\documentclass[a4paper, 11pt]{article}
\usepackage{amssymb,amsmath,amsthm,stmaryrd,mathrsfs,mathabx}

\addtolength{\textwidth}{1.5in}
\addtolength{\textheight}{1in}
\addtolength{\hoffset}{-.75in}
\addtolength{\voffset}{-.5in}
\setlength{\parindent}{0.25in}

\numberwithin{equation}{section}

\newtheorem{thm}{Theorem}[section]
\newtheorem{lemma}[thm]{Lemma}
\newtheorem{cor}[thm]{Corollary}
\newtheorem{rmk}[thm]{Remark}
\newtheorem{defn}[thm]{Definition}
\newtheorem{hypothesis}[thm]{Hypothesis}
\newcommand{\op}[1]{\operatorname{\text{\rm #1}}}

\begin{document}

\title{Constant frequency and the higher regularity of branch sets}
\author{Brian Krummel}
\maketitle

\begin{abstract} 
We consider a two-valued function $u$ that is either Dirichlet energy minimizing, $C^{1,\mu}$ harmonic, or in $C^{1,\mu}$ with an area-stationary graph such that Almgren's frequency (restricted to the singular set) is continuous at a singular point $Y_0$.  As a corollary of recent work of Wickramasekera and the author, if the frequency of $u$ at $Y_0$ equals $1/2+k$ for some integer $k \geq 0$, then the singular set of $u$ is a $C^{1,\tau}$ submanifold and we have estimates on the asymptotic behavior of $u$ at singular points.  Using a nontrivial modification of the argument of Wickramasekera and author, we show that the frequency of $u$ at $Y_0$ cannot equal an integer and therefore must equal $1/2+k$ for some integer $k \geq 0$.  We then use the asymptotic behavior of $u$ and partial Legendre-type transformations based on those of Kinderlehrer, Nirenberg, and Spruck to show that the singular set in this case is in fact real analytic. 
\end{abstract}

\tableofcontents

\section{Introduction} \label{sec:introduction_sec}

We consider the singular set of two-valued functions on a domain $\Omega \subseteq \mathbb{R}^n$.  A two-valued function is a map $u : \Omega \rightarrow \mathcal{A}_2(\mathbb{R}^m)$, where $\mathcal{A}_2(\mathbb{R}^m)$ denotes the space of unordered pairs $\{a_1,a_2\}$ of $a_1,a_2 \in \mathbb{R}^m$ not necessarily distinct.  Given any two-valued function $u : \Omega \rightarrow \mathcal{A}_2(\mathbb{R}^m)$, we let $\mathcal{B}_u$ denote the singular set consisting of all points $Y \in \Omega$ for which there is no $\delta > 0$ such that $u(X) = \{u_1(X),u_2(X)\}$ on $B_{\delta}(Y)$ for some $C^1$ single-valued functions $u_1,u_2 : B_{\delta}(Y) \rightarrow \mathbb{R}^m$.  (Here and throughout we let $B^k_{\rho}(Y)$ denote the open ball in $\mathbb{R}^k$ of center $Y$ and radius $\rho > 0$ and we let $B_{\rho}(Y) = B^n_{\rho}(Y)$ where $n$ is fixed.)  We shall assume the basic facts and notation for two-valued functions used in~\cite{KrumWic1} (in particular, see Sections 2 and 3 of~\cite{KrumWic1}).  We will consider three classes of two-valued functions: \\

\noindent \textbf{Dirichlet energy minimizing two-valued functions.}  $u \in W^{1,2}(\Omega;\mathcal{A}_2(\mathbb{R}^m))$ that are \textit{Dirichlet energy minimizing} in the sense that 
\begin{equation*}
	\int_{\Omega} |Du|^2 \leq \int_{\Omega} |Dv|^2
\end{equation*}
for all $v \in W^{1,2}(\Omega;\mathcal{A}_2(\mathbb{R}^m))$ with $u = v$ a.e. in $\Omega \setminus K$ for some compact set $K \subset \Omega$. \\

\noindent \textbf{$C^{1,\mu}$ harmonic two-valued functions.}  $u \in C^{1,\mu}(\Omega;\mathcal{A}_2(\mathbb{R}^m))$, where $\mu \in (0,1)$, that are locally harmonic in $\Omega \setminus \mathcal{B}_u$ in the sense that for every ball $B \subset \subset \Omega \setminus \mathcal{B}_u$, $u(X) = \{u_1(X),u_2(X)\}$ on $B$ for harmonic single-valued functions $u_1,u_2 : B \rightarrow \mathbb{R}^m$. \\

\noindent \textbf{$C^{1,\mu}$ two-valued functions with area-stationary graphs.}  $u \in C^{1,\mu}(\Omega;\mathcal{A}_2(\mathbb{R}^m))$, where $\mu \in (0,1)$, whose graphs are area-stationary varifolds.  In other words, if we write $u(X) = \{u_1(X),u_2(X)\}$ for $X \in \Omega$, then the graph $\mathcal{M}$ of $u$ is the set of all points $(X,u_1(X))$ and $(X,u_2(X))$ for $X \in \Omega$ and can be regarded as an integral varifold with multiplicity function $\theta : \mathcal{M} \rightarrow \mathbb{Z}_+$ given by $\theta(X,u_1(X)) = \theta(X,u_2(X)) = 1$ whenever $u_1(X) \neq u_2(X)$ and $\theta(X,u_1(X)) = 2$ whenever $u_1(X) = u_2(X)$.  $\mathcal{M}$ is \textit{area-stationary} if 
\begin{equation*}
	\int_{\mathcal{M}} \op{div}_{\mathcal{M}} \zeta \, \theta \, d\mathcal{H}^n = 0
\end{equation*}
for all $\zeta \in C^1_c(\Omega \times \mathbb{R}^m;\mathbb{R}^{n+m})$. \\

Such multivalued functions arise in the study of minimal submanifolds at branch point singularities, i.e. singular points where at least one tangent cone is a plane of integer multiplicity $\geq 2$.  In particular, Almgren in his work posthumously published as~\cite{Alm83} used Dirichlet energy minimizing multivalued functions as approximations of area minimizing $n$-currents in order to show that the singular set of such currents has Hausdorff dimension at most $n-2$.  Later work by De Lellis and Spadaro of~\cite{DeLSpa} provides a more accessible proof of Almgren's result.  Wickramasekera in~\cite{Wic08} and~\cite{Wic} used $C^{1,\mu}$ two-valued harmonic functions as approximations of orientable, stable, minimal hypersurfaces close to a multiplicity two hyperplane in order to show that such hypersurfaces are locally the graphs of $C^{1,\mu'}$ two-valued functions over the plane for some $\mu' < \mu$.  Then $C^{1,\mu}$ two-valued harmonic functions were used as approximations of area-stationary $C^{1,\mu'}$ two-valued graphs in any codimension firstly by Simon and Wickramasekera in~\cite{SimWic11} to show that the singular set of such two-valued graphs has Hausdorff dimension at most $n-2$ and later by Wickramasekera and the author in~\cite{KrumWic2} to show that in any ball the singular set of such two-valued graphs is the finite union of locally $(n-2)$-rectifiable sets. 

Almgren~\cite{Alm83} showed in the case of Dirichlet energy minimizing multivalued functions that $\mathcal{B}_u$ has Hausdorff dimension at most $n-2$.  To accomplish this, Almgren introduced the frequency function 
\begin{equation} \label{defn_freqfn}
	N_{u,Y}(\rho) = \frac{\rho^{2-n} \int_{B_{\rho}(Y)} |Du|^2}{\rho^{1-n} \int_{\partial B_{\rho}(Y)} |u|^2} 
\end{equation}
for a Dirichlet energy minimizing two-valued function $u \in W^{1,2}(\Omega;\mathcal{A}_2(\mathbb{R}^m))$, $Y \in \Omega$, and $\rho \in (0,\op{dist}(Y,\partial \Omega))$.  Almgren showed that $N_{u,Y}(\rho)$ is monotone nondecreasing as a function of $\rho$ and additionally introduced the frequency $\mathcal{N}_u(Y) = \lim_{\rho \downarrow 0} N_{u,Y}(\rho)$ of $u$ at $Y$.  We regard $\mathcal{N}_u(Y)$ as quantifying the rate at which $u$ decays at $Y$ (see~\cite[Lemma 3.2(a)(b)]{KrumWic1}).  Simon and Wickramasekera later extended Almgren's arguments in~\cite{SimWic11} to show that in the cases of $C^{1,\mu}$ harmonic two-valued functions and $C^{1,\mu}$ two-valued functions with area-stationary graphs, $\mathcal{B}_u$ has Hausdorff dimension at most $n-2$.  Modifying arguments of Simon in~\cite{Sim93}, which were originally applied to multiplicity one classes of minimal submanifolds, and using Almgren's frequency function, Wickramasekera and the author in~\cite{KrumWic1} and~\cite{KrumWic2} showed that for all three cases of two-valued functions above, in any ball $\mathcal{B}_u$ is the finite union of locally $(n-2)$-rectifiable sets.  We will show that:

\begin{thm}
For all three classes of two-valued functions above, there is a relatively open dense subset of $\mathcal{B}_u$ on which $\mathcal{B}_u$ is a locally real analytic $(n-2)$-dimensional submanifold. 
\end{thm}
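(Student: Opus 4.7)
The plan is to identify a dense subset $\mathcal{B}_u^\ast$ of $\mathcal{B}_u$ on which $\mathcal{N}_u|_{\mathcal{B}_u}$ is continuous, show that the frequency at such points must equal $\tfrac12 + k$ for some integer $k \geq 0$, invoke the corollary of~\cite{KrumWic1} quoted in the abstract to obtain a $C^{1,\tau}$ local submanifold structure together with sharp asymptotics for $u$, and finally upgrade the $C^{1,\tau}$ regularity to real analytic via a Kinderlehrer--Nirenberg--Spruck partial Legendre transformation. The openness of admitting such a local structure promotes the dense set $\mathcal{B}_u^\ast$ to a dense open subset, so these steps together give the theorem.

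\textbf{Density and excluding integer frequency.} Let $\mathcal{B}_u^\ast$ denote the set of $Y_0 \in \mathcal{B}_u$ at which $\mathcal{N}_u|_{\mathcal{B}_u}$ is continuous. Since $\mathcal{N}_u$ is upper semicontinuous (as a monotone pointwise limit of the continuous functions $N_{u,Y}(\rho)$), its restriction to the complete metric space $\mathcal{B}_u$ is continuous on a dense $G_\delta$ set, so $\mathcal{B}_u^\ast$ is dense in $\mathcal{B}_u$. Classification of homogeneous two-valued harmonic tangent maps constrains $\mathcal{N}_u(Y_0)$ to lie in the nonnegative half-integers, and every homogeneous two-valued harmonic function of integer degree is necessarily single-valued. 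The core technical step --- and the main obstacle --- is to show $\mathcal{N}_u(Y_0) \notin \{1,2,3,\dots\}$ for every $Y_0 \in \mathcal{B}_u^\ast$. The idea is that at such a $Y_0$ with integer frequency $k$ one should inductively subtract a single-valued harmonic polynomial (a ``first Taylor correction'') from $u$ to produce a two-valued function $v$ with strictly larger frequency and the same local singular set; iterating then either exits the integers into a half-integer frequency (at which point the main half-integer analysis applies to $v$) or expresses $u$ near $Y_0$ as a convergent series of single-valued harmonic polynomials, contradicting $Y_0 \in \mathcal{B}_u$. The nontrivial modification of~\cite{KrumWic1,KrumWic2} is needed to propagate the continuity of $\mathcal{N}_u$ at $Y_0$ into uniform H\"older-type decay estimates for $u$ minus its polynomial part, so that this Taylor-type subtraction is rigorous and the excess-decay / competitor estimates of~\cite{KrumWic1} remain nondegenerate in the integer-frequency limit.

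\textbf{$C^{1,\tau}$ structure and real analytic upgrade.} Once integer frequencies are ruled out on $\mathcal{B}_u^\ast$, the corollary of~\cite{KrumWic1} supplies a neighborhood $U$ of each $Y_0 \in \mathcal{B}_u^\ast$ on which $\mathcal{B}_u \cap U$ is a $C^{1,\tau}$ $(n-2)$-submanifold and
\[
u(X) = \{\pm\, r(X)^{1/2+k}\, \phi(\omega(X))\} + o(r(X)^{1/2+k}),
\]
where $r, \omega$ are cylindrical coordinates for $\mathcal{B}_u \cap U$ and $\phi$ is a single-valued eigenfunction on the circle normal to $\mathcal{B}_u$. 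The existence of such local structure is an open condition in $Y_0$ and $\mathcal{B}_u^\ast$ is dense, so it holds on a dense open subset of $\mathcal{B}_u$. To upgrade $C^{1,\tau}$ to real analytic, I would straighten $\mathcal{B}_u \cap U$ to a coordinate $(n-2)$-plane via its $C^{1,\tau}$ graphical representation and then apply a partial Legendre transformation in the two normal variables designed to absorb the half-integer branching of $u$ into new single-valued dependent variables; for $k=0$ this amounts to inverting the square-root singularity through the conformal double cover $(\xi,\eta) \mapsto (\xi^2-\eta^2,\, 2\xi\eta)$. Under this change of variables the governing equation --- Laplace's equation in the harmonic and Dirichlet-minimizing cases, or the minimal surface system in the area-stationary case --- becomes a nonlinear uniformly elliptic system with real analytic coefficients on a half-space-like domain with real analytic boundary. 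Morrey's classical analyticity theorem for analytic nonlinear elliptic boundary value problems then yields real analyticity of the transformed map, and inverting the Legendre transformation transfers this to real analyticity of both branches of $u$ and of $\mathcal{B}_u$ near $Y_0$. The delicate point is to ensure the expansion above is sharp enough for the Legendre transformation to be a $C^{1,\tau}$-diffeomorphism onto a domain with real analytic boundary; this requires a Schauder-type improvement of the $o(r^{1/2+k})$ error term, after which the classical analytic regularity results apply.
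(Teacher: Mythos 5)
Your high‐level skeleton matches the paper's: the theorem is reduced to (i) showing a dense set of points at which the restricted frequency $\mathcal{N}_u|_{\mathcal{B}_u}$ is continuous, (ii) ruling out integer frequency at such points (so the frequency is $1/2+k$), (iii) invoking Theorem~\ref{constfreqthm2} (the easy consequence of~\cite{KrumWic1}) for $C^{1,\tau}$ local structure and asymptotics, and (iv) bootstrapping to real analyticity via a Kinderlehrer--Nirenberg--Spruck partial Legendre transformation. The openness of the final condition is exactly how the paper converts density to open density. The Baire--category argument you use for (i) is a valid but genuinely different route from the paper's: the paper instead combines Lemma~\ref{cbu_dense_lemma} (density of $\mathcal{B}_u\setminus\mathcal{B}_u^{(n-3)}$) with upper semicontinuity to \emph{construct} a sequence $Y_j\in\Gamma_{u,k_j/2}$ converging to any given $Y\in\mathcal{B}_u$; both approaches give density, but the paper's approach simultaneously yields the finer structural statement of Theorem~\ref{maincor}.

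However, your description of steps (ii) and (iv) contains genuine gaps.

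For (ii) --- the exclusion of integer frequency --- you propose to ``inductively subtract a single-valued harmonic polynomial (a first Taylor correction) to produce a two-valued function $v$ with strictly larger frequency.'' This presupposes that $u$ has a well-defined leading homogeneous harmonic term at $Y_0$, i.e.\ that the blow-up at $Y_0$ is \emph{unique} and $u$ decays to it at a definite rate. But uniqueness of blow-ups at a single point of a two-valued harmonic function is not known a priori; establishing it is precisely the content of the De Giorgi excess-decay argument that occupies Sections~\ref{sec:L2estimates_sec}--\ref{sec:excessdecay_sec}. Moreover, in the Dirichlet-minimizing case (a), subtracting a single-valued harmonic polynomial from $u$ exits the minimizing class entirely, so the frequency monotonicity machinery is no longer available for the ``corrected'' function $v$. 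The paper's actual mechanism never modifies $u$: it measures the $L^2$ excess $\int\mathcal{G}(u,\varphi)^2$ against competitors $\varphi\in\Phi_\alpha$ and proves polynomial decay of this excess via a blow-up argument, with two features that your sketch does not engage: homogeneous degree-$\alpha$ harmonic two-valued functions $\{\pm\varphi_1\}$ may have spines of \emph{any} dimension (unlike the half-integer case, where the spine is automatically $(n-2)$-dimensional), which necessitates the ``optimal sequence with increasing spines'' construction and the multi-scale iteration of Section~\ref{sec:excessdecay_sec}; and the ``no small gaps'' hypothesis used in~\cite{KrumWic1} fails in the integer case, so the excess decay must be designed to tolerate gaps where $\mathcal{N}_u < \alpha$.

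For (iv) --- the upgrade to real analyticity --- you assert that after the partial Legendre transformation ``the governing equation becomes a nonlinear \emph{uniformly} elliptic system on a half-space-like domain with real analytic boundary,'' to which ``Morrey's classical analyticity theorem'' applies. This is not correct. Under the transformation $\tilde x_1=u^1$, $\tilde x_2=u^2$, $\tilde y_{j-2}=x_j$, the inverse map $(\phi^1,\phi^2)$ has $D_{\tilde x}\phi^\kappa\sim |\tilde x|$ and the transformed equations (\ref{transpde1})--(\ref{transpde2}) carry the Jacobian factor $\phi^1_1\phi^2_2-\phi^1_2\phi^2_1\sim|\tilde x|^2$, so the coefficients degenerate (respectively blow up) along the interior hypersurface $\{0\}\times\mathbb{R}^{n-2}$, which is not a boundary of the domain but an interior singular locus. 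The system is elliptic away from $\{0\}\times\mathbb{R}^{n-2}$ but degenerate there, in a way that is captured in the paper by the singular metric $g = 4|\tilde x|^2(d\tilde x_1^2+d\tilde x_2^2)+\sum d\tilde y_j^2$ introduced in Section~\ref{sec:schaudersec}. Morrey's interior analyticity theorem for uniformly elliptic analytic systems does not directly apply; the paper instead develops a bespoke Schauder theory in weighted H\"{o}lder spaces $C^{g,k,\tau}$ and $\mathcal{H}^{s,k,\tau}$ adapted to $g$ (Lemmas~\ref{schauderlemma} and \ref{schaudersyslemma}), and then proves analyticity in the $\tilde y$-variables by a majorant iteration in the style of Friedman~\cite{Friedman}, not by citing a classical theorem. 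Getting the delicate balance between the degeneracy of the coefficients and the vanishing rates of $\psi^\kappa_k$ at $\{0\}\times\mathbb{R}^{n-2}$ right is essential to close the Schauder loop.
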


Examples show that for the three classes of multivalued functions $u$ that we consider, $\mathcal{B}_u$ is not a smoothly embedded $(n-2)$-dimensional submanifold in general.  Consider $u \in C^{1,1/2}(\mathbb{R}^4;\mathcal{A}_2(\mathbb{R}^2))$ given by $u(z,w) = \{ \pm (z^2 - w^4)^{3/2} \}$ for $z,w \in \mathbb{C}$, where here we identify $\mathbb{C} \cong \mathbb{R}^2$.  Since $u$ is holomorphic, $u$ is Dirichlet energy minimizing and the graph of $u$ is area-minimizing.  In this particular example, $\mathcal{B}_u$ is the holomorphic variety $\{z = -w^2\} \cup \{z = w^2\}$ consisting of locally real analytic $2$-dimensional submanifolds away from the origin along which $\mathcal{N}_u$ is constant $3/2$ and the origin at which $\mathcal{N}_u(0,0) = 3$.  Since one can regard the singular set of two-valued harmonic functions as analogous to the singular part of the nodal set of harmonic single-valued functions, the latter which is known to be a real analytic variety, one might conjecture that for the three classes of two-valued functions that we consider, $\mathcal{B}_u$ is a real analytic variety.  Motivated by examples like the one above, we will show that $\mathcal{B}_u$ consists of real analytic $(n-2)$-dimensional submanifolds along which $\mathcal{N}_u |_{\mathcal{B}_u}$ is constant and a relatively closed set $\mathcal{J}_u$ of points at which $\mathcal{N}_u |_{\mathcal{B}_u}$ is not lower semicontinuous.  Moreover, one can further conjecture that $\mathcal{J}_u$ is small in the sense that $\mathcal{H}^{n-2}(\mathcal{J}_u) = 0$ or $\dim \mathcal{J}_u \leq n-3$, though we will not prove that here.

Suppose that either (a) $u \in W^{1,2}(\Omega;\mathcal{A}_2(\mathbb{R}^m))$ is a Dirichlet energy minimizing two-valued function or (b) $u \in C^{1,\mu}(\Omega;\mathcal{A}_2(\mathbb{R}^m))$ is locally harmonic on $\Omega \setminus \mathcal{B}_u$.  (The case where $u \in C^{1,\mu}(\Omega;\mathcal{A}_2(\mathbb{R}^m))$ with an area-stationary graph has some important differences and will be discussed below.)  In these cases we may assume that $u$ is nonzero and symmetric, i.e. $u(X) = \{-u_1(X),+u_1(X)\}$ for some $u_1(X) \in \mathbb{R}^m$ for every $X \in \Omega$, so that $\mathcal{B}_u \subseteq \{ X : u(X) = \{0,0\} \}$ in case (a) and $\mathcal{B}_u \subseteq \{ X : u(X) = \{0,0\}, \, Du(X) = \{0,0\} \}$ in case (b).  Here we prove the following: 

\begin{thm} \label{maincor}
Suppose that either (a) $u \in W^{1,2}(\Omega;\mathcal{A}_2(\mathbb{R}^m))$ is a nonzero, symmetric, Dirichlet energy minimizing two-valued function or (b) $u \in C^{1,\mu}(\Omega;\mathcal{A}_2(\mathbb{R}^m))$, where $\mu \in (0,1)$, is a nonzero, symmetric two-valued function that is locally harmonic in $\Omega \setminus \mathcal{B}_u$.  Then 
\begin{equation*}
	\mathcal{B}_u = \bigcup_{k=0}^{\infty} \Gamma_{u,1/2+k} \cup \mathcal{J}_u
\end{equation*}
where: 
\begin{enumerate}
\item[(i)] For each $k = 0,1,2,\ldots$, $\Gamma_{u,1/2+k}$ is defined to be the set of points $Y \in \mathcal{B}_u$ for which there exists $\sigma > 0$ such that $\mathcal{N}_u(X) = 1/2+k$ for all $X \in \mathcal{B}_u \cap B_{\sigma}(Y)$.  In case (b), $\Gamma_{u,1/2} = \emptyset$.  $\Gamma_{u,1/2+k}$ is a locally real analytic $(n-2)$-dimensional submanifold for all $k$. 

\item[(ii)] $\mathcal{J}_u$ is the set of all points $Y \in \mathcal{B}_u$ for which there exists an integer $k \geq 0$ (depending on $Y$) such that $Y$ is a limit point of $\Gamma_{u,1/2+k}$ but $\mathcal{N}_u(Y) > 1/2+k$.  
\end{enumerate}
\end{thm}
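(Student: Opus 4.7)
The plan is to first reduce the theorem to two local claims at each point of $\mathcal{B}_u$, then settle those claims in turn. By Almgren's monotonicity of $N_{u,Y}(\rho)$ in $\rho$ (extended to case (b) by Simon and Wickramasekera), $\mathcal{N}_u$ is upper semicontinuous on $\Omega$. Hence it suffices to establish: (A) $\mathcal{N}_u(Y)\in\{1/2+k:k\in\mathbb{Z},\, k\geq 0\}$ for every $Y\in\mathcal{B}_u$; and (B) whenever $\mathcal{N}_u(Y)=1/2+k$ and $\mathcal{N}_u|_{\mathcal{B}_u}$ is continuous at $Y$, the set $\Gamma_{u,1/2+k}$ is a real analytic $(n-2)$-submanifold in a neighborhood of $Y$. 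Granting these, the decomposition $\mathcal{B}_u=\bigcup_k\Gamma_{u,1/2+k}\cup\mathcal{J}_u$ falls out from a bookkeeping exercise: upper semicontinuity together with the discreteness of admissible values $1/2+k$ forces $\mathcal{B}_u\setminus\mathcal{J}_u$ to coincide with the continuity points of $\mathcal{N}_u|_{\mathcal{B}_u}$, and $\mathcal{N}_u|_{\mathcal{B}_u}$ is locally constant at each such point, placing $Y$ in the appropriate $\Gamma_{u,1/2+k}$.

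The main obstacle is (A), the exclusion of integer frequencies. I would argue by contradiction: assume $Y\in\mathcal{B}_u$ is a continuity point of $\mathcal{N}_u|_{\mathcal{B}_u}$ with $\mathcal{N}_u(Y)=k\in\mathbb{Z}$, $k\geq 1$. Frequency-normalized blowups of $u$ at $Y$ produce a nonzero, symmetric, homogeneous degree $k$ two-valued harmonic function $\varphi$ whose frequency equals $k$ on a dense subset of $\mathcal{B}_\varphi$. Following the cylindrical-tangent classification developed in \cite{KrumWic1}, such a $\varphi$ must split as an unordered pair $\{\varphi_1,\varphi_2\}$ of genuine single-valued harmonic polynomials on $\mathbb{R}^n$, since integer homogeneity is incompatible with a truly two-valued branched structure. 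The remaining and most delicate step---the ``nontrivial modification'' of \cite{KrumWic1} advertised in the abstract---is to transfer this splitting back to $u$ itself: the excess-decay and frequency-gap lemmas of \cite{KrumWic1} are calibrated to half-integer blowups whose singular behavior is comparable to that of $u$, so for integer blowups one must refine those lemmas to accommodate the fact that the candidate blowup is strictly smoother than $u$. Iterating the refined decay estimate then forces $u$ to split as two $C^1$ single-valued functions on a ball around $Y$, contradicting $Y\in\mathcal{B}_u$.

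Granting (A), claim (B) is obtained in two steps. By continuity of $\mathcal{N}_u|_{\mathcal{B}_u}$ at $Y$ with $\mathcal{N}_u(Y)=1/2+k$, the corollary of \cite{KrumWic1} cited in the abstract applies immediately and gives, near $Y$, both a $C^{1,\tau}$ $(n-2)$-submanifold structure on $\Gamma_{u,1/2+k}$ and a precise leading-order expansion for $u$: in coordinates $(x'',x_{n-1},x_n)\in\mathbb{R}^{n-2}\times\mathbb{R}^2$ in which $\Gamma_{u,1/2+k}$ is graphical over $\mathbb{R}^{n-2}\times\{0\}$, $u$ is asymptotic along transverse fibers to an explicit degree $1/2+k$ homogeneous two-valued harmonic polynomial in $(x_{n-1},x_n)$ with H\"older-controlled remainder. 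To upgrade $C^{1,\tau}$ to real analytic, I would introduce a partial hodograph--Legendre transform in the spirit of Kinderlehrer, Nirenberg and Spruck, using an appropriate algebraic root of the leading harmonic polynomial as a new independent variable so as to straighten $\Gamma_{u,1/2+k}$ to a hyperplane; Laplace's equation (respectively the minimal surface system) satisfied by $u$ away from its branch set pulls back to a nonlinear elliptic system on a half-space with real analytic nonlinearities, whose solution encodes a parametrization of $\Gamma_{u,1/2+k}$. Morrey's analyticity theorem for nonlinear elliptic systems with analytic coefficients then yields that $\Gamma_{u,1/2+k}$ is real analytic. Finally, $\Gamma_{u,1/2}=\emptyset$ in case (b) follows because $u\in C^{1,\mu}$ with $u=Du=\{0,0\}$ on $\mathcal{B}_u$ gives $|u(X)|\leq C|X-Y|^{1+\mu}$ for $X$ near $Y\in\mathcal{B}_u$, which upon insertion into the definition of $N_{u,Y}(\rho)$ forces $\mathcal{N}_u(Y)\geq 1+\mu>1/2$.
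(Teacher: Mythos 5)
Your overall three-step plan---(i) show frequency at continuity points of $\mathcal{N}_u|_{\mathcal{B}_u}$ is a non-integer half-integer, (ii) apply the $C^{1,\tau}$ structure theorem at such points, (iii) upgrade to real analyticity via a Kinderlehrer--Nirenberg--Spruck-type partial Legendre transform---does match the paper's architecture (its Theorems \ref{constfreqthm1}, \ref{constfreqthm2}, \ref{analyticity_thm}). However your claim (A), \emph{``$\mathcal{N}_u(Y)\in\{1/2+k\}$ for every $Y\in\mathcal{B}_u$''}, is false and is contradicted by the paper's own example $u(z,w)=\{\pm(z^2-w^4)^{3/2}\}$, which has a genuine branch point at the origin with $\mathcal{N}_u(0,0)=3$. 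Integer (and, a priori, other) frequencies do occur at branch points; they are only excluded at points where $\mathcal{N}_u|_{\mathcal{B}_u}$ is continuous (equivalently, where the local minimality $\mathcal{N}_u(Z)\geq\mathcal{N}_u(Y)$ for nearby $Z\in\mathcal{B}_u$ holds). Your own contradiction argument for (A) already assumes $Y$ is a continuity point, so there is an internal inconsistency: you state (A) globally but argue it only locally at continuity points. With (A) repaired to the continuity-point version, the ``bookkeeping exercise'' for $\mathcal{B}_u=\bigcup_k\Gamma_{u,1/2+k}\cup\mathcal{J}_u$ is no longer automatic: at a discontinuity point $Y$, you must actually exhibit a specific $\Gamma_{u,1/2+k}$ of which $Y$ is a limit point, and the nearby low-frequency points $Z_j$ you produce by upper semicontinuity need not themselves be continuity points.

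The tool that repairs both gaps, and which your proposal never invokes, is the density result that $\mathcal{B}_u\setminus\mathcal{B}_u^{(n-3)}$ is dense in $\mathcal{B}_u$ (the paper's Lemma \ref{cbu_dense_lemma}, proved via the topological fact that $\mathcal{H}^{n-2}(\mathcal{B}_u\cap B)=0$ forces $u$ to decompose). On $\mathcal{B}_u\setminus\mathcal{B}_u^{(n-3)}$, at least one blow-up has spine of full dimension $n-2$, so the frequency is exactly $k/2$ for an integer $k\geq 1$. Combined with upper semicontinuity this shows (i) that $\mathcal{N}_u(Y)=k/2$ for a continuity point $Y$ (a step you skip: a priori the frequency could be $0.7$ or $\pi$ at a continuity point, and nothing in your argument rules this out before you appeal to integer exclusion), and (ii) that near any discontinuity point $Y$ one can find $Y_j\to Y$ lying in $\mathcal{B}_u\setminus\mathcal{B}_u^{(n-3)}$ with $\mathcal{N}_u(Y_j)=k_j/2\leq\mathcal{N}_u(Z_j)$ and such that $\mathcal{N}_u\geq\mathcal{N}_u(Y_j)$ on a small ball around $Y_j$; after passing to a subsequence $k_j=k$ constant, and the integer exclusion plus the $C^{1,\tau}$ theorem then place $Y_j\in\Gamma_{u,k/2}$, certifying $Y\in\mathcal{J}_u$. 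Your proposal has the right theorems in the right order, and your intuition about the obstruction to integer exclusion (the blow-up is smoother than $u$, so the cylindrical/no-gap hypotheses of the blow-up method degenerate) is correct, but the statement (A) must be localized to continuity points and the density lemma inserted to make the claimed dichotomy and the $\mathcal{J}_u$ membership go through.
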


Note that Theorem \ref{maincor} does not say anything about the size or local structure of $\mathcal{J}_u$; rather the theorem says that $\mathcal{B}_u \setminus \mathcal{J}_u$ is a relatively open dense subset of $\mathcal{B}_u$.  From standard stratification theory, we know that $\mathcal{J}_u$ is the union of a set of Hausdorff dimension $n-3$ and the subset of points in $\mathcal{J}_u$ where at least one blow-up is ``cylindrical'', i.e. translation invariant along an $(n-2)$-dimensional linear subspace, the latter set potentially having Hausdorff dimension $n-2$.  One can consider this analogous to the situation with the singular set of stationary integral varifolds, where by Allard regularity the set of regular points is an open dense subset of the support of the varifold and the singular set in principle could have Hausdorff dimension $n$ due to branch point singularities near which at least one tangent cone is an integer multiplicity hyperplane.  Very little is known about branch point singularities in general.  There are some results in special cases are obtained in~\cite{Alm83}~\cite{DeLSpa} for area-minimizing currents and in~\cite{Wic08}~\cite{Wic}~\cite{SimWic11}~\cite{KrumWic2} for stable minimal hypersurfaces near branch points with a multiplicity two tangent plane.  In the study stable minimal hypersurfaces near branch points with a tangent plane of multiplicity $\geq 3$, one must similarly consider the set of singular points where density restricted to the singular set is not lower semicontinuous.  Consequently, we regard establishing the size of $\mathcal{J}_u$ as a challenging problem for future study and do not consider the problem further here.

The proof of Theorem \ref{maincor} is broken up into several theorems as follows.  An easy consequence of the work of Wickramasekera and the author in~\cite{KrumWic1}, whose proof we discuss in Section~\ref{sec:prelims_sec}, is:

\begin{thm} \label{constfreqthm2}
Suppose that either (a) $u \in W^{1,2}(\Omega;\mathcal{A}_2(\mathbb{R}^m))$ is a symmetric Dirichlet energy minimizing two-valued function or (b) $u \in C^{1,\mu}(\Omega;\mathcal{A}_2(\mathbb{R}^m))$, where $\mu \in (0,1]$, is a two-valued function that is locally harmonic on $\Omega \setminus \mathcal{B}_u$.  Suppose that $Y_0 \in \mathcal{B}_u$ such that $\mathcal{N}_u(Y_0) = 1/2+k$ for some integer $k \geq 0$ ($k \geq 1$ in case (b)) and 
\begin{equation} \label{cft2_eqn1}
	\mathcal{N}_u(Y) \geq \mathcal{N}_u(Y_0) \text{ for all } Y \in \mathcal{B}_u.
\end{equation}
Then for some $\tau = \tau(n,m) \in (0,1)$ and $\rho_0 = \rho_0(n,m,u) \in (0,\op{dist}(Y_0,\partial \Omega)/2)$, 
\begin{enumerate}
\item[(i)] $\mathcal{N}_u(Y) = 1/2+k$ for all $Y \in \mathcal{B}_u \cap B_{\rho_0}(Y_0)$, 
\item[(ii)] for every $Y \in \mathcal{B}_u \cap B_{\rho_0}(Y_0)$ there is a unique homogeneous degree $1/2+k$ two-valued function $\varphi_Y : \mathbb{R}^n \rightarrow \mathcal{A}_2(\mathbb{R}^m)$ such that for some rotation $q_Y$ of $\mathbb{R}^n$ and $c_Y = (c^1_Y,c^2_Y,\ldots,c^m_Y) \in \mathbb{C}^m$ (with $c_Y \cdot c_Y = \sum_{\kappa=1}^m (c^{\kappa}_Y)^2 = 0$ in case (a)), 
\begin{equation*}
	\varphi_Y(q_Y^{-1} X) = \{ \pm \op{Re}(c_Y (x_1+ix_2)^{1/2+k}) \}
\end{equation*}
and 
\begin{equation*}
	\rho^{-n} \int_{B_{\rho}(0)} \mathcal{G}(u(Y+X),\varphi_Y(X))^2 dX \leq C \rho^{1+2k+2\tau} 
\end{equation*}
for all $\rho \in (0,\rho_0)$ and some constant $C = C(n,m,u) \in (0,\infty)$, where $\mathcal{G}$ is the metric on the space $\mathcal{A}_2(\mathbb{R}^m)$ of unordered pairs of $\mathbb{R}^m$ defined by 
\begin{equation} \label{metricGdefn}
	\mathcal{G}(\{a_1,a_2\},\{b_1,b_2\}) = \min \left\{ \sqrt{|a_1-b_1|^2 + |a_2-b_2|^2}, \sqrt{|a_1-b_2|^2 + |a_2-b_1|^2} \right\}
\end{equation}
for any $a_1,a_2,b_1,b_2 \in \mathbb{R}^m$,  
\item[(iii)] for every $Y,Z \in \mathcal{B}_u \cap B_{\rho_0}(Y_0)$, 
\begin{equation*}
	\int_{B_1(0)} \mathcal{G}(\varphi_Y(X),\varphi_Z(X))^2 dX \leq C |Y-Z|^{2\tau} 
\end{equation*}
for some constant $C = C(n,m,u) \in (0,\infty)$, and 
\item[(iv)] $\mathcal{B}_u$ is a $C^{1,\tau}$ $(n-2)$-dimensional submanifold in $B_{\rho_0}(Y)$.
\end{enumerate}
\end{thm}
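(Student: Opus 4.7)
The plan is to reduce this theorem to the main branch-set regularity result of \cite{KrumWic1}. The proof rests on three ingredients drawn from that work: (A) upper semicontinuity of the frequency $Y \mapsto \mathcal{N}_u(Y)$ on $\mathcal{B}_u$, which follows from monotonicity of $\rho \mapsto N_{u,Y}(\rho)$; (B) the classification of nonzero symmetric homogeneous harmonic (respectively Dirichlet minimizing) two-valued functions of half-integer degree $1/2+k$, which after a rotation of $\mathbb{R}^n$ must depend only on the first two variables and take the form $\{\pm \op{Re}(c(x_1+ix_2)^{1/2+k})\}$ for some $c \in \mathbb{C}^m$, with the constraint $c \cdot c = 0$ in case (a); and (C) the constant-frequency regularity theorem of \cite{KrumWic1}, which under the neighborhood hypothesis $\mathcal{N}_u \equiv 1/2+k$ on $\mathcal{B}_u$ yields uniqueness of tangent functions, quantitative $L^2$-decay, base-point H\"older dependence of tangent functions, and $C^{1,\tau}$ regularity of the branch set.

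First I would establish (i).  By (A), for every $\epsilon > 0$ there exists $\rho_1 > 0$ such that $\mathcal{N}_u(Y) \leq 1/2+k+\epsilon$ for all $Y \in \mathcal{B}_u \cap B_{\rho_1}(Y_0)$; combined with \eqref{cft2_eqn1}, this pins $\mathcal{N}_u(Y)$ into the interval $[1/2+k,\, 1/2+k+\epsilon]$.  At any singular point, $\mathcal{N}_u(Y)$ equals the homogeneity degree of an associated nonzero symmetric tangent function, and the set of admissible such degrees is a discrete subset of $\tfrac{1}{2}\mathbb{Z}_{\geq 1}$ (and lies in $\{3/2, 2, 5/2, \ldots\}$ in case (b), since $C^{1,\mu}$ regularity forces the tangent function to vanish to order at least $3/2$).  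Since the next admissible value above $1/2+k$ is $k+1$, taking $\epsilon < 1/2$ forces $\mathcal{N}_u(Y) = 1/2+k$ throughout $\mathcal{B}_u \cap B_{\rho_0}(Y_0)$ for some $\rho_0 \in (0,\rho_1]$, proving (i).

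With (i) established, the hypotheses of the constant-frequency regularity theorem (C) from \cite{KrumWic1} are satisfied and I would simply apply it.  For each $Y \in \mathcal{B}_u \cap B_{\rho_0}(Y_0)$, blow-ups of $u$ at $Y$ converge to a unique homogeneous degree $1/2+k$ tangent function $\varphi_Y$, and by (B) this $\varphi_Y$ has the specific form prescribed in (ii); in case (a), the extra constraint $c_Y \cdot c_Y = 0$ comes from the Dirichlet minimizer stationarity condition.  The $L^2$-decay bound in (ii) and the H\"older dependence in (iii) are the core quantitative outputs of the Simon-type blow-up plus excess-decay iteration carried out along $\mathcal{B}_u$ in \cite{KrumWic1}.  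Conclusion (iv) follows from (iii) because the branching axes $q_Y^{-1}(\{x_1 = x_2 = 0\})$ of the tangent functions $\varphi_Y$ vary $C^{0,\tau}$ in $Y$ and chart $\mathcal{B}_u$ locally as the graph of a $C^{1,\tau}$ function over an $(n-2)$-plane.

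The main obstacle is the discretization argument for (i): upper semicontinuity by itself only yields $\mathcal{N}_u(Y) \leq 1/2+k+\epsilon$, and pinning this down to equality requires both the classification of homogeneous tangent functions and the observation that tangent degrees are stable under blow-up.  Once (i) is in hand, the remainder of the theorem is a direct invocation of the machinery of \cite{KrumWic1}, with any residual work amounting to matching the form of the estimates proved there to those stated in (ii)--(iv).
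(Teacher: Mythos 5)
Your argument for conclusion (i) contains a genuine gap. You assert that at every singular point $Y \in \mathcal{B}_u$ the frequency $\mathcal{N}_u(Y)$ lies in the discrete set $\tfrac{1}{2}\mathbb{Z}_{\geq 1}$, and then use upper semicontinuity to squeeze $\mathcal{N}_u(Y)$ onto the value $1/2+k$. But this discreteness is not known for general singular points. The classification $\varphi = \{\pm \mathrm{Re}(c(x_1+ix_2)^{p/2})\}$ with $p \in \mathbb{Z}$ is only available for blow-ups with $(n-2)$-dimensional spine; at points $Y$ where every blow-up has spine of dimension $\leq n-3$, nothing constrains $\mathcal{N}_u(Y)$ to a half-integer, and a priori it could be any real number in the window $[1/2+k, 1/2+k+\varepsilon]$. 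The paper is explicit that $\mathcal{N}_u$ can take both integer and non-integer values on $\mathcal{B}_u$, and the density result (Lemma \ref{cbu_dense_lemma}) only gives a \emph{dense} subset of $\mathcal{B}_u$ on which the frequency is a half-integer, not all of $\mathcal{B}_u$. Without that discreteness, the pinning argument collapses.

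Relatedly, you treat the citation (C) as a black-box "constant-frequency regularity theorem" that delivers (ii)--(iv) once (i) is known; this inverts the logical order of the actual argument. The paper does not prove (i) first. Instead it establishes two structural facts at $Y_0$: (1) every blow-up $\varphi$ of $u$ at $Y_0$ has $\mathcal{B}_{\varphi}=S(\varphi)$ with $\dim S(\varphi)=n-2$, which uses \eqref{cft2_eqn1}, upper semicontinuity under scaling, and crucially that $\mathcal{N}_{\varphi}(0)=1/2+k$ is \emph{not} an integer so $0 \in \mathcal{B}_{\varphi}$; and (2) a quantitative "no small gaps" property: any Dirichlet minimizer $v$ that is $L^2$-close to $\varphi^{(0)}$ must have singular points near every point of the spine of $\varphi^{(0)}$. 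These two facts are what allow one to rule out option (i) of Lemma 4.5 of \cite{KrumWic1} and run the Simon-type excess-decay iteration of the proof of Proposition 8.1 there. Conclusion (i) of the present theorem then emerges as a \emph{consequence} of the asymptotic decay estimate produced by that machinery, together with a covering/continuation argument showing $\mathcal{B}_u$ fills out the Lipschitz graph $\Gamma$; it is not a prerequisite. Your proposal needs the cylindricity and no-gap steps, which you do not supply, and cannot supply (i) a priori.
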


The argument of~\cite{KrumWic1} involves an application of a blow-up method original due to Simon~\cite{Sim93}.  To prove Theorem \ref{constfreqthm2} using this blow-up method, one shows that at least one blow-up $\varphi$ of $u$ at $Y_0$ is ``cylindrical'' in the sense that $\mathcal{B}_{\varphi}$ is an $(n-2)$-dimensional subspace along which $\varphi$ is translation invariant and there are ``no small gaps'' in that there is a sufficiently high concentration of points $Z \in \mathcal{B}_u$ (with $\mathcal{N}_u(Z) \geq 1/2+k$) near $\mathcal{B}_{\varphi}$. 

Theorem \ref{constfreqthm2} naturally leads to the question of what happens if (\ref{cft2_eqn1}) holds true but $\mathcal{N}_u(Y_0)$ takes a value other than $1/2+k$ for some integer $k \geq 0$.  Using the theory of frequency functions, in particular Lemma \ref{cbu_dense_lemma} below and the upper semicontinuity of $\mathcal{N}_u$, one concludes that (\ref{cft2_eqn1}) holds true only when $\mathcal{N}_u(Y_0) = k/2$ for some integer $k \geq 1$ ($k \geq 3$ in case (b)).  For the case where $\mathcal{N}_u(Y_0)$ equals an integer, we assume without loss of generality that $\Omega = B_1(0)$ and $Y_0 = 0$ and show: 

\begin{thm} \label{constfreqthm1}
There is no symmetric two-valued function $u$ with either (a) $u \in W^{1,2}(B_1(0);\mathcal{A}_2(\mathbb{R}^m))$ Dirichlet energy minimizing or (b) $u \in C^{1,\mu}(B_1(0);\mathcal{A}_2(\mathbb{R}^m))$, where $\mu \in (0,1]$, harmonic on $B_1(0) \setminus \mathcal{B}_u$ such that $0 \in \mathcal{B}_u$, $\mathcal{N}_u(0)$ is an integer, and 
\begin{equation*}
	\mathcal{N}_u(Z) \geq \mathcal{N}_u(0) \text{ for all } Z \in \mathcal{B}_u.
\end{equation*}
\end{thm}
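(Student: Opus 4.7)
The strategy is proof by contradiction, combining the cylindrical blow-up construction underlying Theorem \ref{constfreqthm2} with a new monodromy argument specific to the integer-frequency case. Assume for contradiction that such a $u$ exists, and write $k := \mathcal{N}_u(0)$, a positive integer (with $k \geq 2$ in case (b), since symmetric $C^{1,\mu}$ harmonic two-valued functions have frequency $\geq 3/2$ at singular points). First I would rerun the Simon-Wickramasekera-Krummel blow-up scheme of \cite{KrumWic1} to produce a homogeneous degree-$k$ tangent function $\varphi$ of $u$ at $0$ that is cylindrical, in the sense that $\mathcal{B}_{\varphi}$ is an $(n-2)$-dimensional linear subspace $L \subset \mathbb{R}^n$ and $\varphi$ is translation invariant along $L$. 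The same construction also delivers the ``no small gaps'' property, namely that $\mathcal{B}_u \cap B_{\rho}(0)$ is concentrated in an arbitrarily thin tubular neighborhood of $L$ as $\rho \downarrow 0$, together with quantitative $L^2$-closeness of rescalings of $u$ to $\varphi$.

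Next, since $\varphi$ is symmetric, translation invariant along $L$, and homogeneous of integer degree $k$, its values depend only on the transverse coordinates $z = x_{n-1} + i x_n$, and a symmetric, homogeneous, degree-$k$ two-valued harmonic function on $\mathbb{R}^2 \setminus \{0\}$ must split globally because $z \mapsto z^k$ is single-valued on $\mathbb{C}$. Thus, after a suitable rotation,
\begin{equation*}
	\varphi(X) = \{-v(X), v(X)\}, \qquad v(X) = \op{Re}(c (x_{n-1} + i x_n)^k),
\end{equation*}
for some $c \in \mathbb{C}^m \setminus \{0\}$ (with $c \cdot c = 0$ in case (a)), and the monodromy of $\varphi$ around $L$ is trivial. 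This dichotomy with the half-integer case, where the two branches cannot globally agree and the monodromy is $-1$, is the key structural input.

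The heart of the argument is then to propagate the splitting of $\varphi$ back to $u$ itself. Fix $\rho_0 > 0$ small and use the no-small-gaps property to arrange that $\mathcal{B}_u \cap B_{\rho_0}(0)$ lies in a very thin tubular neighborhood $T_{\delta}$ of $L$. On $B_{\rho_0}(0) \setminus T_{\delta}$, quantitative $C^0$ closeness of a rescaling of $u$ to $\varphi$ transfers triviality of monodromy to $u$, so that $u$ admits single-valued branches $\{u_1, -u_1\}$ with $u_1$ smooth there. To extend the splitting across the tube, one checks that around any small sphere in $T_{\delta}$ which links a component of $\mathcal{B}_u$ without meeting $\mathcal{B}_u$, $u$ remains $C^0$-close to $\varphi$ and therefore has trivial monodromy as well. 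Since the monodromy of a two-valued function takes values in the abelian group $S_2 \cong \mathbb{Z}/2\mathbb{Z}$ and, using the tubular concentration together with the $(n-2)$-rectifiability of $\mathcal{B}_u$ from \cite{KrumWic2}, $H_1(B_{\rho_0}(0) \setminus \mathcal{B}_u; \mathbb{Z}/2\mathbb{Z})$ is generated by such linking cycles, the full monodromy of $u$ on $B_{\rho_0}(0) \setminus \mathcal{B}_u$ is trivial and we obtain a global decomposition $u = \{u_1, -u_1\}$ there.

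Finally, since $\mathcal{B}_u$ has Hausdorff dimension at most $n-2$ and $u_1$ is a bounded harmonic (resp.\ Dirichlet-minimizing) single-valued function on the complement, standard removable singularity results extend $u_1$ smoothly to all of $B_{\rho_0}(0)$. But then $u = \{-u_1, u_1\}$ is a pair of $C^1$ single-valued functions on a neighborhood of $0$, contradicting $0 \in \mathcal{B}_u$. I expect the main obstacle to be the third step: making the propagation of the splitting from $\varphi$ to $u$ rigorous requires sharp quantitative control of how $\mathcal{B}_u$ distributes inside the tube $T_{\delta}$, so that the abelian monodromy argument genuinely captures every generator of $H_1$. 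This is the nontrivial modification of the argument of \cite{KrumWic1} advertised in the introduction.
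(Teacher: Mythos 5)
Your high-level intuition is right — for integer frequency $k$ the blow-up $\varphi$ decomposes into a single-valued polynomial pair $\{\pm\op{Re}(c\,z^k)\}$, has trivial monodromy, and the goal is to propagate that splitting back to $u$ to contradict $0\in\mathcal{B}_u$. The paper's endgame is the same. But your first step has a genuine gap that the paper explicitly identifies as the whole difficulty: you cannot ``rerun the Simon--Wickramasekera--Krummel blow-up scheme of \cite{KrumWic1}'' here, because the two hypotheses it requires — that the blow-up be cylindrical with $\mathcal{B}_{\varphi}$ an $(n-2)$-dimensional subspace, and the ``no small gaps'' concentration of $\mathcal{B}_u$ near that subspace — both fail when $\mathcal{N}_u(0)$ is an integer. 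Indeed, since $\varphi = \{\pm\varphi_1\}$ with $\varphi_1$ a single-valued polynomial, $\mathcal{B}_{\varphi}=\emptyset$, not an $(n-2)$-plane. The no-small-gaps property in \cite{KrumWic1} is a soft compactness consequence of $S(\varphi)=\mathcal{B}_{\varphi}$; once $\mathcal{B}_{\varphi}=\emptyset$ there is no mechanism forcing $\mathcal{B}_u$ to concentrate in a thin tube around $S(\varphi)$, so steps 3--4 of your proposal (the tubular-neighborhood monodromy argument) have no foundation. Moreover $S(\varphi)$ need not even be $(n-2)$-dimensional.

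This is precisely why the paper devotes Sections 3--5 to a nontrivial modification of the blow-up method: it replaces the cylindrical/no-gaps hypotheses by Hypothesis \ref{cft1_hyp} (closeness of $u$ to a chosen $\varphi\in\Phi_{\alpha}$, either with matching spine dimension or with $u$ closer to $\varphi$ than to anything with larger spine), introduces optimal sequences with increasing spines, proves a new collection of a priori estimates (Lemmas \ref{graphrep_lemma}, \ref{keyest_lemma} and Corollaries \ref{radialdecay_cor}--\ref{nonconest_cor}) that tolerate gaps, and only then establishes excess decay (Lemmas \ref{excessdecay1_lemma}--\ref{excessdecay3_lemma}) and the graphical structure of $\mathcal{B}_u$. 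Only after all that does the paper conclude — just as you want — that $u$ decomposes into two harmonic single-valued functions near $0$. Your monodromy computation also glosses over the topology: the claim that $H_1(B_{\rho_0}(0)\setminus\mathcal{B}_u;\mathbb{Z}/2\mathbb{Z})$ is generated by linking cycles around the tube requires $\mathcal{B}_u$ to be a nice graph (or to have $\mathcal{H}^{n-2}$-measure zero), which is exactly the output of the quantitative argument, not an input you can assume. So the gap is real: the advertised ``nontrivial modification'' of the blow-up method is the substance of the proof, and a soft $C^0$-closeness-plus-monodromy argument cannot substitute for it.
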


To prove Theorem \ref{constfreqthm1}, we suppose that $u$ did exist.  We extend the blow-up method of Wickramasekera and the author in~\cite{KrumWic1} to establish the asymptotic behavior of $u$ at its singular points and thereby show that $u$ decomposes into two harmonic single-valued functions near the origin, contradicting $0 \in \mathcal{B}_u$.  However, the blow-ups of $u$ at singular points $Y$ with $\mathcal{N}_u(Y) = \mathcal{N}_u(0)$ equal $\{ \pm h(x) \}$ for some homogeneous degree $\mathcal{N}_u(0)$, harmonic, single-valued polynomial $h : \mathbb{R}^n \rightarrow \mathbb{R}^m$.  Consequently it is possible that the blow-ups to $u$ at such singular points are not cylindrical and/or that the no gap condition fails to hold true.  Inspired by ideas of Wickramasekera in~\cite{Wic14} and Hughes in~\cite{Hughes}, we modify the blow-up method as follows.  Let $\alpha = \mathcal{N}_u(0)$ and $\Phi_{\alpha}$ denote the set of $\varphi = \{ \pm \varphi_1 \}$ where $\varphi_1$ is a homogeneous degree $\alpha$, harmonic, single-valued polynomial $\varphi_1$.  Given $\varphi \in \Phi_{\alpha}$, let the spine of $\varphi$ be the maximal subspace along which $\varphi$ is translation invariant.  Assuming the hypothesis that $u$ is closer to a particular $\varphi \in \Phi_{\alpha}$ than it is to any element of $\Phi_{\alpha}$ whose spine contains the spine of $\varphi$, we establish a De Giorgi-type excess decay lemma for the $L^2$ distance of $u$ to elements of $\Phi_{\alpha}$ using the blow-up method.  We weaken the above hypothesis to $u$ is close to $\varphi \in \Phi_{\alpha}$ using a multiple scales argument similarly used by Wickramasekera in~\cite{Wic14}.  Small gaps are permitted since if $\mathcal{N}_u(X) < \mathcal{N}_u(0)$ then there are no singular points of $u$ near $X$ and thus we can apply standard elliptic estimates for single-valued functions near $X$.

Next, we prove that: 

\begin{thm} \label{analyticity_thm}
Suppose that either (a) $u \in W^{1,2}(\Omega;\mathcal{A}_2(\mathbb{R}^m))$ is a symmetric Dirichlet energy minimizing two-valued function or (b) $u \in C^{1,\mu}(\Omega;\mathcal{A}_2(\mathbb{R}^m))$, where $\mu \in (0,1]$, is a two-valued function that is locally harmonic on $\Omega \setminus \mathcal{B}_u$.  Suppose that $Y_0 \in \mathcal{B}_u$ such that $\mathcal{N}_u(Y_0) = 1/2+k$ for some integer $k \geq 0$ ($k \geq 1$ in case (b)) and (\ref{cft2_eqn1}) holds true.  Then for some $\sigma \in (0,\op{dist}(Y_0,\partial \Omega))$, $\mathcal{B}_u$ is a real analytic $(n-2)$-dimensional submanifold in $B_{\sigma}(Y_0)$.
\end{thm}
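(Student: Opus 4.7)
The plan is to combine the $C^{1,\tau}$ structure of $\mathcal{B}_u$ from Theorem \ref{constfreqthm2}(iv) with the precise asymptotic description of $u$ from Theorem \ref{constfreqthm2}(ii) to reduce the analyticity of $\mathcal{B}_u$ to the analyticity of a single-valued function satisfying an elliptic system with analytic nonlinearities.  After a rotation we may assume $Y_0 = 0$ and $\mathcal{B}_u = \{(\zeta(y), y) : y \in B^{n-2}\}$ with $\zeta : B^{n-2} \to \mathbb{C}$ of class $C^{1,\tau}$, where we write $X = (z,y)$, $z = x_1 + i x_2 \in \mathbb{C}$, $y \in \mathbb{R}^{n-2}$.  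By Theorem \ref{constfreqthm2}(ii)(iii), at each $y$ the function $u$ admits the asymptotic expansion $u(\zeta(y) + z', y+y') \approx \{\pm \op{Re}(c(y)(z')^{1/2+k})\}$ with $c(y) \in \mathbb{C}^m$ Hölder in $y$.  This motivates passing to the branched cover $\Phi(w,y) := (w^2 + \zeta(y), y)$, $w \in \mathbb{C}$: since $u$ is symmetric and $\Phi$ is a 2-to-1 map ramified precisely along $\mathcal{B}_u$, the composition $\tilde u(w,y) := u_1(\Phi(w,y))$ is well-defined as a single-valued $\mathbb{R}^m$-valued function on a neighborhood of $\{w = 0\}$ (with the global sign fixed by continuity), and the asymptotic expansion upgrades to $\tilde u(w,y) = \op{Re}(c(y) w^{2k+1}) + \text{higher-order terms}$.

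Next, pull back the Euler--Lagrange equation (harmonicity away from $\mathcal{B}_u$) under $\Phi$.  A direct chain-rule computation shows that the pulled-back metric on the $(w_1,w_2)$-factor has Jacobian $4|w|^2$, so $\tilde u$ satisfies a degenerate elliptic system of the form
\begin{equation*}
	\Delta_w \tilde u + |w|^2 \Delta_y \tilde u + \sum_{j,k} a_{jk}(w,y;\zeta, D\zeta)\, \partial_j \partial_k \tilde u + \sum_j b_j(w,y;\zeta,D\zeta)\, \partial_j \tilde u = 0,
\end{equation*}
with $a_{jk}, b_j$ of class $C^{0,\tau}$ in $(w,y)$ and vanishing at $w = 0$ to the appropriate order.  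Because $\tilde u$ has the prescribed leading polynomial $\op{Re}(c(y) w^{2k+1})$ along $\{w = 0\}$, one can apply weighted Schauder estimates for degenerate operators of this form (the operator is uniformly elliptic after the change of scale $(w,y) \mapsto (w, |w| y)$ away from $w=0$, and the singular piece is explicit) to bootstrap $\tilde u$ from $C^{1,\tau}$ to $C^{\infty}$.  Matching the resulting Taylor expansion of $\tilde u$ at $w = 0$ against the leading expression $\op{Re}(c(y) w^{2k+1})$ and using that $\mathcal{B}_u = \Phi(\{w=0\})$ then promotes $\zeta$ to a $C^{\infty}$ map.

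To upgrade $C^\infty$ regularity to real analyticity, introduce a partial Legendre-type transformation in the spirit of Kinderlehrer--Nirenberg--Spruck.  Because $c(y) \neq 0$, the symmetric-tensor $(2k+1)$-jet of $\tilde u$ at $w=0$ is a non-degenerate homogeneous polynomial of bidegree $(2k+1,0)$ in $w$; its normalized real and imaginary parts furnish a pair of $C^\infty$ functions that vanish transversely at $\{w=0\}$ and can be used as replacements for the coordinates $(w_1, w_2)$ in a neighborhood.  In the new coordinates the ramification locus $\{w=0\}$ becomes the flat subspace $\{\text{new coords} = 0\}$, and the parametrization $\zeta$ gets absorbed into the change of variables; the system for $\tilde u$ becomes a quasilinear elliptic system whose principal part is the standard Laplacian and whose nonlinearities are analytic in their arguments (the dependence of the transformed coefficients on $D\zeta$ disappears because $\zeta$ is now the unknown realized as the Legendre variable).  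Morrey's theorem on analyticity of solutions to elliptic systems with real analytic nonlinearities then yields that $\tilde u$, and hence $\zeta$, is real analytic, which is what was to be shown.

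The main obstacle is a faithful execution of the Legendre transformation when $k \geq 1$: the expansion $\op{Re}(c(y) w^{2k+1})$ vanishes to order $2k+1$, so one cannot use $\tilde u$ itself as a substitute coordinate and must instead work with a normalized $(2k+1)$-st derivative (or, equivalently, recognize $\tilde u / |w|^{2k}$ as the right geometric object to transform).  Verifying that the resulting system satisfies the hypotheses of Morrey's analyticity theorem — in particular ellipticity up to the straightened branch set and analytic dependence of coefficients on all arguments — is the technically delicate step, and parallels the analyticity portion of Kinderlehrer--Nirenberg--Spruck for higher-order contact in the obstacle problem.
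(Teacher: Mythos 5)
Your overall architecture — flatten $\mathcal{B}_u$, extract a degenerate elliptic operator of the form $\Delta_{\tilde x} + |\tilde x|^2 \Delta_{\tilde y}$, bootstrap regularity, then prove real analyticity — is the same as the paper's, and the partial Legendre transformation in the spirit of Kinderlehrer--Nirenberg--Spruck is indeed the mechanism used (see Sections~\ref{sec:harm_plt_sec}--\ref{sec:analyticitysec}).  The degenerate operator $\Delta_{\tilde x} + 4|\tilde x|^2 \Delta_{\tilde y}$ you encounter after the branched cover is exactly the one that appears in Lemma~\ref{schauderlemma}, and your observation that for $k \geq 1$ one must Legendre-transform a normalized $(2k+1)$-st derivative rather than $u$ itself is also correct — the paper uses $\tilde x_1 = D_1^{\mathcal{N}-1/2} u^1$, $\tilde x_2 = D_2 D_1^{\mathcal{N}-3/2} u^1$.

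The genuine gap is the order of operations, and it matters.  You build the double cover $\Phi(w,y) = (w^2 + \zeta(y), y)$ \emph{first}, with $\zeta$ the a priori $C^{1,\tau}$ parametrization of $\mathcal{B}_u$, and then try to bootstrap $\tilde u$ via weighted Schauder estimates.  But the coefficients $a_{jk}(w,y;\zeta,D\zeta)$, $b_j(w,y;\zeta,D\zeta)$ of your pulled-back system inherit only $C^{0,\tau}$ regularity from $D\zeta$, so Schauder estimates cannot take $\tilde u$ past $C^{2,\tau}$, and you cannot improve $\zeta$ without first improving $\tilde u$.  The ``$\zeta$ gets absorbed'' remark is the right idea, but it is the key step, not a footnote: you would need to perform the Legendre transformation \emph{before} using the parametrization $\zeta$ in any coefficient, which is precisely what the paper does.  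The paper defines the transformed variables $\tilde x_1, \tilde x_2, \tilde y$ directly in terms of derivatives of $u$ (without ever invoking $\zeta$), takes the \emph{inverse} maps $\phi^1, \phi^2$ as the unknowns, and recovers $\mathcal{B}_u$ a posteriori as $\{(\phi^1(0,\tilde y),\phi^2(0,\tilde y),\tilde y)\}$; the branch set parametrization never appears in the coefficients, and the coefficients depend analytically on $D\phi$ and the transformed derivatives $D\chi_k, D\psi_k$.  Smoothness is then extracted by a difference-quotient argument in the $\tilde y$-directions driven by the degenerate Schauder estimate.

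Your analyticity step is also incomplete as stated: you appeal to Morrey's theorem for elliptic systems with analytic nonlinearities, but that theorem requires uniform ellipticity, whereas the system here degenerates as $|\tilde x| \downarrow 0$.  No Legendre change of variables removes this degeneracy — it is the signature of the half-integer frequency.  The paper has to prove its own analyticity result for the degenerate operator, via a majorant argument in the style of Friedman: one iterates the degenerate Schauder estimate, tracking precise factorial growth in the number of $\tilde y$-derivatives, and controls the nonlinear terms by comparing against a convergent majorant series.  That is the technical core of Section~\ref{sec:analyticitysec} and cannot be subsumed under a citation to Morrey.
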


To prove Theorem \ref{analyticity_thm}, by taking derivatives of $u$ we can reduce the proof of Theorem \ref{analyticity_thm} to the case of a two-valued function $u$ that is locally harmonic on $\Omega \setminus \mathcal{B}_u$ and is asymptotic to a homogeneous degree $1/2$, harmonic two-valued function at each point on $\mathcal{B}_u$.  The proof of Theorem \ref{analyticity_thm} then follows by using partial Legendre-type transformations based on those of Kinderlehrer, Nirenberg, and Spruck in~\cite{KNS}.  We use the partial Legendre-type transformation $\tilde x_1 = u^1(X)$, $\tilde x_2 = u^2(X)$, and $\tilde x_j = x_j$ if $j = 3,4,\ldots,n$.  One can show that there exists an inverse transformation, which necessarily has the form $x_1 = \phi^1(\tilde X)$, $x_2 = \phi^2(\tilde X)$, and $x_j = \tilde x_j$ if $j = 3,4,\ldots,n$ for some functions $\phi^1$ and $\phi^2$.  Under the partial Legendre-type transformation, $\mathcal{B}_u$ maps into $\{0\} \times \mathbb{R}^{n-2}$ and $\Delta u = 0$ in $B_1(0) \setminus \mathcal{B}_u$ transforms into a singular elliptic differential system of $\phi^1$ and $\phi^2$.  The analyticity of $\mathcal{B}_u$ is then established by an analysis of the regularity of $\phi^1$ and $\phi^2$ and then using the fact that $\mathcal{B}_u$ is the graph of $(\phi^1,\phi^2)$ over $\{0\} \times \mathbb{R}^{n-2}$.  We establish a preliminary regularity result for $\phi^1$ and $\phi^2$ using a difference quotient argument and a Schauder estimate, whose application uses the asymptotic behavior of $u$ at branch points established in Theorem \ref{constfreqthm2}.  The necessary regularity of $\phi^1$ and $\phi^2$ for proving $\mathcal{B}_u$ is locally real analytic follows from iteratively applying the Schauder estimate to obtain precise estimates on derivatives of $\phi^1$ and $\phi^2$ and bounding terms in the Schauder estimates using a method using majorants based on arguments of Friedman in~\cite{Friedman} and the author in~\cite{KrumThesis}. 

Finally, Theorem \ref{maincor} follows from combining Theorems \ref{constfreqthm2}, \ref{constfreqthm1}, and \ref{analyticity_thm} and using some facts about frequency, in particular the upper semicontinuity of frequency and Lemma \ref{cbu_dense_lemma} below.

Theorems \ref{maincor},  \ref{constfreqthm2}, \ref{constfreqthm1}, and \ref{analyticity_thm} can all be extended to the case of a two-valued function $u \in C^{1,\mu}(\Omega;\mathcal{A}_2(\mathbb{R}^m))$, where $\mu \in (0,1)$, such that $\mathcal{M} = \op{graph} u$ is area-stationary.  At each branch point $p$ of $\mathcal{M}$, write $\mathcal{M}$ as the graph of a $C^{1,\mu}$ two-valued function $\tilde u_p$ over the tangent plane to $\mathcal{M}$ at $p$.  Write $\tilde u_p(X) = \{\tilde u_{p,1}(X),\tilde u_{p,2}(X)\}$ at each $X$ and let $\tilde u_{p,a}(X) = (\tilde u_{p,1}(X)+\tilde u_{p,2}(X))/2$ and $\tilde u_{p,s}(X) = \{ \pm (\tilde u_{p,1}(X)-\tilde u_{p,2}(X))/2 \}$ so that $\tilde u_p = \tilde u_{p,a} + \tilde u_{p,s}$.  We define the frequency $\mathcal{N}_{\mathcal{M}}(p)$ of $\mathcal{M}$ at $p$ by $\mathcal{N}_{\mathcal{M}}(p) = \lim_{\rho \downarrow 0} \mathcal{N}_{\tilde u_{p,s},0}(\rho)$, where $N_{\tilde u_{p,s},0}(\rho)$ is given by (\ref{defn_freqfn}) with $\tilde u_{p,s}$ and $0$ in place of $u$ and $Y$ respectively.  The existence of the limit $\mathcal{N}_{\mathcal{M}}(p)$ is justified by~\cite{SimWic11} and~\cite{KrumWic2}.  The analogue to Theorem \ref{maincor} for area-stationary graphs, which follows by combining the analogues of Theorems \ref{constfreqthm2}, \ref{constfreqthm1}, and \ref{analyticity_thm}, is:

\begin{thm} \label{maincor_mss}
Suppose that $u \in C^{1,\mu}(\Omega;\mathcal{A}_2(\mathbb{R}^m))$, where $\mu \in (0,1)$, is a two-valued function whose graph $\mathcal{M}$ is area-stationary.  Then the set of points of $\mathcal{M}$ near which $\mathcal{M}$ is not locally the union of smoothly embedded submanifolds equals 
\begin{equation*}
	\op{sing} \mathcal{M} = \bigcup_{k=1}^{\infty} \Gamma_{\mathcal{M},1/2+k} \cup \mathcal{J}_{\mathcal{M}}
\end{equation*}
where: 
\begin{enumerate}
\item[(i)] For each $k = 1,2,3,\ldots$, $\Gamma_{\mathcal{M},1/2+k}$ is defined to be the set of points $p_0$ of $\mathcal{M}$ for which there exists $\sigma > 0$ such that $\mathcal{N}_{\mathcal{M}}(p) = 1/2+k$ for all branch points $p$ in $B^{n+m}_{\sigma}(p_0)$.  $\Gamma_{\mathcal{M},1/2+k}$ is a locally real analytic $(n-2)$-dimensional submanifold for all $k$. 

\item[(ii)] $\mathcal{J}_{\mathcal{M}}$ is the set of all branch points $p$ of $\mathcal{M}$ for which there exists an integer $k \geq 1$ (depending on $p$) such that $p$ is a limit point of $\Gamma_{\mathcal{M},1/2+k}$ but $\mathcal{N}_{\mathcal{M}}(p) > 1/2+k$.  
\end{enumerate}
\end{thm}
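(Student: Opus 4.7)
The plan is to reduce the area-stationary graph setting to the two-valued function setting locally at each branch point, and then combine the area-stationary analogues of Theorems \ref{constfreqthm2}, \ref{constfreqthm1}, and \ref{analyticity_thm}. Given a branch point $p_0$ of $\mathcal{M}$, I would first use the $C^{1,\mu}$ regularity of $\mathcal{M}$ to write it locally as the graph of a $C^{1,\mu}$ two-valued function $\tilde u_{p_0}$ over the tangent plane $T_{p_0}\mathcal{M}$, and split $\tilde u_{p_0} = \tilde u_{p_0,a} + \tilde u_{p_0,s}$ into its single-valued average and symmetric two-valued parts. The set of points of $\mathcal{M}$ near $p_0$ that are not locally a union of smoothly embedded submanifolds is then (via the graphing) precisely the image of $\mathcal{B}_{\tilde u_{p_0,s}}$, and by construction $\mathcal{N}_{\mathcal{M}}(p) = \mathcal{N}_{\tilde u_{p_0,s}}(X)$ when $p$ corresponds to $X$. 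Thus the local structure of $\op{sing} \mathcal{M}$ near $p_0$ is completely controlled by $\mathcal{B}_{\tilde u_{p_0,s}}$.

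Next, I would invoke the area-stationary analogues of the three main theorems, reflecting that $\tilde u_{p_0,s}$ satisfies a quasilinear perturbation of the Laplace equation rather than $\Delta u = 0$, but for which the basic frequency monotonicity, tangent function existence, and blow-up machinery have already been developed in \cite{SimWic11} and \cite{KrumWic2}. The analogue of Theorem \ref{constfreqthm2} gives, for each branch point $p$ where $\mathcal{N}_{\mathcal{M}}$ attains a local minimum value $1/2+k$ (with $k \geq 1$, since the $C^{1,\mu}$ hypothesis forces frequency strictly greater than one), a neighborhood on which $\mathcal{N}_{\mathcal{M}}$ is constant and $\op{sing} \mathcal{M}$ is a $C^{1,\tau}$ $(n-2)$-submanifold with uniquely determined tangent functions at all branch points. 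The analogue of Theorem \ref{constfreqthm1} rules out integer values of $\mathcal{N}_{\mathcal{M}}$ at any such local minimum on $\op{sing} \mathcal{M}$, so only the half-integer values $1/2+k$ arise. The analogue of Theorem \ref{analyticity_thm} then upgrades this $C^{1,\tau}$ regularity to real analyticity via the Legendre-type transformation applied to $\tilde u_{p_0,s}$ after the usual reduction to the case $k = 0$ by taking a suitable derivative.

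To package the local statements into the global decomposition, I would use upper semicontinuity of $\mathcal{N}_{\mathcal{M}}$ on $\op{sing} \mathcal{M}$ together with the density statement of Lemma \ref{cbu_dense_lemma} (applied to $\tilde u_{p_0,s}$ in each local chart). Upper semicontinuity implies that the set where $\mathcal{N}_{\mathcal{M}}$ is locally constant equal to some $1/2+k$ is relatively open in $\op{sing} \mathcal{M}$, and Lemma \ref{cbu_dense_lemma} provides, for each $p \in \op{sing} \mathcal{M}$, nearby branch points at which $\mathcal{N}_{\mathcal{M}}$ attains a local infimum on $\op{sing} \mathcal{M}$; at such infimum points the analogue of Theorem \ref{constfreqthm1} forces the value to be a half-integer $1/2+k$, and then the analogue of Theorem \ref{constfreqthm2} makes $\mathcal{N}_{\mathcal{M}}$ constantly equal to $1/2+k$ throughout a neighborhood. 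Defining $\Gamma_{\mathcal{M},1/2+k}$ as in the statement and $\mathcal{J}_{\mathcal{M}}$ as the points of $\op{sing} \mathcal{M} \setminus \bigcup_k \Gamma_{\mathcal{M},1/2+k}$ that are limits of some $\Gamma_{\mathcal{M},1/2+k}$ with strictly larger frequency, this yields the decomposition, with real analyticity of each $\Gamma_{\mathcal{M},1/2+k}$ coming from the analytic analogue.

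The hard part will be verifying that the blow-up argument of Theorem \ref{constfreqthm1} genuinely extends to the area-stationary setting. In the harmonic case the blow-ups at integer-frequency singular points are $\{\pm h\}$ for a single-valued harmonic polynomial $h$; in the area-stationary case one must carry out the De Giorgi-type excess decay (with the multiple-scales argument and the weakened closeness hypothesis relative to $\Phi_\alpha$) for the quasilinear system satisfied by $\tilde u_{p_0,s}$, which requires control of the quadratic error terms arising from the minimal surface operator and confirming that the ``no small gaps'' condition is still not needed once $\mathcal{N}_{\mathcal{M}}(X) < \mathcal{N}_{\mathcal{M}}(p_0)$ regions are handled by single-valued elliptic estimates. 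A secondary subtlety is that the Legendre-type transformation in the analyticity analogue must be adapted to $\tilde u_{p_0,s}$, and one must check that after the transformation the resulting singular quasilinear system retains the structure needed for the majorant-based regularity bootstrap used for the harmonic case.
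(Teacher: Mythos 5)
Your proposal is correct and takes essentially the same route as the paper: reduce to local two-valued graphs $\tilde u_{p_0,s}$ over tangent planes, invoke the area-stationary analogues (Theorems \ref{constfreqthm2_mss}, \ref{constfreqthm1_mss}, \ref{analyticity_thm_mss}) as black boxes, and package the local statements using Lemma \ref{usc_freq_mss}(i) together with the area-stationary analogue of Lemma \ref{cbu_dense_lemma}, exactly as the paper does by mirroring the proof of Theorem \ref{maincor}. The ``hard parts'' you flag (the excess decay for the quasilinear system and the adaptation of the Legendre transformation) are the content of those analogue theorems rather than of this one, so your treatment matches the paper's division of labor.
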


In particular, as a direct consequence of~\cite[Theorem 1.5]{Wic08}, we have: 

\begin{cor} \label{maincor_stablevar}
There exist $\varepsilon, \sigma \in (0,1)$ depending only on $n$ such that the following holds true.  Let $V$ be a stationary integral varifold of $B^{n+1}_2(0)$ in the varifold closure of orientable immersed stable minimal hypersurfaces $\mathcal{M}$ with $0 \in \overline{\mathcal{M}}$, $\mathcal{H}^{n-2}(\op{sing} \mathcal{M}) < \infty$, and $\frac{\mathcal{H}^n(\mathcal{M})}{\omega_n 2^n} \leq 2+\varepsilon$.  Let $\op{reg} V$ denote the set of points of $\op{spt} \|V\|$ near which $\op{spt} \|V\|$ is a union of embedded hypersurfaces (possibly intersecting) and $\op{sing} V = \op{spt} \|V\| \setminus \op{reg} V$.  Then
\begin{equation*}
	\op{sing} V \cap B^{n+1}_{\sigma}(0) = \bigcup_{k=0}^{\infty} \Gamma_{V,1/2+k} \cup \mathcal{J}_V \cup \mathcal{S}_V 
\end{equation*}
where:
\begin{enumerate}
\item[(i)] For each $k = 1,2,3,\ldots$, $\Gamma_{V,1/2+k}$ is the set of branch points $p_0$ of $V$ at which the unique tangent cone to $V$ is a multiplicity two hyperplane and there exists $\delta > 0$ such that $\mathcal{N}_{\mathcal{M}}(p) = 1/2+k$ for all branch points $p$ of $V$ in $B^{n+m}_{\delta}(p_0)$.  $\Gamma_{\mathcal{M},1/2+k}$ is a locally real analytic $(n-2)$-dimensional submanifold for all $k$. 

\item[(ii)] $\mathcal{J}_{\mathcal{M}}$ is the set of all branch points $p$ of $\mathcal{M}$ at which the unique tangent cone to $V$ is a multiplicity two hyperplane and for which there exists an integer $k \geq 1$ (depending on $p$) such that $p$ is a limit point of $\Gamma_{\mathcal{M},1/2+k}$ but $\mathcal{N}_{\mathcal{M}}(p) > 1/2+k$.  

\item[(iii)] $\mathcal{S}_V$ is a relatively closed subset of $\op{spt} \|V\| \cap B^{n+1}_{\sigma}(0)$ such that $\mathcal{S}_V = \emptyset$ if $n \leq 6$, $\mathcal{S}_V$ is discrete if $n = 7$, and the Hausdorff dimension of $\mathcal{S}_V$ is at most $n-7$ if $n \geq 8$.
\end{enumerate}
\end{cor}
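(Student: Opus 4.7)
The plan is to deduce this corollary from three ingredients: the sheeting/branch-set theorem \cite[Theorem 1.5]{Wic08}, Theorem \ref{maincor_mss} applied locally at each multiplicity-two branch point, and the classical Schoen--Simon dimension bound for the remaining singular set. The corollary is really a bookkeeping exercise that glues these three results together.

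First, I would fix $\varepsilon = \varepsilon(n)$ and $\sigma = \sigma(n) \in (0,1)$ small enough to invoke \cite[Theorem 1.5]{Wic08}. That theorem provides the following dichotomy at each $p_0 \in \op{spt} \|V\| \cap B^{n+1}_{\sigma}(0)$: either $\op{spt} \|V\|$ is, near $p_0$, a union of embedded stable minimal hypersurfaces, or $p_0$ is a branch point whose unique tangent cone is a multiplicity-two hyperplane $P$. In the latter case the theorem supplies a neighborhood $U$ of $p_0$ and a $C^{1,\mu}$ two-valued function $u$ (with $\mu = \mu(n) \in (0,1)$) defined on a domain in $P$, such that $\op{spt} \|V\| \cap U$ equals the graph of $u$ with the natural multiplicities. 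Because $V$ is stationary, the graph of $u$ is area-stationary in the sense required to apply Theorem \ref{maincor_mss}.

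Next, on each such chart $U$ I would apply Theorem \ref{maincor_mss} to $u$, yielding the decomposition of the branch set of $u$ as $\bigcup_{k \geq 1} \Gamma_{\mathcal{M},1/2+k} \cup \mathcal{J}_{\mathcal{M}}$ with each $\Gamma_{\mathcal{M},1/2+k}$ a locally real analytic $(n-2)$-submanifold along which the frequency is constantly $1/2+k$. Since the branch set of $u$ coincides with the set of multiplicity-two branch points of $V$ inside $U$ by \cite[Theorem 1.5]{Wic08}, a finite covering argument produces the globally-defined sets $\Gamma_{V,1/2+k}$ and $\mathcal{J}_V$. The key point enabling this patching is that $\mathcal{N}_{\mathcal{M}}(p)$ is intrinsic to $V$: it depends only on the blow-up of $V$ at $p$ after subtracting the smooth average sheet, and therefore not on the choice of local tangent plane $P$ or representation $u$ used to compute it via \eqref{defn_freqfn}. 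Finally, I would define $\mathcal{S}_V := \op{sing} V \cap B^{n+1}_{\sigma}(0) \setminus \bigl( \bigcup_k \Gamma_{V,1/2+k} \cup \mathcal{J}_V \bigr)$. Any $p \in \mathcal{S}_V$ fails to have a multiplicity-two hyperplane tangent cone (otherwise the dichotomy would place $p$ in the branch set of some local $u$, hence in $\Gamma_{V,1/2+k} \cup \mathcal{J}_V$), so $\mathcal{S}_V$ lies inside the classical (embedded) singular set of a stable minimal hypersurface, and the dimension bounds in part (iii) are precisely those of Schoen--Simon regularity: empty if $n \leq 6$, discrete if $n = 7$, and Hausdorff dimension at most $n-7$ if $n \geq 8$.

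The hard work is entirely absorbed by \cite[Theorem 1.5]{Wic08} and Theorem \ref{maincor_mss}; the only step not encapsulated by a black-box citation is the verification of the intrinsic nature of $\mathcal{N}_{\mathcal{M}}(p)$ that licenses the patching across overlapping charts. This amounts to checking that if two distinct nearby base points give rise to two $C^{1,\mu}$ two-valued representations of $V$ near $p$, the symmetric parts $u_s$ of the two representations have the same blow-up frequency at $p$. This follows from the uniqueness of the multiplicity-two tangent cone together with the scale-invariance of \eqref{defn_freqfn}, and is where I expect to spend the bulk of the (still rather limited) effort.
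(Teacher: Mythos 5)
Your proposal is correct and takes essentially the same approach as the paper, which treats this corollary as a direct consequence of \cite[Theorem 1.5]{Wic08} (providing the dichotomy between locally-embedded structure and multiplicity-two branch points, together with the $\mathcal{S}_V$ dimension bound) combined with Theorem \ref{maincor_mss} applied in each local two-valued-graph chart. The intrinsicality concern you flag is already handled by the paper's definition of $\mathcal{N}_{\mathcal{M}}(p)$, which is computed via the graph representation over $T_p\mathcal{M}$ (the tangent plane at $p$ itself) rather than over an arbitrary chart plane, so the patching across overlapping charts is automatic.
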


\begin{rmk}
In~\cite{Wic}, which is currently in preparation, the a priori regularity assumption (i.e. $\mathcal{H}^{n-2}(\op{sing} \mathcal{M}) < \infty$) of~\cite{Wic08} is removed.  Specifically, Wickramasekera shows that there exist $\varepsilon, \sigma \in (0,1)$ depending only on $n$ such that if $V$ is a stationary integral $n$-varifold of $B^{n+1}_2(0)$ such that $0 \in \op{spt} \|V\|$, $\frac{\|V\|(B^{n+1}_2(0))}{\omega_n 2^n} \leq 2+\varepsilon$, $\op{reg} V$ is stable in the sense that 
\begin{equation*}
	\int_{\op{reg} V} |\nabla_{\op{reg} V} \zeta|^2 \leq \int_{\op{reg} V} |A_{\op{reg} V}|^2 \zeta^2 
\end{equation*}
for all $\zeta \in C^1_c(\op{reg} V)$ where $\nabla_{\op{reg} V}$ denotes the gradient of $\op{reg} V$ and $A_{\op{reg} V}$ denotes the second fundamental form of $\op{reg} V$, and there is no singular point of $V$ at which the tangent cone to $V$ is the sum of three half-hyperplanes meeting at equal angles along a common boundary, then the conclusion of~\cite[Theorem 1.5]{Wic08} holds true.  Hence for such a $V$ the conclusion of Corollary \ref{maincor_stablevar} also holds true. 
\end{rmk}

As an easy consequence of~\cite{KrumWic2} we obtain the analogue of Theorem \ref{constfreqthm2} of: 

\begin{thm} \label{constfreqthm2_mss}
Suppose that $u \in C^{1,\mu}(\Omega;\mathcal{A}_2(\mathbb{R}^m))$, where $\mu \in (0,1)$, is a two-valued function whose graph $\mathcal{M}$ is area-stationary.  Suppose that $p_0 \in \op{sing} \mathcal{M}$ such that $\mathcal{N}_{\mathcal{M}}(p_0) = 1/2+k$ for some integer $k \geq 1$ and 
\begin{equation} \label{cft2_eqn1_mss}
	\mathcal{N}_{\mathcal{M}}(p) \geq \mathcal{N}_{\mathcal{M}}(p_0) \text{ for all } p \in \op{sing} \mathcal{M}.
\end{equation}
Then for some $\tau = \tau(n,m) \in (0,1)$ and $\rho_0 = \rho_0(n,m,\mathcal{M}) \in (0,\op{dist}(Y,\partial \Omega)/2)$, 
\begin{enumerate}
\item[(i)] $\mathcal{N}_{\mathcal{M}}(p) = 1/2+k$ for all $p \in \op{sing} \mathcal{M} \cap B^{n+m}_{\rho_0}(p_0)$, 
\item[(ii)] for every $p \in \op{sing} \mathcal{M} \cap B_{\rho_0}(p_0)$ there is a unique homogeneous degree $1/2+k$ two-valued function $\varphi_p : T_p \mathcal{M} \rightarrow \mathcal{A}_2(T_p \mathcal{M}^{\perp})$ and there exists an orthogonal map $q_p : T_p \mathcal{M} \rightarrow \mathbb{R}^n$ and $c_p \in T_p \mathcal{M}^{\perp} \otimes \mathbb{C}$ such that 
\begin{equation*}
	\varphi_p(q_p^{-1} X) = \{ \pm \op{Re}(c_p (x_1+ix_2)^{1/2+k}) \}
\end{equation*}
and 
\begin{equation*}
	\rho^{-n} \int_{B_{\rho}(0)} \mathcal{G}\left( \frac{\tilde u_{p,s}(X)}{\|u_s\|_{L^2(B_{2\rho_0}(0))}}, \varphi_Z(X) \right)^2 dX 
	\leq C \rho^{1+2k+2\tau} 
\end{equation*}
for all $\rho \in (0,\rho_0)$ and some constant $C = C(n,m,\mathcal{M}) \in (0,\infty)$, and for every $p,p' \in \op{sing} \mathcal{M} \cap B_{\rho_0}(p_0)$, 
\begin{equation*}
	|q_p - q_{p'}| \leq C |p - p'|^{\tau} \text{ and } |c_p - c_{p'}| \leq C |p - p'|^{\tau} 
\end{equation*}
for some constant $C = C(n,m,\mathcal{M}) \in (0,\infty)$, and 
\item[(iii)] $\op{sing} \mathcal{M}$ is a $C^{1,\tau}$ $(n-2)$-dimensional submanifold in $B_{\rho_0}(0) \times \mathbb{R}^m$.
\end{enumerate}
\end{thm}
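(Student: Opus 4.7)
The plan is to port the proof of Theorem~\ref{constfreqthm2} to the area-stationary setting, using the frequency framework for two-valued area-stationary graphs developed in \cite{SimWic11} and \cite{KrumWic2}. Near each branch point $p$ of $\mathcal{M}$ in a neighborhood of $p_0$, I represent $\mathcal{M}$ as the graph of $\tilde u_p = \tilde u_{p,a}+\tilde u_{p,s}$ over $T_p\mathcal{M}$, where $\tilde u_{p,a}$ is a smooth single-valued function and $\tilde u_{p,s}$ is a $C^{1,\mu}$ symmetric two-valued function vanishing at the origin of $T_p\mathcal{M}$. Substituting this splitting into the minimal surface system yields a quasilinear two-valued elliptic system of the form $\Delta \tilde u_{p,s}=E_p$ on $B_1(0)\setminus \mathcal{B}_{\tilde u_{p,s}}$, where $E_p$ is quadratic in the first derivatives of $\tilde u_p$. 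By \cite{SimWic11} and \cite{KrumWic2}, Almgren's frequency $N_{\tilde u_{p,s},0}(\rho)$ is monotone up to a quantitative $C^{1,\mu}$-controlled error, so $\mathcal{N}_\mathcal{M}(p)$ exists, is upper semicontinuous in $p$, and every tangent map of $\tilde u_{p,s}$ at $0$ is a homogeneous two-valued harmonic function of degree $\mathcal{N}_\mathcal{M}(p)$.

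With this infrastructure, the Simon-type blow-up scheme of \cite[\S4--6]{KrumWic1} applies essentially verbatim, using the modifications from \cite{KrumWic2} that absorb $E_p$ into acceptable error terms at each dyadic scale. The hypothesis $\mathcal{N}_\mathcal{M}(p)\ge 1/2+k = \mathcal{N}_\mathcal{M}(p_0)$ on $\op{sing}\mathcal{M}$ forces at least one tangent map $\varphi_*$ of $\tilde u_{p_0,s}$ at $0$ to be cylindrical, i.e.\ of the form $\{\pm \op{Re}(c(x_1+ix_2)^{1/2+k})\}$ up to a rotation, with spine an $(n-2)$-dimensional subspace along which $\varphi_*$ is translation invariant. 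The ``no small gaps'' condition holds automatically along that spine, since $p_0$ is an accumulation point of $\op{sing}\mathcal{M}$ where the frequency is $\ge 1/2+k$. Iterating the resulting excess decay lemma at dyadic scales yields conclusion (i), the constancy of $\mathcal{N}_\mathcal{M}$ on $\op{sing}\mathcal{M}\cap B^{n+m}_{\rho_0}(p_0)$, the $L^2$-asymptotic estimate in (ii), and the Hölder dependence of $(q_p,c_p)$ on $p$.

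Finally, conclusion (iii) follows because the tangent $(n-2)$-plane to $\op{sing}\mathcal{M}$ at $p$, read off as $q_p^{-1}(\{0\}\times\mathbb{R}^{n-2})$ and pushed forward via the graphical map over $T_p\mathcal{M}$, varies Hölder continuously in $p$ (both $q_p$ and $p\mapsto T_p\mathcal{M}$ are Hölder), so $\op{sing}\mathcal{M}$ is locally a $C^{1,\tau}$ graphical $(n-2)$-submanifold in $\mathbb{R}^{n+m}$. The main technical obstacle is that, unlike the harmonic case, the PDE for $\tilde u_{p,s}$ carries the nonlinear perturbation $E_p$ and the base plane $T_p\mathcal{M}$ itself varies, so one must check that the blow-up limits solve the exact two-valued harmonic system and that the frequency-monotonicity-with-errors delivers the same homogeneity and separation properties as in \cite{KrumWic1}. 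Both of these points were already resolved in \cite{KrumWic2} for the rectifiability argument, where $E_p$ is shown to rescale away at a branch point; the same estimates are precisely what is needed to adapt the Simon-type blow-up machinery and deduce (i)--(iii) here.
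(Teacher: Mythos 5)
Your proposal follows essentially the same route as the paper: translate the Simon-type blow-up machinery of \cite{KrumWic1} to the area-stationary setting using the frequency framework of \cite{SimWic11} and \cite{KrumWic2}, verify that blow-ups at $p_0$ are cylindrical with an $(n-2)$-dimensional spine, establish a no-small-gaps condition, and iterate the excess-decay lemma.

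Two points, however, deserve sharper justification than you give. First, the reason every blow-up $\varphi$ of $\mathcal{M}$ at $p_0$ is cylindrical is not merely the lower bound $\mathcal{N}_\mathcal{M}(p) \ge 1/2+k$; the essential input is that $1/2+k$ is \emph{not an integer}. This is what forces $0 \in \mathcal{B}_\varphi$: a homogeneous two-valued harmonic function of non-integer degree cannot split into two single-valued harmonic polynomials, so its branch set is nonempty and hence $(n-2)$-dimensional. Only then does the upper semicontinuity of frequency (Lemma~\ref{usc_freq_mss}) combined with the hypothesis (\ref{cft2_eqn1_mss}) give $\mathcal{B}_\varphi = S(\varphi)$ with $\dim S(\varphi) = n-2$. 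If this degree were an integer the blow-up could be $\{\pm h\}$ with $h$ a harmonic polynomial and $\mathcal{B}_\varphi = \emptyset$, and the cylindrical structure would fail (this is exactly the situation Theorem~\ref{constfreqthm1_mss} must exclude by a much harder argument). Second, the no-small-gaps condition is \emph{not} automatic from $p_0$ being an accumulation point of $\op{sing}\mathcal{M}$: what the iteration needs is that, after rescaling, there are singular points near \emph{every} slice $\{0\}\times\{y_0\}$ of the spine, not just near the origin. The paper obtains this via a compactness/contradiction argument using Schauder estimates: if a rescaled $v$ close to $\varphi^{(0)}$ had no singular point near $(0,y_0)$, it would decompose into two single-valued solutions there, and Schauder estimates would force $\varphi^{(0)}$ to decompose near $(0,y_0)$ too, contradicting $(0,y_0) \in \mathcal{B}_{\varphi^{(0)}} = S(\varphi^{(0)})$. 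Without this uniform statement the excess-decay lemma cannot be applied at all scales.
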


By modifying the proof of Theorem \ref{constfreqthm1} using ideas from~\cite{KrumWic2}, we obtain: 

\begin{thm} \label{constfreqthm1_mss}
There is no two-valued function $u \in C^{1,\mu}(\Omega;\mathcal{A}_2(\mathbb{R}^m))$, where $\mu \in (0,1)$, whose graph $\mathcal{M}$ is area-stationary such that $0 \in \mathcal{B}_u$, $u(0) = 0$, $Du(0) = 0$, $\mathcal{N}_{\mathcal{M}}(0)$ is an integer and 
\begin{equation*}
	\mathcal{N}_u(p) \geq \mathcal{N}_u(0) \text{ for all } p \in \op{sing} \mathcal{M}.
\end{equation*}
\end{thm}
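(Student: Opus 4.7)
The plan is to proceed by contradiction, mirroring the strategy used for Theorem~\ref{constfreqthm1} in the Dirichlet energy minimizing and $C^{1,\mu}$ harmonic cases, and to absorb the extra nonlinearities arising from area-stationarity using the techniques developed in~\cite{KrumWic2}. So suppose such a $u$ exists with $\alpha = \mathcal{N}_{\mathcal{M}}(0) \in \mathbb{Z}_{\geq 1}$. At each branch point $p$ we write $\mathcal{M}$ locally as the graph of $\tilde{u}_p = \tilde u_{p,a} + \tilde u_{p,s}$ over $T_p \mathcal{M}$ and note that the symmetric part $\tilde u_{p,s}$ satisfies an elliptic system of the schematic form $\Delta \tilde u_{p,s} = E_p(X, \tilde u_p, D\tilde u_p)$, where $E_p$ is a nonlinear error coming from the minimal surface operator and obeys the quadratic-type bounds established in~\cite{KrumWic2}. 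The frequency hypothesis $\mathcal{N}_{\mathcal{M}}(p) \geq \alpha$ for all $p \in \op{sing}\mathcal{M}$ translates (via the results of~\cite{SimWic11} and~\cite{KrumWic2}) into the corresponding lower frequency bound for $\tilde u_{0,s}$ restricted to its branch set, so we are in the setting in which the excerpt's ``modified'' blow-up apparatus should apply.

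Next I would set up the excess decay machinery from the proof of Theorem~\ref{constfreqthm1}, replacing the class used there with $\Phi_\alpha = \{\pm \varphi_1 : \varphi_1 \text{ homogeneous degree } \alpha, \text{ harmonic polynomial}\}$ and measuring excess by the $L^2$ distance of $\tilde u_{0,s}$ (suitably normalized) to $\Phi_\alpha$. Under the primary hypothesis that $\tilde u_{0,s}$ is closer to some $\varphi \in \Phi_\alpha$ than to any element whose spine strictly contains the spine of $\varphi$, I would carry out the Simon–Wickramasekera style blow-up of a sequence of $\tilde u_{0,s}$'s by their excess. The nonlinear error $E_p$ contributes terms that, in the blow-up, vanish uniformly because they are quadratic in $\tilde u_p$ and its gradient while the normalization is linear; this is exactly the mechanism used in~\cite{KrumWic2} to pass area-stationary information to the harmonic blow-up. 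The resulting blow-up is harmonic and inherits from the hypothesis all the concentration properties used in the Dirichlet case, so the De Giorgi-type decay lemma goes through verbatim at the linear level.

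Then I would weaken the primary hypothesis to ``$\tilde u_{0,s}$ is merely close to some $\varphi \in \Phi_\alpha$'' by running the multiple scales argument of~\cite{Wic14} (as in the proof of Theorem~\ref{constfreqthm1}): at each dyadic scale either one is already in the primary hypothesis and gets decay, or one sits strictly closer to an element of $\Phi_\alpha$ with a larger spine, at which point one iterates. The small-gap issue is handled as in the Dirichlet case, because on balls missing $\op{sing}\mathcal{M}$ one has standard (single-valued) elliptic estimates applied to each sheet of $\mathcal{M}$, now of the minimal surface system rather than the Laplacian, which~\cite{KrumWic2} shows yields the same regularity on the scales that matter. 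Iterating the decay then yields an asymptotic expansion of $\tilde u_{0,s}$ at the origin of the form $\tilde u_{0,s}(X) = \{\pm h(X)\} + o(|X|^\alpha)$ for a homogeneous degree $\alpha$ harmonic polynomial $h$, with matching decay on derivatives. Combined with the $C^{1,\mu}$ regularity of $\tilde u_{0,a}$, this forces $\mathcal{M}$ to split near $0$ into two $C^{1,\mu}$ embedded sheets (by selecting $+h$ and $-h$ as continuous branches on $B_\sigma(0) \setminus \{h = 0\}$, which is connected because $\alpha \geq 1$ is an integer so $\{h = 0\}$ has codimension at least two away from the spine), contradicting $0 \in \mathcal{B}_u$.

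The main obstacle I anticipate is the quantitative handling of the nonlinear error $E_p$ through the multiple scales argument: one needs estimates that are uniform in the base point $p$ and the scale $\rho$, taking into account that the reference tangent plane $T_p \mathcal{M}$ itself varies with $p$ with only $C^{1,\tau}$-type control on the map $p \mapsto T_p \mathcal{M}$ (coming from the area-stationary analogue of the estimates in~\cite{KrumWic2}). Tracking how the nonlinear corrections transform under these changes of frame while maintaining the quadratic smallness required to kill them in the blow-up limit — and verifying that the coupling $\tilde u_{p,a} \leftrightarrow \tilde u_{p,s}$ does not destroy the homogeneity structure at the linear level — is where most of the technical work in adapting~\cite{KrumWic2} will be concentrated.
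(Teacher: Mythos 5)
Your overall strategy is the same as the paper's: extend the blow-up machinery of Theorem~\ref{constfreqthm1} to the area-stationary case by importing the estimates of~\cite{KrumWic2}, use the multiple-scales argument to drop the spine-dimension hypothesis, and conclude that $\op{sing}\mathcal{M}$ is a $C^{1,\tau}$ submanifold near $0$ along which $\mathcal{M}$ is asymptotic to $\pm$ a harmonic polynomial, so $\mathcal{M}$ splits into two sheets and contradicts $0\in\mathcal{B}_u$. You also correctly flag, in your final paragraph, precisely where the extra work lies. But your proposed resolution of that difficulty is insufficient as stated, and this is where the paper does something you have not anticipated.

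You claim the nonlinear error ``contributes terms that, in the blow-up, vanish uniformly because they are quadratic in $\tilde u_p$ and its gradient while the normalization is linear.'' This is not adequate with a purely $L^2$ excess. The symmetric part solves $\Delta\tilde u^{\kappa}_{p,s} = f_\kappa$ with $f_\kappa = D_i\bigl((A^{ij}_{\kappa\lambda}(D\tilde u_{p,a},D\tilde u_{p,s})-\delta_{ij}\delta_{\kappa\lambda})D_j\tilde u^{\lambda}_{p,s}\bigr)$, and the key estimate from~\cite{KrumWic2} used in the paper bounds $\int(1+R^{4-n+2\alpha})|f|^2$ by $C\|Du\|_{C^{0,1/4}}^2\int|u_s|^2$; that is, $f$ is \emph{bilinear} in $Du$ and $u_s$, not quadratic in the excess alone. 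If you normalize only by the $L^2$ distance to $\Phi_\alpha$, the blow-ups $w_j = v_j/E_j$ solve $\Delta w_j = f_j/(\Lambda_j E_j)$, and there is no reason for the right-hand side to vanish in the limit. The paper's fix is to \emph{augment the excess itself}: Hypothesis~\ref{cft1_hyp_mss}(ii) adds a $\|Du\|_{C^{0,1/4}(B_1(0))}$ term to the quantity being iterated (with carefully mismatched powers on the two sides), and the blow-up normalization is
\begin{equation*}
	E_j = \left(\int_{B_1(0)}\mathcal{G}(u_{s,j}/\Lambda_j,\varphi_j)^2 + \|Du_j\|_{C^{0,1/4}(B_1(0))}^2\right)^{1/2},
\end{equation*}
so that the nonlinear source is dominated by the excess and the blow-up limit is genuinely harmonic. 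Correspondingly, the decay produced by Lemma~\ref{excessdecay1_lemma_mss} is $\vartheta^{1/4}$, not $\vartheta^2$, and the excess that decays is this augmented quantity (the $|Du|$ part decaying comes from $u(0)=0$, $Du(0)=0$). Without this augmented excess the De~Giorgi-type decay and the multiple-scales iteration do not close, so as written your proposal has a genuine gap exactly at the obstacle you identify.
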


By extending the arguments leading to Theorem \ref{analyticity_thm}, we will show that: 

\begin{thm} \label{analyticity_thm_mss}
Suppose that $u \in C^{1,\mu}(\Omega;\mathcal{A}_2(\mathbb{R}^m))$, where $\mu \in (0,1)$, is a two-valued function whose graph $\mathcal{M}$ is area-stationary.  Suppose that $p_0 \in \op{sing} \mathcal{M}$ such that $\mathcal{N}_{\mathcal{M}}(p_0) = 1/2+k$ for some integer $k \geq 1$ and (\ref{cft2_eqn1_mss}) holds true.  Then for some $\sigma \in (0,\op{dist}(p_0,\partial \Omega \times \mathbb{R}^m)/2)$, $\op{sing} \mathcal{M}$ is a real analytic $(n-2)$-dimensional submanifold in $B^{n+m}_{\sigma}(p_0)$.
\end{thm}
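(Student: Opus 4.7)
The plan is to mirror the strategy used to prove Theorem \ref{analyticity_thm}, modified to accommodate the nonlinearity of the area-stationarity condition relative to the Laplace equation. First, I would fix $p_0 \in \op{sing} \mathcal{M}$ and realize a neighborhood of $p_0$ in $\mathcal{M}$ as the graph of $\tilde u_{p_0}$ over $T_{p_0}\mathcal{M}$, and split $\tilde u_{p_0} = \tilde u_{p_0,a} + \tilde u_{p_0,s}$ into average and symmetric parts. Standard elliptic regularity for the minimal surface system gives that $\tilde u_{p_0,a}$ is real analytic away from branch points and in fact up to them, so the only obstacle to analyticity of $\op{sing} \mathcal{M}$ comes from $\tilde u_{p_0,s}$. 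A short computation using the minimal surface system shows that $\tilde u_{p_0,s}$ satisfies a quasilinear elliptic system of the schematic form $\Delta \tilde u_{p_0,s} = F(X, \tilde u_{p_0,a}, D\tilde u_{p_0,a}, \tilde u_{p_0,s}, D\tilde u_{p_0,s})$, where $F$ is real analytic in all its arguments and is at least quadratic in $(\tilde u_{p_0,s}, D\tilde u_{p_0,s})$ when these arguments are small.

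Next I would invoke Theorem \ref{constfreqthm2_mss} to obtain: constancy of $\mathcal{N}_{\mathcal{M}}$ at $1/2+k$ on $\op{sing} \mathcal{M} \cap B^{n+m}_{\rho_0}(p_0)$; $C^{1,\tau}$ regularity of $\op{sing} \mathcal{M}$; and a unique asymptotic blowup $\{\pm \op{Re}(c_p (x_1+ix_2)^{1/2+k})\}$ for $\tilde u_{p_0,s}$ at every nearby branch point, with H\"older continuous dependence of $(q_p, c_p)$ on $p$. After rotating coordinates so that the tangent space to $\op{sing} \mathcal{M}$ at $p_0$ is $\{0\} \times \mathbb{R}^{n-2}$ and $c_{p_0}$ is aligned appropriately in $(x_1, x_2)$, I would apply the partial Legendre-type transform $\tilde x_j = (\tilde u_{p_0,s})^j(X)$ for $j = 1, 2$ and $\tilde x_j = x_j$ for $j \geq 3$. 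The asymptotic description together with the $C^{1,\tau}$ control supplied by Theorem \ref{constfreqthm2_mss} gives the non-degeneracy needed to invert this transform in a punctured neighborhood of $p_0$ and to produce single-valued inverse functions $\phi^1, \phi^2$ sending $\op{sing} \mathcal{M}$ to $\{\tilde x_1 = \tilde x_2 = 0\}$.

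In the transformed coordinates, the area-stationarity equation becomes a singular analytic second-order elliptic system for $(\phi^1, \phi^2)$, whose principal part is identical to that appearing in the harmonic case of Theorem \ref{analyticity_thm}, while the remaining terms are analytic compositions of $F$ with $\phi^1, \phi^2$ and their first derivatives. A preliminary $C^{1,\alpha}$ regularity of $\phi^1, \phi^2$ up to $\{\tilde x_1 = \tilde x_2 = 0\}$ then follows by the same difference-quotient plus Schauder estimate argument used for Theorem \ref{analyticity_thm}, now treating the extra analytic remainder as a perturbation that vanishes to sufficient order near $\{\tilde x_1 = \tilde x_2 = 0\}$ because $F$ is at least quadratic in $(\tilde u_{p_0,s}, D\tilde u_{p_0,s})$.

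Finally I would upgrade this to real analyticity by iteratively applying the Schauder estimate in tandem with the majorant scheme of Friedman and of the author used in the proof of Theorem \ref{analyticity_thm}. The main obstacle is the construction of a majorant that simultaneously dominates the derivatives of $(\phi^1, \phi^2)$ and the Taylor coefficients of the analytic composition $F(\cdot,\phi^1,\phi^2,\ldots)$: in the harmonic case the corresponding system is linear and the majorant only has to close for $(\phi^1, \phi^2)$, whereas here one must organize the iteration so that derivative bounds for $(\phi^1, \phi^2)$ feed back through the composition with $F$ without blowing up the majorant's radius of convergence. This is feasible because the composed series has coefficients controlled by products of known analytic majorants and because the relevant contributions vanish to high order at the singular locus, so they can be absorbed into a slightly shrunken geometric majorant. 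The analyticity of $\op{sing} \mathcal{M}$ as a $(n-2)$-dimensional submanifold then follows since in a neighborhood of $p_0$ it is the graph of $(\phi^1, \phi^2)\bigr|_{\{\tilde x_1 = \tilde x_2 = 0\}}$.
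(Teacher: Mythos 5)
Your proposal misidentifies the partial Legendre-type transform that actually works here, and this is a genuine gap. In Theorem \ref{analyticity_thm_mss} we have $\mathcal{N} = 1/2 + k$ with $k \geq 1$, so $\mathcal{N} \geq 3/2$, and the blow-up of $\tilde u_{p_0,s}$ at a branch point behaves like $\{\pm\op{Re}(c(x_1+ix_2)^{\mathcal{N}})\}$. Consequently the map $X \mapsto ((\tilde u_{p_0,s})^1(X),(\tilde u_{p_0,s})^2(X),x_3,\ldots,x_n)$ has Jacobian vanishing to order $\mathcal{N}-1$ at the branch locus and is not bi-Lipschitz near it; the inverse functions $\phi^1,\phi^2$ you want to produce would fail to exist with useful regularity. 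The paper instead applies the derivative transform $\tilde x_1 = D_1^{\mathcal{N}-1/2} v^1(X)$, $\tilde x_2 = D_2 D_1^{\mathcal{N}-3/2} v^1(X)$ (with $v = u_s/\|u_s\|_{L^2(B_1(0))}$), precisely because these derivatives behave like $(x_1+ix_2)^{1/2}$ near a branch point and give a non-degenerate transform — see equations (\ref{mss_u_asym1})--(\ref{mss_u_asym2}) and the discussion surrounding (\ref{harm_transvar}).

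A second, related gap is your treatment of $\tilde u_{p_0,a}$ as ``real analytic by standard elliptic regularity.'' Equation (\ref{mss4}) shows that $\tilde u_{p_0,a}$ satisfies a divergence-form system whose coefficients depend on $D\tilde u_{p_0,s}$, which is only $C^{0,1/2}$ across $\mathcal{B}_u$; a priori $\tilde u_{p_0,a}$ is therefore only finitely differentiable there, and its regularity cannot be decoupled from that of $\tilde u_{p_0,s}$. The paper handles this by establishing the asymptotics of $\tilde u_{p_0,a}$ directly ((\ref{mss_asymptotics4}), (\ref{mss_asymptotics5}), Lemma \ref{avgexcessdecay_lemma}), introducing the full family $\chi_k = D_1^k u_a$, $\chi_{2k+1} = D_1^k D_2 u_a$, $\psi_k = D_1^k v$, $\psi_{2k+1} = D_1^k D_2 v$, and feeding all of them jointly into the transformed singular elliptic system (\ref{transpde1}), (\ref{transpde2}), (\ref{transpde_diff1}), (\ref{transpde_diff2}). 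The majorant bootstrap of Section 10 is run simultaneously on $\phi^\kappa$, $\chi_k^\kappa$, and $\psi_k^\kappa$; it cannot be run on $\phi^1,\phi^2$ alone with the rest treated as a fixed analytic perturbation, because the unknowns and the ``perturbation'' are mutually coupled and share the same regularity bootstrap. Your description of the majorant difficulty is directionally correct, but without the right transform and without the joint system for $u_a$ and its derivatives, the scheme cannot close.
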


Note that since the derivatives of $u$ do not solve the minimal surface system, to prove Theorem \ref{analyticity_thm_mss} we consider a differential system satisfied by $u$ and itself derivatives.  In fact, we prove Theorems \ref{analyticity_thm} and \ref{analyticity_thm_mss} jointly by considering an abstract differential system and we gain some additional information about the structure of $u$ near singular points from this approach.

\section{Frequency functions and blow-ups} \label{sec:prelims_sec}

\noindent \textbf{Dirichlet energy minimizing two-valued functions.}  Let $\Omega$ be an open subset of $\mathbb{R}^n$.  We consider Dirichlet energy minimizing two-valued function $u \in W^{1,2}(\Omega;\mathcal{A}_2(\mathbb{R}^m))$.  We assume that $u$ is symmetric in that at each $X \in \Omega$, $u(X) = \{ -u_1(X),+u_1(X) \}$ for some $u_1(X) \in \mathbb{R}^m$ and that $u$ is nonzero. 

Recall that given a nonzero, symmetric, Dirichlet energy minimizing two-valued function $u \in W^{1,2}(\Omega;\mathcal{A}_2(\mathbb{R}^m))$ and $Y \in \Omega$, we define the \textit{frequency function} $N_{u,Y} : (0,\op{dist}(Y,\partial \Omega)) \rightarrow (0,\infty)$ by (\ref{defn_freqfn}).  $N_{u,Y}$ is well-defined and monotone nondecreasing~\cite[Theorem 2.6(10)]{Alm83}.  Moreover, $N_{u,Y} \equiv \alpha$ constant if and only if $u(Y+X)$ is a homogeneous degree $\alpha$ function of $X$.  By the monotonicity of $N_{u,Y}$, we can define the \textit{frequency} $\mathcal{N}_u(Y)$ of $u$ at $Y \in \Omega$ by $\mathcal{N}_u(Y) = \lim_{\rho \downarrow 0} N_{u,Y}(\rho)$.  Also by the monotonicity of frequency functions, frequency has the following upper-semicontinuity property:

\begin{lemma}[Upper semicontinuity of frequency] \label{usc_freq_dm}
Suppose that $u_j,u \in W^{1,2}(\Omega;\mathcal{A}_2(\mathbb{R}^m))$ are nonzero, symmetric, Dirichlet energy minimizing two-valued functions such that $u_j \rightarrow u$ in $L^2$ on compact subsets of $\Omega$ and suppose that $Y_j,Y \in \Omega$ such that $Y_j \rightarrow Y$.  Then 
\begin{equation*}
	\mathcal{N}_u(Y) \geq \limsup_{j \rightarrow \infty} \mathcal{N}_{u_j}(Y_j). 
\end{equation*}
\end{lemma}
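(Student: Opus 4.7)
The plan is to leverage the monotonicity of the frequency function. Since each $N_{u_j,Y_j}(\rho)$ is nondecreasing in $\rho$, we have $\mathcal{N}_{u_j}(Y_j) \le N_{u_j,Y_j}(\rho)$ for every admissible $\rho > 0$, and likewise $\mathcal{N}_u(Y) = \lim_{\rho \downarrow 0} N_{u,Y}(\rho)$. Hence it suffices to establish that for a set of radii $\rho \in (0,\op{dist}(Y,\partial\Omega))$ accumulating at $0$,
\begin{equation*}
	\lim_{j\to\infty} N_{u_j,Y_j}(\rho) = N_{u,Y}(\rho).
\end{equation*}
Taking $\limsup_j$ at such $\rho$, then sending $\rho \downarrow 0$, will give $\limsup_j \mathcal{N}_{u_j}(Y_j) \le N_{u,Y}(\rho) \downarrow \mathcal{N}_u(Y)$, as desired.

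For the numerator of $N_{u_j,Y_j}(\rho)$, the first step is to upgrade the hypothesis $u_j \to u$ in $L^2_{\mathrm{loc}}$ to convergence in $W^{1,2}_{\mathrm{loc}}$. This is the standard Dirichlet-minimizer compactness for two-valued maps: combining uniform interior $W^{1,2}$ estimates with a comparison argument in which a cutoff interpolation between $u_j$ and $u$ is used as a competitor in the minimality inequality for $u_j$ (and symmetrically for $u$) forces $\int_K |Du_j|^2 \to \int_K |Du|^2$ on every compact $K \subset\subset \Omega$. Consequently $\int_{B_\rho(Y_j)} |Du_j|^2 \to \int_{B_\rho(Y)} |Du|^2$ for all but the at most countably many radii at which $|Du|^2$ concentrates on $\partial B_\rho(Y)$.

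For the denominator, I would invoke coarea: $\int_{B_\rho(Y_j)} |u_j|^2 = \int_0^\rho \int_{\partial B_t(Y_j)} |u_j|^2 \, d\mathcal{H}^{n-1}\,dt$, and similarly for $u$. The $L^2_{\mathrm{loc}}$ convergence of $u_j$ gives convergence of the volume integrals for every $\rho$, and combined with the uniform $W^{1,2}_{\mathrm{loc}}$ bound on $u_j$, a Fubini/slicing argument (or equivalently, the trace theorem on spheres of almost every radius) yields $\int_{\partial B_\rho(Y_j)} |u_j|^2 \to \int_{\partial B_\rho(Y)} |u|^2$ for a.e. $\rho$. To rule out a vanishing denominator, note that since $u$ is nonzero and symmetric with $\mathcal{N}_u(Y) < \infty$, the standard log-derivative identity for $\rho^{1-n}\int_{\partial B_\rho(Y)} |u|^2$ (whose derivative is controlled in terms of the frequency) shows this quantity stays positive for all sufficiently small $\rho$. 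The main obstacle is the trace-type convergence of the denominator, which is sensitive to both the choice of radius and the translation $Y_j \to Y$; the combination of slicing and the $W^{1,2}_{\mathrm{loc}}$ compactness of minimizers is the key point that resolves this, and the remaining assembly of the argument is routine.
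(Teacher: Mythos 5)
Your argument is correct, and it is exactly the standard proof that the paper is implicitly invoking when it says the lemma follows "by the monotonicity of frequency functions": fix a good radius $\rho$, use $\mathcal{N}_{u_j}(Y_j) \le N_{u_j,Y_j}(\rho)$, pass to the limit in $j$ via the $W^{1,2}_{\mathrm{loc}}$ compactness of minimizers to get $N_{u,Y}(\rho)$, then send $\rho \downarrow 0$. One small overstatement: since $|Du|^2\,dX$ is absolutely continuous, the numerator convergence $\int_{B_\rho(Y_j)}|Du_j|^2 \to \int_{B_\rho(Y)}|Du|^2$ in fact holds for every $\rho$, not merely outside a countable exceptional set; only the trace convergence in the denominator genuinely requires restriction to a.e. $\rho$, which is harmless since the end of the argument only needs a sequence of radii accumulating at $0$.
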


For each Dirichlet energy minimizing two-valued function $u \in W^{1,2}(\Omega;\mathcal{A}_2(\mathbb{R}^m))$, let $\Sigma_u$ be the set of $Y \in \Omega$ such that $u(Y) = \{0,0\}$ but $u \not\equiv \{0,0\}$ near $Y$.  Since $u$ is continuous by~\cite[Theorem 2.13]{Alm83}, $\mathcal{B}_u \subseteq \Sigma_u$.  Clearly $\mathcal{N}_u(Y)$ is a positive integer for all $Y \in \Sigma_u \setminus \mathcal{B}_u$, whereas $\mathcal{N}_u(Y)$ can take both integer and non-integer values for $Y \in \mathcal{B}_u$.  By~\cite[Theorem 2.13]{Alm83}, there exists $\mu = \mu(n,m) \in (0,1)$ such that $u \in C^{0,\mu}(\Omega;\mathcal{A}_2(\mathbb{R}^m))$ and consequently (see~\cite[Lemma 3.1]{KrumWic1}) $\mathcal{N}_u(Y) \geq \mu$ for all $Y \in \Sigma_u$.

Given a nonzero, symmetric, Dirichlet energy minimizing two-valued function $u \in W^{1,2}(\Omega;\mathcal{A}_2(\mathbb{R}^m))$ and $Y \in \Sigma_u$, we define a \textit{blow-up} $\varphi \in W^{1,2}(\mathbb{R}^n;\mathcal{A}_2(\mathbb{R}^m))$ of $u$ at $Y$ to be any limit of the form 
\begin{equation} \label{defn_blowup}
	\varphi = \lim_{j \rightarrow \infty} u_{Y,\rho_j} \quad \text{letting} \quad  
	u_{Y,\rho}(X) = \frac{u(Y + \rho X)}{\rho^{-n/2} \|u\|_{L^2(B_{\rho}(Y))}} \text{ for } \rho > 0, 
\end{equation}
where $\rho_j \rightarrow 0^+$ and the limit is taken in $W^{1,2}$ on compact subsets of $\mathbb{R}^n$.  By the compactness properties of Dirichlet energy minimizing multivalued functions and monotonicity of frequency functions (see~\cite[Lemma 3.1(b)]{KrumWic1}), there exists at least one blow-up $\varphi$ of $u$ at each $Y$; however, the blow-up $\varphi$ might not be unique independent of the particular sequence $\rho_j$.  Every blow-up $\varphi$ of $u$ at $Y$ is a nonzero, homogeneous degree $\mathcal{N}_u(Y)$, locally Dirichlet energy minimizing two-valued function. 

For every nonzero, homogeneous, two-valued function $\varphi \in W^{1,2}(\mathbb{R}^n;\mathcal{A}_2(\mathbb{R}^m))$ that is Dirichlet energy minimizing, we define the \textit{spine} $S(\varphi)$ of $\varphi$ by 
\begin{equation} \label{defn_spine}
	S(\varphi) = \{ Z \in \mathbb{R}^n : \mathcal{N}_{\varphi}(Z) = \mathcal{N}_{\varphi}(0) \}. 
\end{equation}
Note that $S(\varphi)$ is always a linear subspace and $\varphi(Z+X) = \varphi(X)$ for all $X \in \mathbb{R}^n$ and $Z \in S(\varphi)$. 

Let $u \in W^{1,2}(\Omega;\mathcal{A}_2(\mathbb{R}^m))$ be a nonzero, symmetric, Dirichlet energy minimizing two-valued function.  For $j = 1,2,\ldots,n-2$, let $\Sigma^{(j)}_u$ denote the set of all $Y \in \Sigma_u$ such that $\dim S(\varphi) \leq j$ for every blow-up $\varphi$ of $u$ at $Y$ and let $\mathcal{B}^{(j)}_u = \mathcal{B}_u \cap \Sigma^{(j)}_u$.  Then 
\begin{equation*}
	\Sigma_u = \Sigma^{(n-2)}_u \supseteq \Sigma^{(n-3)}_u \supseteq \cdots \supseteq \Sigma^{(1)}_u \supseteq \Sigma^{(0)}_u 
\end{equation*}
and the Hausdorff dimension of $\Sigma^{(j)}_u$ is at most $j$ (see~\cite[Lemma 3.2]{KrumWic1}).  $\Sigma_u \setminus \Sigma^{(n-3)}_u$ consists of the set of all $Y \in \Sigma_u$ such that at least one blow-up $\varphi$ of $u$ at $Y$ has $\dim S(\varphi) = n-2$.  After rotating so that $S(\varphi) = \{0\} \times \mathbb{R}^{n-2}$, $\varphi$ has the form $\varphi(X) = \{ \pm \op{Re}(c(x_1+ix_2)^{k/2}) \}$ for some $c = (c^1,c^2,\ldots,c^m) \in \mathbb{C}^m$ with $\sum_{\kappa=1}^m (c^{\kappa})^2 = 1$ by~\cite{MicWhi94} and integer $k \geq 1$.  Consequently $\mathcal{N}_u(Y) = k/2$. 

\begin{lemma} \label{cbu_dense_lemma}
Let $u \in W^{1,2}(\Omega;\mathcal{A}_2(\mathbb{R}^m))$ be a nonzero, symmetric, Dirichlet energy minimizing two-valued function.  Then $\mathcal{B}_u \setminus \mathcal{B}_u^{(n-3)}$ is dense in $\mathcal{B}_u$. 
\end{lemma}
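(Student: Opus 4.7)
The plan is to argue by contradiction using Federer-type dimension reduction. Suppose $\mathcal{B}_u \setminus \mathcal{B}_u^{(n-3)}$ is not dense in $\mathcal{B}_u$; then there exist $Y_0 \in \mathcal{B}_u$ and $\rho > 0$ such that $\mathcal{B}_u \cap B_{\rho}(Y_0) \subseteq \mathcal{B}_u^{(n-3)}$, meaning every blow-up of $u$ at any point of this neighborhood has spine of dimension at most $n-3$. I will derive a contradiction by producing some $Y \in \mathcal{B}_u \cap B_\rho(Y_0)$ admitting a blow-up with spine of dimension $n-2$.

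First I would select $Y^* \in \mathcal{B}_u \cap B_\rho(Y_0)$ and a blow-up $\varphi$ of $u$ at $Y^*$ attaining the integer supremum $d := \sup\{\dim S(\varphi') : Y \in \mathcal{B}_u \cap B_{\rho}(Y_0),\, \varphi' \text{ a blow-up of } u \text{ at } Y\} \leq n-3$. Attainment is ensured by compactness of Dirichlet minimizers under $L^2$-convergence and the upper semicontinuity of frequency (Lemma~\ref{usc_freq_dm}) applied to a maximizing sequence $(Y_n,\varphi_n)$; a slight initial shrinking of $\rho$ avoids any boundary issue and keeps $Y^*$ in the open ball.

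Next I would carry out the dimension reduction on $\varphi$. Since $Y^* \in \mathcal{B}_u$, the blow-up $\varphi$ cannot be of the form $\{\pm\varphi_1\}$ for a smooth single-valued harmonic function $\varphi_1$ (otherwise $u$ itself would admit a $C^1$ decomposition into two branches near $Y^*$, contradicting $Y^* \in \mathcal{B}_u$), so $\varphi$ is genuinely two-valued. For homogeneous, genuinely two-valued Dirichlet minimizers, the branch set has Hausdorff dimension exactly $n-2$; since $\dim S(\varphi) = d \leq n-3$, we may choose $Z \in \mathcal{B}_\varphi \setminus S(\varphi)$. For any blow-up $\psi$ of $\varphi$ at $Z$, translation invariance of $\varphi$ along $S(\varphi)$ forces $S(\psi) \supseteq S(\varphi)$, while homogeneity of $\varphi$ about $0$ forces $\mathbb{R} Z \subseteq S(\psi)$, so $\dim S(\psi) \geq d+1$.

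Finally, a diagonal subsequence argument bridges the two-step blow-up back to $u$: if $u_{Y^*,\rho_j} \to \varphi$ and $\varphi_{Z,\sigma_k} \to \psi$, then by Hausdorff convergence of singular sets on compact subsets (a consequence of Almgren's frequency monotonicity together with $W^{1,2}$-compactness of Dirichlet minimizers) one can choose $Z_k \in \mathcal{B}_{u_{Y^*,\rho_{j_k}}}$ with $Z_k \to Z$; setting $Y_k := Y^* + \rho_{j_k} Z_k \in \mathcal{B}_u$ and $\tilde\rho_k := \rho_{j_k}\sigma_k$, after a further subsequence and renormalization $u_{Y_k,\tilde\rho_k} \to \psi$, exhibiting $\psi$ as a blow-up of $u$ at $Y_k$ for large $k$. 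Since $Y_k \to Y^* \in B_\rho(Y_0)$, eventually $Y_k \in \mathcal{B}_u \cap B_\rho(Y_0)$, yielding a point whose blow-up has spine dimension $\geq d+1$ and contradicting the maximality of $d$. The chief obstacle is making this diagonalization rigorous; the two essential inputs are the Hausdorff convergence of singular sets under blow-up and the classification-based lower bound $\dim_H \mathcal{B}_\varphi = n-2$ for genuinely two-valued homogeneous Dirichlet minimizers, which ensures $\mathcal{B}_\varphi \setminus S(\varphi) \neq \emptyset$ whenever $\dim S(\varphi) \leq n-3$.
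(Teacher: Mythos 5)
The paper's proof is a short direct argument: by the appendix of \cite{SimWic11}, if $\mathcal{H}^{n-2}(\mathcal{B}_u \cap B) = 0$ for an open ball $B \subseteq \Omega$, then $B \setminus \mathcal{B}_u$ is simply connected and $u$ decomposes into two harmonic single-valued branches on $B$. Hence for every ball $B$, either $\mathcal{B}_u \cap B = \emptyset$ or $\mathcal{H}^{n-2}(\mathcal{B}_u \cap B) > 0$; since $\mathcal{B}_u^{(n-3)}$ has Hausdorff dimension at most $n-3$, it cannot exhaust $\mathcal{B}_u \cap B$ whenever the latter is nonempty. Your proposal instead attempts a Federer-type dimension reduction at the blow-up level, which is a genuinely different route.

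The key step of your argument has a gap. In Step 3 you assert that a blow-up $\varphi$ of $u$ at a branch point $Y^*$ cannot be of the form $\{\pm\varphi_1\}$ for a smooth single-valued harmonic $\varphi_1$, ``otherwise $u$ itself would admit a $C^1$ decomposition into two branches near $Y^*$.'' This inference is incorrect: smoothness of the blow-up does not transfer back to $u$. Convergence of the normalized rescalings $u_{Y^*,\rho_j}$ to $\varphi$ (even in $W^{1,2}_{\mathrm{loc}}$, or locally in $C^1$ away from the singular set) gives no decomposition of $u$ itself in any fixed neighborhood of $Y^*$. Indeed, the situation you claim is impossible is exactly what occurs when $\mathcal{N}_u(Y^*)$ is an integer: as the paper notes after the proof of Theorem~\ref{constfreqthm2} for case (a), if every blow-up $\varphi$ at $Y^*$ satisfies $\dim S(\varphi) \leq n-3$, then $\mathcal{B}_\varphi = \emptyset$ and $\varphi = \{\pm\varphi_1\}$ for a homogeneous harmonic polynomial $\varphi_1$. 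Ruling this configuration out is precisely the content of Theorem~\ref{constfreqthm1}, which occupies Sections 3--6 and cannot be assumed here. Without $\mathcal{B}_\varphi \neq \emptyset$, your subsequent dimension reduction and diagonalization have nothing to reduce, and the contradiction with the maximality of $d$ never materializes.

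The other pieces of your argument (that $S(\varphi) \subseteq S(\psi)$ and $\mathbb{R}Z \subseteq S(\psi)$ for a blow-up $\psi$ of $\varphi$ at $Z \notin S(\varphi)$, the diagonal subsequence via Hausdorff convergence of branch sets, attainment of $d$) are sound and standard in this context. The cleanest repair, which is also what the paper does, is to bypass blow-ups entirely: apply the simply-connectedness result directly to $u$ on a ball $B$ where hypothetically $\mathcal{B}_u \cap B \subseteq \mathcal{B}_u^{(n-3)}$. Then $\mathcal{H}^{n-2}(\mathcal{B}_u \cap B) = 0$, $u$ decomposes on $B$, and $\mathcal{B}_u \cap B = \emptyset$, which is the contradiction.
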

\begin{proof}
Given an open ball $B \subseteq \Omega$, by the appendix of~\cite{SimWic11} if $\mathcal{H}^{n-2}(\mathcal{B}_u \cap B) = 0$ then $B \setminus \mathcal{B}_u$ is simply-connected and thus $u$ decomposes into two or more harmonic single-valued functions on $B$.  Hence either $\mathcal{B}_u \cap B = \emptyset$ or $\mathcal{H}^{n-2}(\mathcal{B}_u \cap B) > 0$.  $\mathcal{B}_u^{(n-3)}$ has Hausdorff dimension at most $n-3$, so $\mathcal{B}_u \setminus \mathcal{B}_u^{(n-3)}$ is dense in $\mathcal{B}_u$ as claimed. 
\end{proof}

As a consequence of Lemma \ref{cbu_dense_lemma}, $\mathcal{N}_u(Y) \geq 1/2$ for every nonzero, symmetric, Dirichlet energy minimizing two-valued function $u \in W^{1,2}(\Omega;\mathcal{A}_2(\mathbb{R}^m))$ and $Y \in \Sigma_u$.  Note that Lemma \ref{cbu_dense_lemma} is specific to the case of two-valued functions since in the case of $q$-valued function for $q \geq 3$, $\mathcal{B}_u$ is not necessarily contained in the set of points $Y$ where $u(Y) = \{0,0,\ldots,0\}$. 

\begin{proof}[Proof of Theorem \ref{constfreqthm2} for case (a)]
First we claim that $\dim S(\varphi) = n-2$ for every blow-up $\varphi$ of $u$ at $Y_0$.  Write $\varphi = \lim_{j \rightarrow \infty} u_{Y_0,\rho_j}$ with convergence in locally in $W^{1,2}$ for $\rho_j \downarrow 0$ and let $Z \in \mathcal{B}_{\varphi}$.  Clearly $\mathcal{N}_{\varphi}(Z) \leq \mathcal{N}_u(Y_0)$.  For every $\delta > 0$, $B_{\delta}(Z) \cap \mathcal{B}_{u_{Y_0,\rho_j}} \neq \emptyset$ for $j$ sufficiently large since then otherwise $u_{Y_0,\rho_j}$ decomposes into two harmonic single-valued functions on $B_{\delta}(Z)$ and then by the Schauder estimates $\varphi$ decomposes into two harmonic single-valued functions on $B_{\delta/2}(Z)$, contradicting $Z \in \mathcal{B}_{\varphi}$.  Therefore there exists $Z_j \in \mathcal{B}_{u_{0,\rho_j}}$ such that $Z_j \rightarrow Z$.  By (\ref{cft2_eqn1}) and the upper semicontinuity of frequency, 
\begin{equation*}
	\mathcal{N}_{\varphi}(Z) 
	\geq \limsup_{j \rightarrow \infty} \mathcal{N}_{u_{Y_0,\rho_j}}(Z_j) 
	= \lim_{j \rightarrow \infty} \mathcal{N}_u(Y_0+\rho_j Z_j) 
	\geq \mathcal{N}_u(Y_0). 
\end{equation*}
Therefore $\mathcal{N}_{\varphi}(Z) = \mathcal{N}_u(Y_0)$ for all $Z \in \mathcal{B}_{\varphi}$, i.e. $\mathcal{B}_{\varphi} = S(\varphi)$.  Since $\mathcal{N}_{\varphi}(0) = 1/2+k$, $0 \in \mathcal{B}_{\varphi}$.  If $\dim S(\varphi) \leq n-3$, then $\mathbb{R}^n \setminus S(\varphi)$ is simply-connected and thus $u$ decomposes into two harmonic single-valued functions on $\mathbb{R}^n$, contradicting $\mathcal{B}_{\varphi} \neq \emptyset$.  Therefore $\dim S(\varphi) = n-2$. 

Let $\varphi^{(0)}$ be any blow-up of $u$ at $Y_0$.  Translate so that $Y_0 = 0$ and rotate so that $\mathcal{B}_{\varphi^{(0)}} = \{0\} \times \mathbb{R}^{n-2}$.  We claim that for every $\delta > 0$ there is a $\varepsilon_0 = \varepsilon_0(n,m,\varphi^{(0)},\delta) > 0$ such that if $v \in W^{1,2}(B_1(0);\mathcal{A}_2(\mathbb{R}^m))$ is a Dirichlet energy minimizing two-valued function such that 
\begin{equation*}
	\int_{B_1(0)} \mathcal{G}(v(X),\varphi^{(0)}(X))^2 dX < \varepsilon_0, 
\end{equation*}
then for every $y_0 \in B^{n-2}_{1/2}(0)$ there is a $Z \in \mathcal{B}_v \cap B_{\delta}(0,y_0)$.  Suppose $v_j \in W^{1,2}(B_1(0);\mathcal{A}_2(\mathbb{R}^m))$ are Dirichlet energy minimizing two-valued functions converging to $\varphi^{(0)}$ in $L^2(B_1(0);\mathcal{A}_2(\mathbb{R}^m))$ and $\mathcal{B}_{v_j} \cap B_{\delta}(0,y_j) = \emptyset$ for some $y_j \in B^{n-2}_{1/2}(0)$.  Let $y_j \rightarrow y$.  Then $v_j$ decomposes into two harmonic single-valued functions on $B_{\delta/2}(0,y)$ for $j$ sufficiently large.  By the Schauder estimates $\varphi^{(0)}$ decomposes into two harmonic single-valued functions on $B_{\delta/4}(0,y)$, contradicting $\mathcal{B}_{\varphi^{(0)}} = \{0\} \times \mathbb{R}^{n-2}$.  

By the proof of Proposition 8.1 of~\cite{KrumWic1}, noting that by the discussion in the previous paragraph and (\ref{cft2_eqn1}) we can rule out option (i) of Lemma 4.5 of~\cite{KrumWic1}, $\mathcal{B}_u \cap B_{\sigma}(0) \subseteq \Gamma \cap B_{\sigma}(0)$ for the graph $\Gamma$ of some Lipschitz function over $\{0\} \times \mathbb{R}^{n-2}$ and some $\sigma \in (0,\op{dist}(0,\partial \Omega))$.  Suppose there exists $z \in B^{n-2}_{\sigma/2}(0)$ and $\eta \in (0,\sigma/2-|z|)$ such that $\mathcal{B}_u \cap B^2_{\eta}(z) \times \mathbb{R}^{n-2} = \emptyset$.  Then $u(X) = \{\pm u_1(X)\}$ on $B_{\sigma/2}(0) \setminus (\Gamma \setminus (B^2_{\eta}(z) \times \mathbb{R}^{n-2}))$ for some harmonic single-valued function $u_1$ since the set is simply-connected.  We claim that $u_1$ extends to a continuous function on all of $B_{\sigma/2}(0)$.  If $z' \in B^{n-2}_{\sigma/2}(0)$ and $\eta' \in (0,\sigma/2-|z'|)$ such that $\mathcal{B}_u \cap B^2_{\eta'}(z') \times \mathbb{R}^{n-2} = \emptyset$, then $u_1$ extends to a unique harmonic single-valued function such that $u(X) = \{\pm u_1(X)\}$ on the set $B_{\sigma/2}(0) \setminus (\Gamma \setminus ((B^2_{\eta}(z) \cup B^2_{\eta'}(z')) \times \mathbb{R}^{n-2}))$ since the set is simply-connected and thus $u_1$ extends smoothly across $\Gamma \cap (B^2_{\eta'}(z') \times \mathbb{R}^{n-2})$.  Clearly $u_1$ extends continuously across $\mathcal{B}_u$ with $u_1 = 0$ on $\mathcal{B}_u$.  Therefore $u_1$ extends to a continuous function on $B_{\sigma/2}(0)$.  By Weyl's lemma, $u_1$ is harmonic on $B_{\sigma/2}(0)$.  Clearly $u(X) = \{ \pm u_1(X) \}$ on $B_{\sigma}(0)$, contradicting $0 \in \mathcal{B}_u$.  Therefore $\mathcal{B}_u \cap B_{\sigma/2}(0) = \Gamma \cap B_{\sigma/2}(0)$.  The rest of the conclusion of Theorem \ref{constfreqthm2}, in particular that $\mathcal{B}_u$ is a $C^{1,\tau}$ submanifold near $0$ and $u$ is exponentially asymptotic to unique blow-ups along $\mathcal{B}_u$ near $0$, now follows from the proof of Proposition 8.1 of~\cite{KrumWic1}.  
\end{proof}

Notice that the above proof of Theorem \ref{constfreqthm2} would not extend if we replaced the hypothesis of Theorem \ref{constfreqthm2} that $\mathcal{N}_u(Y_0) = 1/2+k$ with $\mathcal{N}_u(Y_0)$ being an integer since then neither the origin nor any other point on $S(\varphi)$ would necessarily be a singular point of $\varphi$.  Indeed, from the argument above we could conclude that if $\varphi$ is a blow-up of $u$ at $Y_0$ then $\mathcal{B}_{\varphi} \subseteq S(\varphi)$ due to $\mathcal{N}_u(Z) \geq \mathcal{N}_u(Y_0)$ for all $Z \in \mathcal{B}_u$.  Hence, using the fact that either $\dim S(\varphi) = n-2$ or $\dim S(\varphi) \leq n-3$ implying $\mathcal{B}_{\varphi} = \emptyset$, $\varphi(X) = \{\pm \varphi_1(X)\}$ for some nonzero, homogeneous degree $\mathcal{N}_u(Y_0)$, harmonic single-valued function $\varphi_1 : \mathbb{R}^n \rightarrow \mathbb{R}^m$.  Consequently we are not guaranteed the conditions that the blow-ups $\varphi$ at $Y_0$ are ``cylindrical'' (i.e. $S(\varphi) = n-2$) and there are ``no small gaps'' (i.e. $y_0 \in B^{n-2}_{1/2}(0)$ there exists $Z \in \mathcal{B}_u \cap B_{\delta}(0,y_0)$ with $\mathcal{N}_u(Z) \geq \mathcal{N}_u(0)$) which are essential to the blow-up method of~\cite{KrumWic1}.

Assuming Theorem \ref{constfreqthm2} and Theorem \ref{analyticity_thm}, which we will prove below, the proof of Theorem \ref{maincor} is as follows: 

\begin{proof}[Proof of Theorem \ref{maincor} for case (a)]
Let $Y \in \mathcal{B}_u$.  Suppose that $\mathcal{N}_u$ is continuous at $Y$, i.e. for every $\varepsilon > 0$ there exists $\delta \in (0,\op{dist}(Y,\partial \Omega))$ such that for every $X \in \mathcal{B}_u \cap B_{\delta}(Y)$, $\mathcal{N}_u(Y) - \varepsilon < \mathcal{N}_u(X) < \mathcal{N}_u(Y) + \varepsilon$.  By Lemma \ref{cbu_dense_lemma}, $\mathcal{N}_u(Y) = k/2$ for some integer $k \geq 1$ and $\mathcal{N}_u(X) \geq \mathcal{N}_u(Y)$ for all $X \in \mathcal{B}_u$ sufficiently close to $Y$.  By Theorem \ref{constfreqthm1}, $\mathcal{N}_u(Y)$ is not an integer.  By Theorem \ref{constfreqthm2}, $Y \in \Gamma_{u,\mathcal{N}_u(Y)}$.  By Theorem \ref{analyticity_thm}, $\Gamma_{u,\mathcal{N}_u(Y)}$ is real analytic near $Y$. 

If instead $\mathcal{N}_u$ is not continuous at $Y$, by the upper semicontinuity of $\mathcal{N}_u$, there exists a sequence $Z_j \in \mathcal{B}_u$ converging to $Y$ such that $\lim_{j \rightarrow \infty} \mathcal{N}_u(Z_j) < \mathcal{N}_u(Y)$.  By Lemma \ref{cbu_dense_lemma}, there exists integers $k_j \geq 0$, $Y_j \in (\mathcal{B}_u \setminus \mathcal{B}_u^{(n-3)}) \cap B_{1/j}(Z_j)$, and $\delta_j > 0$ such that $\mathcal{N}_u(Y_j) = k_j/2$, $\mathcal{N}_u(Y_j) \leq \mathcal{N}_u(Z_j)$, and $\mathcal{N}_u(X) \geq \mathcal{N}_u(Y_j)$ for all $X \in \mathcal{B}_u \cap B_{\delta_j}(Y_j)$.  After passing to a subsequence, we may assume that $k_j = k$ for some integer $k \geq 0$ independent of $j$.  Clearly $\mathcal{N}_u(Y) > k/2$.  By Theorems \ref{constfreqthm2} and \ref{constfreqthm1}, $k$ is an odd integer and $Y_j \in \Gamma_{u,k/2}$. 
\end{proof}

\noindent \textbf{$C^{1,\mu}$ harmonic two-valued functions.}  This case is similar to the case of Dirichlet energy minimizing two-valued functions.  We consider $u \in C^{1,\mu}(\Omega;\mathcal{A}_2(\mathbb{R}^m))$, where $\mu \in (0,1)$, such that $u$ is locally harmonic on $\Omega \setminus \mathcal{B}_u$.  We may assume that such $u$ are symmetric and nonzero.  Given such a $u$ and $Y \in \Omega$, we again define the frequency function $N_{u,Y} : (0,\op{dist}(Y,\partial \Omega)) \rightarrow (0,\infty)$ by (\ref{defn_freqfn}) and the frequency $\mathcal{N}_u(Y)$ of $u$ at $Y \in \Omega$ by $\mathcal{N}_u(Y) = \lim_{\rho \downarrow 0} N_{u,Y}(\rho)$.  By~\cite[Lemma 2.2]{SimWic11}, $N_{u,Y}$ is well-defined and monotone nondecreasing and thus the limit $\mathcal{N}_u(Y)$ exists.  $N_{u,Y} \equiv \alpha$ constant if and only if $u(Y+X)$ is a homogeneous degree $\alpha$ function of $X$.  By the monotonicity of frequency functions, the upper-semicontinuity property Lemma \ref{usc_freq_dm} above continues to hold true with $C^{1,\mu}$ harmonic two-valued functions in place of Dirichlet energy minimizing two-valued functions.  

For nonzero, symmetric two-valued functions $u \in C^{1,\mu}(\Omega;\mathcal{A}_2(\mathbb{R}^m))$ that are locally harmonic on $\Omega \setminus \mathcal{B}_u$, we replace $\Sigma_u$ above with $\mathcal{K}_u$, the set of $Y \in \Omega$ such that $u(Y) = \{0,0\}$ and $Du(Y) = \{0,0\}$ but $u \not\equiv \{0,0\}$ near $Y$.  Clearly $\mathcal{N}_u(Y)$ is a integer $\geq 2$ for all $Y \in \mathcal{K}_u \setminus \mathcal{B}_u$, whereas $\mathcal{N}_u(Y)$ can take both integer and non-integer values for $Y \in \mathcal{B}_u$.  By~\cite[Lemma 4.1]{SimWic11}, if $u \in C^{1,\mu}(\Omega;\mathcal{A}_2(\mathbb{R}^m))$ is nonzero, symmetric, and locally harmonic on $\Omega \setminus \mathcal{B}_u$, then $u \in C^{1,1/2}(\Omega;\mathcal{A}_2(\mathbb{R}^m))$ and $\mathcal{N}_u(Y) \geq 3/2$ for all $Y \in \mathcal{K}_u$.

Given a nonzero, symmetric, two-valued function $u \in C^{1,\mu}(\Omega;\mathcal{A}_2(\mathbb{R}^m))$ that is locally harmonic on $\Omega \setminus \mathcal{B}_u$ and $Y \in \mathcal{K}_u$, we define a blow-up $\varphi \in C^{1,1/2}(\mathbb{R}^n;\mathcal{A}_2(\mathbb{R}^m))$ of $u$ at $Y$ to be any limit given by (\ref{defn_blowup}) for some $\rho_j \rightarrow 0^+$ where the limit is taken in $C^1$ on compact subsets of $\mathbb{R}^n$.  By the Schauder estimates for $C^{1,\mu}$ harmonic two-valued function~\cite[Lemma 3.2]{SimWic11} and monotonicity of frequency functions (see~\cite[Lemma 3.1(b)]{KrumWic1}), there exists at least one blow-up $\varphi$ of $u$ at each $Y$; however, the blow-up $\varphi$ might not be unique independent of the particular sequence $\rho_j$.  Every blow-up $\varphi$ of $u$ at $Y$ is nonzero, homogeneous degree $\mathcal{N}_u(Y)$, and locally harmonic on $\mathbb{R}^n \setminus \mathcal{B}_{\varphi}$. 

For every nonzero, homogeneous, two-valued function $\varphi \in C^{1,\mu}(\mathbb{R}^n;\mathcal{A}_2(\mathbb{R}^m))$, where $\mu \in (0,1)$, we define the spine $S(\varphi)$ of $\varphi$ by (\ref{defn_spine}) above.  $S(\varphi)$ is always a linear subspace and $\varphi(Z+X) = \varphi(X)$ for all $X \in \mathbb{R}^n$ and $Z \in S(\varphi)$. 

Let $\mu \in (0,1)$ and $u \in C^{1,\mu}(\Omega;\mathcal{A}_2(\mathbb{R}^m))$ be nonzero, symmetric, and locally harmonic on $\Omega \setminus \mathcal{B}_u$.  For $j = 1,2,\ldots,n-2$, let $\mathcal{K}^{(j)}_u$ denote the set of all $Y \in \mathcal{K}_u$ such that $\dim S(\varphi) \leq j$ for every blow-up $\varphi$ of $u$ at $Y$ and let $\mathcal{B}^{(j)}_u = \mathcal{B}_u \cap \mathcal{K}^{(j)}_u$.  Then 
\begin{equation*}
	\mathcal{K}_u = \mathcal{K}^{(n-2)}_u \supseteq \mathcal{K}^{(n-3)}_u \supseteq \cdots \supseteq \mathcal{K}^{(1)}_u \supseteq \mathcal{K}^{(0)}_u 
\end{equation*}
and the Hausdorff dimension of $\mathcal{K}^{(j)}_u$ is at most $j$.  $\mathcal{K}_u \setminus \mathcal{K}^{(n-3)}_u$ consists of the set of all $Y \in \mathcal{K}_u$ such that at least one blow-up $\varphi$ of $u$ at $Y$ has $\dim S(\varphi) = n-2$.  After rotating so that $S(\varphi) = \{0\} \times \mathbb{R}^{n-2}$, $\varphi$ has the form $\varphi(X) = \{ \pm \op{Re}(c(x_1+ix_2)^{k/2}) \}$ for some $c \in \mathbb{C}^m$ and integer $k \geq 3$ and consequently $\mathcal{N}_u(Y) = k/2$.  Lemma \ref{cbu_dense_lemma} above continues to hold true with $C^{1,\mu}$ harmonic two-valued functions in place of Dirichlet energy minimizing two-valued functions.  

The proofs of Theorem \ref{constfreqthm2} and Theorem \ref{maincor} all continue to hold in the case of $C^{1,\mu}$ harmonic two-valued functions with obvious changes. \\

\noindent \textbf{$C^{1,\mu}$ two-valued functions with area-stationary graphs.}  The case of $C^{1,\mu}$ two-valued functions with area-stationary graphs is significantly different due to the fact that these area-stationary submanifolds are only approximately the graphs of harmonic functions when expressed as graphs over the tangent planes at branch points.  Let $\mu \in (0,1)$ and $u \in C^{1,\mu}(B_2(0);\mathcal{A}_2(\mathbb{R}^m))$ be a two-valued function whose $\mathcal{M} = \op{graph} u$ is area-stationary as a varifold.  Note that by~\cite{SimWic11}, $u \in C^{1,1/2}(B_1(0);\mathcal{A}_2(\mathbb{R}^m))$.  Assume that $u_s(X) = \{ \pm (u_1(X)-u_2(X))/2 \}$ is not identically $\{0,0\}$, where $u(X) = \{u_1(X),u_2(X)\}$ at $X \in B_1(0)$, so that $\mathcal{M}$ is not the graph of smooth single-valued function with multiplicity two.  Assume that for $\varepsilon > 0$ to be determined, 
\begin{equation} \label{close2plane}
	\|u\|_{C^{1,1/2}(B_1(0))} = \sup_{B_1(0)} |u| + \sup_{B_1(0)} |Du| + [Du]_{\mu,B_1(0)} < \varepsilon. 
\end{equation}

Recall that $\op{sing} \mathcal{M}$ denotes the set of all points $p$ of $\mathcal{M}$ for which there is no $\delta > 0$ such that $\mathcal{M}$ is the union of smooth embedded submanifolds in $B^{n+m}_{\delta}(p)$.  In place of $\Sigma_u$ and $\mathcal{K}_u$, we will consider the set $\mathcal{K}_{\mathcal{M}}$ of all points $p$ of $\mathcal{M}$ at which $\mathcal{M}$ has a multiplicity two tangent plane.  Observe that $\op{sing} \mathcal{M} \subseteq \mathcal{K}_{\mathcal{M}}$. 

At each $p \in \mathcal{K}_{\mathcal{M}}$, we can write $\mathcal{M}$ as the graph of a two-valued function $\tilde u_p \in C^{1,1/2}(T_p \mathcal{M} \cap B^{n+m}_{\sqrt{1-4\varepsilon^2} (2-|p|)}(0); T_p \mathcal{M}^{\perp})$ over the tangent plane $T_p \mathcal{M}$ to $\mathcal{M}$ at $p$~\cite{KrumWic2}.  Writing $\tilde u_p(X) = \{\tilde u_{p,1}(X),\tilde u_{p,2}(X)\}$ at each $X$, let $\tilde u_{p,a}(X) = (\tilde u_{p,1}(X)+\tilde u_{p,2}(X))/2$ and $\tilde u_{p,s}(X) = \{ \pm (\tilde u_{p,1}(X)-\tilde u_{p,2}(X))/2 \}$ so that $\tilde u_p = \tilde u_{p,a} + \tilde u_{p,s}$.  By~\cite[Theorems 7.1 and 7.4]{SimWic11}, assuming $\varepsilon$ is sufficiently small depending on $n$ and $m$, 
\begin{gather}
	|\tilde u_{p,s}(X)| + \op{dist}(X,\mathcal{B}_{\tilde u_p}) |D\tilde u_{p,s}(X)| + \op{dist}(X,\mathcal{B}_{\tilde u_p})^2 |D^2 \tilde u_{p,s}(X)| 
		\leq C(n,m) \varepsilon \op{dist}(X,\mathcal{B}_{\tilde u_p})^{3/2}, \nonumber \\
	|D^2 \tilde u_{p,a}(X)| \leq C(n,m) \varepsilon. \label{SimWicThm7}
\end{gather}

Let $B$ be an open ball in $T_p \mathcal{M} \cap B^{n+m}_{\sqrt{1-4\varepsilon^2} (2-|p|)}(0) \setminus \mathcal{B}_{\tilde u_p}$.  Observe that $\tilde u_p(X) = \{\tilde u_{p,1}(X),\tilde u_{p,2}(X)\}$ on $B$ for single-valued solutions $\tilde u_{p,1},\tilde u_{p,2}$ to the minimal surface system 
\begin{equation} \label{mss1}
	D_i \left( \sqrt{G(D\tilde u_l)} G^{ij}(D\tilde u_l) D_j \tilde u_l^{\kappa} \right) = 0
\end{equation}
on $B$ for $\kappa = 1,2,\ldots,m$ and $l = 1,2$, where $G(P)$ and $(G^{ij}(P))$ denote the determinant and inverse matrix respectively of the $n \times n$ matrix $(G_{ij}(P))$ where $G_{ij}(P) = \delta_{ij} + P_i^{\kappa} P_j^{\kappa}$ for $P \in \mathbb{R}^{mn}$.  Subtracting (\ref{mss1}) for $l = 1,2$ yields 
\begin{equation} \label{mss2}
	D_i \left( A^{ij}_{\kappa \lambda}(D\tilde u_{p,a},D\tilde u_{p,s}) D_j \tilde u_{p,s}^{\lambda} \right) = 0 \text{ on } B
\end{equation}
for $\kappa = 1,2,\ldots,m$, where $\tilde u_{p,a}(X) = (\tilde u_1(X)+\tilde u_2(X))/2$, $\tilde u_{p,s}(X) = (\tilde u_2(X)-\tilde u_1(X))/2$, and 
\begin{equation} \label{mss2_notation} 
	A^{ij}_{\kappa \lambda}(P,Q) = \frac{1}{2} \int_{-1}^1 (D_{P^{\kappa}_i P^{\lambda}_j} (\sqrt{G}))(P+tQ) dt. 
\end{equation}
Note that $A^{ij}_{\kappa \lambda}(0,0) = \delta_{ij} \delta_{\kappa \lambda}$ and $A^{ij}_{\kappa \lambda}(P,Q) = A^{ij}_{\kappa \lambda}(-P,Q) = A^{ij}_{\kappa \lambda}(P,-Q)$ for all $P,Q \in \mathbb{R}^{mn}$.  We can rewrite (\ref{mss2}) as 
\begin{equation} \label{mss3}
	\Delta \tilde u_s^{\kappa} + f_{\kappa} = 0 \text{ on } B \text{ where } 
	f_{\kappa} = D_i \left( (A^{ij}_{\kappa \lambda}(D\tilde u_{p,a},D\tilde u_{p,s}) - \delta_{ij} \delta_{\kappa \lambda}) D_j \tilde u_{p,s}^{\lambda} \right) 
\end{equation}
for $\kappa = 1,2,\ldots,m$.  Note that by adding (\ref{mss1}) for $l = 1,2$ we get that 
\begin{equation} \label{mss4}
	D_i \left( A^{ij}_{\kappa \lambda}(D\tilde u_{p,s},D\tilde u_{p,a}) D_j \tilde u_{p,a}^{\lambda} \right) = 0 \text{ on } B
\end{equation}
for $\kappa = 1,2,\ldots,m$.  

By~\cite{KrumWic2}, provided $\varepsilon$ is sufficiently small, $e^{C\varepsilon \rho^{1/4}} N_{\tilde u_{p,s},0}(\rho)$ is monotone nondecreasing as a function of $\rho$ for some constant $C = C(n,m) \in (0,\infty)$, where $\mathcal{N}_{\tilde u_{p,s},0}(\rho)$ is given by (\ref{defn_freqfn}) with $\tilde u_{p,s}$ and $0$ in place of $u$ and $Y$ respectively.  Thus we may define the frequency $\mathcal{N}_{\mathcal{M}}(p)$ of $\mathcal{M}$ at $p$ by $\mathcal{N}_{\mathcal{M}}(p) = \lim_{\rho \downarrow 0} \mathcal{N}_{\tilde u_{p,s},0}(\rho)$.  $\mathcal{N}_{\mathcal{M}}(p)$ is an integer $\geq 2$ for each $p \in \mathcal{K}_{\mathcal{M}} \setminus \op{sing} \mathcal{M}$, but $\mathcal{N}_{\mathcal{M}}(p)$ can take both integer and non-integer values for $p \in \op{sing} \mathcal{M}$.  By~\cite{SimWic11}, $\mathcal{N}_{\mathcal{M}}(p) \geq 3/2$ for all $p \in \mathcal{K}_{\mathcal{M}}$.  Frequency has the following upper-semicontinuity properties:

\begin{lemma}[Upper semicontinuity of frequency] \label{usc_freq_mss}
(i) Suppose that $u \in C^{1,\mu}(B_1(0);\mathcal{A}_2(\mathbb{R}^m))$ is a two-valued functions such that $u_s \not\equiv \{0,0\}$, (\ref{close2plane}) holds true for some $\varepsilon = \varepsilon(n,m,\mu) > 0$ sufficiently small, and $\mathcal{M} = \op{graph} u$ is area stationary.  Let $p_j,p \in \mathcal{K}_{\mathcal{M}}$ such that $p_j \rightarrow p$.  Then 
\begin{equation*}
	\mathcal{N}_{\mathcal{M}}(p) \geq \limsup_{j \rightarrow \infty} \mathcal{N}_{\mathcal{M}}(p_j). 
\end{equation*}

\noindent (ii) Suppose that $u_j \in C^{1,\mu}(B_1(0);\mathcal{A}_2(\mathbb{R}^m))$ is a two-valued functions for which $\mathcal{M}_j = \op{graph} u_j$ is area stationary.  Let $u_{j,s}(X) = \{ \pm (u_{j,1}(X)-u_{j,2}(X))/2 \}$ where $u_j(X) = \{u_{j,1}(X),u_{j,2}(X)\}$ and assume that $u_{j,s} \not\equiv \{0,0\}$.  Suppose that $\|u_j\|_{C^{1,\mu}(B_1(0))} \rightarrow 0$ and $u_{j,s}/\|u_{j,s}\|_{L^2(B_1(0))}$ converge to a nonzero two-valued function $\varphi \in C^{1,\mu}(B_1(0);\mathcal{A}_2(\mathbb{R}^m))$ that is locally harmonic on $B_1(0) \setminus \mathcal{B}_{\varphi}$.  Let $Y_j \in B_1(0)$ with $(Y_j,u_j(Y_j)) \in \mathcal{K}_{\mathcal{M}_j}$ (slightly abusing notation so that $u_j(Y_j)$ denotes both the unordered pair $\{u_{j,1}(Y_j),u_{j,2}(Y_j)\}$ and common value $u_{j,1}(Y_j) = u_{j,2}(Y_j)$) and $Y \in B_1(0) \cap \mathcal{K}_{\varphi}$ such that $Y_j \rightarrow Y$.  Then 
\begin{equation*}
	\mathcal{N}_{\varphi}(Y) \geq \limsup_{j \rightarrow \infty} \mathcal{N}_{\mathcal{M}_j}(Y_j,u_j(Y_j)). 
\end{equation*}
\end{lemma}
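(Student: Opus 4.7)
In both parts the overall strategy is the same: apply the almost-monotonicity of the modified frequency $e^{C\varepsilon\rho^{1/4}}N_{\tilde u_{p,s},0}(\rho)$ from~\cite{KrumWic2} to bound
\begin{equation*}
\mathcal{N}_{\mathcal{M}}(p_j) \;=\; \lim_{\rho \downarrow 0} N_{\tilde u_{p_j,s},0}(\rho) \;\le\; e^{C\varepsilon \rho^{1/4}} N_{\tilde u_{p_j,s},0}(\rho)
\end{equation*}
at every admissible small $\rho > 0$, then pass to the limit first in $j\to\infty$ at fixed $\rho$ and then $\rho\downarrow 0$. The limit in $j$ at fixed $\rho$ reduces each statement to continuous dependence of the integrals defining $N_{\tilde u_{p_j,s},0}(\rho)$ on the underlying graph representation, which is handled differently in the two cases.

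For part (i), since $u \in C^{1,1/2}(B_1(0);\mathcal{A}_2(\mathbb{R}^m))$ the tangent planes $T_{p_j}\mathcal{M}\to T_p\mathcal{M}$, realized by orthogonal maps $q_j$ of $\mathbb{R}^{n+m}$ with $q_j\to\mathrm{id}$. The construction of $\tilde u_{p_j}$ from~\cite{KrumWic2} is a uniform implicit function construction on balls of $j$-independent radius, so after identification by $q_j$ one has $\tilde u_{p_j}\to\tilde u_p$ in $C^1$ on compact subsets of $T_p\mathcal{M}\cap B^{n+m}_{r_0}(0)$, and hence $\tilde u_{p_j,s}\to\tilde u_{p,s}$ in $C^1$. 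For every $\rho>0$ with $\int_{\partial B_\rho(0)}|\tilde u_{p,s}|^2>0$---a dense set in a neighborhood of $0$ since $\tilde u_{p,s}\not\equiv 0$ near the origin---the numerator and denominator of $N_{\tilde u_{p_j,s},0}(\rho)$ converge to those of $N_{\tilde u_{p,s},0}(\rho)$, so passing $j\to\infty$ and then $\rho\downarrow 0$ through this set gives $\limsup_j \mathcal{N}_{\mathcal{M}}(p_j)\le\mathcal{N}_{\mathcal{M}}(p)$.

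For part (ii), set $p_j = (Y_j, u_j(Y_j))$, $\varepsilon_j = \|u_j\|_{C^{1,\mu}(B_1(0))} \to 0$, and $v_j = \tilde u_{p_j,s}/\|u_{j,s}\|_{L^2(B_1(0))}$. The tangent plane $T_{p_j}\mathcal{M}_j$ differs from $\mathbb{R}^n\times\{0\}$ by a rotation with $|q_j - \mathrm{id}| \le C\varepsilon_j$. A direct expansion of the re-graphing formula using the pointwise bounds (\ref{SimWicThm7}) shows that, after identification by $q_j$,
\begin{equation*}
v_j(X) \;=\; \frac{u_{j,s}(Y_j + X)}{\|u_{j,s}\|_{L^2(B_1(0))}} + o(1)
\end{equation*}
in $C^1$ on compact subsets of $B_1(0)\setminus\mathcal{B}_\varphi$, the error coming from the tilt of the tangent plane and the contribution of $\tilde u_{p_j,a}$. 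Combined with the hypothesis $u_{j,s}/\|u_{j,s}\|_{L^2(B_1(0))}\to\varphi$ and Schauder estimates for $C^{1,\mu}$ harmonic two-valued functions~\cite[Lemma 3.2]{SimWic11}, this yields $v_j\to\varphi(Y+\cdot)$ in $C^1$ on compact subsets of $\mathbb{R}^n\setminus\mathcal{B}_{\varphi(Y+\cdot)}$. Since $N$ is scale-invariant in its first argument, $N_{\tilde u_{p_j,s},0}(\rho) = N_{v_j,0}(\rho)\to N_{\varphi,Y}(\rho)$ for each $\rho$ with $\int_{\partial B_\rho(Y)}|\varphi|^2>0$. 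The almost-monotonicity factor $e^{C\varepsilon_j\rho^{1/4}}\to 1$, and passing $\rho\downarrow 0$ through the admissible set gives $\limsup_j\mathcal{N}_{\mathcal{M}_j}(p_j)\le\mathcal{N}_\varphi(Y)$.

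The main obstacle is the comparison in part (ii) between $\tilde u_{p_j,s}$ and the translate $u_{j,s}(Y_j+\cdot)$: because the symmetric part is defined after re-graphing over $T_{p_j}\mathcal{M}_j$ rather than the original base, the tilt of the tangent plane at $p_j$ produces an $O(\varepsilon_j)$ correction that must be shown to vanish relative to the normalization $\|u_{j,s}\|_{L^2(B_1(0))}$. This hinges on $D\tilde u_{p_j,a}(0) = 0$ together with the sharp decay of $\tilde u_{p_j,s}$ near its branch set from (\ref{SimWicThm7}), which ensures that the contribution of the average part $\tilde u_{p_j,a}$ is of lower order than the symmetric part in the relevant $L^2$ scale. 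This is the one place where the smallness assumption (\ref{close2plane}) plays an essential role.
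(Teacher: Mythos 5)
Your proposal is essentially correct in structure and conclusion, but it takes a genuinely different route from the paper, and you should be aware of what the paper's strategy buys you.

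The paper does not prove convergence of the re-graphed functions $\tilde u_{p_j,s}$. Instead, for part (i), it fixes the single function $\tilde u_{p,s}$ (graphing $\mathcal{M}$ over $T_p\mathcal{M}$), projects $p_j$ to $Y_j$ on this tangent plane, and uses a quantitative change-of-tangent-plane comparison
\begin{equation*}
N_{\tilde u_{p,s},Y_j}(\rho) \geq \left(1 - C(n,m)\sup_{B_{2|Y_j|}(0)}|D\tilde u_p|\right) N_{\tilde u_{p_j,s},0}(\rho),
\end{equation*}
relating the frequency of the fixed graphing at the shifted center $Y_j$ to the frequency of the re-graphing at $p_j$ centered at the origin. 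Since $D\tilde u_p(0)=0$ and $Y_j\to 0$, the prefactor tends to $1$; continuity of $N_{\tilde u_{p,s},\cdot}(\rho)$ in the base point is then all that is needed, with no compactness argument for the family $\tilde u_{p_j}$. Part (ii) proceeds analogously, comparing $N_{u_{j,s},Y_j}(\rho)$ with $N_{\tilde u_{j,s},0}(\rho)$ and using $u_{j,s}/\|u_{j,s}\|_{L^2}\to\varphi$ directly, again avoiding any claim that $\tilde u_{p_j,s}$ converges after normalization. Your route is legitimate, but it asks for more: uniform implicit-function-theorem estimates to obtain $\tilde u_{p_j,s}\to\tilde u_{p,s}$ in a topology strong enough to make $N_{\tilde u_{p_j,s},0}(\rho)$ continuous in $j$, and in part (ii) the additional step of showing the tilt and average-part corrections are $o(\|u_{j,s}\|_{L^2})$ after normalization. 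You flag the latter as the main obstacle, which is exactly right; the paper sidesteps it by never passing to a limit of the re-graphings, relying instead on the purely algebraic comparison between frequency functions of the same surface seen over two nearby planes. If you pursue your route, you should also note that $C^1$ convergence on compacta away from $\mathcal{B}_\varphi$ is not by itself enough to conclude $N_{v_j,0}(\rho)\to N_{\varphi,Y}(\rho)$: you need convergence in $W^{1,2}(B_\rho)$ and on $\partial B_\rho$, which requires either uniform $C^{1,\mu}$ bounds for $v_j$ on all of $B_\rho$ (as you indicate via the Schauder estimates) or the decay estimates near the branch set from (\ref{SimWicThm7}).
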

\begin{proof}
To prove (i), recall that $\mathcal{M}$ is the graph of $\tilde u_p$ over the tangent plane to $\mathcal{M}$ at $p$ and let $Y_j$ be the projection of $p_j$ onto the tangent plane of $\mathcal{M}$ at $p$.  Using the monotonicity of $N_{\tilde u_{p_j,s},0}$, we compute that 
\begin{align*}
	\mathcal{N}_{\mathcal{M}}(p) 
	&= \lim_{\rho \downarrow 0} N_{\tilde u_{p,s},0}(\rho) 
	= \lim_{\rho \downarrow 0} \lim_{j \rightarrow \infty} N_{\tilde u_{p,s},Y_j}(\rho) 
	\\&\geq \lim_{\rho \downarrow 0} \lim_{j \rightarrow \infty} \left( 1 - C(n,m) \sup_{B_{2|Y_j|}(0)} |D\tilde u_p| \right) N_{\tilde u_{p_j,s},0}(\rho) 
	\geq \limsup_{j \rightarrow \infty} \mathcal{N}_{\mathcal{M}}(p_j). 
\end{align*}
To prove (ii), write $\mathcal{M}$ as the graph of $\tilde u_j$ over the tangent plane to $\mathcal{M}_j$ at $p_j$ and let $\tilde u_{j,s}(X) = \{ \pm (u_{j,1}(X)-u_{j,2}(X))/2 \}$ where $\tilde u_j(X) = \{u_{j,1}(X),u_{j,2}(X)\}$.  Using the monotonicity of $N_{\tilde u_{j,s},0}$, we compute that 
\begin{align*}
	\mathcal{N}_{\varphi}(Y) 
	&= \lim_{\rho \downarrow 0} N_{\varphi,Y}(\rho) 
	= \lim_{\rho \downarrow 0} \lim_{j \rightarrow \infty} N_{u_{j,s}/\|u_{j,s}\|_{L^2(B_1(0))},Y_j}(\rho) 
	= \lim_{\rho \downarrow 0} \lim_{j \rightarrow \infty} N_{u_{j,s},Y_j}(\rho) 
	\\&\geq \lim_{\rho \downarrow 0} \lim_{j \rightarrow \infty} \left( 1 - C(n,m) \sup_{B_1(0)} |Du_j| \right) N_{\tilde u_{j,s},0}(\rho) 
	\geq \limsup_{j \rightarrow \infty} \mathcal{N}_{\mathcal{M}}(p_j). 
\end{align*}
\end{proof}

Given $u \in C^{1,\mu}(B_1(0);\mathcal{A}_2(\mathbb{R}^m))$ with $u_s \not\equiv \{0,0\}$ whose $\mathcal{M} = \op{graph} u$ is area-stationary as a varifold and $p \in \mathcal{K}_{\mathcal{M}}$, we define a \textit{blow-up} $\varphi \in C^{1,\mu}(T_p \mathcal{M};\mathcal{A}_2(T_p \mathcal{M}^{\perp}))$ of $\mathcal{M}$ at $p$ to be any limit of the form 
\begin{equation*}
	\varphi = \lim_{j \rightarrow \infty} \frac{\tilde u_p(\rho_j X)}{\rho_j^{-n/2} \|\tilde u_p\|_{L^2(B_{\rho}(Y))}}, 
\end{equation*}
where $T_p \mathcal{M}$ denotes the tangent plane to $\mathcal{M}$ at $p$, $\rho_j \rightarrow 0^+$, and the limit is taken in $W^{1,2}$ on compact subsets of $\mathbb{R}^n$.  By~\cite{KrumWic2}, for each $\mathcal{M}$ and $p$ at least one blow-up $\varphi$ exists; however, the blow-up $\varphi$ might not be unique independent of the particular sequence $\rho_j$.  Every blow-up $\varphi$ of $\mathcal{M}$ at $p$ is a nonzero, homogeneous degree $\mathcal{N}_{\mathcal{M}}(p)$ two-valued function that is locally harmonic on $T_p \mathcal{M} \setminus \mathcal{B}_{\varphi}$.  For $j = 1,2,\ldots,n-2$, let $\mathcal{K}^{(j)}_{\mathcal{M}}$ denote the set of all $p \in \mathcal{K}_{\mathcal{M}}$ such that $\dim S(\varphi) \leq j$ for every blow-up $\varphi$ of $\mathcal{M}$ at $p$.  Then 
\begin{equation*}
	\mathcal{K}_{\mathcal{M}} = \mathcal{K}^{(n-2)}_{\mathcal{M}} \supseteq \mathcal{K}^{(n-3)}_{\mathcal{M}} \supseteq \cdots 
	\supseteq \mathcal{K}^{(1)}_{\mathcal{M}} \supseteq \mathcal{K}^{(0)}_{\mathcal{M}} 
\end{equation*}
and the Hausdorff dimension of $\mathcal{K}^{(j)}_{\mathcal{M}}$ is at most $j$ by~\cite{KrumWic2}.  $\mathcal{K}_{\mathcal{M}} \setminus \mathcal{K}^{(n-3)}_{\mathcal{M}}$ consists of the set of all $p \in \mathcal{K}_{\mathcal{M}}$ such that at least one blow-up $\varphi$ of $\mathcal{M}$ at $p$ has $\dim S(\varphi) = n-2$ and thus after rotating so that $\mathcal{M}$ is tangent to $\mathbb{R}^n \times \{0\}$ at $p$ and $S(\varphi) = \{0\} \times \mathbb{R}^{n-2} \subset \mathbb{R}^n$, $\varphi$ has the form $\varphi(X) = \{ \pm \op{Re}(c(x_1+ix_2)^{k/2}) \}$ for some $c \in \mathbb{C}^m$ and integer $k \geq 3$ and consequently $\mathcal{N}_u(Y) = k/2$.  Arguing much like we did to prove Lemma \ref{cbu_dense_lemma}, we obtain that $\op{sing} \mathcal{M} \setminus \mathcal{K}_{\mathcal{M}}^{(n-3)}$ is dense in $\op{sing} \mathcal{M}$. 

The proof of Theorem \ref{constfreqthm2_mss} is similar to the proof of Theorem \ref{constfreqthm2} and is included for completion. 

\begin{proof}[Proof of Theorem \ref{constfreqthm2_mss}]
First we claim that $\dim S(\varphi) = n-2$ for every blow-up $\varphi$ of $\mathcal{M}$ at $p_0$.  Since $\mathcal{N}_{\varphi}(0) = \mathcal{N}_{\mathcal{M}}(p_0)$ is not an integer, $0 \in \mathcal{B}_{\varphi}$.  In particular $\mathcal{B}_{\varphi} \neq \emptyset$, so $\mathcal{B}_{\varphi}$ has Hausdorff dimension $n-2$.  Write $\varphi = \lim_{j \rightarrow \infty} \tilde u_{p_0,s}(\rho_j X)/\rho_j^{-n/2} \|\tilde u_{p_0,s}\|_{L^2(B_{\rho_j}(0))}$ with convergence in locally in $C^1$ for $\rho_j \downarrow 0$ and let $Z \in \mathcal{B}_{\varphi}$.  Clearly $\mathcal{N}_{\varphi}(Z) \leq \mathcal{N}_u(0)$.  Then for every $\delta > 0$, $B_{\delta}(Z) \cap \mathcal{B}_{\tilde u_{p_0,s}(\rho_j X)} \neq \emptyset$ for $j$ sufficiently large since then otherwise $\tilde u_{p_0,s}(\rho_j X)$ decomposes into two harmonic single-valued functions on $B_{\delta}(Z)$ and then by the Schauder estimates applied to $\tilde u_{p_0,s}(\rho_j X)/\rho_j^{-n/2} \|\tilde u_{p_0,s}\|_{L^2(B_{\rho_j}(0))}$, which solves a homogeneous elliptic differential equation in divergence form due to $\tilde u_{p_0}$ solving the minimal surface system (see Section 5 of~\cite{SimWic11}), $\varphi$ decomposes into two harmonic single-valued functions on $B_{\delta/2}(Z)$, contradicting $Z \in \mathcal{B}_{\varphi}$.  Therefore there exists $Z_j \in \mathcal{B}_{\tilde u_{p_0,s}(\rho_j X)}$ such that $Z_j \rightarrow Z$.  By (\ref{cft2_eqn1}) and Lemma \ref{usc_freq_mss}(ii), $\mathcal{N}_{\varphi}(Z) \geq \mathcal{N}_{\mathcal{M}}(p_0)$.  Therefore $\mathcal{N}_{\varphi}(Z) = \mathcal{N}_u(p_0)$ for all $Z \in \mathcal{B}_{\varphi}$.  Consequently $S(\varphi) = \mathcal{B}_{\varphi}$ and $\dim S(\varphi) = n-2$. 

Let $\varphi^{(0)}$ be any blow-up of $\mathcal{M}$ at $p_0$.  Rotate so that $\mathcal{M}$ is tangent to $\mathbb{R}^n \times \{0\}$ at $p_0$ and $\mathcal{B}_{\varphi^{(0)}} = \{0\} \times \mathbb{R}^{n-2} \subset \mathbb{R}^n$.  Using the Schauder estimates, we can show that for every $\delta > 0$ there is a $\varepsilon_0 = \varepsilon_0(n,m,\varphi^{(0)},\delta) > 0$ such that if $v \in C^{1,1/2}(B_1(0);\mathcal{A}_2(\mathbb{R}^m))$ with $v_s \not\equiv \{0,0\}$ and $\op{graph} v$ area-stationary such that 
\begin{equation*}
	\|v\|_{C^{1,1/4}(B_1(0))} \leq \varepsilon_0, \quad 
	\int_{B_1(0)} \mathcal{G}\left( \frac{v(X)}{\|v\|_{L^2(B_1(0))}}, \varphi^{(0)}(X) \right)^2 dX < \varepsilon_0, 
\end{equation*}
then for every $y_0 \in B^{n-2}_{1/2}(0)$ there is a $Z \in \mathcal{B}_v \cap B_{\delta}(0,y_0)$.  

The conclusion of Theorem \ref{constfreqthm2_mss} now follows from the arguments of~\cite{KrumWic2} like in the proof of Theorem \ref{constfreqthm2} for case (a) above, noting that by the discussion in the previous paragraph and (\ref{cft2_eqn1_mss}) we can rule out small gaps.  
\end{proof}

The proof of Theorem \ref{maincor_mss} is nearly identical to the proof of Theorem \ref{maincor} with obvious changes, in particular replacing Lemma \ref{usc_freq_dm} and Theorems \ref{constfreqthm2}, \ref{constfreqthm1}, and \ref{analyticity_thm} with Lemma \ref{usc_freq_mss}(i) and Theorems \ref{constfreqthm2_mss}, \ref{constfreqthm1_mss}, and \ref{analyticity_thm_mss} respectively.  Corollary \ref{maincor_stablevar} is a direct consequence of~\cite{Wic08}~\cite{Wic} and Theorem \ref{maincor}. 

Now Sections 3-6 will be devoted to the proofs of Theorems \ref{constfreqthm1} and \ref{constfreqthm1_mss} and Sections 7-10 will be devoted to the proofs of Theorems \ref{analyticity_thm} and \ref{analyticity_thm_mss}.

\section{Apriori estimates - Part I: Statement of estimates and blow-ups} \label{sec:L2estimates_sec}

In the next two sections, we will state and prove the main a priori estimates needed to prove Theorem \ref{constfreqthm1}.  These estimates are extensions of the estimates of Wickramasekera and the author in Section 6 of~\cite{KrumWic1}, which itself is based on the earlier work of Simon in~\cite{Sim93}.  Recall that unlike in~\cite{Sim93} and~\cite{KrumWic1} here we consider a situation where there may be small gaps and the homogeneous degree $\alpha$, harmonic two-valued functions can have spines of different dimensions.  The small gaps are handled by the fact that in any open ball where frequency $< \alpha$ by assumption there are no singular points and thus we may apply the Schauder estimates.  The main challenge with the spines having different dimensions is that in our setting that a given homogeneous degree $\alpha$, harmonic two-valued function may be close to another homogeneous degree $\alpha$, harmonic two-valued function with a  spine of larger dimension.  To handle this issue, we introduce and use the concept of an optimal sequence with increasing spines and, motivated by the work of Wickramasekera of~\cite{Wic04} and~\cite{Wic14}, we assume Hypothesis \ref{cft1_hyp}(ii) and prove an estimate Corollary \ref{branchdist_cor} for the distance of singular points to spines of homogeneous degree $\alpha$, harmonic two-valued functions.

\begin{defn}
Fix an integer $\alpha \geq 1$.  

Let $\mathcal{F}_{\alpha}$ denote the set of all Dirichlet energy minimizing two-valued functions $u \in W^{1,2}(B_1(0);\mathcal{A}_2(\mathbb{R}^m))$ such that $\mathcal{N}_u(Y) \geq \alpha$ for all $Y \in \mathcal{B}_u$.

Let $\Phi_{\alpha}$ denote the set of all nonzero two-valued functions $\varphi : \mathbb{R}^n \rightarrow \mathcal{A}_2(\mathbb{R}^m)$ such that $\varphi(X) = \{ \pm \varphi_1(X) \}$ for all $X \in \mathbb{R}^n$ and some nonzero, homogeneous degree $\alpha$, harmonic single-valued polynomial $\varphi_1$. 
\end{defn}

\begin{hypothesis} \label{cft1_hyp}
Let $\varepsilon, \beta > 0$.  Let $\varphi^{(0)} \in \Phi_{\alpha}$ be Dirichlet energy minimizing and write $\varphi^{(0)}(X) = \{ \pm \varphi^{(0)}_1(X) \}$ for all $X \in \mathbb{R}^n$ and some harmonic single-valued function $\varphi^{(0)}_1 : \mathbb{R}^n \rightarrow \mathbb{R}^m$.  Let $\varphi \in \Phi_{\alpha}$, and $u \in \mathcal{F}_{\alpha}$ such that 
\begin{enumerate}
\item[(i)] $\int_{B_1(0)} \mathcal{G}(\varphi,\varphi^{(0)})^2 < \varepsilon$ and $\int_{B_1(0)} \mathcal{G}(u,\varphi^{(0)})^2 < \varepsilon$ and 
\item[(ii)] either $\dim S(\varphi) = \dim S(\varphi^{(0)})$ or 
\begin{equation*}
	\int_{B_1(0)} \mathcal{G}(u,\varphi)^2 
	\leq \beta \inf_{\varphi' \in \Phi_{\alpha}, \, S(\varphi) \subset S(\varphi')} \int_{B_1(0)} \mathcal{G}(u,\varphi')^2, 
\end{equation*}
where for two sets $A$ and $B$ we let $A \subset B$ denote that $A$ is a proper subset of $B$. 
\end{enumerate}
\end{hypothesis}

\begin{defn}
Given $\varphi \in \Phi_{\alpha}$, there exists a finite sequence $\{\varphi_j\}_{j = 0,1,2,\ldots,N} \subset \Phi_{\alpha}$, called \textit{optimal sequence with increasing spines}, such that $\varphi_0 = \varphi$, $\varphi_j$ satisfies $S(\varphi_{j-1}) \subset S(\varphi_j)$ and 
\begin{equation*}
	\int_{B_1(0)} \mathcal{G}(\varphi,\varphi_j)^2 
	\leq \frac{3}{2} \inf_{\varphi' \in \Phi_{\alpha}, \, S(\varphi_{j-1}) \subset S(\varphi')} \int_{B_1(0)} \mathcal{G}(\varphi,\varphi')^2 
\end{equation*}
for $j = 1,2,\ldots,N$, and $\dim S(\varphi_N) = \dim S(\varphi^{(0)})$. 
\end{defn}

The statement of the a priori estimates are as follows.  For each $l \in \{1,2,\ldots,n\}$, $\zeta \in \mathbb{R}^{n-l}$, $\kappa \in [0,1]$, and $\rho > 0$, let
\begin{equation*}
	A^l_{\rho,\kappa}(\zeta) = \{ (x,y) \in \mathbb{R}^l \times \mathbb{R}^{n-l} : (|x|-\rho)^2 + |y-\zeta|^2 < \kappa^2 \rho^2/4 \} 
\end{equation*}
if $l < n$ and $A^n_{\rho,\kappa} = \{ x \in \mathbb{R}^n : (1-\kappa/2) \rho < |x| < (1+\kappa/2) \rho \}$.

\begin{lemma} \label{graphrep_lemma} 
Suppose $\varphi^{(0)} \in \Phi_{\alpha}$ is a Dirichlet energy minimizing two-valued function with $\dim S(\varphi^{(0)}) = n-l_0$ for some integer $l_0 \in \{2,3,\ldots,n\}$.  Let $l \in \{l_0,l_0+1,\ldots,n\}$.  For every $\gamma,\nu \in (0,1)$ there exists $\varepsilon_0, \beta_0 > 0$ depending on $n$, $l$, $m$, $\varphi^{(0)}$, $\gamma$, and $\nu$ such that the following holds true.  Let $\varphi \in \Phi_{\alpha}$ and after an orthogonal change of coordinates suppose that $S(\varphi) = \{0\} \times \mathbb{R}^{n-l}$.  (Note that we need not assume that $S(\varphi) \subseteq S(\varphi^{(0)})$.)  Let $u \in W^{1,2}(A^l_{1,\gamma}(0);\mathcal{A}_2(\mathbb{R}^m))$ be a nonzero, symmetric, Dirichlet energy minimizing two-valued function.  Suppose that 
\begin{equation*}
	\int_{A^l_{1,\gamma}(0)} \mathcal{G}(\varphi,\varphi^{(0)})^2 < \varepsilon_0, \quad 
	\int_{A^l_{1,\gamma}(0)} \mathcal{G}(u,\varphi^{(0)})^2 < \varepsilon_0, 
\end{equation*} 
and either $l = l_0$ or 
\begin{equation*}
	\int_{A^l_{1,\gamma}(0)} \mathcal{G}(u,\varphi)^2 
	\leq \beta_0 \inf_{\varphi' \in \Phi_{\alpha}, \, S(\varphi) \subset S(\varphi')} \int_{A^l_{1,\gamma}(0)} \mathcal{G}(u,\varphi')^2. 
\end{equation*}
Then there is a harmonic single-valued function $v : A^l_{1,\gamma/2}(0) \rightarrow \mathbb{R}^m$ such that 
\begin{equation*}
	u(X) = \{ \varphi_1(X) + v(X), -\varphi_1(X) - v(X) \}
\end{equation*}
for all $X \in A^l_{1,\gamma/2}(0)$, where $\varphi(X) = \{ \pm \varphi_1(X) \}$ for all $X \in \mathbb{R}^n$ and some harmonic single-valued function $\varphi_1 : \mathbb{R}^n \rightarrow \mathbb{R}^m$ close to $\varphi^{(0)}_1$ in $L^2(B_1(0);\mathbb{R}^m)$, and
\begin{gather*}
	\sup_{A^l_{1,\gamma/2}(0)} (|v| + |Dv|) \leq \nu, \\
	\int_{A^l_{1,\gamma/2}(0)} (|v|^2 + |Dv|^2) \leq C \int_{A^l_{1,\gamma}(0)} \mathcal{G}(u,\varphi)^2, 
\end{gather*}
for some $C = C(n,l,m,\varphi^{(0)},\gamma) \in (0,\infty)$. 
\end{lemma}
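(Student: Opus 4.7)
The proof is by compactness and contradiction. Suppose the conclusion fails for some $\gamma, \nu \in (0,1)$; then there exist sequences $\varepsilon_j, \beta_j \downarrow 0$ and corresponding $\varphi_j \in \Phi_\alpha$ (with $S(\varphi_j) = \{0\}^l \times \mathbb{R}^{n-l}$) and symmetric Dirichlet-minimizing $u_j \in W^{1,2}(A^l_{1,\gamma}(0);\mathcal{A}_2(\mathbb{R}^m)) \cap \mathcal{F}_\alpha$ satisfying the closeness and optimality assumptions of the lemma with $\varepsilon_j, \beta_j$ in place of $\varepsilon_0, \beta_0$, but for which no valid representation $\{\pm(\varphi_{j,1}+v_j)\}$ with the stated estimates exists on $A^l_{1,\gamma/2}(0)$. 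Writing $\varphi_j = \{\pm \varphi_{j,1}\}$, elliptic regularity for homogeneous harmonic polynomials of fixed degree $\alpha$ gives $\varphi_{j,1} \to \varphi^{(0)}_1$ smoothly on compact sets, and Almgren's Hölder continuity together with compactness of Dirichlet minimizers gives, along a subsequence, $u_j \to \varphi^{(0)}$ uniformly on compact subsets of $A^l_{1,\gamma}(0)$. Since every $\varphi_j$ is translation invariant along the fixed subspace $\{0\}^l \times \mathbb{R}^{n-l}$, so is the limit $\varphi^{(0)}$; hence $\{0\}^l \times \mathbb{R}^{n-l} \subseteq S(\varphi^{(0)})$, with equality forced by dimension when $l = l_0$.

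The heart of the argument is to show $\mathcal{B}_{u_j} \cap A^l_{1,\gamma/2}(0) = \emptyset$ for all large $j$. Suppose not, and pick $Y_j \in \mathcal{B}_{u_j} \cap A^l_{1,\gamma/2}(0)$; since $u_j \in \mathcal{F}_\alpha$ we have $\mathcal{N}_{u_j}(Y_j) \geq \alpha$, and along a subsequence $Y_j \to Y \in \overline{A^l_{1,\gamma/2}(0)}$. Upper semicontinuity of frequency (Lemma \ref{usc_freq_dm}) then gives $\mathcal{N}_{\varphi^{(0)}}(Y) \geq \alpha$; since $\varphi^{(0)}$ is a homogeneous polynomial of degree $\alpha$, $\mathcal{N}_{\varphi^{(0)}} \leq \alpha$ everywhere with equality precisely on $S(\varphi^{(0)})$, forcing $Y \in S(\varphi^{(0)})$. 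When $l = l_0$, $S(\varphi^{(0)}) = \{0\}^l \times \mathbb{R}^{n-l}$ is disjoint from $\overline{A^l_{1,\gamma/2}(0)}$ (whose points satisfy $|x| \geq 1 - \gamma/4 > 0$), yielding the desired contradiction. When $l > l_0$, $\varphi^{(0)}$ itself is admissible as $\varphi'$ in the optimality infimum of the lemma since $S(\varphi_j) \subsetneq S(\varphi^{(0)})$, so $\int_{A^l_{1,\gamma}(0)} \mathcal{G}(u_j,\varphi_j)^2 \leq \beta_j \varepsilon_j \to 0$. On the other hand, blowing up $u_j$ at $Y_j$ produces a homogeneous Dirichlet-minimizing two-valued function of degree $\geq \alpha$ that remains irreducible at the origin; compactness of the class of normalized homogeneous Dirichlet minimizers together with the open nature of irreducibility yield a uniform positive separation between such irreducible limits and decomposable two-valued functions in $L^2(B_1(0))$, which translates to a lower bound $\int_{B_r(Y_j)} \mathcal{G}(u_j,\varphi_j)^2 \geq c\, r^{n+2\alpha}$ for some fixed $r > 0$ and $c > 0$ independent of $j$, contradicting $\beta_j \varepsilon_j \to 0$.

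Once $\mathcal{B}_{u_j} \cap A^l_{1,\gamma/2}(0) = \emptyset$, we label the two sheets of $u_j$ consistently by proximity to $\pm\varphi_{j,1}$ (using that $\varphi_{j,1}$ is globally defined on $A^l_{1,\gamma}(0)$ together with uniform closeness $u_j \to \varphi^{(0)}$, $\varphi_j \to \varphi^{(0)}$), producing a single-valued harmonic $v_j := u_{j,1} - \varphi_{j,1}$ on $A^l_{1,\gamma/2}(0)$ with $u_j = \{\pm(\varphi_{j,1} + v_j)\}$. Interior Schauder estimates together with uniform convergence give $\sup(|v_j| + |Dv_j|) \to 0$, hence eventually $\leq \nu$. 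The $L^2$ bound is immediate since $|v_j|^2 \leq \mathcal{G}(u_j,\varphi_j)^2$ pointwise, and interior $L^2$ gradient estimates for the harmonic function $v_j$ control $\int_{A^l_{1,\gamma/2}(0)} |Dv_j|^2$ by $\int_{A^l_{1,\gamma}(0)} |v_j|^2$. This contradicts the failure assumption and completes the proof. The principal obstacle is the branch-point exclusion in the case $l > l_0$: one must combine the optimality of $\varphi$ expressed in the lemma's $\beta_0$ assumption with a blowup at $Y_j$ to extract the uniform lower bound incompatible with $\beta_j \varepsilon_j \to 0$.
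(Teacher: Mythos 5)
Your overall strategy (compactness and contradiction, reducing to ruling out branch points in the annulus, then using Schauder and simple connectedness to produce the sheet $v$ and its estimates) matches the paper's, and your handling of the case $l=l_0$ is essentially correct modulo some care with the $j$-dependent rotation putting $S(\varphi_j)$ in standard position. But the case $l>l_0$ has a genuine gap, and it is precisely the part for which the paper deploys its heaviest machinery.

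Your argument for $l>l_0$ deduces $\int_{A^l_{1,\gamma}}\mathcal{G}(u_j,\varphi_j)^2\leq\beta_j\varepsilon_j\to 0$ (granting that after a small rotation $\{0\}^l\times\mathbb{R}^{n-l}\subsetneq S(\varphi^{(0)})$, so $\varphi^{(0)}$ is admissible in the infimum), and then tries to contradict this via a lower bound $\int_{B_r(Y_j)}\mathcal{G}(u_j,\varphi_j)^2\geq cr^{n+2\alpha}$ coming from ``a uniform positive separation between irreducible limits and decomposable two-valued functions.'' This step fails. The limit $Y$ of the branch points $Y_j$ lies in $S(\varphi^{(0)})\cap\overline{A^l_{1,\gamma/2}(0)}$, which is nonempty when $l>l_0$; consequently $\varphi_j(Y_j)\to\varphi^{(0)}(Y)=\{0,0\}$. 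So the decomposable comparison function $\varphi_j$ itself becomes degenerate at $Y_j$ as $j\to\infty$, and there is no uniform scale $r$ or constant $c$ at which $u_j$ (merely having a branch point at $Y_j$) must differ from $\varphi_j$ by $cr^{n+2\alpha}$. Moreover, the blow-up of $u_j$ at $Y_j$ need not be irreducible: since $\alpha$ is an integer here, the blow-up can be a two-valued function of the form $\{\pm P\}$ for a homogeneous degree-$\alpha$ harmonic polynomial $P$, which decomposes; irreducibility is not an invariant of the frequency. So neither half of the claimed separation argument holds.

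The paper's proof is structured to navigate exactly this degeneracy, and it does so via ingredients that do not appear in your proposal. The proof is an induction on $l=\lambda$ under Hypothesis~\ref{graphrep_hyp}, and it uses the a priori estimates at lower levels $l<\lambda$ — Corollary~\ref{branchdist_cor} (distance of a branch point to the spines) and Corollary~\ref{nonconest_cor}(ii) (a weighted non-concentration estimate) — which you do not invoke. Concretely: the paper introduces the optimal sequence with increasing spines $\{\varphi_{j,k}\}$ over the annulus, locates the critical index $K$ at which the scale jump occurs, uses Corollary~\ref{branchdist_cor} to build rotations $\Gamma_j$ sending $Z_j$ into the spines so that the translated data satisfies Hypothesis~\ref{cft1_hyp}, forms the blow-up $w$ of $u_j\circ\Gamma_j^{-1}$ relative to $\varphi_{j,K}$ (which, by Hypothesis~\ref{cft1_hyp}(ii), is also the blow-up of $\varphi_j\circ\Gamma_j^{-1}$ and hence a homogeneous degree-$\alpha$ harmonic polynomial), and then uses Corollary~\ref{nonconest_cor}(ii) to show that $w$ vanishes to order $\alpha$ at the limit of $Z_j$. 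This forces $S(\widetilde{\varphi}_j)\supsetneq S(\varphi_j)$, which contradicts the minimality of $K$ in the optimal sequence — not the smallness of $\beta_j\varepsilon_j$ directly. Your argument is not a shortcut around this construction; the degeneracy of $\varphi_j$ at $Y_j$ is the genuine obstacle the paper's machinery is designed to overcome, and it would need to appear somewhere in any correct proof of the $l>l_0$ case.
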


As an obvious consequence of Lemma \ref{graphrep_lemma} we obtain: 

\begin{cor} \label{graphrep_cor}
Suppose $\varphi^{(0)} \in \Phi_{\alpha}$ is a Dirichlet energy minimizing two-valued function with $\dim S(\varphi^{(0)}) = n-l_0$ for some integer $l_0 \in \{2,3,\ldots,n\}$.  Let $l \in \{l_0,l_0+1,\ldots,n\}$.  For every $\gamma,\nu,\tau \in (0,1)$ with $\tau < 1-\gamma$ there exists $\varepsilon_0, \beta_0 > 0$ depending on $n$, $l$, $m$, $\varphi^{(0)}$, $\gamma$, $\nu$, and $\tau$ such that if $\varphi \in \Phi_{\alpha}$ and $u \in \mathcal{F}_{\alpha}$ such that $\dim S(\varphi) = n-l$ and Hypothesis \ref{cft1_hyp} holds true with $\varepsilon = \varepsilon_0$ and $\beta = \beta_0$, then there is a harmonic single-valued function $v : B_{\gamma}(0) \cap \{ r > \tau \} \rightarrow \mathbb{R}^m$ such that 
\begin{equation*}
	u(X) = \{ \varphi_1(X) + v(X), -\varphi_1(X) - v(X) \}
\end{equation*}
for all $X \in B_{\gamma}(0) \cap \{ r > \tau \}$ and
\begin{gather*}
	\sup_{B_{\gamma}(0) \cap \{ r > \tau \}} (r^{-\alpha} |v| + r^{1-\alpha} |Dv|) \leq \nu, \nonumber \\
	\int_{B_{\gamma}(0) \cap \{ r > \tau \}} (|v|^2 + r^2 |Dv|^2) \leq C \int_{B_1(0)} \mathcal{G}(u,\varphi)^2, 
\end{gather*}
where $C = C(n,l,m,\varphi^{(0)},\gamma) \in (0,\infty)$ and $r(X) = \op{dist}(X,S(\varphi))$. 
\end{cor}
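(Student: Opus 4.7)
The plan is to deduce the Corollary from Lemma \ref{graphrep_lemma} by a covering-plus-rescaling argument. For each $X_0 = (x_0, y_0) \in B_\gamma(0) \cap \{r > \tau\}$, set $\rho = |x_0| \in (\tau, \gamma)$, and consider the rescaled function $\tilde u(X) := \rho^{-\alpha} u((0, y_0) + \rho X)$, which is again a nonzero, symmetric, Dirichlet-energy minimizing two-valued function in $\mathcal{F}_\alpha$. Because $\varphi$ is homogeneous of degree $\alpha$ and translation-invariant along $S(\varphi) = \{0\} \times \mathbb{R}^{n-l}$, a direct computation gives $\rho^{-\alpha} \varphi((0, y_0) + \rho X) = \varphi(X)$, so $\varphi$ itself is preserved by the rescaling. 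I would then apply Lemma \ref{graphrep_lemma} to $\tilde u$ on the annular region $A^l_{1, \gamma'}(0)$ for some absolute $\gamma' \in (0, 1/2)$, taking $\varphi$ itself to play the role of the reference $\varphi^{(0)}$ and the approximator $\varphi$ in the Lemma statement (so that $l_0 = l$ there and Hypothesis \ref{cft1_hyp}(ii) holds vacuously).

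The $L^2$ smallness hypothesis demanded by the Lemma follows from
\begin{equation*}
\int_{A^l_{1,\gamma'}(0)} \mathcal{G}(\tilde u, \varphi)^2 \,dX = \rho^{-2\alpha - n} \int_{A^l_{\rho, \gamma'}(y_0)} \mathcal{G}(u,\varphi)^2 \,dZ \leq \tau^{-2\alpha - n} \int_{B_1(0)} \mathcal{G}(u,\varphi)^2 \,dZ,
\end{equation*}
combined with Hypothesis \ref{cft1_hyp}(i) and the triangle inequality $\mathcal{G}(u,\varphi)^2 \leq 2\mathcal{G}(u,\varphi^{(0)})^2 + 2\mathcal{G}(\varphi,\varphi^{(0)})^2$; it is enough to choose $\varepsilon_0$ and $\beta_0$ in the Corollary small depending on $\tau$ and on the $\varepsilon_0$ afforded by the Lemma. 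The Lemma then produces a harmonic single-valued $\tilde v$ on $A^l_{1, \gamma'/2}(0)$ with $\tilde u = \{\varphi_1 + \tilde v, -\varphi_1 - \tilde v\}$ there, and with $\sup(|\tilde v| + |D\tilde v|) \leq \nu'$ and $\int (|\tilde v|^2 + |D\tilde v|^2) \leq C \int \mathcal{G}(\tilde u, \varphi)^2$ for a constant $C$ independent of $\rho$. Unscaling via $v(Z) := \rho^\alpha \tilde v((Z-(0,y_0))/\rho)$ on $A^l_{\rho, \gamma'/2}(y_0)$, and using that $r(Z) = |x|$ is comparable to $\rho$ on this region, the Lemma's sup bound translates to $r^{-\alpha}|v| + r^{1-\alpha}|Dv| \leq C\nu'$, and a change of variables in the integral bound gives $\int_{A^l_{\rho, \gamma'/2}(y_0)}(|v|^2 + r^2|Dv|^2) \leq C\int_{A^l_{\rho, \gamma'/2}(y_0)} \mathcal{G}(u,\varphi)^2$.

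Finally, I would cover $B_\gamma(0) \cap \{r > \tau\}$ by a countable family of such regions $A^l_{\rho_i, \gamma'/4}(y_i)$ of uniformly bounded overlap (by a standard Vitali-type selection), so that $v$ and the estimates assemble globally. Consistency of the local decompositions on overlaps reduces to choosing, at each point, the branch of $u$ close to $\varphi_1$ rather than to $-\varphi_1$; since $|v| \leq C\nu' r^\alpha$ while $|\varphi_1|$ is of order $r^\alpha$ away from its nodal set, taking $\nu'$ sufficiently small (by shrinking the Corollary's $\varepsilon_0$ further) makes this selection continuous across overlaps and hence coherent, yielding a globally defined harmonic single-valued $v$. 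Summing the local $L^2$ estimates over the bounded-overlap cover then produces the claimed global integral bound. The main technical point worth extra care is the case $l = 2$, in which $B_\gamma(0) \cap \{r > \tau\}$ is not simply connected and the symmetric two-valued $u$ could a priori develop a sign-flipping monodromy around the $(n-2)$-dimensional spine; the pointwise domination $|v| \ll |\varphi_1|$ rules out such a flip along any nontrivial loop and thereby secures the single-valued globally.
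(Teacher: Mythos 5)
Your overall strategy (rescale, cover by annuli, patch) is the right framework, and the unique-continuation patching argument is sound. But the crucial step of applying Lemma \ref{graphrep_lemma} with $\varphi$ playing the role of its own reference $\varphi^{(0)}$ has two genuine gaps. First, Lemma \ref{graphrep_lemma} explicitly requires the reference function to be a \emph{Dirichlet energy minimizing} element of $\Phi_\alpha$; the Corollary's $\varphi$ is an arbitrary element of $\Phi_\alpha$ close to $\varphi^{(0)}$, and is not assumed (nor, in this paper's conventions, guaranteed) to be Dirichlet energy minimizing. The minimizing property of the reference is actually used in the Lemma's contradiction argument, where one passes to a limit $u_j \to \varphi^{(0)}$ of Dirichlet energy minimizers and invokes the upper semicontinuity of frequency Lemma \ref{usc_freq_dm} to deduce that a limit branch point lies in $S(\varphi^{(0)})$ — this needs the limit to be a minimizer. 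Second, even setting aside the minimizing issue, the Lemma's constants $\varepsilon_0, \beta_0, C$ depend on the reference; by substituting $\varphi$, those constants depend on $\varphi$, whereas the Corollary's conclusion demands $C = C(n,l,m,\varphi^{(0)},\gamma)$ with the particular $\varphi$ otherwise free. You would therefore need an additional compactness/uniformity argument showing the Lemma's constants can be taken uniform over the family of admissible $\varphi$ near $\varphi^{(0)}$; you neither state nor prove this, and it does not follow from the statement of the Lemma alone.

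The cleaner route, and the one that makes the corollary a genuinely ``obvious'' consequence, is to keep the original $\varphi^{(0)}$ as the reference. For each annulus $A^l_{\rho,\gamma'}(\zeta) \subset B_\gamma(0) \cap \{r>\tau\}$ with $\rho > \tau/2$, apply the Lemma to $\tilde u(X) = \rho^{-\alpha} u((0,\zeta)+\rho X)$ keeping both $\varphi^{(0)}$ and $\varphi$ in their original roles. Because $\varphi$ is homogeneous and translation-invariant along $S(\varphi)$ (so $\tilde\varphi = \varphi$) and because $\rho$ is bounded below by $\tau/2$, the Lemma's $L^2$ smallness hypothesis on the annulus follows from Hypothesis \ref{cft1_hyp}(i) on $B_1(0)$ up to a factor $C(\tau,\gamma')$, and the homogeneity of $\varphi$ and $\varphi^{(0)}$ gives $\int_{A^l_{1,\gamma'}(0)} \mathcal{G}(\varphi,\varphi^{(0)})^2 = \rho^{-n-2\alpha}\int_{A^l_{\rho,\gamma'}(0)} \mathcal{G}(\varphi,\varphi^{(0)})^2 \leq C \int_{B_1(0)} \mathcal{G}(\varphi,\varphi^{(0)})^2$. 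Taking $\varepsilon_0,\beta_0$ in the Corollary small enough relative to the Lemma's $\varepsilon_0,\beta_0$ and the $\tau$-dependent comparison constants then covers all the annuli simultaneously (their number and scales are controlled by $\tau$ and $\gamma$ alone), and the constant in the final $L^2$ estimate depends only on $n,l,m,\varphi^{(0)},\gamma$ as claimed, with the $\tau$-dependence confined to $\varepsilon_0, \beta_0$. Your patching argument then applies verbatim; the ``domination $|v| \ll |\varphi_1|$ rules out the wrong branch'' step should be stated as a unique continuation argument on the connected overlap minus the nodal set of $\varphi_1$, since the pointwise inequality does fail on that nodal set, but the conclusion $v_1 \equiv v_2$ on the full overlap follows because $v_1 - v_2$ is harmonic.
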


\begin{lemma} \label{keyest_lemma} 
Suppose $\varphi^{(0)} \in \Phi_{\alpha}$ is a Dirichlet energy minimizing two-valued function with $\dim S(\varphi^{(0)}) = n-l_0$ for some integer $l_0 \in \{2,3,\ldots,n\}$.  Let $l \in \{l_0,l_0+1,\ldots,n\}$.  For every $\gamma \in (0,1)$ there exists $\varepsilon_0, \beta_0 > 0$ depending on $n$, $l$, $m$, $\varphi^{(0)}$, and $\gamma$ such that if $\varphi \in \Phi_{\alpha}$ and $u \in \mathcal{F}_{\alpha}$ such that $\dim S(\varphi) = n-l$, Hypothesis \ref{cft1_hyp} holds true with $\varepsilon = \varepsilon_0$ and $\beta = \beta_0$, and $\mathcal{N}_u(0) \geq \alpha$, then 
\begin{equation} \label{keyest_eqn1}
	\int_{B_{\gamma}(0)} |\nabla_{S(\varphi)} u|^2 + \int_{B_{\gamma}(0)} R^{2-n} \left( \frac{\partial (u/R^{\alpha})}{\partial R} \right)^2 
	\leq C \int_{B_1(0)} \mathcal{G}(u,\varphi^{(0)})^2 
\end{equation} 
where $\nabla_{S(\varphi)}$ denotes the tangential derivative along $S(\varphi)$, $R(X) = |X|$, and $C = C(n,l,m,\varphi^{(0)},\gamma) \in (0,\infty)$.
\end{lemma}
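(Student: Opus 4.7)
The plan is to adapt the Caccioppoli-type and monotonicity-based estimates of Section~6 of~\cite{KrumWic1}, which themselves build on~\cite{Sim93}, to the current setting in which the spines of $\varphi$ and $\varphi^{(0)}$ may differ in dimension. Both terms on the left of~(\ref{keyest_eqn1}) measure failure of $u$ to be a homogeneous degree $\alpha$ function translation invariant along $S(\varphi)$, so each should be controlled by an $L^2$ deviation of $u$ from such a function. I would treat the radial and tangential terms separately.

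For the radial term, $\varphi^{(0)}$ is homogeneous of degree $\alpha$, so $N_{\varphi^{(0)},0}\equiv\alpha$. Hypothesis~\ref{cft1_hyp}(i), upper semicontinuity of frequency (Lemma~\ref{usc_freq_dm}), the monotonicity of $N_{u,0}$, and the standing assumption $\mathcal{N}_u(0)\geq\alpha$ together squeeze $N_{u,0}(\rho)$ between $\alpha$ and $\alpha+\eta(\varepsilon_0)$ on a definite range of scales. The integrand $R^{2-n}(\partial(u/R^\alpha)/\partial R)^2$ is, up to a multiplicative constant, the nonnegative integrand driving Almgren's derivative formula for $N_{u,0}$; integrating that formula from $\gamma$ to $1$ relates its total integral to the frequency gap times $\int_{\partial B_1}|u|^2$. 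Since the analogous radial defect vanishes identically for the homogeneous competitor $\varphi^{(0)}$, subtraction and the two-sheeted harmonic decomposition $u=\{\pm(\varphi^{(0)}_1+w)\}$ provided by Corollary~\ref{graphrep_cor} on $\{r_0>\tau\}$, with $r_0=\op{dist}(\cdot,S(\varphi^{(0)}))$, let us replace $|u|^2$ by $\mathcal{G}(u,\varphi^{(0)})^2$ after a Caccioppoli step on the harmonic remainder $w$. The thin tube $\{r_0\leq\tau\}$ is absorbed using $|u|\leq C r_0^{\alpha}$, which follows from $\mathcal{N}_u\geq\alpha$ together with standard growth estimates.

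For the tangential term, fix coordinates so that $S(\varphi)=\{0\}\times\mathbb{R}^{n-l}$ and let $e$ be a unit vector in $S(\varphi)$, with $r=\op{dist}(\cdot,S(\varphi))$. On $\{r>\tau\}$, Corollary~\ref{graphrep_cor} produces a harmonic decomposition $u=\{\pm(\varphi_1+v)\}$; since $D_e\varphi_1=0$, we have $D_e u=\{\pm D_e v\}$, so the standard Caccioppoli estimate for the harmonic $v$ bounds $\int|Dv|^2$ on $B_\gamma\cap\{r>\tau\}$ by $\int v^2$ on slightly larger sets, which Corollary~\ref{graphrep_cor} bounds in turn by $\int_{B_1}\mathcal{G}(u,\varphi)^2$. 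The contribution from $\{r\leq\tau\}$ is absorbed using $|\nabla u|\leq C r^{\alpha-1}$, which follows from interior harmonic estimates combined with $\mathcal{N}_u\geq\alpha$. To convert $\int_{B_1}\mathcal{G}(u,\varphi)^2$ to $\int_{B_1}\mathcal{G}(u,\varphi^{(0)})^2$: when $\dim S(\varphi)=\dim S(\varphi^{(0)})$, the triangle inequality together with Hypothesis~(i) suffices; in the remaining case one iterates~(ii) along the optimal sequence with increasing spines $\varphi=\varphi_0,\varphi_1,\ldots,\varphi_N$ (with $\dim S(\varphi_N)=\dim S(\varphi^{(0)})$), combined with triangle inequalities at each step, to obtain $\int\mathcal{G}(u,\varphi)^2\leq C\int\mathcal{G}(u,\varphi^{(0)})^2$.

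The main obstacle is precisely the mismatch $\dim S(\varphi)<\dim S(\varphi^{(0)})$: tangential directions along $S(\varphi)$ may then include directions in which $\varphi^{(0)}$ genuinely fluctuates (since $\varphi^{(0)}$ is invariant only along the larger spine $S(\varphi^{(0)})$), so a naive Caccioppoli bound against $\mathcal{G}(u,\varphi^{(0)})^2$ would miss contributions of the same size as the left-hand side. Hypothesis~\ref{cft1_hyp}(ii) is designed precisely to forbid the scenario in which $u$ is substantially closer, up to a factor $\beta_0$, to some $\varphi'\in\Phi_\alpha$ with strictly larger spine; propagating this optimality uniformly through the Caccioppoli and monotonicity estimates across all $l\in\{l_0,\ldots,n\}$ is the heart of the argument.
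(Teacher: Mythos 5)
The general outline — frequency monotonicity for the radial defect, a Caccioppoli/first-variation identity for the tangential derivative, Corollary~\ref{graphrep_cor} to obtain a single-valued $v$ away from the spine, and Hypothesis~\ref{cft1_hyp}(ii) to handle the spine-dimension mismatch — is consistent with the paper's strategy (which starts from the identities (\ref{keyest_eqn4})--(\ref{keyest_eqn5}) and the monotonicity bound (\ref{keyest_eqn2})).  However, there are two concrete gaps in the proposed argument.

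First, your absorption of the thin tube $\{r\le\tau\}$ is not justified.  You claim $|u|\le C r_0^\alpha$ (respectively $|\nabla u|\le C r^{\alpha-1}$) near $S(\varphi^{(0)})$ (respectively $S(\varphi)$) as a consequence of $\mathcal{N}_u\ge\alpha$.  But the lower frequency bound $\mathcal{N}_u(Y)\ge\alpha$ gives decay centred at $Y\in\mathcal{B}_u$, not decay measured against a spine.  $\mathcal{B}_u$ is not contained in $S(\varphi)$ (Corollary~\ref{branchdist_cor} only says it is close, with constants depending on the gaps between successive spines), and there may be open slices of the tube containing no singular points at all; on those ``gaps'' $u$ decomposes as $\{\pm u_1\}$ with $u_1$ nonvanishing, so neither $|u|$ nor $|Du|$ need be small near $S(\varphi)$.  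The paper's Case~I/II/III dichotomy with its Besicovitch covering by annuli $A^{\lambda}_{\rho,(1-\gamma)/2}(\zeta)$ and the finer sub-cover at scale $\max\{4cE,d(0,\xi)/2\}$ in Case~III exists precisely to handle these gaps and the locally variable branch structure; a single global cut at scale $\tau$ cannot replace it.  In particular, in the Case~III regime the argument must pass to a new optimal element $\widetilde\varphi$ with a possibly larger spine and invoke the inductive Corollaries~\ref{radialdecay_cor}, \ref{branchdist_cor}, and \ref{nonconest_cor}, none of which appear in your outline.

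Second, a Caccioppoli bound on the single-valued remainder $v$ alone does not close.  The right-hand sides produced by testing (\ref{keyest_eqn4}) and (\ref{keyest_eqn5}) carry leading-order contributions such as $\int(|\nabla_{S^{\lambda-1}}\varphi|^2-\alpha(\alpha+\lambda-2)|\varphi|^2)R^{-1}\psi\psi'$ and the cross term $\int(\nabla_{S^{\lambda-1}}v\cdot\nabla_{S^{\lambda-1}}\varphi-\alpha(\alpha+\lambda-2)v\varphi)\ldots$, which are $O(1)$ and $O(\|v\|)$ respectively, not $O(\int\mathcal{G}(u,\varphi)^2)$.  The paper removes them by integrating against $\varphi$'s spherical eigenfunction equation $\Delta_{S^{\lambda-1}}\varphi+\alpha(\alpha+\lambda-2)\varphi=0$ and then integrating by parts.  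Without this cancellation the estimate fails by an order of magnitude.  Your closing step converting $\int\mathcal{G}(u,\varphi)^2$ to $\int\mathcal{G}(u,\varphi^{(0)})^2$ is also stated in the wrong direction: the triangle inequality gives $\int\mathcal{G}(u,\varphi)^2\le 2\int\mathcal{G}(u,\varphi^{(0)})^2+2\int\mathcal{G}(\varphi,\varphi^{(0)})^2$, and Hypothesis~(i) only makes both summands small, not comparable; an appeal to Hypothesis~(ii) in the form you need requires $S(\varphi)\subset S(\varphi^{(0)})$, which is not assumed.
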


\begin{cor} \label{radialdecay_cor} 
Suppose $\varphi^{(0)} \in \Phi_{\alpha}$ is a Dirichlet energy minimizing two-valued function with $\dim S(\varphi^{(0)}) = n-l_0$ for some integer $l_0 \in \{2,3,\ldots,n\}$.  Let $l \in \{l_0,l_0+1,\ldots,n\}$.  For every $\gamma,\sigma \in (0,1)$ there exists $\varepsilon_0, \beta_0 > 0$ depending on $n$, $l$, $m$, $\varphi^{(0)}$, $\gamma$, and $\sigma$ such that if $\varphi \in \Phi_{\alpha}$ and $u \in \mathcal{F}_{\alpha}$ such that $\dim S(\varphi) = n-l$, Hypothesis \ref{cft1_hyp} holds true with $\varepsilon = \varepsilon_0$ and $\beta = \beta_0$, and $\mathcal{N}_u(0) \geq \alpha$, then 
\begin{equation*} 
	\int_{B_{\gamma}(0)} \frac{\mathcal{G}(u,\varphi)^2}{|X|^{n+2\alpha-\sigma}} \leq C \int_{B_1(0)} \mathcal{G}(u,\varphi)^2 
\end{equation*} 
for some $C = C(n,l,m,\varphi^{(0)},\gamma) \in (0,\infty)$.
\end{cor}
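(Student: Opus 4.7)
The plan is to obtain the weighted decay estimate directly from the radial derivative bound of Lemma \ref{keyest_lemma} via a ray-by-ray Cauchy--Schwarz and a weighted integration in the radial variable, together with triangle-inequality manipulations that use Hypothesis \ref{cft1_hyp} to convert a $\varphi^{(0)}$-term into a $\varphi$-term on the right-hand side.

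\textbf{Step 1 (setup and use of homogeneity).}  Because $\varphi \in \Phi_\alpha$ is homogeneous of degree $\alpha$, $\varphi(R\omega)/R^\alpha$ is independent of $R$.  Set $h(R,\omega) = \mathcal{G}(u(R\omega),\varphi(R\omega))/R^\alpha$.  On the complement of a neighborhood of $S(\varphi)$ and $\mathcal{B}_u$, Lemma \ref{graphrep_lemma} lets us write $u = \{\pm(\varphi_1 + v)\}$ with $v$ harmonic and small, so $h = |v|/R^\alpha$ and the pointwise bound $|\partial_R h| \le |\partial_R(u/R^\alpha)|$ holds a.e.  Combined with Lemma \ref{keyest_lemma}, this yields
\begin{equation*}
\int_0^\gamma R \int_{S^{n-1}} (\partial_R h(R,\omega))^2 \, d\omega \, dR
\;\le\; \int_{B_\gamma} R^{2-n} \Bigl(\tfrac{\partial(u/R^\alpha)}{\partial R}\Bigr)^{\!2} dX
\;\le\; C \int_{B_1} \mathcal{G}(u,\varphi^{(0)})^2.
\end{equation*}

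\textbf{Step 2 (Cauchy--Schwarz on each ray).}  For each $\omega\in S^{n-1}$ and $0<\rho<\gamma$, writing $h(\rho,\omega) - h(\gamma,\omega) = -\int_\rho^\gamma \partial_R h(R,\omega)\,dR$ and applying Cauchy--Schwarz with the splitting $R^{-1/2}\cdot R^{1/2}|\partial_R h|$,
\begin{equation*}
h(\rho,\omega)^2 \;\le\; 2\, h(\gamma,\omega)^2 + 2\log(\gamma/\rho) \int_0^\gamma R\,(\partial_R h(R,\omega))^2 \, dR.
\end{equation*}

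\textbf{Step 3 (weighted radial integration).}  Multiplying by $\rho^{\sigma-1}$ and integrating over $\rho\in(0,\gamma)$ and $\omega\in S^{n-1}$, using $\int_0^\gamma \rho^{\sigma-1}d\rho = \gamma^\sigma/\sigma$ and $\int_0^\gamma \rho^{\sigma-1}\log(\gamma/\rho)\,d\rho = \gamma^\sigma/\sigma^2$, and converting the left side back to a bulk integral (noting that the left side equals $\int_{B_\gamma} \mathcal{G}(u,\varphi)^2 / |X|^{n+2\alpha-\sigma} \, dX$), I obtain
\begin{equation*}
\int_{B_\gamma} \frac{\mathcal{G}(u,\varphi)^2}{|X|^{n+2\alpha - \sigma}} \, dX
\;\le\; \frac{C\gamma^{\sigma-n-2\alpha+1}}{\sigma} \int_{\partial B_\gamma} \mathcal{G}(u,\varphi)^2
+ \frac{C}{\sigma^2} \int_{B_1} \mathcal{G}(u,\varphi^{(0)})^2.
\end{equation*}
The spherical boundary term is eliminated in the standard way: replacing $\gamma$ by an average over $\gamma'\in(3\gamma/4,\gamma)$ (Fubini) turns it into a bulk integral $\le C \int_{B_1}\mathcal{G}(u,\varphi)^2$.

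\textbf{Step 4 (replacing $\varphi^{(0)}$ by $\varphi$ on the right).}  It remains to bound $\int_{B_1} \mathcal{G}(u,\varphi^{(0)})^2 \le C \int_{B_1} \mathcal{G}(u,\varphi)^2$.  The triangle inequality reduces this to bounding $\int \mathcal{G}(\varphi,\varphi^{(0)})^2$ by $\int \mathcal{G}(u,\varphi)^2$.  When $l>l_0$, so that $\dim S(\varphi) < \dim S(\varphi^{(0)})$, one uses Hypothesis \ref{cft1_hyp}(ii): provided $\varepsilon_0$ is small enough to guarantee via Hypothesis \ref{cft1_hyp}(i) that $S(\varphi)$ is (after a small rotation) contained in $S(\varphi^{(0)})$, then $\varphi^{(0)}$ is admissible in the infimum, so $\int\mathcal{G}(u,\varphi)^2 \le \beta_0 \int \mathcal{G}(u,\varphi^{(0)})^2$, which combined with the triangle inequality and $\beta_0$ chosen $<1/4$ yields the desired reverse bound up to an absolute constant.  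When $l=l_0$, the spines have equal dimension, so $\varphi$ and $\varphi^{(0)}$ lie in the same stratum of $\Phi_\alpha$; discreteness of this stratum up to rotation (together with the $\sqrt{\varepsilon_0}$-closeness from Hypothesis \ref{cft1_hyp}(i)) lets one absorb $\int \mathcal{G}(\varphi,\varphi^{(0)})^2$ into a constant multiple of $\int \mathcal{G}(u,\varphi)^2$ after $\varepsilon_0$ is chosen sufficiently small.

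\textbf{Main obstacle.}  Step 4 is where the real work lies.  The radial-integration steps 1--3 are essentially routine once Lemma \ref{keyest_lemma} is in hand; by contrast, swapping $\varphi^{(0)}$ for $\varphi$ on the right-hand side requires the fine spine geometry of $\Phi_\alpha$ encoded in Hypothesis \ref{cft1_hyp}(ii) and the optimal-sequence-with-increasing-spines formalism.  A secondary technical issue is the correct interpretation of $\partial_R(u/R^\alpha)$ through singular points of $u$: away from $\mathcal{B}_u\cup S(\varphi)$ one reads it off the graph representation of Lemma \ref{graphrep_lemma} and Corollary \ref{graphrep_cor}, and the remaining set has measure zero, so it does not contribute to the integrals.
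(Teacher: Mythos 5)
Your Steps 1--3 are exactly the argument the paper intends: the paper's own ``proof'' is a one-line citation (``Follows from Lemma \ref{keyest_lemma} using the same argument as the proof of Corollary 6.4 of \cite{KrumWic1}''), and that argument is precisely the ray-by-ray Cauchy--Schwarz plus weighted radial integration that you carry out. The use of homogeneity of $\varphi$, the reduction of the radial derivative of $\mathcal{G}(u,\varphi)/R^\alpha$ to the graph perturbation $v/R^\alpha$, the $\log(\gamma/\rho)$ weight, the computations $\int_0^\gamma \rho^{\sigma-1}d\rho = \gamma^\sigma/\sigma$ and $\int_0^\gamma \rho^{\sigma-1}\log(\gamma/\rho)\,d\rho = \gamma^\sigma/\sigma^2$, and the averaging over annular radii to dispose of the boundary term are all correct and standard.

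Step 4, however, is both wrong and unnecessary. The direction of the inequality supplied by Hypothesis \ref{cft1_hyp}(ii) is the opposite of what you need: when $l > l_0$, assuming $\varphi^{(0)}$ is admissible in the infimum, hypothesis (ii) gives
\begin{equation*}
\int_{B_1(0)}\mathcal{G}(u,\varphi)^2 \;\le\; \beta_0 \int_{B_1(0)}\mathcal{G}(u,\varphi^{(0)})^2,
\end{equation*}
which is a \emph{lower} bound on $\int\mathcal{G}(u,\varphi^{(0)})^2$ in terms of $\int\mathcal{G}(u,\varphi)^2$, i.e.\ $\int\mathcal{G}(u,\varphi^{(0)})^2 \ge \beta_0^{-1}\int\mathcal{G}(u,\varphi)^2$. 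Taking $\beta_0 < 1/4$ makes this worse, not better. And indeed the upper bound $\int\mathcal{G}(u,\varphi^{(0)})^2 \le C\int\mathcal{G}(u,\varphi)^2$ with $C$ independent of $\beta_0$ is simply \emph{false} in general when $l>l_0$: Hypothesis \ref{cft1_hyp}(ii) precisely encodes that $\varphi$ may be much closer to $u$ than any $\varphi'$ with a larger spine (including $\varphi^{(0)}$). You also appeal to a claim that $S(\varphi)\subseteq S(\varphi^{(0)})$ after a small rotation, but the paper explicitly disclaims this (see the parenthetical in Lemma \ref{graphrep_lemma}: ``Note that we need not assume that $S(\varphi)\subseteq S(\varphi^{(0)})$.'').

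The reason you should not need Step 4 at all: the right-hand side of Lemma \ref{keyest_lemma} as stated, $C\int_{B_1(0)}\mathcal{G}(u,\varphi^{(0)})^2$, appears to be a typo for $C\int_{B_1(0)}\mathcal{G}(u,\varphi)^2$. Inspecting the proof of Lemma \ref{keyest_lemma} for $l=\lambda$, the central estimate (\ref{keyest_eqn9}) already bounds the localized quantity by $C\int_{A^\lambda_{\rho,1-\gamma}(\zeta)}\mathcal{G}(u,\varphi)^2$, and the Besicovitch summation over annuli keeps the right-hand side in terms of $\mathcal{G}(u,\varphi)^2$; nowhere does the argument incur an error term forcing a $\varphi^{(0)}$ on the right. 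The boundary term in your Step 3 is already controlled by $\int_{B_1}\mathcal{G}(u,\varphi)^2$, so with the corrected Lemma \ref{keyest_lemma} your Steps 1--3 finish the corollary directly, consistent with the paper's one-line reference and with the way Corollary \ref{radialdecay_cor} is subsequently applied (always with $\int\mathcal{G}(u,\varphi)^2$ or $\int\mathcal{G}(u,\widetilde\varphi)^2$ on the right).
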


\begin{cor} \label{branchdist_cor} 
Suppose $\varphi^{(0)} \in \Phi_{\alpha}$ is a Dirichlet energy minimizing two-valued function with $\dim S(\varphi^{(0)}) = \{0\} \times \mathbb{R}^{n-l_0}$ for some integer $l_0 \in \{2,3,\ldots,n\}$.  Let $l \in \{l_0,l_0+1,\ldots,n\}$.  For every $\gamma \in (0,1)$ there exists $\varepsilon_0, \beta_0 > 0$ depending on $n$, $l$, $m$, $\varphi^{(0)}$, and $\gamma$ such that the following holds true.  Let $\varphi \in \Phi_{\alpha}$ and $u \in \mathcal{F}_{\alpha}$ such that Hypothesis \ref{cft1_hyp} holds true with $\varepsilon = \varepsilon_0$ and $\beta = \beta_0$.  Let $\{\varphi_j\}_{j = 0,1,2,\ldots,N} \subset \Phi_{\alpha}$ be an optimal sequence with increasing spines such that $\varphi_0 = \varphi$ and $S(\varphi_N) = \{0\} \times \mathbb{R}^{n-l_0}$.  Let $Z \in B_{1/2}(0)$ with $\mathcal{N}_u(Z) \geq \alpha$.  Then 
\begin{gather*}
	\op{dist}(Z,S(\varphi_j))^2 \leq \frac{C \int_{B_1(0)} \mathcal{G}(u,\varphi)^2}{\int_{B_1(0)} \mathcal{G}(\varphi,\varphi_{j+1})^2} 
		\text{ for } j = 0,1,2,\ldots,N-1, \nonumber \\
	\op{dist}(Z,S(\varphi^{(0)}))^2 \leq C \int_{B_1(0)} \mathcal{G}(u,\varphi)^2, 
\end{gather*} 
for some $C = C(n,l,m,\varphi^{(0)},\gamma) \in (0,\infty)$. 
\end{cor}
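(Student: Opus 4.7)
The plan is to deduce both estimates by combining Corollary \ref{radialdecay_cor} (the $L^2$ radial decay of $\mathcal{G}(u,\varphi)$) with the polynomial vanishing of $u$ at $Z$ implied by $\mathcal{N}_u(Z)\geq\alpha$, and then to exploit the structure of $\Phi_{\alpha}$ and of the optimal sequence to extract distance bounds. I would first establish the terminal estimate $\op{dist}(Z,S(\varphi^{(0)}))^2\leq CE$, where $E=\int_{B_1(0)}\mathcal{G}(u,\varphi)^2$. Almgren's monotonicity at $Z$ gives $\rho^{-n-2\alpha}\int_{B_\rho(Z)}|u|^2\leq C$ for $\rho\in(0,1/4)$. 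Writing $\varphi^{(0)}=\{\pm\varphi^{(0)}_1\}$ with $\varphi^{(0)}_1$ a nonzero homogeneous harmonic polynomial of degree $\alpha$ depending only on the coordinates transverse to $S(\varphi^{(0)})$, a non-degeneracy argument (using homogeneity and unique continuation for $\varphi^{(0)}_1$) produces a lower bound $\int_{B_\rho(Z)}|\varphi^{(0)}|^2 \geq c\, d_N^{2\alpha}\rho^n$ for some $\rho$ comparable to $d_N=\op{dist}(Z,S(\varphi^{(0)}))$. Combining this with the triangle inequality $\mathcal{G}(u,\varphi^{(0)})^2\leq 2\,\mathcal{G}(u,\varphi)^2+2\,\mathcal{G}(\varphi,\varphi^{(0)})^2$, Hypothesis \ref{cft1_hyp}(i), and the $u$-decay yields $d_N^{2\alpha}\leq CE$; since $d_N\leq 1$ and $E\leq\varepsilon_0$ may be taken small, this implies $d_N^2\leq CE$.

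For the intermediate estimates at each $j<N$, I would follow the same template but with $\varphi^{(0)}$ replaced by $\varphi_{j+1}$. The essential new ingredient is a \emph{transverse lower bound} of the form
\[
\int_{B_\rho(Z)}\mathcal{G}(\varphi,\varphi_{j+1})^2 \;\geq\; c\, d_j^{2\alpha}\int_{B_1(0)}\mathcal{G}(\varphi,\varphi_{j+1})^2
\]
for $\rho$ comparable to $1$ and $c>0$ independent of $Z$, where $d_j=\op{dist}(Z,S(\varphi_j))$. The intuition is that, up to a sign, $\varphi_1-(\varphi_{j+1})_1$ is a homogeneous harmonic polynomial of degree $\alpha$ which, by optimality of $\varphi_{j+1}$ in the defining minimization for the sequence, depends non-trivially on at least one direction perpendicular to $S(\varphi_j)$; a Poincar\'e-type inequality along these transverse directions should yield the displayed bound. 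Coupling this with Corollary \ref{radialdecay_cor}, the $u$-decay at $Z$, and the triangle inequality $\mathcal{G}(\varphi,\varphi_{j+1})\leq\mathcal{G}(\varphi,u)+\mathcal{G}(u,\varphi_{j+1})$ (where $\int_{B_\rho(Z)}\mathcal{G}(u,\varphi_{j+1})^2$ is controlled by combining the already-established estimate for index $j+1$ with the decay of $u$) gives $d_j^{2\alpha}\int_{B_1(0)}\mathcal{G}(\varphi,\varphi_{j+1})^2\leq CE$, hence the claimed bound using $d_j\leq 1$. This inductive structure, working in reverse order from $j=N$ down to $j=0$, appears necessary to reconcile the recursive use of closeness estimates for different elements of the sequence.

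The principal obstacle will be establishing the transverse lower bound above, which requires converting the global optimality property ``$\varphi_{j+1}$ is (up to factor $3/2$) the $L^2$-closest element of $\Phi_\alpha$ with spine properly containing $S(\varphi_{j-1})$'' into a quantitative local statement near $Z$. A clean route is likely a compactness/contradiction argument: assuming the transverse lower bound fails along a sequence of data, extract blow-up limits in the finite-dimensional space $\Phi_\alpha$ (stratified by spine dimension) to contradict either the optimality of the sequence or the decay provided by Corollary \ref{radialdecay_cor}. The main delicate point is to rule out the degenerate case in which $\varphi-\varphi_{j+1}$ happens to be approximately translation-invariant along one of the new directions of $S(\varphi_j)\setminus S(\varphi_{j-1})$, since this would invalidate the Poincar\'e-type step; the optimality property of $\varphi_{j+1}$ is what prevents this and must be carefully quantified.
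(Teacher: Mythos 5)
Your sketch contains a fundamental scaling error that the argument cannot recover from. The conclusion you need is $\op{dist}(Z,S(\varphi^{(0)}))^2\leq CE$ with $E=\int_{B_1(0)}\mathcal{G}(u,\varphi)^2$, i.e.\ a \emph{quadratic} bound on the distance. Your zeroth-order comparison of $|u|$ against $|\varphi^{(0)}|$ near $Z$ (using that both objects carry degree-$\alpha$ vanishing, $u$ at $Z$ and $\varphi^{(0)}$ on its spine) can only produce $d_N^{2\alpha}\lesssim$ (excess), and you then write ``since $d_N\leq 1$ and $E\leq\varepsilon_0$ may be taken small, this implies $d_N^2\leq CE$.'' That implication is backwards: for $\alpha\geq 1$ and $d_N\leq 1$ one has $d_N^{2\alpha}\leq d_N^2$, so $d_N^{2\alpha}\leq CE$ is \emph{strictly weaker} than the desired $d_N^2\leq CE$ whenever $\alpha>1$. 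Exactly the same gap appears in the intermediate step, where you arrive at $d_j^{2\alpha}\int_{B_1(0)}\mathcal{G}(\varphi,\varphi_{j+1})^2\leq CE$ and try to drop the exponent ``using $d_j\leq 1$.'' A secondary issue: your terminal estimate is driven by $\int_{B_1(0)}\mathcal{G}(u,\varphi^{(0)})^2$, which by Hypothesis \ref{cft1_hyp}(i) is only bounded by $\varepsilon_0$ and is not controlled by $E$; your triangle inequality brings in $\mathcal{G}(\varphi,\varphi^{(0)})^2$, which again is only $\lesssim\varepsilon_0$, so even ignoring the exponent problem this route produces a bound of the wrong shape.

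The paper obtains the correct quadratic scaling by going to first order rather than zeroth order. Taylor-expanding $\varphi_1(X-Z)$ about $X$ (equation (\ref{branchdist_eqn7})), the leading correction is the \emph{linear} term $-D\varphi_1(X)\cdot(\xi,0)$, where $\xi$ is the transverse component of $Z$. Since $\varphi$ is homogeneous of degree $\alpha$, $D\varphi$ is degree $\alpha-1$, and at points $X$ with $r(X)=\op{dist}(X,S(\varphi))\sim 1$ the gradient has unit size, so $|D\varphi(X)\cdot\xi|\sim|\xi|$ — linear in the displacement. The non-degeneracy lemma (\ref{branchdist_eqn1}) turns this into a measure-theoretic lower bound on a positive-measure subset of a ball, and — crucially — is formulated for arbitrary directions $a$ with weights $\int_{B_1(0)}\mathcal{G}(\varphi,\varphi_{j+1})^2$ on the different ``strata'' of the spine flag, so that all the intermediate and terminal estimates come out simultaneously from a single application of (\ref{branchdist_eqn1}) rather than via a downward induction on $j$. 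This non-degeneracy step is the only place where your instinct is right: it is indeed proved by a compactness/contradiction argument, extracting limit configurations in $\Phi_{\alpha}$ and using translation-invariance orthogonality relations to rule out the degenerate case. But the quantity being shown non-degenerate is the directional derivative $D\varphi(X)\cdot a$, not a $\sup$-norm of $\varphi$, and it is this linearization that delivers the correct $d^2$ rather than $d^{2\alpha}$ scaling. To repair your sketch you would need to replace the sup-norm comparison with a first-order expansion of $\varphi(X-Z)-\varphi(X)$ and prove quantitative lower bounds for $D\varphi\cdot a$ along the flag of spines.
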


\begin{cor} \label{nonconest_cor} 
Suppose $\varphi^{(0)} \in \Phi_{\alpha}$ is a Dirichlet energy minimizing two-valued function with $\dim S(\varphi^{(0)}) = \{0\} \times \mathbb{R}^{n-l_0}$ for some integer $l_0 \in \{2,3,\ldots,n\}$.  Let $l \in \{l_0,l_0+1,\ldots,n\}$.  For every $\gamma,\tau,\sigma \in (0,1)$ there exists $\varepsilon_0, \beta_0 > 0$ depending on $n$, $l$, $m$, $\varphi^{(0)}$, $\gamma$, $\tau$, and $\sigma$ such that the following holds true.  Let $\varphi \in \Phi_{\alpha}$ and $u \in \mathcal{F}_{\alpha}$ such that Hypothesis \ref{cft1_hyp} holds true with $\varepsilon = \varepsilon_0$ and $\beta = \beta_0$ and $\dim S(\varphi) = n-l$.  Let $\varphi_1$ and $v$ be as in Corollary \ref{graphrep_cor}.  Let $Z \in B_{1/2}(0)$ with $\mathcal{N}_u(Z) \geq \alpha$.  Then 
\begin{align*}
	&\text{(i)} \hspace{3mm} \int_{B_{\gamma}(0)} \frac{\mathcal{G}(u,\varphi)^2}{|X-Z|^{n-\sigma}} 
		\leq C \int_{B_1(0)} \mathcal{G}(u,\varphi)^2, \nonumber \\
	&\text{(ii)} \hspace{3mm} \int_{B_{\gamma}(0) \cap \{|x| > \tau\}} \frac{|v(X) - D\varphi_1(X) \cdot Z|^2}{|X-Z|^{2\alpha+n-\sigma}} 
		\leq C \int_{B_1(0)} \mathcal{G}(u,\varphi)^2, 
\end{align*} 
for some $C = C(n,l,m,\varphi^{(0)},\gamma,\sigma) \in (0,\infty)$. 
\end{cor}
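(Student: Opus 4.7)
\emph{Proof plan.} The strategy is to reduce to Corollary~\ref{radialdecay_cor} by recentering at the singular point $Z$. Set $\tilde u(X) := u(Z+X)$, defined after restriction and rescaling on $B_1(0)$. Then $\tilde u \in \mathcal{F}_\alpha$ with $\mathcal{N}_{\tilde u}(0) = \mathcal{N}_u(Z) \geq \alpha$, and by Corollary~\ref{branchdist_cor}, $\op{dist}(Z, S(\varphi^{(0)}))^2 \leq C \int_{B_1(0)}\mathcal{G}(u,\varphi)^2$ is small, so $|Z|$ is small and $\tilde u$ remains $L^2$-close to $\varphi^{(0)}$. Choose a near-minimizer $\tilde\varphi \in \Phi_\alpha$ of $\varphi' \mapsto \int_{B_1(0)}\mathcal{G}(\tilde u,\varphi')^2$, adjoining an optimal sequence with increasing spines if necessary so that the analogue of Hypothesis~\ref{cft1_hyp} holds for $(\tilde u,\tilde\varphi)$ with suitable small parameters $(\varepsilon_0', \beta_0')$.

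Applying Corollary~\ref{radialdecay_cor} to $(\tilde u,\tilde\varphi)$ and reinterpreting in the original coordinates yields
\begin{equation*}
\int_{B_\gamma(0)}\frac{\mathcal{G}(u(X),\tilde\varphi(X-Z))^2}{|X-Z|^{n+2\alpha-\sigma}}\,dX \leq C \int_{B_1(0)}\mathcal{G}(u,\varphi)^2\,dX,
\end{equation*}
where on the right we used near-optimality of $\tilde\varphi$ and the triangle inequality to bound $\int\mathcal{G}(u,\tilde\varphi(\cdot-Z))^2$ by $\int\mathcal{G}(u,\varphi)^2$. For (i), the triangle inequality $\mathcal{G}(u,\varphi)^2 \leq 2\mathcal{G}(u,\tilde\varphi(\cdot-Z))^2 + 2\mathcal{G}(\tilde\varphi(\cdot-Z),\varphi)^2$ together with $|X-Z|^{-(n-\sigma)} \leq C|X-Z|^{-(n+2\alpha-\sigma)}$ on bounded sets controls the first piece by the displayed bound; the second piece is a polynomial comparison, bounded via standard $L^\infty$--$L^2$ estimates for harmonic polynomials of bounded degree and the integrability of $|X-Z|^{-(n-\sigma)}$.

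For statement (ii), restrict to $\{r>\tau\}$ where Corollary~\ref{graphrep_cor} gives $u = \{\pm(\varphi_1+v)\}$, and likewise $\tilde\varphi(X-Z) = \{\pm\tilde\varphi_1(X-Z)\}$ with $\tilde\varphi_1$ close to $\varphi_1$ (both being near $\varphi^{(0)}_1$). Computing $\mathcal{G}(u(X),\tilde\varphi(X-Z))^2$ via the graph representation, Taylor expanding $\tilde\varphi_1(X-Z) = \tilde\varphi_1(X) - D\tilde\varphi_1(X)\cdot Z + O(|Z|^2)$, and using polynomial $C^k$-control of $\tilde\varphi_1 - \varphi_1$, the leading contribution becomes (a constant multiple of) $|v(X) - D\varphi_1(X)\cdot Z|^2$, with quadratic-in-$|Z|$ remainders absorbed by the smallness of $|Z|$. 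The main obstacle will be verifying the translated Hypothesis~\ref{cft1_hyp}(ii) for $(\tilde u,\tilde\varphi)$---specifically, constructing the optimal sequence with increasing spines for $\tilde u$ compatible with $\varphi^{(0)}$, which is achieved by iteratively invoking Corollary~\ref{branchdist_cor}.
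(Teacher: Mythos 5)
Your basic strategy — recenter at $Z$ and apply Corollary~\ref{radialdecay_cor} to the translate, then compare back — is the right direction, and it matches the paper's approach in outline. But the proposal has a genuine gap precisely at the step you treat as a routine "polynomial comparison." The target bound on the right-hand side is $C\int_{B_1(0)}\mathcal{G}(u,\varphi)^2$, a quantity that can be arbitrarily small. Your claim that $\int\mathcal{G}(\tilde\varphi(\cdot-Z),\varphi)^2/|X-Z|^{n-\sigma}$ is controlled by "$L^\infty$--$L^2$ estimates for harmonic polynomials" plus integrability of the weight only gives a bound of order $\varepsilon_0$ (since $\tilde\varphi$ and $\varphi$ are each within $\varepsilon_0$ of $\varphi^{(0)}$), which does not imply the needed bound by $C\int\mathcal{G}(u,\varphi)^2$. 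The same issue occurs earlier when you assert $\int\mathcal{G}(u,\tilde\varphi(\cdot-Z))^2\leq C\int\mathcal{G}(u,\varphi)^2$ from "near-optimality and the triangle inequality": the triangle inequality leaves you with $\int\mathcal{G}(\varphi(\cdot-Z),\varphi)^2$, which Taylor-expands to roughly $\int_0^1\int|D\varphi_1(X-tZ)\cdot Z|^2$, and controlling this by $\int\mathcal{G}(u,\varphi)^2$ is the crux.

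The resolution in the paper is the spine-decomposition argument. It writes $Z=(\xi_0,\ldots,\xi_N,\zeta)$ adapted to an optimal sequence with increasing spines $\varphi=\varphi_0,\ldots,\varphi_N$ and decomposes $D\varphi_{0,1}\cdot(\xi,0)$ as a sum of terms $D(\varphi_{0,1}-\varphi_{j+1,1})\cdot(0,\xi_{N-j},0)$ plus $D\varphi_{0,1}\cdot(0,\xi_0,0)$; each piece is then controlled by the corresponding bound from Corollary~\ref{branchdist_cor}, namely $\op{dist}(Z,S(\varphi_j))^2\,\int\mathcal{G}(\varphi,\varphi_{j+1})^2\leq C\int\mathcal{G}(u,\varphi)^2$. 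Without this decomposition, $\op{dist}(Z,S(\varphi))^2$ alone could be as large as $O(1)$ when $\varphi$ is very close to a profile with a larger spine, and your naive estimate fails. Relatedly, the paper does not introduce a new near-minimizer $\tilde\varphi$ for the translate; instead it proves Lemma~\ref{translate_lemma}, which (via the same decomposition argument) shows that the translated pair $(\rho^{-\alpha}u(Z+\rho\,\cdot),\varphi)$ with the \emph{original} $\varphi$ continues to satisfy Hypothesis~\ref{cft1_hyp}. This sidesteps the need to compare $D\tilde\varphi_1$ with $D\varphi_1$ in the conclusion of part (ii) — another step your proposal leaves unjustified beyond the qualitative remark that both are near $\varphi^{(0)}_1$. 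You correctly flag "verifying the translated Hypothesis~\ref{cft1_hyp}(ii)" as the obstacle, but the proposal does not actually carry out the argument that resolves it.
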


We will prove Lemma \ref{graphrep_lemma}, Lemma \ref{keyest_lemma}, Corollary \ref{radialdecay_cor}, Corollary \ref{branchdist_cor}, and Corollary \ref{nonconest_cor} in Section~\ref{sec:proof_est_sec} via an induction argument on $l$.  Before doing so, we consider an important consequence of these results, the existence of blow-ups relative to elements of $\Phi_{\alpha}$.  First we need the following lemma: 

\begin{lemma} \label{twovalL2_lemma}
Let $\gamma \in (0,1)$, $d \geq 0$ be an integer, and $\psi : B_1(0) \rightarrow \mathbb{R}^m$ be a nonzero single-valued polynomial of degree at most $d$.  There exists $\varepsilon = \varepsilon(n,m,\gamma,\psi) > 0$ such that if $u,v \in C^d(B_1(0);\mathbb{R}^m)$ are single-valued functions such that  
\begin{equation} \label{twovalL2_eqn1}
	\|u - \psi\|_{C^d(B_1(0))} + \|v - \psi\|_{C^d(B_1(0))} < \varepsilon,
\end{equation}
then 
\begin{equation} \label{twovalL2_eqn2}
	\int_{B_{\gamma}(0)} |u - v|^2 \leq C \int_{B_1(0)} \mathcal{G}(\{\pm u\}, \{\pm v\})^2,  
\end{equation}
for some constant $C = C(n,m,\gamma,\psi) \in (0,\infty)$.  
\end{lemma}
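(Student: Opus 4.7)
The plan is to decompose $B_1$ according to the sign of $u \cdot v$.  Write $u = \psi + f$ and $v = \psi + g$, so that $\|f\|_{C^d}, \|g\|_{C^d} < \varepsilon$, and set $H = u - v$.  On the good set $G = \{u \cdot v \geq 0\}$ one has $\mathcal{G}(\{\pm u\}, \{\pm v\})^2 = 2|H|^2$, so $\int_{B_\gamma \cap G} |H|^2 \leq \tfrac12 \int_{B_1} \mathcal{G}^2$.  On the bad set $E = \{u \cdot v < 0\}$, the identity $u \cdot v = |\psi|^2 + \psi \cdot (f + g) + f \cdot g$ together with $u \cdot v < 0$ forces $|\psi|^2 \leq 2\varepsilon |\psi| + \varepsilon^2$, hence $|\psi| \leq (1 + \sqrt{2})\varepsilon$ on $E$.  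Thus $E \subseteq \{|\psi| \leq C\varepsilon\}$, and a Remez-type inequality applied to the nonzero polynomial $|\psi|^2$ of degree at most $2d$ gives $|E| \leq C_\psi \varepsilon^{1/d}$.  Using $|H| \leq 2\varepsilon$, this yields $\int_{B_\gamma \cap E} |H|^2 \leq C \varepsilon^{2 + 1/d}$, so that together
\begin{equation*}
\int_{B_\gamma} |H|^2 \leq \tfrac12 \int_{B_1} \mathcal{G}^2 + C \varepsilon^{2 + 1/d}. \qquad (\star)
\end{equation*}

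To absorb the residual I argue by contradiction: if the conclusion fails, there exist $\varepsilon_k \downarrow 0$ and $u_k, v_k$ with $\|u_k - \psi\|_{C^d} + \|v_k - \psi\|_{C^d} < \varepsilon_k$ and $T_k := \int_{B_\gamma} |H_k|^2 > k R_k$ with $R_k := \int_{B_1} \mathcal{G}_k^2$.  Sharpening $(\star)$ by replacing $|H_k| \leq 2\varepsilon_k$ with $|H_k| \leq \mu_k := \|H_k\|_{L^\infty(B_1)}$ on the bad set gives $T_k \leq \tfrac12 R_k + C \mu_k^2 \varepsilon_k^{1/d}$, and combined with $T_k > k R_k$ this forces $T_k \leq C \mu_k^2 \varepsilon_k^{1/d}$.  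Conversely, since $\|H_k\|_{C^1} \leq 2\varepsilon_k$ and $|H_k|$ attains $\mu_k$ at some $X_k \in \bar B_{(1+\gamma)/2}$, one has $|H_k| \geq \mu_k/2$ on a ball of radius $\mu_k/(4\varepsilon_k)$ about $X_k$, giving $T_k \geq c \mu_k^{n+2}/\varepsilon_k^n$.  These two bounds yield $\mu_k \leq C \varepsilon_k^{1 + 1/(nd)}$, so in particular $\mu_k/\varepsilon_k \to 0$.

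Now rescale: passing to a subsequence so that $X_k \to X_\infty \in \bar B_{(1+\gamma)/2}$, set $\bar H_k(Y) = H_k(X_k + \rho_k Y)/\mu_k$ with $\rho_k = (\mu_k/\varepsilon_k)^{1/d} \to 0$.  By the Landau-Kolmogorov interpolation inequalities the $\bar H_k$ are uniformly bounded in $C^d(B_R)$ for each fixed $R$ and $k$ large (since $\|\bar H_k\|_{L^\infty} = 1$ and $\|D^d \bar H_k\|_{L^\infty} \leq 2$), so a subsequential limit $\bar H_k \to \bar H$ exists in $C^{d-1}_{\mathrm{loc}}(\mathbb{R}^n)$ with $|\bar H(0)| = 1$.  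The concentration estimate $\int_{\{|\psi| > 3\varepsilon_k\}} |H_k|^2 \leq R_k/2 \leq C \mu_k^2 \varepsilon_k^{1/d}/k$, transferred to rescaled coordinates and combined with the Taylor expansion of $\psi$ about $X_\infty$ (whose leading homogeneous part $P$ is a nonzero polynomial of degree $\leq d$), forces $\bar H$ to vanish on a dense subset of $\mathbb{R}^n$, contradicting $|\bar H(0)| = 1$.

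The main technical obstacle lies in this last step: carefully tracking how the $L^2$ constraint on the good set transfers under rescaling at the rate $\rho_k = (\mu_k/\varepsilon_k)^{1/d}$, and verifying that the Taylor expansion of $\psi$ at $X_\infty$ provides enough nondegeneracy (after possibly a further shift by $W = \lim (X_k - X_\infty)/\rho_k$) to force $\bar H$ to vanish on a dense set.  This balancing uses the sharp bound $\mu_k \leq C \varepsilon_k^{1 + 1/(nd)}$ derived in the second paragraph and the structure of $\psi$ near its zero set.
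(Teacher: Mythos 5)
Your decomposition of $B_1$ into the good set $G=\{u\cdot v\geq 0\}$, where $\mathcal{G}(\{\pm u\},\{\pm v\})^2 = 2|u-v|^2$, together with the Remez-type estimate $|E|\leq C(\psi)\varepsilon^{1/d}$ for the bad set, is a nice observation, and the derivation of $\mu_k\leq C\varepsilon_k^{1+1/(nd)}$ is sound.  But the closing blow-up step does not go through.  First, there is a definitional clash: the bound $\|\bar H_k\|_{L^{\infty}(B_R)}\leq 1$ requires $\mu_k=\|H_k\|_{L^{\infty}(B_1)}$ with $B_{\rho_k R}(X_k)\subset B_1$, while the lower bound $T_k\geq c\,\mu_k^{n+2}/\varepsilon_k^n$ (an integral over $B_\gamma$ only) requires the maximizer $X_k$ to lie in $\bar B_\gamma$; asserting $X_k\in\bar B_{(1+\gamma)/2}$ while $\mu_k$ is the sup over all of $B_1$ is unjustified and in general false.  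Second, and decisively, the transferred concentration estimate $\int_{\{|\tilde\psi_k|>3\varepsilon_k\}\cap B_R}|\bar H_k|^2 \leq C\varepsilon_k^{1/d}/(k\rho_k^n)$ need not tend to zero.  You have established only an upper bound on $\mu_k$, no lower bound, and $\rho_k^n=(\mu_k/\varepsilon_k)^{n/d}$ can decay far faster than $\varepsilon_k^{1/d}/k$ when $\mu_k$ decays rapidly, so the right-hand side can blow up.  You therefore cannot conclude that $\bar H$ vanishes anywhere, let alone on a dense set.  The issue you flag as the ``main technical obstacle'' (the Taylor expansion of $\psi$ at $X_\infty$) is a real one, but the argument already breaks a step earlier.

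The paper's proof of this lemma proceeds quite differently: an induction on the degree $d$ of $\psi$, a reduction to homogeneous $\psi$ with trivial spine $L(\psi)=\{0\}$, and a rescaling $\tilde u_j(X)=\lambda_j^{-d}u_j(\lambda_j X)$, $\tilde v_j(X)=\lambda_j^{-d}v_j(\lambda_j X)$, where $\lambda_j$ is chosen to normalize the lower-order jets of $u_j$ and $v_j$ themselves at the origin rather than any quantity built from $H_k=u_k-v_k$ alone.  The rescaled functions then converge to $\psi+f$ and $\psi+g$ with $f,g$ nonzero polynomials of degree $<d$ (and eventually $f=g$), and the induction hypothesis is applied to $\psi+f$ near points of a fixed ball after ruling out that $\psi+f$ is a translate of $\psi$.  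Your blow-up of the difference $H_k$ alone discards the separate structure of $u_k$ and $v_k$, which is exactly what the degree induction exploits; I do not see how to recover that information within your good-set/bad-set framework without importing some analogue of the paper's induction.
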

\begin{proof}
We shall proceed by induction on the degree $d$ of the function $\psi$.  The case $d = 0$ is obvious.  Assume the induction hypothesis that for some integer $d \geq 1$, Lemma \ref{twovalL2_lemma} holds true in the case that $\psi$ is a polynomial of degree at most $d-1$.  We want to show that Lemma \ref{twovalL2_lemma} holds true in the case that $\psi$ is a polynomial of degree $d$.  By locally approximating $\psi$ by homogeneous polynomials, it suffices to prove Lemma \ref{twovalL2_lemma} in the special case that $\psi$ is a homogeneous degree $d$ polynomial. 

Given a homogeneous degree $d$ polynomial $\psi$, let 
\begin{equation*}
	L(\psi) = \{ X : D^k \psi(X) = 0 \text{ for all } k < d \} ,
\end{equation*}
noting that $L(\psi)$ is always a linear subspace and $\psi(Y+X) = \psi(X)$ for all $X \in \mathbb{R}^n$ and $Y \in L(\psi)$.  We claim that it suffices to prove Lemma \ref{twovalL2_lemma} in the special case that $\psi$ is a homogeneous degree $d$ polynomial with $L(\psi) = \{0\}$.  Having shown that, if $\psi : B_1(0) \rightarrow \mathbb{R}^m$ is any nonzero homogeneous degree $d$ polynomial and $u,v \in C^d(B_1(0);\mathbb{R}^m)$ satisfy (\ref{twovalL2_eqn1}), we could rotate so that $L(\psi) = \{0\} \times \mathbb{R}^{n-l}$ for some integer $l$ and conclude that 
\begin{equation} \label{twovalL2_eqn3}
	\int_{B^l_{\gamma \sqrt{1 - |y|^2}}(0)} |u(x,y) - v(x,y)|^2 dx \leq C \int_{B^l_{\sqrt{1 - |y|^2}}(0)} \mathcal{G}(\{\pm u\}, \{\pm v\})^2 
\end{equation}
for all $y \in B^{n-l}_{\gamma}(0)$ and some constant $C = C(n,m,\gamma,\psi) \in (0,\infty)$, provided $\varepsilon$ is sufficiently small.  Then using the fact that $B^l_{\sqrt{\gamma^2 - |y|^2}}(0) \subseteq B^l_{\gamma \sqrt{1 - |y|^2}}(0)$ and integrating both sides of (\ref{twovalL2_eqn3}) over $y \in B^{n-l}_{\gamma}(0)$, we obtain (\ref{twovalL2_eqn2}). 

Suppose that $\psi : B_1(0) \rightarrow \mathbb{R}^m$ is a nonzero, homogeneous degree $d$, harmonic single-valued function such that $L(\psi) = \{0\}$.  Suppose that for every integer $j \geq 1$, there exists single-valued functions $u_j,v_j \in C^d(B_1(0);\mathbb{R}^m)$ such that $u_j \rightarrow \psi$ and $v_j \rightarrow \psi$ in $C^d(B_1(0);\mathbb{R}^m)$ but 
\begin{equation} \label{twovalL2_eqn4}
	\int_{B_1(0)} \mathcal{G}(\{\pm u_j\}, \{\pm v_j\})^2 < \frac{1}{j} \int_{B_{\gamma}(0)} |u_j - v_j|^2.  
\end{equation}
Without loss of generality assume that 
\begin{align} \label{twovalL2_eqn5}
	&\|u_j - \psi\|_{C^d(B_1(0))} + \|v_j - \psi\|_{C^d(B_1(0))} \nonumber \\ 
	&\leq 2 \inf_{Z \in \mathbb{R}^n} \left( \|u_j(X) - \psi(X-Z)\|_{C^d(B_1(0))} + \|v_j(X) - \psi(X-Z)\|_{C^d(B_1(0))} \right) . 
\end{align}
By the induction hypothesis and the homogeneity of $\psi$, there exists $\varepsilon_0 = \varepsilon_0(n,m,\gamma,\psi) > 0$ such that if $u,v \in C^d(B_1(0);\mathbb{R}^m)$ such that 
\begin{equation*}
	\sum_{k=0}^d \rho^k \sup_{B_{\rho}(0) \setminus B_{\gamma^3 \rho}(0)} |D^k u - D^k \psi| 
	+ \sum_{k=0}^d \rho^k \sup_{B_{\rho}(0) \setminus B_{\gamma^3 \rho}(0)} |D^k v - D^k \psi| < \varepsilon_0 \rho^d,
\end{equation*}
then 
\begin{equation} \label{twovalL2_eqn6}
	\int_{B_{\gamma \rho}(0) \setminus B_{\gamma^2 \rho}(0)} |u - v|^2 \leq C \int_{B_1(0) \setminus B_{\gamma^3 \rho}(0)} \mathcal{G}(\{\pm u\}, \{\pm v\})^2 
\end{equation}
for some constant $C = C(n,m,\gamma,\psi) \in (0,\infty)$.   Thus if $D^k \tilde u_j(0) = D^k \tilde v_j(0) = 0$ for $k < d$, then 
\begin{align*}
	|D^k u_j(X) - D^k \psi(X)| \leq C(n,d,k) \sup_{B_1(0)} |D^d u_j - D^d \psi| |X|^{d-k}, \\
	|D^k v_j(X) - D^k \psi(X)| \leq C(n,d,k) \sup_{B_1(0)} |D^d v_j - D^d \psi| |X|^{d-k}, 
\end{align*}
for $k = 0,1,2,\ldots,d$ and $X \in B_1(0)$, so by (\ref{twovalL2_eqn6}) and the obvious covering argument, (\ref{twovalL2_eqn2}) holds true with $u = u_j$ and $v = v_j$, contradicting (\ref{twovalL2_eqn4}).  Thus we may rescale letting $\tilde u_j(X) = \lambda_j^{-d} u_j(\lambda_j X)$ and $\tilde v_j(X) = \lambda_j^{-d} v_j(\lambda_j X)$ where $\lambda_j > 0$ is chosen such that 
\begin{equation} \label{twovalL2_eqn7}
	\sum_{k=1}^{d-1} (|D^k \tilde u_j(0)|^2 + |D^k \tilde v_j(0)|^2) = 1. 
\end{equation}
Since $u_j \rightarrow \psi$ and $v_j \rightarrow \psi$ in $C^d(B_1(0);\mathbb{R}^m)$, $\lambda_j \rightarrow 0$ and 
\begin{align} \label{twovalL2_eqn8}
	&\lim_{j \rightarrow \infty} \sup_{B_{1/\lambda_j}(0)} |D^d \tilde u_j - D^d \psi| 
		= \lim_{j \rightarrow \infty} \sup_{B_1(0)} |D^d u_j - D^d \psi| = 0, \nonumber \\ 
	&\lim_{j \rightarrow \infty} \sup_{B_{1/\lambda_j}(0)} |D^d \tilde v_j - D^d \psi| 
		= \lim_{j \rightarrow \infty} \sup_{B_1(0)} |D^d v_j - D^d \psi| = 0. 
\end{align}
By (\ref{twovalL2_eqn4}), 
\begin{equation} \label{twovalL2_eqn9}
	\int_{B_{1/\lambda_j}(0)} \mathcal{G}(\{\pm \tilde u_j\}, \{\pm \tilde v_j\})^2 
	< \frac{1}{j} \int_{B_{\gamma/\lambda_j}(0)} |\tilde u_j - \tilde v_j|^2.  
\end{equation}
By (\ref{twovalL2_eqn7}) and (\ref{twovalL2_eqn8}), 
\begin{align*}
	|D^k \tilde u_j(X) - D^k \psi(X)| \leq C(n,d,k) (|X|^{d-k-1} + \sup_{B_1(0)} |D^d u_j - D^d \psi| |X|^{d-k}), \\
	|D^k \tilde v_j(X) - D^k \psi(X)| \leq C(n,d,k) (|X|^{d-k-1} + \sup_{B_1(0)} |D^d v_j - D^d \psi| |X|^{d-k}), 
\end{align*}
for $k = 0,1,2,\ldots,d$ and $X \in B_{1/\lambda_j}(0) \setminus B_1(0)$, so by (\ref{twovalL2_eqn6}) and the obvious covering argument there exists $R > 0$ depending on $\psi$ such that 
\begin{equation} \label{twovalL2_eqn10}
	\int_{B_{\gamma/\lambda_j}(0) \setminus B_R(0)} |\tilde u_j - \tilde v_j|^2 
	\leq C \int_{B_{1/\lambda_j}(0) \setminus B_{\gamma R}(0)} \mathcal{G}(\{\pm \tilde u_j\}, \{\pm \tilde v_j\})^2 
\end{equation}
for large $j$ and some constant $C = C(n,m,\gamma,\psi) \in (0,\infty)$.  By (\ref{twovalL2_eqn7}) and (\ref{twovalL2_eqn8}), $\tilde u_j - \psi \rightarrow f$ and $\tilde v_j - \psi \rightarrow g$ in $C^d$ on compact subsets of $\mathbb{R}^n$ for some nonzero polynomials $f$ and $g$ of degree at most $d-1$.  By (\ref{twovalL2_eqn9}) and (\ref{twovalL2_eqn10}), 
\begin{equation*}
	\int_{B_{\gamma/\lambda_j}(0) \setminus B_R(0)} |\tilde u_j - \tilde v_j|^2 \leq \frac{2C}{j} \int_{B_R(0)} |\tilde u_j - \tilde v_j|^2
\end{equation*}
for large $j$ and so by letting $j \rightarrow \infty$, $f = g$ on $\mathbb{R}^n \setminus B_R(0)$.  Since $f$ and $g$ are polynomials, $f = g$ on $\mathbb{R}^n$.  Observe that we cannot have $D^k f(Z) + D^k \psi(Z) = 0$ for all $k = 1,2,\ldots,d-1$ for some $Z \in B_R(0)$ since otherwise $f(X) + \psi(X) = \psi(X-Z)$ for all $X \in \mathbb{R}^n$ and thus 
\begin{align*}
	&\|u_j(X) - \psi(X- \lambda_j Z)\|_{C^d(B_1(0))} + \|v_j(X) - \psi(X - \lambda_j Z)\|_{C^d(B_1(0))}
	\\&< \frac{1}{2} \|u_j(X) - \psi(X-Z)\|_{C^d(B_1(0))} + \|v_j(X) - \psi(X-Z)\|_{C^d(B_1(0))}
\end{align*}
for $j$ sufficiently large, contradicting (\ref{twovalL2_eqn5}).  By locally approximating $f+\psi$ by Taylor polynomials of degree less than $d$ near points in $B_R(0)$ and applying the induction hypothesis,  
\begin{equation} \label{twovalL2_eqn11}
	\int_{B_R(0)} |\tilde u_j - \tilde v_j|^2 \leq C \int_{B_{2R}(0)} \mathcal{G}(\{\pm \tilde u_j\}, \{\pm \tilde v_j\})^2 
\end{equation}
for large $j$ and some constant $C = C(n,m,f,\psi) \in (0,\infty)$.  But (\ref{twovalL2_eqn10}) and (\ref{twovalL2_eqn11}) contradict (\ref{twovalL2_eqn4}) for large $j$. 
\end{proof}

\noindent \textbf{Blow-ups.} Suppose $\varepsilon_j, \beta_j \rightarrow 0^+$ and $\varphi^{(0)}, \varphi_j \in \Phi_{\alpha}$ and $u_j \in \mathcal{F}_{\alpha}$ such that Hypothesis \ref{cft1_hyp} holds true with $\varepsilon = \varepsilon_j$, $\beta = \beta_j$, $\varphi = \varphi_j$, and $u = u_j$.  After passing to a subsequence, we may assume that $l = n-\dim S(\varphi_j)$ is independent of $j$.  Assume that Corollary \ref{graphrep_cor} and Corollary \ref{nonconest_cor}(i) all hold true for this particular value of $l$.  Let $\tau_j \rightarrow 0^+$ slowly enough that Corollary \ref{graphrep_cor} holds true with $(1+\gamma)/2$ in place of $\gamma$, $\tau = \tau_j$, $\sigma = 1/2$, $\varphi = \varphi_j$, and $u = u_j$.  

By the sequential compactness of the space of closed subsets of a compact space equipped with the Hausdorff metric, after passing to a subsequence $\mathcal{B}_{u_j}$ converges to some closed subset $\mathcal{D} \subseteq \{0\} \times B^{n-l}_1(0)$ in Hausdorff distance on compact subsets of $B_1(0)$.  By Corollary \ref{graphrep_cor} and the Schauder estimates, we get $v_j \in C^2(U_j;\mathbb{R}^m)$, where $U_j = \{X \in B_{(1+\gamma)/2}(0) : \op{dist}(X,\mathcal{D}) > \tau_j\}$, such that 
\begin{equation*}
	u_j(X) = \{ \pm (\varphi{j,1}(X) + v_j(X)) \} 
\end{equation*}
for all $X \in U_j$, where $\varphi_j(X) = \{ \pm \varphi_{j,1}(X) \}$ for all $X \in \mathbb{R}^n$ and a unique harmonic single-valued function $\varphi_{j,1} : \mathbb{R}^n \rightarrow \mathbb{R}^m$ that is close to $\varphi^{(0)}_1$ in $L^2(B_1(0);\mathbb{R}^m)$.  Define 
\begin{equation*}
	w_j = v_j/E_j \quad \text{for} \quad E_j = \left( \int_{B_1(0)} \mathcal{G}(u_j,\varphi_j)^2 \right)^{1/2}. 
\end{equation*}
By elliptic estimates and Lemma \ref{twovalL2_lemma}, 
\begin{equation*}
	\|w_j\|_{C^3(K)} \leq C, 
\end{equation*}
for large $j$ and some constant $C = C(n,m,K,\varphi^{(0)}) \in (0,\infty)$ for every compact subset $K$ of $B_1(0) \setminus \mathcal{D}$, so after passing to a subsequence $w_j$ converge to some $w$ in $C^2(K;\mathbb{R}^m)$ for every compact subset $K$ of $B_1(0) \setminus \mathcal{D}$.  We call any such $w$ a \textit{blow-up} of $u_j$ relative to $\varphi_j$ over $B_{\gamma}(0)$.  We claim that $w_j \rightarrow w$ in $L^2(B_{\gamma}(0);\mathbb{R}^m)$. 

Given $\delta \in (0,(1-\gamma)/2)$, for every $(0,y_0) \in B_{\gamma}(0) \cap \mathcal{D}$ and for $j$ sufficiently large there exists $Z_j \in \mathcal{B}_{u_j} \cap B_{\delta}(0,y_0)$ and thus by Corollary \ref{nonconest_cor}(i) then  
\begin{equation} \label{bu_eqn1}
	\int_{B_{(1+\gamma)/2}(0)} \frac{\mathcal{G}(u_j,\varphi^{(0)})^2}{|X-Z_j|^{n-1/2}} \leq C \int_{B_1(0)} \mathcal{G}(u_j,\varphi^{(0)})^2 
\end{equation} 
for some $C = C(n,l,m,\varphi^{(0)},\gamma) \in (0,\infty)$.  By (\ref{bu_eqn1}),  
\begin{equation} \label{bu_eqn2}
	\int_{B_{2\delta}(0,y_0)} \mathcal{G}(u_j,\varphi_j)^2 \leq C \delta^{n-1/2} \int_{B_1(0)} \mathcal{G}(u_j,\varphi_j)^2. 
\end{equation} 
By covering $\{ X \in B_{\gamma}(0) : \op{dist}(X,\mathcal{D}) \leq \delta \}$ by $N$ balls $B_{2\delta}(0,z_k)$ for $z_k \in B_{\rho/2}(0) \cap \mathcal{D}$ and $N \leq C(n,l,\gamma) \delta^{l-n}$ and summing (\ref{bu_eqn2}) with $y_0 = z_k$ over $k = 1,2,\ldots,N$, 
\begin{equation*}
	\int_{\{ X \in B_{\gamma}(0) : \op{dist}(X,\mathcal{D}) \leq \delta \}} \mathcal{G}(u_j,\varphi_j)^2 
	\leq C \delta^{l-1/2} \int_{B_1(0)} \mathcal{G}(u_j,\varphi_j)^2 
\end{equation*} 
for $j$ sufficiently large and for some $C = C(n,l,m,\varphi^{(0)},\gamma) \in (0,\infty)$.  Consequently $w_j \rightarrow w$ in $L^2(B_{\gamma}(0);\mathbb{R}^m)$ with 
\begin{equation*}
	\lim_{j \rightarrow \infty} E_j^{-2} \int_{B_{\gamma}(0)} \mathcal{G}(u_j,\varphi_j)^2 = \int_{B_{\gamma}(0)} |w|^2.  
\end{equation*}

\section{Apriori estimates - Part II: Proof of estimates} \label{sec:proof_est_sec}

Fix a Dirichlet energy minimizing two-valued function $\varphi^{(0)} \in \Phi_{\alpha}$ and let $l_0 = n - \dim S(\varphi^{(0)})$.  The proof of the results Lemma \ref{graphrep_lemma}, Lemma \ref{keyest_lemma}, Corollary \ref{radialdecay_cor}, Corollary \ref{branchdist_cor}, and Corollary \ref{nonconest_cor} will proceed by induction, assuming for some $\lambda \in \{l_0,l_0+1,\ldots,n\}$ that:

\begin{hypothesis} \label{graphrep_hyp}
	Either $\lambda = l_0$ or $\lambda > l_0$ and Lemma \ref{graphrep_lemma}, Lemma \ref{keyest_lemma}, Corollary \ref{radialdecay_cor}, Corollary \ref{branchdist_cor}, and Corollary \ref{nonconest_cor} all hold true whenever $l_0 \leq l < \lambda$. 
\end{hypothesis}

\begin{proof}[Proof of Lemma \ref{graphrep_lemma} for $l = \lambda$]
We claim that whenever $\varepsilon_0$ and $\beta_0$ are sufficiently small and the hypotheses of Lemma \ref{graphrep_lemma} hold true with $l = \lambda$, $\mathcal{B}_u \cap A^{\lambda}_{1,3\gamma/4}(0) = \emptyset$.  Then by the Schauder estimates and the structure of $\varphi^{(0)}$ when $\lambda = 2$ and $A^{\lambda}_{1,5\gamma/8}(0)$ being simply connected when $\lambda \geq 3$, there exists a harmonic single-valued function $v : A^{\lambda}_{1,5\gamma/8}(0) \rightarrow \mathbb{R}^m$ such that $u(X) = \{ \pm (\varphi_1(X) + v(X)) \}$ for all $X \in A^{\lambda}_{1,5\gamma/8}(0)$ and the estimates on $v$ in $A^{\lambda}_{1,\gamma/2}(0)$ follow from the Schauder estimates and Lemma \ref{twovalL2_lemma}.  Since the claim is obviously true when $\lambda = l_0$, we shall assume that $\lambda > l_0$.  By rotating $\mathbb{R}^n$ slightly, it suffices to prove the claim under the assumption that $S(\varphi^{(0)}) = \{0\} \times \mathbb{R}^{n-l_0}$ and $S(\varphi) = \{0\} \times \mathbb{R}^{n-\lambda}$.

Suppose there exist $\varepsilon_j, \beta_j \downarrow 0$, $\varphi_j$, and $u_j$ such that the hypotheses of Lemma \ref{graphrep_lemma} hold true with $\varepsilon_j$, $\beta_j$, $\varphi^{(0)}$, $\varphi_j$, and $u_j$ in place of $\varepsilon_0$, $\beta_0$, $\varphi^{(0)}$, $\varphi$, and $u$, $S(\varphi^{(0)}) = \{0\} \times \mathbb{R}^{n-l_0}$, and $S(\varphi_j) = \{0\} \times \mathbb{R}^{n-\lambda}$ but there exists $Z_j \in \mathcal{B}_{u_j} \cap A^{\lambda}_{1,3\gamma/4}(0)$.  

Let $\{\varphi_{j,k}\}_{k = 0,1,2,\ldots,N_j} \subset \Phi_{\alpha}$ be an optimal sequence with increasing spines over $A^{\lambda}_{1,1}(0)$ instead of $B_1(0)$ with $\varphi_{j,0} = \varphi_j$.  After passing to a subsequence and making an orthogonal change of coordinates fixing $\{0\} \times \mathbb{R}^{n-\lambda}$ and $\{0\} \times \mathbb{R}^{n-l_0}$, assume that $N_j$ and $\dim S(\varphi_{j,k})$ are independent of $j$ and that $S(\varphi_{j,k})$ converge to $\{0\} \times \mathbb{R}^{n-p_k}$ for some integers $\lambda = p_0 > p_1 > p_2 > \cdots > p_N = l_0$.  Then after small rotations of $u_j$, $\varphi_j$, and $\varphi_{j,k}$, we may assume that $S(\varphi_{j,k}) = \{0\} \times \mathbb{R}^{n-p_k}$.  After passing to a subsequence, we can find $K \in \{1,2,\ldots,N\}$ such that 
\begin{equation*}
	\liminf_{j \rightarrow \infty} \frac{\int_{A^{\lambda}_{1,1}(0)} \mathcal{G}(\varphi_j,\varphi_{j,k})^2}{
		\int_{A^{\lambda}_{1,1}(0)} \mathcal{G}(\varphi_j,\varphi_{j,k+1})^2} > 0 \text{ for } k = 1,2,\ldots,N-1
\end{equation*}
and either $K = N$ or 
\begin{equation} \label{graphrep_eqn1}
	\lim_{j \rightarrow \infty} \frac{\int_{A^{\lambda}_{1,1}(0)} \mathcal{G}(\varphi_j,\varphi_{j,K})^2}{
		\int_{A^{\lambda}_{1,1}(0)} \mathcal{G}(\varphi_j,\varphi_{j,K+1})^2} = 0.
\end{equation}
Note that by the definition of an optimal sequence with increasing spines, for large $j$, 
\begin{align} \label{graphrep_eqn2}
	\int_{A^{\lambda}_{1,1}(0)} \mathcal{G}(\varphi_{j,k},\varphi_{j,k+1})^2 
	&\leq 2 \int_{A^{\lambda}_{1,1}(0)} \mathcal{G}(\varphi_{j,K},\varphi_{j,k})^2 + 2 \int_{A^{\lambda}_{1,1}(0)} \mathcal{G}(\varphi_{j,K},\varphi_{j,k+1})^2 
		\nonumber \\
	&\leq 6 \int_{A^{\lambda}_{1,1}(0)} \mathcal{G}(\varphi_{j,K},\varphi_{j,k+1})^2. 
\end{align}

By Corollary \ref{branchdist_cor}, we can find, in order, rotations $\Gamma_{j,N}, \Gamma_{j,N-1}, \ldots, \Gamma_{j,K}$ of $\mathbb{R}^n$ such that $\Gamma_{j,k}$ is the identity map on $\{0\} \times \mathbb{R}^{n-\lambda}$ for all $k$, $\Gamma_{j,k}$ is the identity map on $\mathbb{R}^{p_{k+1}} \times \{0\}$ for $k < N$, and 
\begin{align} \label{graphrep_eqn3}
	\sup_{\mathbb{R}^n} |\Gamma_{j,k}-I| &\leq C \frac{\int_{A^{\lambda}_{1,1}(0)} \mathcal{G}(u_j,\varphi_{j,K})^2}{
		\int_{A^{\lambda}_{1,1}(0)} \mathcal{G}(\varphi_{j,K},\varphi_{j,k+1})^2} \text{ for } k < N, \nonumber \\
	\sup_{\mathbb{R}^n} |\Gamma_{j,N}-I| &\leq C \int_{A^{\lambda}_{1,1}(0)} \mathcal{G}(u_j,\varphi_{j,K})^2, 
\end{align}
for some $C = C(n,\lambda,m,\varphi^{(0)}) \in (0,\infty)$ and 
\begin{equation} \label{graphrep_eqn4}
	(\Gamma_{j,k} \circ \Gamma_{j,k+1} \circ \cdots \circ \Gamma_{j,N})(Z_j) \in \{0\} \times \mathbb{R}^{n-p_k} 
\end{equation}
for all $k$.  Let $\Gamma_j = \Gamma_{j,K} \circ \Gamma_{j,K+1} \circ \cdots \circ \Gamma_{j,N}$.  By Hypothesis \ref{graphrep_hyp}, (\ref{graphrep_eqn1}), and (\ref{graphrep_eqn3}), there exists $\eta_j \rightarrow 0^+$ such that 
\begin{equation*}
	\int_{A^{\lambda}_{1,1}(0)} \mathcal{G}(u_j \circ \Gamma_j^{-1},\varphi^{(0)})^2 
	\leq 2\int_{A^{\lambda}_{1,1}(0)} \mathcal{G}(u_j,\varphi^{(0)})^2 + C \sup_{\mathbb{R}^n} |\Gamma_j - I|^2 
	\leq 2\varepsilon_j + \eta_j, 
\end{equation*}
which converges to zero as $j \rightarrow \infty$, and 
\begin{align} \label{graphrep_eqn5}
	\int_{A^{\lambda}_{1,1}(0)} \mathcal{G}(u_j \circ \Gamma_j^{-1},\varphi_{j,K})^2 
	&\leq C\int_{A^{\lambda}_{1,1}(0)} \mathcal{G}(u_j,\varphi_{j,K})^2 \nonumber \\&\hspace{3mm}
		+ C \sum_{k=K}^{N-1} \int_{A^{\lambda}_{1,1}(0)} |D\varphi_{j,K,1} - D\varphi_{j,k+1,1}|^2 
			\sup_{\{0\} \times \mathbb{R}^{p_k-p_{k+1}} \times \{0\}} |\Gamma_j - I|^2 \nonumber \\&\hspace{3mm}
		+ C \int_{A^{\lambda}_{1,1}(0)} |D\varphi_{j,K,1}|^2 \sup_{\mathbb{R}^{l_0} \times \{0\}} |\Gamma_j - I|^2 \nonumber \\
	&\leq C\int_{A^{\lambda}_{1,1}(0)} \mathcal{G}(u_j,\varphi_{j,K})^2 
\end{align}
and so 
\begin{align*}
	\int_{A^{\lambda}_{1,1}(0)} \mathcal{G}(u_j \circ \Gamma_j^{-1},\varphi_{j,K})^2 
	\leq \eta_j \int_{A^{\lambda}_{1,1}(0)} \mathcal{G}(\varphi_{j,K},\varphi_{j,K+1})^2,  
\end{align*}
where $\varphi_{j,k}(X) = \{ \pm \varphi_{j,k,1}(X) \}$ for all $X \in \mathbb{R}^n$ and some harmonic single-valued function $\varphi_{j,k,1}$ that is close to $\varphi^{(0)}_1$ in $L^2(B_1(0);\mathbb{R}^m)$ and $C = C(n,\lambda,m,\varphi^{(0)}) \in (0,\infty)$.  Hence Hypothesis \ref{graphrep_hyp} holds true with $\varepsilon = 2\varepsilon_j + \eta_j$, $\beta = \eta_j$, $u = u_j \circ \Gamma_j^{-1}$, and $\varphi = \varphi_{j,K}$.  

Now, after passing to a subsequence, $u_j \circ \Gamma_j^{-1}$ blows up to some $w$ relative to $\varphi_{j,K}$ over $A^{\lambda}_{1,7\gamma/8}(0)$ (in place of the ball $B_1(0)$).  By Hypothesis \ref{cft1_hyp}(ii), $w$ is also the blow-up of $\varphi_j \circ \Gamma_j^{-1}$ relative to $\varphi_{j,K}$, so in particular $w$ is a nonzero, homogeneous degree $\alpha$, harmonic polynomial.  Write $\varphi_{j,k}(X) = \{\pm \varphi_{j,k,1}(X)\}$ for $X \in \mathbb{R}^n$ for a harmonic single-valued function $\varphi_{j,k,1}$ that is close to $\varphi^{(0)}_1$ in $L^2(B_1(0);\mathbb{R}^m)$ and let $\widetilde{\varphi}_j(X) = \{ \pm (\varphi_{j,K,1}(X) + E_j w(X)) \}$ for $X \in \mathbb{R}^n$, where $E_j = \left( \int_{A^{\lambda}_{1,\gamma}(0)} \mathcal{G}(u_j,\varphi_{j,K})^2 \right)^{1/2}$.  Obviously $\{0\} \times \mathbb{R}^{n-\lambda} \subseteq S(\widetilde{\varphi}_j) \subseteq \{0\} \times \mathbb{R}^{n-p_K}$ for large $j$.

Write $Z_j = (0,\zeta_j)$ for $\zeta_j \in \mathbb{R}^{n-p_K}$.  After passing to a subsequence, $\zeta_j \rightarrow \zeta$ in $\mathbb{R}^{n-p_K}$ and $\op{dist}((0,\zeta),\{0\} \times \mathbb{R}^{n-\lambda}) \geq 1-3\gamma/4$.  By Corollary \ref{nonconest_cor}(ii) applied on a ball whose center lies on $S(\varphi_{j,K})$, 
\begin{equation} \label{graphrep_eqn6}
	\int_{B_{\gamma/16}(Z_j) \cap \{|x| > \tau_j\}} \frac{|v_j(X)|^2}{|X-Z_j|^{2\alpha+n-1/2}} \leq C \int_{B_1(0)} \mathcal{G}(u_j,\varphi_{j,K})^2 
\end{equation}
for some $C = C(n,\lambda,m,\varphi^{(0)},\gamma) \in (0,\infty)$, where $\tau_j \rightarrow 0^+$ slowly enough that Corollaries \ref{graphrep_cor} and \ref{nonconest_cor} hold with $(1+\gamma)/2$ in place of $\gamma$, $\tau = \tau_j$, $\sigma = 1/2$, $\varphi = \varphi_{j,K}$, and $u = u_j \circ \Gamma_j^{-1}$ and $v = v_j$ in Corollary \ref{graphrep_cor}.  By dividing (\ref{graphrep_eqn6}) by $E_j^2$ and using (\ref{graphrep_eqn1}) and (\ref{graphrep_eqn4}), 
\begin{equation*}
	\int_{B_{\gamma/32}(0,\zeta)} \frac{|w(X)|^2}{|X-Z|^{2\alpha+n-1/2}} \leq C 
\end{equation*} 
for some $C = C(n,\lambda,m,\varphi^{(0)}) \in (0,\infty)$, which implies that $w$ vanishes at order $\alpha$ at $(0,\zeta)$.  Hence $S(\varphi_j) = \{0\} \times \mathbb{R}^{n-\lambda} \subset S(\widetilde{\varphi}_j)$.  

Since $w$ is the blow-up of both $\varphi_j \circ \Gamma_j^{-1}$ and $\widetilde{\varphi}_j$ relative to $\varphi_{j,K}$ over $A^{\lambda}_{1,7\gamma/8}(0)$, 
\begin{equation*}
	\lim_{j \rightarrow \infty} \frac{\int_{A^{\lambda}_{1,1}(0)} \mathcal{G}(\varphi_j \circ \Gamma_j^{-1},\widetilde{\varphi}_j)^2}{
		\int_{A^{\lambda}_{1,1}(0)} \mathcal{G}(u_j \circ \Gamma_j^{-1},\varphi_{j,K})^2} = 0
\end{equation*}
and thus by Hypothesis \ref{graphrep_hyp}(ii) and (\ref{graphrep_eqn5}), 
\begin{equation*}
	\lim_{j \rightarrow \infty} \frac{\int_{A^{\lambda}_{1,1}(0)} \mathcal{G}(u_j,\widetilde{\varphi}_j \circ \Gamma_j)^2}{
		\int_{A^{\lambda}_{1,1}(0)} \mathcal{G}(u_j,\varphi_{j,K})^2} = 0, 
\end{equation*}
contradicting the definitions of $\{\varphi_{j,k}\}_{k=0,1,\ldots,N}$ and of $K$ as the smallest value satisfying (\ref{graphrep_eqn1}). 
\end{proof}

\begin{lemma} \label{translate_lemma}
Suppose $\varphi^{(0)} \in \Phi_{\alpha}$ is a Dirichlet energy minimizing two-valued function with $S(\varphi^{(0)}) = \{0\} \times \mathbb{R}^{n-l_0}$ for some integer $l_0 \in \{2,3,\ldots,n\}$.  Let $\lambda$ and $l$ be integers such that Hypothesis \ref{graphrep_hyp} holds true and $l_0 \leq \lambda \leq l \leq n$.  Given $\varepsilon_1,\beta_1 > 0$, and $\rho \in (0,1/2]$, there exists $\varepsilon_2, \beta_2 > 0$ depending on $n$, $\lambda$, $m$, $\varphi^{(0)}$, $\varepsilon_1$, $\beta_1$, and $\rho$ such that if $\varphi \in \Phi_{\alpha}$, $u \in \mathcal{F}_{\alpha}$, $Z \in B_{1/2}(0)$ such that Hypothesis \ref{cft1_hyp} holds true with $\varepsilon = \varepsilon_2$ and $\beta = \beta_2$, $S(\varphi) = \{0\} \times \mathbb{R}^{n-l}$, and $\mathcal{N}_u(Z) \geq \alpha$, then Hypothesis \ref{cft1_hyp} holds true with $\widetilde{u}(X) = \rho^{-\alpha} u(Z + \rho X)$ in place of $u$ and $\varepsilon = \varepsilon_1$ and $\beta = \beta_1$. 
\end{lemma}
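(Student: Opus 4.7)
The plan is to proceed by contradiction: suppose sequences $\varepsilon_j, \beta_j \downarrow 0^+$ and $\varphi_j \in \Phi_{\alpha}$, $u_j \in \mathcal{F}_{\alpha}$, $Z_j \in B_{1/2}(0)$ exist verifying the hypotheses of the lemma with $\varepsilon_j, \beta_j$, yet Hypothesis \ref{cft1_hyp} fails for $(\tilde{u}_j, \varphi_j)$ at level $(\varepsilon_1, \beta_1)$, where $\tilde{u}_j(X) = \rho^{-\alpha} u_j(Z_j + \rho X)$. After passing to a subsequence, $\dim S(\varphi_j) = n - l$ is constant. The natural candidate approximator for $\tilde{u}_j$ is $\varphi_j$ itself: writing $Z_j = P_j + Z_j^{\perp}$ with $P_j \in S(\varphi_j)$ and $Z_j^{\perp} \perp S(\varphi_j)$, the translation invariance of $\varphi_j$ along $S(\varphi_j)$ combined with homogeneity gives $\rho^{-\alpha} \varphi_j(P_j + \rho X) = \varphi_j(X)$, so it suffices to verify Hypothesis \ref{cft1_hyp} for the pair $(\tilde{u}_j, \varphi_j)$.

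To verify (i), the bound $\int_{B_1(0)} \mathcal{G}(\varphi_j, \varphi^{(0)})^2 < \varepsilon_j$ is immediate. For $\int_{B_1(0)} \mathcal{G}(\tilde{u}_j, \varphi^{(0)})^2$, apply the triangle inequality and the change of variables $Y = Z_j + \rho X$ to write
\begin{equation*}
\int_{B_1(0)} \mathcal{G}(\tilde{u}_j, \varphi_j)^2 = \rho^{-n-2\alpha} \int_{B_\rho(Z_j)} \mathcal{G}\bigl(u_j(Y), \varphi_j(Y - Z_j^{\perp})\bigr)^2 \, dY.
\end{equation*}
The pointwise estimate $\mathcal{G}(u_j, \varphi_j(\cdot - Z_j^{\perp}))^2 \leq 2\mathcal{G}(u_j, \varphi_j)^2 + 2|\varphi_j - \varphi_j(\cdot - Z_j^{\perp})|^2$ splits the integral into two pieces. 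The first is controlled by $C \rho^{n-\sigma} \int_{B_1(0)} \mathcal{G}(u_j, \varphi_j)^2$ via Corollary \ref{nonconest_cor}(i) applied at $Z_j$. The polynomial bound $|\varphi_j(Y) - \varphi_j(Y - Z_j^{\perp})| \leq C|Z_j^{\perp}|(|Y|^{\alpha-1} + |Z_j^{\perp}|^{\alpha-1})$ together with $|Z_j^{\perp}|^2 \leq C \int_{B_1(0)} \mathcal{G}(u_j, \varphi_j)^2$ from Corollary \ref{branchdist_cor} handles the second. Choosing $\varepsilon_2$ small depending on $\rho, \varphi^{(0)}, \varepsilon_1$ yields the required bound.

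To verify (ii), if $l = l_0$ then $\dim S(\varphi_j) = \dim S(\varphi^{(0)})$ and the first alternative of Hypothesis \ref{cft1_hyp}(ii) holds trivially. If $l > l_0$, suppose there is $\varphi'_j \in \Phi_{\alpha}$ with $S(\varphi_j) \subset S(\varphi'_j)$ and $\int_{B_1(0)} \mathcal{G}(\tilde{u}_j, \varphi_j)^2 > \beta_1 \int_{B_1(0)} \mathcal{G}(\tilde{u}_j, \varphi'_j)^2$. The same change of variables rewrites both sides as integrals over $B_\rho(Z_j)$, involving $\varphi_j(\cdot - Z_j^{\perp})$ and $\varphi'_j(\cdot - Q'_j)$ respectively, where $Q'_j = Z_j - P'_j$ with $P'_j$ the projection of $Z_j$ onto $S(\varphi'_j)$. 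The original Hypothesis \ref{cft1_hyp}(ii) for $(u_j, \varphi_j)$ gives $\int_{B_1(0)} \mathcal{G}(u_j, \varphi_j)^2 \leq \beta_j \int_{B_1(0)} \mathcal{G}(u_j, \varphi'_j)^2$. Using Corollary \ref{nonconest_cor}(i) to bound $\int_{B_\rho(Z_j)} \mathcal{G}(u_j, \varphi_j)^2$ from above in terms of the $B_1(0)$-integral, handling the translation errors on both sides via Corollary \ref{branchdist_cor}, and using that $\beta_j \to 0$ while $\beta_1$ is fixed, forces the ratio $\int_{B_\rho(Z_j)} \mathcal{G}(u_j, \varphi_j(\cdot - Z_j^{\perp}))^2 / \int_{B_\rho(Z_j)} \mathcal{G}(u_j, \varphi'_j(\cdot - Q'_j))^2$ to tend to zero, contradicting the assumed lower bound $\beta_1$.

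The hard part is the case $l > l_0$ of step (ii): transferring the minimality of $\varphi_j$ among elements of $\Phi_{\alpha}$ with strictly larger spine from $B_1(0)$ down to $B_\rho(Z_j)$, because the translates $\varphi_j(\cdot - Z_j^{\perp})$ and $\varphi'_j(\cdot - Q'_j)$ no longer lie in $\Phi_{\alpha}$. The quantitative distance bounds of Corollary \ref{branchdist_cor} combined with the improved integrability from Corollary \ref{nonconest_cor}(i) ensure these translation errors are of strictly smaller order than $\int_{B_1(0)} \mathcal{G}(u_j, \varphi'_j)^2$, provided $\varepsilon_2, \beta_2$ are chosen sufficiently small depending on $\rho, \varepsilon_1, \beta_1$, so the contradiction goes through.
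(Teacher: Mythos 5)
Your proposal misreads Corollary \ref{branchdist_cor} and, more importantly, misses the genuine difficulty that drives the paper's two-case structure (the paper splits on $l > \lambda$ versus $l = \lambda > l_0$, not on $l = l_0$ versus $l > l_0$).

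The central problem is in your treatment of the translation error. You write $Z = P + Z^\perp$ with $Z^\perp \perp S(\varphi)$ and then claim $|Z^\perp|^2 \leq C \int_{B_1(0)} \mathcal{G}(u,\varphi)^2$ ``from Corollary \ref{branchdist_cor}.'' That is not what the corollary says: it bounds $\op{dist}(Z, S(\varphi^{(0)}))^2 \leq C \int \mathcal{G}(u,\varphi)^2$, but for the smaller spine $S(\varphi)$ it only gives $\op{dist}(Z,S(\varphi))^2 = \op{dist}(Z,S(\varphi_0))^2 \leq C \int \mathcal{G}(u,\varphi)^2 / \int \mathcal{G}(\varphi,\varphi_1)^2$, and $\int \mathcal{G}(\varphi,\varphi_1)^2$ can be arbitrarily small. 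Your pointwise bound $|\varphi(Y)-\varphi(Y-Z^\perp)| \leq C|Z^\perp|(|Y|^{\alpha-1}+|Z^\perp|^{\alpha-1})$ together with the naive $|Z^\perp|^2 \lesssim \int\mathcal{G}(u,\varphi)^2$ would close the estimate, but the second ingredient is false. The paper's computation (\ref{translate_eqn1}) uses instead a graded decomposition of $Z$ along the nested spines $S(\varphi_0) \subset S(\varphi_1) \subset \cdots \subset S(\varphi_N)$ of an optimal sequence: writing $Z = (\xi_0,\ldots,\xi_N,\zeta)$ and using that $\varphi_{k+1}$ is translation invariant in the $\xi_{N-k}$ direction, the error in that direction becomes $(D\varphi_{0,1}-D\varphi_{k+1,1})\cdot(0,\xi_{N-k},0)$, whose $L^2$ norm is $\lesssim \int\mathcal{G}(\varphi,\varphi_{k+1})^2 \, |\xi_{N-k}|^2$ — and \emph{this} product is bounded by Corollary \ref{branchdist_cor}, which was tailored to cancel exactly this ratio. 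The final error $\xi_0$ perpendicular to $S(\varphi^{(0)})$ is the only one the corollary controls outright. Your uniform $Z^\perp$-decomposition cannot see this cancellation.

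The second, deeper issue is that in the critical case $l = \lambda > l_0$ — the case actually invoked in the proofs of Lemma \ref{keyest_lemma}, Corollary \ref{branchdist_cor}, and Corollary \ref{nonconest_cor} at step $\lambda$ — Corollaries \ref{branchdist_cor} and \ref{nonconest_cor} at level $\lambda$ are \emph{not available}: Hypothesis \ref{graphrep_hyp} only supplies them for $l_0 \leq l' < \lambda$, and Lemma \ref{translate_lemma} is proved before Corollary \ref{branchdist_cor} at step $\lambda$. Applying them directly to $\varphi$ with $\dim S(\varphi) = n-\lambda$, as your proof does, is circular. The paper's way out is the separate contradiction/blow-up argument establishing the refined distance estimates (\ref{translate_eqn2})--(\ref{translate_eqn3}): the blow-up $w$ relative to $\varphi_{j,k_1}$ (which has strictly larger spine, i.e.\ level $<\lambda$, where the corollaries \emph{are} available) is shown via the orthogonality identities (\ref{translate_eqn6})--(\ref{translate_eqn8}) to vanish to order $\alpha$ at the limit point, forcing the rescaled distances $\kappa_i$ to vanish — a genuinely new argument not captured by your proposal. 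Without this, your step (ii) for $l = \lambda > l_0$ does not go through.
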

\begin{proof}
By Hypothesis \ref{graphrep_hyp}(i), given $\delta > 0$ if $\varepsilon_2$ and $\beta_2$ are sufficiently small then 
\begin{equation*}
	\op{dist}(Z,S(\varphi^{(0)})) \leq \delta, 
\end{equation*}
so by Hypothesis \ref{graphrep_hyp}(i), 
\begin{align*}
	\int_{B_1(0)} \mathcal{G}(\widetilde{u},\varphi^{(0)})^2 
	&= \rho^{-n-2\alpha} \int_{B_{\rho}(Z)} \mathcal{G}(u(X),\varphi^{(0)}(X-Z))^2 dX \\
	&\leq 2\rho^{-n-2\alpha} \int_{B_1(0)} \mathcal{G}(u,\varphi^{(0)})^2 + C \op{dist}(Z,S(\varphi^{(0)}))^2 \\
	&\leq 2\rho^{-n-2\alpha} \varepsilon_2 + C \delta^2 
\end{align*}
for $C = C(n,\lambda,m,\varphi^{(0)},\rho) \in (0,\infty)$.  Therefore Hypothesis \ref{graphrep_hyp}(i) holds true with $\varepsilon = 2\rho^{-n-2\alpha} \varepsilon_2 + C\delta$ and $u = \widetilde{u}$. 

Suppose that $l > \lambda$.  Let $\{\varphi_k\}_{k=0,1,2,\ldots,N}$ be an optimal sequence with increasing spines and $\varphi_0 = \varphi$.  Write $\varphi_k(X) = \{\pm \varphi_{k,1}(X)\}$ for all $X \in \mathbb{R}^n$ and some harmonic single-valued function $\varphi_{k,1}$ that is close to $\varphi^{(0)}_1$ in $L^2(B_1(0);\mathbb{R}^m)$.  After a small rotation of $\mathbb{R}^n$ fixing $\{0\} \times \mathbb{R}^{n-l}$, assume that $S(\varphi_N) = S(\varphi^{(0)})$.  Assume that $S(\varphi_k) = \{0\} \times \mathbb{R}^{n-p_k}$ for $k = 0,1,2,\ldots,N$ for some integers $\lambda = p_0 > p_1 > p_2 > \cdots > p_N = l$.  Write $Z = (\xi_0,\xi_1,\xi_2,\ldots,\xi_N,\zeta) \in \mathbb{R}^{l_0} \times \mathbb{R}^{p_{N-1}-p_N} \times \mathbb{R}^{p_{N-2}-p_{N-1}} \times \cdots \times \mathbb{R}^{p_0-p_1} \times \mathbb{R}^{n-l}$.  By Hypothesis \ref{graphrep_hyp} and Corollary \ref{branchdist_cor}, 
\begin{align} \label{translate_eqn1}
	\int_{B_1(0)} \mathcal{G}(\widetilde{u},\varphi)^2 
	&= \rho^{-n-2\alpha} \int_{B_{\rho}(Z)} \mathcal{G}(u(X),\varphi(X-Z))^2 dX \nonumber \\
	&\leq C \int_{B_1(0)} \mathcal{G}(u,\varphi)^2 + C \int_{B_1(0)} \int_0^1 |D\varphi_{0,1}(X+tZ)|^2 |\xi_0|^2 dt dX \nonumber \\&\hspace{4mm}
		+ C \sum_{k=1}^N \int_{B_1(0)} \int_0^1 |(D\varphi_{0,1} - D\varphi_{k,1})(X+tZ)|^2 |\xi_{N-k+1}|^2 \nonumber \\
	&\leq C \int_{B_1(0)} \mathcal{G}(u,\varphi)^2 + C \int_{B_1(0)} |\varphi|^2 |\xi_0|^2 
		+ C \sum_{k=1}^N \int_{B_1(0)} \mathcal{G}(\varphi,\varphi_k)^2 |\xi_{N-k+1}|^2 \nonumber \\
	&\leq C \int_{B_1(0)} \mathcal{G}(u,\varphi)^2 
\end{align}
for $C = C(n,\lambda,m,\varphi^{(0)},\rho) \in (0,\infty)$ provided $\varepsilon_2$ and $\beta_2$ are sufficiently small and thus 
\begin{equation*}
	\int_{B_1(0)} \mathcal{G}(\widetilde{u},\varphi)^2 
	\leq C\beta_2 \inf_{\varphi' \in \Phi_{\alpha}, \, S(\varphi) \subset S(\varphi')} \int_{B_1(0)} \mathcal{G}(\varphi,\varphi')^2. 
\end{equation*}
Therefore Hypothesis \ref{graphrep_hyp}(ii) holds true with $\beta = 4C\beta_2$ and $u = \widetilde{u}$ provided $C\beta_2 < 1/4$.  

Suppose that $l = \lambda > l_0$.  Let $\{\varphi_k\}_{k=0,1,2,\ldots,N}$ be an optimal sequence with increasing spines and $\varphi_0 = \varphi$.  We want to show that given $\eta > 0$, if $\varepsilon_2$ and $\beta_2$ are sufficiently small, then 
\begin{equation} \label{translate_eqn2}
	\op{dist}(Z,S(\varphi^{(0)})) \leq \eta, 
\end{equation}
and 
\begin{align} \label{translate_eqn3}
	\op{dist}(Z,S(\varphi_k))^2 &\leq \frac{\eta \inf_{\varphi' \in \Phi_{\alpha}, \, S(\varphi) \subset S(\varphi')} 
		\int_{B_1(0)} \mathcal{G}(\varphi,\varphi')^2}{\int_{B_1(0)} \mathcal{G}(\varphi,\varphi_{k+1})^2} \text{ for } k = 1,2,\ldots,N-1, \nonumber \\
	\op{dist}(Z,S(\varphi^{(0)}))^2 &\leq \eta \inf_{\varphi' \in \Phi_{\alpha}, \, S(\varphi) \subset S(\varphi')} 
		\int_{B_1(0)} \mathcal{G}(\varphi,\varphi')^2. 
\end{align} 
(\ref{translate_eqn2}) is clearly true by Corollary \ref{graphrep_cor}.  Assuming that (\ref{translate_eqn3}) is also true, by the computation of (\ref{translate_eqn1}) with (\ref{translate_eqn2}) and (\ref{translate_eqn3}) in place of Corollary \ref{branchdist_cor}, 
\begin{equation*}
	\int_{B_1(0)} \mathcal{G}(\widetilde{u},\varphi)^2 
	\leq C (\beta_2 + \eta) \inf_{\varphi' \in \Phi_{\alpha}, \, S(\varphi) \subset S(\varphi')} \int_{B_1(0)} \mathcal{G}(\varphi,\varphi')^2 
\end{equation*}
for $C = C(n,\lambda,m,\varphi^{(0)},\rho) \in (0,\infty)$ provided $\varepsilon_2$ and $\beta_2$ are sufficiently small.  By choosing $\eta$ such that $C\eta < \beta_1/2$ and then choosing $\varepsilon_2$ and $\beta_2$ so that (\ref{translate_eqn2}) and (\ref{translate_eqn3}) hold true and $C\beta_2 < \beta_1/2$, Hypothesis \ref{graphrep_hyp}(ii) holds true with $\beta = \beta_1$ and $u = \widetilde{u}$.  

Suppose there exists $\eta > 0$, $\varepsilon_j, \beta_j \downarrow 0$, $\varphi_j \in \Phi_{\alpha}$, $u_j \in \mathcal{F}_{\alpha}$, $Z_j \in B_{1/2}(0)$, and $\{\varphi_{j,k}\}_{k=0,1,2,\ldots,N}$ an optimal sequence with increasing spines with $\varphi_{j,0} = \varphi_j$ such that Hypothesis \ref{graphrep_hyp} holds true with $\varepsilon = \varepsilon_j$, $\beta = \beta_j$, $\varphi = \varphi_j$, and $u = u_j$, $\dim S(\varphi_j) = n-\lambda$, $\mathcal{N}_{u_j}(Z_j) \geq \alpha$, and (\ref{translate_eqn3}) fails to hold true with $Z = Z_j$, $\varphi = \varphi_j$, and $\varphi_k = \varphi_{j,k}$.  After passing to a subsequence of $j$, we can find an increasing sequence $\{k_i\}_{i=1,2,\ldots,I} \subset \in \{1,2,\ldots,N\}$ such that 
\begin{gather}
	\liminf_{j \rightarrow \infty} \frac{\int_{B_1(0)} \mathcal{G}(\varphi_j,\varphi_{j,k})^2}{\int_{B_1(0)} \mathcal{G}(\varphi_j,\varphi_{j,k+1})^2} > 0 
		\text{ if } k \not\in \{k_i\}, \nonumber \\
	\lim_{j \rightarrow \infty} \frac{\int_{B_1(0)} \mathcal{G}(\varphi_j,\varphi_{j,k_i})^2}{\int_{B_1(0)} \mathcal{G}(\varphi_j,\varphi_{j,{k_{i+1}}})^2} = 0 
		\text{ for } i = 1,2,\ldots,I-1, \label{translate_eqn4}
\end{gather}
and $k_I = N$.  As in the proof of Lemma \ref{graphrep_lemma}, we may assume that $S(\varphi_{j,k_i}) = \{0\} \times \mathbb{R}^{n-p_i}$ for some integers $\lambda = p_0 > p_1 > p_2 > \cdots > p_I = l_0$.  Write $Z_j = (\xi_{j,0},\xi_{j,1},\xi_{j,2},\ldots,\xi_{j,I},\zeta_j) \in \mathbb{R}^{l_0} \times \mathbb{R}^{p_{I-1}-p_I} \times \mathbb{R}^{p_{I-2}-p_{I-1}} \times \cdots \times \mathbb{R}^{p_0-p_1} \times \mathbb{R}^{n-\lambda}$.  Let $w$ be a blow-up of $u_j$ relative to $\varphi_{j,k_1}$ over $B_{3/4}(0)$.  By Hypothesis \ref{graphrep_hyp}(ii), $w$ is also a blow-up of $\varphi_j$ relative to $\varphi_{j,k_1}$ and thus $w$ is homogeneous degree $\alpha$ and $w(Y+X) = w(X)$ for all $X \in B_{3/4}(0)$ if $Y \in \{0\} \times \mathbb{R}^{n-\lambda}$.  Write $\varphi_{j,k}(X) = \{\pm \varphi_{j,k,1}(X)\}$ for all $X \in \mathbb{R}^n$ and some harmonic single-valued function $\varphi_{j,k,1}$ that is close to $\varphi^{(0)}_1$ in $L^2(B_1(0);\mathbb{R}^m)$.  After passing to a subsequence, the limit 
\begin{equation*}
	\widehat{\varphi}_i = \lim_{j \rightarrow \infty} \frac{\varphi_{j,k_1,1} - \varphi_{j,k_{i+1},1}}{
		\left( \int_{B_1(0)} \mathcal{G}(\varphi_{j,k_1},\varphi_{j,k_{i+1}})^2 \right)^{1/2}}
\end{equation*}
exists in $L^2(B_1(0);\mathbb{R}^m)$ for $k = 1,2,\ldots,I-1$.  Let $\widehat{\varphi}_I = \varphi^{(0)}$.  By (\ref{translate_eqn4}), 
\begin{equation*}
	\widehat{\varphi}_i = \lim_{j \rightarrow \infty} \frac{\varphi_{j,k_i,1} - \varphi_{j,k_{i+1},1}}{
		\left( \int_{B_1(0)} \mathcal{G}(\varphi_{j,k_1},\varphi_{j,k_{i+1}})^2 \right)^{1/2}}
\end{equation*}
and consequently $\widehat{\varphi}_i$ is translation invariant along $\{0\} \times \mathbb{R}^{n-p_i}$.  After passing to a subsequence, the limits 
\begin{gather*}
	\kappa_i = \lim_{j \rightarrow \infty} 
		\left( \frac{\int_{B_1(0)} \mathcal{G}(\varphi_{j,k_1},\varphi_{j,k_{i+1}})^2}{\int_{B_1(0)} \mathcal{G}(u_j,\varphi_{j,k_1})^2} \right)^{1/2} 
		\cdot \xi_{j,I-i} \text{ for } k = 1,2,\ldots,I-1, \nonumber \\
	\kappa_I = \lim_{j \rightarrow \infty} \frac{\xi_{j,0}}{\left( \int_{B_1(0)} \mathcal{G}(u_j,\varphi_{j,k_1})^2 \right)^{1/2}}, 
\end{gather*}
exist and $\zeta_j$ converges to some $\zeta$ in $\{0\} \times \mathbb{R}^{n-\lambda}$.  Since (\ref{translate_eqn3}) fails to hold true with $Z = Z_j$, $\varphi = \varphi_j$, and $\varphi_k = \varphi_{j,k}$, one of $\kappa_i$ is nonzero.  By Corollary \ref{nonconest_cor}(ii), 
\begin{align*} 
	&\int_{B_{3/4}(0) \cap \{|x| > \tau_j\}} \frac{\left| v_j - \sum_{i=1}^{I-1} (D\varphi_{j,k_1,1} - D\varphi_{j,k_{i+1},1}) \cdot (0,\xi_{j,I-i},0) 
		- D\varphi_{j,k_1,1} \cdot (0,\xi_{j,0},0) \right|^2}{|X-Z_j|^{2\alpha+n-1/2}} \\&\leq C \int_{B_1(0)} \mathcal{G}(u_j,\varphi_{j,K})^2 
\end{align*}
for some $C = C(n,\lambda,m,\varphi^{(0)}) \in (0,\infty)$, where $\tau_j \rightarrow 0^+$ slowly enough that Corollaries \ref{graphrep_cor} and \ref{nonconest_cor} hold with $\gamma = 3/4$, $\tau = \tau_j$, $\sigma = 1/2$, $\varphi = \varphi_{j,K}$, and $u = u_j \circ \Gamma_j^{-1}$ and $v = v_j$ in Corollary \ref{graphrep_cor} and where $(0,\xi_{j,I-i},0) \in \mathbb{R}^{p_{i+1}} \times \mathbb{R}^{p_i - p_{i+1}} \times \mathbb{R}^{n-p_i}$.  Dividing by $\int_{B_1(0)} \mathcal{G}(u_j,\varphi_{j,k_1})^2$ and letting $j \rightarrow \infty$, 
\begin{equation} \label{translate_eqn5}
	\int_{B_{3/4}(0)} \frac{\left| w - \sum_{i=1}^I D\widehat{\varphi}_i \cdot (0,\kappa_i,0) \right|^2}{|X-(0,\zeta)|^{2\alpha+n-1/2}} \leq C 
\end{equation} 
for some $C = C(n,\lambda,m,\varphi^{(0)}) \in (0,\infty)$, where $(0,\kappa_i,0) \in \mathbb{R}^{p_{i+1}} \times \mathbb{R}^{p_i - p_{i+1}} \times \mathbb{R}^{n-p_i}$.  By integration by parts and the fact that $\widehat{\varphi}_i$ is translation invariant along $\{0\} \times \mathbb{R}^{n-p_i}$, 
\begin{equation} \label{translate_eqn6}
	\int_{B_{1/8}(0,\zeta)} D_{(0,\kappa_i,0)} \widehat{\varphi}_i \cdot D_{(0,\kappa_j,0)} \widehat{\varphi}_j 
	= \int_{B_{1/8}(0,\zeta)} \widehat{\varphi}_i \cdot D_{(0,\kappa_i,0)} D_{(0,\kappa_j,0)} \widehat{\varphi}_j 
	= 0 
\end{equation}
if $1 \leq i < j \leq I$, where we rewrite the partial derivatives $D\widehat{\varphi}_i \cdot (0,\kappa_i,0)$ as $D_{(0,\kappa_i,0)} \widehat{\varphi}_i$ and let $\cdot$ denote the inner product on $\mathbb{R}^m$.  Since $w$ is homogeneous degree $\alpha$, (\ref{translate_eqn6}) implies that 
\begin{equation} \label{translate_eqn7}
	\int_{B_{1/8}(0,\zeta)} \frac{D_{(0,\kappa_i,0)} \widehat{\varphi}_i \cdot D_{(0,\kappa_j,0)} \widehat{\varphi}_j}{|X-(0,\zeta)|^{2\alpha+n-1/2}} = 0 
\end{equation}
if $1 \leq i < j \leq I$.  By (\ref{translate_eqn7}) and $w$ being homogeneous degree $\alpha$ at $Z$, 
\begin{equation} \label{translate_eqn8}
	\sum_{i=1}^I \int_{B_{1/8}(0,\zeta)} \frac{|D\widehat{\varphi}_i \cdot (0,\kappa_i,0)|^2}{|X-(0,\zeta)|^{2\alpha+n-1/2}} \leq C. 
\end{equation} 
But by (\ref{translate_eqn4}), $\widehat{\varphi}_i$ is not translation invariant in the $(0,\kappa_i,0)$ direction whenever $\kappa_i \neq 0$ and thus since $\widehat{\varphi}_i$ is homogeneous degree $\alpha$ at $(0,\zeta)$, (\ref{translate_eqn8}) is impossible.  
\end{proof}

\begin{proof}[Proof of Lemma \ref{keyest_lemma} for $l = \lambda$]
Let $\psi : [0,\infty) \rightarrow \mathbb{R}$ be a smooth function with $\psi(t) = 1$ for $t \in [0,\gamma]$, $\psi(t) = 0$ for $t \geq (1+\gamma)/2$, and $0 \leq \psi'(t) \leq 3/(1 - \gamma)$ for $t \in [0,\infty)$.  Arguing as in~\cite{KrumWic1} using the fact that $\mathcal{N}_u(0) \geq \alpha$, we obtain (6.6) of~\cite{KrumWic1}, i.e. 
\begin{equation} \label{keyest_eqn2}
	\int_{B_{\gamma}(0)} R^{2-n} \left( \frac{\partial (u/R^{\alpha})}{\partial R} \right)^2 
	\leq C \int (r^2 |Du|^2 \psi(R)^2 + 2 \alpha R^{-1} |u|^2 \psi(R) \psi'(R)), 
\end{equation}
where $r(X) = \op{dist}(X,S(\varphi))$, $R(X) = |X|$, and $C = C(n,m,\varphi^{(0)},\gamma) \in (0,\infty)$. 

After an orthogonal change of coordinates suppose that $S(\varphi) = \{0\} \times \mathbb{R}^{n-\lambda}$.  Equip $\mathbb{R}^n$ with cylindrical coordinates $(r \omega,y)$ for $r > 0$, $\omega \in S^{\lambda-1}$, and $y \in \mathbb{R}^{n-\lambda}$ and let $\nabla_{S^{\lambda-1}}$ denote the gradient with respect to $\omega$.  Recall (3.3) and (3.4) of~\cite{KrumWic1}, i.e. 
\begin{gather}
	\int_{B_1(0)} \left( \frac{1}{2} |Du|^2 \delta_{ij} - D_i u^{\kappa} D_j u^{\kappa} \right) D_i \zeta^j = 0, \label{keyest_eqn4} \\
	\int_{B_1(0)} (|Du|^2 \zeta + u^{\kappa} D_i u^{\kappa} D_i \zeta) = 0, \label{keyest_eqn5}
\end{gather}
for $\zeta^1,\zeta^2,\ldots,\zeta^n,\zeta \in C^{\infty}_c(B_1(0))$.  Let $(\zeta^1,\zeta^2,\ldots,\zeta^n) = \psi(R)^2 (x_1,x_2,\ldots,x_{\lambda},0,\ldots,0)$ in (\ref{keyest_eqn4}) to obtain 
\begin{equation} \label{keyest_eqn6}
	\int \left( \frac{\lambda}{2} |Du|^2 - |D_x u|^2 \right) \psi(R)^2 
	= -2 \int \left( \frac{1}{2} r^2 |Du|^2 - r^2 |D_r u|^2 - r D_r u (y \cdot D_y u) \right) R^{-1} \psi(R) \psi'(R), 
\end{equation}
where $r(X) = \op{dist}(X,\{0\} \times \mathbb{R}^{n-\lambda})$.  Let $\zeta = \psi(R)^2$ in (\ref{keyest_eqn5}) to obtain 
\begin{equation} \label{keyest_eqn7}
	\int |Du|^2 \psi(R)^2 = -2 \int \left( r u D_r u + u (y \cdot D_y u) \right) R^{-1} \psi(R) \psi'(R). 
\end{equation}
By multiplying (\ref{keyest_eqn7}) by $\alpha$ and adding it to (\ref{keyest_eqn6}), and adding also $2\alpha (\alpha+\lambda/2-1) \int R^{-1} |u|^2 \psi(R) \psi'(R)$ to both sides, we obtain 
\begin{align*} 
	&\int |D_y u|^2 \psi(R)^2 + (\alpha+\lambda/2-1) \int \left( |Du|^2 \psi(R)^2 + 2\alpha R^{-1} |u|^2 \psi(R) \psi'(R) \right) 
	\\&= -2 \int \left( \frac{1}{2} r^2 |Du|^2 - \alpha (\alpha+\lambda/2-1) R^{-1} |u|^2 - r D_r u (r D_r u - \alpha u) \right) R^{-1} \psi(R) \psi'(R) 
	\\&+ 2 \int (y \cdot D_y u)(r D_r u - \alpha u) R^{-1} \psi(R) \psi'(R). 
\end{align*}
By Cauchy's inequality,
\begin{align} \label{keyest_eqn8}
	&\frac{1}{2} \int |D_y u|^2 \psi(R)^2 + (\alpha+\lambda/2-1) \int \left( |Du|^2 \psi(R)^2 + 2\alpha R^{-1} |u|^2 \psi(R) \psi'(R) \right) \nonumber \\
	&\leq -2 \int \left( \frac{1}{2} r^2 |Du|^2 - \alpha (\alpha+\lambda/2-1) |u|^2 - r D_r u (r D_r u - \alpha u) \right) R^{-1} \psi(R) \psi'(R) \nonumber \\
	&+ 2 \int |r D_r u - \alpha u|^2 \psi'(R)^2. 
\end{align}

Using the Besicovitch covering theorem, cover $B^{1+n-\lambda}_{\gamma}(0) \cap (0,\infty) \times \mathbb{R}^{n-\lambda}$ by a collection of balls $\{ B_{(1-\gamma) \rho/2}(\rho,\zeta) \}_{(\rho,\zeta) \in \mathcal{I}}$, where $\mathcal{I}$ is an index set consisting of points of $B^{1+n-\lambda}_{\gamma}(0) \cap (0,\infty) \times \mathbb{R}^{n-\lambda}$, such that $\{ B_{(1-\gamma) \rho/2}(\rho,\zeta) \}_{(\rho,\zeta) \in \mathcal{I}}$ is the union of $N \leq C(n,\lambda)$ subcollections of mutually disjoint balls.  Observe that then $B_{\gamma}(0) \setminus \{0\} \times \mathbb{R}^{n-\lambda}$ is covered by the collection of annuli $\{ A^{\lambda}_{\rho,(1-\gamma)/2}(\zeta) \}_{(\rho,\zeta) \in \mathcal{I}}$ and this collection is the union of $N$ subcollections of mutually disjoint annuli.  Let $\{\chi_{(\rho,\zeta)}\}_{(\rho,\zeta) \in \mathcal{I}}$ be a partition of unity subordinate to $\{ A^{\lambda}_{\rho,(1-\gamma)/2}(\zeta) \}_{(\rho,\zeta) \in \mathcal{I}}$ such that $\chi_{(\rho,\zeta)}(x,y)$ depends only on $|x|$ and $y$.  We want to show that 
\begin{align} \label{keyest_eqn9}
	&-2 \int \left( \frac{1}{2} r^2 |Du|^2 - \alpha (\alpha+\lambda/2-1) |u|^2 - r D_r u (r D_r u - \alpha u) \right) R^{-1} \psi(R) \psi'(R) \chi_{(\rho,\zeta)}
	\nonumber \\&+ 2 \int |r D_r u - \alpha u|^2 \psi'(R)^2 \chi_{(\rho,\zeta)} \leq C \int_{A^{\lambda}_{\rho,1-\gamma}(\zeta)} \mathcal{G}(u,\varphi)^2 
\end{align}
for some $C = C(n,\lambda,m,\varphi^{(0)},\gamma) \in (0,\infty)$.  Since given $(\rho_0,\zeta_0) \in \mathcal{I}$, $A^{\lambda}_{\rho_0,(1-\gamma)/2}(\zeta_0) \cap A^{\lambda}_{\rho,(1-\gamma)/2}(\zeta) \neq \emptyset$ for at most $N' \leq C(n,\lambda)$ points $(\rho,\zeta) \in \mathcal{I}$, by summing (\ref{keyest_eqn9}) over $(\rho,\zeta) \in \mathcal{I}$ and combining with (\ref{keyest_eqn2}) and (\ref{keyest_eqn8}), (\ref{keyest_eqn1}) holds true.  To prove (\ref{keyest_eqn9}), fix $(\rho,\zeta) \in \mathcal{I}$ and let $\chi = \chi_{(\rho,\zeta)}$.  Let $\widetilde{\varepsilon}, \widetilde{\beta} \in (0,1)$ depend only on $n$, $\lambda$, $m$, $\varphi^{(0)}$, and $\gamma$ and will be determined below.  We shall consider three cases: 
\begin{enumerate}
	\item[I.] $\rho^{-n-2\alpha} \int_{A^{\lambda}_{\rho,1-\gamma}(\zeta)} \mathcal{G}(u,\varphi)^2 \geq (2\widetilde{\beta}/3)^n \widetilde{\varepsilon}$, 
	\item[II.] $\rho^{-n-2\alpha} \int_{A^{\lambda}_{\rho,1-\gamma}(\zeta)} \mathcal{G}(u,\varphi)^2 < (2\widetilde{\beta}/3)^n \widetilde{\varepsilon}$ and $A^{\lambda}_{\rho,1-\gamma}(\zeta) \cap \mathcal{B}_u = \emptyset$, and
	\item[III.] $\rho^{-n-2\alpha} \int_{A^{\lambda}_{\rho,1-\gamma}(\zeta)} \mathcal{G}(u,\varphi)^2 < (2\widetilde{\beta}/3)^n \widetilde{\varepsilon}$ and $A^{\lambda}_{\rho,1-\gamma}(\zeta) \cap \mathcal{B}_u \neq \emptyset$. 
\end{enumerate}

In Case I, by the $W^{1,2}$ estimates for harmonic two-valued functions (see Section 2 of~\cite{KrumWic1}), 
\begin{align*}
	\int_{A^{\lambda}_{\rho,(1-\gamma)/2}(\zeta)} (|u|^2 + r^2 |Du|^2) 
	&\leq C \int_{A^{\lambda}_{\rho,1-\gamma}(\zeta)} |u|^2 
	\leq C \int_{A^{\lambda}_{\rho,1-\gamma}(\zeta)} |\varphi|^2 + C \int_{A^{\lambda}_{\rho,1-\gamma}(\zeta)} \mathcal{G}(u,\varphi)^2 \nonumber \\
	&\leq C \rho^{n+2\alpha} + C \int_{A^{\lambda}_{\rho,1-\gamma}(\zeta)} \mathcal{G}(u,\varphi)^2
	\leq C \int_{A^{\lambda}_{\rho,1-\gamma}(\zeta)} \mathcal{G}(u,\varphi)^2
\end{align*}
for $C = C(n,\lambda,m,\varphi^{(0)},\gamma) \in (0,\infty)$ and thus (\ref{keyest_eqn9}) follows. 

In Case II, by the Schauder estimates and Lemma \ref{twovalL2_lemma}, provided $\widetilde{\varepsilon}$ is sufficiently small, there exists $v : A^{\lambda}_{\rho,(1-\gamma)/2}(\zeta) \rightarrow \mathbb{R}^m$ such that 
\begin{gather*}
	u(X) = \{ \varphi_1(X) + v(X), -\varphi_1(X) - v(X) \} \text{ for all } A^{\lambda}_{\rho,(1-\gamma)/2}(\zeta), \\
	\int_{A^{\lambda}_{\rho,(1-\gamma)/2}(\zeta)} (|v|^2 + r^2 |Dv|^2) \leq C \int_{A^{\lambda}_{\rho,1-\gamma}(\zeta)} \mathcal{G}(u,\varphi)^2
\end{gather*}
for $C = C(n,\lambda,m,\varphi^{(0)},\gamma) \in (0,\infty)$.  Hence 
\begin{align} \label{keyest_eqn10}
	&-2 \int \left( \frac{1}{2} r^2 |Du|^2 - \alpha (\alpha+\lambda/2-1) |u|^2 - r D_r u (r D_r u - \alpha u) \right) R^{-1} \psi(R) \psi'(R) \chi
	\nonumber \\&+ 2 \int |r D_r u - \alpha u|^2 \psi'(R)^2 \chi 
	\leq \frac{1}{2} \int \left( |\nabla_{S^{\lambda-1}} \varphi|^2 - \alpha (\alpha+\lambda-2) |\varphi|^2 \right) R^{-1} \psi(R) \psi'(R) \chi \nonumber \\
	&+ \sqrt{2} \int \left( \nabla_{S^{n-\lambda-1}} v \nabla_{S^{\lambda-1}} \varphi - \alpha (\alpha+\lambda-2) v \varphi \right) R^{-1} \psi(R) \psi'(R) \chi 
	+ C \int_{A^{\lambda}_{\rho,1-\gamma}(\zeta)} \mathcal{G}(u,\varphi)^2 
\end{align}
for some $C = C(n,\lambda,m,\varphi^{(0)},\gamma) \in (0,\infty)$.  Since $\varphi$ is harmonic and homogeneous degree $\alpha$ and $S(\varphi) = \{0\} \times \mathbb{R}^{n-\lambda}$, $\Delta_{S^{\lambda-1}} \varphi + \alpha (\alpha + \lambda - 2) \varphi = 0$ and thus by integration by parts 
\begin{equation*}
	\int |\nabla_{S^{\lambda-1}} \varphi|^2  R^{-1} \psi(R) \psi'(R) \chi 
	= \alpha (\alpha + \lambda - 2) \int |\varphi|^2  R^{-1} \psi(R) \psi'(R) \chi 
\end{equation*}
and  
\begin{equation*}
	\int \nabla_{S^{\lambda-1}} \varphi \cdot \nabla_{S^{\lambda-1}} v  R^{-1} \psi(R) \psi'(R) \chi 
	= \alpha (\alpha + \lambda - 2) \int \varphi v  R^{-1} \psi(R) \psi'(R) \chi. 
\end{equation*}
Hence by (\ref{keyest_eqn10}), (\ref{keyest_eqn9}) holds true. 

Finally we will consider Case III.  By Lemma \ref{graphrep_lemma}, provided $\widetilde{\varepsilon}$ and $\widetilde{\beta}$ is sufficiently small, there exists $J \geq 1$ and a sequence $\varphi_0,\varphi_1,\varphi_2,\ldots,\varphi_J$ such that $\varphi_0 = \varphi$, $S(\varphi_{j-1}) \subset S(\varphi_j)$ and 
\begin{equation} \label{keyest_eqn11}
	\int_{A^{\lambda}_{\rho,1-\gamma}(\zeta)} \mathcal{G}(u,\varphi_{j-1})^2 > \frac{2}{3} \widetilde{\beta} \int_{A^{\lambda}_{\rho,1-\gamma}(\zeta)} \mathcal{G}(u,\varphi_j)^2
\end{equation}
for all $j = 1,2,\ldots,J$, and letting $\widetilde{\varphi} = \varphi_J$ either $\dim S(\widetilde{\varphi}) = l_0$ or 
\begin{equation} \label{keyest_eqn12}
	\int_{A^{\lambda}_{\rho,1-\gamma}(\zeta)} \mathcal{G}(u,\widetilde{\varphi})^2 
	\leq \widetilde{\beta} \in_{\psi' \in \Phi_{\alpha}, \, S(\widetilde{\varphi}) \subset S(\varphi')} 
	\int_{A^{\lambda}_{\rho,1-\gamma}(\zeta)} \mathcal{G}(u,\varphi')^2. 
\end{equation}
Observe that by (\ref{keyest_eqn11}), 
\begin{equation} \label{keyest_eqn13}
	\int_{A^{\lambda}_{\rho,1-\gamma}(\zeta)} \mathcal{G}(u,\widetilde{\varphi})^2 
	< (2\widetilde{\beta}/3)^{-n} \int_{A^{\lambda}_{\rho,1-\gamma}(\zeta)} \mathcal{G}(u,\varphi_j)^2 < \widetilde{\varepsilon}. 
\end{equation}
After an orthogonal change of coordinates, assume that $\dim S(\widetilde{\varphi}) = \{0\} \times \mathbb{R}^{n-l}$.  Let $(s\omega,z)$ denote cylindrical coordinates on $\mathbb{R}^n$ where $s \geq 0$, $\omega \in S^{l-1}$, and $z \in \mathbb{R}^{n-l}$.  Note that by (\ref{keyest_eqn11}), (\ref{keyest_eqn13}), Corollary \ref{branchdist_cor}, provided $\widetilde{\varepsilon}$ and $\widetilde{\beta}$ are sufficiently small,  
\begin{equation} \label{keyest_eqn14}
	\op{dist}(Z,S(\widetilde{\varphi})) \leq c E \text{ for all } Z \in \mathcal{B}_u \cap A^{\lambda}_{\rho,1-\gamma}(\zeta)
\end{equation} 
for some constant $c = c(n,\lambda,m,\varphi^{(0)},\gamma) \in (0,\infty)$, where $E = \left( \rho^{-n-2\alpha} \int_{A^{\lambda}_{\rho,1-\gamma}(\zeta)} \mathcal{G}(u,\widetilde{\varphi})^2 \right)^{1/2}$.  Since $\widetilde{\varphi}$ is harmonic and homogeneous degree $\alpha$ and $\{0\} \times \mathbb{R}^{n-\lambda} \subset S(\widetilde{\varphi})$, $\Delta_{S^{\lambda-1}} \widetilde{\varphi} + \alpha (\alpha + \lambda - 2) \widetilde{\varphi} = 0$ and thus by integration by parts 
\begin{equation*}
	\int |\nabla_{S^{\lambda-1}} \widetilde{\varphi}|^2  R^{-1} \psi(R) \psi'(R) \chi 
	= \alpha (\alpha + \lambda - 2) \int |\widetilde{\varphi}|^2  R^{-1} \psi(R) \psi'(R) \chi 
\end{equation*}
and since (\ref{keyest_eqn12}) holds true, in order to prove (\ref{keyest_eqn9}) it suffices to show that 
\begin{align} \label{keyest_eqn15}
	&-2 \int \left( \frac{1}{2} r^2 (|Du|^2 - |D\widetilde{\varphi}|^2) - \alpha (\alpha+\lambda/2-1) (|u|^2 - |\widetilde{\varphi}|^2) 
		- r D_r u (r D_r u - \alpha u) \right) R^{-1} \psi(R) \psi'(R) \chi \nonumber \\
	&+ 2 \int |r D_r u - \alpha u|^2 \psi'(R)^2 \chi \leq C \int_{A^{\lambda}_{\rho,1-\gamma}(\zeta)} \mathcal{G}(u,\varphi)^2 
\end{align}

For $X \in A^{\lambda}_{\rho,1-\gamma}(\zeta)$, let $d(X) = \max\{\op{dist}(X,\mathcal{B}_u),(1-\gamma) \rho/32\}$.  Observe that we can cover $A^{\lambda}_{\rho,(1-\gamma)/2}(\zeta)$ by $A^{\lambda}_{\rho,(1-\gamma)/2}(\zeta) \cap \{ s > (1-\gamma)\rho/64 \}$, $B_{\max\{4cE,d(0,\xi)/2\}}(0,\xi)$ where $(0,\xi) \in A^{\lambda}_{\rho,(1-\gamma)/2}(\zeta) \cap \{0\} \times \mathbb{R}^{n-l}$, and $A^l_{\sigma,1/2}(\xi)$ where $(0,\xi) \in A^{\lambda}_{\rho,(1-\gamma)/2}(\zeta) \cap \{0\} \times \mathbb{R}^{n-l}$ and $\max\{4cE,d(0,\xi)/2\} \leq \sigma \leq (1-\gamma) \rho/32$.  Using the Besicovitch covering theorem, cover $A^{\lambda}_{\rho,(1-\gamma)/2}(\zeta)$ by a collection of such balls and annuli that equals the union of $N \leq C(n,\lambda,l)$ subcollections of mutually disjoint sets.  Let $\{\eta_S\}_{S \in \mathcal{C}}$ be a partition of unity subordinate to $\mathcal{C}$ that $\eta_S(X)$ depends only on $s$ and $z$.  Note that since two sets in $\mathcal{C}$ intersect only if they are roughly the same size, by defining $\eta'_S : \mathbb{R}^n \rightarrow [0,1]$ by $\eta'_S(X) = \op{dist}(X, \mathbb{R}^n \setminus S)$, approximating $\eta''_S$ by an appropriate smooth function $\eta''_S$ via convolution, and letting 
\begin{equation*}
	\eta_S = \frac{\eta''_S}{\sum_{T \in \mathcal{C}} \eta''_S}, 
\end{equation*}
we can assume that $|D\eta_{0,S}| \leq C \rho^{-1}$ when $S = A^{\lambda}_{\rho,(1-\gamma)/2}(\zeta) \cap \{ s > (1-\gamma)\rho/64 \}$ and $|D\eta_{0,S}| \leq C \sigma^{-1}$ whenever $S = B_{\sigma}(\xi) \in \mathcal{C}$ or $S = A^l_{\sigma,1/2}(\xi) \in \mathcal{C}$.

Suppose that $B_{d(0,\xi)/2}(0,\xi) \in \mathcal{C}$ and $d(0,\xi) \geq 8cE$.  If $d(0,\xi)^{-n-2\alpha} \int_{B_{d(0,\xi)}(0,\xi)} \mathcal{G}(u,\widetilde{\varphi})^2 < \widetilde{\varepsilon}$, then by the Schauder estimates there exists a harmonic function $v : B_{d(0,\xi)/2}(0,\xi) \rightarrow \mathbb{R}^m$ such that 
\begin{gather}
	u(X) = \{ \widetilde{\varphi}_1(X) + v(X), -\widetilde{\varphi}_1(X) - v(X) \} \text{ for all } X \in B_{d(0,\xi)/2}(0,\xi), \nonumber \\
	\int_{B_{d(0,\xi)/2}(0,\xi)} (|v|^2 + s^2 |Dv|^2) \leq C \int_{B_{d(0,\xi)}(0,\xi)} \mathcal{G}(u,\widetilde{\varphi})^2 \label{keyest_eqn16}
\end{gather}
for some $C = C(n,\lambda,m,\varphi^{(0)},\gamma) \in (0,\infty)$, where $\widetilde{\varphi}(X) = \{ \pm \widetilde{\varphi}_1(X) \}$ for all $X \in \mathbb{R}^n$ and some harmonic single-valued function $\widetilde{\varphi}_1 : \mathbb{R}^n \rightarrow \mathbb{R}^m$ that is close to $\varphi^{(0)}_1$ in $L^2(B_1(0);\mathbb{R}^m)$.  By Lemma \ref{translate_lemma} and Corollary \ref{radialdecay_cor}, 
\begin{equation} \label{keyest_eqn17}
	\int_{B_{d(0,\xi)}(0,\xi)} \mathcal{G}(u,\widetilde{\varphi})^2 
	\leq C \left(\frac{d(0,\xi)}{\rho}\right)^{n+2\alpha-1/2} \int_{A^{\lambda}_{\rho,1-\gamma}(\zeta)} \mathcal{G}(u,\widetilde{\varphi})^2 
\end{equation}
for some $C = C(n,\lambda,m,\varphi^{(0)},\gamma) \in (0,\infty)$.  By (\ref{keyest_eqn16}), (\ref{keyest_eqn17}), and $\Delta_{S^{\lambda-1}} \widetilde{\varphi} + \alpha (\alpha + \lambda - 2) \widetilde{\varphi} = 0$, 
\begin{align} \label{keyest_eqn18}
	&-2 \int \left( \frac{1}{2} r^2 (|Du|^2 - |D\widetilde{\varphi}|^2) - \alpha (\alpha+\lambda/2-1) (|u|^2 - |\widetilde{\varphi}|^2) 
		- r D_r u (r D_r u - \alpha u) \right) R^{-1} \psi(R) \psi'(R) \chi \nonumber \\
	&+ 2 \int |r D_r u - \alpha u|^2 \psi'(R)^2 \chi \eta \nonumber \\
	&\leq -2 \int \left( \nabla_{S^{\lambda-1}} v \cdot \nabla_{S^{\lambda-1}} \widetilde{\varphi} - \alpha (\alpha+\lambda-2) v \widetilde{\varphi} \right) 
		R^{-1} \psi(R) \psi'(R) \chi \eta 
	+ C \left(\frac{d(0,\xi)}{\rho}\right)^{n+2\alpha-1/2} \int_{A^{\lambda}_{\rho,1-\gamma}(\zeta)} \mathcal{G}(u,\widetilde{\varphi})^2 \nonumber \\
	&\leq 2 \int v \nabla_{S^{\lambda-1}} \widetilde{\varphi} \cdot \nabla_{S^{\lambda-1}} \eta R^{-1} \psi(R) \psi'(R) \chi 
	+ C \rho^{-n-2\alpha+5/2} \left( \int_{B_{d(0,\xi)/2}(0,\xi)} s^{2\alpha-5/2} \right) 
		\left( \int_{A^{\lambda}_{\rho,1-\gamma}(\zeta)} \mathcal{G}(u,\varphi)^2 \right), 
\end{align}
where $\eta = \eta_{B_{d(0,\xi)/2}(0,\xi)}$ and $C = C(n,\lambda,m,\varphi^{(0)},\gamma) \in (0,\infty)$.  Similarly, when $A^l_{\sigma,1/2}(\xi) \in \mathcal{C}$ with $\sigma^{-n-2\alpha} \int_{A^l_{\sigma,1}(\xi)} \mathcal{G}(u,\widetilde{\varphi})^2 < \widetilde{\varepsilon}$, 
\begin{align} \label{keyest_eqn19}
	&-2 \int \left( \frac{1}{2} r^2 (|Du|^2 - |D\widetilde{\varphi}|^2) - \alpha (\alpha+\lambda/2-1) (|u|^2 - |\widetilde{\varphi}|^2) 
		- r D_r u (r D_r u - \alpha u) \right) R^{-1} \psi(R) \psi'(R) \chi \nonumber \\
	&+ 2 \int |r D_r u - \alpha u|^2 \psi'(R)^2 \chi \eta \nonumber \\
	&\leq 2 \int v \nabla_{S^{\lambda-1}} \widetilde{\varphi} \cdot \nabla_{S^{\lambda-1}} \eta R^{-1} \psi(R) \psi'(R) \chi 
	+ C \rho^{-n-2\alpha+5/2} \left( \int_{A^l_{\sigma,1/2}(\xi)} s^{2\alpha-5/2} \right) 
		\left( \int_{A^{\lambda}_{\rho,1-\gamma}(\zeta)} \mathcal{G}(u,\varphi)^2 \right), 
\end{align}
where $\eta = \eta_{A^l_{\sigma,1/2}(\xi)}$ and $C = C(n,\lambda,m,\varphi^{(0)},\gamma) \in (0,\infty)$.  Also, 
\begin{align} \label{keyest_eqn20}
	&-2 \int \left( \frac{1}{2} r^2 (|Du|^2 - |D\widetilde{\varphi}|^2) - \alpha (\alpha+\lambda/2-1) (|u|^2 - |\widetilde{\varphi}|^2) 
		- r D_r u (r D_r u - \alpha u) \right) R^{-1} \psi(R) \psi'(R) \chi \nonumber \\
	&+ 2 \int |r D_r u - \alpha u|^2 \psi'(R)^2 \chi \eta 
	\leq 2 \int v \nabla_{S^{\lambda-1}} \widetilde{\varphi} \cdot \nabla_{S^{\lambda-1}} \eta R^{-1} \psi(R) \psi'(R) \chi 
	+ C \int_{A^{\lambda}_{\rho,1-\gamma}(\zeta)} \mathcal{G}(u,\varphi)^2, 
\end{align}
where $\eta = \eta_{A^{\lambda}_{\rho,(1-\gamma)/2}(\zeta) \cap \{ s > (1-\gamma)\rho/64 \}}$ and $C = C(n,\lambda,m,\varphi^{(0)},\gamma) \in (0,\infty)$.

Suppose instead that $B_{d(0,\xi)/2}(0,\xi) \in \mathcal{C}$ with $d(0,\xi) \geq 8cE$ and $d(0,\xi)^{-n-2\alpha} \int_{B_{d(0,\xi)}(0,\xi)} \mathcal{G}(u,\widetilde{\varphi})^2 \geq \widetilde{\varepsilon}$.  Then by the $W^{1,2}$ estimates for harmonic two-valued functions, 
\begin{align*}
	\int_{B_{d(0,\xi)/2}(0,\xi)} (|u|^2 + |\widetilde{\varphi}|^2 + s^2 |Du|^2 + s^2 |D\widetilde{\varphi}|^2) 
	&\leq \int_{B_{d(0,\xi)}(0,\xi)} (|u|^2 + |\widetilde{\varphi}|^2) 
	\\&\leq C d(0,\xi)^{n+2\alpha} + C \int_{B_{d(0,\xi)}(0,\xi)} \mathcal{G}(u,\widetilde{\varphi})^2  
	\\&\leq C \int_{B_{d(0,\xi)}(0,\xi)} \mathcal{G}(u,\widetilde{\varphi})^2 
\end{align*}
for $C = C(n,\lambda,m,\varphi^{(0)},\gamma) \in (0,\infty)$, so by (\ref{keyest_eqn17}), 
\begin{align} \label{keyest_eqn21}
	&-2 \int \left( \frac{1}{2} r^2 (|Du|^2 - |D\widetilde{\varphi}|^2) - \alpha (\alpha+\lambda/2-1) (|u|^2 - |\widetilde{\varphi}|^2) 
		- r D_r u (r D_r u - \alpha u) \right) R^{-1} \psi(R) \psi'(R) \chi \nonumber \\
	&+ 2 \int |r D_r u - \alpha u|^2 \psi'(R)^2 \chi \eta 
	\leq C \left(\frac{d(0,\xi)}{\rho}\right)^{n+2\alpha-2-1/2} \int_{A^{\lambda}_{\rho,1-\gamma}(\zeta)} \mathcal{G}(u,\varphi)^2 \nonumber \\ 
	&\leq C \rho^{-n-2\alpha+5/2} \left( \int_{B_{d(0,\xi)/2}(0,\xi)} s^{2\alpha-5/2} \right) 
		\left( \int_{A^{\lambda}_{\rho,1-\gamma}(\zeta)} \mathcal{G}(u,\varphi)^2 \right), 
\end{align}
where $\eta = \eta_{B_{d(0,\xi)/2}(0,\xi)}$ and $C = C(n,\lambda,m,\varphi^{(0)},\gamma) \in (0,\infty)$.  Similarly, Similarly, when $A^l_{\sigma,1/2}(\xi) \in \mathcal{C}$ with $\sigma^{-n-2\alpha} \int_{A^l_{\sigma,1}(\xi)} \mathcal{G}(u,\widetilde{\varphi})^2 \geq \widetilde{\varepsilon}$, (\ref{keyest_eqn21}) holds true with $\eta = \eta_{A^l_{\sigma,1/2}(\xi)}$ and $\int_{A^l_{\sigma,1/2}(\xi)} s^{2\alpha-1/2}$ in place of $\int_{B_{d(0,\xi)/2}(0,\xi)} s^{2\alpha-1/2}$.

Suppose that $B_{d(0,\xi)/2}(0,\xi) \in \mathcal{C}$ and $d(0,\xi) < 8cE$.  Then by the $W^{1,2}$ estimates for harmonic two-valued functions, 
\begin{align*}
	\int_{B_{4cE}(0,\xi)} (|u|^2 + |\widetilde{\varphi}|^2 + s^2 |Du|^2 + s^2 |D\widetilde{\varphi}|^2) 
	&\leq \int_{B_{8cE}(0,\xi)} (|u|^2 + |\widetilde{\varphi}|^2) 
	\\&\leq C E^{n+2\alpha} + C \int_{B_{8cE}(0,\xi)} \mathcal{G}(u,\widetilde{\varphi})^2 
\end{align*}
for $C = C(n,\lambda,m,\varphi^{(0)},\gamma) \in (0,\infty)$, so by a computation similar to (\ref{keyest_eqn17}) and by the definition of $E$, (\ref{keyest_eqn19}), 
\begin{align} \label{keyest_eqn22}
	&-2 \int \left( \frac{1}{2} r^2 (|Du|^2 - |D\widetilde{\varphi}|^2) - \alpha (\alpha+\lambda/2-1) (|u|^2 - |\widetilde{\varphi}|^2) 
		- r D_r u (r D_r u - \alpha u) \right) R^{-1} \psi(R) \psi'(R) \chi \nonumber \\
	&+ 2 \int |r D_r u - \alpha u|^2 \psi'(R)^2 \chi \eta 
	\leq C \left(\frac{E}{\rho}\right)^{n+2\alpha-5/2} \int_{A^{\lambda}_{\rho,1-\gamma}(\zeta)} \mathcal{G}(u,\varphi)^2 \nonumber \\ 
	&\leq C \rho^{-n-2\alpha+5/2} \left( \int_{B_{d(0,\xi)/2}(0,\xi)} s^{2\alpha-5/2} \right) 
		\left( \int_{A^{\lambda}_{\rho,1-\gamma}(\zeta)} \mathcal{G}(u,\varphi)^2 \right), 
\end{align}
where $\eta = \eta_{B_{4cE}(0,\xi)}$ and $C = C(n,\lambda,m,\varphi^{(0)},\gamma) \in (0,\infty)$. 

By (\ref{keyest_eqn18}) - (\ref{keyest_eqn22}), 
\begin{align} \label{keyest_eqn23}
	&-2 \int \left( \frac{1}{2} r^2 (|Du|^2 - |D\widetilde{\varphi}|^2) - \alpha (\alpha+\lambda/2-1) (|u|^2 - |\widetilde{\varphi}|^2) 
		- r D_r u (r D_r u - \alpha u) \right) R^{-1} \psi(R) \psi'(R) \chi \nonumber \\
	&+ 2 \int |r D_r u - \alpha u|^2 \psi'(R)^2 \chi \eta 
	\leq \sum_{S \in \mathcal{C}_*} 2 \int v \nabla_{S^{\lambda-1}} \widetilde{\varphi} \cdot \nabla_{S^{\lambda-1}} \eta_S R^{-1} \psi(R) \psi'(R) \chi 
	\nonumber \\&+ C \rho^{-n-2\alpha-5/2} \left( \int_{A^{\lambda}_{\rho,1-\gamma}(\zeta)} s^{2\alpha-5/2} \right) 
		\left( \int_{A^{\lambda}_{\rho,1-\gamma}(\zeta)} \mathcal{G}(u,\varphi)^2 \right) , 
\end{align}
for some $C = C(n,\lambda,m,\varphi^{(0)},\gamma) \in (0,\infty)$, where $\mathcal{C}_*$ consists of $A^{\lambda}_{\rho,(1-\gamma)/2}(\zeta) \cap \{ s > (1-\gamma)\rho/64 \}$, $B_{d(0,\xi)/2}(0,\xi) \in \mathcal{C}$ with $d(0,\xi) \geq 8cE$ and $d(0,\xi)^{-n-2\alpha} \int_{B_{d(0,\xi)}(0,\xi)} \mathcal{G}(u,\widetilde{\varphi})^2 < \widetilde{\varepsilon}$, and $A^l_{\sigma,1/2}(\xi) \in \mathcal{C}$ with $\sigma^{-n-2\alpha} \int_{A^l_{\sigma,1}(\xi)} \mathcal{G}(u,\widetilde{\varphi})^2 < \widetilde{\varepsilon}$.   By the definition of $\eta_S$, the $W^{1,2}$ estimates for harmonic two-valued functions, and (\ref{keyest_eqn17}) and similar computations, 
\begin{align} \label{keyest_eqn24}
	&\sum_{S \in \mathcal{C}_*} 2 \int v \nabla_{S^{\lambda-1}} \widetilde{\varphi} \cdot \nabla_{S^{\lambda-1}} \eta_S R^{-1} \psi(R) \psi'(R) \chi \nonumber \\
	&= -\sum_{S \in \mathcal{C} \setminus \mathcal{C}_*} 
		2 \int v \nabla_{S^{\lambda-1}} \widetilde{\varphi} \cdot \nabla_{S^{\lambda-1}} \eta_S R^{-1} \psi(R) \psi'(R) \chi \nonumber \\
	&\leq C \sum_{S \in \mathcal{C} \setminus \mathcal{C}_*} \int_S \frac{r^2}{s} (|u| + |\widetilde{\varphi}|) |D\widetilde{\varphi}| \nonumber \\
	&\leq C \sum_{S \in \mathcal{C} \setminus \mathcal{C}_*} \rho^{-n-2\alpha+5/2} \left( \int_S s^{2\alpha-5/2} \right) 
		\left( \int_{A^{\lambda}_{\rho,1-\gamma}(\zeta)} \mathcal{G}(u,\varphi)^2 \right) \nonumber \\
	&\leq C \rho^{-n-2\alpha-5/2} \left( \int_{A^{\lambda}_{\rho,1-\gamma}(\zeta)} s^{2\alpha-5/2} \right) 
		\left( \int_{A^{\lambda}_{\rho,1-\gamma}(\zeta)} \mathcal{G}(u,\varphi)^2 \right) 
\end{align}
for $C = C(n,\lambda,m,\varphi^{(0)},\gamma) \in (0,\infty)$.  Moreover, 
\begin{equation} \label{keyest_eqn25}
	\int_{A^{\lambda}_{\rho,1-\gamma}(\zeta)} s^{2\alpha-5/2} \leq C(n,\lambda,l,\gamma) \rho^{n+2\alpha-5/2}. 
\end{equation}
By combining (\ref{keyest_eqn23}), (\ref{keyest_eqn24}), and (\ref{keyest_eqn25}), (\ref{keyest_eqn9}).
\end{proof}

\begin{proof}[Proof of Corollary \ref{radialdecay_cor} for $l = \lambda$]
Follows from Lemma \ref{keyest_lemma} using the same argument as the proof of Corollary 6.4 of~\cite{KrumWic1}. 
\end{proof}

\begin{proof}[Proof of Corollary \ref{branchdist_cor} for $l = \lambda$]
First we claim that given $\rho \in (0,1/2]$ there exists $\overline{\delta} = \overline{\delta}(\varphi^{(0)}) > 0$, $\overline{\varepsilon} = \overline{\varepsilon}(\varphi^{(0)},\rho) > 0$, and $\overline{\beta} = \overline{\beta}(\varphi^{(0)},\rho) > 0$ such that if $\varphi \in \Phi_{\alpha}$, $u \in \mathcal{F}_{\alpha}$, $Z \in B_{1/2}(0)$, $\{\varphi_j\}_{j = 0,1,2,\ldots,N} \subset \Phi_{\alpha}$ is an optimal sequence with increasing spines and $\varphi_0 = \varphi$, and $a \in \mathbb{R}^{\lambda}$ with $a \neq 0$ such that Hypothesis \ref{graphrep_hyp} holds true with $\varepsilon = \overline{\varepsilon}$ and $\beta = \overline{\beta}$, $\mathcal{N}_u(Z) \geq \alpha$, and $S(\varphi_j) = \{0\} \times \mathbb{R}^{n-p_j}$ for some integers $\lambda = p_0 > p_1 > p_2 > \cdots > p_N = l_0$ (recall that $S(\varphi^{(0)}) = \{0\} \times \mathbb{R}^{n-l_0}$), then 
\begin{equation} \label{branchdist_eqn1}
	\mathcal{L}^n \left\{ X \in B_{\rho}(Z) : \overline{\delta} r(X)^{\alpha-1} 
	\left( |a_0|^2 + \sum_{j=0}^{N-1} \int_{B_1(0)} \mathcal{G}(\varphi,\varphi_{j+1})^2 |a_{N-j}|^2 \right)^{1/2} 
	\leq |D\varphi(X) \cdot (a,0)| \right\} \geq \overline{\delta} \rho^n, 
\end{equation}
where $r(X) = \op{dist}(X,\{0\} \times \mathbb{R}^{n-\lambda}$ and $a = (a_0,a_1,a_2,\ldots,a_N) \in \mathbb{R}^{l_0} \times \mathbb{R}^{p_{N-1}-p_N} \times \mathbb{R}^{p_{N-2}-p_{N-1}} \times \cdots \times \mathbb{R}^{p_0-p_1}$. 

Suppose that for every $j = 1,2,3,\ldots$ there existed $\rho_j \in (0,1/2]$, $\varepsilon_{j,k}, \beta_{j,k} \downarrow 0$ as $k \rightarrow \infty$, $\varphi_{j,k} \in \Phi_{\alpha}$, $u_{j,k} \in \mathcal{F}_{\alpha}$, $Z_{j,k} \in B_{1/2}(0)$,  $\{\varphi_{j,k,s}\}_{s = 0,1,2,\ldots,N} \subset \Phi_{\alpha}$ is an optimal sequence with increasing spines and $\varphi_{j,k,0} = \varphi_{j,k}$, and $a_{j,k} \in S^{\lambda-1}$ such that Hypothesis \ref{graphrep_hyp} holds true with $\varepsilon = \varepsilon_{j,k}$ and $\beta = \beta_{j,k}$, $\mathcal{N}_u(Z_{j,k}) \geq \alpha$, and $S(\varphi_{j,k,s}) = \{0\} \times \mathbb{R}^{n-p_s}$ for some integers $\lambda = p_0 > p_1 > p_2 > \cdots > p_N = l_0$ but  
\begin{align} \label{branchdist_eqn2}
	&\mathcal{L}^n \left\{ X \in B_{\rho_j}(Z_{j,k}) : (1/j) r(X)^{\alpha-1} 
	\left( |a_{j,k,0}|^2 + \sum_{s=0}^{N-1} \int_{B_1(0)} \mathcal{G}(\varphi_j,\varphi_{j,k,s+1})^2 |a_{j,k,N-s}|^2 \right)^{1/2} \right. \nonumber \\
	&\leq \left. |D\varphi(X) \cdot (a_{j,k},0)| \right\} < (1/j) \rho_j^n, 
\end{align}
where $a_{j,k} = (a_{j,k,0},a_{j,k,1},a_{j,k,2},\ldots,a_{j,k,N}) \in \mathbb{R}^{l_0} \times \mathbb{R}^{p_{N-1}-p_N} \times \mathbb{R}^{p_{N-2}-p_{N-1}} \times \cdots \times \mathbb{R}^{p_0-p_1}$.  Let 
\begin{align*}
	\widetilde{a}_{j,k,s} &= \frac{\left( \int_{B_1(0)} \mathcal{G}(\varphi_{j,k},\varphi_{j,k,s+1})^2 \right)^{1/2}}{
		\left( |a_{j,k,0}|^2 + \sum_{t=0}^{N-1} \int_{B_1(0)} \mathcal{G}(\varphi_{j,k},\varphi_{j,k,t+1})^2 |a_{j,k,N-t}|^2 \right)^{1/2}} a_{j,k,N-s} 
		\text{ for } s = 0,1,\ldots,N-1, \\
	\widetilde{a}_{j,k,N} &= \frac{a_{j,k,0}}{
		\left( |a_{j,k,0}|^2 + \sum_{t=0}^{N-1} \int_{B_1(0)} \mathcal{G}(\varphi_{j,k},\varphi_{j,k,t+1})^2 |a_{j,k,N-t}|^2 \right)^{1/2}}, 
\end{align*}
and 
\begin{align*}
	\widetilde{\varphi}_{j,k,s} &= \frac{\varphi_{j,k,0,1} - \varphi_{j,k,s+1,1}}{
		\left( \int_{B_1(0)} \mathcal{G}(\varphi_{j,k},\varphi_{j,k,s+1})^2 \right)^{1/2}} \text{ for } s = 0,1,\ldots,N-1, \\
	\widetilde{\varphi}_{j,k,s} &= \varphi^{(0)},
\end{align*}
where $\varphi_{j,k,s}(X) = \{\pm \varphi_{j,k,s,1}(X)\}$ for all $X \in \mathbb{R}^n$ and some harmonic single-valued function $\varphi_{j,k,s,1}$ close to $\varphi^{(0)}_1$ in $L^2(B_1(0);\mathbb{R}^m)$.  After passing to a subsequence, let $Z_{j,k}$ converge to $Z_j \in \{0\} \times \mathbb{R}^{n-\lambda}$ by Corollary \ref{graphrep_cor}, $\widetilde{a}_{j,k,s} \rightarrow \widetilde{a}_{j,s}$, and $\widetilde{\varphi}_{j,k,s} \rightarrow \widetilde{\varphi}_{j,s}$ in $L^2(B_1(0);\mathbb{R}^m)$ as $k \rightarrow \infty$.  Note that $\sum_{s=0}^N |\widetilde{a}_{j,s}|^2 = 1$ for all $j$.  As $k \rightarrow \infty$, (\ref{branchdist_eqn2}) yields 
\begin{equation*}
	\mathcal{L}^n \left\{ X \in B_{\rho_j}(Z_j) : (1/j) r(X)^{\alpha-1} \leq \left| \sum_{s=0}^N D\widetilde{\varphi}_{j,s} \cdot (0,\widetilde{a}_{j,s},0) 
		\right| \right\} < (1/j) \rho_j^n, 
\end{equation*}
where $(0,\widetilde{a}_{j,s},0) \in \mathbb{R}^{p_{s+1}} \times \mathbb{R}^{p_s - p_{s+1}} \times \mathbb{R}^{n-p_s}$.  By translating and scaling, we can take $Z = 0$ and $\rho = 1$ so that 
\begin{equation} \label{branchdist_eqn3}
	\mathcal{L}^n \left\{ X \in B_1(0) : (1/j) r(X)^{\alpha-1} \leq \left| \sum_{s=0}^N D\widetilde{\varphi}_{j,s} \cdot (0,\widetilde{a}_{j,s},0) 
		\right| \right\} < 1/j. 
\end{equation}
After passing to a subsequence, $\widetilde{a}_{j,s} \rightarrow \widetilde{a}_s$ and $\widetilde{\varphi}_{j,s} \rightarrow \widetilde{\varphi}_s$ in $L^2(B_1(0);\mathbb{R}^m)$ as $j \rightarrow \infty$.  $\sum_{s=0}^N |\widetilde{a}_s|^2 = 1$.  By (\ref{branchdist_eqn3}), 
\begin{equation} \label{branchdist_eqn4}
	\sum_{s=0}^N D\widetilde{\varphi}_s \cdot (0,\widetilde{a}_s,0) \equiv 0 \text{ on } B_1(0). 
\end{equation}
For some increasing sequence of integer $k(j)$, $\widetilde{a}_{j,k(j),s} \rightarrow \widetilde{a}_s$, and $\widetilde{\varphi}_{j,k(j),s} \rightarrow \widetilde{\varphi}_s$ in $L^2(B_1(0);\mathbb{R}^m)$ as $j \rightarrow \infty$.  After passing to a subsequence of $j$, we can find an increasing sequence $\{s_i\}_{i=1,2,\ldots,I} \subset \in \{1,2,\ldots,N\}$ such that 
\begin{gather}
	\lim_{j \rightarrow \infty} \frac{\int_{B_1(0)} \mathcal{G}(\varphi_{j,k(j),0},\varphi_{j,k(j),s})^2}{
		\int_{B_1(0)} \mathcal{G}(\varphi_{j,k(j),0},\varphi_{j,k(j),s+1})^2} > 0 \text{ if } s \not\in \{s_i\}, \nonumber \\
	\lim_{j \rightarrow \infty} \frac{\int_{B_1(0)} \mathcal{G}(\varphi_{j,k(j),0},\varphi_{j,k(j),s_i})^2}{
		\int_{B_1(0)} \mathcal{G}(\varphi_{j,k(j),0},\varphi_{j,k(j),s_{i+1}})^2} = 0 \text{ for } i = 1,2,\ldots,I-1, \label{branchdist_eqn5}
\end{gather}
and $s_I = N$.  Let 
\begin{align*}
	\widehat{\varphi}_i &= \lim_{j \rightarrow \infty} \frac{\varphi_{j,k(j),s_i,1} - \varphi_{j,k(j),s_{i+1},1}}{
		\left( \int_{B_1(0)} \mathcal{G}(\varphi_{j,k(j)},\varphi_{j,k(j),s_{i+1}})^2 \right)^{1/2}} \text{ for } i = 1,\ldots,I-1, \\
	\widehat{\varphi}_I &= \varphi^{(0)}. 
\end{align*}
By the definition of $\widetilde{\varphi}_s$ and (\ref{branchdist_eqn5}), 
\begin{equation*}
	D\widetilde{\varphi}_s \cdot (0,\widetilde{a}_s,0) = c_s D\widehat{\varphi}_i \cdot (0,\widetilde{a}_s,0) \text{ if } s_i \leq s < s_{i+1} 
\end{equation*}
for some constant $c_s \in \mathbb{R}$, so (\ref{branchdist_eqn4}) implies that 
\begin{equation} \label{branchdist_eqn6}
	\sum_{i=1}^I D\widehat{\varphi}_i \cdot \widehat{a}_i \equiv 0 \text{ on } B_1(0) \text{ where } 
	\widehat{a}_i = \sum_{s_i \leq s < s_{i+1}} c_s (0,\widetilde{a}_s,0). 
\end{equation}
By integration by parts and the fact that $\widehat{\varphi}_i$ is translation invariant along $\{0\} \times \mathbb{R}^{n-p_i}$, 
\begin{equation*}
	\int_{B_1(0)} D_{\widehat{a}_i} \widehat{\varphi}_i \cdot D_{\widehat{a}_j} \widehat{\varphi}_j 
	= \int_{B_1(0)} \widehat{\varphi}_i \cdot D_{\widehat{a}_i} D_{\widehat{a}_j} \widehat{\varphi}_j 
	= 0 
\end{equation*}
if $1 \leq i < j \leq I$, where we rewrite the partial derivatives $D\widehat{\varphi}_i \cdot \widehat{a}_i$ as $D_{\widehat{a}_i} \widehat{\varphi}_i$ and let $\cdot$ denote the inner product on $\mathbb{R}^m$, so by (\ref{branchdist_eqn6}), 
\begin{equation*}
	\sum_{i=1}^I \int_{B_1(0)} |D\widehat{\varphi}_i \cdot \widehat{a}_i|^2 = 0; 
\end{equation*}
in other words, $\widehat{\varphi}_i$ is translation invariant in the $\widehat{a}_i$ direction whenever $\widehat{a}_i \neq 0$.  But $\widehat{a}_i \neq 0$ for some $i$ since $\sum_{i=1}^I |\widehat{a}_i|^2 = 1$ and $\widehat{\varphi}_i$ cannot be translation invariant along the $\widehat{a}_i$ direction by (\ref{branchdist_eqn5}), so we reach a contradiction. 

Now let $\varphi$, $\{\varphi_j\}_{j=0,1,2,\ldots,N}$, $u$, and $Z$ be as in the statement of Corollary \ref{branchdist_cor}.  Write $\varphi_j(X) = \{\pm \varphi_{j,1}(X)\}$ for all $X \in \mathbb{R}^n$ and some harmonic single-valued $\varphi_{j,1}$ close to $\varphi^{(0)}_1$ in $L^2(B_1(0);\mathbb{R}^m)$.  Write $Z = (\xi,\zeta) \in \mathbb{R}^{\lambda} \times \mathbb{R}^{n-\lambda}$ and, recalling that $S(\varphi_j) = \{0\} \times \mathbb{R}^{n-p_j}$, write $\xi = (\xi_0,\xi_1,\xi_2,\ldots,\xi_N) \in \mathbb{R}^{l_0} \times \mathbb{R}^{p_{N-1}-p_N} \times \mathbb{R}^{p_{N-2}-p_{N-1}} \times \cdots \times \mathbb{R}^{p_0-p_1}$.  By Taylor's theorem applied to $\varphi_{0,1}$ and the homogeneity of $\varphi$, if $|\xi| \leq r(X)/2$ then
\begin{align} \label{branchdist_eqn7}
	&\varphi_{0,1}(X-Z) = \varphi_{0,1}(X) - D_{(\xi,0)} \varphi_{0,1}(X) + \int_0^1 (1-t) D_{(\xi,0)} D_{(\xi,0)} \varphi_{0,1}(X - tZ) dt \nonumber \\
	&= \varphi_{0,1}(X) - D_{(\xi,0)} \varphi_{0,1}(X) 
		+ \sum_{j=0}^{N-1} \int_0^1 (1-t) D_{(0,\xi_{N-j},0)} D_{(\xi,0)} (\varphi_{0,1} - \varphi_{j+1,1})(X - tZ)) dt \nonumber \\&
		+ \int_0^1 (1-t) D_{(\xi_0,0)} D_{(\xi,0)} \varphi_{0,1}(X - tZ) dt, 
\end{align}
so 
\begin{align} \label{branchdist_eqn8}
	&G(u(X),\varphi(X-Z)) \geq |D\varphi(X) \cdot \xi| - G(u(X),\varphi(X)) \nonumber \\&
		- \sum_{j=0}^{N-1} \int_0^1 |D^2 (\varphi_{0,1} - \varphi_{j,1})(X - tZ))| dt \cdot |\xi| |\xi_{N-j}| 
		- \int_0^1 |D^2 \varphi_{0,1}(X - tZ)| dt \cdot |\xi| |\xi_0|. 
\end{align}
Let $\rho > 0$ to be determined.  Let $a = \xi$ in (\ref{branchdist_eqn1}) and use (\ref{branchdist_eqn8}) to obtain that for some set $S \subset B_{\rho}(Z)$ with $\mathcal{L}^n(S) \geq \overline{\delta} \rho^n$ 
\begin{align} \label{branchdist_eqn9}
	&\left( |\xi_0|^2 + \sum_{j=0}^{N-1} \int_{B_1(0)} \mathcal{G}(\varphi,\varphi_{j+1})^2 |\xi_{N-j}|^2 \right) \int_S r(X)^{2\alpha-2} dX 
	\leq \overline{\delta}^{-2} \int_{B_{\rho}(Z)} |D\varphi \cdot (\xi,0)|^2 \nonumber \\
	&\leq (N+3) \overline{\delta}^{-2} \int_{B_{\rho}(Z)} \mathcal{G}(u(X),\varphi(X-Z))^2 dX 
		+ (N+3) \overline{\delta}^{-2} \int_{B_1(0)} \mathcal{G}(u,\varphi)^2 \nonumber \\&\hspace{4mm} 
		+ (N+3) \overline{\delta}^{-2} \sum_{j=0}^{N-1} \int_{B_{\rho}(Z) \cap \{r(X) \geq 2|\xi|\}} \int_0^1 
			|D^2 (\varphi_{0,1} - \varphi_{j,1})(X - tZ))|^2 dt dX \cdot |\xi|^2 |\xi_{N-j}|^2 \nonumber \\&\hspace{4mm} 
		+ (N+3) \overline{\delta}^{-2} \int_{B_{\rho}(Z) \cap \{r(X) \geq 2|\xi|\}} \int_0^1 |D^2 \varphi_{0,1}(X - tZ)|^2 dt dX \cdot |\xi|^2 |\xi_0|^2 
			\nonumber \\&\hspace{4mm} 
		+ \overline{\delta}^{-2} \int_{B_{\rho}(Z) \cap \{r(X) \leq 2|\xi|\}} |D\varphi \cdot (\xi,0)|^2. 
\end{align}
For some $\kappa = \kappa(n,\lambda)$, $\mathcal{L}^n(B^{\lambda}_{\kappa \overline{\delta}^{1/2} \rho}(0) \times B^{n-\lambda}_{\rho}(0)) < \overline{\delta} \rho^n/2$ and thus $\mathcal{L}^n(\{ X \in S : r(X) \geq \kappa \overline{\delta}^{1/2} \rho \} \geq \overline{\delta} \rho^n/2$.  To bound the integral on the left-hand side of (\ref{branchdist_eqn9}), we have 
\begin{equation} \label{branchdist_eqn10}
	\int_S r(X)^{2\alpha-2} dX \geq \frac{1}{2} \kappa^{2\alpha-2} \overline{\delta}^{\alpha} \rho^{n+2\alpha-4} 
\end{equation}
for $C = C(n,\lambda,\varphi^{(0)}) \in (0,\infty)$.  We need to bound the terms on the right-hand side of (\ref{branchdist_eqn9}).  For the first term on the right-hand side of (\ref{branchdist_eqn9}), by Lemma \ref{translate_lemma} and Corollary \ref{radialdecay_cor}
\begin{align} \label{branchdist_eqn11}
	&\rho^{-n-2\alpha+1/2} \int_{B_{\rho}(Z)} \mathcal{G}(u(X),\varphi(X-Z))^2 dX \nonumber \\
	&\leq C \int_{B_1(0)} \mathcal{G}(u(X),\varphi(X-Z))^2 dX \nonumber \\
	&\leq C \int_{B_1(0)} \mathcal{G}(u(X),\varphi(X-Z))^2 dX + C \int_{B_1(0)} \int_0^1 |D\varphi(X-tZ) \cdot (\xi,0)|^2 dt dX \nonumber \\
	&\leq C \int_{B_1(0)} \mathcal{G}(u(X),\varphi(X-Z))^2 dX \nonumber \\&\hspace{4mm}
		+ C \sum_{j=0}^{N-1} \int_{B_1(0)} \int_0^1 |(D\varphi_{0,1} - D\varphi_{j+1,1})(X-tZ) \cdot (0,\xi_{N-j},0)|^2 dt dX \nonumber \\&\hspace{4mm}
		+ C \int_{B_1(0)} \int_0^1 |D\varphi_{0,1}(X-tZ) \cdot (0,\xi_0,0)|^2 dt dX \nonumber \\
	&\leq C \int_{B_1(0)} \mathcal{G}(u(X),\varphi(X-Z))^2 dX 
		+ C \sum_{j=0}^{N-1} \int_{B_1(0)} \mathcal{G}(\varphi,\varphi_{j+1})^2 |\xi_{N-j}|^2 + C |\xi_0|^2 
\end{align}
for $C = C(n,\lambda,m,\varphi^{(0)}) \in (0,\infty)$.  For the third and fourth terms on the right-hand side of (\ref{branchdist_eqn9}), the terms are nonzero only if $\rho > |\xi|$ and thus by direct computation 
\begin{align} \label{branchdist_eqn12}
	&\int_{B_{\rho}(Z) \cap \{|x| \geq 2|\xi|\}} \int_0^1 |D^2 (\varphi_{0,1} - \varphi_{j+1,1})(X - tZ))|^2 dt dX \nonumber \\
	&\leq \int_{B_{2\rho}(0,\zeta)} |D^2 \varphi_{0,1} - D^2 \varphi_{j+1,1}|^2
	\leq C \rho^{n+2\alpha-4} \int_{B_1(0)} \mathcal{G}(\varphi,\varphi_{j+1})^2
\end{align}
for some constant $C = C(n,\alpha) \in (0,\infty)$ and similarly 
\begin{equation} \label{branchdist_eqn13}
	\int_{B_{\rho}(Z) \cap \{|x| \geq 2|\xi|\}} \int_0^1 |D^2 \varphi_{0,1}(X - tZ)|^2 dt dX \leq C \rho^{n+2\alpha-4} 
\end{equation}
for some constant $C = C(n,\alpha,\varphi^{(0)}) \in (0,\infty)$.  Moreover, by Corollary \ref{graphrep_cor}, for $\tau > 0$ to be determined, provided $\varepsilon_0$ and $\beta_0$ are sufficiently small depending on $\tau$, $|\xi| < \tau$.  For the last term on the right-hand side of (\ref{branchdist_eqn9}) we have 
\begin{align} \label{branchdist_eqn14}
	&\int_{B_{\rho}(Z) \cap \{|x| \leq 2|\xi|\}} |D\varphi \cdot (\xi,0)|^2 \nonumber \\
	&\leq (N+1) \sum_{j=0}^{N-1} \int_{B_{\rho}(Z) \cap \{r(X) \leq 2|\xi|\}} |D\varphi_{0,1} - D\varphi_{j+1,1}|^2 |\xi_{N-j}|^2 
		+ (N+1) \int_{B_{\rho}(Z) \cap \{r(X) \leq 2|\xi|\}} |D\varphi|^2 |\xi_0|^2 \nonumber \\
	&\leq C \rho^{n-\lambda} \sum_{j=0}^{N-1} \int_{B^{\lambda}_{2|\xi|}(0)} |D\varphi_{0,1} - D\varphi_{j+1,1}|^2 |\xi_{N-j}|^2 
		+ C \rho^{n-\lambda} \int_{B^{\lambda}_{2|\xi|}(0)} |D\varphi|^2 |\xi_0|^2 \nonumber \\
	&\leq C \rho^{n-\lambda} |\xi|^{\lambda+2\alpha-2} \sum_{j=0}^{N-1} \int_{B_1(0)} \mathcal{G}(\varphi,\varphi_{j+1})^2 |\xi_{N-j}|^2 
		+ C \rho^{n-\lambda} |\xi|^{\lambda+2\alpha-2} |\xi_0|^2
\end{align}
for $C = C(n,\lambda) \in (0,\infty)$ and again $|\xi| < \tau$.  Combining (\ref{branchdist_eqn9}) - (\ref{branchdist_eqn14}), and $|\xi| < \tau$, we obtain 
\begin{equation} \label{branchdist_eqn15}
	\left( |\xi_0|^2 + \sum_{j=0}^{N-1} \int_{B_1(0)} \mathcal{G}(\varphi,\varphi_{j+1})^2 |\xi_{N-j}|^2 \right) 
		\left( 1 - C \rho^{7/2} + C\tau^2 + \frac{C \tau^{\lambda+2\alpha-2}}{\rho^{\lambda+2\alpha-4}} \right) \rho^{n+2\alpha-4} 
	\leq C \int_{B_1(0)} \mathcal{G}(u,\varphi)^2 
\end{equation}
for $C = C(n,\lambda,m,\varphi^{(0)}) \in (0,\infty)$.  Choose $\rho$ such that $C \rho^{7/2} < 1/6$, then choose $\tau$ such that $C \tau^2 < 1/6$ and $\tau \leq \rho$, and finally choose $\varepsilon_0$ and $\beta_0$ small enough that we have (\ref{branchdist_eqn1}) and $|\xi| < \tau$ so that (\ref{branchdist_eqn15}) yields 
\begin{equation*}
	|\xi_0|^2 + \sum_{j=0}^{N-1} \int_{B_1(0)} \mathcal{G}(\varphi,\varphi_{j+1})^2 |\xi_{N-j}|^2 \leq C \int_{B_1(0)} \mathcal{G}(u,\varphi)^2 
\end{equation*}
for $C = C(n,\lambda,m,\varphi^{(0)}) \in (0,\infty)$.
\end{proof}

\begin{proof}[Proof of Corollary \ref{nonconest_cor} for $l = \lambda$]
By Corollary \ref{branchdist_cor}, the computation of (\ref{translate_eqn1}) holds true when $l = \lambda$ and $\rho = 1/2$ to give us 
\begin{equation} \label{nonconest_eqn1}
	\int_{B_{1/2}(Z)} \mathcal{G}(u(X),\varphi(X-Z))^2 dX \leq C \int_{B_1(0)} \mathcal{G}(u,\varphi)^2 
\end{equation}
for $C = C(n,\lambda,m,\varphi^{(0)}) \in (0,\infty)$.  By Corollary \ref{radialdecay_cor} with $2^{\alpha} u(Z+X/2)$ in place of $u$, which holds true by Lemma \ref{translate_lemma}, and by (\ref{nonconest_eqn1}),  
\begin{equation} \label{nonconest_eqn2}
	\int_{B_{1/4}(Z)} \frac{\mathcal{G}(u(X),\varphi(X-Z))^2}{|X-Z|^{n+2\alpha-\sigma}} dX 
	\leq C \int_{B_{1/2}(Z)} \mathcal{G}(u,\varphi)^2 
	\leq C \int_{B_1(0)} \mathcal{G}(u,\varphi)^2 
\end{equation}
for $C = C(n,\lambda,m,\varphi^{(0)}) \in (0,\infty)$. 

Let $\{\varphi_j\}_{j = 0,1,2,\ldots,N} \subset \Phi_{\alpha}$ be an optimal sequence with increasing spines such that $\varphi_0 = \varphi$.  After a small rotation of $\mathbb{R}^n$, take $S(\varphi_N) = \{0\} \times \mathbb{R}^{n-l_0}$.  Write $\varphi_j(X) = \{\pm \varphi_{j,1}(X)\}$ for all $X \in \mathbb{R}^n$ and some harmonic single-valued function $\varphi_{j,1}$ close to $\varphi^{(0)}_1$ in $L^2(B_1(0);\mathbb{R}^m)$.  Let $Z = (\xi,\zeta) \in \mathbb{R}^{\lambda} \times \mathbb{R}^{n-\lambda}$ and $\xi = (\xi_0,\xi_1,\xi_2,\ldots,\xi_N) \in \mathbb{R}^{l_0} \times \mathbb{R}^{p_N-p_{N-1}} \times \mathbb{R}^{p_{N-1}-p_{N-2}} \times \cdots \times \mathbb{R}^{p_0-p_1}$.  Observe that by (\ref{nonconest_eqn2}), Corollary \ref{branchdist_cor}, and elliptic estimates applied to $\varphi$ and $\varphi_j$, 
\begin{align*}
	&\int_{B_{\gamma}(0)} \frac{\mathcal{G}(u,\varphi)^2}{|X-Z|^{n-\sigma}} \nonumber \\
	&\leq (N+2) \int_{B_{1/4}(Z)} \frac{\mathcal{G}(u,\varphi)^2}{|X-Z|^{n-\sigma}} 
		+ (N+2) \sum_{j=0}^{N-1} \int_{B_{1/4}(Z)} \int_0^1 \frac{|D(\varphi_{0,1} - \varphi_{j,1})(X - tZ)|^2 |\xi_{N-j}|^2}{|X-Z|^{n-\sigma}} dt dX 
			\nonumber \\& \hspace{4mm} 
		+ (N+2) \int_{B_{1/4}(Z)} \int_0^1 \frac{|D\varphi(X - tZ)|^2 |\xi_0|^2}{|X-Z|^{n-\sigma}} dt dX
		+ 4^{n-\sigma} \int_{B_1(0)} \mathcal{G}(u,\varphi)^2 \\
	&\leq (N+2) \int_{B_{1/4}(Z)} \frac{\mathcal{G}(u,\varphi)^2}{|X-Z|^{n-\sigma}} 
		+ (N+2) \sum_{j=0}^{N-1} \int_{B_1(0)} \mathcal{G}(\varphi,\varphi_j)^2 |\xi_{N-j}|^2 \int_{B_{1/4}(Z)} \frac{dX}{|X-Z|^{n-\sigma}} 
			\nonumber \\& \hspace{4mm} 
		+ (N+2) \int_{B_1(0)} |\varphi|^2 |\xi_0|^2 \int_{B_{1/4}(Z)} \frac{dX}{|X-Z|^{n-\sigma}} 
		+ 4^{n-\sigma} \int_{B_1(0)} \mathcal{G}(u,\varphi)^2 \\
	&\leq C \int_{B_1(0)} \mathcal{G}(u,\varphi)^2 
\end{align*}
for some constant $C = C(n,\lambda,m,\varphi^{(0)},\sigma) \in (0,\infty)$. 

Let $v$ be as in Corollary \ref{graphrep_cor}.  By (\ref{branchdist_eqn7}), 
\begin{align} \label{nonconest_eqn3}
	&\int_{B_{\gamma}(0) \cap \{r(X) \geq \tau\}} \frac{|v - D\varphi_{0,1} \cdot (\xi,0)|^2}{|X-Z|^{n+2\alpha-\sigma}} \nonumber \\
	&\leq (N+2) \int_{B_{1/4}(Z) \cap \{r(X) \geq \tau\}} \frac{|u_1(X) - \varphi_{0,1}(X-Z)|^2}{|X-Z|^{n+2\alpha-\sigma}} \nonumber \\
		&+ (N+2) \sum_{j=0}^{N-1} \int_{B_{1/4}(Z) \cap \{r(X) \geq \tau\}} \int_0^1 \frac{|D^2 (\varphi_{0,1} - \varphi_{j,1})(X - tZ)|^2 |\xi|^2 |\xi_{N-j}|^2}{
			|X-Z|^{n+2\alpha-\sigma}} dt dX \nonumber \\
		&+ (N+2) \int_{B_{1/4}(Z) \cap \{r(X) \geq \tau\}} \int_0^1 \frac{|D^2 \varphi(X - tZ)|^2 |\xi|^2 |\xi_0|^2}{|X-Z|^{n+2\alpha-\sigma}} dt dX 
			\nonumber \\
		&+ (N+2) 4^{n+2\alpha-\sigma} \int_{(B_{\gamma}(0) \setminus B_{1/4}(Z)) \cap \{r(X) \geq \tau\}} |v|^2 \nonumber \\ 
		&+ (N+2) 4^{n+2\alpha-\sigma} \sum_{j=0}^{N-1} \int_{(B_{\gamma}(0) \setminus B_{1/4}(Z)) \cap \{r(X) \geq \tau\}} |D\varphi_{0,1} - D\varphi_{j,1}|^2 
			|\xi|^2 |\xi_{N-j}|^2 \nonumber \\
		&+ (N+2) 4^{n+2\alpha-\sigma} \int_{(B_{\gamma}(0) \setminus B_{1/4}(Z)) \cap \{r(X) \geq \tau\}} |D\varphi|^2 |\xi|^2 |\xi_0|^2. 
\end{align}
By locally approximating $\varphi |_{S^{l-1}}$ by polynomials of degree at most $\alpha$ using Taylor's theorem and then using Lemma \ref{twovalL2_lemma}, 
\begin{equation*}
	\int_{S^{{\lambda}-1}} |u_1(\xi+r\omega,\zeta+y) - \varphi_{0,1}(r\omega,y)|^2 d\omega  
	\leq C \int_{S^{{\lambda}-1}} \mathcal{G}(u(\xi+r\omega,\zeta+y),\varphi(r\omega,y))^2 d\omega  
\end{equation*}
for all $r \in [\tau,1]$ and $y \in \mathbb{R}^{n-\lambda}$ with $r^2 + |y|^2 \leq 1/16$ for some constant $C = C(\lambda,m,\varphi^{(0)}) \in (0,\infty)$, so by (\ref{nonconest_eqn2}), 
\begin{equation} \label{nonconest_eqn4}
	\int_{B_{1/4}(Z)} \frac{|u_1(X) - \varphi_{0,1}(X-Z)|^2}{|X-Z|^{n+2\alpha-\sigma}} 
	\leq C \int_{B_{1/4}(Z)} \frac{\mathcal{G}(u(X),\varphi(X-Z))^2}{|X-Z|^{n+2\alpha-\sigma}} 
	\leq C \int_{B_1(0)} \mathcal{G}(u,\varphi)^2  
\end{equation}
for $C = C(n,\lambda,m,\varphi^{(0)},\sigma) \in (0,\infty)$.  By Lemma \ref{twovalL2_lemma}, 
\begin{equation} \label{nonconest_eqn5}
	\int_{(B_{\gamma}(0) \setminus B_{1/4}(Z)) \cap \{r(X) \geq \tau\}} |v|^2 \leq C \int_{B_1(0)} \mathcal{G}(u,\varphi)^2 
\end{equation}
for $C = C(n,\lambda,m,\varphi^{(0)},\gamma) \in (0,\infty)$.  By elliptic estimates applied to $\varphi$ and $\varphi_j$ and Corollary \ref{branchdist_cor}, 
\begin{align} \label{nonconest_eqn6}
	&\int_{B_{1/4}(Z) \cap \{r(X) \geq \tau\}} \int_0^1 \frac{|D^2 (\varphi_{0,1} - \varphi_{j,1})(X - tZ)|^2 |\xi|^2 |\xi_{N-j}|^2}{|X-Z|^{n+2\alpha-\sigma}} 
		dt dX \nonumber \\
	&\leq C |\xi|^2 |\xi_{N-j}|^2 \int_{B_1(0)} \mathcal{G}(\varphi,\varphi_j)^2 \int_{B_{1/4}(Z) \cap \{r(X) \geq \tau\}} \frac{dX}{|X-Z|^{n+2\alpha-\sigma}} 
		\nonumber \\
	&\leq \frac{C |\xi|^2}{\tau^{2\alpha}} \int_{B_1(0)} \mathcal{G}(u,\varphi)^2
\end{align}
for $C = C(n,\lambda,m,\varphi^{(0)},\sigma) \in (0,\infty)$ and similarly 
\begin{equation} \label{nonconest_eqn7}
	\int_{B_{1/4}(Z) \cap \{r(X) \geq \tau\}} \int_0^1 \frac{|D^2 \varphi(X - tZ)|^2 |\xi|^2 |\xi_0|^2}{|X-Z|^{n+2\alpha-\sigma}} dt dX 
	\leq \frac{C |\xi|^2}{\tau^{2\alpha}} \int_{B_1(0)} \mathcal{G}(u,\varphi)^2 
\end{equation}
for $C = C(n,\lambda,m,\varphi^{(0)},\sigma) \in (0,\infty)$.  Similarly, 
\begin{align} \label{nonconest_eqn8}
	\int_{(B_{\gamma}(0) \setminus B_{1/4}(Z)) \cap \{r(X) \geq \tau\}} |D\varphi_{0,1} - D\varphi_{j,1}|^2 |\xi|^2 |\xi_{N-j}|^2 
		&\leq C |\xi|^2 \int_{B_1(0)} \mathcal{G}(u,\varphi)^2, \nonumber \\
	\int_{(B_{\gamma}(0) \setminus B_{1/4}(Z)) \cap \{r(X) \geq \tau\}} |D\varphi|^2 |\xi|^2 |\xi_0|^2 
		&\leq C |\xi|^2 \int_{B_1(0)} \mathcal{G}(u,\varphi)^2
\end{align}
for $C = C(n,\lambda,m,\varphi^{(0)}) \in (0,\infty)$.  By (\ref{nonconest_eqn3}) - (\ref{nonconest_eqn8}) and choosing $\varepsilon_0$ and $\beta_0$ sufficiently small that $|\xi| < \tau^{\alpha}$, 
\begin{equation*}
	\int_{B_{\gamma}(0) \cap \{r(X) \geq \tau\}} \frac{|v - D\varphi_{0,1} \cdot (\xi,0)|^2}{|X-Z|^{n+2\alpha-\sigma}} 
	\leq C \int_{B_1(0)} \mathcal{G}(u,\varphi)^2
\end{equation*}
for $C = C(n,\lambda,m,\varphi^{(0)},\gamma,\sigma) \in (0,\infty)$.  
\end{proof}

\section{Improvement of excess and proof of Theorem \ref{constfreqthm1}} \label{sec:excessdecay_sec}

\begin{lemma} \label{excessdecay1_lemma}
Let $\vartheta \in (0,1/4)$ and let $\varphi^{(0)} \in \Phi_{\alpha}$ be a Dirichlet energy minimizing two-valued function.  Then there are $\overline{\varepsilon} = \overline{\varepsilon}(n,m,\varphi^{(0)},\vartheta) \in (0,1)$ and $\overline{\beta} = \overline{\beta}(n,m,\varphi^{(0)},\vartheta) \in (0,1)$ such that if $\varphi \in \Phi_{\alpha}$ and $u \in \mathcal{F}_{\alpha}$ such that Hypothesis \ref{cft1_hyp} holds true with $\varepsilon = \overline{\varepsilon}$ and $\beta = \overline{\beta}$ and $\mathcal{N}_u(0) \geq \alpha$, then there exists $\widetilde{\varphi} \in \Phi_{\alpha}$ with 
\begin{equation} \label{excessdecay1_eqn1}
	\int_{B_1(0)} \mathcal{G}(\widetilde{\varphi},\varphi)^2 \leq 3 \int_{B_1(0)} \mathcal{G}(u,\varphi)^2
\end{equation}
and 
\begin{equation} \label{excessdecay1_eqn2}
	\vartheta^{-n-2\alpha} \int_{B_{\vartheta}(0)} \mathcal{G}(u,\widetilde{\varphi})^2 \leq \overline{C} \vartheta^2 \int_{B_1(0)} \mathcal{G}(u,\varphi)^2 
\end{equation}
for some $\overline{C} = \overline{C}(n,m,\varphi^{(0)}) \in [1,\infty)$.  
\end{lemma}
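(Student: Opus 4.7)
The plan is a standard De Giorgi-type excess decay argument by contradiction and blow-up. Suppose the conclusion fails: then there exist $\overline{\varepsilon}_j, \overline{\beta}_j \downarrow 0$, $\varphi_j \in \Phi_\alpha$, and $u_j \in \mathcal{F}_\alpha$ satisfying Hypothesis \ref{cft1_hyp} with $\varepsilon = \overline{\varepsilon}_j$, $\beta = \overline{\beta}_j$ and $\mathcal{N}_{u_j}(0) \geq \alpha$, but for which no $\widetilde{\varphi} \in \Phi_\alpha$ satisfies both (\ref{excessdecay1_eqn1}) and (\ref{excessdecay1_eqn2}). Set $E_j = \bigl(\int_{B_1(0)} \mathcal{G}(u_j,\varphi_j)^2\bigr)^{1/2}$; note $E_j \to 0$ by Hypothesis \ref{cft1_hyp}(i). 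Applying the blow-up construction from the end of Section \ref{sec:L2estimates_sec}, after passing to a subsequence we may write $u_j = \{\pm(\varphi_{j,1} + v_j)\}$ off a shrinking tubular neighborhood of $\mathcal{B}_{u_j}$, and the normalized differences $w_j = v_j/E_j$ converge in $L^2(B_\gamma(0);\mathbb{R}^m)$ and in $C^2_{\mathrm{loc}}(B_1(0)\setminus\mathcal{D})$ to a function $w$ harmonic on $B_1(0)\setminus\mathcal{D}$, where $\mathcal{D}$ is the Hausdorff limit of $\mathcal{B}_{u_j}$, with $\int_{B_\gamma(0)}|w|^2$ uniformly bounded in $\gamma < 1$.

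The key structural step is to upgrade $w$ to a harmonic function on all of $B_1(0)$ that vanishes to order at least $\alpha$ at the origin. Since $\mathcal{D}$ has Hausdorff dimension at most $n-2$ and $w$ is locally $L^2$, standard removable-singularity theorems produce a harmonic extension $\overline{w}$ on $B_1(0)$. For the vanishing order, I would apply Corollary \ref{radialdecay_cor} to $u_j, \varphi_j$ with a fixed $\sigma \in (0,2)$, divide by $E_j^2$, and pass to the limit using the $L^2$ convergence of $w_j$ to obtain $\int_{B_\gamma(0)}|w|^2/|X|^{n+2\alpha-\sigma} < \infty$. Expanding $\overline{w}$ at $0$ as $\overline{w} = \sum_{k \geq 0}\psi_k$, with $\psi_k$ the homogeneous degree-$k$ harmonic polynomial part, and invoking the orthogonality of spherical harmonics on $S^{n-1}$, this integrability forces $\psi_k \equiv 0$ for every $k < \alpha$.

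Let $\psi_\alpha$ denote the degree-$\alpha$ part of $\overline{w}$ and set $\widetilde{\varphi}_j = \{\pm(\varphi_{j,1} + E_j\psi_\alpha)\}$, which lies in $\Phi_\alpha$ for $j$ large since $\varphi_{j,1} + E_j\psi_\alpha$ is a nonzero homogeneous degree-$\alpha$ harmonic polynomial. Inequality (\ref{excessdecay1_eqn1}) follows from the pointwise bound $\mathcal{G}(\widetilde{\varphi}_j,\varphi_j)^2 \leq 2E_j^2|\psi_\alpha|^2$ together with $\int_{B_1(0)}|\psi_\alpha|^2 \leq \int_{B_1(0)}|\overline{w}|^2$, using $L^2(B_1(0))$-orthogonality of harmonic polynomials of distinct homogeneity degrees. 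For (\ref{excessdecay1_eqn2}), I would split the integral over $B_\vartheta(0)$ into a $\delta$-tubular neighborhood of $\mathcal{D}$, whose contribution vanishes as $\delta \to 0$ by Corollary \ref{nonconest_cor}(i) and the covering argument of the Blow-ups subsection, and its complement, where branch-matching yields $\mathcal{G}(u_j,\widetilde{\varphi}_j)^2 = 2|v_j - E_j\psi_\alpha|^2$. Letting $j \to \infty$ and then $\delta \to 0$, I obtain
\begin{equation*}
	\lim_{j \to \infty} E_j^{-2}\int_{B_\vartheta(0)} \mathcal{G}(u_j,\widetilde{\varphi}_j)^2
	= 2\int_{B_\vartheta(0)}|\overline{w} - \psi_\alpha|^2
	= 2\sum_{k > \alpha}\vartheta^{n+2k}\int_{B_1(0)}|\psi_k|^2
	\leq C\vartheta^{n+2\alpha+2},
\end{equation*}
which, for $\overline{C}$ chosen sufficiently large depending only on $n, m, \varphi^{(0)}$, contradicts the failure of (\ref{excessdecay1_eqn2}) for $\widetilde{\varphi} = \widetilde{\varphi}_j$ at large $j$.

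The main technical obstacle is ensuring the correct scaling of the a priori estimates of Section \ref{sec:L2estimates_sec}: Corollary \ref{radialdecay_cor} and Corollary \ref{nonconest_cor}(i) must have right-hand sides of order $E_j^2$ rather than $\overline{\varepsilon}_j$, which is essential for the blow-up limit to carry nontrivial information. Verifying the removability of $\mathcal{D}$ and carefully handling the branch-choice identification $\mathcal{G}(u_j,\widetilde{\varphi}_j) = \sqrt{2}|v_j - E_j\psi_\alpha|$ off a thin tube around $\mathcal{D}$ also require care; the remaining steps then reduce to a routine computation with the harmonic polynomial expansion of $\overline{w}$.
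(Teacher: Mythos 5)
Your overall architecture is the same as the paper's — contradiction sequences, the blow-up $w$ relative to $\varphi_j$, picking off the degree-$\alpha$ harmonic polynomial part of $w$ and using it to form $\widetilde\varphi_j$, then the standard elliptic decay estimate. The weighted-$L^2$ argument forcing vanishing order at least $\alpha$ at the origin (via Corollary \ref{radialdecay_cor} divided by $E_j^2$) is correct, as is the orthogonality computation for (\ref{excessdecay1_eqn1}) and the tube-splitting for (\ref{excessdecay1_eqn2}). However, there is one genuine gap that the paper's proof has to work hard to avoid, and your proposal does not.

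The gap is the sentence ``Since $\mathcal{D}$ has Hausdorff dimension at most $n-2$ and $w$ is locally $L^2$, standard removable-singularity theorems produce a harmonic extension $\overline{w}$ on $B_1(0)$.'' No such theorem exists in this generality: the function $\log|x|$ (with $x$ the two transverse coordinates) is harmonic off $\{0\}\times\mathbb{R}^{n-2}$ and locally square-integrable, but it does not extend harmonically across. Knowing $\dim\mathcal{D} \le n-2$ and $w \in L^2_{\text{loc}}$ is not sufficient; one needs either boundedness, a $W^{1,2}$ bound (which rules out the log), or a decay estimate at points of $\mathcal{D}$ — none of which you establish. The Corollary \ref{nonconest_cor}(i) estimate, which you do invoke for the tube-splitting, gives only $\int_{B_\gamma} |w|^2/|X-Z|^{n-\sigma} < \infty$, equivalently $\rho^{-n}\int_{B_\rho(Z)}|w|^2 \lesssim \rho^{-\sigma}$; this grows as $\rho \downarrow 0$ and does not rule out the $\log$-type singularity, as one can check that $\log|x|$ also satisfies it.

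What the paper actually does is apply Corollary \ref{nonconest_cor}(ii), which gives a much stronger estimate: for each branch point $Z_j \to (0,\zeta) \in \mathcal{D}$, $\int_{B_{3/4}(0)} |v_j - (\text{correction terms})|^2/|X-Z_j|^{n+2\alpha-1/2} \lesssim E_j^2$, where the correction terms are $\sum_i (D\varphi_{j,0,1} - D\varphi_{j,k_{i+1},1})\cdot\xi_{I-i} + D\varphi_{j,N,1}\cdot\xi_0$ built from the optimal sequence with increasing spines (Hypothesis \ref{cft1_hyp}(ii)) and the branch-point coordinates. Dividing by $E_j^2$ and passing to the limit yields that $w - \sum_i D\widehat\varphi_i\cdot\kappa_{I-i}(\zeta)$ has pointwise decay near $(0,\zeta)$, and it is from this that the paper deduces continuity of $w$ across $\mathcal{D}$ (the case $\alpha \ge 2$ gives actual decay; the case $\alpha = 1$ uses that $D\widehat\varphi_i$ is constant) and then, via Weyl's lemma, a genuine harmonic extension. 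The functions $\kappa_i: \mathcal{D}\to\mathbb{R}^2$, the integration-by-parts orthogonality among $D_{(0,\kappa_i,0)}\widehat\varphi_i$, and the observation $\kappa_i(0) = 0$ since $0 \in \mathcal{B}_{u_j}$ are all essential to this step. Your proposal omits Corollary \ref{nonconest_cor}(ii) entirely and therefore has no mechanism for showing $w$ is better than $L^2$ near $\mathcal{D}$; this is exactly the point where the extra structure coming from Hypothesis \ref{cft1_hyp}(ii) and the optimal-sequence machinery earns its keep, and it cannot be bypassed by a generic removability theorem.
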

\begin{proof}
We argue as in the proof of Lemma 4.1 of~\cite{KrumWic1}, which itself is based on the proof of Lemma 1 of~\cite{Sim93}.  Let $\vartheta \in (0,1/4)$, $\varepsilon_j,\beta_j \downarrow 0$, $\varphi_j \in \Phi_{\alpha}$, and $u_j \in \mathcal{F}_{\alpha}$ such that Hypothesis \ref{cft1_hyp} holds true with $\varepsilon = \varepsilon_j$ and $\beta = \beta_j$, and $N_{u_j}(0) \geq \alpha$.  We want to show that for infinitely many $j$ there exists $\widetilde{\varphi}_j \in \Phi_{\alpha}$ such that (\ref{excessdecay1_eqn3}) and (\ref{excessdecay1_eqn4}) hold true with $u = u_j$, $\varphi = \varphi_j$, and $\widetilde{\varphi} = \widetilde{\varphi}_j$ for some constant $\overline{C} \in (0,\infty)$ depending only on $n$, $m$, and $\varphi^{(0)}$ and independent of $j$. 

Let $\{\varphi_{j,k}\}_{k=0,1,2,\ldots,N}$ be a sequence with increasing spines such that $\varphi_{j,0} = \varphi_j$.  Write $\varphi_{j,k}(X) = \{\pm \varphi_{j,k,1}(X)\}$ for all $X \in \mathbb{R}^n$ and some harmonic single-valued function $\varphi_{j,k,1}$ close to $\varphi^{(0)}_1$ in $L^2(B_1(0);\mathbb{R}^m)$.  After passing to a subsequence and rotating $u_j$, $\varphi_j$, and $\varphi_{j,k}$ slightly, we may assume that $S(\varphi^{(0)}) = \{0\} \times \mathbb{R}^{n-l_0}$ and $S(\varphi_{j,k}) = \{0\} \times \mathbb{R}^{n-p_k}$ for integers $2 \leq l_0 = p_N < p_{N-1} < \cdots < p_1 < p_0 = l \leq n$.  Let $\tau_j \downarrow 0$ slowly enough that Corollary \ref{graphrep_cor} and Corollary \ref{nonconest_cor} hold with $\gamma = 3/4$, $\tau = \tau_j$, $\sigma = 1/2$, $u(X) = \rho^{-\alpha} u_j(\rho X)$, and $\varphi(X) = \rho^{-\alpha} \varphi_j(\rho X)$ for $\rho \in [\vartheta,1]$.  By Corollary \ref{graphrep_cor}, we get $v_j \in C^2(\op{graph} \varphi |_{U_j};\mathbb{R}^m)$, where $U_j = \{(x,y) \in B_{3/4}(0) : |x| > \tau_j\}$, such that
\begin{equation*}
	u_j(X) = \{ \varphi_{j,1}(X) + v_j(X,\varphi_{j,1}(X)), -\varphi_{j,1}(X) + v_j(X,-\varphi_{j,1}(X)) \} .
\end{equation*}
By the sequential compactness of the space of closed subsets of a compact space equipped with the Hausdorff metric, after passing to a subsequence $\mathcal{B}_{u_j}$ converges to some closed subset $\mathcal{D} \subseteq \{0\} \times B^{n-l}_1(0)$ in Hausdorff distance.  Recall that after passing to a subsequence, $v_j/E_j$, where $E_j = \left( \int_{B_1(0)} \mathcal{G}(u_j,\varphi_j)^2 \right)^{1/2}$, converges to a blow-up $w$ in $C^2$ on compact subsets of the interior of $B_1(0) \setminus \mathcal{D}$ and in $L^2(B_{1/2}(0);\mathbb{R}^m)$.  Since $w$ is a blow-up of $u_j$ relative to $\varphi_j$, 
\begin{equation} \label{excessdecay1_eqn3}
	\lim_{j \rightarrow \infty} E_j^{-2} \int_{B_{\rho}(0)} \mathcal{G}(u_j,\varphi_j)^2 = \sqrt{2} \int_{B_{\rho}(0)} |w|^2  
\end{equation} 
whenever $\rho \in [\vartheta,1/4]$. 

Let $(0,\zeta) \in B_{1/2}(0) \cap \mathcal{D}$.  Let $Z_j \in \mathcal{B}_{u_j}$ converge to $(0,\zeta)$.  After passing to a subsequence of $j$, we can find an increasing sequence $\{k_i\}_{i=1,2,\ldots,I} \subset \in \{1,2,\ldots,N\}$ such that 
\begin{gather}
	\lim_{j \rightarrow \infty} \frac{\int_{B_1(0)} \mathcal{G}(\varphi_{j,0},\varphi_{j,k})^2}{
		\int_{B_1(0)} \mathcal{G}(\varphi_{j,0},\varphi_{j,k+1})^2} > 0 \text{ if } k \not\in \{k_i\}, \nonumber \\
	\lim_{j \rightarrow \infty} \frac{\int_{B_1(0)} \mathcal{G}(\varphi_{j,0},\varphi_{j,k_i})^2}{
		\int_{B_1(0)} \mathcal{G}(\varphi_{j,0},\varphi_{j,k_{i+1}})^2} = 0 \text{ for } i = 1,2,\ldots,I-1, \label{excessdecay1_eqn4}
\end{gather}
and $k_I = N$.  Let $k_0 = l$.  Write $Z_j = (\xi_{j,0},\xi_{j,1},\xi_{j,2},\ldots,\xi_{j,I},\zeta) \in \mathbb{R}^{l_0} \times \mathbb{R}^{p_{k_I}-p_{k_I-1}} \times \mathbb{R}^{p_{k_I-1}-p_{k_I-2}} \times \cdots \times \mathbb{R}^{p_{k_0}-p_{k_1}} \times \mathbb{R}^{n-l}$.  After passing to a subsequence, the limit 
\begin{equation*}
	\widehat{\varphi}_i = \frac{\varphi_{j,k_1,1} - \varphi_{j,k_{i+1},1}}{\left( \int_{B_1(0)} \mathcal{G}(\varphi_{j,k_1},\varphi_{j,k_{i+1}})^2 \right)^{1/2}}
\end{equation*}
exists in $L^2(B_1(0);\mathbb{R}^m)$ for $k = 1,2,\ldots,I-1$.  Let $\widehat{\varphi}_I = \varphi^{(0)}$.  By (\ref{translate_eqn4}), 
\begin{equation*}
	\widehat{\varphi}_i = \frac{\varphi_{j,k_i,1} - \varphi_{j,k_{i+1},1}}{\left( \int_{B_1(0)} \mathcal{G}(\varphi_{j,k_1},\varphi_{j,k_{i+1}})^2 \right)^{1/2}}
\end{equation*}
and consequently $\widehat{\varphi}_i$ is translation invariant along $\{0\} \times \mathbb{R}^{n-p_{k_i}}$.  By Corollary \ref{branchdist_cor}, after passing to a subsequence the limits 
\begin{gather*}
	\kappa_i = \lim_{j \rightarrow \infty} 
		\left( \frac{\int_{B_1(0)} \mathcal{G}(\varphi_{j,k_1},\varphi_{j,k_{i+1}})^2}{\int_{B_1(0)} \mathcal{G}(u_j,\varphi_{j,k_1})^2} \right)^{1/2} 
		\cdot \xi_{I-i} \text{ for } k = 1,2,\ldots,I-1, \nonumber \\
	\kappa_I = \lim_{j \rightarrow \infty} \frac{\xi_{j,0}}{\left( \int_{B_1(0)} \mathcal{G}(u_j,\varphi_{j,K})^2 \right)^{1/2}}, 
\end{gather*}
exist.  By Corollary \ref{nonconest_cor}(ii), 
\begin{equation} \label{excessdecay1_eqn5}
	\int_{B_{3/4}(0)} \frac{\left|v_j - \sum_{i=0}^{I-1} (D\varphi_{j,0,1} - D\varphi_{j,k_{i+1},1}) \cdot \xi_{I-i} + D\varphi_{j,N,1} \cdot \xi_0 \right|^2}{
		|X-Z_j|^{n+2\alpha-1/2}} \leq C \int_{B_1(0)} \mathcal{G}(u_j,\varphi_j)^2. 
\end{equation}
for some constant $C = C(n,m,\varphi^{(0)}) \in (0,\infty)$.  Dividing (\ref{excessdecay1_eqn5}) by $\mathcal{G}(u_j,\varphi_j)^2$ and letting $j \rightarrow \infty$, 
\begin{equation} \label{excessdecay1_eqn6}
	\int_{B_{3/4}(0)} \frac{\left| w - \sum_{i=0}^I D\widehat{\varphi}_i \cdot \kappa_{I-i} \right|}{|X-Z_j|^{n+2\alpha-1/2}} 
	\leq C \int_{B_1(0)} \mathcal{G}(u_j,\varphi_j)^2. 
\end{equation}
and thus 
\begin{equation} \label{excessdecay1_eqn7}
	\rho^{-n} \int_{B_{\rho}(0,\zeta)} \left|w - \sum_{i=0}^I D\widehat{\varphi}_i \cdot (0,\kappa_{I-i},0) \right|^2 
	\leq C \rho^{2\alpha-1/2}
\end{equation} 
for all $\rho \in (0,3/4-|\zeta|)$.  Note that by integration by parts and the fact that $\widehat{\varphi}_i$ is translation invariant along $\{0\} \times \mathbb{R}^{n-p_{k_i}}$, 
\begin{equation*}
	\int_{B_1(0)} D_{(0,\kappa_i,0)} \widehat{\varphi}_i \cdot D_{(0,\kappa_j,0)} \widehat{\varphi}_j 
	= \int_{B_1(0)} \widehat{\varphi}_i \cdot D_{(0,\kappa_i,0)} D_{(0,\kappa_j,0)} \widehat{\varphi}_j 
	= 0 
\end{equation*}
if $1 \leq i < j \leq I$, which since $w$ is homogeneous degree $\alpha$ implies that 
\begin{equation} \label{excessdecay1_eqn8}
	\int_{S^{n-1}} D_{(0,\kappa_i,0)} \widehat{\varphi}_i \cdot D_{(0,\kappa_j,0)} \widehat{\varphi}_j = 0 
\end{equation}
if $1 \leq i < j \leq I$.  By (\ref{excessdecay1_eqn7}) and (\ref{excessdecay1_eqn8}), $\kappa_i$ are independent of the subsequence of $j$ used to obtain the limit $\kappa_i$ and thus (\ref{excessdecay1_eqn7}) holds true with $\kappa_i = \kappa_i(\zeta)$ for some functions $\kappa_i : B_{1/2}(0) \cap \mathcal{D} \rightarrow \mathbb{R}^2$.  Since $0 \in \mathcal{B}_{u_j}$ for all $j$, $0 \in \mathcal{D}$ and $\kappa_i(0) = 0$.  

By (\ref{excessdecay1_eqn7}), if $\alpha \geq 2$ then 
\begin{equation*}
	\rho^{-n} \int_{B_{\rho}(0,\zeta)} |w|^2 \leq C \rho^{2\alpha-2}, 
\end{equation*} 
so by the Schauder estimates $w$ is $C^{\alpha-2,1}$ on the interior of $B_{1/2}(0)$ with $D^k w = \{0,0\}$ on $\{0\} \times B^{n-l}_{1/2}(0)$ for $k \leq \alpha-2$.  If instead $\alpha = 1$, $D\widehat{\varphi}_i$ are constant and thus so $w$ is continuous on the interior of $B_{1/2}(0)$.  Since, regardless of the value of $\alpha$, $w$ is a continuous on the interior of $B_{1/2}(0)$ and harmonic on the interior of $B_{1/2}(0) \setminus \{0\} \times \mathbb{R}^{n-l}$, $w$ is a smooth harmonic single-valued function on the interior of $B_{1/2}(0)$.  Recall that $D^k w(0,0) = \{0,0\}$ for $\kappa \leq \alpha-2$ and observe that by (\ref{excessdecay1_eqn7}) with $\zeta = 0$ and $\kappa_i = 0$, $D^{\alpha-1} v(0,0) = \{0,0\}$.  Hence by standard elliptic theory, 
\begin{equation} \label{excessdecay1_eqn9}
	\int_{B_{\vartheta}(0)} |w - \psi(X)|^2 \leq C \vartheta^2 \int_{B_1(0)} |w - \psi(X)|^2
\end{equation}
for some constant $C = C(n,m) \in (0,\infty)$ and some homogeneous degree $\alpha$, harmonic single-valued function $\psi : \mathbb{R}^n \rightarrow \mathbb{R}^m$ such that 
\begin{equation} \label{excessdecay1_eqn10}
	\int_{B_{\rho}(0)} |w|^2 = \int_{B_{\rho}(0)} |\psi|^2 + \int_{B_1(0)} |v(X,\varphi^{(0)}_1(X)) - \psi(X)|^2 
\end{equation}
for all $\rho \in (0,1]$. 

Let 
\begin{equation*}
	\widetilde{\varphi}_j(X) = \{ \pm (\varphi_{j,1}(X) + E_j \psi(X)) \}.
\end{equation*}
By (\ref{excessdecay1_eqn10}) and (\ref{excessdecay1_eqn10}), 
\begin{equation*}
	\int_{B_{1/4}(0)} |\psi|^2 \leq 1/\sqrt{2} 
\end{equation*}
and so (\ref{excessdecay1_eqn1}) holds true.  Repeating the argument above with $\widetilde{\varphi}_j$ in place of $\varphi_j$, we obtain 
\begin{equation} \label{excessdecay1_eqn11}
	\lim_{j \rightarrow \infty} E_j^{-2} \int_{B_{\rho}(0)} \mathcal{G}(u_j,\widetilde{\varphi}_j)^2 
	= \sqrt{2} \int_{B_{\rho}(0)} |w - \psi(X)|^2 
\end{equation} 
whenever $\rho \in [\vartheta,1/4]$.  By (\ref{excessdecay1_eqn3}), (\ref{excessdecay1_eqn9}), and (\ref{excessdecay1_eqn11}), we obtain (\ref{excessdecay1_eqn2}) with $u = u_j$, $\varphi = \varphi_j$, and $\widetilde{\varphi} = \widetilde{\varphi}_j$ for large $j$ as required. 
\end{proof}

In the following two lemmas, we remove Hypothesis \ref{graphrep_hyp}(ii) using an argument from Section 13 of~\cite{Wic14}. 

\begin{lemma} \label{excessdecay2_lemma}
Let $\varphi^{(0)} \in \Phi_{\alpha}$ be a Dirichlet energy minimizing two-valued function with $S(\varphi^{(0)}) = n-l_0$ for some $l_0 \in \{2,3,\ldots,n\}$ and let $l \in \{l_0,l_0+1,\ldots,n\}$.  For $j = 0,1,2,\ldots,l-l_0$, let $\vartheta_j \in (0,1/4)$ such that $8\vartheta_j < \vartheta_{j+1}$ for $j = 0,1,2,\ldots,l-l_0-1$.  There exists $\varepsilon^{(l)} = \varepsilon^{(l)}(n,m,\varphi^{(0)},\vartheta_0,\vartheta_1,\ldots,\vartheta_{l-l_0}) \in (0,1)$ such that if $\varphi \in \Phi_{\alpha}$ and $u \in \mathcal{F}_{\alpha}$ such that Hypothesis \ref{cft1_hyp}(i) holds true with $\varepsilon = \varepsilon^{(l)}$, $S(\varphi) = n-l$, and $\mathcal{N}_u(0) \geq \alpha$, then there exists $\widetilde{\varphi} \in \Phi_{\alpha}$ with 
\begin{equation*}
	\int_{B_1(0)} \mathcal{G}(\widetilde{\varphi},\varphi)^2 \leq 3 \int_{B_1(0)} \mathcal{G}(u,\varphi)^2
\end{equation*}
and for some $j \in \{0,1,2,\ldots,l-l_0\}$
\begin{equation*}
	\vartheta_j^{-n-2\alpha} \int_{B_{\vartheta_j}(0)} \mathcal{G}(u,\widetilde{\varphi})^2 
	\leq C^{(l)}_j \vartheta_j^2 \int_{B_1(0)} \mathcal{G}(u,\varphi)^2 
\end{equation*}
where $C^{(l)}_0 = \overline{C}_0(n,m,\varphi^{(0)}) \in [1,\infty)$ and $C^{(l)}_j = C^{(l)}_j(n,m,\varphi^{(0)},\vartheta_0,\vartheta_1,\ldots,\vartheta_{j-1}) \in [1,\infty)$ for $j = 1,2,\ldots,l-l_0$ are constants.  
\end{lemma}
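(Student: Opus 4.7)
I would prove the lemma by induction on $k := l - l_0 \geq 0$. The base case $k = 0$ is immediate: when $l = l_0$, Hypothesis~\ref{cft1_hyp}(ii) is vacuously satisfied by its disjunctive clause $\dim S(\varphi) = \dim S(\varphi^{(0)})$, so Lemma~\ref{excessdecay1_lemma} applied at scale $\vartheta = \vartheta_0$ with $\beta = \overline{\beta}(n,m,\varphi^{(0)},\vartheta_0)$ delivers the conclusion with $j = 0$, $C^{(l_0)}_0 = \overline{C}$, and $\varepsilon^{(l_0)} = \overline{\varepsilon}(n,m,\varphi^{(0)},\vartheta_0)$.

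For the inductive step, assume the result at level $l - 1$ and set $\beta_0 = \overline{\beta}(n,m,\varphi^{(0)},\vartheta_0)$. If Hypothesis~\ref{cft1_hyp}(ii) holds for $(\varphi,u)$ with $\beta = \beta_0$, then Lemma~\ref{excessdecay1_lemma} applies directly, finishing with $j = 0$. Otherwise, the failure of (ii) produces $\varphi' \in \Phi_\alpha$ with $S(\varphi) \subsetneq S(\varphi')$ and
\[
\int_{B_1(0)} \mathcal{G}(u,\varphi')^2 < \beta_0^{-1} \int_{B_1(0)} \mathcal{G}(u,\varphi)^2 \leq 4\beta_0^{-1}\varepsilon^{(l)},
\]
the last inequality via Hypothesis~\ref{cft1_hyp}(i) and the triangle inequality. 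The same hypothesis then bounds $\int_{B_1(0)} \mathcal{G}(\varphi',\varphi^{(0)})^2$ by $C\beta_0^{-1} \varepsilon^{(l)}$, so by choosing $\varepsilon^{(l)}$ small enough relative to $\varepsilon^{(l-1)}$, Hypothesis~\ref{cft1_hyp}(i) is satisfied by $(\varphi',u)$ at level $l' := n - \dim S(\varphi') \leq l-1$.

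The inductive hypothesis applied to $(\varphi',u)$ with the same sequence of scales then yields $\widetilde{\varphi} \in \Phi_\alpha$ and some $j' \in \{0,\ldots,l'-l_0\}$ with $\int_{B_1(0)} \mathcal{G}(\widetilde{\varphi},\varphi')^2 \leq 3 \int_{B_1(0)} \mathcal{G}(u,\varphi')^2$ and the decay estimate at scale $\vartheta_{j'}$ in terms of $\int \mathcal{G}(u,\varphi')^2$. Setting $j := j'+1 \leq l - l_0$, I convert this to a decay estimate at scale $\vartheta_j$ by splitting $B_{\vartheta_j}(0) = B_{\vartheta_{j'}}(0) \cup (B_{\vartheta_j}(0) \setminus B_{\vartheta_{j'}}(0))$, bounding the inner integral by the inductive estimate and the annular one by the crude estimate $\int_{B_1(0)} \mathcal{G}(u,\widetilde{\varphi})^2 \leq 16\int_{B_1(0)} \mathcal{G}(u,\varphi')^2$. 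The scale gap $\vartheta_{j'}/\vartheta_j < 1/8$ absorbs the rescaling factor, and substituting $\int \mathcal{G}(u,\varphi')^2 < \beta_0^{-1}\int \mathcal{G}(u,\varphi)^2$ yields the conclusion with $C^{(l)}_j$ depending on $C^{(l')}_{j'}$, $\beta_0$, and $\vartheta_0,\ldots,\vartheta_{j-1}$.

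The principal technical difficulty is enforcing the \emph{absolute} constant $3$ in the bound $\int_{B_1(0)} \mathcal{G}(\widetilde{\varphi},\varphi)^2 \leq 3\int_{B_1(0)} \mathcal{G}(u,\varphi)^2$: a direct triangle inequality $\int \mathcal{G}(\widetilde{\varphi},\varphi)^2 \leq 2\int\mathcal{G}(\widetilde{\varphi},\varphi')^2 + 2\int\mathcal{G}(\varphi',\varphi)^2$ produces a constant of order $\beta_0^{-1}$, not $3$. To obtain the sharper bound I would re-select $\widetilde{\varphi}$ as the $L^2$-closest element of the finite-dimensional model subspace of $\Phi_\alpha$ associated with the spine of $\varphi$ (rather than that of $\varphi'$), following the multiple-scales scheme in~\cite{Wic14}, Section~13; the gap condition $\vartheta_{j+1} > 8\vartheta_j$ is designed precisely to tolerate the ensuing scale adjustments without degrading the decay rate.
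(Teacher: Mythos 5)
Your overall structure (induction on $l$, dichotomy on whether Hypothesis~\ref{cft1_hyp}(ii) holds with $\beta = \overline{\beta}$, and passage to a $\varphi'$ with strictly larger spine when it fails) matches the paper's. The divergence, and the gap, is in how you handle the inductive application. You apply the level-$(l-1)$ statement to $(\varphi',u)$ with the \emph{same} scale sequence $\vartheta_0,\ldots,\vartheta_{l'-l_0}$, obtaining decay at scale $\vartheta_{j'}$, and then try to upgrade this to decay at scale $\vartheta_j$, $j = j'+1$, by splitting $B_{\vartheta_j}(0)$ into $B_{\vartheta_{j'}}(0)$ and the annulus. The inner piece is fine (the factor $(\vartheta_{j'}/\vartheta_j)^{n+2\alpha+2} < 8^{-(n+2\alpha+2)}$ does work in your favor), but the annular piece does not: the crude bound $\int_{B_1(0)}\mathcal{G}(u,\widetilde{\varphi})^2 \leq C\int_{B_1(0)}\mathcal{G}(u,\varphi')^2$ gives $\vartheta_j^{-n-2\alpha}\int_{B_{\vartheta_j}\setminus B_{\vartheta_{j'}}}\mathcal{G}(u,\widetilde{\varphi})^2 \lesssim \vartheta_j^{-n-2\alpha}\int_{B_1(0)}\mathcal{G}(u,\varphi')^2$, which is \emph{larger} than $\vartheta_j^2\int_{B_1(0)}\mathcal{G}(u,\varphi')^2$ by a factor $\vartheta_j^{-n-2\alpha-2}$ and cannot be absorbed into a constant $C^{(l)}_j$ that is independent of $\vartheta_j$ (a dependence the lemma must avoid, since in Theorem~\ref{constfreqthm1} one chooses $\vartheta_j$ \emph{after} $C_j$ is fixed). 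Even invoking the $L^2$-decay of Corollary~\ref{radialdecay_cor} on the annulus only improves this to $\vartheta_j^{-\sigma}$, which still does not decay.

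The paper avoids this entirely: instead of applying the level-$l'$ lemma with the same scales and then converting, it applies it with the \emph{shifted} sequence $\vartheta_1,\vartheta_2,\ldots$ in place of $\vartheta_0,\vartheta_1,\ldots$, so the decay is obtained directly at $\vartheta_{j'+1}$ with no annular correction needed; the role of (\ref{excessdecay2_eqn1}) is then purely to pass from $\int\mathcal{G}(u,\varphi')^2$ to $\int\mathcal{G}(u,\varphi)^2$ in the right-hand side, at the cost of a factor $\widetilde{\beta}^{-1}$ absorbed into $C^{(l)}_{j'+1}$. This shift also keeps $C^{(l)}_0$ equal to the absolute constant $\overline{C}(n,m,\varphi^{(0)})$ (since only the "(ii) holds" branch produces $j=0$), which is what makes the downstream choice of the $\vartheta_j$'s in Theorem~\ref{constfreqthm1} noncircular. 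Your final remark about the constant $3$ in $\int\mathcal{G}(\widetilde{\varphi},\varphi)^2 \leq 3\int\mathcal{G}(u,\varphi)^2$ raises a legitimate concern that the paper's proof does not explicitly address — the triangle inequality through $\varphi'$ does introduce factors of $\widetilde{\beta}^{-1}$ — but your proposed remedy of re-projecting $\widetilde{\varphi}$ is orthogonal to, and does not repair, the annulus problem.
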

\begin{proof}
We shall proceed by induction on $l$.  If $l = l_0$, then Lemma \ref{excessdecay2_lemma} with $\varepsilon^{(l_0)} = \overline{\varepsilon}(n,m,\varphi^{(0)},\vartheta_0)$ follows from Lemma \ref{excessdecay1_lemma} with $\vartheta = \vartheta_0$.  Suppose that Lemma \ref{excessdecay2_lemma} holds true whenever $l < l_0+p$ for some $p \in \{1,2,\ldots,n-l_0\}$.  We shall prove Lemma \ref{excessdecay2_lemma} for the case $l = l_0+p$ with 
\begin{align*}
	\varepsilon^{(l_0+p)} &= \min\{ \overline{\varepsilon}(n,m,\varphi^{(0)},\vartheta_0) \} \cup 
		\{ (2\widetilde{\beta}/3) \varepsilon^{(l)}(n,m,\varphi^{(0)},\vartheta_1,\vartheta_2,\ldots,\vartheta_{l-l_0+1}) : l_0 \leq l < l_0+p \}, \\
	C^{(l_0+p)}_0 &= \overline{C}(n,m,\varphi^{(0)}), \\
	C^{(l_0+p)}_1 &= \max\{ C^{(l)}_0(n,m,\varphi^{(0)}) : l_0 \leq l < l_0+p \}, \\
	C^{(l_0+p)}_j &= \max\{ (2\widetilde{\beta}/3)^{-1} C^{(l)}_j(n,m,\varphi^{(0)},\vartheta_1,\vartheta_2,\ldots,\vartheta_{j+1}) : l_0 \leq l < l_0+p \} 
		\text{ if } 2 \leq j \leq p, 
\end{align*} 
where $\widetilde{\beta} = \overline{\beta}(n,m,\varphi^{(0)},\vartheta_0)$.  Let $\varphi \in \Phi_{\alpha}$ and $u \in \mathcal{F}_{\alpha}$ such that Hypothesis \ref{cft1_hyp}(i) holds true with $\varepsilon = \varepsilon^{(l_0+p)}$, $S(\varphi) = n-l_0+p$, and $\mathcal{N}_u(0) \geq \alpha$.  If Hypothesis \ref{cft1_hyp}(ii) holds true with $\beta = \widetilde{\beta}$, then the conclusion of Lemma \ref{excessdecay2_lemma} with $j = 0$ follows from Lemma \ref{excessdecay1_lemma} with $\vartheta = \vartheta_0$.  If Hypothesis \ref{cft1_hyp}(ii) fails for $\beta = \widetilde{\beta}$, we can find $\varphi' \in \Phi_{\alpha}$ with $S(\varphi) \subset S(\varphi')$ and 
\begin{equation} \label{excessdecay2_eqn1}
	\int_{B_1(0)} \mathcal{G}(u,\varphi')^2 \leq \frac{2}{3\widetilde{\beta}} \int_{B_1(0)} \mathcal{G}(u,\varphi)^2. 
\end{equation}
Then the conclusion of Lemma \ref{excessdecay2_lemma} with $\varphi$ follows from Lemma \ref{excessdecay2_lemma} with $\varphi'$ in place of $\varphi$ and $\vartheta_{j+1}$ in place of $\vartheta_j$ and from (\ref{excessdecay2_eqn1}). 
\end{proof}

\begin{lemma} \label{excessdecay3_lemma}
Let $\varphi^{(0)} \in \Phi_{\alpha}$ be a Dirichlet energy minimizing two-valued function with $S(\varphi^{(0)}) = n-l_0$ for some $l_0 \in \{2,3,\ldots,n\}$.  For $j = 0,1,2,\ldots,n-l_0$, let $\vartheta_j \in (0,1/4)$ such that $8\vartheta_j < \vartheta_{j+1}$ for $j = 0,1,2,\ldots,n-l_0-1$.  There exists $\varepsilon = \varepsilon(n,m,\varphi^{(0)},\vartheta_0,\vartheta_1,\ldots,\vartheta_{n-l_0}) \in (0,1)$ such that if $\varphi \in \Phi_{\alpha}$ and $u \in \mathcal{F}_{\alpha}$ such that Hypothesis \ref{cft1_hyp}(i) holds true and $\mathcal{N}_u(0) \geq \alpha$, then there exists $\widetilde{\varphi} \in \Phi_{\alpha}$ with 
\begin{equation*}
	\int_{B_1(0)} \mathcal{G}(\widetilde{\varphi},\varphi)^2 \leq 3 \int_{B_1(0)} \mathcal{G}(u,\varphi)^2
\end{equation*}
and for some $j \in \{0,1,2,\ldots,n-l_0\}$
\begin{equation*}
	\vartheta_j^{-n-2\alpha} \int_{B_{\vartheta_j}(0)} \mathcal{G}(u,\widetilde{\varphi})^2 
	\leq C_j \vartheta_j^2 \int_{B_1(0)} \mathcal{G}(u,\varphi)^2 
\end{equation*}
where $C_0 =C_0(n,m,\varphi^{(0)}) \in [1,\infty)$ and $C_j = C_j(n,m,\varphi^{(0)},\vartheta_0,\vartheta_1,\ldots,\vartheta_{j-1}) \in [1,\infty)$ for $j = 1,2,\ldots,n-l_0$ are constants.  
\end{lemma}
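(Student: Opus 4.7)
The plan is to deduce Lemma \ref{excessdecay3_lemma} as an immediate consequence of Lemma \ref{excessdecay2_lemma} by uniformizing the choice of $\varepsilon$ and constants $C_j$ over all possible values of $l := n - \dim S(\varphi)$.

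First I would verify that Hypothesis \ref{cft1_hyp}(i), for $\varepsilon$ small depending only on $\varphi^{(0)}$, forces $\dim S(\varphi) \leq \dim S(\varphi^{(0)}) = n - l_0$, so that $l \in \{l_0, l_0+1, \ldots, n\}$. This is a brief compactness argument: if $Z \in S(\varphi)$ with $|Z| = 1$, then $\varphi(Z+\cdot) - \varphi \equiv 0$ combined with Hypothesis \ref{cft1_hyp}(i) gives $\|\varphi^{(0)}(Z+\cdot) - \varphi^{(0)}\|_{L^2(B_{1/2}(0))} < 2\sqrt{\varepsilon}$. The continuous map $Z \mapsto \|\varphi^{(0)}(Z+\cdot) - \varphi^{(0)}\|_{L^2(B_{1/2}(0))}$ vanishes precisely on $S(\varphi^{(0)})$, so on the unit sphere modulo $S(\varphi^{(0)})$ it is bounded below by a positive constant depending only on $\varphi^{(0)}$. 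Hence for $\varepsilon$ small we must have $Z \in S(\varphi^{(0)})$, and by linearity $S(\varphi) \subseteq S(\varphi^{(0)})$.

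Next I would set the constants for Lemma \ref{excessdecay3_lemma} by taking a minimum (respectively maximum) over all admissible values of $l$:
\[
\varepsilon = \min_{l_0 \leq l \leq n} \varepsilon^{(l)}(n, m, \varphi^{(0)}, \vartheta_0, \ldots, \vartheta_{l-l_0}), \qquad
C_j = \max_{l_0 + j \leq l \leq n} C^{(l)}_j.
\]
Given $\varphi$ and $u$ satisfying the hypotheses of Lemma \ref{excessdecay3_lemma} with this $\varepsilon$, apply Lemma \ref{excessdecay2_lemma} with $l = n - \dim S(\varphi)$ (which lies in $\{l_0, \ldots, n\}$ by the previous step) to obtain $\widetilde{\varphi}$ and the stated decay at some scale $\vartheta_j$ with $j \in \{0, 1, \ldots, l - l_0\} \subseteq \{0, 1, \ldots, n - l_0\}$. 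The conclusion of Lemma \ref{excessdecay3_lemma} follows.

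The substantive work — the pigeonhole between Hypothesis \ref{cft1_hyp}(ii) holding (in which case Lemma \ref{excessdecay1_lemma} applies directly at scale $\vartheta_0$) and failing (in which case one passes to a nearby $\varphi' \in \Phi_{\alpha}$ with strictly larger spine and recurses on the smaller value of $l$) — has already been carried out inductively in the proof of Lemma \ref{excessdecay2_lemma}. Consequently the only residual task for Lemma \ref{excessdecay3_lemma} is the bookkeeping above: controlling $\dim S(\varphi) \leq \dim S(\varphi^{(0)})$ under Hypothesis \ref{cft1_hyp}(i) and collecting uniform constants over the finitely many choices of $l$, and so no genuine obstacle arises at this step.
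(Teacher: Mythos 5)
Your proposal is correct and takes essentially the same approach as the paper's proof (take $\varepsilon$ to be the minimum of $\varepsilon^{(l)}$ and $C_j$ the maximum of $C^{(l)}_j$ over $l$, then invoke Lemma \ref{excessdecay2_lemma} with $l = n - \dim S(\varphi)$). Your preliminary step verifying that Hypothesis \ref{cft1_hyp}(i) forces $\dim S(\varphi) \leq \dim S(\varphi^{(0)})$, and hence $l \in \{l_0,\ldots,n\}$, is a reasonable and correct detail that the paper leaves implicit.
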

\begin{proof}
Let 
\begin{align*}
	\varepsilon &= \min\{ \varepsilon^{(l)}(n,m,\varphi^{(0)},\vartheta_0,\vartheta_1,\ldots,\vartheta_{l-l_0}) : l_0 \leq l \leq n \}, \\
	C_0 &= \max\{ C^{(l)}_0(n,m,\varphi^{(0)}) : l_0 \leq l \leq n \} \text{ if } 1 \leq j \leq n-l_0, \\
	C_j &= \max\{ C^{(l)}_j(n,m,\varphi^{(0)},\vartheta_0,\vartheta_1,\ldots,\vartheta_{j-1}) : l_0 \leq l \leq n \} \text{ if } 1 \leq j \leq n-l_0. 
\end{align*}
If $\varphi \in \Phi_{\alpha}$ and $u \in \mathcal{F}_{\alpha}$ such that Hypothesis \ref{cft1_hyp}(i) holds true and $\mathcal{N}_u(0) \geq \alpha$, then Hypothesis \ref{cft1_hyp}(i) holds true with $\varepsilon = \varepsilon^{(l)}(n,m,\varphi^{(0)},\vartheta_0,\vartheta_1,\ldots,\vartheta_{l-l_0})$ and thus the conclusion of Lemma \ref{excessdecay3_lemma} follows from Lemma \ref{excessdecay2_lemma}. 
\end{proof}

\begin{proof}[Proof of Theorem \ref{constfreqthm1}]
Let $\varphi^{(0)}$ be any blow-up of $u$ at the origin and recall that $\varphi^{(0)} \in \Phi_{\alpha}$.  Let $l = n-\dim S(\varphi^{(0)})$ and without loss of generality assume $n-l$ the minimal dimension of the spine of a blow-up at point $Y \in B_{1/2}(0) \cap \mathcal{B}_u$ with $\mathcal{N}_u(Y) \geq \alpha$.  Choose $\vartheta_j \in (0,1/4)$ for $j = 0,1,2,\ldots,n-l_0$ such that $8\vartheta_j < \vartheta_{j+1}$ for $j = 0,1,2,\ldots,n-l_0-1$ and $C_j \vartheta_j \leq 1$, where $C_0 =C_0(n,m,\varphi^{(0)})$ and $C_j = C_j(n,m,\varphi^{(0)},\vartheta_0,\vartheta_1,\ldots,\vartheta_{j-1})$ as in Lemma \ref{excessdecay3_lemma}.  By scaling, assume that 
\begin{equation*}
	\int_{B_1(0)} \mathcal{G}(u,\varphi^{(0)})^2 < 2^{-n-2\alpha} \varepsilon 
\end{equation*}
for $\varepsilon \in (0,1)$ to be determined.  Observe that then 
\begin{equation} \label{cft1_eqn1}
	2^{n+2\alpha} \int_{B_{1/2}(Z)} \mathcal{G}(u(X),\varphi^{(0)}(X-Z))^2 dX < \varepsilon 
\end{equation}
for every $Z \in B_{1/2}(0) \cap \mathcal{B}_u$.  

We claim that by iteratively applying Lemma \ref{excessdecay3_lemma}, for each $Z \in B_{1/2}(0) \cap \mathcal{B}_u$ we can find a sequence $\{\psi_k\}_{k=0,1,2,\ldots} \subset \Phi_{\alpha}$ and a sequence $1/2 = \sigma_0 > \sigma_1 > \sigma_2 > \cdots$ such that $\psi_0 = \varphi^{(0)}$ and whenever $k = 0,1,2,\ldots$, 
\begin{equation} \label{cft1_eqn2}
	\int_{B_1(0)} \mathcal{G}(\psi_k,\psi_{k+1})^2 \leq 3 \sigma_k^{-n-2\alpha} \int_{B_{\sigma_k}(Z)} \mathcal{G}(u(X),\psi_k(X-Z))^2 dX
\end{equation}
and for some $j_k \in \{0,1,2,\ldots,n-l_0\}$, $\sigma_{k+1} = \vartheta_{j_k} \sigma_k$ and 
\begin{equation} \label{cft1_eqn3}
	\sigma_{k+1}^{-n-2\alpha} \int_{B_{\sigma_{k+1}}(Z)} \mathcal{G}(u(X),\psi_{k+1}(X-Z))^2 dX 
	\leq \vartheta_{j_k} \sigma_k^{-n-2\alpha} \int_{B_{\sigma_k}(Z)} \mathcal{G}(u(X),\psi_k(X-Z))^2 dX. 
\end{equation}
Suppose that we found such $\psi_0,\psi_1,\ldots,\psi_k$ for some $k \geq 0$.  By (\ref{cft1_eqn1}) and (\ref{cft1_eqn3}), 
\begin{equation} \label{cft1_eqn4}
	\sigma_k^{-n-2\alpha} \int_{B_{\sigma_k}(Z)} \mathcal{G}(u(X),\psi_k(X-Z))^2 dX 
	\leq 2^{n+2\alpha+1} \sigma_k \int_{B_{1/2}(Z)} \mathcal{G}(u(X),\varphi^{(0)}(X-Z))^2 dX 
	< 2\sigma_k \varepsilon,  
\end{equation}
so by (\ref{cft1_eqn2}) and (\ref{cft1_eqn4}) and the fact that $\vartheta_j < 1/4$, 
\begin{align} \label{cft1_eqn5}
	\left( \int_{B_1(0)} \mathcal{G}(\psi_k,\varphi^{(0)})^2 \right)^{1/2}
	&\leq \sum_{i=0}^{k-1} \left( \int_{B_1(0)} \mathcal{G}(\psi_{i+1},\psi_i)^2 \right)^{1/2} \nonumber \\
	&\leq \sum_{i=0}^{k-1} \left( 3 \sigma_i^{-n-2\alpha} \int_{B_{\sigma_i}(Z)} \mathcal{G}(u(X),\psi_i(X-Z))^2 dX \right)^{1/2} \nonumber \\
	&\leq \sum_{i=0}^{k-1} (6 \sigma_i \varepsilon)^{1/2} 
	\leq \sum_{i=0}^{k-1} (3 \cdot 4^{-i} \varepsilon)^{1/2} 
	\leq 2 (3 \varepsilon)^{1/2}.  
\end{align}
Thus if $\varepsilon$ is sufficiently small then by Lemma \ref{excessdecay3_lemma} $\psi_k$ and $j_k$ exist. 

Computing like in (\ref{cft1_eqn5}) using the fact that $\vartheta_1 = \max_j \vartheta_j < 4$, 
\begin{align} \label{cft1_eqn6}
	\left( \int_{B_1(0)} \mathcal{G}(\psi_k,\psi_K)^2 \right)^{1/2}
	&\leq \sum_{i=k}^{K-1} \left( \int_{B_1(0)} \mathcal{G}(\psi_{i+1},\psi_i)^2 \right)^{1/2} \nonumber \\
	\leq \sum_{i=k}^{K-1} (3 \vartheta_1^{i-k} \sigma_k \varepsilon)^{1/2} 
	\leq 2 (3 \sigma_k \varepsilon)^{1/2} 
\end{align}
for $0 \leq k < K$.  By (\ref{cft1_eqn6}), $\{\psi_k\}_{k=0,1,2,\ldots}$ is a Cauchy sequence in $L^2(B_1(0);\mathcal{A}_2(\mathbb{R}^m))$ and thus converges to some $\varphi_Z \in \Phi_{\alpha}$ such that $\varphi_Z$ is the unique blow-up to $u$ at $Z$.  Clearly $\dim S(\varphi_Z) \leq \dim S(\varphi^{(0)}) = n-l$, so by the minimality of $l$, $\dim S(\varphi_Z) = n-l$.  By letting $K \rightarrow \infty$ in (\ref{cft1_eqn6}), 
\begin{equation} \label{cft1_eqn7}
	\int_{B_1(0)} \mathcal{G}(\psi_k,\varphi_Z)^2 \leq 12 \sigma_k \varepsilon .
\end{equation}
By (\ref{cft1_eqn4}) and (\ref{cft1_eqn7}), 
\begin{equation} \label{cft1_eqn8}
	\sigma_k^{-n-2\alpha} \int_{B_{\sigma_k}(Z)} \mathcal{G}(u(X),\varphi_Z(X-Z))^2 dX \leq 48 \sigma_k \varepsilon 
\end{equation}
for $k = 0,1,2,\ldots$.  Now for $\rho \in (0,1/2]$, by choosing an integer $k \geq 1$ such that $\sigma_k < \rho \leq \sigma_{k-1}$, (\ref{cft1_eqn8}) yields 
\begin{equation} \label{cft1_eqn9}
	\rho^{-n-2\alpha} \int_{B_{\rho}(Z)} \mathcal{G}(u(X),\varphi_Z(X-Z))^2 dX \leq C \rho \varepsilon 
\end{equation}
for some constant $C = C(n,m,\varphi^{(0)}) \in (0,\infty)$.

By (\ref{cft1_eqn9}), Corollary \ref{branchdist_cor}, and $\dim S(\varphi_X) = n-l$ for all $X \in B_{1/2}(0) \cap \mathcal{B}_u$, 
\begin{equation} \label{cft1_eqn11}
	\op{dist}(Z,Y + S(\varphi_Y)) \leq C\rho^{3/2} \text{ for all } Z \in \mathcal{B}_u \cap B_{\rho}(Y) 
\end{equation}
for all $Y \in B_{1/2}(0) \cap \mathcal{B}_u$ and $\rho \in (0,1/2]$ for some $C = C(n,m,\varphi^{(0)}) \in (0,\infty)$.  

Suppose $Y,Z \in B_{1/2}(0) \cap \mathcal{B}_u$.  Let $\rho = 2|Y-Z|$.  Repeating this argument with $\rho^{-\alpha} u(Y + \rho X)$ in place of $u$ and $\varphi_Y$ in place of $\varphi^{(0)}$, by the uniqueness of blow-ups, (\ref{cft1_eqn7}) with $k = 0$, and (\ref{cft1_eqn9}) the same argument yields 
\begin{equation} \label{cft1_eqn10}
	\int_{B_1(0)} \mathcal{G}(\varphi_Y,\varphi_Z)^2 \leq C \varepsilon |X-Y|^2
\end{equation}
for some $C = C(n,m,\varphi^{(0)}) \in (0,\infty)$ and thus 
\begin{equation} \label{cft1_eqn12}
	|\op{pr}_{S(\varphi_Y)} - \op{pr}_{S(\varphi_Z)}| \leq C \varepsilon |X-Y| 
\end{equation}
for all $Y,Z \in B_{1/2}(0) \cap \mathcal{B}_u$ for some $C = C(n,m,\varphi^{(0)}) \in (0,\infty)$, where $\op{pr}_S$ denotes the orthogonal projection onto a given subspace $S$.  After an orthogonal change of coordinates, suppose that $S(\varphi^{(0)}) = \{0\} \times \mathbb{R}^{n-l}$.  By (\ref{cft1_eqn11}) and (\ref{cft1_eqn12}), provided $\varepsilon$ is sufficiently small, $\mathcal{B}_u$ is contained in the graph $\Gamma$ of of a Lipschitz function over $S(\varphi^{(0)}) = \{0\} \times \mathbb{R}^{n-l}$ with small gradient.  If $l \geq 3$, then $\mathcal{B}_u$ has Hausdorff dimension at most $n-3$ and thus $B_{1/4}(0) \setminus \mathcal{B}_u$ is simply connected by the appendix of~\cite{SimWic11}.  Consequently $u$ decomposes into two harmonic single-valued functions on $B_{1/4}(0)$, contradicting $0 \in \mathcal{B}_u$.  If $l = 2$, then by arguing like in the last paragraph of the proof of Theorem \ref{constfreqthm2} for case (a), we must have that $B_{1/4}(0) \cap \mathcal{B}_u = B_{1/4}(0) \cap \Gamma$.  Moreover, by (\ref{cft1_eqn9}), standard elliptic estimates, and Lemma \ref{twovalL2_lemma}, $u$ is asymptotic to $\varphi_Z$ at each $Z \in \Gamma \cap B_{1/8}(0)$.  By the asymptotic behavior of $u$ and the structure of the blow-ups $\varphi_Z$, $u$ decomposes into two harmonic single-valued functions on $B_{1/8}(0)$, contradicting $0 \in \mathcal{B}_u$. 
\end{proof}

\section{Proof of Theorem \ref{constfreqthm1_mss}} \label{sec:cft1_mss_sec}

In this section we will extend the proof of Theorem \ref{constfreqthm1} to the context of area-stationary two-valued graphs in order to prove Theorem \ref{constfreqthm1_mss}.  Our approach is to modify the blow-up method in a way similar to~\cite{KrumWic2} in order to extend Lemma \ref{excessdecay1_lemma} as follows.

Fix an integer $\alpha \geq 1$.  In place of $\mathcal{F}_{\alpha}$ we use the set $\mathcal{F}^{\text{MSS}}_{\alpha}$ of all pairs $(u,\Lambda)$ of two-valued functions $u \in C^{1,1/2}(B_1(0);\mathcal{A}_2(\mathbb{R}^m))$ whose graph $\mathcal{M}$ is area-stationary such that $\mathcal{N}_{\mathcal{M}}(p) \geq \alpha$ for all $p \in \op{sing} \mathcal{M}$ and $\Lambda \in (0,\infty)$.  We replace Hypothesis \ref{cft1_hyp} with: 

\begin{hypothesis} \label{cft1_hyp_mss}
Let $\varepsilon, \beta > 0$.  Let $\varphi^{(0)}, \varphi \in \Phi_{\alpha}$ and $(u,\Lambda) \in \mathcal{F}^{\text{MSS}}_{\alpha}$ such that $\|u\|_{C^{0,1/2}(B_2(0))} \leq \varepsilon$ and 
\begin{enumerate}
\item[(i)] $\int_{B_1(0)} \mathcal{G}(\varphi,\varphi^{(0)})^2 < \varepsilon$ and $\int_{B_1(0)} \mathcal{G}(u_s/\Lambda,\varphi^{(0)})^2 < \varepsilon$ and 
\item[(ii)] either $\dim S(\varphi) = \dim S(\varphi^{(0)})$ or 
\begin{equation*}
	\int_{B_1(0)} \mathcal{G}(u_s/\Lambda,\varphi)^2 + \|Du\|_{C^{0,1/4}(B_1(0))}^4 
	\leq \beta \inf_{\varphi' \in \Phi_{\alpha}, \, S(\varphi) \subset S(\varphi')} 
	\left( \int_{B_1(0)} \mathcal{G}(u_s/\Lambda,\varphi')^2 + \|Du\|_{C^{0,1/4}(B_1(0))}^2 \right) , 
\end{equation*}
where $u_s(X) = \{ \pm (u_1(X)-u_2(X))/2 \}$. 
\end{enumerate}
\end{hypothesis}

Observe that Hypothesis \ref{cft1_hyp_mss}(ii) implies that 
\begin{align} \label{aprioriest_mss_eqn7} 
	\int_{B_1(0)} \mathcal{G}(u_s/\Lambda,\varphi)^2 &\leq \frac{\beta}{1-\beta} \inf_{\varphi' \in \Phi_{\alpha}, \, S(\varphi) \subset S(\varphi')} 
		\int_{B_1(0)} \mathcal{G}(u_s/\Lambda,\varphi')^2, \nonumber \\
	\|Du\|_{C^{0,1/4}(B_1(0))}^2 &\leq \frac{\beta}{1-\beta} \inf_{\varphi' \in \Phi_{\alpha}, \, S(\varphi) \subset S(\varphi')} 
		\int_{B_1(0)} \mathcal{G}(u_s/\Lambda,\varphi')^2.
\end{align}

\begin{lemma} \label{excessdecay1_lemma_mss}
Let $\vartheta \in (0,1/4)$ and let $\varphi^{(0)} \in \Phi_{\alpha}$.  Then there are $\overline{\varepsilon} = \overline{\varepsilon}(n,m,\varphi^{(0)},\vartheta) \in (0,1)$ and $\overline{\beta} = \overline{\beta}(n,m,\varphi^{(0)},\vartheta) \in (0,1)$ such that if $\varphi \in \Phi_{\alpha}$ and $(u,\Lambda) \in \mathcal{F}^{\text{MSS}}_{\alpha}$ such that Hypothesis \ref{cft1_hyp_mss} holds true with $\varepsilon = \overline{\varepsilon}$ and $\beta = \overline{\beta}$, $u(0) = 0$, $Du(0) = 0$, and $\mathcal{N}_{\mathcal{M}}(0) \geq \alpha$, then there exists $\widetilde{\varphi} \in \Phi_{\alpha}$ with 
\begin{equation*}
	\int_{B_1(0)} \mathcal{G}(\widetilde{\varphi},\varphi)^2 \leq 3 \left( \int_{B_1(0)} \mathcal{G}(u_s/\Lambda,\varphi)^2 + \|Du\|_{C^{0,1/4}(B_1(0))}^2 \right)
\end{equation*}
and 
\begin{equation*}
	\vartheta^{-n-2\alpha} \int_{B_{\vartheta}(0)} \mathcal{G}(u_s/\Lambda,\widetilde{\varphi})^2 + \sup_{B_{\vartheta}(0)} |Du| 
		+ \vartheta^{1/2} [Du]_{1/4,B_{\vartheta}(0)} 
	\leq \overline{C} \vartheta^{1/4} \left( \int_{B_1(0)} \mathcal{G}(u_s/\Lambda,\varphi)^2 + \|Du\|_{C^{0,1/4}(B_1(0))} \right)
\end{equation*}
for some $\overline{C} = \overline{C}(n,m,\varphi^{(0)}) \in [1,\infty)$.  
\end{lemma}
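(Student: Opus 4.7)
The proof adapts the contradiction-compactness argument of Lemma \ref{excessdecay1_lemma}, with three new ingredients: an excess combining the $L^2$-distance to $\varphi$ with the $C^{0,1/4}$-norm of $Du$, the nonlinear perturbation of $\Delta u_s$ by terms quadratic in $Du$, and the need to separately track the averaged part $u_a$. Suppose for contradiction that there exist sequences $\varepsilon_j, \beta_j \downarrow 0$ and $(u_j,\Lambda_j) \in \mathcal{F}^{\text{MSS}}_{\alpha}$, $\varphi_j \in \Phi_{\alpha}$ satisfying Hypothesis \ref{cft1_hyp_mss} but violating the conclusion. Define the total excess
\[
	E_j^2 = \int_{B_1(0)} \mathcal{G}(u_{j,s}/\Lambda_j, \varphi_j)^2 + \|Du_j\|_{C^{0,1/4}(B_1(0))}^2,
\]
and, after passing to a subsequence and rotating slightly, take $S(\varphi_j) = \{0\}\times \mathbb{R}^{n-l}$ to be constant. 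Since $\|u_j\|_{C^{1,1/2}} \leq \varepsilon_j \rightarrow 0$, the coefficients $A^{ij}_{\kappa\lambda}$ in the expansion (\ref{mss2})--(\ref{mss4}) converge to $\delta_{ij}\delta_{\kappa\lambda}$, and the forcing term $f_j$ in (\ref{mss3}) obeys $\|f_j\|_{L^2(B_1(0))} \leq C \|Du_j\|_{C^{0,1/4}}^2 \, \|Du_{j,s}\|_{L^2}$.

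I would next extend the apriori estimates of Section~\ref{sec:L2estimates_sec} (Lemmas \ref{graphrep_lemma}, \ref{keyest_lemma} and Corollaries \ref{graphrep_cor}--\ref{nonconest_cor}) to the perturbed divergence-form elliptic system (\ref{mss2}) satisfied by $u_{j,s}/\Lambda_j$, using techniques from \cite{KrumWic2} to absorb the quadratic corrections. This yields a graph representation $u_{j,s}(X)/\Lambda_j = \{\pm(\varphi_{j,1}(X) + v_j(X))\}$ on $B_{3/4}(0)\cap\{|x|>\tau_j\}$ with $\tau_j \downarrow 0$. Setting $w_j = v_j/E_j$ produces uniform $C^2$-bounds away from the Hausdorff limit $\mathcal{D}$ of $\mathcal{B}_{u_j}$, and a subsequential blow-up $w_j \rightarrow w$ exists. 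Dividing (\ref{mss3}) by $\Lambda_j E_j$, the perturbation contributes a term whose $H^{-1}$-norm is bounded by $(\Lambda_j E_j)^{-1}\|f_j\|_{L^2} \leq C E_j \rightarrow 0$, so the limit $w$ is harmonic on $B_1(0)\setminus \mathcal{D}$. Applying the area-stationary analogue of Corollary \ref{nonconest_cor}(ii) at branch points $Z_j \in \mathcal{B}_{u_j}$ with $\mathcal{N}_{\mathcal{M}_j}((Z_j, u_j(Z_j))) \geq \alpha$, together with the upper semicontinuity Lemma \ref{usc_freq_mss}(ii), forces $w$ to vanish to order $\alpha$ at every point of $\mathcal{D}$; hence $w$ extends to a smooth harmonic single-valued function on $B_{1/2}(0)$ with $D^k w(0)=0$ for $k \leq \alpha-1$.

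Standard elliptic theory then produces a homogeneous degree $\alpha$ harmonic polynomial $\psi : \mathbb{R}^n \rightarrow \mathbb{R}^m$ with $\int_{B_{\vartheta}(0)}|w-\psi|^2 \leq C\vartheta^2 \int_{B_1(0)}|w-\psi|^2$ and the $L^2$-orthogonality $\int_{B_1(0)}|w|^2 = \int_{B_1(0)}|\psi|^2 + \int_{B_1(0)}|w-\psi|^2$. Setting $\widetilde{\varphi}_j(X) = \{\pm(\varphi_{j,1}(X) + E_j\psi(X))\}$ gives the claimed $L^2$-decay, while Schauder estimates for the minimal surface system applied to the rescaled function $u_j(\vartheta X)$ control $\sup_{B_{\vartheta}(0)}|Du_j| + \vartheta^{1/2}[Du_j]_{1/4,B_{\vartheta}(0)}$ by a multiple of the rescaled excess at scale $\vartheta$. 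The final exponent $\vartheta^{1/4}$ reflects the H\"older index $1/4$ carried by $\|Du\|_{C^{0,1/4}}$ in the excess rather than the $\vartheta^2$ Taylor rate for $L^2$-data alone; combined with the $L^2$-orthogonality yielding a $\vartheta^2$ factor in the first term, we take the worst rate.

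The main obstacle will be the simultaneous control of both components of the excess under the blow-up: the $L^2$-distance $\|u_{j,s}/\Lambda_j - \varphi_j\|_{L^2}$ and $\|Du_j\|_{C^{0,1/4}}$ need not be of the same order as $j \rightarrow \infty$, and the argument must function uniformly whichever dominates. The fourth-power term $\|Du\|_{C^{0,1/4}}^4$ appearing in Hypothesis \ref{cft1_hyp_mss}(ii) is precisely what forces both pieces to scale compatibly under refinement of the spine, so that in the blow-up limit the perturbation from the minimal surface system is genuinely negligible and the harmonic approximation of $w$ by $\psi$ can be propagated back to both components of the excess for $u_j$. Careful bookkeeping of the powers of $\Lambda_j$, $E_j$, and $\|Du_j\|_{C^{0,1/4}}$ throughout the blow-up, together with extending Corollaries \ref{branchdist_cor} and \ref{nonconest_cor} into the area-stationary setting with the additional H\"older term, constitutes the technical heart of the argument.
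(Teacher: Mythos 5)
Your compactness--contradiction structure---defining a combined excess $E_j$, blowing up $v_j/E_j$ after rotating to a fixed spine, extending the apriori estimates of Section~\ref{sec:L2estimates_sec} to absorb the minimal surface perturbation, approximating the harmonic blow-up $w$ by a degree-$\alpha$ polynomial $\psi$, and setting $\widetilde{\varphi}_j = \{\pm(\varphi_{j,1} + E_j\psi)\}$---is essentially what the paper does and yields the first conclusion together with the $L^2$ part of the second.

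The gap is in your last step, the treatment of $\sup_{B_{\vartheta}(0)}|Du| + \vartheta^{1/2}[Du]_{1/4,B_{\vartheta}(0)}$. You propose to control these via Schauder estimates for the minimal surface system applied to $u_j(\vartheta X)$, claiming the result is bounded by ``a multiple of the rescaled excess at scale~$\vartheta$.'' This does not work as described: a Schauder estimate for the full two-valued graph produces bounds in terms of $\|u_j\|_{L^2}$ or $\|u_j\|_{L^\infty}$ (which see the averaged part $u_{j,a}$ as well) and does not by itself produce the required $\vartheta^{1/4}$ smallness factor tied to the symmetric excess $E_j$. No elliptic regularity is needed here: because $u(0)=0$ and $Du(0)=0$ are among the hypotheses, the definition of the $C^{0,1/4}$ norm gives directly, for $X \in B_{\vartheta}(0)$,
\[
|Du(X)| = |Du(X) - Du(0)| \le [Du]_{1/4,B_1(0)}\,\vartheta^{1/4},
\]
and similarly for the seminorm, so that
\[
\sup_{B_{\vartheta}(0)}|Du| + \vartheta^{1/4}[Du]_{1/4,B_{\vartheta}(0)} \le \vartheta^{1/4}\|Du\|_{C^{0,1/4}(B_1(0))}.
\]
This trivial observation is exactly what the paper uses; it is the correct mechanism producing the weak $\vartheta^{1/4}$ rate, while the $L^2$ term from the blow-up decays at the stronger rate $\vartheta^2$ and is dominated. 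Only the $L^2$ term genuinely needs the blow-up machinery; the derivative terms are controlled for free.

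One bookkeeping point: the second conclusion carries the \emph{unsquared} $\|Du\|_{C^{0,1/4}(B_1(0))}$ on the right, consistent with the paper's definition of $E_j$ in the proof sketch (where $\|Du_j\|_{C^{0,1/4}}$ enters $E_j^2$ unsquared). You define $E_j^2$ with the squared H\"older norm. Since $\|Du_j\|\le 1$ this still implies the stated estimate, but the unsquared normalization is what is carried through the multi-scales iteration in the analogue of Lemma~\ref{excessdecay2_lemma}, so you should track which form you are propagating.
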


Upon proving Lemma \ref{excessdecay1_lemma_mss}, we can drop Hypothesis \ref{cft1_hyp_mss}(ii) using a multiple scales argument like we did in Section \ref{sec:excessdecay_sec}.  We then extend the proof of Theorem \ref{constfreqthm1} to show that 
\begin{equation*}
	\rho^{-n-2\alpha} \int_{B_{\rho}(0)} \mathcal{G} \left( \frac{\tilde u_{p,s}}{\|u_s\|_{L^2(B_1(0))}}, \varphi_p \right)^2 
	\leq C(n,m) \varepsilon \rho^{2\tau}
\end{equation*}
for every $p \in B_{1/2}(0) \times \mathbb{R}^m \cap \op{sing} \mathcal{M}$ for some $\tau = \tau(n,m) \in (0,1)$ and a unique homogeneous degree $\alpha$, harmonic two-valued function $\varphi_p : T_p \mathcal{M} \rightarrow T_p \mathcal{M}^{\perp}$, where recall that we write $\mathcal{M}$ as the graph of $\tilde u_p(X) = \{\tilde u_{p,1}(X),\tilde u_{p,2}(X)\}$ over the tangent plane to $\mathcal{M}$ at $p$ and let $\tilde u_{p,s}(X) = \{ \pm (\tilde u_{p,1}(X) - \tilde u_{p,2}(X))/2 \}$.  Then $\op{sing} \mathcal{M}$ is contained in a graph over an $(n-l)$-dimensional subspace near the origin, where $n-l$ is the dimension of the spines of each $\varphi_p$, and one can argue that $\op{sing} \mathcal{M}$ equals the graph and has $C^{1,\tau}$ regularity.  Using the fact that $\tilde u_p$ is asymptotic to $\varphi_p$ at each $p \in \op{sing} \mathcal{M}$ near the origin, we can then show that $\mathcal{M}$ is the union of two smoothly embedded $n$-dimensional submanifolds near the origin, contradicting $0 \in \op{sing} \mathcal{M}$.

To prove Lemma \ref{excessdecay1_lemma_mss}, we extend the statement of the a priori estimates of Section~\ref{sec:L2estimates_sec} as follows.  

\begin{lemma} \label{graphrep_lemma_mss} 
Suppose $\varphi^{(0)} \in \Phi_{\alpha}$ with $\dim S(\varphi^{(0)}) = n-l_0$ for some integer $l_0 \in \{2,3,\ldots,n\}$.  Let $l \in \{l_0,l_0+1,\ldots,n\}$.  For every $\gamma,\nu \in (0,1)$ there exists $\varepsilon_0, \beta_0 > 0$ depending on $n$, $l$, $m$, $\varphi^{(0)}$, $\gamma$, and $\nu$ such that the following holds true.  Let $\varphi \in \Phi_{\alpha}$ and after an orthogonal change of coordinates suppose that $S(\varphi) = \{0\} \times \mathbb{R}^{n-l}$.  (Note that we need not assume that $S(\varphi) \subseteq S(\varphi^{(0)})$.)  Let $u \in C^{1,1/2}(A^l_{1,\gamma}(0);\mathcal{A}_2(\mathbb{R}^m))$ be a two-valued function whose graph is area-stationary and $\Lambda \in (0,\infty)$.  Suppose that 
\begin{equation*}
	\|u\|_{C^{1,1/2}(A^l_{1,\gamma}(0))} \leq \varepsilon_0, \quad
	\int_{A^l_{1,\gamma}(0)} \mathcal{G}(\varphi,\varphi^{(0)})^2 < \varepsilon_0, \quad 
	\int_{A^l_{1,\gamma}(0)} \mathcal{G}(u_s/\Lambda,\varphi^{(0)})^2 < \varepsilon_0, 
\end{equation*} 
and either $l = l_0$ or 
\begin{equation*}
	\int_{A^l_{1,\gamma}(0)} \mathcal{G}(u_s/\Lambda,\varphi)^2 + \|u\|_{C^{1,1/4}(A^l_{1,\gamma}(0))}
	\leq \beta_0 \left( \inf_{\varphi' \in \Phi_{\alpha}, \, S(\varphi) \subset S(\varphi')} \int_{A^l_{1,\gamma}(0)} \mathcal{G}(u_s/\Lambda,\varphi')^2 
		+ \|u\|_{C^{1,1/4}(A^l_{1,\gamma}(0))} \right) . 
\end{equation*}
Then there is a harmonic single-valued function $v : A^l_{1,\gamma/2}(0) \rightarrow \mathbb{R}^m$ such that 
\begin{equation*}
	u_s(X)/\Lambda = \{ \varphi_1(X) + v(X), -\varphi_1(X) - v(X) \}
\end{equation*}
for all $X \in A^l_{1,\gamma/2}(0)$ and
\begin{gather*}
	\sup_{A^l_{1,\gamma/2}(0)} (|v| + |Dv|) \leq \nu, \\
	\int_{A^l_{1,\gamma/2}(0)} (|v|^2 + |Dv|^2) \leq C \left( \int_{A^l_{1,\gamma}(0)} \mathcal{G}(u_s/\Lambda,\varphi)^2
		+ \Lambda^{-2} \int_{A^l_{1,\gamma}(0)} |f|^2 \right) , 
\end{gather*}
for some $C = C(n,l,m,\varphi^{(0)},\gamma) \in (0,\infty)$, where $f_{\kappa} = D_i \left( (A^{ij}_{\kappa \lambda}(D\tilde u_{p,a},D\tilde u_{p,s}) - \delta_{ij} \delta_{\kappa \lambda}) D_j \tilde u_{p,s}^{\lambda} \right)$. 
\end{lemma}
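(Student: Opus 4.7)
The plan is to mirror the proof of Lemma \ref{graphrep_lemma} (i.e.\ the Dirichlet energy minimizing case) by induction on $l \in \{l_0,l_0+1,\ldots,n\}$, carrying along in parallel the area-stationary analogues of Lemma \ref{keyest_lemma}, Corollary \ref{radialdecay_cor}, Corollary \ref{branchdist_cor}, and Corollary \ref{nonconest_cor}. The central claim to prove is that under the hypotheses of the lemma, the branch set satisfies $\mathcal{B}_u \cap A^l_{1,3\gamma/4}(0) = \emptyset$, provided $\varepsilon_0$ and $\beta_0$ are sufficiently small. Once this is established, the region $A^l_{1,5\gamma/8}(0)$ is simply connected (for $l \geq 3$, and by the explicit structure of $\varphi^{(0)}$ when $l = 2$), so $u_s/\Lambda$ splits as $\{\pm(\varphi_1+v)\}$ for a single-valued $v : A^l_{1,5\gamma/8}(0) \to \mathbb{R}^m$; from (\ref{mss3}) the function $v$ satisfies $\Delta v + f = 0$, and Schauder estimates together with Lemma \ref{twovalL2_lemma} deliver the pointwise bound $\sup|v|+|Dv| \leq \nu$ and the $L^2$ estimate, with the extra $\Lambda^{-2}\int|f|^2$ term on the right hand side absorbing the nonlinear perturbation from the minimal surface system.

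The base case $l = l_0$ is direct: in this regime there is nothing to prove beyond the Schauder estimate because no increase of the spine is possible. For the inductive step, suppose $\lambda > l_0$ and the above-listed a priori estimates have been established for all $l_0 \leq l < \lambda$ in the area-stationary setting. Argue by contradiction: assume there are sequences $\varepsilon_j,\beta_j \downarrow 0$, $\varphi_j \in \Phi_\alpha$ with $S(\varphi_j) = \{0\} \times \mathbb{R}^{n-\lambda}$, and $(u_j,\Lambda_j) \in \mathcal{F}^{\mathrm{MSS}}_\alpha$ satisfying the hypotheses of the lemma, together with points $Z_j \in \mathcal{B}_{u_j} \cap A^\lambda_{1,3\gamma/4}(0)$. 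Pass to an optimal sequence with increasing spines $\{\varphi_{j,k}\}_{k=0,\ldots,N}$ with $\varphi_{j,0}=\varphi_j$; after passing to a subsequence and making small rotations, reduce to the case $S(\varphi_{j,k}) = \{0\} \times \mathbb{R}^{n-p_k}$ for integers $\lambda = p_0 > \cdots > p_N = l_0$. Then select the smallest index $K$ for which
\begin{equation*}
    \lim_{j \to \infty} \frac{\int_{A^\lambda_{1,1}(0)} \mathcal{G}(\varphi_j,\varphi_{j,K})^2}{\int_{A^\lambda_{1,1}(0)} \mathcal{G}(\varphi_j,\varphi_{j,K+1})^2} = 0,
\end{equation*}
and use the inductive form of Corollary \ref{branchdist_cor} to construct rotations $\Gamma_j$ of $\mathbb{R}^n$ sending $Z_j$ into $\{0\} \times \mathbb{R}^{n-p_K}$, with $\sup|\Gamma_j - I|$ controlled by the ratio of the relevant $L^2$ defects. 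The rotations $\Gamma_j$ do not destroy the area-stationary structure of the graph, and by the same computation as (\ref{graphrep_eqn5}), Hypothesis \ref{cft1_hyp_mss} continues to hold for $u_j \circ \Gamma_j^{-1}$ with $\varphi = \varphi_{j,K}$.

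One then extracts a blow-up $w$ of $u_{j,s} \circ \Gamma_j^{-1}/\Lambda_j$ relative to $\varphi_{j,K}$ over $A^\lambda_{1,7\gamma/8}(0)$. The crucial point here is that, even though $u_{j,s}/\Lambda_j$ only solves the divergence-form elliptic system (\ref{mss3}) rather than Laplace's equation, the perturbation term $f_\kappa$ is quadratic in $Du_j$ and hence by the bound $\|u_j\|_{C^{1,1/2}} \leq \varepsilon_j$ and (\ref{aprioriest_mss_eqn7}) it is of strictly higher order than $E_j = (\int \mathcal{G}(u_{j,s}/\Lambda_j,\varphi_{j,K})^2)^{1/2}$ after division by $E_j$; consequently $w$ is harmonic away from the limiting branch set and, by Hypothesis \ref{cft1_hyp_mss}(ii), is simultaneously the blow-up of $\varphi_j \circ \Gamma_j^{-1}$ relative to $\varphi_{j,K}$, so $w$ is in fact a nonzero homogeneous degree $\alpha$ harmonic polynomial. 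Using the inductive form of Corollary \ref{nonconest_cor}(ii) centered at a point of $S(\varphi_{j,K})$, together with (\ref{graphrep_eqn4}), deduce that $w$ vanishes to order $\alpha$ at the limit of $\Gamma_j Z_j$, forcing $S(\varphi_j) = \{0\} \times \mathbb{R}^{n-\lambda} \subset S(\widetilde\varphi_j)$ for the two-valued function $\widetilde\varphi_j(X) = \{\pm(\varphi_{j,K,1}(X)+E_j w(X))\} \in \Phi_\alpha$. This $\widetilde\varphi_j$ is then strictly closer to $u_j$ than the allegedly optimal $\varphi_{j,K+1}$, contradicting the definition of $K$.

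The main obstacle, and the only place where new work beyond the argument of Lemma \ref{graphrep_lemma} is genuinely needed, is controlling the nonlinear contribution $f$ from the minimal surface system during the blow-up and in the Schauder estimate. The clean way to handle this is to include $\|Du\|_{C^{0,1/4}(B_1(0))}^2$ as an additional summand in the excess in Hypothesis \ref{cft1_hyp_mss}(ii), as is already done in the statement of the lemma; then $\|f\|_{L^\infty} \leq C\|Du\|_{C^{0,1/4}}\,|Du_s|$ and the divergence-form nature of $f$ combine with the energy hypothesis (\ref{aprioriest_mss_eqn7}) to ensure that $\Lambda^{-2}\int|f|^2$ and $E_j^{-2}\int|f|^2$ remain negligible compared to the leading $L^2$ defect, so the blow-up limit remains harmonic and the proof goes through essentially verbatim.
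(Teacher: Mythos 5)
Your proposal is correct and follows essentially the same route the paper indicates: the paper itself states only that the proof of Lemma \ref{graphrep_lemma} carries over ``by using (\ref{aprioriest_mss_eqn7}) and making obvious modifications,'' and that the $W^{1,2}$ estimate changes because $v$ now solves $\Delta v = f$ rather than $\Delta v = 0$; you flesh this out faithfully, including the key observation that the extra $\|Du\|_{C^{0,1/4}}^2$ term in Hypothesis \ref{cft1_hyp_mss}(ii) (equivalently (\ref{aprioriest_mss_eqn7})) forces the quadratic perturbation $f$ to vanish at a faster rate than the excess $E_j$ after normalization, so the blow-up $w$ is harmonic and the contradiction argument from Lemma \ref{graphrep_lemma} goes through unchanged. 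The only minor imprecision is your definition of $E_j$, which should include the H\"{o}lder term $\|Du_j\|_{C^{0,1/4}}^2$ as in the paper, but this does not affect the logic of your sketch.
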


\begin{lemma} \label{keyest_lemma_mss} 
Suppose $\varphi^{(0)} \in \Phi_{\alpha}$ with $\dim S(\varphi^{(0)}) = n-l_0$ for some integer $l_0 \in \{2,3,\ldots,n\}$.  Let $l \in \{l_0,l_0+1,\ldots,n\}$.  For every $\gamma \in (0,1)$ there exists $\varepsilon_0, \beta_0 > 0$ depending on $n$, $l$, $m$, $\varphi^{(0)}$, and $\gamma$ such that if $\varphi \in \Phi_{\alpha}$ and $(u,\Lambda) \in \mathcal{F}^{\text{MSS}}_{\alpha}$ such that $\dim S(\varphi) = n-l$, Hypothesis \ref{cft1_hyp_mss} holds true with $\varepsilon = \varepsilon_0$ and $\beta = \beta_0$, $u(0) = 0$, $Du(0) = 0$, and $\mathcal{N}_{\mathcal{M}}(0) \geq \alpha$, then 
\begin{equation*}
	\Lambda^{-2} \int_{B_{\gamma}(0)} |\nabla_{S(\varphi)} u|^2 
	+ \Lambda^{-2} \int_{B_{\gamma}(0)} R^{2-n} \left( \frac{\partial (u/R^{\alpha})}{\partial R} \right)^2 
	\leq C \left( \int_{B_1(0)} \mathcal{G}(u_s/\Lambda,\varphi)^2 + \|Du\|_{C^{0,1/4}(B_1(0))}^2 \right)
\end{equation*} 
where $\nabla_{S(\varphi)}$ denotes the tangential derivative along $S(\varphi)$, $R(X) = |X|$, and $C = C(n,l,m,\varphi^{(0)},\gamma) \in (0,\infty)$.
\end{lemma}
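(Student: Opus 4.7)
The plan is to follow closely the structure of the proof of Lemma \ref{keyest_lemma} for $l = \lambda$ in Section \ref{sec:proof_est_sec}, replacing each harmonic identity by its area-stationary counterpart with explicit error terms and bookkeeping those errors in terms of $\|Du\|_{C^{0,1/4}(B_1(0))}^2$. First, in place of inequality (6.6) of \cite{KrumWic1}, I will use the monotonicity-type identity for $u_s/\Lambda$ established in \cite{KrumWic2}: differentiating $e^{C\varepsilon \rho^{1/4}} N_{u_s,0}(\rho)$ yields
\begin{equation*}
\Lambda^{-2}\int R^{2-n}\left(\frac{\partial (u_s/R^\alpha)}{\partial R}\right)^2 \leq C\int (r^2\Lambda^{-2}|Du_s|^2\psi^2 + 2\alpha R^{-1}\Lambda^{-2}|u_s|^2\psi\psi') + C\|Du\|_{C^{0,1/4}}^2,
\end{equation*}
where the last term records the perturbation of the frequency identity from the nonlinearity in (\ref{mss3}).

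Second, in place of the harmonic stress--energy identities (\ref{keyest_eqn4}) and (\ref{keyest_eqn5}), I will use the versions that hold for solutions of the perturbed equation $\Delta u_s^\kappa + f_\kappa = 0$, obtained by taking $(\zeta^1,\ldots,\zeta^n) = \psi(R)^2(x_1,\ldots,x_\lambda,0,\ldots,0)$ and $\zeta = \psi(R)^2$ as test vector/scalar fields. The resulting identities differ from (\ref{keyest_eqn6}) and (\ref{keyest_eqn7}) by terms of the form $\int \zeta^j f_j\,dX$ and $\int \zeta f\cdot u_s\,dX$, which because $|f|\leq C|Du|^2|D^2 u|$ pointwise away from $\mathcal{B}_u$ (and is integrable across $\mathcal{B}_u$ by the estimates (\ref{SimWicThm7}) of Simon--Wickramasekera), satisfy
\begin{equation*}
\Lambda^{-2}\left|\int \zeta^j f_j \right| + \Lambda^{-2}\left|\int \zeta f\cdot u_s\right| \leq C\|Du\|_{C^{0,1/4}(B_1(0))}^2\cdot \int_{B_1(0)}\mathcal{G}(u_s/\Lambda,\varphi)^2 + C\|Du\|_{C^{0,1/4}}^4,
\end{equation*}
which are absorbable into the right-hand side. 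Combining the two new identities by adding $\alpha$ times the second to the first (exactly as in the derivation of (\ref{keyest_eqn8})) gives the desired Cauchy inequality modulo these error terms.

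Third, I will carry out the partition-of-unity argument verbatim, splitting the region $A^\lambda_{\rho,(1-\gamma)/2}(\zeta)$ into the three cases I, II, III, now applied to $u_s/\Lambda$ rather than $u$. In Case II I use the Schauder estimates for the minimal surface system (available because $\|Du\|$ is small by Hypothesis \ref{cft1_hyp_mss}), so that $u_s/\Lambda$ decomposes as $\pm(\varphi_1+v)$ for a function $v$ satisfying $\Delta v + \tilde f = 0$ with $\tilde f$ controlled by $\|Du\|^2$; the perturbed integration by parts using $\Delta_{S^{\lambda-1}}\varphi + \alpha(\alpha+\lambda-2)\varphi = 0$ still works, with the extra error term contributing $C\|Du\|_{C^{0,1/4}}^4$. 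In Case III I invoke Lemma \ref{graphrep_lemma_mss} in place of Lemma \ref{graphrep_lemma} to obtain the optimal sequence $\varphi_0,\ldots,\varphi_J$, and then redo the covering estimates (\ref{keyest_eqn18})--(\ref{keyest_eqn25}) with $u_s/\Lambda$ and with the Schauder estimate for $v$ now picking up an additional $\Lambda^{-2}\int |f|^2$ term; this too is bounded by $\|Du\|_{C^{0,1/4}}^4$.

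The main obstacle, and the step requiring the most care, is tracking the error terms in Case III: since the non-integer-frequency hypothesis forces us to pass through the optimal-sequence machinery on small annuli around branch points, we must ensure that the nonlinear error $\Lambda^{-2}|f|^2$ integrated over $A^\lambda_{\rho,1-\gamma}(\zeta)$ scales no worse than $\rho^{n+2\alpha}\|Du\|_{C^{0,1/4}}^4$, so that after summing over the Besicovitch cover it is absorbable into the right-hand side of the claimed estimate. Using the pointwise bound $|f|\leq C(|Du||D^2u||Du_s| + |Du|^2|D^2u_s|)$ combined with (\ref{SimWicThm7}) (which gives $|D^2u_s|\leq C\varepsilon\,\op{dist}(\cdot,\mathcal{B}_u)^{-1/2}$) yields exactly this scaling, so the argument closes.
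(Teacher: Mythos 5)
Your structural plan mirrors the paper's proof closely: start from the frequency-monotonicity identity for $u_s/\Lambda$ with an $|f|^2$ error, replace the harmonic stress-energy identities (\ref{keyest_eqn4})--(\ref{keyest_eqn5}) by their counterparts for $\Delta u_s + f = 0$ (picking up error terms that Cauchy-Schwarz converts into $\int |f|^2$), then run the Cases I/II/III covering argument with $u_s/\Lambda$ in place of $u$ and $\int s^4|f|^2$ terms carried along each piece. That is exactly what the paper does, and in particular your Cases I/II/III analysis, the use of Lemma \ref{graphrep_lemma_mss} in Case III, and the $r^4 |f|^2$ bookkeeping after summing the Besicovitch cover all match. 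So the skeleton is right.

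The gap is in your final paragraph, where you claim that the pointwise bound $|f|\leq C(|Du||D^2u||Du_s|+|Du|^2|D^2u_s|)$ together with the Simon--Wickramasekera estimate $|D^2 u_s|\leq C\varepsilon\,\op{dist}(\cdot,\mathcal{B}_u)^{-1/2}$ from (\ref{SimWicThm7}) ``yields exactly this scaling, so the argument closes.'' That is not enough. The estimate (\ref{SimWicThm7}) controls $|D^2 u_s|$ only in terms of $\varepsilon = [Du]_{1/2}$ and the distance $d$ to the branch set, with no reference to $\Lambda = \|u_s\|_{L^2}$. After dividing by $\Lambda^2$, your error term is of size $\varepsilon^6 \Lambda^{-2}\int d^{-1}$, which blows up as $\Lambda\to 0$; it is not absorbable into $\int_{B_1(0)}\mathcal{G}(u_s/\Lambda,\varphi)^2 + \|Du\|_{C^{0,1/4}(B_1(0))}^2$. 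What is actually needed, and what the paper invokes from \cite{KrumWic2} at the very last step, is the refined estimate
\begin{equation*}
	\int_{B_{(3+\gamma)/4}(0)} (1 + R^{4-n+2\alpha}) |f|^2 \leq C \|Du\|_{C^{0,1/4}(B_1(0))}^2 \int_{B_1(0)} |u_s|^2,
\end{equation*}
which after dividing by $\Lambda^2$ gives the uniform bound $C\|Du\|_{C^{0,1/4}(B_1(0))}^2$. This estimate needs more than (\ref{SimWicThm7}): it requires the frequency bound $\mathcal{N}_{\mathcal{M}} \geq \alpha \geq 2$, which via the interior Schauder estimate for (\ref{mss2}) gives $|D^2 u_s| \lesssim \|u_s\|_{L^2}\,\op{dist}(\cdot,\mathcal{B}_u)^{\mathcal{N}-2}$ near branch points (see the analogous estimate (\ref{rotateu_eqn5})), and thereby ties $|f|$ to $\|u_s\|_{L^2}$ rather than to a distance-to-$\mathcal{B}_u$ singularity. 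Your intermediate claim $\Lambda^{-2}|\int \zeta f\cdot u_s| \leq C\|Du\|^2\int\mathcal{G}^2 + C\|Du\|^4$ also looks too strong (there is no reason for the error to be proportional to $\int \mathcal{G}^2$ rather than $\approx 1$), but since $C\|Du\|^2$ alone would suffice this is not the decisive issue. You need to make the $\|u_s\|_{L^2}$-dependence of $|D^2 u_s|$ explicit, or cite the relevant estimate from \cite{KrumWic2}, for the argument to close.
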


\begin{cor} \label{radialdecay_cor_mss} 
Suppose $\varphi^{(0)} \in \Phi_{\alpha}$ with $\dim S(\varphi^{(0)}) = n-l_0$ for some integer $l_0 \in \{2,3,\ldots,n\}$.  Let $l \in \{l_0,l_0+1,\ldots,n\}$.  For every $\gamma,\sigma \in (0,1)$ there exists $\varepsilon_0, \beta_0 > 0$ depending on $n$, $l$, $m$, $\varphi^{(0)}$, $\gamma$, and $\sigma$ such that if $\varphi \in \Phi_{\alpha}$ and $(u,\Lambda) \in \mathcal{F}^{\text{MSS}}_{\alpha}$ such that $\dim S(\varphi) = n-l$, Hypothesis \ref{cft1_hyp_mss} holds true with $\varepsilon = \varepsilon_0$ and $\beta = \beta_0$, and $\mathcal{N}_{\mathcal{M}}(0) \geq \alpha$, then 
\begin{equation} \label{aprioriest_harm_eqn4} 
	\int_{B_{\gamma}(0)} \frac{\mathcal{G}(u,\varphi)^2}{|X|^{n+2\alpha-\sigma}} \leq C \int_{B_1(0)} \mathcal{G}(u,\varphi)^2 
\end{equation} 
for some $C = C(n,l,m,\varphi^{(0)},\gamma) \in (0,\infty)$.
\end{cor}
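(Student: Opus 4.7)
The plan is to mimic the proof of Corollary \ref{radialdecay_cor} (i.e.\ Corollary 6.4 of~\cite{KrumWic1}), with Lemma \ref{keyest_lemma_mss} replacing Lemma \ref{keyest_lemma} at the key step and with the extra $\|Du\|_{C^{0,1/4}(B_1(0))}^2$ error term tracked through the argument. Throughout, the statement is read with $u_s/\Lambda$ in place of $u$ on both sides (the area-stationary counterpart of the Dirichlet version). I would work on a slightly enlarged ball $B_{\gamma'}(0)$ with $\gamma < \gamma' < 1$ so that the ``key estimate'' can be applied to a cutoff region.

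First, I would apply Lemma \ref{keyest_lemma_mss} on $B_{\gamma'}(0)$ to get
\begin{equation*}
	\Lambda^{-2} \int_{B_{\gamma'}(0)} R^{2-n} \left( \frac{\partial (u_s/R^{\alpha})}{\partial R} \right)^2
	\leq C \, E, \qquad
	E := \int_{B_1(0)} \mathcal{G}(u_s/\Lambda,\varphi)^2 + \|Du\|_{C^{0,1/4}(B_1(0))}^2.
\end{equation*}
Since $\varphi \in \Phi_{\alpha}$ is homogeneous of degree $\alpha$, we have $\partial(\varphi/R^{\alpha})/\partial R \equiv 0$, so the radial derivative above coincides with $\partial((u_s/\Lambda-\varphi)/R^{\alpha})/\partial R$ on the complement of $\mathcal{B}_u$ (using the local smooth lift of $u_s$ on any simply connected piece of $B_1(0)\setminus \mathcal{B}_u$, which is harmless because $\mathcal{G}$ is invariant under the choice of lift). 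A Hardy-type inequality along radial rays then yields, for each $\sigma \in (0,1)$,
\begin{equation*}
	\int_{B_{\gamma}(0)} \frac{\mathcal{G}(u_s/\Lambda,\varphi)^2}{|X|^{n+2\alpha-\sigma}}
	\leq C \left( \Lambda^{-2} \int_{B_{\gamma'}(0)} R^{2-n} \left( \frac{\partial (u_s/R^{\alpha})}{\partial R} \right)^2
		+ \int_{B_{\gamma'}(0) \setminus B_{\gamma}(0)} \mathcal{G}(u_s/\Lambda,\varphi)^2 \right),
\end{equation*}
where the second term on the right is an $\omega$-boundary contribution arising when one writes $\mathcal{G}(u_s(\rho\omega)/\Lambda, \varphi(\rho\omega))/\rho^{\alpha}$ as an antiderivative of its radial derivative anchored at $\rho = \gamma'$.

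Combining the two displays gives $\int_{B_{\gamma}(0)} \mathcal{G}(u_s/\Lambda,\varphi)^2/|X|^{n+2\alpha-\sigma} \leq C\,E$. It remains to absorb the error $\|Du\|_{C^{0,1/4}(B_1(0))}^2$ inside $E$ back into the $L^2$ excess. If Hypothesis \ref{cft1_hyp_mss}(ii) is in force, then (\ref{aprioriest_mss_eqn7}) gives
\[
	\|Du\|_{C^{0,1/4}(B_1(0))}^2 \leq \tfrac{\beta_0}{1-\beta_0}\int_{B_1(0)} \mathcal{G}(u_s/\Lambda,\varphi')^2
\]
for the nearest $\varphi' \in \Phi_{\alpha}$ with a strictly larger spine, which together with the triangle inequality $\mathcal{G}(u_s/\Lambda,\varphi)^2 \leq 2\mathcal{G}(u_s/\Lambda,\varphi')^2 + 2\mathcal{G}(\varphi,\varphi')^2$ and an elementary comparison between $\mathcal{G}(\varphi,\varphi')^2$ and the two $L^2$ distances, bounds $E$ by a constant multiple of $\int_{B_1(0)} \mathcal{G}(u_s/\Lambda,\varphi)^2$ provided $\beta_0$ is small enough. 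If $l = l_0$, the same absorption is achieved directly from interior Schauder estimates for the area-stationary system (in the form $\|u\|_{C^{1,1/4}}^2 \leq C\,\varepsilon_0 \int \mathcal{G}(u_s/\Lambda,\varphi^{(0)})^2$), together with Hypothesis \ref{cft1_hyp_mss}(i) and the $C^{0,1/2}$ smallness of $u$.

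The main obstacle I anticipate is making the Hardy-type inequality step rigorous in the presence of $\mathcal{B}_u$: radial rays through the origin need not miss the singular set, and $\partial(u_s/R^{\alpha})/\partial R$ is not classically defined there. This will be handled exactly as in~\cite{KrumWic1}: the radial integration is performed on the product space $(0,\gamma') \times S^{n-1}$ via Fubini, using the fact that the intersection of $\mathcal{B}_u$ with almost every ray has Lebesgue measure zero (since $\mathcal{B}_u$ has Hausdorff dimension $\leq n-2$), and using the weak radial derivative inherent in the $W^{1,2}$ (locally on each simply connected piece) representation of $u_s$. The extra terms introduced by the area-stationary nonlinearity contribute to $\|Du\|_{C^{0,1/4}}^2$ and are controlled by the absorption step above.
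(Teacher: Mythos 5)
Your overall plan — apply Lemma \ref{keyest_lemma_mss} and then integrate radially via a Hardy-type inequality, mirroring the Dirichlet-minimizing case — is the right skeleton, but there is a genuine gap at the very first step: Lemma \ref{keyest_lemma_mss} has $u(0)=0$ and $Du(0)=0$ as explicit hypotheses, while Corollary \ref{radialdecay_cor_mss} does not assume either. For an area-stationary two-valued graph the tangent plane at $(0,u(0))$ is generically tilted, so the lemma cannot be invoked directly as you do. The paper's route is to first establish the special case $u(0)=0$, $Du(0)=0$ by carrying the proof of Corollary \ref{radialdecay_cor} over with the modified a~priori estimates, and then to reduce the general case to that one by re-expressing the graph over its tangent plane at the origin; this rotation introduces a discrepancy of order $|Du(0)|$, which is then simply absorbed into the $\|Du\|_{C^{0,1/4}(B_1(0))}$ error term. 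Your proposal omits this reduction entirely and so only treats the already-flat case.

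The concluding absorption step is also unsound. From (\ref{aprioriest_mss_eqn7}) one only obtains $\|Du\|_{C^{0,1/4}(B_1(0))}^2 \leq \frac{\beta_0}{1-\beta_0}\inf_{\varphi'} \int_{B_1(0)} \mathcal{G}(u_s/\Lambda,\varphi')^2$, and that same display shows the infimum can exceed $\int_{B_1(0)} \mathcal{G}(u_s/\Lambda,\varphi)^2$ by a factor as large as $(1-\beta_0)/\beta_0$; the triangle inequality you cite runs in the wrong direction for what you need, since the purpose of Hypothesis \ref{cft1_hyp_mss}(ii) is precisely to enforce $\int_{B_1(0)} \mathcal{G}(u_s/\Lambda,\varphi)^2 \ll \inf_{\varphi'} \int_{B_1(0)} \mathcal{G}(u_s/\Lambda,\varphi')^2$. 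The Schauder-type bound $\|u\|_{C^{1,1/4}}^2 \leq C\varepsilon_0 \int_{B_1(0)} \mathcal{G}(u_s/\Lambda,\varphi^{(0)})^2$ you invoke for $l=l_0$ is also not a consequence of interior regularity: the two sides measure unrelated quantities (a norm of $u$ itself versus the $L^2$ distance of the normalized symmetric part to a fixed model) and there is no such implication. Comparing with Lemma \ref{keyest_lemma_mss} and Corollary \ref{nonconest_cor_mss}, the intended right-hand side of Corollary \ref{radialdecay_cor_mss} is $\int_{B_1(0)} \mathcal{G}(u_s/\Lambda,\varphi)^2 + \|Du\|_{C^{0,1/4}(B_1(0))}^2$ (with $u_s/\Lambda$ likewise inside the left-hand integrand), so the $\|Du\|^2$ term is meant to be carried along rather than absorbed; chasing the absorption is both unnecessary and, as argued above, not achievable.
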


\begin{cor} \label{nonconest_cor_mss} 
Suppose $\varphi^{(0)} \in \Phi_{\alpha}$ with $\dim S(\varphi^{(0)}) = \{0\} \times \mathbb{R}^{n-l_0}$ for some integer $l_0 \in \{2,3,\ldots,n\}$.  Let $l \in \{l_0,l_0+1,\ldots,n\}$.  For every $\gamma \in (0,1)$ there exists $\varepsilon_0, \beta_0 > 0$ depending on $n$, $l$, $m$, $\varphi^{(0)}$, and $\gamma$ such that the following holds true.  Let $\varphi \in \Phi_{\alpha}$ and $(u,\Lambda) \in \mathcal{F}^{\text{MSS}}_{\alpha}$ such that Hypothesis \ref{cft1_hyp_mss} holds true with $\varepsilon = \varepsilon_0$ and $\beta = \beta_0$.  Let $\{\varphi_j\}_{j = 0,1,2,\ldots,N} \subset \Phi_{\alpha}$ be an optimal sequence with increasing spines such that $\varphi_0 = \varphi$ and $S(\varphi_N) = \{0\} \times \mathbb{R}^{n-l_0}$.  Write $\varphi_j(X) = \{ \pm \varphi_{j,1}(X) \}$ for all $X \in \mathbb{R}^n$ and some harmonic single-valued function $\varphi_{j,1} : \mathbb{R}^n \rightarrow \mathbb{R}^m$ that is close to $\varphi^{(0)}_1$ in $L^2(B_1(0);\mathbb{R}^m)$.  Let $v : B_{\gamma}(0) \setminus \{|x| > \tau\} \rightarrow \mathbb{R}^m$ be a single-valued such that 
\begin{equation*}
	u_s(X)/\Lambda = \{ \varphi_{0,1}(X) + v(X), -\varphi_{0,1}(X) - v(X) \}
\end{equation*}
on $B_{\gamma}(0) \setminus \{|x| > \tau\}$ and $\sup |v|$ is small.  Let $Z \in B_{1/2}(0)$ with $\mathcal{N}_{\mathcal{M}}(Z,u(Z)) \geq \alpha$.  Then 
\begin{gather*}
	\op{dist}(Z,S(\varphi_j))^2 \leq C \frac{\int_{B_1(0)} \mathcal{G}(u_s/\Lambda,\varphi)^2 + \|Du\|_{C^{0,1/4}(B_1(0))}^2}{
		\int_{B_1(0)} \mathcal{G}(\varphi,\varphi_{j+1})^2} \text{ for } j = 0,1,2,\ldots,N-1, \\
	\op{dist}(Z,S(\varphi^{(0)}))^2 \leq C \left( \int_{B_1(0)} \mathcal{G}(u_s/\Lambda,\varphi)^2 + \|Du\|_{C^{0,1/4}(B_1(0))}^2 \right), 
\end{gather*} 
for some $C = C(n,l,m,\varphi^{(0)},\gamma) \in (0,\infty)$ and 
\begin{gather*}
	\int_{B_{\gamma}(0)} \frac{\mathcal{G}(u_s/\Lambda,\varphi)^2}{|X-Z|^{n-\sigma}} 
		\leq C \left( \int_{B_1(0)} \mathcal{G}(u_s/\Lambda,\varphi)^2 + \|Du\|_{C^{0,1/4}(B_1(0))}^2 \right), \\
	\int_{B_{\gamma}(0) \cap \{|x| > \tau\}} \frac{|v(X) - D\varphi{0,1}(X) \cdot Z|^2}{|X-Z|^{2\alpha+n-\sigma}} 
		\leq C \left( \int_{B_1(0)} \mathcal{G}(u_s/\Lambda,\varphi)^2 + \|Du\|_{C^{0,1/4}(B_1(0))}^2 \right),  
\end{gather*} 
for some $C = C(n,l,m,\varphi^{(0)},\gamma,\sigma) \in (0,\infty)$
\end{cor}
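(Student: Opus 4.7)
The plan is to prove Corollary \ref{nonconest_cor_mss} by faithfully adapting the arguments used for Corollary \ref{branchdist_cor} and Corollary \ref{nonconest_cor} in the harmonic case, now using the MSS analogs Lemma \ref{graphrep_lemma_mss}, Lemma \ref{keyest_lemma_mss}, and Corollary \ref{radialdecay_cor_mss}. The induction structure mirrors Hypothesis \ref{graphrep_hyp}: I would assume inductively that Lemma \ref{graphrep_lemma_mss}, Lemma \ref{keyest_lemma_mss}, Corollary \ref{radialdecay_cor_mss}, and Corollary \ref{nonconest_cor_mss} all hold for $l < \lambda$, and then establish the case $l = \lambda$ by first proving the distance bounds and afterwards deducing the weighted $L^2$ estimates. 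Throughout, a version of Lemma \ref{translate_lemma} for $(u,\Lambda) \in \mathcal{F}^{\op{MSS}}_{\alpha}$ is needed so that translating/rescaling around a singular point $Z$ preserves (a perturbed form of) Hypothesis \ref{cft1_hyp_mss}; here the Hölder seminorm of $Du$ scales favorably under rescaling by $\rho \in (0,1/2]$, so the $\|Du\|_{C^{0,1/4}}^2$ side term only improves.

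For the distance estimates, I would argue by contradiction and compactness exactly as in the proof of Corollary \ref{branchdist_cor}, producing sequences $(u_{j,k}, \Lambda_{j,k}) \in \mathcal{F}^{\op{MSS}}_{\alpha}$, $\varphi_{j,k}$, $Z_{j,k}$, $a_{j,k}$ violating the appropriate density inequality. After normalization by the $L^2$-norms $\left(\int_{B_1} \mathcal{G}(\varphi_{j,k},\varphi_{j,k,s+1})^2\right)^{1/2}$ (augmented by $\|Du_{j,k}\|_{C^{0,1/4}}^2$ per Hypothesis \ref{cft1_hyp_mss}(ii)), I would extract limiting harmonic functions $\widehat{\varphi}_i$ translation invariant along $\{0\}\times \mathbb{R}^{n-p_i}$ together with directions $\widehat{a}_i$. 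Using Lemma \ref{usc_freq_mss}(ii) to promote $\mathcal{N}_{\mathcal{M}_{j,k}}(Z_{j,k}, u_{j,k}(Z_{j,k})) \geq \alpha$ to vanishing of order $\alpha$ for the blow-up $w$ at the limit point, combined with the homogeneity/orthogonality argument of (\ref{branchdist_eqn6})--(\ref{translate_eqn8}) built from integration by parts against the translation-invariant directions, yields $\widehat{a}_i = 0$ for all $i$, contradicting normalization. The Taylor-expansion argument of (\ref{branchdist_eqn7})--(\ref{branchdist_eqn15}) then converts the density statement into the quantitative distance bound, with $\|Du\|_{C^{0,1/4}}^2$ carried along as an additive perturbation.

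For the weighted integrals, I would apply Corollary \ref{radialdecay_cor_mss} centered at $Z$ using the MSS translate lemma, and then combine with the Taylor expansions (\ref{branchdist_eqn7}) of $\varphi_{0,1}$ around $Z$ (using the optimal sequence $\{\varphi_j\}$) to estimate each resulting piece as in (\ref{nonconest_eqn4})--(\ref{nonconest_eqn8}); the $|\xi_{N-j}|^2$ weights are consumed by the distance bounds just established, producing the conclusion. The main obstacle is controlling the perturbation from the minimal surface system: $u_s$ only solves $\Delta u_s^\kappa + f_\kappa = 0$ with $f$ as in (\ref{mss3}), and Schauder estimates used to pass from $L^2$ closeness of $u_s/\Lambda$ to $\varphi$ to $C^2$ estimates on $v$ pick up an $L^2$ contribution of $\Lambda^{-1} f$; this must be absorbed into the added $\|Du\|_{C^{0,1/4}(B_1(0))}^2$ term on the right-hand side via $|f| \leq C|Du||D^2 u_s|$ and a Cauchy--Schwarz argument. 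A secondary subtlety is that Lemma \ref{twovalL2_lemma}, which was used freely in the harmonic case to replace two-valued distances by single-valued ones, must still apply here since the harmonic polynomial $\varphi_{0,1}$ still satisfies its hypotheses; the MSS perturbation does not affect this step.
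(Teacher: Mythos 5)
Your proposal is essentially the paper's own approach: the paper explicitly states that Corollary \ref{nonconest_cor_mss} is proved by extending the arguments for Corollary \ref{branchdist_cor} and Corollary \ref{nonconest_cor} ``by using (\ref{aprioriest_mss_eqn7}) and making obvious modifications to account for the changes in the statements of the a priori estimates and definitions of blow-ups'' within the same inductive scheme on $l = n - \dim S(\varphi)$, and you have correctly identified the main technical ingredients that make these ``obvious modifications'' go through. In particular, you are right that one needs an MSS analog of Lemma \ref{translate_lemma}, that Lemma \ref{usc_freq_mss}(ii) supplies the required upper semicontinuity in the blow-up compactness argument, that the orthogonality/homogeneity contradiction from (\ref{branchdist_eqn6}) and the Taylor expansion of (\ref{branchdist_eqn7})--(\ref{branchdist_eqn15}) carry over with the $\|Du\|^2_{C^{0,1/4}}$ term appearing as an additive perturbation, and that the error from the minimal surface system (i.e.\ the $f$ in (\ref{mss3})) is what produces that term when passing through Schauder estimates. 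Two small caveats that you should be aware of when writing this up carefully: first, the claim that the $C^{0,1/4}$-norm of $Du$ ``only improves'' under rescaling around a singular point $Z$ requires rotating to a tilted coordinate system in which $Du(Z)=0$; otherwise only the seminorm $[Du]_{1/4}$ scales favorably while $\sup|Du|$ need not. Second, your pointwise bound $|f| \le C|Du||D^2 u_s|$ is heuristic; the structure of $A^{ij}_{\kappa\lambda}$ actually gives $|A - \delta| \lesssim |Du|^2$ and $|DA| \lesssim |Du|$, so the correct absorbing mechanism is the integral inequality $\int (1 + R^{4-n+2\alpha})|f|^2 \le C\|Du\|^2_{C^{0,1/4}(B_1(0))}\int|u_s|^2$ from \cite{KrumWic2}, which is what the paper invokes in the proof of Lemma \ref{keyest_lemma_mss} and should be cited here as well. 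Neither of these affects the soundness of the overall strategy.
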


We modify the definition of blow-ups of Section~\ref{sec:L2estimates_sec} as follows.  Suppose $\varepsilon_j, \beta_j \rightarrow 0^+$ and $\varphi^{(0)}, \varphi_j \in \Phi_{\alpha}$ and $(u_j,\Lambda_j) \in \mathcal{F}^{\text{MSS}}_{\alpha}$ such that Hypothesis \ref{cft1_hyp_mss} holds true with $\varepsilon = \varepsilon_j$, $\beta = \beta_j$, $\varphi = \varphi_j$, and $(u,\Lambda) = (u_j,\Lambda_j)$.  Writing $u_j(X) = \{u_{j,1}(X),u_{j,2}(X)\}$, let $u_{j,a}(X) = (u_{j,1}(X)+u_{j,2}(X))/2$ and $u_{j,s}(X) = \{ \pm (u_{j,1}(X)-u_{j,2}(X))/2 \}$.  Write $\varphi_j(X) = \{ \pm \varphi_{j,1}(X) \}$ for a harmonic single-valued function $\varphi_{j,1} : \mathbb{R}^n \rightarrow \mathbb{R}^m$ that is close to $\varphi^{(0)}_1$ in $L^2(B_1(0);\mathbb{R}^m)$.  After passing to a subsequence, we may assume that $l = n-\dim S(\varphi_j)$ is independent of $j$.  By the sequential compactness of the space of closed subsets of a compact space equipped with the Hausdorff metric, after passing to a subsequence $\mathcal{B}_{u_j}$ converges to some closed subset $\mathcal{D} \subseteq \{0\} \times B^{n-l}_1(0)$ in Hausdorff distance on compact subsets of $B_1(0)$.  Write 
\begin{equation*}
	u_{j,s}(X)/\Lambda = \{ \pm (\varphi_{j,1}(X) + v_j(X)) \} 
\end{equation*}
for all $X \in U_j$, where $U_j = \{X \in B_{(1+\gamma)/2}(0) : \op{dist}(X,\mathcal{D}) > \tau_j\}$ for $\tau_j \rightarrow 0^+$ slowly and $v_j \in C^2(U_j;\mathbb{R}^m)$.  Define 
\begin{equation*}
	w_j = v_j/E_j \quad \text{for} \quad 
	E_j = \left( \int_{B_1(0)} \mathcal{G}(u_{s,j}/\Lambda_j,\varphi_j)^2 + \|Du_j\|_{C^{0,1/4}(B_1(0))}^2 \right)^{1/2}. 
\end{equation*}
Note that $w_j$ satisfies $\Delta w_j^{\kappa} = f_{j,\kappa}$ where $f_{j,\kappa} = D_i ((A^{il}_{\kappa \lambda}(Du_{j,a},Du_{j,s}) - \delta_{il} \delta_{\kappa \lambda}) D_l u_{j,s}^{\lambda})$.  By elliptic estimates for $w_j$ and Lemma \ref{twovalL2_lemma}, after passing to a subsequence $w_j$ converge to some $w$ in $C^2(K;\mathbb{R}^m)$ for every compact subset $K$ of $B_1(0) \setminus \mathcal{D}$.  Using Lemma \ref{nonconest_cor_mss}, one can show that $w_j \rightarrow w$ in $L^2(B_{\gamma}(0);\mathbb{R}^m)$.  Any such $w$ is called a \textit{blow-up} of $(u_j,\Lambda_j)$ relative to $\varphi_j$ over $B_{\gamma}(0)$. 

As before, the proofs of the a priori estimates Lemma \ref{graphrep_lemma_mss}, Lemma \ref{keyest_lemma_mss}, Corollary \ref{radialdecay_cor_mss}, and Corollary \ref{nonconest_cor_mss} proceed by induction on $l = n - \dim S(\varphi)$, assuming that for some $\lambda \in \{l_0,l_0+1,\ldots,n\}$ either $\lambda = l_0$ or the a priori estimates hold true whenever $l_0 \leq l < \lambda$ and then proving the a priori estimates for $l = \lambda$.  The proofs of Lemma \ref{graphrep_lemma}, Lemma \ref{translate_lemma}, Corollary \ref{branchdist_cor}, and Corollary \ref{nonconest_cor} in Section~\ref{sec:proof_est_sec} extend by using (\ref{aprioriest_mss_eqn7}) and making obvious modifications to account for the changes in the statements of the a priori estimates and definitions of blow-ups.  The $W^{1,2}$ estimate in Lemma \ref{graphrep_lemma} changes since $\Delta v = f$ by (\ref{mss3}), where $f$ is as in (\ref{mss3}).  The proof of Corollary \ref{radialdecay_cor_mss} is identical to a proof of~\cite{KrumWic2} and involves extending the proof of Corollary \ref{radialdecay_cor} in Section~\ref{sec:proof_est_sec}, accounting for the changes in the statements of the a priori estimates, to prove the corollary in the special case where $u(0) = 0$ and $Du(0) = 0$ and then proving the general case by rotating the tangent plane at the origin, picking up a term $|Du(0)|$ on the right-hand side which is absorbed into $\|Du\|_{C^{0,1/4}(B_1(0))}$.  The proof of Lemma \ref{keyest_lemma_mss} proceeds as follows: 

\begin{proof}[Proof of Lemma \ref{keyest_lemma_mss} for $l = \lambda$]  
In place of (6.6) of~\cite{KrumWic1} we observe that by~\cite{KrumWic2}, 
\begin{equation} \label{keyest_mss_eqn1}
	\int_{B_{\gamma}(0)} R^{2-n} \left( \frac{\partial (u_s/R^{\alpha})}{\partial R} \right)^2 
	\leq C \int (r^2 |Du_s|^2 \psi(R)^2 + 2 \alpha R^{-1} |u_s|^2 \psi(R) \psi'(R)) + C \int_{B_{(1+\gamma)/2}(0)} R^{4-n+2\alpha} |f|^2 
\end{equation}
for $C = C(n,m,\varphi^{(0)},\gamma) \in (0,\infty)$, where $f_{\kappa} = D_i ((A^{ij}_{\kappa \lambda}(D\tilde u_{p,a},D\tilde u_{p,s}) - \delta_{ij} \delta_{\kappa \lambda}) D_j \tilde u_{p,s}^{\lambda})$.  We extend (\ref{keyest_eqn8}) using (\ref{mss3}) like in~\cite{KrumWic2} to 
\begin{align} \label{keyest_mss_eqn2}
	&\frac{1}{2} \int |D_y u_s|^2 \psi(R)^2 + (\alpha+\lambda/2-1) \int \left( |Du_s|^2 \psi(R)^2 + 2\alpha R^{-1} |u_s|^2 \psi(R) \psi'(R) \right) \nonumber \\
	&\leq -2 \int \left( \frac{1}{2} r^2 |Du_s|^2 - \alpha (\alpha+\lambda/2-1) |u_s|^2 - r D_r u_s (r D_r u_s - \alpha u_s) \right) R^{-1} \psi(R) \psi'(R) 
	\nonumber \\&+ 2 \int |r D_r u_s - \alpha u_s|^2 \psi'(R)^2 + \frac{1}{4} \int |f|^2 \psi(R)^2. 
\end{align}
In Cases I and II, we extend (\ref{keyest_eqn9}) using (\ref{mss3}) in obtaining $W^{1,2}$ estimates and thereby obtain 
\begin{align} \label{keyest_mss_eqn3}
	&-2 \Lambda^{-2} \int \left( \frac{1}{2} r^2 |Du_s|^2 - \alpha (\alpha+\lambda/2-1) |u_s|^2 - r D_r u_s (r D_r u_s - \alpha u_s) \right) 
	R^{-1} \psi(R) \psi'(R) \chi_{(\rho,\zeta)} \nonumber \\&+ 2 \Lambda^{-2} \int |r D_r u_s - \alpha u_s|^2 \psi'(R)^2 \chi_{(\rho,\zeta)} 
	\leq C \int_{A^{\lambda}_{\rho,1-\gamma}(\zeta)} \mathcal{G}(u_s/\Lambda,\varphi)^2 + C \int_{A^{\lambda}_{\rho,3(1-\gamma)/4}(\zeta)} r^4 |f|^2
\end{align}
for $C = C(n,\lambda,m,\varphi^{(0)},\gamma) \in (0,\infty)$.  In Case III, we cover $A^{\lambda}_{\rho,(1-\gamma)/2}(\zeta)$ by a cover $\mathcal{C}$ balls and annuli and find a subordinate partition of unity $\{\eta_S\}_{S \in \mathcal{C}}$ as before.  If $B_{d(0,\xi)/2}(0,\xi) \in \mathcal{C}$ with $d(0,\xi) \geq 8cE$ and $d(0,\xi)^{-n-2\alpha} \int_{B_{d(0,\xi)}(0,\xi)} \mathcal{G}(u,\widetilde{\varphi})^2 < \widetilde{\varepsilon}$, then by the Schauder estimates there exists a harmonic function $v : B_{d(0,\xi)/2}(0,\xi) \rightarrow \mathbb{R}^m$ such that 
\begin{gather}
	u(X) = \{ \varphi_1(X) + v(X), -\varphi_1(X) - v(X) \} \text{ for all } X \in B_{d(0,\xi)/2}(0,\xi), \nonumber \\
	\int_{B_{d(0,\xi)/2}(0,\xi)} (|v|^2 + s^2 |Dv|^2) \leq C \int_{B_{d(0,\xi)}(0,\xi)} \mathcal{G}(u,\widetilde{\varphi})^2 
		+ \int_{B_{d(0,\xi)}(0,\xi)} s^4 |f|^2 \label{keyest_mss_eqn4}
\end{gather}
for some $C = C(n,\lambda,m,\varphi^{(0)},\gamma) \in (0,\infty)$.  By Lemma \ref{translate_lemma} and Corollary \ref{radialdecay_cor}, 
\begin{equation} \label{keyest_mss_eqn5}
	\int_{B_{d(0,\xi)}(0,\xi)} \mathcal{G}(u,\widetilde{\varphi})^2 
	\leq C \left(\frac{d(0,\xi)}{\rho}\right)^{n+2\alpha-1/2} \left( \int_{A^{\lambda}_{\rho,1-\gamma}(\zeta)} \mathcal{G}(u,\widetilde{\varphi})^2 
		+ \rho^n \|Du\|_{C^{0,1/4}(A^{\lambda}_{\rho,1-\gamma}(\zeta))} \right)
\end{equation}
for some $C = C(n,\lambda,m,\varphi^{(0)},\gamma) \in (0,\infty)$.  By (\ref{keyest_mss_eqn4}), (\ref{keyest_mss_eqn5}), and $\Delta_{S^{\lambda-1}} \widetilde{\varphi} + \alpha (\alpha + \lambda - 2) \widetilde{\varphi} = 0$, 
\begin{align} \label{keyest_mss_eqn6}
	&-2 \int \left( \frac{1}{2} r^2 (|Du|^2 - |D\widetilde{\varphi}|^2) - \alpha (\alpha+\lambda/2-1) (|u|^2 - |\widetilde{\varphi}|^2) 
		- r D_r u (r D_r u - \alpha u) \right) R^{-1} \psi(R) \psi'(R) \chi \nonumber \\
	&+ 2 \int |r D_r u - \alpha u|^2 \psi'(R)^2 \chi \eta 
	\leq 2 \int v \nabla_{S^{\lambda-1}} \widetilde{\varphi} \cdot \nabla_{S^{\lambda-1}} \eta R^{-1} \psi(R) \psi'(R) \chi 
	+ C \int_{B_{d(0,\xi)}(0,\xi)} s^2 r^2 |f|^2 \nonumber \\
	&+ C \rho^{-n-2\alpha+5/2} \left( \int_{B_{d(0,\xi)/2}(0,\xi)} s^{2\alpha-5/2} \right) 
		\left( \int_{A^{\lambda}_{\rho,1-\gamma}(\zeta)} \mathcal{G}(u,\varphi)^2 + \rho^n \|Du\|_{C^{0,1/4}(A^{\lambda}_{\rho,1-\gamma}(\zeta))} \right) .  
\end{align}
where $\eta = \eta_{B_{d(0,\xi)/2}(0,\xi)}$ and $C = C(n,\lambda,m,\varphi^{(0)},\gamma) \in (0,\infty)$.  By similarly extending (\ref{keyest_eqn19}) - (\ref{keyest_eqn22}), (\ref{keyest_eqn24}) and adding the resulting estimates and using (\ref{keyest_eqn25}), 
\begin{align} \label{keyest_mss_eqn7}
	&-2 \int \left( \frac{1}{2} r^2 (|Du|^2 - |D\widetilde{\varphi}|^2) - \alpha (\alpha+\lambda/2-1) (|u|^2 - |\widetilde{\varphi}|^2) 
		- r D_r u (r D_r u - \alpha u) \right) R^{-1} \psi(R) \psi'(R) \chi \nonumber \\
	&+ 2 \int |r D_r u - \alpha u|^2 \psi'(R)^2 \chi \eta 
	\leq C \left( \int_{A^{\lambda}_{\rho,1-\gamma}(\zeta)} \mathcal{G}(u,\varphi)^2 + \rho^n \|Du\|_{C^{0,1/4}(A^{\lambda}_{\rho,1-\gamma}(\zeta))} \right) 
	+ C \sum_{S \in \mathcal{C}} \int_{\widehat{S}} r^4 |f|^2,  
\end{align}
where $\widehat{S} = B_{2\sigma}(0,\xi)$ when $S = B_{\sigma}(0,\xi) \in \mathcal{C}$, $\widehat{S} = A^l_{\sigma,1}(\xi)$ when $S = A^l_{\sigma,1/2}(\xi) \in \mathcal{C}$, and $\widehat{S} = A^{\lambda}_{\rho,3(1-\gamma)/4}(\zeta)$ for $S = A^{\lambda}_{\rho,(1-\gamma)/2}(\zeta) \cap \{ d \geq (1-\gamma) \rho/64 \}$.  Since any ball or annuli in $\mathcal{C}$ that intersect have roughly the same size and $\mathcal{C}$ is the union of $N \leq C(n)$ subcollections of mutually disjoint sets, $\{ \widehat{S} : S \in \mathcal{C} \}$ equals the union of $N' \leq C(n)$ subcollections of mutually disjoint sets and thus 
\begin{equation} \label{keyest_mss_eqn8}
	\sum_{S \in \mathcal{C}} \int_{\widehat{S}} r^4 |f|^2 \leq C(n) \int_{A^{\lambda}_{\rho,1-\gamma}(\zeta)} r^4 |f|^2
\end{equation}
By combining (\ref{keyest_mss_eqn1}), (\ref{keyest_mss_eqn2}), (\ref{keyest_mss_eqn3}), (\ref{keyest_mss_eqn7}), and (\ref{keyest_mss_eqn8})
\begin{align*}
	&\Lambda^{-2} \int_{B_{\gamma}(0)} |\nabla_{S(\varphi)} u|^2 
	+ \Lambda^{-2} \int_{B_{\gamma}(0)} R^{2-n} \left( \frac{\partial (u/R^{\alpha})}{\partial R} \right)^2 
	\\&\leq C \left( \int_{B_1(0)} \mathcal{G}(u_s/\Lambda,\varphi)^2 + \|Du\|_{C^{0,1/4}(B_1(0))}^2 
	+ C \Lambda^{-2} \int_{B_{(3+\gamma)/4}(0)} (1 + R^{4-n+2\alpha}) |f|^2 \right)
\end{align*} 
for $C = C(n,l,m,\varphi^{(0)},\gamma) \in (0,\infty)$.  We complete the proof of the extension of Lemma \ref{keyest_lemma} we observe that by~\cite{KrumWic2},  
\begin{equation*}
	\int_{B_{(3+\gamma)/4}(0)} (1 + R^{4-n+2\alpha}) |f|^2 \leq C \|Du\|_{C^{0,1/4}(B_1(0))}^2 \int_{B_1(0)} |u_s|^2 
\end{equation*} 
for $C = C(n,m,\gamma) \in (0,\infty)$. 
\end{proof} 

Having proved the a priori estimates above, the proof of Lemma \ref{excessdecay1_lemma_mss}] proceeds almost exactly like before with obvious changes except we define the blow-up $w$ of $(u_j,\Lambda_j)$ relative to $\varphi_j$ to be the limit of $v_j/E_j$ where 
\begin{equation*}
	E_j = \left( \int_{B_1(0)} \mathcal{G}(u_{s,j}/\Lambda_j,\varphi_j)^2 + \|Du_j\|_{C^{0,1/4}(B_1(0))} \right)^{1/2}
\end{equation*}
so that $w$ is harmonic and thereby show that 
\begin{equation*}
	\vartheta^{-n-2\alpha} \int_{B_{\vartheta}(0)} \mathcal{G}(u_s/\Lambda,\widetilde{\varphi})^2 
	\leq \overline{C} \vartheta^2 \left( \int_{B_1(0)} \mathcal{G}(u_s/\Lambda,\varphi)^2 + \|Du\|_{C^{0,1/4}(B_1(0))} \right)
\end{equation*}
for some $\overline{C} = \overline{C}(n,m,\varphi^{(0)}) \in [1,\infty)$.  Since $u(0) = 0$ and $Du(0) = 0$, 
\begin{equation*}
	\sup_{B_{\vartheta}(0)} |Du| + \vartheta^{1/4} [Du]_{1/4,B_{\vartheta}(0)} \leq \vartheta^{1/4} \|Du\|_{C^{0,1/4}(B_1(0))}. 
\end{equation*}

\section{Partial Legendre transformation for harmonic 2-valued function} \label{sec:harm_plt_sec}

In the remainder of the paper, we will prove Theorems \ref{analyticity_thm} and \ref{analyticity_thm_mss}.  By Lemma \ref{constfreqthm2}, to prove Theorem \ref{analyticity_thm} it suffices to consider the following setup.  Let $\mathcal{N} = 1/2+k$ for some integer $k \geq 0$.  Let $\varepsilon > 0$ to be determined depending only on $n$, $m$, and $\mathcal{N}$.  Suppose that $u : B_1(0) \rightarrow \mathcal{A}_2(\mathbb{R}^m)$ is either $W^{1,2}$ Dirichlet energy minimizing or $C^{1,1/2}$ and locally harmonic on $B_1(0) \setminus \mathcal{B}_u$.  Observe that $u$ is locally harmonic on $B_1(0) \setminus \mathcal{B}_u$ in the sense that for every ball $B \subset \subset \Omega \setminus \mathcal{B}_u$, $u(X) = \{u_1(X),u_2(X)\}$ on $B$ for harmonic single-valued functions $u_1,u_2 : B \rightarrow \mathbb{R}^m$.  Suppose that $\mathcal{B}_u$ is contained in the graph $(x_1,x_2) = g(x_3,\ldots,x_n)$ of a $C^{1,\tau}$ function $g : B^{n-2}_1(0) \rightarrow \mathbb{R}^2$ such that 
\begin{equation} \label{branchsetgraph} 
	g(0) = 0, \quad Dg(0) = 0, \quad [Dg]_{\tau;B^{n-2}_1(0)} \leq \varepsilon, 
\end{equation}
Suppose that for every $Y \in B_{1/2}(0) \cap \mathcal{B}$, there exists a homogeneous degree $\mathcal{N}$ two-valued function $\varphi_Y : \mathbb{R}^n \rightarrow \mathcal{A}_2(\mathbb{R}^m)$ such that for some rotation $q_Y$ of $\mathbb{R}^n$ and $c_Y \in \mathbb{C}^m$, 
\begin{equation} \label{harm_asymptotics1}
	\varphi_Y(q_Y^{-1} X) = \{ \pm \op{Re}(c_Y (x_1+ix_2)^{\mathcal{N}}) \}
\end{equation}
and 
\begin{equation} \label{harm_asymptotics2}
	\rho^{-n} \int_{B_{\rho}(0)} \mathcal{G}(u(Y+X),\varphi_Y(X))^2 dX \leq \varepsilon \rho^{2\mathcal{N}+2\tau} 
\end{equation}
for all $\rho \in (0,1/2)$ for some constants $C \in (0,\infty)$ and $\tau = \tau(n,m) \in (0,1)$.  In the case that $u$ is Dirichlet energy minimizing and $\mathcal{N} = 1/2$, we may assume that $q_0$ equals the identity map on $\mathbb{R}^n$ and $c_0 = (1,i,0,0,\ldots,0)$.  In the case that $\mathcal{N} \geq 3/2$, we may assume that $q_0$ equals the identity map on $\mathbb{R}^n$, $\op{Re}(c_0) = (1,0,0,\ldots,0)$ and $|\op{Im}(c_0)| \leq 1$.  By the compactness of properties of homogeneous degree $\mathcal{N}$, harmonic two-valued functions with $(n-2)$-dimensional spine and the proof of Lemma \ref{constfreqthm2}, we may take $C$ in (\ref{harm_asymptotics2}) to depend only on $n$, $m$, and $\mathcal{N}$.  By (\ref{harm_asymptotics1}) and (\ref{harm_asymptotics2}), we may assume that 
\begin{equation} \label{varphicont} 
	|c_Y - c_Z| \leq C(n,m) \varepsilon |Y-Z|^{\tau}, \quad |q_Y - q_Z| \leq C(n,m) \varepsilon |Y-Z|^{\tau}
\end{equation}
whenever $Y,Z \in B_{1/2}(0) \cap \mathcal{B}$.  

Let $X \in B_{1/4}(0)$ and $Y \in \mathcal{B}_u \cap B_{1/2}(0)$ with $|Z-Y| \leq 2\op{dist}(Z,\mathcal{B}_u)$.  Write $u(X) = \{ \pm u_1(X) \}$ and $\varphi_Y(X-Y) = \{ \pm \varphi_{Y,1}(X-Y) \}$ where $u_1$ and $\varphi_{Y,1}$ are close to $\varphi_{0,1}$ in $L^2(B_1(0);\mathcal{A}_2(\mathbb{R}^m))$.  Then $u_1 - \varphi_{Y,1}$ is harmonic on $B_{|X-Y|/2}(X)$.  Thus provided $\varepsilon$ is sufficiently small, by (\ref{harm_asymptotics2}),~\cite[Lemma 5.1]{KrumWic1}, and the Schauder estimates, 
\begin{equation} \label{harm_asymptotics3}
	|D^{\alpha} u_1(X) - D^{\alpha} \varphi_{Y,1}(X-Y)| \leq C(n,m,|\alpha|) \varepsilon |X-Y|^{\mathcal{N}+\tau-|\alpha|} 
\end{equation}
for every multi-index $\alpha$. 

We define the following variant of the partial Legendre transformation of~\cite{KNS}.  In the case that $\mathcal{N} = 1/2$ and $u$ is Dirichlet energy minimizing, we let $\tilde X = (\tilde x_1,\tilde x_2,\tilde y_1,\tilde y_2,\ldots,\tilde y_{n-2})$ for 
\begin{equation*}
	\tilde x_1 = u^1(X), \quad \tilde x_2 = u^2(X), \quad \tilde y_{j-2} = x_j \text{ for } j = 3,4,\ldots,n, 
\end{equation*}
where $u^{\kappa}$ denotes the $\kappa$-th coordinate function of $u$.  In other words under the transformation each point $X \in B_1(0)$ maps to two points, $(+u^1_1(X),+u^2_1(X),x_3,\ldots,x_n)$ and $(-u^1_1(X),-u^2_1(X),x_3,\ldots,x_n)$.  In the case that $\mathcal{N} \geq 3/2$, we will consider $u$, $D_1^{\mathcal{N}-1/2} u$, and $D_2 D_1^{\mathcal{N}-3/2} u$ as locally harmonic two-valued functions and let $\tilde X = (\tilde x_1,\tilde x_2,\tilde y_1,\tilde y_2,\ldots,\tilde y_{n-2})$ for 
\begin{equation*}
	\tilde x_1 = D_1^{\mathcal{N}-1/2} u^1(X), \quad \tilde x_2 = D_2 D_1^{\mathcal{N}-3/2} u^1(X), \quad \tilde y_{j-2} = x_j \text{ for } j = 3,4,\ldots,n. 
\end{equation*}
where we let $u^{\kappa}$ denote the $\kappa$-th coordinate function of $u$.  In other words under the transformation each point $X \in B_1(0)$ maps to two points, $(+D_1 D_1^{\mathcal{N}-3/2} u^1_1(X),+D_2 D_1^{\mathcal{N}-3/2} u^1_1(X),x_3,\ldots,x_n)$ and $(-D_1 D_1^{\mathcal{N}-3/2} u^1_1(X),-D_2 D_1^{\mathcal{N}-3/2} u^1_1(X),x_3,\ldots,x_n)$.  Note that the transformation maps $\mathcal{B}_u$ into $\{0\} \times \mathbb{R}^{n-2}$. 

Let $\eta' = (\eta_3,\eta_4,\ldots,\eta_n) \in B^{n-2}_{1/4}(0)$ and $Y = (\eta_1,\eta_2,\eta')$ be the unique point in $\mathcal{B}_u \cap B^2_{1/4}(0) \times \{\eta'\}$.  Let $c = (c^1,c^2) = (c^1_Y,c^2_Y)$ if $\mathcal{N} = 1/2$ and $u$ is Dirichlet energy minimizing and $c = (c^1,c^2) = \mathcal{N} (\mathcal{N}-1) \cdots (5/2) (3/2) c^1_Y (1,i)$ if $\mathcal{N} \geq 3/2$ and let $q_Y = (q^i_j)_{i,j=1,2,\ldots,n}$.  Suppose $X \in B_{1/4}(Y)$ with $|X-Y| \leq 2 \op{dist}(X,\mathcal{B})$.  By the definition of $\tilde x_1, \tilde x_2$ and (\ref{harm_asymptotics3}), 
\begin{align*}
	\tilde x_k 
	&= \pm \left( \op{Re} \left( c^k \left( \sum_{j=1}^n (q^1_j + i q^2_j) (x_j - \eta_j) \right)^{1/2} \right) + E \right) \\
	&= \pm \left( \op{Re}(c^k) \op{Re} \left( \sum_{j=1}^n (q^1_j + i q^2_j) (x_j - \eta_j) \right)^{1/2} 
		- \op{Im}(c^k) \op{Im} \left( \sum_{j=1}^n (q^1_j + i q^2_j) (x_j - \eta_j) \right)^{1/2} + E \right)
\end{align*}
for $k = 1,2$ where $E$ denotes error terms satisfying 
\begin{equation} \label{harm_tildex_asym1}
	|D_X^{\alpha} E| \leq C(n,m,|\alpha|) \varepsilon |X-Y|^{1/2+\tau-|\alpha|}. 
\end{equation}
We want to express $x_1$ and $x_2$ in terms of $\tilde X$.  Solving for the real and imaginary parts of $\left( \sum_{j=1}^n (q^1_j + i q^2_j) (x_j - \eta_j) \right)^{1/2}$ yields 
\begin{equation} \label{harm_tildex_asym2}
	\pm \left( \left( \sum_{j=1}^n (q^1_j + i q^2_j) (x_j - \eta_j) \right)^{1/2} + E \right)
	= \frac{i \overline{c^2} \tilde x_1 - i \overline{c^1} \tilde x_2}{\op{Im}(\overline{c^1} c^2)}  
\end{equation}
where $E$ again satisfies (\ref{harm_tildex_asym1}).  Squaring both sides yields 
\begin{align*}
	\frac{(\overline{c^2} \tilde x_1 - \overline{c^1} \tilde x_2)^2}{(\op{Im}(\overline{c^1} c^2))^2} 
	&= -\sum_{j=1}^n (q^1_j + i q^2_j) (x_j - \eta_j) + E \nonumber \\
	&= -\sum_{j=1}^2 (q^1_j + i q^2_j) (x_j - \eta_j) - \sum_{j=3}^n (q^1_j + i q^2_j) (\tilde y_{j-2} - \eta_j) + E 
\end{align*}
where 
\begin{equation} \label{harm_tildex_asym3}
	|X-Y| |D_X^{\alpha} E| \leq C(n,m,|\alpha|) \varepsilon |X-Y|^{1+\tau-|\alpha|}. 
\end{equation}
By solving for $x_1$ and $x_2$, 
\begin{align} \label{harm_tildex_asym4}
	\left( \begin{array}{c} x_1 \\ x_2 \end{array} \right) 
	&= \left( \begin{array}{c} \eta_1 \\ \eta_2 \end{array} \right) 
		- \left( \begin{array}{cc} q^1_1 & q^1_2 \\ q^2_1 & q^2_2 \end{array} \right)^{-1} 
		\left( \frac{1}{(\op{Im}(\overline{c^1} c^2))^2} 
		\left( \begin{array}{c} \op{Re} (c^2 \tilde x_1 - c^1 \tilde x_2)^2 \\ 
		-\op{Im} (c^2 \tilde x_1 - c^1 \tilde x_2)^2 \end{array} \right) 
		+ \sum_{j=3}^n \left( \begin{array}{c} q^1_j \\ q^2_j \end{array} \right) (\tilde y_{j-2} - \eta_j) \right) + E, 
\end{align}
where $E$ satisfies (\ref{harm_tildex_asym3}).  By (\ref{branchsetgraph}), $|X-Y| \leq 2 \op{dist}(X,\mathcal{B}_u)$ implies that 
\begin{equation*}
	|X-Y| \leq 2 (1 + 3 \varepsilon) \left( |x_1-\eta_1|^2 + |x_2-\eta_2|^2 \right)^{1/2}, 
\end{equation*} 
so by (\ref{harm_tildex_asym4}), 
\begin{equation} \label{harm_tildex_asym5} 
	\frac{1}{2} (1 - C(n,m) \varepsilon) |X-Y| \leq |\tilde x|^2 \leq (1 + C(n,m) \varepsilon) |X-Y|. 
\end{equation}
Hence $E$ in (\ref{harm_tildex_asym4}) satisfies 
\begin{equation} \label{harm_tildex_asym6}
	|D_X^{\alpha} E| \leq C(n,m,|\alpha|) \varepsilon (|\tilde x_1|^2 + |\tilde x_2|^2)^{1+\tau-|\alpha|} 
\end{equation}
whenever $|X-Y| \leq 2 \op{dist}(X,\mathcal{B}_u)$. 

We claim that the transformation $X \mapsto \tilde X$ is invertible on a neighborhood of the origin.  Observe that to show this it suffices to check that $X \mapsto \tilde X$ is invertible on the slice $B^2_{1/8}(0) \times \{\eta''\}$ for each $\eta' \in B^{n-2}_{1/8}(0)$.  Let $Y = (\eta_1,\eta_2,\eta')$ be the unique branch point of $u$ in $B^2_{1/8}(0) \times \{\eta\}$.  By (\ref{harm_tildex_asym4}) and (\ref{harm_tildex_asym4}), $F : B^2_{3/16}(0) \rightarrow \mathbb{C} \equiv \mathbb{R}^2$ defined by 
\begin{equation*}
	F(x_1,x_2) = \left( \begin{array}{c} \eta_1 \\ \eta_2 \end{array} \right) 
		- \frac{1}{(\op{Im}(\overline{c^1} c^2))^2} \left( \begin{array}{cc} q^1_1 & q^1_2 \\ q^2_1 & q^2_2 \end{array} \right)^{-1} 
		\left( \begin{array}{c} \op{Re} (\overline{c^2} u^1(x_1,x_2,\eta') - \overline{c^1} \tilde u^2(x_1,x_2,\eta'))^2 \\ 
		\op{Im} (\overline{c^2} u^1(x_1,x_2,\eta) - \overline{c^1} u^2(x_1,x_2,\eta'))^2 \end{array} \right) 
\end{equation*}
extends across $(y_1,y_2)$ to a $C^1$ function such that $F(\eta_1,\eta_2) = (\eta_1,\eta_2)$ and $|DF - I| \leq C(n,m) \varepsilon$ on $B^2_{3/16}(0)$, where $I$ denotes the $2 \times 2$ identity matrix.  Obviously $F |_{B^2_{1/8}(0)}$ is invertible.  Consequently $X \mapsto \tilde X$ is invertible on an neighborhood of the origin.  The inverse tranformation is given by 
\begin{equation} \label{harm_transvar}
	x_1 = \phi^1(\tilde X), \quad x_2 = \phi^2(\tilde X), \quad y_{j-2} = \tilde x_j \text{ for } j = 3,4,\ldots,n, 
\end{equation}
for $\tilde X \in B_{1/8}(0)$ for some functions $\phi^1, \phi^2: B_{1/8}(0) \rightarrow \mathbb{R}$.  Note that assuming $\varepsilon$ is sufficiently small, 
\begin{equation} \label{harm_branchset}
	\mathcal{B} \cap B_{1/16}(0) = \{ (\phi^1(0,y),\phi^2(0,y),y) : y \in B^{n-2}_{1/16}(0) \} \cap B_{1/16}(0). 
\end{equation}
Therefore our goal is to study the regularity of $\phi^1$ and $\phi^2$.

By the definitions of $\phi^1$ and $\phi^2$, 
\begin{gather} 
	\frac{\partial}{\partial \tilde x_1} = \phi^1_1 \frac{\partial}{\partial x_1} + \phi^2_1 \frac{\partial}{\partial x_2}, \nonumber \\
	\frac{\partial}{\partial \tilde x_2} = \phi^1_2 \frac{\partial}{\partial x_1} + \phi^2_2 \frac{\partial}{\partial x_2}, \nonumber \\
	\frac{\partial}{\partial \tilde y_{j-2}} = \phi^1_j \frac{\partial}{\partial x_1} + \phi^2_j \frac{\partial}{\partial x_2} + \frac{\partial}{\partial x_j} 
		\text{ for } j = 3,4,\ldots,n, \label{transder1} 
\end{gather}
where $\phi^{\kappa}_j = \partial \phi^{\kappa}/\partial \tilde x_j$ for $\kappa,j = 1,2$ and $\phi^{\kappa}_j = \partial \phi^{\kappa}/\partial \tilde y_{j-2}$ for $\kappa = 1,2$ and $j = 3,4,\ldots,n$.  Thus 
\begin{gather} 
	\frac{\partial}{\partial x_1} = \frac{\phi^2_2}{\phi^1_1 \phi^2_2 - \phi^1_2 \phi^2_1} \frac{\partial}{\partial \tilde x_1} 
		- \frac{\phi^2_1}{\phi^1_1 \phi^2_2 - \phi^1_2 \phi^2_1} \frac{\partial}{\partial \tilde x_2} \nonumber \\
	\frac{\partial}{\partial x_2} = \frac{-\phi^1_2}{\phi^1_1 \phi^2_2 - \phi^1_2 \phi^2_1} \frac{\partial}{\partial \tilde x_1} 
		+ \frac{\phi^1_1}{\phi^1_1 \phi^2_2 - \phi^1_2 \phi^2_1} \frac{\partial}{\partial \tilde x_2} \nonumber \\
	\frac{\partial}{\partial x_j} = \frac{-\phi^2_2 \phi^1_j + \phi^1_2 \phi^2_j}{\phi^1_1 \phi^2_2 - \phi^1_2 \phi^2_1} \frac{\partial}{\partial \tilde x_1} 
		+ \frac{\phi^2_1 \phi^1_j - \phi^1_1 \phi^2_j}{\phi^1_1 \phi^2_2 - \phi^1_2 \phi^2_1} \frac{\partial}{\partial \tilde x_2} 
		+ \frac{\partial}{\partial \tilde y_{j-2}} \text{ for } j = 3,4,\ldots,n. \label{transder2}
\end{gather}

We will close this section by examining the asymptotic behavior of $\phi^1$ and $\phi^2$ as $\tilde X \rightarrow (0,0,\eta')$ for $\eta' = (\eta_3,\eta_4,\ldots,\eta_n) \in B^{n-2}_{1/16}(0)$.  Let $Y = (\eta_1,\eta_2,\eta')$ be the unique point in $\mathcal{B}_u \cap B^2_{1/8}(0) \times \{\eta'\}$.  Let $c = (c^1,c^2) = (c^1_Y,c^2_Y)$ if $\mathcal{N} = 1/2$ and $u$ is Dirichlet energy minimizing and $c = (c^1,c^2) = \mathcal{N} (\mathcal{N}-1) \cdots (5/2) (3/2) c^1_Y (1,i)$ if $\mathcal{N} \geq 3/2$ and let $q_Y = (q^i_j)_{i,j=1,2,\ldots,n}$.  For now suppose $X \in B_{1/16}(Y)$ with $|X-Y| \leq 2 \op{dist}(X,\mathcal{B}_u)$.  By the computation leading to (\ref{harm_tildex_asym4}) and the fact that $x_k = \phi^k(\tilde X)$ for $k = 1,2$, 
\begin{equation} \label{phi_asym1}
	\left( \begin{array}{c} \phi^1 \\ \phi^2 \end{array} \right) 
	= \left( \begin{array}{c} \eta_1 \\ \eta_2 \end{array} \right) 
		- \left( \begin{array}{cc} q^1_1 & q^1_2 \\ q^2_1 & q^2_2 \end{array} \right)^{-1} 
		\left( \frac{1}{(\op{Im}(\overline{c^1} c^2))^2} 
		\left( \begin{array}{c} \op{Re} (c^2 \tilde x_1 - c^1 \tilde x_2)^2 \\ 
		-\op{Im} (c^2 \tilde x_1 - c^1 \tilde x_2)^2 \end{array} \right) 
		+ \sum_{j=3}^n \left( \begin{array}{c} q^1_j \\ q^2_j \end{array} \right) (\tilde y_{j-2} - \eta_j) \right) + E, 
\end{equation}
where $c = (c^1,c^2)$ and $q = (q^i_j)$ are as above and $|D_X^{\alpha} E| \leq C(n,m) \varepsilon |X-Y|^{1+\tau-|\alpha|}$.  Let 
\begin{equation*}
	\zeta = \frac{i \overline{c^2} \tilde x_1 - i \overline{c^1} \tilde x_2}{\op{Im}(\overline{c^1} c^2)}.  
\end{equation*} 
In the case that $\mathcal{N} = 1/2$ and $u$ is Dirichlet energy minimizing, by the definition of $\phi$, 
\begin{equation*}
	\left( \begin{array}{cc} \phi^1_1 & \phi^1_2 \\ \phi^2_1 & \phi^2_2 \end{array} \right) 
		= \left( \begin{array}{cc} u^1_1 & u^1_2 \\ u^2_1 & u^2_2 \end{array} \right)^{-1}, \quad 
	\left( \begin{array}{c} \phi^1_j \\ \phi^2_j \end{array} \right) 
		= -\left( \begin{array}{cc} u^1_1 & u^1_2 \\ u^2_1 & u^2_2 \end{array} \right)^{-1} 
		\left( \begin{array}{c} u^1_j \\ u^2_j \end{array} \right) , 
\end{equation*}
so by (\ref{harm_asymptotics3}) and (\ref{harm_tildex_asym2}), 
\begin{align} \label{phi_asym2}
	\left( \begin{array}{cc} \phi^1_1 & \phi^1_2 \\ \phi^2_1 & \phi^2_2 \end{array} \right) 
	&= \frac{1}{2} \left( \begin{array}{cc} q^1_1 & q^1_2 \\ q^2_1 & q^2_2 \end{array} \right)^{-1} 
		\left( \begin{array}{cc} \op{Re}(c^1 \zeta^{-1}) & -\op{Im}(c^1 \zeta^{-1}) \\ \op{Re}(c^2 \zeta^{-1}) & -\op{Im}(c^2 \zeta^{-1}) \end{array} 
		\right)^{-1} + E \nonumber \\
	&= \frac{1}{2 \op{Im}(c^1 \overline{c^2})} \left( \begin{array}{cc} q^1_1 & q^1_2 \\ q^2_1 & q^2_2 \end{array} \right)^{-1} 
		\left( \begin{array}{cc} \op{Im}(\overline{c^2} \zeta) & -\op{Im}(\overline{c^1} \zeta) \\ 
		-\op{Re}(\overline{c^2} \zeta) & \op{Re}(\overline{c^1} \zeta) \end{array} \right) + E 
\end{align}
where $|D_X^{\alpha} E| \leq C(n,m,|\alpha|) \varepsilon |X-Y|^{1/2+\tau-|\alpha|}$ and 
\begin{equation} \label{phi_asym3}
	\left( \begin{array}{c} \phi^1_j \\ \phi^2_j \end{array} \right) 
	= -\left( \begin{array}{cc} q^1_1 & q^1_2 \\ q^2_1 & q^2_2 \end{array} \right)^{-1} 
		\left( \begin{array}{c} q^1_j \\ q^2_j \end{array} \right) + E
\end{equation}
where $|D_X^{\alpha} E| \leq C(n,m,|\alpha|) \varepsilon |X-Y|^{\tau-|\alpha|}$.  By a similar argument, (\ref{phi_asym3}) holds true in the case that $\mathcal{N} \geq 3/2$.  By (\ref{harm_tildex_asym5}), (\ref{harm_tildex_asym6}), (\ref{transder1}), (\ref{phi_asym1}), (\ref{phi_asym2}), and (\ref{phi_asym3}), $E$ in (\ref{harm_tildex_asym4}) and $E$ in (\ref{phi_asym1}) both satisfy  
\begin{equation} \label{phi_asym4}
	|D_{\tilde x}^{\alpha} D_{\tilde y}^{\beta} E| \leq C(n,m,|\alpha|,|\beta|) \varepsilon |\tilde x|^{2+2\tau-|\alpha|-2|\beta|}. 
\end{equation}
If instead $|X-Y| \geq 2 \op{dist}(X,\mathcal{B}_u)$, by replacing $Y$ with $Z \in \mathcal{B}_u \cap B^{n-2}_{1/4}(0)$ such that $|X-Z| = \op{dist}(X,\mathcal{B}_u)$ and applying (\ref{phi_asym1}) and (\ref{phi_asym4}) with $Z$ in place of $Y$ and using (\ref{branchsetgraph}), (\ref{varphicont}), and the fact that 
\begin{equation*}
	|\op{pr}_{\mathbb{R}^2 \times \{0\}} (q_Z (Y-Z))| \leq C(n,m,\tau) \varepsilon |Y-Z|^{1+\tau} 
\end{equation*}
due to $\mathcal{B}_u$ being a $C^{1,\tau}$ submanifold, where $\op{pr}_{\mathbb{R}^2 \times \{0\}}$ denotes orthogonal projection onto $\mathbb{R}^2 \times \{0\}$, it follows that $E$ in (\ref{harm_tildex_asym4}) and $E$ in (\ref{phi_asym1}) both satisfy 
\begin{gather}
	|E| \leq C(n,m) \varepsilon \left( |\tilde x|^2 + \sum_{j=3}^n |\tilde y_{j-2} - \eta_j| \right)^{1+\tau}, \nonumber \\
	|\tilde x|^{-1} |D_{\tilde x} E| + |D_{\tilde x \tilde x} E| + |D_{\tilde y} E| 
		\leq C(n,m) \varepsilon \left( |\tilde x|^2 + \sum_{j=3}^n |\tilde y_{j-2} - \eta_j| \right)^{\tau}, \label{phi_asym5} 
\end{gather}
and $E$ satisfying (\ref{phi_asym4}) if $|\alpha| + 2|\beta| > 2$.  Finally, note that by (\ref{harm_asymptotics3}), (\ref{harm_tildex_asym4}), (\ref{phi_asym4}), and (\ref{phi_asym5}) (see (\ref{harm_tildex_asym2})) 
\begin{equation} \label{harm_u_asym1}
	D_X^{\gamma} u = \op{Re} \left( \mathcal{N} (\mathcal{N}-1) \cdots (\mathcal{N}-|\gamma|+1) c_Y 
		\left( \frac{i \overline{c^2} \tilde x_1 - i \overline{c^1} \tilde x_2}{\op{Im}(\overline{c^1} c^2)} \right)^{2\mathcal{N} - 2|\gamma|} \right) + E
\end{equation}
whenever $|X-Y| \geq 2 \op{dist}(X,\mathcal{B}_u)$ and $|\gamma| \leq \mathcal{N}-1/2$, where 
\begin{equation} \label{harm_u_asym2}
	|D_{\tilde x}^{\alpha} D_{\tilde y}^{\beta} E| \leq C(n,m,\mathcal{N},|\gamma|,|\alpha|,|\beta|) |\tilde x|^{2\mathcal{N}-2|\gamma|-|\alpha|-2|\beta|+2\tau}
\end{equation}
for all $\alpha$ and $\beta$.  By (\ref{branchsetgraph}) and (\ref{varphicont}), (\ref{harm_u_asym1}) holds true even when $|X-Y| > 2 \op{dist}(X,\mathcal{B}_u)$ with 
\begin{equation*}
	|D_{\tilde x}^{\alpha} E| \leq C(n,m,\mathcal{N},|\gamma|,|\alpha|) |\tilde x|^{2\mathcal{N}-2|\gamma|-|\alpha|} (|\tilde x|^2 + |\tilde y|)^{\tau}
\end{equation*}
whenever $|\alpha| \leq 2\mathcal{N}-2|\gamma|$ and (\ref{harm_u_asym2}) whenever $|\alpha| > 2\mathcal{N}-2|\gamma|$ or $|\beta| \geq 1$.

\section{Partial Legendre transformation for area-stationary graphs} \label{sec:mss_plt_sec}

By Lemma \ref{constfreqthm2_mss}, to prove Theorem \ref{analyticity_thm_mss} it suffices to consider the following setup.  Let $\mathcal{N} = 1/2+k$ for some integer $k \geq 1$.  Let $\varepsilon > 0$ to be determined depending only on $n$, $m$, and $\mathcal{N}$.  Suppose that $u \in C^{1,1/2}(B_2(0);\mathcal{A}_2(\mathbb{R}^m))$ is a two-valued function such that $\mathcal{M} = \op{graph} u$ is area-stationary, $0 \in \mathcal{B}_u$, $\mathcal{N}_{\mathcal{M}}(Y,u(Y)) = \mathcal{N}$ for all $Y \in \mathcal{B}_u$, and 
\begin{equation} \label{mss_closetoplane}
	u(0) = \{0,0\}, \quad Du(0) = \{0,0\}, \quad [Du]_{1/2;B_2(0)} \leq \varepsilon. 
\end{equation}
Suppose that $\mathcal{B}_u$ is the graph $(x_1,x_2) = g(x_3,\ldots,x_n)$ of a $C^{1,\tau}$ function $g : B^{n-2}_1(0) \rightarrow \mathbb{R}^2$ such that (\ref{branchsetgraph}) holds true.  By (\ref{mss_closetoplane}), theory of frequency functions, and Theorem \ref{constfreqthm2_mss}, we may suppose that for every $Y \in B_{1/2}(0) \cap \mathcal{B}_u$, there exists a homogeneous degree $\mathcal{N}$ two-valued function $\widehat{\varphi}_Y : T_p \mathcal{M} \rightarrow \mathcal{A}_2(T_p \mathcal{M}^{\perp})$, where $T_p \mathcal{M}^{\perp}$ denotes the tangent plane to $\mathcal{M}$ at the branch point $p = (Y,u(Y))$, such that after an orthogonal change of coordinates of $T_p \mathcal{M}$ 
\begin{equation} \label{mss_asymptotics1}
	\widehat{\varphi}_Y(X) = \{ \pm \op{Re}(c_Y (x_1+ix_2)^{\mathcal{N}}) \}
\end{equation}
for some $c_Y \in \mathbb{C}^m$ and 
\begin{equation} \label{mss_asymptotics2}
	\rho^{-n} \int_{B_{1/2}(0)} \mathcal{G}\left( \frac{\tilde u_{p,s}}{\|u_s\|_{L^2(B_1(0))}}, \widehat{\varphi}_Y(X) \right)^2 dX 
	\leq \varepsilon \rho^{\mathcal{N}+2\tau} 
\end{equation}
for all $\rho \in (0,1/2)$ for some constants $C \in (0,\infty)$ and $\tau = \tau(n,m) \in (0,1)$, where $\mathcal{M}$ is the graph of $\tilde u_p(X) = \{\tilde u_1(X),\tilde u_2(X)\}$ over $T_p \mathcal{M}$, $\tilde u_{p,s}(X) = \{\pm (\tilde u_{p,2}(X)-\tilde u_{p,1}(X))/2\}$, and, like in Section~\ref{sec:harm_plt_sec}, we can take $C$ to depend only on $n$, $m$, and $\mathcal{N}$ but not on $\varphi_0$ by the compactness of properties of homogeneous degree $\mathcal{N}$, $C^{1,1/2}$, harmonic two-valued functions with $(n-2)$-dimensional spine and the proof of Lemma \ref{constfreqthm2_mss}.

Define 
\begin{equation*}
	v = \frac{u_s}{\|\tilde u_s\|_{L^2(B_1(0))}}
\end{equation*}
We want to establish the asymptotic behavior of $v$ at branch points of $u$.  Let $X_0 \in B_{1/2}(0)$ and $Y \in \mathcal{B}_u \cap B_1(0)$ with $|X_0-Y| \leq 2 \op{dist}(X,\mathcal{B}_u)$.  Let $Q$ denote a rotation of $\mathbb{R}^{n+m}$ taking $\mathbb{R}^n \times \{0\}$ to $T_{(Y,u_a(Y))} \mathcal{M}$ with $|Q - I| \leq C |Du_a(Y)|$ (for example, construct the columns of $Q$ by applying the Gram-Schmidt process to the basis $\{ (e_j, D_j u_a(Y)) \}_{j=1,2,\ldots,n} \cup \{(0,e_j)\}_{j=1,2,\ldots,m}$ for $\mathbb{R}^{n+m}$).  Let $\xi_l(X_0) = \op{pr}_{\mathbb{R}^n \times \{0\}} Q^{-1} (X_0,u_l(X_0))$ for $l = 1,2$ and $\xi_a(X_0) = (\xi_1(X_0)+\xi_2(X_0))/2$, where $\tilde u(X_0) = (\tilde u_1(X_0),\tilde u_2(X_0))$.  Since the image of $(X_0,u_a(X_0)) + Q(X,\tilde u(X))$ for $X \in B_{|X_0-Y|/8}(\xi_a(X_0))$ is contained in the graph of $u |_{B_{|X_0-Y|/4}(X_0)}$ provided $\varepsilon$ is sufficiently small and $\mathcal{B}_u \cap B_{|X_0-Y|/4}(X_0) = \emptyset$, by the local decomposition of $\tilde u$ and $u$ away from branch points and unique continuation we can write $u(X) = \{u_1(X),u_2(X)\}$ on $B_{|X_0-Y|/4}(X_0)$ and $\tilde u_1(X) = \{\tilde u_1(X),\tilde u_2(X)\}$ on $B_{|X_0-Y|/8}(\xi_a(X_0))$ for $C^1$ single-valued solutions $u_1,u_2,\tilde u_1,\tilde u_2$ to the minimal surface system.  Assume $u_1,u_2$ are defined without changing their values at $X_0$.  Letting $\xi_l(X) = \op{pr}_{\mathbb{R}^n \times \{0\}} Q^{-1} (X,u_l(X))$ for $X \in B_{|X_0-Y|/16}(X_0)$ and $l = 1,2$, we can choose $\tilde u_1, \tilde u_2$ so that $(X,u_l(X)) = Q (\xi_l(X),\tilde u_l(\xi_l(X)))$ for $l = 1,2$, i.e. 
\begin{align} 
	\label{rotateu_eqnX} X &= Y + Q_{11} \xi_l(X) + Q_{12} \tilde u_l(\xi_l(X)), \\
	\label{rotateu_eqnU} u_l(X) &= u_a(Y) + Q_{21} \xi_l(X) + Q_{22} \tilde u_l(\xi_l(X)), 
\end{align}
on $B_{|X_0-Y|/16}(X_0)$.  Let $\xi_a(X) = (\xi_1(X)+\xi_2(X))/2$, $\xi_s(X) = (\xi_2(X)-\xi_1(X))/2$, $u_a(X) = (u_1(X)+u_2(X))/2$, $u_s(X) = (u_2(X)-u_1(X))/2$, $\tilde u_a(X) = (\tilde u_1(X)+\tilde u_2(X))/2$, and $\tilde u_s(X) = (\tilde u_2(X)-\tilde u_1(X))/2$.  By taking differences in (\ref{rotateu_eqnX}) and (\ref{rotateu_eqnU}) we get 
\begin{equation} \label{rotateu_eqn1} 
	\xi_s(X) = -Q_{11}^{-1} Q_{12} \frac{\tilde u_2(\xi_2(X)) - \tilde u_1(\xi_1(X))}{2} 
\end{equation}
on $B_{|X_0-Y|/16}(X_0)$ and thus 
\begin{equation} \label{rotateu_eqn2} 
	u_s(X) = (Q_{22} - Q_{21} Q_{11}^{-1} Q_{12}) \frac{\tilde u_2(\xi_2(X)) - \tilde u_1(\xi_1(X))}{2} 
\end{equation}
on $B_{|X_0-Y|/16}(X_0)$.  Observe that 
\begin{align} \label{rotateu_eqn3}
	\op{dist}(\xi_l,\mathcal{B}_{\tilde u}) 
	&\leq \op{dist}((\xi_l,\tilde u_l(\xi_l)), \op{sing} \op{graph} {\tilde u}) 
	= \op{dist}((X,u_l(X)), \op{sing} \op{graph} u) \nonumber \\
	&\leq (1 + C(n,m) \varepsilon) \op{dist}(X,\mathcal{B}_u) 
	= \frac{17}{16} (1 + C(n,m) \varepsilon) |X_0-Y|
\end{align}
and similarly 
\begin{equation*}
	\op{dist}(\xi_l,\mathcal{B}_u) \geq \frac{7}{16} (1 - C(n,m) \varepsilon) |X_0-Y| 
\end{equation*}
and so by (\ref{SimWicThm7}) and (\ref{rotateu_eqn1}) and (\ref{rotateu_eqn2}), 
\begin{gather}
	\sup_{B_{|X_0-Y|/16}(X_0)} |\xi_s| + \sup_{B_{|X_0-Y|/16}(X_0)} |u_s| + \sup_{B_{|X_0-Y|/16}(X_0-Y)} |\tilde u_s| \leq C(n,m) \varepsilon |X_0-Y|^{3/2}, 
		\nonumber \\
	\sup_{B_{|X_0-Y|/16}(X_0)} |Du_s| + \sup_{B_{|X_0-Y|/16}(X_0-Y)} |D\tilde u_s| \leq C(n,m) \varepsilon |X_0-Y|^{1/2}, \nonumber \\
	[Du_s]_{1/2,B_{|X_0-Y|/16}(X_0)} + [D\tilde u_s]_{1/2,B_{|X_0-Y|/16}(0)} 
		+ \sup_{B_{|X_0-Y|/16}(X_0)} |D^2 u_a| + \sup_{B_{|X_0-Y|/16}(X_0-Y)} |D^2 \tilde u_a| \leq C(n,m) \varepsilon, \nonumber \\
	\sup_{B_{|X_0-Y|/16}(X_0)} |D^2 u_s| + \sup_{B_{|X_0-Y|/16}(X_0-Y)} |D^2 \tilde u_s| \leq C(n,m) \varepsilon |X_0-Y|^{-1/2}. \label{rotateu_eqn4}
\end{gather}
Recall that (\ref{mss1}) holds true with $\tilde u_{p,a} = \tilde u_a$, $\tilde u_{p,s} = \tilde u_s$, and $B = B_{|X_0-Y|/16}(X_0)$, so by the Schauder estimates for (\ref{mss1}) together with $\sup_{B_{|X_0-Y|/16}(X_0)} |\tilde u_s| \leq C(n,m) \|\tilde u_s\|_{L^2(B_{1/2}(0))} |X_0-Y|^{\mathcal{N}}$ by the theory of frequency functions and (\ref{rotateu_eqn3}), 
\begin{equation} \label{rotateu_eqn5}
	\sup_{B_{3|X_0-Y|/64}(X_0)} |\tilde u_s| + |X_0-Y| \sup_{B_{3|X_0-Y|/64}(X_0)} |D\tilde u_s| 
	\leq C(n,m) \|\tilde u_s\|_{L^2(B_{1/2}(0))} |X_0-Y|^{\mathcal{N}}. 
\end{equation}
Since $\tilde u_a$ and $\tilde u_s$ satisfy (\ref{mss2}) and $\widehat{\varphi}_Y$ is harmonic, 
\begin{equation*}
	\Delta \left( \frac{\tilde u_s^{\kappa}}{\|u_s\|_{L^2(B_1(0))}} - \widehat{\varphi}_Y \right) 
	= - D_i \left( (A^{ij}_{\kappa \lambda}(D\tilde u_a,D\tilde u_s) - \delta_{ij} \delta_{\kappa \lambda}) 
	\frac{D_j u_s^{\lambda}}{\|u_s\|_{L^2(B_1(0))}} \right) 
\end{equation*}
on $B_{|X_0-Y|/16}(X_0)$, to which we can apply the Schauder estimates together with an estimate of~\cite{KrumWic2}, (\ref{harm_asymptotics2}), (\ref{rotateu_eqn3}), and (\ref{rotateu_eqn5}) to get 
\begin{equation} \label{rotateu_eqn6}
	\sup_{B_{|X_0-Y|/32}(X_0)} \left| \frac{D^{\alpha} \tilde u_s^{\kappa}}{\|u_s\|_{L^2(B_1(0))}} - D^{\alpha} \widehat{\varphi}_Y \right| 
	\leq C(n,m,|\alpha|) \varepsilon |X_0-Y|^{\mathcal{N}+\tau-|\alpha|}  
\end{equation}
for all $\alpha$.  By (\ref{rotateu_eqnX}), (\ref{rotateu_eqn4}) and (\ref{rotateu_eqn6}), 
\begin{equation} \label{rotateu_eqn7}
	|\xi_a(X) - Q_{11}^{-1} (X-Y)| \leq C(n,m) \varepsilon |X_0-Y|^2, \quad |\xi_s(X)| \leq C(n,m) |X_0-Y|^{\mathcal{N}}. 
\end{equation}
By (\ref{rotateu_eqn2}), (\ref{rotateu_eqn4}), and (\ref{rotateu_eqn7}), 
\begin{equation} \label{rotateu_eqn8}
	\sup_{B_{|X_0-Y|/32}(X_0)} \left| \frac{u_s(X)}{\|u_s\|_{L^2(B_1(0))}} - (Q_{22} - Q_{21} Q_{11}^{-1} Q_{12}) \widehat{\varphi}_Y(Q_{11}^{-1} (X-Y)) \right| 
	\leq C(n,m) \varepsilon |X_0-Y|^{\mathcal{N}+\tau}. 
\end{equation}
Let 
\begin{equation*}
	\varphi_Y(X) = (Q_{22} - Q_{21} Q_{11}^{-1} Q_{12}) \widehat{\varphi}_Y(Q_{11}^{-1} X).  
\end{equation*}
Observe that by choosing of the matrices $Q$ to be a Lipschitz function of $Y$ with Lipschitz constant $\leq C(n,m) \varepsilon$ (for example, by constructing $q$ via Gram-Schmidt as described above) and by the proof of Lemma \ref{constfreqthm2_mss}, 
\begin{equation*}
	\varphi_Y(q^{-1}_Y X) = \op{Re}(c_Y (x_1+ix_2)^{\mathcal{N}}) 
\end{equation*}
for some $n \times n$ matrix $q_Y$, not necessarily a rotation matrix, and $c_Y \in \mathbb{C}^m$ such that $q_0$ equals the identity map on $\mathbb{R}^n$, $\op{Re}(c_0) = (1,0,0,\ldots,0)$, $|\op{Im}(c_0)| \leq 1$, and (\ref{varphicont}). 

Since $u_l$ satisfies the minimal surface system, i.e. (\ref{mss1}) holds true with $\tilde u_{p,1} = u_l$ and $B = B_{|X_0-Y|/4}(X_0)$, by the Schauder estimates and (\ref{rotateu_eqn4}), 
\begin{equation} \label{rotateu_eqn9} 
	\sup_{B_{|X_0-Y|/8}(X_0)} |D^{\alpha} u| \leq C(n,m,|\alpha|) \varepsilon |X_0-Y|^{3/2-|\alpha|} 
\end{equation}
for every multi-index $\alpha$ with $|\alpha| \geq 2$.  By scaling (\ref{mss2}) with $\tilde u_{p,a} = u_a$ and $\tilde u_{p,s} = u_s$, 
\begin{equation} \label{rotateu_eqn10}
	D_i (A^{ij}_{\kappa \lambda}(D\tilde u_a(Y),0) D_j \varphi_Y^{\lambda}) = 0 
\end{equation}
weakly on $B_{|X_0-Y|/8}(X_0-Y)$.  By subtracting (\ref{mss2}) with $\tilde u_{p,a} = u_a$, $\tilde u_{p,s} = u_s$, and $B = B_{|X_0-Y|/8}(X_0-Y)$ and (\ref{rotateu_eqn10}), 
\begin{equation*}
	D_i \left( A^{ij}_{\kappa \lambda}(Du_a(Y),0) D_j (v^{\lambda}(X) - \varphi_Y^{\lambda}(X-Y)) \right)  
	= -D_i \left( (A^{ij}_{\kappa \lambda}(Du_a,Du_s) - A^{ij}_{\kappa \lambda}(Du_a(Y),0)) D_j v_s^{\lambda} \right)
\end{equation*}
on $B_{|X_0-Y|/8}(X_0)$ for $\kappa = 1,2,\ldots,m$ and thus by (\ref{rotateu_eqn8}) and the Schauder estimates 
\begin{equation} \label{mss_asymptotics3}
	\sup_{B_{|X_0-Y|/64}(X_0)} \left| D^{\alpha} v(X) - D^{\alpha} \varphi_Y(X-Y) \right| 
	\leq C(n,m,|\alpha|) \varepsilon |X_0-Y|^{\mathcal{N}+\tau-|\alpha|} 
\end{equation}
for every multi-index $\alpha$.  

We similarly want to establish the asymptotic behavior of $u_a$ at branch points of $u$.  Since by (\ref{mss1}), 
\begin{equation*}
	D_i \left( \sqrt{G(Du_a)} G^{ij}(Du_a) D_j u_a - f^i_{\kappa}(Du_a,Du_s) \right) = 0 
\end{equation*}
in $B_1(0)$ where $f^i_{\kappa} : \mathbb{R}^{mn} \times \mathbb{R}^{mn} \rightarrow \mathbb{R}$ is a real analytic function such that $f^i_{\kappa}(P,0) = 0$ and $Df^i_{\kappa}(P,0) = 0$ for all $P \in \mathbb{R}^{mn}$, by elliptic regularity and the Schauder estimates using (\ref{rotateu_eqn9}) and (\ref{mss_asymptotics3}), 
\begin{align} \label{mss_asymptotics4}
	\sup_{B_{\rho}(Y)} |D^{\alpha} u_a - D^{\alpha} P| \leq C(n,m,|\alpha|) \varepsilon \rho^{2\mathcal{N}-1-|\alpha|} 
\end{align}
for all $\rho \in (0,1/2]$ and $\alpha$ for some polynomial $P$ of degree at most $2\mathcal{N}-2$.  In the case that $\mathcal{N} \geq 5/2$ this estimate is sufficient whereas in the case $\mathcal{N} = 3/2$ we need a stronger estimate.  Note that by (\ref{mss1}) for any harmonic polynomial $\tilde P : \mathbb{R}^n \rightarrow \mathbb{R}^m$, 
\begin{gather} 
	D_i \left( A^{ij}_{\kappa \lambda}(D\tilde u_s,D\tilde u_a) D_j \tilde u_a^{\lambda} \right) = 0, \nonumber \\
	\Delta (\tilde u_a^{\kappa} - \tilde P) = - D_i \left( (A^{ij}_{\kappa \lambda}(D\tilde u_s,D\tilde u_a) - \delta_{ij} \delta_{\kappa \lambda}) 
		D_j u_a^{\lambda} \right) . \label{rotateu_eqn11}
\end{gather}

Using the following lemma, we can establish the asymptotic behavior for $\tilde u_a$ at $0$.

\begin{lemma} \label{avgexcessdecay_lemma}
For every $\vartheta \in (0,1/2]$ there exists $\varepsilon = \varepsilon(n,m,\vartheta) > 0$ such that if $\tilde u \in C^{1,1/2}(B_2(0);\mathcal{A}_2(\mathbb{R}^m))$ is a two-valued function whose graph $\mathcal{M}$ is area-stationary,  
\begin{align}
	\label{avged_eqn1} \tilde u(0) = 0, \quad D\tilde u(0) = 0, \quad [D\tilde u]_{1/2,B_2(0)} \leq \varepsilon 
\end{align}
and there is a polynomial $\tilde P : \mathbb{R}^n \rightarrow \mathbb{R}^m$ of degree at most $2$ such that 
\begin{equation} \label{avged_eqn3} 
	\int_{B_1(0)} |\tilde u_a - \tilde P|^2 < \varepsilon, 
\end{equation} 
then there exists a polynomial $\tilde P' : \mathbb{R}^n \rightarrow \mathbb{R}^m$ of degree at most $2$ such that 
\begin{equation} \label{avged_eqn4} 
	\vartheta^{-n-4} \int_{B_{\vartheta}(0)} |\tilde u_a - \tilde P'|^2 
	\leq C \vartheta^2 \left( \int_{B_1(0)} |\tilde u_a - \tilde P|^2 + \|\tilde u\|_{L^2(B_2(0))}^2 \|\tilde u_a\|_{L^2(B_2(0))}^2 \right)
\end{equation} 
for some $C = C(n,m) \in (0,\infty)$. 
\end{lemma}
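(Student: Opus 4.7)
The plan is a blow-up argument by contradiction, driven by the equation (\ref{rotateu_eqn11}) for the average part $\tilde u_a$.  Suppose the conclusion fails for some $\vartheta \in (0,1/2]$.  Then there exist sequences $\tilde u^{(j)}$ and $\tilde P^{(j)}$ satisfying (\ref{avged_eqn1}) and (\ref{avged_eqn3}) with $\varepsilon = 1/j$ such that, setting
\begin{equation*}
E_j^2 = \int_{B_1(0)}|\tilde u^{(j)}_a - \tilde P^{(j)}|^2 + \|\tilde u^{(j)}\|_{L^2(B_2(0))}^2 \|\tilde u^{(j)}_a\|_{L^2(B_2(0))}^2,
\end{equation*}
for every degree-$\leq 2$ polynomial $\tilde P'$ one has $\vartheta^{-n-4}\int_{B_\vartheta(0)}|\tilde u^{(j)}_a - \tilde P'|^2 > j\vartheta^2 E_j^2$.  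WLOG $\tilde u^{(j)} \not\equiv 0$, so $E_j > 0$.  I define $w_j = (\tilde u^{(j)}_a - \tilde P^{(j)})/E_j$; then $\|w_j\|_{L^2(B_1(0))} \leq 1$ and, since $(\tilde P' - \tilde P^{(j)})/E_j$ ranges over all degree-$\leq 2$ polynomials as $\tilde P'$ does, the failure reads
\begin{equation*}
\vartheta^{-n-4}\int_{B_\vartheta(0)}|w_j - \tilde Q|^2 > j\vartheta^2 \quad\text{for every degree-$\leq 2$ polynomial } \tilde Q. \tag{$\star$}
\end{equation*}

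From (\ref{mss4}) with $(\tilde u_{p,a},\tilde u_{p,s}) = (\tilde u^{(j)}_a, \tilde u^{(j)}_s)$, and noting that $\Delta\tilde P^{(j)} = c^{(j)}$ is a constant vector since $\tilde P^{(j)}$ has degree $\leq 2$,
\begin{equation*}
\Delta w_j^{\kappa} = -\tfrac{1}{E_j} D_i\bigl(f^{(j),i}_{\kappa\lambda}\, D_j \tilde u^{(j),\lambda}_a\bigr) - \tfrac{c^{(j),\kappa}}{E_j}, \qquad f^{(j),i}_{\kappa\lambda} := A^{ij}_{\kappa\lambda}(D\tilde u^{(j)}_s, D\tilde u^{(j)}_a) - \delta_{ij}\delta_{\kappa\lambda}.
\end{equation*}
Since $A^{ij}_{\kappa\lambda}(0,0) = \delta_{ij}\delta_{\kappa\lambda}$ and $A$ is smooth, $|f^{(j)}| \leq C|D\tilde u^{(j)}|^2$.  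Schauder estimates for the minimal surface system applied to $\tilde u^{(j)}_a$ (on its quasi-linear elliptic system) and for $\tilde u^{(j)}_s$ (via Lemma 3.2 of \cite{SimWic11}), together with the smallness of $[D\tilde u^{(j)}]_{1/2}$, give $\|D\tilde u^{(j)}\|_{L^\infty(B_{3/2}(0))} \leq C\|\tilde u^{(j)}\|_{L^2(B_2(0))}$ and $\|D\tilde u^{(j)}_a\|_{L^2(B_{3/2}(0))} \leq C\|\tilde u^{(j)}_a\|_{L^2(B_2(0))}$, so using $E_j \geq \|\tilde u^{(j)}\|_{L^2(B_2(0))}\|\tilde u^{(j)}_a\|_{L^2(B_2(0))}$,
\begin{equation*}
\tfrac{1}{E_j}\|f^{(j)}\, D\tilde u^{(j)}_a\|_{L^2(B_{3/2}(0))} \leq \tfrac{C\|\tilde u^{(j)}\|_{L^2(B_2(0))}^2 \|\tilde u^{(j)}_a\|_{L^2(B_2(0))}}{\|\tilde u^{(j)}\|_{L^2(B_2(0))}\|\tilde u^{(j)}_a\|_{L^2(B_2(0))}} = C\|\tilde u^{(j)}\|_{L^2(B_2(0))} \longrightarrow 0.
\end{equation*}
This is exactly the role played by the product term in the definition of $E_j^2$ and in the right-hand side of (\ref{avged_eqn4}); it is engineered so that the inhomogeneity in the blown-up equation vanishes in the limit.

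Testing the equation against a fixed $\phi \in C_c^\infty(B_1(0))$ with $\int\phi = 1$, integration by parts gives $|c^{(j),\kappa}/E_j| \leq \|\Delta\phi\|_{L^2}\|w_j\|_{L^2(B_1(0))} + \|\nabla\phi\|_{L^2}\cdot\tfrac{1}{E_j}\|f^{(j)}D\tilde u^{(j)}_a\|_{L^2}$, so $c^{(j)}/E_j$ is bounded.  A standard Caccioppoli estimate on $w_j$ then yields $\|w_j\|_{H^1(B_r(0))} \leq C_r$ for every $r < 1$.  Passing to a subsequence, $w_j \to w$ in $L^2_{\mathrm{loc}}(B_1(0))$ and weakly in $H^1_{\mathrm{loc}}(B_1(0))$, and $c^{(j)}/E_j \to c$; the limit satisfies $\Delta w = -c$ weakly, so $\hat w := w + \tfrac{c}{2n}|X|^2$ is harmonic on $B_1(0)$ with $\|\hat w\|_{L^2(B_1(0))} \leq C$.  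The degree-$2$ Taylor polynomial $\hat Q$ of $\hat w$ at $0$ obeys $\sup_{B_\vartheta(0)}|\hat w - \hat Q| \leq C\vartheta^3\|\hat w\|_{L^2(B_1(0))}$ by standard harmonic function theory, and setting $\tilde Q := \hat Q - \tfrac{c}{2n}|X|^2$ (still a degree-$\leq 2$ polynomial) gives $\vartheta^{-n-4}\int_{B_\vartheta(0)}|w - \tilde Q|^2 \leq C_0\vartheta^2$.  By $L^2$-convergence, $\vartheta^{-n-4}\int_{B_\vartheta(0)}|w_j - \tilde Q|^2 \leq 2C_0\vartheta^2$ for all large $j$, contradicting ($\star$) once $j > 2C_0$.

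The main obstacle is the non-divergence inhomogeneity $c^{(j)}/E_j$ arising because $\tilde P^{(j)}$ is not assumed harmonic: the blow-up limit solves $\Delta w = -c$ rather than $\Delta w = 0$.  This is handled by exploiting the fact that the conclusion (\ref{avged_eqn4}) allows \emph{any} degree-$\leq 2$ polynomial as the new approximant, so the quadratic correction $\tfrac{c}{2n}|X|^2$ can be absorbed into $\tilde Q$, reducing matters to the familiar polynomial approximation for harmonic functions.
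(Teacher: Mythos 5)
Your proposal is correct and follows essentially the same blow-up/compactness strategy as the paper: normalize by the excess $E_j$, use the equation derived from (\ref{mss4}) for the average part $\tilde u_a$, exploit the product term $\|\tilde u\|^2_{L^2}\|\tilde u_a\|^2_{L^2}$ in the definition of $E_j$ to make the nonlinear divergence-form inhomogeneity vanish in the limit, and then invoke the standard polynomial decay of harmonic functions. You use $L^2$/Caccioppoli compactness rather than the paper's stated $C^1$ compactness via Schauder; both routes work.

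There is one point where you are actually more careful than the paper's own proof. Equation (\ref{rotateu_eqn11}) is derived in the text ``for any \emph{harmonic} polynomial $\tilde P$,'' and the paper's proof simply asserts that the blow-up limit $w$ is harmonic, which tacitly requires $\Delta\tilde P_j = 0$. The lemma statement, however, allows $\tilde P$ to be any degree-$\leq 2$ polynomial, so $\Delta\tilde P = c$ may be a nonzero constant. You correctly identify that the normalized equation then reads $\Delta w_j = (\text{vanishing RHS}) - c_j/E_j$, show by testing against a fixed $\phi$ that $c_j/E_j$ stays bounded, obtain $\Delta w = -c$ in the limit, and absorb the quadratic correction $\tfrac{c}{2n}|X|^2$ into the new approximant. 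This fills a small gap that the paper glosses over. (In the paper's application the iterated $\tilde P_j$ all happen to be harmonic since $\tilde P_0 = 0$ and the conclusion produces a harmonic $p$, so the gap is harmless there, but your proof is the correct proof of the lemma as literally stated.)
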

\begin{proof}
Consider a sequence of $\tilde u_j \in C^{1,1/2}(B_1(0);\mathcal{A}_2(\mathbb{R}^m))$ and polynomials $\tilde P_j : \mathbb{R}^n \rightarrow \mathbb{R}^m$ of degree at most $2k$ such that the graph $\mathcal{M}_j$ of $u_j$ is area-stationary, (\ref{avged_eqn1}) and (\ref{avged_eqn3}) hold true with $\varepsilon = \varepsilon_j \downarrow 0$, $\tilde u = \tilde u_j$, $\mathcal{M} = \mathcal{M}_j$, and $\tilde P = \tilde P_j$.  Let $\tilde u_{j,a}(X) = (\tilde u_{j,1}(X)+\tilde u_{j,2}(X))/2$ and $\tilde u_{j,s}(X) = \{ \pm (\tilde u_{j,1}(X)-\tilde u_{j,2}(X))/2 \}$ where $\tilde u_j(X) = \{\tilde u_{j,1}(X),\tilde u_{j,2}(X)\}$.  Let 
\begin{equation*}
	E_j = \left( \int_{B_1(0)} |\tilde u_{j,a} - \tilde P_j|^2 + \|\tilde u_j\|_{L^2(B_2(0))}^2 \|\tilde u_{j,a}\|_{L^2(B_2(0))}^2 \right)^{1/2}
\end{equation*}
By the Schauder estimates for (\ref{rotateu_eqn11}) with $\tilde u = \tilde u_j$, (\ref{avged_eqn1}), and (\ref{SimWicThm7}), $(u_{j,a} - \tilde P_j)/E_j$ converges to a harmonic single-valued function $w$ in $C^1$ on compact subsets of the interior of $B_1(0)$.  By the Schauder estimates for harmonic functions, 
\begin{equation*} 
	\vartheta^{-n-4} \int_{B_{\vartheta}(0)} |w - p|^2 \leq C \vartheta^2 \int_{B_{3/4}(0)} |w|^2 \leq C \vartheta^2 
\end{equation*} 
for some harmonic polynomial $p$ of degree at most $2$ and $C = C(n,m) \in (0,\infty)$ and thus it follows that (\ref{avged_eqn4}) holds true with $\tilde u = \tilde u_j$, $\tilde P = \tilde P_j$, and $\tilde P' = E_j p$ for infinitely many $j$. 
\end{proof}

By iteratively applying Lemma \ref{avgexcessdecay_lemma} choosing $\vartheta$ so that $C \vartheta \leq 1/8$ and using (\ref{mss_closetoplane}) and~\cite[Lemma 6.9]{SimWic11}, which implies that 
\begin{equation*}
	\rho^{-n/2} \|\tilde u_a\|_{L^2(B_{\rho}(0))} \leq C(n,m) \rho^2 \|\tilde u_a\|_{L^2(B_{1/2}(0))}, \quad 
	\rho^{-n/2} \|\tilde u_s\|_{L^2(B_{\rho}(0))} \leq C(n,m) \rho^{3/2} \|\tilde u_s\|_{L^2(B_{1/2}(0))},
\end{equation*}
for all $\rho \in (0,1/2]$, we can find a sequence of polynomials $\tilde P_j$ for $j = 0,1,2,3,\ldots$ such that $\tilde P_0 = 0$ and 
\begin{equation*}
	(\vartheta^j/4)^{-n} \int_{B_{\vartheta^j/4}(0)} |\tilde u_a - \tilde P_j|^2 + \varepsilon \vartheta^j
	\leq \frac{1}{4} \left( (\vartheta^{j-1}/4)^{-n} \int_{B_{\vartheta^{j-1}/4}(0)} |\tilde u_a - \tilde P_{j-1}|^2 + \varepsilon \vartheta^{j-1} \right) 
\end{equation*}
for all $j = 1,2,3,\ldots$.  It is then standard that $\tilde P_j$ converge uniformly on $B_1(0)$ to a harmonic polynomial $\tilde P_Y$ of degree at most $2$ such that 
\begin{equation} \label{rotateu_eqn12}
	\rho^{-n} \int_{B_{\rho}(0)} |\tilde u_a - \tilde P_Y|^2 \leq C(n,m) \varepsilon \rho^{2+\tau'} 
\end{equation}
for all $\rho \in (0,1/2]$ for some $\tau' = \tau'(n,m) \in (0,1/2]$.  Moreover, for each $Y,Z \in \mathcal{B}_u \cap B_1(0)$ with corresponding harmonic polynomials $P_Y, P_Z$ respectively, 
\begin{equation} \label{polycont}
	\sum_{|\alpha| = 2} |D^{\alpha} P_Y(0) - D^{\alpha} P_Z(0)| \leq C(n,m) \varepsilon |Y-Z|^{\tau'}. 
\end{equation}
By reducing $\tau$ if necessary, we may take $\tau' = \tau$.  By the Schauder estimates applied to (\ref{rotateu_eqn11}), (\ref{mss_asymptotics3}), and (\ref{rotateu_eqn12}), 
\begin{equation} \label{rotateu_eqn13}
	\sup_{B_{|X_0-Y|/32}(X_0-Y)} |D^{\alpha} \tilde u_a - D^{\alpha} \tilde P_Y| \leq C(n,m,|\alpha|) \varepsilon |X_0-Y|^{2+\tau-|\alpha|} 
\end{equation}
for all $\alpha$.  By (\ref{rotateu_eqnX}) and (\ref{rotateu_eqnU}), 
\begin{align*}
	u_a(X) &= u_a(Y) + Du_a(Y) (X-Y) + (Q_{22} - Q_{21} Q_{11}^{-1} Q_{12}) \frac{\tilde u_1(\xi_1(X)) + \tilde u_2(\xi_2(X))}{2} \\
	&= u_a(Y) + Du_a(Y) (X-Y) + (Q_{22} - Q_{21} Q_{11}^{-1} Q_{12}) \frac{\tilde u_a(\xi_1(X)) + \tilde u_a(\xi_2(X))}{2} \\
		&\hspace{5mm} + \frac{1}{2} (Q_{22} - Q_{21} Q_{11}^{-1} Q_{12}) \int_{-1}^1 D\tilde u_s(\xi_a(X)+t\xi_s(X)) \cdot \xi_s(X) dt 
\end{align*}
on $B_{|X_0-Y|/16}(X_0)$, so by (\ref{rotateu_eqn1}), (\ref{rotateu_eqn6}), (\ref{rotateu_eqn7}), and (\ref{rotateu_eqn13}) 
\begin{align*}
	&\sup_{B_{|X_0-Y|/32}(0)} \left| u_a(X) - P_Y(X) 
	+ \|u_s\|_{L^2(B_1(0))}^2 D\tilde \varphi_Y(Q_{11}^{-1} (X-Y)) Q_{11}^{-1} Q_{12} \tilde \varphi_Y(Q_{11}^{-1} (X-Y)) \right|
	\\&\leq C(n,m) \varepsilon |X_0-Y|^{2+\tau}, 
\end{align*}
provided $\tau \leq 1/4$ and reducing $\tau$ otherwise, where 
\begin{equation*}
	P_Y(X) = u_a(Y) + Du_a(Y)(X-Y) + (Q_{22} - Q_{21} Q_{11}^{-1} Q_{12}) \tilde P_Y(Q_{11}^{-1} (X-Y)). 
\end{equation*}
Hence by the Schauder estimates and (\ref{rotateu_eqn11}), 
\begin{align} \label{mss_asymptotics5}
	&\sup_{B_{|X_0-Y|/64}(X_0)} \left| D^{\alpha} u_a(X) - D^{\alpha} P_Y(X) - \|u_s\|_{L^2(B_1(0))}^2 
		D^{\alpha} \left( D\tilde \varphi_Y(Q_{11}^{-1} (X-Y)) Q_{11}^{-1} Q_{12} \tilde \varphi_Y(Q_{11}^{-1} (X-Y)) \right) \right| \nonumber \\
	&\leq C(n,m,|\alpha|) \varepsilon |X_0-Y|^{2+\tau-|\alpha|} 
\end{align}
for all $\alpha$. 

We define the following variant of the partial Legendre transformation of~\cite{KNS}.  We will consider $D_1^k u_a,D_1^k v$ for $k = 0,1,2,\ldots,\mathcal{N}-1/2$ and $D_2 D_1^{\mathcal{N}-3/2} u_a, D_2 D_1^{\mathcal{N}-3/2} v$ as locally solutions to an elliptic system and let $\tilde X = (\tilde x_1,\tilde x_2,\tilde y_1,\tilde y_2,\ldots,\tilde y_{n-2})$ for 
\begin{equation*}
	\tilde x_1 = D_1^{\mathcal{N}-1/2} v^1(X), \quad \tilde x_2 = D_2 D_1^{\mathcal{N}-3/2} v^1(X), \quad \tilde y_{j-2} = x_j \text{ for } j = 3,4,\ldots,n. 
\end{equation*}
where $v^{\kappa}$ denotes the $\kappa$-th coordinate function of $v$.  In other words under the transformation each point $X \in B_1(0)$ maps to two points, $(+D_1^{\mathcal{N}-1/2} v^1_1(X),+D_2 D_1^{\mathcal{N}-3/2} v^1_1(X),x_3,\ldots,x_n)$ and $(-D_1^{\mathcal{N}-1/2} v^1_1(X),-D_2 D_1^{\mathcal{N}-3/2} v^1_1(X),x_3,\ldots,x_n)$ where $v(X) = \{ \pm v_1(X) \}$.  Note that the transformation maps $\mathcal{B}_u$ into $\{0\} \times \mathbb{R}^{n-2}$.   By an argument similar to the one in Section~\ref{sec:harm_plt_sec} using (\ref{mss_asymptotics3}), this transformation is invertible with inverse tranformation is given by (\ref{harm_transvar}) for some functions $\phi^1, \phi^2: B_{1/8}(0) \rightarrow \mathbb{R}$.  By the definition of $\phi^1$ and $\phi^2$, (\ref{transder1}) and (\ref{transder2}) hold true.  Assuming $\varepsilon$ is sufficiently small, (\ref{harm_branchset}) holds true and thus our goal is to study the regularity of $\phi^1$ and $\phi^2$.

Let $\eta' = (\eta_3,\eta_4,\ldots,\eta_n) \in B^{n-2}_{1/4}(0)$ and $Y = (\eta_1,\eta_2,\eta')$ be the unique point in $\mathcal{B}_u \cap B^2_{1/4}(0) \times \{\eta'\}$.  Let $c = (c^1,c^2) = \mathcal{N} (\mathcal{N}-1) \cdots (5/2) (3/2) c^1_Y (1,i)$ if $\mathcal{N} \geq 3/2$ and let $q_Y = (q^i_j)_{i,j=1,2,\ldots,n}$.  By the argument in Section~\ref{sec:harm_plt_sec} using (\ref{mss_asymptotics3}) in place of (\ref{harm_asymptotics3}), we can show that (\ref{harm_tildex_asym4}) and (\ref{phi_asym1}) hold true with $E$ in both equations satisfying (\ref{phi_asym5}) and (\ref{phi_asym4}) for $|\alpha| + 2|\beta| > 2$. 

By (\ref{mss_asymptotics3}), (\ref{mss_asymptotics4}), (\ref{mss_asymptotics5}), (\ref{harm_tildex_asym4}), (\ref{phi_asym4}), and (\ref{phi_asym5}) (see (\ref{harm_tildex_asym2})), 
\begin{equation} \label{mss_u_asym1}
	D_X^{\gamma} v = \op{Re} \left( \mathcal{N} (\mathcal{N}-1) \cdots (\mathcal{N}-|\gamma|+1) c_Y 
		\left( \frac{i \overline{c^2} \tilde x_1 - i \overline{c^1} \tilde x_2}{\op{Im}(\overline{c^1} c^2)} \right)^{2\mathcal{N} - 2|\gamma|} \right) + E
\end{equation}
whenever $|X-Y| \geq 2 \op{dist}(X,\mathcal{B}_u)$ and $|\gamma| \leq \mathcal{N}-1/2$, where 
\begin{equation} \label{mss_u_asym2}
	|D_{\tilde x}^{\alpha} D_{\tilde y}^{\beta} E| \leq C(n,m,\mathcal{N},|\gamma|,|\alpha|,|\beta|) |\tilde x|^{2\mathcal{N}-2|\gamma|-|\alpha|-2|\beta|+2\tau}
\end{equation}
for all $\alpha$ and $\beta$ and 
\begin{equation} \label{mss_u_asym3}
	D_X^{\gamma} u_a = \sum_{|\alpha|/2+|\beta| \leq 2\mathcal{N}-1-|\gamma|} a_{\gamma,\alpha,\beta} \tilde x^{\alpha} \tilde y^{\beta} + E
\end{equation}
whenever $|X-Y| \geq 2 \op{dist}(X,\mathcal{B}_u)$ and $|\gamma| \leq 2\mathcal{N}-1$, where 
\begin{equation} \label{mss_u_asym4}
	|D_{\tilde x}^{\alpha} D_{\tilde y}^{\beta} E| \leq C(n,m,\mathcal{N},|\gamma|,|\alpha|,|\beta|) |\tilde x|^{4\mathcal{N}-2-2|\gamma|-|\alpha|-2|\beta|+2\tau}
\end{equation}
for all $\alpha$ and $\beta$.  By (\ref{branchsetgraph}) and (\ref{varphicont}), (\ref{mss_u_asym1}) holds true even when $|X-Y| > 2 \op{dist}(X,\mathcal{B}_u)$ with 
\begin{equation*}
	|D_{\tilde x}^{\alpha} E| \leq C(n,m,\mathcal{N},|\gamma|,|\alpha|) |\tilde x|^{2\mathcal{N}-2|\gamma|-|\alpha|} (|\tilde x|^2 + |\tilde y|)^{\tau}
\end{equation*}
whenever $|\alpha| \leq 2\mathcal{N}-2|\gamma|$ and (\ref{mss_u_asym2}) whenever $|\alpha| > 2\mathcal{N}-2|\gamma|$ or $|\beta| \geq 1$.  Also, (\ref{mss_u_asym3}) holds true even when $|X-Y| > 2 \op{dist}(X,\mathcal{B}_u)$ with 
\begin{equation*}
	|D_{\tilde x}^{\alpha} E| \leq C(n,m,\mathcal{N},|\gamma|,|\alpha|) (|\tilde x|^2 + |\tilde y|)^{2\mathcal{N}-1-|\gamma|-|\alpha|/2-|\beta|+\tau}
\end{equation*}
whenever $|\alpha|/2+|\beta| \leq 2\mathcal{N}-1-|\gamma|$ and (\ref{mss_u_asym2}) whenever $|\alpha|/2+|\beta| > 2\mathcal{N}-1-|\gamma|$.

\section{Transformed differential system} \label{sec:transpde_sec}

Suppose that either (a) $u \in W^{1,2}(B_1(0);\mathcal{A}_2(\mathbb{R}^m))$ is a Dirichlet energy minimizing function as in Section~\ref{sec:harm_plt_sec} and $\mathcal{N} = 1/2$, (b) $u \in C^{1,1/2}(B_1(0);\mathcal{A}_2(\mathbb{R}^m))$ is locally harmonic in $B_1(0) \setminus \mathcal{B}_u$ and is as in Section~\ref{sec:harm_plt_sec}, or (c) $u \in C^{1,1/2}(B_1(0);\mathcal{A}_2(\mathbb{R}^m))$ is a two-valued functions whose graph is area-stationary as in Section~\ref{sec:mss_plt_sec}.  In cases (a) and (b), note that $u_a \equiv 0$ and let $v = u$ and $\sigma = 1$.  In case (c), let $v = u_s/\sigma$ where $\sigma = \|u_s\|_{L^2(B_1(0))}$.  Let $\tilde X = (\tilde x,\tilde y)$ be the partial Legendre transformation and $\phi^1$ and $\phi^2$ be the functions defined in Sections~\ref{sec:harm_plt_sec} and~\ref{sec:mss_plt_sec} as defined for each case (a), (b), and (c).  Let 
\begin{gather*}
	M^1_1(R) = \frac{R^2_2}{R^1_1 R^2_2 - R^1_2 R^2_1}, \quad 
	M^2_1(R) = \frac{-R^2_1}{R^1_1 R^2_2 - R^1_2 R^2_1}, \\
	M^1_2(R) = \frac{-R^1_2}{R^1_1 R^2_2 - R^1_2 R^2_1}, \quad 
	M^2_2(R) = \frac{R^1_1}{R^1_1 R^2_2 - R^1_2 R^2_1}, \\ 
	M^i_1(R) = M^i_2 = 0, \quad 
	M^1_i(R) = \frac{-R^2_2 R^1_j + R^1_2 R^2_j}{R^1_1 R^2_2 - R^1_2 R^2_1}, \quad
	M^2_i(R) = \frac{R^2_1 R^1_j - R^1_1 R^2_j}{R^1_1 R^2_2 - R^1_2 R^2_1} \quad \text{for } i = 3,4,\ldots,n, \\
	M^i_j(R) = \delta_{ij} \text{ for } i,j = 3,4,\ldots,n, 
\end{gather*}
for all $R \in \mathbb{R}^{2n}$ and let $M^i_j = M^i_j(D\phi)$ for brevity so that by (\ref{transder1}) and (\ref{transder2}), 
\begin{equation} \label{transder3}
	\frac{\partial}{\partial x_i} = \sum_{j=1}^2 M^j_i \frac{\partial}{\partial \tilde x_j} + \sum_{j=3}^n M^j_i \frac{\partial}{\partial \tilde y_{j-2}}  
\end{equation}
for $i = 1,2,\ldots,n$. 

Observe that by the fact that $u$ is locally harmonic on $B_1(0) \setminus \mathcal{B}_u$ in cases (a) and (b) and (\ref{mss2}) and (\ref{mss4}) hold true with $u_a,u_s$ in place of $\tilde u_{p,a}, \tilde u_{p,s}$ in case (c), we have  
\begin{align} \label{pde1}
	&D_i \left( A^{ij}_{\kappa \lambda}(Du_a,\sigma Dv) D_j v^{\lambda} \right) = 0, \nonumber \\
	&D_i \left( A^{ij}_{\kappa \lambda}(\sigma Dv,Du_a) D_j u_a^{\lambda} \right) = 0,
\end{align}
locally on $B_1(0) \setminus \mathcal{B}_u$ for $\kappa = 1,2,\ldots,m$, where $A^{ij}_{\kappa \lambda} = \delta_{ij} \delta_{\kappa \lambda}$ in cases (a) and (b) and $A^{ij}_{\kappa \lambda}$ is defined by (\ref{mss2_notation}) in case (c).  Note that 
\begin{equation} \label{Asymmetry1}
	A^{ij}_{\kappa \lambda}(P,Q) = A^{ij}_{\kappa \lambda}(-P,Q) = A^{ij}_{\kappa \lambda}(P,-Q), \quad
	A^{ij}_{\kappa \lambda}(0,0) = \delta_{ij} \delta_{\kappa \lambda}, \quad DA^{ij}_{\kappa \lambda}(0,0) = 0,
\end{equation} 
where $P,Q \in \mathbb{R}^{mn}$.  For cases (b) and (c), differentiating (\ref{pde1}) by $D^k_1$ yields 
\begin{align} \label{pde2}
	&D_i \left( a^{ij}_{\kappa \lambda}(Du_a,\sigma Dv) D_j D^k_1 v^{\lambda} 
		+ \sigma^{-1} b^{ij}_{\kappa \lambda}(Du_a,\sigma Dv) D_j D^k_1 u_a^{\lambda} \right) \nonumber \\& 
		= \sigma^{-1} D_i \left( h^i_{k,\kappa}(\{DD^l_1 u_a\}_{l \leq k-1}, \{\sigma DD^l_1 v\}_{l \leq k-1} \right) , \nonumber \\
	&D_i \left( a^{ij}_{\kappa \lambda}(\sigma Dv,Du_a) D_j D^k_1 u_a^{\lambda} 
		+ \sigma b^{ij}_{\kappa \lambda}(\sigma Dv,Du_a) D_j D^k_1 v^{\lambda} \right) \nonumber \\& 
		= D_i \left( h^i_{k,\kappa}(\{\sigma DD^l_1 v\}_{l \leq k-1}, \{DD^l_1 u_a\}_{l \leq k-1}) \right) ,
\end{align}
locally on $B_1(0) \setminus \mathcal{B}_u$ for $\kappa = 1,2,\ldots,m$ and $k = 1,2,\ldots,\mathcal{N}-1/2$, where 
\begin{equation*}
	a^{ij}_{\kappa \lambda}(P,Q) = A^{ij}_{\kappa \lambda}(P,Q) + D_{Q^{\lambda}_j} A^{il}_{\kappa \nu}(P,Q) Q^{\nu}_l, \quad 
	b^{ij}_{\kappa \lambda}(P,Q) = D_{P^{\lambda}_j} A^{il}_{\kappa \nu}(P,Q) Q^{\nu}_l, 
\end{equation*}
for $P,Q \in \mathbb{R}^{mn}$ and $h^i_{k,\kappa} : (\mathbb{R}^{mn})^{2k} \rightarrow \mathbb{R}$ are defined by $h^i_{1,\kappa} = 0$ and 
\begin{align*}
	&h^i_{k+1,\kappa}(\{P_t\}_{t \leq k},\{Q_t\}_{t \leq k}) 
	\\&= \sum_{s=0}^{k-1} (D_{P^{\nu}_{s,l}} h^i_{k,\kappa}(\{P_t\}_{t \leq k-1},\{Q_t\}_{t \leq k-1}) P^{\nu}_{s+1,l} 
		+ D_{Q^{\nu}_{s,l}} h^i_{k,\kappa}(\{P_t\}_{t \leq k-1},\{Q_t\}_{t \leq k-1}) Q^{\nu}_{s+1,l}) 
	\\&- D_{P^{\nu}_l} a^{ij}_{\kappa \lambda}(P_0) Q^{\lambda}_{k,j} P^{\nu}_{1,l} 
		- D_{Q^{\nu}_l} a^{ij}_{\kappa \lambda}(P_0) Q^{\lambda}_{k,j} Q^{\nu}_{1,l} 
		- D_{P^{\nu}_l} b^{ij}_{\kappa \lambda}(P_0) P^{\lambda}_{k,j} P^{\nu}_{1,l} 
		- D_{Q^{\nu}_l} b^{ij}_{\kappa \lambda}(P_0) P^{\lambda}_{k,j} Q^{\nu}_{1,l} 
\end{align*}
for all $P_t = (P^{\lambda}_{t,j}), Q_t = (Q^{\lambda}_{t,j}) \in \mathbb{R}^{mn}$ and $k = 1,2,\ldots,\mathcal{N}-3/2$.  Differentiating (\ref{pde1}) and (\ref{pde2}) by $D_2$ yields 
\begin{align} \label{pde3}
	&D_i \left( a^{ij}_{\kappa \lambda}(Du_a,\sigma Dv) D_j D^k_1 D_2 v^{\lambda} 
		+ \sigma^{-1} b^{ij}_{\kappa \lambda}(Du_a,\sigma Dv) D_j D^k_1 D_2 u_a^{\lambda} \right) \\& 
		= \sigma^{-1} D_i \left( f^i_{2k+1,\kappa}(\{(DD^l_1 u_a, DD^l_1 D_2 u_a)\}_{l \leq k-1},DD^k_1 u_a,
			\{(\sigma DD^l_1 v, \sigma DD^l_1 D_2 v)\}_{l \leq k-1}, \sigma DD^k_1 v) \right) , \nonumber \\
	&D_i \left( a^{ij}_{\kappa \lambda}(\sigma Dv,Du_a) D_j D^k_1 D_2 u_a^{\lambda} 
		+ \sigma b^{ij}_{\kappa \lambda}(\sigma Dv,Du_a) D_j D^k_1 D_2 v^{\lambda} \right) \nonumber \\& 
		= D_i \left( f^i_{2k+1,\kappa}(\{(\sigma DD^l_1 v, \sigma DD^l_1 D_2 v)\}_{l \leq k-1}, \sigma DD^k_1 v, 
		\{(DD^l_1 u_a, DD^l_1 D_2 u_a)\}_{l \leq k-1}, DD^k u_a) \right) , \nonumber
\end{align}
locally on $B_1(0) \setminus \mathcal{B}_u$ for $\kappa = 1,2,\ldots,m$ and $k = 0,1,2,\ldots,\mathcal{N}-3/2$, where $f^i_{2k+1,\kappa} : (\mathbb{R}^{mn})^{4k+2} \rightarrow \mathbb{R}$ are defined by $f^i_{1,\kappa} = 0$ and 
\begin{align*}
	&f^i_{2k+1,\kappa}(\{P_t\}_{t \leq 2k},\{Q_t\}_{t \leq 2k}) 
	\\&= \sum_{s=0}^{k-1} (D_{P^{\nu}_{s,l}} h^i_{k,\kappa}(\{P_{2t}\}_{t \leq k-1},\{Q_{2t}\}_{t \leq k-1}) P^{\nu}_{2s+1,l} 
		+ D_{Q^{\nu}_{s,l}} h^i_{k,\kappa}(\{P_{2t}\}_{t \leq k-1},\{Q_{2t}\}_{t \leq k-1}) Q^{\nu}_{2s+1,l}) 
	\\&- D_{P^{\nu}_l} a^{ij}_{\kappa \lambda}(P_0) Q^{\lambda}_{2k,j} P^{\nu}_{1,l} 
		- D_{Q^{\nu}_l} a^{ij}_{\kappa \lambda}(P_0) Q^{\lambda}_{2k,j} Q^{\nu}_{1,l} 
		- D_{P^{\nu}_l} b^{ij}_{\kappa \lambda}(P_0) P^{\lambda}_{2k,j} P^{\nu}_{1,l} 
		- D_{Q^{\nu}_l} b^{ij}_{\kappa \lambda}(P_0) P^{\lambda}_{2k,j} Q^{\nu}_{1,l} 
\end{align*}
for all $P_t = (P^{\lambda}_{t,j}), Q_t = (Q^{\lambda}_{t,j}) \in \mathbb{R}^{mn}$ and $k = 1,2,\ldots,\mathcal{N}-3/2$.  Define $f^i_{2k,\kappa} : (\mathbb{R}^{mn})^{4k} \rightarrow \mathbb{R}$ by $f^i_{2k,\kappa}(\{P_t\}_{t \leq 2k-1},\{Q_t\}_{t \leq 2k-1}) = h^i_{k,\kappa}(\{P_{2t}\}_{t \leq k-1},\{Q_{2t}\}_{t \leq k-1})$.  Note that by the definition of $f^i_{k,\kappa}$ and (\ref{Asymmetry1}), 
\begin{equation} \label{fsymmetry1}
	f^i_{\kappa \lambda}(P,Q) = f^i_{\kappa \lambda}(-P,Q) = -f^i_{\kappa \lambda}(P,-Q), \quad 
	D^l f^i_{\kappa \lambda}(0,0) = 0 \text{ for } l = 0,1,2,
\end{equation} 
where $P,Q \in (\mathbb{R}^{mn})^k$.

Let $\chi_{2k}(\tilde X) = D^k_1 u_a(X)$, $\chi_{2k+1}(\tilde X) = D^k_1 D_2 u_a(X)$, $\psi_{2k}(\tilde X) = D^k_1 v(X)$, and $\psi_{2k+1}(\tilde X) = D^k_1 D_2 v(X)$ for each $k$.  By (\ref{transder3}) and $dx_1 dx_2 \cdots dx_n = (\phi^1_1 \phi^2_2 - \phi^1_2 \phi^2_1) d\tilde x_1 d\tilde x_2 \cdots d\tilde x_n$, (\ref{pde1}) transforms into
\begin{align} \label{transpde1}
	&D_i \left( (\phi^1_1 \phi^2_2 - \phi^1_2 \phi^2_1) M^i_s M^j_t A^{st}_{\kappa \lambda}(D\chi_0 M,\sigma D\psi_0 M) D_j \psi_0^{\lambda} \right) = 0, 
		\nonumber \\
	&D_i \left( (\phi^1_1 \phi^2_2 - \phi^1_2 \phi^2_1) M^i_s M^j_t A^{ij}_{\kappa \lambda}(\sigma D\psi_0 M,D\chi_0 M) D_j \chi_0^{\lambda} \right) = 0,
\end{align}
on $B_{1/8}(0) \setminus \{0\} \times \mathbb{R}^{n-2}$ for $\kappa = 1,2,\ldots,m$.  For cases (b) and (c), (\ref{pde2}) and (\ref{pde3}) transforms into
\begin{align} \label{transpde2}
	&D_i \left( (\phi^1_1 \phi^2_2 - \phi^1_2 \phi^2_1) M^i_s M^j_t a^{st}_{\kappa \lambda}(D\chi_0 M,\sigma D\psi_0 M) D_j \psi_k^{\lambda} \right) \nonumber \\
	&+ \sigma^{-1} D_i \left( (\phi^1_1 \phi^2_2 - \phi^1_2 \phi^2_1) M^i_s M^j_t b^{st}_{\kappa \lambda}(D\chi_0 M,\sigma D\psi_0 M) 
		D_j \chi_k^{\lambda} \right) \nonumber \\
	&= \sigma^{-1} D_i \left( (\phi^1_1 \phi^2_2 - \phi^1_2 \phi^2_1) M^i_s f^s_{k,\kappa}(\{D\chi_l M\}_{l \leq k-1}, \{\sigma D\psi_l M\}_{l \leq k-1}) \right)
		, \nonumber \\
	&D_i \left( (\phi^1_1 \phi^2_2 - \phi^1_2 \phi^2_1) M^i_s M^j_t a^{st}_{\kappa \lambda}(\sigma D\psi_0 M,D\chi_0 M) D_j \chi_k^{\lambda} \right) \nonumber \\
	&+ \sigma D_i \left( (\phi^1_1 \phi^2_2 - \phi^1_2 \phi^2_1) M^i_s M^j_t b^{st}_{\kappa \lambda}(\sigma D\psi_0 M,D\chi_0 M) 
		D_j \psi_k^{\lambda} \right) \nonumber \\
	&= D_i \left( (\phi^1_1 \phi^2_2 - \phi^1_2 \phi^2_1) M^i_s f^s_{k,\kappa}(\{\sigma D\psi_l M\}_{l \leq k-1}, \{D\chi_l M\}_{l \leq k-1}) \right) ,
\end{align}
on $B_{1/8}(0) \setminus \{0\} \times \mathbb{R}^{n-2}$ for $\kappa = 1,2,\ldots,m$ and $k = 1,2,\ldots,2\mathcal{N}-1$.  Note that in case (a), $\psi_0^{\lambda} = \tilde x_{\lambda}$ for $\lambda = 1,2$ and thus in (\ref{transpde1}) and (\ref{transpde2}) we replace $D_j \psi_{2\mathcal{N}-1}^{\lambda}$ with $\delta_{\lambda j}$ for $\lambda = 1,2$ and $j = 1,2,\ldots,n$.  In cases (b) and (c), $\psi_{2\mathcal{N}-3+\lambda}^1 = \tilde x_{\lambda}$ for $\lambda = 1,2$ and in (\ref{transpde1}) and (\ref{transpde2}) we replace $D_j \psi_{2\mathcal{N}-3+\lambda}^1$ with $\delta_{\lambda j}$ for $\lambda = 1,2$ and $j = 1,2,\ldots,n$. 

\section{Schauder theory for singular elliptic systems} \label{sec:schaudersec}

In what follows, it is convenient to write $\tilde X \in \mathbb{R}^n$ as $\tilde X = (\tilde x,\tilde y)$ for $\tilde x \in \mathbb{R}^2$ and $\tilde y \in \mathbb{R}^{n-2}$.  We will use the notation 
\begin{gather*}
	D_i = \partial/\partial \tilde x_i \text{ for } i = 1,2, \quad D_i = \partial/\partial \tilde y_{i-2} \text{ for } i = 3,4,\ldots,n, \quad 
		D_{ij} = D_i D_j \text{ for all } i,j, \\
	D_{\tilde x}^{\alpha} = D_1^{\alpha_1} D_2^{\alpha_2} \text{ for } \alpha = (\alpha_1,\alpha_2), \quad 
		D_{\tilde y}^{\beta} = D_3^{\beta_1} D_4^{\beta_2} \cdots D_n^{\beta_{n-2}} \text{ for } \beta = (\beta_1,\beta_2,\ldots,\beta_{n-2}), \\
	\Delta_{\tilde x} = \sum_{j=1}^2 D_{jj}, \quad \Delta_{\tilde y} = \sum_{j=3}^n D_{jj}. 
\end{gather*}
Consider the metric on $\mathbb{R}^n$ given by 
\begin{equation*}
	g = 4 (\tilde x_1^2 + \tilde x_2^2) (d\tilde x_1^2 + d\tilde x_2^2) + \sum_{j=1}^{n-2} d\tilde y_j^2. 
\end{equation*}
$g$ is the pullback of the Euclidean metric on $\mathbb{R}^n$ under the map $\tilde X \mapsto (\tilde x_1^2 - \tilde x_2^2, 2 \tilde x_1 \tilde x_2, \tilde y)$.  Let $d_g$ denote geodesic distance on $\mathbb{R}^n$ with respect to the metric $g$.  Given an open set $\Omega$, let $\op{diam}_g(\Omega) = \sup_{\tilde X,\tilde X' \in \Omega} d_g(\tilde X,\tilde X')$ be the diameter of $\Omega$ with respect to the metric $g$.  Let $B^g_{\rho}(\tilde X_0)$ denote the geodesic ball with respect to the metric $g$ with center $\tilde X_0$ and radius $\rho > 0$.  Define 
\begin{equation*}
	[\psi]_{g,\tau,\Omega} = \sup_{\tilde X, \tilde Y \in \Omega, \, \tilde X \neq \tilde Y} 
		\frac{|\psi(\tilde X) - \psi(\tilde Y)|}{d_g(\tilde X, \tilde Y)^{\tau}}
\end{equation*}
for every $\tau \in (0,1)$, open set $\Omega \subseteq \mathbb{R}^n$, and real or vector valued function $\psi$ on $\Omega$.  

\begin{defn}
Given an integers $m \geq 1$ and $k \geq 0$, $\tau \in (0,1/2]$, and an open set $\Omega \subseteq \mathbb{R}^n$, define $C^{g,k,\tau}(\Omega;\mathbb{R}^m)$ to be the set of all functions $\psi : \Omega \rightarrow \mathbb{R}^m$ such that $D_{\tilde x}^{\alpha} D_{\tilde y}^{\beta} \psi$ exists and is continuous on $\Omega$ whenever $|\alpha| + 2|\beta| \leq k$ and $[D_{\tilde x}^{\alpha} D_{\tilde y}^{\beta} \psi]_{g,\tau,\Omega} < \infty$ whenever $|\alpha| + 2|\beta| = k$.  When $m = 1$ or the value of $m$ is obvious from the context, we denote $C^{g,k,\tau}(\Omega;\mathbb{R}^m)$ simply by $C^{g,k,\tau}(\Omega)$.  We equip $C^{g,k,\tau}(\Omega;\mathbb{R}^m)$ with the norm 
\begin{equation*}
	\|\psi\|_{C^{g,k,\tau}(\Omega;\mathbb{R}^m)} \equiv \sum_{|\alpha|+2|\beta| \leq k} \sup_{\Omega} |D_{\tilde x}^{\alpha} D_{\tilde y}^{\beta} \psi| 
		+ \sum_{|\alpha|+2|\beta| = k} [D_{\tilde x}^{\alpha} D_{\tilde y}^{\beta} \psi]_{g,\tau,\Omega}. 
\end{equation*}
\end{defn}

\begin{defn}
Given integers $m \geq 1$ and $k \geq 0$, $s \in \mathbb{R}$, $\tau \in (0,1/2]$, and an open set $\Omega \subseteq \mathbb{R}^n$, define $\mathcal{H}^{s,k,\tau}(\Omega;\mathbb{R}^m)$ to be the set of all functions $f \in C^k(\Omega \setminus \{0\} \times \mathbb{R}^{n-2};\mathbb{R}^m)$ such that for every $(0,\tilde y_0) \in \Omega \cap \{0\} \times \mathbb{R}^{n-2}$ there exists a (unique) homogeneous degree $s$ smooth function $f(\, ;\tilde y_0) : \mathbb{R}^2 \rightarrow \mathbb{R}^m$, which we extend to a function $f(\tilde x,\tilde y;\tilde y_0)$ of $(\tilde x,\tilde y) \in \mathbb{R}^n$ that is independent of $\tilde y$, such that 
\begin{align*}
	&\|f\|_{\mathcal{H}^{s,k,\tau}(\Omega)} \equiv \sum_{|\alpha|+|\beta| \leq k} \sup_{\tilde X \in \Omega \setminus \{0\} \times \mathbb{R}^{n-2}} 
		|\tilde x|^{-s+|\alpha|+2|\beta|} |D_{\tilde x}^{\alpha} D_{\tilde y}^{\beta} f(\tilde X)| 
	\\&+ \sum_{|\alpha|+|\beta| = k} \sup_{\tilde X \in \Omega \setminus \{0\} \times \mathbb{R}^{n-2}} 
		|\tilde x|^{-s+|\alpha|+2|\beta|+2\tau} [D_{\tilde x}^{\alpha} D_{\tilde y}^{\beta} f]_{g,\tau,B^g_{|\tilde x|^2/4}(\tilde X)}
	\\&+ \sum_{|\alpha|+|\beta| \leq k} \sup_{(\tilde X,\tilde y_0) \in S_{\Omega}} \op{diam}_g(\Omega)^{\tau} |\tilde x|^{-s+|\alpha|+2|\beta|} 
		\frac{|D_{\tilde x}^{\alpha} D_{\tilde y}^{\beta} f(\tilde X) - D_{\tilde x}^{\alpha} D_{\tilde y}^{\beta} f(\tilde X;\tilde y_0)|}{
		d_g(\tilde X,(0,\tilde y_0))^{\tau}}  
	\\&+ \sum_{|\alpha|+|\beta| = k} \sup_{(\tilde X,\tilde y_0) \in S_{\Omega}, \, d_g(\tilde X,(0,\tilde y_0)) \leq 4 |\tilde x|^2}
		\op{diam}_g(\Omega)^{\tau} |\tilde x|^{-s+|\alpha|+2|\beta|} [D_{\tilde x}^{\alpha} D_{\tilde y}^{\beta} f 
		- D_{\tilde x}^{\alpha} D_{\tilde y}^{\beta} f(.;\tilde y_0)]_{g,\tau,B^g_{|\tilde x|^2/4}(\tilde X)} 
	< \infty, 
\end{align*}
where $S_{\Omega} = \{ (\tilde X,\tilde y_0) : \tilde X \in \Omega \setminus \{0\} \times \mathbb{R}^{n-2}, \, (0,\tilde y_0) \in \Omega \cap \{0\} \times \mathbb{R}^{n-2} \}$.  (Note that the notation $f(\, ;\tilde y_0)$ is ambiguous since $\mathcal{H}^{s,k,\tau}(\Omega;\mathbb{R}^m) \subset \mathcal{H}^{s',k,\tau}(\Omega;\mathbb{R}^m)$ whenever $s > s'$ and thus $f(\, ;\tilde y_0)$ depends on $s$.  Whenever we use the notation $f(\, ;\tilde y_0)$, we will use the value of $s$ for which $f \in \mathcal{H}^{s,k,\tau}(\Omega;\mathbb{R}^m)$ is explicitly stated.)  If instead $k = -1$, we will define $\mathcal{H}^{s,-1,\tau}(\Omega;\mathbb{R}^m)$ to be the set of all distributions of the form $f = \sum_{i=1}^n D_i f^i$ for $f^i \in \mathcal{H}^{s+1,0,\tau}(\Omega;\mathbb{R}^m)$ for $i = 1,2$ and $f^i \in \mathcal{H}^{s+2,0,\tau}(\Omega;\mathbb{R}^m)$ for $i = 3,4,\ldots,n$ with $f^i( \, ;\tilde y_0) = \sum_{i=1}^n D_i f^i( \, ;\tilde y_0)$ as a distribution and 
\begin{equation*}
	\|f\|_{\mathcal{H}^{s,-1,\tau}(\Omega)} = \sum_{i=1}^3 \|f^i\|_{\mathcal{H}^{s+1,0,\tau}(\Omega)} + \sum_{i=3}^n \|f^i\|_{\mathcal{H}^{s+2,0,\tau}(\Omega)}.
\end{equation*} 
When $m = 1$ or the value of $m$ is obvious from the context, we denote $\mathcal{H}^{s,k,\tau}(\Omega;\mathbb{R}^m)$ simply by $\mathcal{H}^{s,k,\tau}(\Omega)$.  
\end{defn}

We shall first prove two relatively simple Schauder estimate, from which the main Schauder estimates that we need follow:

\begin{lemma} \label{schauderlemma}
Let $k \geq 1$ be an integer and $\tau \in (0,1/2)$.  Let $\psi \in C^{g,k,\tau}(B^g_1(0)) \cap C^{\infty}(B^g_1(0) \setminus \{0\} \times \mathbb{R}^{n-2})$ with 
\begin{equation} \label{schauder_evenodd}
	\psi(-\tilde x,\tilde y) = (-1)^k \psi(\tilde x,\tilde y) 
\end{equation} 
for all $(\tilde x,\tilde y) \in B^g_1(0)$ and $f \in \mathcal{H}^{k-2,k-2,\tau}(B^g_1(0))$.  Suppose that $\psi$ is a solution to 
\begin{equation} \label{schauder1_eqn1} 
	\Delta_{\tilde x} \psi + 4 |\tilde x|^2 \Delta_{\tilde y} \psi = f
\end{equation}
on $B^g_1(0) \setminus \{0\} \times B^{n-2}_1(0)$ and 
\begin{equation} \label{schauder1_eqn2} 
	(\Delta_{\tilde x} + 4 |\tilde x|^2 \Delta_{\tilde y}) \left( \sum_{|\alpha|+2|\beta| \leq k} D^{\alpha}_{\tilde x} D^{\beta}_{\tilde y} \psi(0,\tilde y_0) 
		\tilde x^{\alpha} (\tilde y - \tilde y_0)^{\beta} \right) = f(\tilde X;\tilde y_0)
\end{equation}
on $\mathbb{R}^n \setminus \{0\} \times \mathbb{R}^{n-2}$ for all $\tilde y_0 \in B^{n-2}_1(0)$.  Then 
\begin{equation} \label{schauder1_eqn3}
	\|\psi\|_{C^{g,k,\tau}(B^g_{1/2}(0))} \leq C \left( \sup_{B^g_1(0)} |\psi| + \|f\|_{\mathcal{H}^{k-2,k-2,\tau}(B^g_1(0))} \right)
\end{equation}
for some constant $C = C(n,k,\tau) \in (0,\infty)$. 
\end{lemma}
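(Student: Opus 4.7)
The plan is to exploit the geometric observation that $g$ is the pullback of the Euclidean metric under the squaring map $\Pi:(\tilde x,\tilde y)\mapsto (z,\tilde y)$ with $z_1+iz_2=(\tilde x_1+i\tilde x_2)^2$. A direct chain-rule computation yields $(\Delta_{\tilde x}+4|\tilde x|^2\Delta_{\tilde y})(\Phi\circ\Pi) = 4|\tilde x|^2\,(\Delta_Z\Phi)\circ\Pi$, so off the spine $\{0\}\times\mathbb{R}^{n-2}$ the equation $L\psi=f$ pulls back to the standard Poisson equation $\Delta_Z\Psi = F/(4|z|)$. The symmetry hypothesis (\ref{schauder_evenodd}) determines the lift: for even $k$ the function $\psi$ descends to a single-valued $\Psi$ on $Z$-space, while for odd $k$ one writes $\psi=\tilde x_1\psi^{(1)}+\tilde x_2\psi^{(2)}$ with each $\psi^{(j)}$ even in $\tilde x$ and applies the reduction to the even components. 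A geodesic ball $B^g_r(\tilde X_0)$ with $|\tilde x_0|^2\geq 2r$ corresponds under $\Pi$ to a quasi-Euclidean ball of radius comparable to $r$ centered at $Z_0=\Pi(\tilde X_0)$ and disjoint from $\{z=0\}$.

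On such a ball I would apply the standard interior Schauder estimate for $\Delta_Z$ in Euclidean $Z$-coordinates; converting back via $\Pi$ and matching the $C^{g,k,\tau}$ and $\mathcal{H}^{k-2,k-2,\tau}$ norms with Euclidean Hölder norms on $Z$-space gives the interior bound
\begin{equation*}
\|\psi\|_{C^{g,k,\tau}(B^g_{r/2}(\tilde X_0))} \leq C\bigl(\sup_{B^g_r(\tilde X_0)}|\psi| + \|f\|_{\mathcal{H}^{k-2,k-2,\tau}(B^g_r(\tilde X_0))}\bigr)
\end{equation*}
for every $\tilde X_0$ with $|\tilde x_0|^2\geq 2r$. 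This part is essentially routine once the geometric identification with Euclidean $Z$-space is in hand.

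The main obstacle is the behavior of $\psi$ as $\tilde x_0\to 0$, where $L$ degenerates. Here I plan to use the compatibility hypothesis (\ref{schauder1_eqn2}) as follows. Fix a spine point $(0,\tilde y_0)$ and form the Taylor-type polynomial
\begin{equation*}
P(\tilde X;\tilde y_0) = \sum_{|\alpha|+2|\beta|\leq k} D^\alpha_{\tilde x}D^\beta_{\tilde y}\psi(0,\tilde y_0)\,\tilde x^\alpha(\tilde y-\tilde y_0)^\beta.
\end{equation*}
The difference $\tilde\psi:=\psi-P$ satisfies $L\tilde\psi = f-f(\,\cdot\,;\tilde y_0)$, whose right-hand side vanishes at $(0,\tilde y_0)$ at the rate $d_g^{k-2+2\tau}$ quantitatively controlled by $\|f\|_{\mathcal{H}^{k-2,k-2,\tau}}$, by the very definition of this norm. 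Applying the interior bound above on a geometric sequence of dyadic annuli $B^g_{2^{-j}r}(0,\tilde y_0)\setminus B^g_{2^{-j-1}r}(0,\tilde y_0)$ and summing geometrically yields that $\tilde\psi$ lies in $C^{g,k,\tau}$ with the correct weighted Hölder constant up to each spine point; the comparison-to-polynomial terms in the definition of $C^{g,k,\tau}$ then give the analogous bound for $\psi$ itself.

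Combining the interior and near-spine bounds via a covering of $B^g_{1/2}(0)$ by balls of each type, and absorbing intermediate derivative terms by a standard interpolation inequality against $\sup|\psi|$, gives (\ref{schauder1_eqn3}). I expect the hardest technical step to be the dyadic shell-summation near the spine: it requires precisely matching the weighted Hölder decay of $f-f(\,\cdot\,;\tilde y_0)$ encoded in the fourth term of $\|f\|_{\mathcal{H}^{k-2,k-2,\tau}}$ against the scaling of the interior estimate on each shell, and this is exactly what the compatibility condition (\ref{schauder1_eqn2}) and the structure of $\mathcal{H}^{k-2,k-2,\tau}$ are designed to make possible.
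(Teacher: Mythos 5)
Your proposed route — push forward via the squaring map, get interior Schauder off the spine, then control behavior near the spine by subtracting the Taylor-type polynomial $P(\tilde X;\tilde y_0)$ and summing dyadic shells — is genuinely different from what the paper does, and unfortunately it has two gaps that are not easy to fill.

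First, the reduction for odd $k$. You propose to write $\psi = \tilde x_1 \psi^{(1)} + \tilde x_2 \psi^{(2)}$ with each $\psi^{(j)}$ even in $\tilde x$ and "apply the reduction to the even components." But the operator $L = \Delta_{\tilde x} + 4|\tilde x|^2 \Delta_{\tilde y}$ does not induce a decoupled (or even elliptic) system for $(\psi^{(1)},\psi^{(2)})$ from $L\psi = f$: the decomposition is not unique, and $L(\tilde x_j \psi^{(j)})$ produces lower-order cross terms $D_{\tilde x_j}\psi^{(j)}$ that cannot be absorbed. Under the squaring map $\Pi$, odd-in-$\tilde x$ functions correspond to genuinely two-valued functions on $z$-space (the branch of $z^{1/2}$), not to single-valued ones times a factor of $\tilde x$; there is no clean reduction to the even case. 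This is precisely why the paper's proof of the odd-$k$ Liouville step (the contradiction in the blow-up) goes through Almgren's frequency function, the identity (\ref{schauder1_step3_eqn5})--(\ref{schauder1_step3_eqn6}), the rigidity $N_{\varphi,0}\equiv k/2+\tau$, and a Fourier-mode analysis on $S^1$ — none of which has an analogue in your proposal.

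Second, the shell summation presupposes what needs to be proved. Your plan is to apply the interior estimate on each annulus $B^g_{2^{-j}r}(0,\tilde y_0)\setminus B^g_{2^{-j-1}r}(0,\tilde y_0)$ and sum. But the interior estimate on a shell of $g$-radius $\rho$ controls the Hölder seminorms of $\tilde\psi = \psi - P$ there only in terms of $\sup_{\text{slightly larger shell}} |\tilde\psi|$ plus the $f$-contribution. To make the sum converge you need an a priori decay $\sup_{B^g_\rho} |\tilde\psi| \lesssim \rho^{k/2+\tau} \bigl(\sup|\psi| + \|f\|_{\mathcal{H}^{k-2,k-2,\tau}}\bigr)$ — and establishing that quantitative decay is exactly the content of the Schauder estimate, not a by-product of it. The hypothesis $\psi \in C^{g,k,\tau}$ and the compatibility (\ref{schauder1_eqn2}) give you $L\tilde\psi = f - f(\,\cdot\,;\tilde y_0)$ with vanishing data, but no quantitative control on $\sup|\tilde\psi|$; and a barrier argument for the degenerate $L$ near the spine is not available in any obvious form. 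The paper circumvents this entirely by a compactness argument: reduce (\ref{schauder1_eqn3}) to an improvement-of-oscillation claim (\ref{schauder1_step1_eqn1}), use the abstract covering Lemma \ref{SALemma} plus the interpolation Lemma \ref{interplemma} to globalize it, and prove the improvement by contradiction — normalize, extract a blow-up limit $\chi$ solving a homogeneous model equation with growth $\lesssim |X|^{k/2+\tau}$, and rule it out by a Liouville theorem (classical for $k$ even, via frequency/Fourier for $k$ odd). If you want to pursue the Campanato-style route you sketched, you would at minimum need to replace the dyadic summation with a genuine excess-improvement iteration, whose key comparison step would again rest on a Liouville-type rigidity statement — the same one the paper's blow-up uses.
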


Before proving Lemma \ref{schauderlemma}, we need to prove Schauders estimate away from the singular set, some interpolation inequalities, and an abstract covering lemma similar to CITE.

\begin{lemma}[Schauder estimate away from the singular set] \label{schauder_afbs}
(i) Let $\lambda \in (0,1)$.  Let $\psi \in C^{1,\lambda}(B^g_1(0) \setminus \{0\} \times \mathbb{R}^{n-2})$ and $f^i \in C^{0,\lambda}(B^g_1(0) \setminus  \{0\} \times \mathbb{R}^{n-2})$ for $i = 1,2,\ldots,n$.  Suppose that $\psi$ is a solution to (\ref{schauder1_eqn1}).  Then 
\begin{align*}
	&\sum_{|\alpha|+|\beta| \leq 1} \frac{\rho^{|\alpha|+|\beta|}}{|\tilde x_0|^{|\alpha|}} 
		\sup_{B^g_{\rho/2}(\tilde X_0)} |D_{\tilde x}^{\alpha} D_{\tilde y}^{\beta} \psi| 
	+ \sum_{|\alpha|+|\beta| = 1} \frac{\rho^{|\alpha|+|\beta|+\lambda}}{|\tilde x_0|^{|\alpha|}} 
		[D_{\tilde x}^{\alpha} D_{\tilde y}^{\beta} \psi]_{g,\lambda,B^g_{\rho/2}(\tilde X_0)} 
	\leq C \sup_{B^g_{\rho}(\tilde X_0)} |\psi| \\& + C \left( \sum_{i=1}^2 \frac{\rho}{|\tilde x_0|} \sup_{B^g_{\rho}(\tilde X_0)} |f^i| 
	+ \sum_{i=3}^n \frac{\rho}{|\tilde x_0|^2} \sup_{B^g_{\rho}(\tilde X_0)} |f^i| 
	+ \sum_{i=1}^2 \frac{\rho^{1+\lambda}}{|\tilde x_0|} [f^i]_{g,\lambda,B^g_{\rho}(\tilde X_0)} 
	+ \sum_{i=3}^n \frac{\rho^{1+\lambda}}{|\tilde x_0|^2} [f^i]_{g,\lambda,B^g_{\rho}(\tilde X_0)} \right)
\end{align*}
for every $\tilde X_0 = (\tilde x_0,\tilde y_0) \in B^g_1(0)$ and $0 < \rho \leq \min\{|\tilde x_0|^2/4,1-d_g(\tilde X_0,0)\}$ for some constant $C = C(n,k,\lambda) \in (0,\infty)$. 

(ii) Let $k \geq 2$ be an integer and $\lambda \in (0,1)$.  Let $\psi \in C^{k,\lambda}(B^g_1(0) \setminus \{0\} \times \mathbb{R}^{n-2})$, and $f \in C^{k-2,\lambda}(B^g_1(0) \setminus  \{0\} \times \mathbb{R}^{n-2})$.  Suppose that $\psi$ is a solution to (\ref{schauder1_eqn1}).  Then 
\begin{align*}
	&\sum_{|\alpha|+|\beta| \leq k} \frac{\rho^{|\alpha|+|\beta|}}{|\tilde x_0|^{|\alpha|}} 
		\sup_{B^g_{\rho/2}(\tilde X_0)} |D_{\tilde x}^{\alpha} D_{\tilde y}^{\beta} \psi| 
	+ \sum_{|\alpha|+|\beta| = k} \frac{\rho^{|\alpha|+|\beta|+\lambda}}{|\tilde x_0|^{|\alpha|}} 
		[D_{\tilde x}^{\alpha} D_{\tilde y}^{\beta} \psi]_{g,\lambda,B^g_{\rho/2}(\tilde X_0)} 
	\leq C \sup_{B^g_{\rho}(\tilde X_0)} |\psi| \\& + C \left( \sum_{|\alpha|+|\beta| \leq k-2} \frac{\rho^{2+|\alpha|+|\beta|}}{|\tilde x_0|^{2+|\alpha|}} 
		\sup_{B^g_{\rho}(\tilde X_0)} |D_{\tilde x}^{\alpha} D_{\tilde y}^{\beta} f| 
	+ \sum_{|\alpha|+|\beta| = k-2} \frac{\rho^{2+|\alpha|+|\beta|+\lambda}}{|\tilde x_0|^{2+|\alpha|}} 
		[D_{\tilde x}^{\alpha} D_{\tilde y}^{\beta} f]_{g,\lambda,B^g_{\rho}(\tilde X_0)} \right)
\end{align*}
for every $\tilde X_0 = (\tilde x_0,\tilde y_0) \in B^g_1(0)$ and $0 < \rho \leq \min\{|\tilde x_0|^2/4,1-d_g(\tilde X_0,0)\}$ for some constant $C = C(n,k,\lambda) \in (0,\infty)$. 
\end{lemma}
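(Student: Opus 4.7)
The plan is to exploit the fact that on a $g$-ball $B^g_\rho(\tilde X_0)$ with $\tilde X_0 = (\tilde x_0,\tilde y_0)$ and $\rho \leq |\tilde x_0|^2/4$, the coefficient $4|\tilde x|^2$ is pinched between $9|\tilde x_0|^2/4$ and $25|\tilde x_0|^2/4$, so the operator $\Delta_{\tilde x}+4|\tilde x|^2\Delta_{\tilde y}$ is uniformly elliptic with smooth coefficients once the overall factor of $|\tilde x_0|^2$ is scaled out. I would push the equation forward under an affine diffeomorphism that flattens $B^g_\rho(\tilde X_0)$ to a Euclidean domain of unit diameter, apply the classical interior Schauder estimates there, and translate the bounds back; the weights $|\tilde x_0|^{-|\alpha|}$ and the powers of $\rho$ in the statement then come purely from the chain rule.

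Specifically, I would set
\[
\xi = \frac{|\tilde x_0|(\tilde x-\tilde x_0)}{\rho},\qquad \eta = \frac{\tilde y-\tilde y_0}{\rho},\qquad \bar\psi(\xi,\eta) = \psi\!\left(\tilde x_0 + \tfrac{\rho\xi}{|\tilde x_0|},\,\tilde y_0+\rho\eta\right),
\]
and observe that
\[
\left(\frac{\rho}{|\tilde x_0|}\right)^{2}\!\left(\Delta_{\tilde x}\psi+4|\tilde x|^2\Delta_{\tilde y}\psi\right) = \Delta_\xi\bar\psi + a(\xi)\Delta_\eta\bar\psi,
\]
where $a(\xi)=4|\tilde x|^2/|\tilde x_0|^2$ is a polynomial in $\xi$ bounded in $[9/4,25/4]$ with all $C^k$ norms controlled by constants depending only on $n$ and $k$. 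The pullback of $g$ under this map equals $\rho^2(a(\xi)d\xi^2+d\eta^2)$, which is comparable to $\rho^2$ times the Euclidean metric on $(\xi,\eta)$-space, so $B^g_\rho(\tilde X_0)$ maps onto a domain comparable to a Euclidean unit ball.

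For part (i), I would write the equation in divergence form,
\[
\sum_{i=1}^2 D_{\tilde x_i}(D_{\tilde x_i}\psi) + \sum_{i=3}^n D_{\tilde y_{i-2}}\!\left(4|\tilde x|^2 D_{\tilde y_{i-2}}\psi\right) = \sum_{i=1}^n D_i f^i,
\]
so that under the rescaling it becomes the uniformly elliptic divergence-form equation $\sum_{i=1}^2 D_{\xi_i}(D_{\xi_i}\bar\psi) + \sum_{i=3}^n D_{\eta_{i-2}}(a(\xi) D_{\eta_{i-2}}\bar\psi) = \sum_i D_i\tilde f^i$ with $\tilde f^i=(\rho/|\tilde x_0|)\bar f^i$ for $i=1,2$ and $\tilde f^i=(\rho/|\tilde x_0|^2)\bar f^i$ for $i\geq3$. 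The standard interior $C^{1,\lambda}$ Schauder estimate on the rescaled unit ball gives $\|\bar\psi\|_{C^{1,\lambda}(B_{1/2})}\leq C(\sup|\bar\psi|+\sum_i\|\tilde f^i\|_{C^{0,\lambda}(B_1)})$, and unraveling via $D_{\xi_i}\bar\psi=(\rho/|\tilde x_0|)D_{\tilde x_i}\psi$, $D_{\eta_{i-2}}\bar\psi=\rho\,D_{\tilde y_{i-2}}\psi$, and $[\,\cdot\,]_{C^{0,\lambda}(B_1)}=\rho^\lambda[\,\cdot\,]_{g,\lambda,B^g_\rho(\tilde X_0)}$ produces exactly the claimed inequality.

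For part (ii), the same rescaling turns the non-divergence equation (\ref{schauder1_eqn1}) into $\Delta_\xi\bar\psi+a(\xi)\Delta_\eta\bar\psi=\tilde f$ with $\tilde f=(\rho/|\tilde x_0|)^2\bar f$, to which the classical $k$-th order interior Schauder estimate applies; the chain-rule identities $D_\xi^\alpha D_\eta^\beta\bar\psi = (\rho^{|\alpha|+|\beta|}/|\tilde x_0|^{|\alpha|}) D_{\tilde x}^\alpha D_{\tilde y}^\beta\psi$ and $D_\xi^\alpha D_\eta^\beta\tilde f = (\rho^{2+|\alpha|+|\beta|}/|\tilde x_0|^{2+|\alpha|}) D_{\tilde x}^\alpha D_{\tilde y}^\beta f$ give precisely the weights appearing on both sides of the claimed estimate. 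The only point that requires care is verifying that the Schauder constants in the rescaled picture depend only on $n$, $k$, $\lambda$; this follows from the uniform ellipticity together with the $C^k$ bounds on $a(\xi)$ noted above, both of which are independent of $\tilde X_0$ and $\rho$. I do not anticipate any serious obstacle here, as the content of the lemma is essentially the correct choice of anisotropic rescaling dictated by the metric $g$.
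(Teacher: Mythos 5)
Your proposal is correct and matches the paper's argument: the paper proves this by applying the ordinary (divergence-form and non-divergence-form, respectively) interior Schauder estimates to the rescaled function $\psi(\tilde x_0 + \rho\tilde x/(2|\tilde x_0|),\tilde y_0+\rho\tilde y)$, using exactly the anisotropic change of variables you describe, with the factor $\rho \leq |\tilde x_0|^2/4$ ensuring that the coefficient $4|\tilde x|^2/|\tilde x_0|^2$ is uniformly elliptic and smoothly bounded on the image. The only cosmetic difference is the normalizing constant in the $\tilde x$-dilation.
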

\begin{proof}
Apply the ordinary Schauder estimates to the rescaled function $\psi(\tilde x_0 + \rho \tilde x/2|\tilde x_0|,\tilde y_0 + \rho \tilde y)$, using the fact that $(\tilde x_0 + \rho \tilde x/2|\tilde x_0|,\tilde y_0 + \rho \tilde y) \in B^g_{\rho/2}(\tilde X_0) \Rightarrow (\tilde x,\tilde y) \in B^n_{1/2}(0)$ and $(\tilde x,\tilde y) \in B_{4/5}(0) \Rightarrow (\tilde x_0 + \rho \tilde x/2|\tilde x_0|,\tilde y_0 + \rho \tilde y) \in B^g_{\rho}(\tilde X_0)$. 
\end{proof}

\begin{lemma}[Interpolation inequality] \label{interplemma}
For every $\eta > 0$, integer $k \geq 1$, and $\tau \in (0,1/2]$, there exists a constant $C = C(n,m,k,\tau,\eta) \in (0,\infty)$ such that for any $\psi \in C^{g,k,\tau}(B^g_{\rho}(0,\tilde y_0);\mathbb{R}^m)$, where $\tilde y_0 \in \mathbb{R}^{n-2}$ and $\rho > 0$, such that (\ref{schauder_evenodd}) holds true for all $(\tilde x,\tilde y) \in B^g_1(0)$, 
\begin{equation} \label{schauderint}
	\sum_{|\alpha|+2|\beta| \leq k} \rho^{|\alpha|/2+|\beta|} \sup_{B^g_{\rho}(0,\tilde y_0)} |D^{\alpha}_{\tilde x} D^{\beta}_{\tilde y} \psi|
	\leq C \sup_{B^g_{\rho}(0,\tilde y_0)} |\psi| 
	+ \eta \sum_{|\alpha|+2|\beta| = k} \rho^{k/2+\tau} [D^{\alpha}_{\tilde x} D^{\beta}_{\tilde y} \psi]_{g,\tau,B^g_{\rho}(0,\tilde y_0)}. 
\end{equation}
\end{lemma}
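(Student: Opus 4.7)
The plan is to first reduce to the normalized case $\rho = 1$, $\tilde y_0 = 0$ via a natural parabolic rescaling, and then establish the scale-$1$ inequality by the standard contradiction-plus-compactness argument for interpolation inequalities in H\"older spaces, carefully adapted to the degenerate metric $g$.

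For the reduction, I will substitute $\psi'(\tilde x',\tilde y') := \psi(\sqrt{\rho}\,\tilde x',\tilde y_0+\rho\,\tilde y')$. Under this change of variables the metric satisfies $g\mapsto \rho^{2}g$, so $d_{g}$ scales by $\rho$ and $B^{g}_{\rho}(0,\tilde y_0)$ corresponds to $B^{g}_{1}(0)$ in the primed coordinates. Weighted derivatives transform as $D^{\alpha}_{\tilde x}D^{\beta}_{\tilde y}\psi=\rho^{-|\alpha|/2-|\beta|}D^{\alpha}_{\tilde x'}D^{\beta}_{\tilde y'}\psi'$ and the $g$-H\"older seminorm of order $\tau$ acquires an additional $\rho^{-\tau}$; these factors are precisely the $\rho^{|\alpha|/2+|\beta|}$ and $\rho^{k/2+\tau}$ appearing in (\ref{schauderint}). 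The parity condition $\psi'(-\tilde x',\tilde y')=(-1)^{k}\psi'(\tilde x',\tilde y')$ is preserved.

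For the contradiction step, suppose there is an $\eta>0$ for which the scale-$1$ inequality fails with every $C$. Passing to a bad sequence and normalizing so that $\sum_{|\alpha|+2|\beta|\le k}\sup|D^{\alpha}_{\tilde x}D^{\beta}_{\tilde y}\psi_j|=1$, we obtain $\sup|\psi_j|\to 0$ and $\sum_{|\alpha|+2|\beta|=k}[D^{\alpha}_{\tilde x}D^{\beta}_{\tilde y}\psi_j]_{g,\tau,B^{g}_{1}(0)}\to 0$. On any compact $K\Subset B^{g}_{1}(0)\setminus\{0\}\times\mathbb{R}^{n-2}$ the metric $g$ is equivalent to the Euclidean one, so the weighted bounds yield uniform Euclidean $C^{k}$-bounds; Arzel\`a–Ascoli then extracts a subsequence converging in $C^{k}_{\mathrm{loc}}$ on $B^{g}_{1}(0)\setminus\{0\}\times\mathbb{R}^{n-2}$ to a limit which is identically $0$ because $\sup|\psi_j|\to 0$. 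Hence $D^{\alpha}_{\tilde x}D^{\beta}_{\tilde y}\psi_j\to 0$ uniformly on each such $K$. For top-order derivatives ($|\alpha|+2|\beta|=k$) full uniform convergence on $B^{g}_{1}(0)$ is then immediate from the vanishing $g$-H\"older bound: fixing $\tilde Y_0\in K$, for every $\tilde X\in B^{g}_{1}(0)$,
\begin{equation*}
|D^{\alpha}_{\tilde x}D^{\beta}_{\tilde y}\psi_j(\tilde X)|\le |D^{\alpha}_{\tilde x}D^{\beta}_{\tilde y}\psi_j(\tilde Y_0)| + [D^{\alpha}_{\tilde x}D^{\beta}_{\tilde y}\psi_j]_{g,\tau,B^{g}_{1}(0)}\,d_g(\tilde X,\tilde Y_0)^{\tau},
\end{equation*}
and both terms vanish in the limit. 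For intermediate orders I will induct downward on $|\alpha|+2|\beta|$: one further $\tilde x$-derivative still lies in the controlled range, which supplies equi-Lipschitz bounds in $\tilde x$, while $\tilde y$-equicontinuity in the $g$-metric comes from integrating top-order derivatives along $\tilde y$-geodesics (for which $d_g\sim|\Delta\tilde y|$). Combining these with the already established convergence on compact subsets away from $\{0\}\times\mathbb{R}^{n-2}$ forces $D^{\alpha}_{\tilde x}D^{\beta}_{\tilde y}\psi_j\to 0$ uniformly on $B^{g}_{1}(0)$, contradicting the normalization $\sum\sup|D^{\alpha}_{\tilde x}D^{\beta}_{\tilde y}\psi_j|=1$.

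The main obstacle will be the last stage — propagating uniform convergence of the \emph{intermediate}-order derivatives across the singular cylinder $\{0\}\times\mathbb{R}^{n-2}$, where $g$ degenerates and the Euclidean Arzel\`a–Ascoli argument fails. The parity hypothesis (\ref{schauder_evenodd}) is indispensable here: it forces intermediate derivatives of the wrong parity in $\tilde x$ to vanish on $\{0\}\times\mathbb{R}^{n-2}$, providing the natural boundary control that lets the downward induction close. Once this degeneracy is handled the remaining steps are routine.
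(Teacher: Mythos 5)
Your contradiction setup contains a genuine error at the key step. If (\ref{schauderint}) fails for some fixed $\eta_0>0$ and every $C$, then along a normalized bad sequence you have
\begin{equation*}
1=\sum_{|\alpha|+2|\beta|\le k}\sup|D^{\alpha}_{\tilde x}D^{\beta}_{\tilde y}\psi_j| > C_j \sup|\psi_j| + \eta_0\sum_{|\alpha|+2|\beta|=k}[D^{\alpha}_{\tilde x}D^{\beta}_{\tilde y}\psi_j]_{g,\tau,B^g_1(0)},
\end{equation*}
which yields $\sup|\psi_j|\to 0$ and $\sum_{|\alpha|+2|\beta|=k}[D^{\alpha}_{\tilde x}D^{\beta}_{\tilde y}\psi_j]_{g,\tau}\le 1/\eta_0$ --- \emph{bounded}, not vanishing. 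Your claim that the top-order $g$-H\"older seminorms tend to zero does not follow, and your next step ("both terms vanish in the limit") is then false: $[D^{\alpha}_{\tilde x}D^{\beta}_{\tilde y}\psi_j]_{g,\tau}\,d_g(\tilde X,\tilde Y_0)^{\tau}$ stays $O(1)$ when $\tilde X$ is far from $\tilde Y_0$ in $d_g$. In particular, you have no direct control of the top-order derivatives near $\{0\}\times\mathbb{R}^{n-2}$, and note also that at top order ($|\alpha|+2|\beta|=k$) one has $|\alpha|\equiv k\ (\mathrm{mod}\ 2)$, so parity does \emph{not} force those derivatives to vanish on $\{0\}\times\mathbb{R}^{n-2}$ --- the one case where you cannot invoke the boundary control you mention at the end. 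A compactness argument can still be pushed through (e.g., use $d_g((\tilde x,\tilde y),(0,\tilde y))=|\tilde x|^2$ and the bounded seminorm to show $D^{\alpha}_{\tilde x}D^{\beta}_{\tilde y}\psi_j(0,\tilde y)\to 0$ pointwise, then upgrade to uniform in $\tilde y$ by equicontinuity along the cylinder, then close the estimate on $\{|\tilde x|<\delta\}$), but that is a nontrivial additional argument which your write-up omits.

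For contrast, the paper avoids this entirely by a direct (non-contradiction) induction on $k$: the $k=1$ case is a one-line mean-value argument (pick a small ball where $|D_j\psi|$ is near its sup, estimate $\inf$ on that ball by $\sup|\psi|$ and the oscillation by the H\"older seminorm); the parity is used precisely to write $\sup|D_{\tilde x}\psi|\le 2\sup|D_{\tilde x\tilde x}\psi|$ since $D_{\tilde x}\psi$ vanishes on the cylinder; and the induction step applies the lower-order estimate to $D_{\tilde x}\psi$ or $D_{\tilde x\tilde x}\psi,D_{\tilde y}\psi$ and absorbs the intermediate sup via (\ref{schauderint1}). Your parabolic rescaling reduction to $\rho=1,\tilde y_0=0$ is fine and matches the spirit of the paper's (implicit) scaling, but the contradiction step as written is not correct.
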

\begin{proof} 
WLOG assume $\tilde y_0 = 0$, $\rho = 1$, and $m = 1$.  We shall first consider the cases $k = 1,2$ and then prove the cases $k \geq 3$ by induction.  For the case $k = 1$, we want to show that for every $\psi \in C^{g,1,\tau}(B^g_1(0);\mathbb{R}^m)$, 
\begin{equation} \label{schauderint1}
	\sup_{B^g_1(0)} |D_{\tilde x} \psi| \leq C \sup_{B^g_1(0)} |\psi| + \eta [D_{\tilde x} \psi]_{g,\tau,B^g_1(0)}. 
\end{equation}
To see this, observe that given $\sigma \in (0,1)$ and $j \in \{1,2\}$, we can choose a ball $B^g_{\sigma}(\tilde Y) \subset B^g_1(0)$ such that  
\begin{equation*}
	\sup_{B^g_1(0)} |D_j \psi| = \sup_{B^g_{\sigma}(\tilde Y)} |D_j \psi| 
\end{equation*}
so that by the definition of $[D_j \psi]_{g,\tau,B^g_1(0)}$ and the mean value theorem, 
\begin{align*}
	\sup_{B^g_1(0)} |D_j \psi| &\leq \inf_{B^g_{\sigma}(\tilde Y)} |D_j \psi| + (2\sigma)^{\tau} [D_j \psi]_{g,\tau,B^g_1(0)} \\
	&\leq (2\sigma)^{-1} \sup_{B^g_1(0)} |\psi| + (2\sigma)^{\tau} [D_j \psi]_{g,\tau,B^g_1(0)} 
\end{align*}
and thus (\ref{schauderint1}) follows by choosing $\sigma = \sigma(\tau,\eta)$ sufficiently small.  By this same type of argument, for every $\eta > 0$ there exists $C = C(n,\tau,\eta) \in (0,\infty)$ such that for every $\psi \in C^{g,2,\tau}(B^g_1(0);\mathbb{R}^m)$, 
\begin{equation} \label{schauderint2}
	\sup_{B^g_1(0)} |D_{\tilde y} \psi| \leq C \sup_{B^g_1(0)} |\psi| + \eta [D_{\tilde y} \psi]_{g,\tau,B^g_1(0)}. 
\end{equation}

In light of (\ref{schauderint2}), to complete proof for the case $k = 2$, we need to show that for every $\eta > 0$ there exists $C = C(n,\tau,\eta) \in (0,\infty)$ such that for every $\psi \in C^{g,2,\tau}(B^g_1(0);\mathbb{R}^m)$ such that (\ref{schauder_evenodd}) holds true for all $(\tilde x,\tilde y) \in B^g_1(0)$, 
\begin{equation} \label{schauderint3}
	\sup_{B^g_1(0)} |D_{\tilde x} \psi| + \sup_{B^g_1(0)} |D_{\tilde x \tilde x} \psi| 
	\leq C \sup_{B^g_1(0)} |\psi| + \eta [D_{\tilde x \tilde x} \psi]_{g,\tau,B^g_1(0)} + \eta [D_{\tilde y} \psi]_{g,\tau,B^g_1(0)}. 
\end{equation}
First observe that by (\ref{schauder_evenodd}) and (\ref{schauderint1}) with $D_{\tilde x} \psi$ in place of $\psi$, 
\begin{equation} \label{schauderint4}
	\sup_{B^g_1(0)} |D_{\tilde x} \psi| + \sup_{B^g_1(0)} |D_{\tilde x \tilde x} \psi| 
	\leq 2 \sup_{B^g_1(0)} |D_{\tilde x \tilde x} \psi| 
	\leq C(n,\tau,\eta) \sup_{B^g_1(0)} |D_{\tilde x} \psi| + \frac{\eta}{2} [D_{\tilde x \tilde x} \psi]_{g,\tau,B^g_1(0)}. 
\end{equation}
Let $\sigma > 0$.  If $\tilde y_0 \in B^{n-2}_1(0)$ with $B^2_{\sigma}(0) \times \{\tilde y_0\} \subset B^g_1(0)$, then 
\begin{equation} \label{schauderint5}
	\sup_{B^2_{\sigma}(0) \times \{\tilde y_0\}} |D_{\tilde x} \psi| \leq \sigma \sup_{B^g_1(0)} |D_{\tilde x \tilde x} \psi|. 
\end{equation}
If instead $\tilde y_0 \in B^{n-2}_1(0)$ with $B^2_{\sigma}(0) \times \{\tilde y_0\} \not\subset B^g_1(0)$, then by standard interpolation inequalities for every $\varepsilon > 0$,  
\begin{equation} \label{schauderint6}
	\sup_{B^2_{\sigma}(0) \times \{\tilde y_0\}} |D_{\tilde x} \psi| 
	\leq C(n,\tau,\eta,\sigma,\varepsilon) \sup_{B^g_1(0)} |\psi| + \varepsilon \sup_{B^g_1(0)} |D_{\tilde x \tilde x} \psi|. 
\end{equation}
By combining (\ref{schauderint4}), (\ref{schauderint5}), and (\ref{schauderint6}) and choosing $\sigma = \sigma(n,\tau,\eta)$ and $\varepsilon = \sigma(n,\tau,\eta)$ sufficiently small, we obtain (\ref{schauderint3}). 

Now suppose that (\ref{schauderint}) holds true whenever $k < K$ for some integer $K \geq 3$.  Let $\eta > 0$ and $\psi \in C^{g,K,\tau}(B^g_1(0);\mathbb{R}^m)$.  If $K$ is odd, then by (\ref{schauder_evenodd}), the induction hypothesis applied to $D_{\tilde x} \psi$, and (\ref{schauderint1}), 
\begin{align*}
	&\sum_{|\alpha|+2|\beta| \leq K} \sup_{B^g_1(0)} |D^{\alpha}_{\tilde x} D^{\beta}_{\tilde y} \psi| 
	\\&\leq C(n,K,\tau,\eta) \sup_{B^g_1(0)} |D_{\tilde x} \psi| 
		+ \frac{\eta}{2} \sum_{|\alpha|+2|\beta| = K} [D^{\alpha}_{\tilde x} D^{\beta}_{\tilde y} \psi]_{g,\tau,B^g_1(0)}
	\\&\leq C(n,K,\tau,\eta) \sup_{B^g_1(0)} |\psi| + \frac{1}{\sqrt{32}} [D_{\tilde x} \psi]_{g,1/2,B^g_1(0)} 
		+ \frac{\eta}{2} \sum_{|\alpha|+2|\beta| = K} [D^{\alpha}_{\tilde x} D^{\beta}_{\tilde y} \psi]_{g,\tau,B^g_1(0)}
	\\&\leq C(n,K,\tau,\eta) \sup_{B^g_1(0)} |\psi| 
		+ \frac{1}{2} \left( \sup_{B^g_1(0)} |D_{\tilde x \tilde x \tilde x} \psi| + \sup_{B^g_1(0)} |D_{\tilde x \tilde y} \psi| \right) 
		+ \frac{\eta}{2} \sum_{|\alpha|+2|\beta| = K} [D^{\alpha}_{\tilde x} D^{\beta}_{\tilde y} \psi]_{g,\tau,B^g_1(0)}, 
\end{align*}
which implies (\ref{schauderint}) with $k = K$.  If $K$ is even, then by (\ref{schauder_evenodd}), the induction hypothesis applied to $D_{\tilde x \tilde x} \psi$ and $D_{\tilde y} \psi$, (\ref{schauderint2}), and (\ref{schauderint3}), 
\begin{align*}
	&\sum_{|\alpha|+2|\beta| \leq K} \sup_{B^g_1(0)} |D^{\alpha}_{\tilde x} D^{\beta}_{\tilde y} \psi| 
	\\&\leq C(n,K,\tau,\eta) \left( \sup_{B^g_1(0)} |\psi| + \sup_{B^g_1(0)} |D_{\tilde x \tilde x} \psi| + \sup_{B^g_1(0)} |D_{\tilde y} \psi| \right)
		+ \frac{\eta}{2} \sum_{|\alpha|+2|\beta| = K} [D^{\alpha}_{\tilde x} D^{\beta}_{\tilde y} \psi]_{g,\tau,B^g_1(0)}
	\\&\leq C(n,K,\tau,\eta) \sup_{B^g_1(0)} |\psi| 
		+ \frac{1}{\sqrt{32}} \left( [D_{\tilde x \tilde x} \psi]_{g,1/2,B^g_1(0)} + [D_{\tilde y} \psi]_{g,1/2,B^g_1(0)} \right) 
		+ \frac{\eta}{2} \sum_{|\alpha|+2|\beta| = K} [D^{\alpha}_{\tilde x} D^{\beta}_{\tilde y} \psi]_{g,\tau,B^g_1(0)}
	\\&\leq C(n,K,\tau,\eta) \sup_{B^g_1(0)} |\psi| 
		+ \frac{1}{2} \sum_{|\alpha|+2|\beta| = 4} \sup_{B^g_1(0)} |D^{\alpha}_{\tilde x} D^{\beta}_{\tilde y} \psi|
		+ \frac{\eta}{2} \sum_{|\alpha|+2|\beta| = K} [D^{\alpha}_{\tilde x} D^{\beta}_{\tilde y} \psi]_{g,\tau,B^g_1(0)}, 
\end{align*}
which implies (\ref{schauderint}) with $k = K$.  
\end{proof}

\begin{lemma}[Abstract covering lemma] \label{SALemma}
Given $l \geq 0$, there exists $\delta = \delta(n,l) > 0$ such that if $\mathcal{C}$ is a collection of subsets of $B^g_1(0)$ containing all geodesic $g$-balls in $B^g_1(0)$ and $S : \mathcal{C} \rightarrow [0,\infty)$ such that $S$ is subadditive in the sense that 
\begin{equation*}
	S(A) \leq \sum_{j=1}^N S(A_j) \text{ whenever } A \subseteq \bigcup_{j=1}^N A_j
\end{equation*}
for $A,A_1,A_2,\ldots,A_N \in \mathcal{C}$ with and for some $\nu \geq 0$, 
\begin{equation} \label{SAL_eqn1}
	\rho^l S(B^g_{\rho/2}(\tilde x_0,\tilde y_0)) \leq \nu 
\end{equation}
for all $B^g_{\rho}(\tilde x_0,\tilde y_0) \subseteq B^g_1(0)$ with $|\tilde x_0|^2/32 \leq \rho \leq |\tilde x_0|^2/4$ and 
\begin{equation} \label{SAL_eqn2}
	\rho^l S(B^g_{\rho/2}(0,\tilde y_0)) \leq \delta \rho^l S(B^g_{\rho}(0,\tilde y_0)) + \nu 
\end{equation}
for all $\tilde y_0 \in B^{n-2}_1(0)$, then 
\begin{equation} \label{SAL_eqn3}
	\rho^l S(B^g_{\rho/2}(\tilde x_0,\tilde y_0)) \leq C \nu 
\end{equation}
for some constant $C = C(n,l) \in (0,\infty)$.  
\end{lemma}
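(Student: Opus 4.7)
The plan is to combine a dyadic contraction from (\ref{SAL_eqn2}) with a finite subadditive covering argument from (\ref{SAL_eqn1}). It helps to think of the setup in the Euclidean coordinates obtained by pulling back via $\tilde x \mapsto (\tilde x_1^2 - \tilde x_2^2,\,2\tilde x_1\tilde x_2,\,\tilde y)$, which sends $g$ to the Euclidean metric and fixes the singular set $\{0\}\times\mathbb{R}^{n-2}$: in these coordinates the $g$-balls centred on the singular set become ordinary Euclidean balls, and the covering counts appearing below reduce to standard dimensional counts (which they would not be in the original anisotropic $g$-metric, where the shells near the singular set are highly stretched).

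Writing $a(\tilde y,\rho) := \rho^l S(B^g_{\rho/2}(0,\tilde y))$, the hypothesis (\ref{SAL_eqn2}) becomes $a(\tilde y,\rho) \leq \delta\cdot 2^{-l}\,a(\tilde y,2\rho) + \nu$. I would fix $\delta = \delta(n,l)$ so that $\delta\cdot 2^{-l} \leq 1/2$, and iterate, which gives $a(\tilde y,\rho) \leq 2^{-k}\,a(\tilde y,2^k\rho) + 2\nu$ whenever $2^k\rho \leq 1$; thus it suffices to bound $a(\tilde y,1)$ uniformly in $\tilde y$. For this, I would cover the shell $B^g_{1/2}(0,\tilde y_0)\setminus B^g_{1/4}(0,\tilde y_0)$ by a dimensional number $N_1 = N_1(n)$ of (\ref{SAL_eqn1})-type balls in the off-singular region $\{|\tilde x|^2 \gtrsim 1\}$ (each contributing at most $\nu$ to the corresponding $\rho_k^l S$), together with a dimensional number $N_2 = N_2(n)$ of on-singular-set balls $B^g_{1/16}(0,\tilde y_j)$ for $\tilde y_j$ forming a $(1/32)$-net in the relevant subset of $\mathbb{R}^{n-2}$. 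Subadditivity and (\ref{SAL_eqn1}) then yield
\begin{equation*}
	a(\tilde y_0,1) \;\leq\; 2^l\,a(\tilde y_0,1/2) + C(n)\,8^l \max_j a(\tilde y_j,1/8) + C(n,l)\,\nu,
\end{equation*}
and inserting one application of the contraction into the first term and three applications into the second (giving $a(\tilde y_0,1/2) \leq \delta\cdot 2^{-l}\,a(\tilde y_0,1) + \nu$ and $a(\tilde y_j,1/8) \leq (\delta\cdot 2^{-l})^3 a(\tilde y_j,1) + C\nu$) and taking the supremum over $\tilde y_0$ on the left produces
\begin{equation*}
	\sup_{\tilde y} a(\tilde y,1) \;\leq\; (\delta + C(n)\,\delta^3)\,\sup_{\tilde y} a(\tilde y,1) + C(n,l)\,\nu.
\end{equation*}
Choosing $\delta = \delta(n,l)$ so small that $\delta + C(n)\,\delta^3 \leq 1/2$, the right-hand supremum is absorbed, giving $\sup_{\tilde y} a(\tilde y,1) \leq C(n,l)\,\nu$ and hence, via the iteration above, the conclusion (\ref{SAL_eqn3}) for all balls centred on the singular set.

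The general case $\tilde X_0 = (\tilde x_0,\tilde y_0)$ with $\tilde x_0 \neq 0$ is then a short reduction. If $\rho \leq |\tilde x_0|^2/4$, then either $\rho \in [|\tilde x_0|^2/32,|\tilde x_0|^2/4]$ and (\ref{SAL_eqn1}) applies directly, or $\rho < |\tilde x_0|^2/32$ and one encloses $B^g_{\rho/2}(\tilde X_0)$ inside a $g$-ball of radius in the admissible range (e.g.\ $|\tilde x_0|^2/16$), so subadditivity with (\ref{SAL_eqn1}) gives $\rho^l S(B^g_{\rho/2}(\tilde X_0)) \leq \nu$. If $\rho > |\tilde x_0|^2/4$, then $B^g_{\rho/2}(\tilde X_0) \subseteq B^g_r(0,\tilde y_0)$ for some $r \leq (9/2)\rho$, and subadditivity together with the singular-set bound just proved yields (\ref{SAL_eqn3}) with constant $C(n,l)$. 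The main obstacle will be the covering-and-absorption step of the middle paragraph: one must simultaneously keep the covering counts $N_1,N_2$ purely dimensional (which forces the preliminary passage to the Euclidean pullback picture) and take $\delta$ small enough that the large prefactor $C(n)\,8^l$ in front of $\delta^3$ is still dominated, fixing the quantitative dependence $\delta = \delta(n,l)$.
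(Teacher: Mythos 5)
Your strategy — cover the singular-centered ball by smaller singular-centered balls that feed into (\ref{SAL_eqn2}) and off-singular balls that feed into (\ref{SAL_eqn1}), then use subadditivity and choose $\delta$ small to absorb — is the same as the paper's, but your specific implementation has a gap in the choice of the quantity being absorbed. You bound everything by $\sup_{\tilde y} a(\tilde y,1)$, where $a(\tilde y,\rho) = \rho^l S(B^g_{\rho/2}(0,\tilde y))$. But for $\tilde y \neq 0$ the ball $B^g_1(0,\tilde y)$ is not contained in $B^g_1(0)$, so hypothesis (\ref{SAL_eqn2}) does not apply at scale $\rho = 1$ for such $\tilde y$, and $S(B^g_{1/2}(0,\tilde y))$ is not even necessarily defined when $|\tilde y| > 1/2$. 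Equivalently, the dyadic iteration $a(\tilde y,\rho) \leq 2^{-k} a(\tilde y, 2^k\rho) + 2\nu$ can only be carried up to $2^k\rho \leq 1 - |\tilde y|$, not to $2^k\rho \leq 1$; the top scale depends on $\tilde y$. So the passage from bounding small scales "up to" scale $1$, and then covering at a fixed scale $1$, does not close: the centers $\tilde y_j$ appearing in your covering have their own (smaller) admissible top scales, and one cannot take a single common $\sup_{\tilde y} a(\tilde y,1)$.

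The fix, which is exactly what the paper does, is to not separate the "iterate up" and "cover at the top" steps. Define
\begin{equation*}
  Q \;=\; \sup\left\{ \rho^l S\bigl(B^g_{\rho/2}(0,\tilde y)\bigr) \;:\; \tilde y \in B^{n-2}_1(0),\ 0 < \rho \leq 1 - |\tilde y| \right\},
\end{equation*}
i.e., a supremum over all admissible center-scale pairs simultaneously. Then for any such $(\tilde y,\rho)$ one performs your covering argument once, at that scale: cover $B^g_{\rho/2}(0,\tilde y)$ by $N \leq C(n)$ singular-centered balls $B^g_{\rho/8}(0,\tilde z_k)$ (each of which, after one application of (\ref{SAL_eqn2}) and the definition of $Q$, contributes at most $C(l)(\delta Q + \nu)/\rho^l$) and $N' \leq C(n)$ off-singular balls in the admissible range of (\ref{SAL_eqn1}) (each contributing $\leq C(l)\nu/\rho^l$). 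Subadditivity and taking the supremum over $(\tilde y,\rho)$ give $Q \leq C(n,l)\,\delta\,Q + C(n,l)\,\nu$ directly, and choosing $\delta = \delta(n,l)$ small absorbs the $\delta Q$ term without any need for the preliminary dyadic iteration — and without ever invoking an ill-defined top-scale quantity. Your final reduction from general $(\tilde x_0,\tilde y_0)$ to the singular-centered case is fine and is implicit in the paper's last sentence.
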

\begin{proof}
Define 
\begin{equation*}
	Q = \sup_{\tilde y \in B^{n-2}_1(0), \, \rho \in (0,1-|\tilde y|]} \rho^l S(B^g_{\rho/2}(0,\tilde y_0)) .
\end{equation*}
Select $\tilde y \in B^{n-2}_1(0)$ and $\rho \in (0,1-|\tilde y|]$.  Cover $B^2_{\rho^{1/2}/4}(0) \times B^{n-2}_{\rho/2}(\tilde y)$ by balls $B^g_{\rho/8}(0,\tilde z_k)$, $k = 1,2,\ldots,N$, with $\tilde z_k \in B^{n-2}_{\rho/2}(0)$ and $N \leq C(n)$.  Cover $B^g_{\rho/2}(0,\tilde y) \setminus (B^2_{\rho^{1/2}/4}(0) \times B^{n-2}_{\rho/2}(\tilde y))$ by balls $B^g_{\rho/64}(\tilde x_k,\tilde y_k)$, $k = 1,2,\ldots,N'$, with $\tilde x_k \in \mathbb{R}^2$ and $\tilde y_k \in \mathbb{R}^{n-2}$ such that $(\tilde x_k,\tilde y_k) \in B^g_{\rho/2}(0,y)$ and $|\tilde x_k| \geq \rho^{1/2}/4$ and $N' \leq C(n)$.  By (\ref{SAL_eqn1}) and (\ref{SAL_eqn2}), the subadditivity of $S$, and the definition of $Q$, 
\begin{equation} \label{SAL_eqn4}
	Q \leq 4^l N \delta \, Q + (4^l N + 32^l N') \nu.
\end{equation}
Recalling that $N, N' \leq C(n)$ and then choosing $\delta = \delta(n) > 0$ so that $4^l N \delta < 1/2$, (\ref{SAL_eqn4}) yields 
\begin{equation*}
	Q \leq C \nu 
\end{equation*}
for some constant $C = C(n,l) \in (0,\infty)$, which by the definition of $Q$ implies (\ref{SAL_eqn3}).
\end{proof}

\begin{proof}[Proof of Lemma \ref{schauderlemma}]  We claim that in order to prove Lemma \ref{schauderlemma} it suffices to show that for every $\delta > 0$ there exists $\varepsilon = \varepsilon(n,\tau,\nu,\delta) > 0$ and $C = C(n,\tau,\nu,\delta) \in (0,\infty)$ such that under the hypotheses of Lemma \ref{schauderlemma}, 
\begin{align} \label{schauder1_step1_eqn1}
	&\sum_{|\alpha|+2|\beta| = k} [D_{\tilde x}^{\alpha} D_{\tilde y}^{\beta} \psi]_{g,\tau,B^g_{1/2}(0)} 
	\leq \delta \sum_{|\alpha|+2|\beta| = k} [D_{\tilde x}^{\alpha} D_{\tilde y}^{\beta} \psi]_{g,\tau,B^g_1(0)} \nonumber \\
	&+ C \left( \sum_{|\alpha|+2|\beta| \leq k} \sup_{B^g_1(0)} |D_{\tilde x}^{\alpha} D_{\tilde y}^{\beta} \psi| + \|f\|_{\mathcal{H}^{k-2,k-2,\tau}(B^g_1(0))} \right) 
\end{align}
Suppose that $\psi$ satisfies the hypotheses of Lemma \ref{schauderlemma} and suppose that (\ref{schauder1_step1_eqn1}) holds true whenever the hypotheses of Lemma \ref{schauderlemma} hold true.  By applying the translation and scaling $(\tilde x,\tilde y) \mapsto (\rho^{1/2} \tilde x, \tilde y_0 + \rho \tilde y)$ to (\ref{schauder1_step1_eqn1}), we conclude that under the hypotheses of Lemma \ref{schauderlemma}, 
\begin{align} \label{schauder1_step1_eqn2}
	&\sum_{|\alpha|+2|\beta| = k} \rho^{k/2+\tau} [D_{\tilde x}^{\alpha} D_{\tilde y}^{\beta} \psi]_{g,\tau,B^g_{\rho/2}(0,\tilde y_0)} 
	\leq \delta \sum_{|\alpha|+2|\beta| = k} \rho^{k/2+\tau} [D_{\tilde x}^{\alpha} D_{\tilde y}^{\beta} \psi]_{g,\tau,B^g_{\rho}(0,\tilde y_0)} \nonumber \\
	&+ C \left( \sum_{|\alpha|+2|\beta| \leq k} \rho^{|\alpha|/2+|\beta|} \sup_{B^g_{\rho}(0,\tilde y_0)} |D_{\tilde x}^{\alpha} D_{\tilde y}^{\beta} \psi| 
		+ \rho^{k/2} \|f\|_{\mathcal{H}^{k-2,k-2,\tau}(B^g_1(0))} \right) 
\end{align}
for every $\tilde y_0 \in B^{n-2}_1(0)$ and $\rho \in (0,1-|\tilde y_0|]$ for some $C = C(n,\tau,\nu,\delta) \in (0,\infty)$.  By the interpolation inequalities of Lemma \ref{interplemma}, (\ref{schauder1_step1_eqn2}) implies that 
\begin{align} \label{schauder1_step1_eqn3}
	&\sum_{|\alpha|+2|\beta| = k} \rho^{k/2+\tau} [D_{\tilde x}^{\alpha} D_{\tilde y}^{\beta} \psi]_{g,\tau,B^g_{\rho/2}(0,\tilde y_0)} 
	\leq 2\delta \sum_{|\alpha|+2|\beta| = k} \rho^{k/2+\tau} [D_{\tilde x}^{\alpha} D_{\tilde y}^{\beta} \psi]_{g,\tau,B^g_{\rho}(0,\tilde y_0)} \nonumber \\
	&+ C \left( \sup_{B^g_{\rho}(0,\tilde y_0)} |\psi| + \|f\|_{\mathcal{H}^{k-2,k-2,\tau}(B^g_1(0))} \right) 
\end{align}
for every $\tilde y_0 \in B^{n-2}_1(0)$ and $\rho \in (0,1-|\tilde y_0|]$ for some $C = C(n,\tau,\nu,\delta) \in (0,\infty)$.  By Lemma \ref{SALemma} using the Schauder estimate of Lemma \ref{schauder_afbs} and (\ref{schauder1_step1_eqn3}) and then the interpolation inequalities of Lemma \ref{interplemma}, (\ref{schauder1_step1_eqn1}) holds true. 

Now suppose that some $\delta \in (0,1)$ and for $j = 1,2,3,\ldots$ there are a sequence of $\psi_j$ and $f_j$ such that the hypotheses of Lemma \ref{schauderlemma} hold true with $\varepsilon = 1/j$, $\psi = \psi_j$, and $f = f_j$ but 
\begin{align} \label{schauder1_step2_eqn1}
	&\sum_{|\alpha|+2|\beta| = k} [D_{\tilde x}^{\alpha} D_{\tilde y}^{\beta} \psi_j]_{g,\tau,B^g_{1/2}(0)} 
	> \delta \sum_{|\alpha|+2|\beta| = k} [D_{\tilde x}^{\alpha} D_{\tilde y}^{\beta} \psi_j]_{g,\tau,B^g_1(0)} \nonumber \\
	&+ j \left( \sum_{|\alpha|+2|\beta| \leq k} \sup_{B^g_1(0)} |D_{\tilde x}^{\alpha} D_{\tilde y}^{\beta} \psi_j| 
	+ \|f_j\|_{\mathcal{H}^{k-2,k-2,\tau}(B^g_1(0))} \right) 
\end{align}
By scaling we may assume that 
\begin{equation} \label{schauder1_step2_eqn2}
	\sum_{|\alpha|+2|\beta| = k} [D_{\tilde x}^{\alpha} D_{\tilde y}^{\beta} \psi_j]_{g,\tau,B^g_{1/2}(0)} = \delta.
\end{equation}
For each $j$, select distinct $\tilde X_j = (\tilde x_j,\tilde y_j), \tilde X'_j = (\tilde x'_j,\tilde y'_j) \in B^g_{1/2}(0) \setminus \{0\} \times \{0\} \times B^{n-2}_{1/2}(0)$ such that 
\begin{equation} \label{schauder1_step2_eqn3}
	\sum_{|\alpha|+2|\beta| = k} |D_{\tilde x}^{\alpha} D_{\tilde y}^{\beta} \psi_j(\tilde X_j) - D_{\tilde x}^{\alpha} D_{\tilde y}^{\beta} \psi_j(\tilde X'_j)| 
	> \frac{\delta}{2} d_g(\tilde X_j, \tilde X'_j)^{\tau} 
\end{equation}
and let $\rho_j = d_g(\tilde X_j, \tilde X'_j)$.  By (\ref{schauder1_step2_eqn1}) and (\ref{schauder1_step2_eqn3}), $\rho_j \rightarrow 0$ as $j \rightarrow \infty$.  Moreover, $|\tilde x_j| \rightarrow 0$ since otherwise we can pass to a subsequence such that $\inf_j |\tilde x_j| > 0$ and then by (\ref{schauder1_eqn1}) with $\psi = \psi_j$ and $f = f_j$ and the Schauder estimates of Lemma \ref{schauder_afbs}, 
\begin{equation*}
	\lim_{j \rightarrow \infty} \sum_{|\alpha|+2|\beta| = k} [D_{\tilde x}^{\alpha} D_{\tilde y}^{\beta} \psi_j]_{g,\tau,B^g_{|\tilde x_j|^2/8}(\tilde X_j)} = 0, 
\end{equation*}
contradicting (\ref{schauder1_step2_eqn3}).

Suppose that, after passing to a subsequence, $\rho_j/|\tilde x_j|^2 \rightarrow 0$ as $j \rightarrow \infty$.  By (\ref{schauder1_eqn1}) and (\ref{schauder1_eqn2}) with $\psi = \psi_j$ and $f = f_j$, 
\begin{equation*}
	\psi(\tilde X) = \psi_j(\tilde X) - \sum_{|\alpha|+2|\beta| \leq k} D^{\alpha}_{\tilde x} D^{\beta}_{\tilde y} \psi_j(0,\tilde y_j) 
		\tilde x^{\alpha} (\tilde y - \tilde y_j)^{\beta}    
\end{equation*}
satisfies 
\begin{equation*}
	\Delta_{\tilde x} \psi + 4|\tilde x|^2 \Delta_{\tilde y} \psi = f_j(\tilde X) - f_j(\tilde X; \tilde y_j) 
\end{equation*}
in $B^g_1(0)$, so by (\ref{schauder1_step2_eqn1}), (\ref{schauder1_step2_eqn2}), and the Schauder estimates of Lemma \ref{schauder_afbs} with $\lambda = \tau$, 
\begin{align} \label{schauder1_step2_eqn4}
	&\sum_{|\alpha|+2|\beta| > k, \, |\alpha|+|\beta| \leq k} |\tilde x_j|^{|\alpha|+2|\beta|} \sup_{B^g_{|\tilde x_j|^2/8}(\tilde X_j)} 
		|D_{\tilde x}^{\alpha} D_{\tilde y}^{\beta} \psi_j| \nonumber \\
	&+ \sum_{|\alpha|+|\beta| = k} |\tilde x_j|^{|\alpha|+2|\beta|+2\tau} 
		[D_{\tilde x}^{\alpha} D_{\tilde y}^{\beta} \psi_j]_{g,\tau,B^g_{|\tilde x_j|^2/8}(\tilde X_j)} 
	\leq C |\tilde x|^{k+2\tau}
\end{align}
for some $C = C(n,\tau) \in (0,\infty)$.  Define 
\begin{equation*}
	\chi_j(\tilde X) = \frac{|\tilde x_j|^k}{\rho_j^{k+\tau}} 
		\left( \psi_j \left( \tilde x_j + \frac{\rho_j}{|\tilde x_j|} \tilde x, \tilde y_j + \rho_j \tilde y \right) 
		- \sum_{|\alpha|+|\beta| \leq k} \frac{1}{\alpha! \beta!} \frac{\rho_j^{|\alpha|+|\beta|}}{|\tilde x_j|^{|\alpha|}} 
		D_{\tilde x}^{\alpha} D_{\tilde y}^{\beta} \psi_j(\tilde X_j) \tilde x^{\alpha} \tilde y^{\beta} \right) .
\end{equation*}
By (\ref{schauder1_step2_eqn4}) and Taylor's theorem, for every $R > 0$ and $j$ sufficiently large depending on $R$, we have the bound on a standard Euclidean H\"{o}lder coefficient of $[D_{\tilde X}^k \chi_j]_{\tau,B_R(0)} \leq C$ for some $C = C(n,\tau) \in (0,\infty)$ independent of $j$, thus after passing to a subsequence $\chi_j$ converges to some function $\chi$ in $C^k$ on compact subsets of $\mathbb{R}^n$ such that $D_{\tilde X}^l \chi(0) = 0$ for $l = 1,2,\ldots,k$ and $[D_{\tilde X}^k \chi]_{\tau,\mathbb{R}^n} < \infty$.  Moreover by (\ref{schauder1_eqn1}), $\chi_j$ satisfies 
\begin{align*}
	&\Delta_{\tilde x} \chi_j + \frac{|\tilde x_j + \rho_j \tilde x/|\tilde x_j||^2}{|\tilde x_j|^2} \Delta_{\tilde y} \chi_j 
	\\&= \frac{|\tilde x_j|^{k-2}}{\rho_j^{k-2+\tau}} \left( f_j\left( \tilde x_j + \frac{\rho_j}{|\tilde x_j|} \tilde x, \tilde y_j + \rho_j \tilde y \right) 
		\right. \\& \hspace{5mm} \left.  
		- \sum_{|\alpha|+|\beta| \leq k-2} \frac{1}{\alpha! \beta!} \frac{\rho_j^{|\alpha|+|\beta|}}{|\tilde x_j|^{|\alpha|}} 
		(D_{\tilde X}^{\alpha} \Delta_{\tilde x} \psi_j(\tilde X_j) + 4|\tilde x_j + \rho_j \tilde x/|\tilde x_j||^2 D_{\tilde x}^{\alpha} D_{\tilde y}^{\beta} 
		\Delta_{\tilde y} \psi_j(\tilde X_j)) \tilde x^{\alpha} \tilde y^{\beta} \right)
	\\&= \frac{|\tilde x_j|^{k-2}}{\rho_j^{k-2+\tau}} \left( f_j \left( \tilde x_j + \frac{\rho_j}{|\tilde x_j|} \tilde x, \tilde y_j + \rho_j \tilde y \right) 
		- \sum_{|\alpha|+|\beta| \leq k-2} \frac{1}{\alpha! \beta!} \frac{\rho_j^{|\alpha|+|\beta|}}{|\tilde x_j|^{|\alpha|}} 
		D_{\tilde x}^{\alpha} D_{\tilde y}^{\beta} f_j(\tilde X_j) \tilde x^{\alpha} \tilde y^{\beta} \right. \\& \left. \hspace{5mm} 
		- \sum_{|\alpha|+|\beta| = k-2} \frac{8}{\alpha! \beta!} \frac{\rho_j^{|\alpha|+|\beta|+1}}{|\tilde x_j|^{|\alpha|+1}} 
		D_{\tilde x}^{\alpha} D_{\tilde y}^{\beta} \Delta_{\tilde y} \psi_j(\tilde X_j) (\tilde x_j \cdot \tilde x) \tilde x^{\alpha} \tilde y^{\beta} 
		\right. \\& \left. \hspace{5mm} 
		- \sum_{|\alpha|+|\beta| = k-3,k-2} \frac{4}{\alpha! \beta!} \frac{\rho_j^{|\alpha|+|\beta|+2}}{|\tilde x_j|^{|\alpha|+2}} |\tilde x|^2 
		D_{\tilde x}^{\alpha} D_{\tilde y}^{\beta} \Delta_{\tilde y} \psi_j(\tilde X_j) \tilde x^{\alpha} \tilde y^{\beta} \right) ,
\end{align*}
so by (\ref{schauder1_step2_eqn2}) and (\ref{schauder1_step2_eqn4}) $\Delta_{\tilde x} \chi + \Delta_{\tilde y} \chi = 0$ on $\mathbb{R}^n$.  But by Liousville's Theorem no such function $\chi$ can exist.  Therefore 
\begin{equation} \label{schauder1_step2_eqn5}
	\sup_j |\tilde x_j|^2/\rho_j < \infty. 
\end{equation}

Now let 
\begin{align*}
	\chi_j(\tilde x, \tilde y) = {}& \rho_j^{-k/2-\tau} \left( \psi_j(\rho_j^{1/2} \tilde x, y_j + \rho_j \tilde y) 
		- \sum_{|\alpha|+2|\beta| \leq k} \frac{1}{\alpha! \beta!} \rho_j^{|\alpha|/2+|\beta|} D_{\tilde x}^{\alpha} D_{\tilde y}^{\beta} \psi_j(0,y_j) 
		\tilde x^{\alpha} \tilde y^{\beta} \right) 
\end{align*}
for $(\tilde x,\tilde y) \in B^g_{1/2\rho_j}(0)$.  By (\ref{schauder1_eqn1}) and (\ref{schauder1_eqn2}) with $\psi = \psi_j$ and $f = f_j$, 
\begin{equation*}
	\Delta_{\tilde x} \chi_j + 4|\tilde x|^2 \Delta_{\tilde y} \chi_j 
	= \rho_j^{-\tau} (f_j(\rho_j^{1/2} \tilde x, y_j + \rho_j \tilde y) - f_j(\rho_j^{1/2} \tilde x; \tilde y_j)) 
\end{equation*}
on $B^g_1(0) \setminus \{0\} \times B^{n-2}_1(0)$.  Thus by the Schauder estimates, (\ref{schauder1_step2_eqn1}), and (\ref{schauder1_step2_eqn2}), after passing to a subsequence $\chi_j$ converges to a function $\chi$ in $C^k$ on compact subsets of $\mathbb{R}^n \setminus \{0\} \times \mathbb{R}^{n-2}$ as $j \rightarrow \infty$ and $\chi$ satisfies 
\begin{equation} \label{schauder1_step2_eqn6}
	\Delta_{\tilde x} \chi + 4|\tilde x|^2 \Delta_{\tilde y} \chi = 0 
\end{equation}
on $\mathbb{R}^n \setminus \{0\} \times \mathbb{R}^{n-2}$.  By (\ref{schauder1_step2_eqn2}) and Arzela-Ascoli, after passing to a subsequence, $D_{\tilde x}^{\alpha} D_{\tilde y}^{\beta} \chi_j \rightarrow D_{\tilde x}^{\alpha} D_{\tilde y}^{\beta} \chi$ uniformly on compact subsets of $\mathbb{R}^n$ whenever $|\alpha|+2|\beta| \leq k$.  If $k$ is odd, then whenever $|\alpha|+2|\beta| = k-1$, 
\begin{equation*}
	D_{\tilde x}^{\alpha} D_{\tilde y}^{\beta} \chi_j(\tilde x,\tilde y) 
	= \int_0^1 D_{\tilde x} D_{\tilde x}^{\alpha} D_{\tilde y}^{\beta} \chi_j(t \tilde x,\tilde y) \cdot \tilde x dt
\end{equation*}
so $D_{\tilde x}^{\alpha} D_{\tilde y}^{\beta} \chi_j \rightarrow D_{\tilde x}^{\alpha} D_{\tilde y}^{\beta} \chi$ in $C^1$ on compact subsets of $\mathbb{R}^n$.  Since when $k \geq 2$ by Taylor's theorem 
\begin{align} \label{schauder1_step2_eqn7}
	\chi_j(\tilde x,\tilde y) 
	&= \sum_{|\beta| \leq N-1} \frac{1}{\beta!} D_{\tilde y}^{\beta} \chi_j(\tilde x,0) \, \tilde y^{\beta} 
		+ \sum_{|\beta| = N} \frac{N}{\beta!} \int_0^1 (1-t)^{N-1} D_{\tilde y}^{\beta} \chi_j(\tilde x,t\tilde y) dt \, \tilde y^{\beta} \nonumber \\
	&= \sum_{|\beta| \leq N-1} \sum_{|\alpha| = k-2|\beta|} \frac{k-2|\beta|}{\alpha! \beta!} 
		\int_0^1 (1-t)^{k-2|\beta|-1} D_{\tilde x}^{\alpha} D_{\tilde y}^{\beta} \chi_j(t\tilde x,0) dt \, \tilde x^{\alpha} \tilde y^{\beta} \nonumber \\& 
		\hspace{5mm} + \sum_{|\beta| = N} \frac{N}{\beta!} \int_0^1 (1-t)^{N-1} D_{\tilde y}^{\beta} \chi_j(\tilde x,t\tilde y) dt \, \tilde y^{\beta},  
\end{align}
where $N = k/2$ if $k$ is even and $N = (k-1)/2$ if $k$ is odd, $D_{\tilde x}^{\alpha} D_{\tilde y}^{\beta} \chi_j \rightarrow D_{\tilde x}^{\alpha} D_{\tilde y}^{\beta} \chi$ uniformly on compact subsets of $\mathbb{R}^n$ whenever $|\alpha|+2|\beta| \leq k$.  In particular, $\chi \in C^{g,k,\tau}(\mathbb{R}^n)$ with $\chi(-\tilde x,\tilde y) = (-1)^k \chi(\tilde x,\tilde y)$ on $\mathbb{R}^n$, $D_{\tilde x}^{\alpha} D_{\tilde y}^{\beta} \chi_j(0) = 0$ whenever $|\alpha|+2|\beta| \leq k$, and by (\ref{schauder1_step2_eqn1}) and (\ref{schauder1_step2_eqn2}) 
\begin{equation} \label{schauder1_step2_eqn8}
	\sum_{|\alpha|+2|\beta| \leq k} [D_{\tilde x}^{\alpha} D_{\tilde y}^{\beta} \chi]_{g,\tau,\mathbb{R}^n} \leq 1.
\end{equation}
By (\ref{schauder1_step2_eqn5}), after passing to a subsequence, $(\rho_j^{-1/2} \tilde x_j,0)$ and $(\rho_j^{-1/2} \tilde x'_j, \rho_j^{-1} (\tilde y'_j - \tilde y_j))$ converge to some $\xi,\xi' \in \mathbb{R}^n$ such that by (\ref{schauder1_step2_eqn3}), 
\begin{equation} \label{schauder1_step2_eqn9}
	\sum_{|\alpha|+2|\beta| = k} |D_{\tilde x}^{\alpha} D_{\tilde y}^{\beta} \chi(\xi) - D_{\tilde x}^{\alpha} D_{\tilde y}^{\beta} \chi(\xi')| 
	\geq \frac{\delta}{2}  
\end{equation}
for some $\xi,\xi' \in \mathbb{R}^n$. 

Suppose that $k$ is even.  Using the fact that $\chi(-\tilde x,\tilde y) = \chi(\tilde x,\tilde y)$, we define the single-valued function $\varphi : \mathbb{R}^n \rightarrow \mathbb{R}$ by 
\begin{equation} \label{schauder1_step3_eqn1}
	\varphi(X) = \chi(\op{Re}(x_1+ix_2)^{1/2},\op{Im}(x_1+ix_2)^{1/2}, x_3,\ldots,x_n)
\end{equation} 
for $X = (x_1,x_2,\ldots,x_n) \in \mathbb{R}^n$.  Since $\varphi \in C^0(\mathbb{R}^n)$ and $\varphi$ is harmonic on $\mathbb{R}^n \setminus \{0\} \times \mathbb{R}^{n-2}$, $\varphi \in C^{\infty}(\mathbb{R}^n)$ and $\varphi$ is harmonic on $\mathbb{R}^n$.  By applying Taylor's theorem to $\chi$ like we did for (\ref{schauder1_step2_eqn7}) and by (\ref{schauder1_step2_eqn8}), 
\begin{equation} \label{schauder1_step3_eqn2}
	|\varphi(X)| \leq C(n,k,\tau) |X|^{k/2+\tau}
\end{equation}
for all $X \in \mathbb{R}^n$, so by Liousville's theorem $\varphi$ must identically zero.  Thus $\chi$ is identically zero, contradicting (\ref{schauder1_step2_eqn9}).  

Suppose that $k$ is odd.  Using the fact that $\chi(-\tilde x,\tilde y) = -\chi(\tilde x,\tilde y)$, we define the symmetric two-valued function $\varphi : \mathbb{R}^n \rightarrow \mathbb{R}$ by (\ref{schauder1_step3_eqn1}).  By using Taylor's theorem like we did for (\ref{schauder1_step2_eqn7}), the formulas $\cos(\theta) = 2\cos(\theta/2)-1 = 1-2\sin(\theta/2)$ and $\sin(\theta) = 2\cos(\theta/2) \sin(\theta/2)$ for all $\theta \in \mathbb{R}$, and (\ref{schauder1_step2_eqn8}), 
\begin{equation} \label{schauder1_step3_eqn3}
	\left| \varphi(x,y) - \op{Re} \left( \sum_{p+q+|\beta| \leq (k-1)/2} c_{p,q,\beta}(y_0) |x|^p (x_1+i_2)^{1/2+q} (y-y_0)^{\beta} \right) \right| 
	\leq C(n,k,\tau) |X - (0,y_0)|^{k/2+\tau}
\end{equation}
(where $p,q$ are nonnegative integers) for all $x \in \mathbb{R}^2$ and $y,y_0 \in \mathbb{R}^{n-2}$ for some functions $c_{p,q,\beta} \in C^{0,\tau}(\mathbb{R}^{n-2};\mathbb{C})$ with $c_{p,q,\beta}(0) = 0$.  In particular, (\ref{schauder1_step3_eqn2}) holds true. 

Since $\varphi$ is harmonic on $\mathbb{R}^n \setminus \{0\} \times \mathbb{R}^{n-2}$, 
\begin{gather*}
	\int_{B_{\rho}(0,y_0)} |D\varphi|^2 \zeta 
		= \int_{\partial B_{\rho}(0,y_0)} \varphi D_R \varphi \zeta - \int_{B_{\rho}(0,y_0)} \varphi D\varphi \cdot D\zeta, \\
	\int_{B_{\rho}(0,y_0)} \sum_{i,j=1}^n (|D\varphi|^2 \delta_{ij} - 2 D_i \varphi D_j \varphi) D_i (X^j \zeta) 
		= \rho \int_{\partial B_{\rho}(0,y_0)} (|D\varphi|^2 - 2 |D_R \varphi|^2) \zeta, 
\end{gather*}
for all $y_0 \in \mathbb{R}^{n-2}$, $\rho > 0$, and $\zeta \in C^{\infty}(\mathbb{R}^n)$ such that $\zeta \equiv 0$ near $\{0\} \times \mathbb{R}^{n-2}$.  By letting $\zeta$ be smooth functions approximating $\zeta_{\delta}$ given by $\zeta_{\delta}(x,y) = 0$ if $|x| \leq \delta/2$, $\zeta_{\delta}(x,y) = 2|x|/\delta - 1$ if $\delta/2 < |x| < \delta$, and $\zeta_{\delta}(x,y) = 1$ if $|x| \leq \delta$ and then letting $\delta \downarrow 0$ using (\ref{schauder1_step3_eqn3}), we obtain 
\begin{gather}
	\int_{B_{\rho}(0,y_0)} |D\varphi|^2 = \int_{\partial B_{\rho}(0,y_0)} \varphi D_R \varphi, \nonumber \\
	(n-2) \int_{B_{\rho}(0,y_0)} |D\varphi|^2 = \rho \int_{\partial B_{\rho}(0,y_0)} (|D\varphi|^2 - 2 |D_R \varphi|^2), \label{schauder1_step3_eqn4}
\end{gather}
for all $y_0 \in \mathbb{R}^{n-2}$ and $\rho > 0$.  Now we can consider the frequency function $N_{\varphi,(0,y_0)}(\rho)$ by (\ref{defn_freqfn}) with $u = \varphi$ and $Y = (0,y_0)$.  By (\ref{schauder1_step3_eqn4}), 
\begin{equation*}
	\frac{d}{d\rho} \left( \rho^{2-n} \int_{B_{\rho}(0,y_0)} |D\varphi|^2 \right) = 2 \rho^{2-n} \int_{\partial B_{\rho}(0,y_0)} |D_R \varphi|^2
\end{equation*}
and thus 
\begin{equation} \label{schauder1_step3_eqn5}
	N'_{\varphi,(0,y_0)}(\rho) = \frac{2\rho^{3-2n}}{H_{\varphi,(0,y_0)}(\rho)^2} 
		\left( \left( \int_{\partial B_{\rho}(0,y_0)} |\varphi|^2 \right) 
		\left( \int_{\partial B_{\rho}(0,y_0)} |D_R \varphi|^2 \right) - 
		\left( \int_{\partial B_{\rho}(0,y_0)} \varphi D_R \varphi \right)^2 \right) \geq 0
\end{equation}
for all $y_0 \in \mathbb{R}^{n-2}$ and $\rho > 0$ with equality if and only if $\varphi(x,y_0+y)$ is homogeneous degree $N_{\varphi,(0,y_0)}(\rho)$ as a function of $(x,y)$, where 
\begin{equation*}
	H_{\varphi,(0,y_0)}(\rho) = \rho^{1-n} \int_{\partial B_{\rho}(0,y_0)} |\varphi|^2. 
\end{equation*}
Moreover, by (\ref{schauder1_step3_eqn4}), 
\begin{equation} \label{schauder1_step3_eqn6}
	N_{\varphi,(0,y_0)}(\rho) = \frac{2 H'_{\varphi,(0,y_0)}(\rho)}{\rho H_{\varphi,(0,y_0)}(\rho)} 
\end{equation}
for all $y_0 \in \mathbb{R}^{n-2}$ and $\rho > 0$.  By (\ref{schauder1_step3_eqn2}), (\ref{schauder1_step3_eqn5}), and (\ref{schauder1_step3_eqn6}) with $y_0 = 0$, $\log( \rho^{-2k-2\tau} H_{\varphi,0}(\rho))$ is a bounded, convex function of $\log(\rho)$ and thus $\log( \rho^{-k-2\tau} H_{\varphi,0}(\rho))$ is constant with derivative $2N_{\varphi,0}(\rho) - k - 2\tau = 0$.  In other words, $N_{\varphi,0}(\rho) \equiv k/2+\tau$ and thus $\varphi$ is homogeneous degree $k+\tau$.  

Consider the Fourier coefficient of $\varphi$ given by 
\begin{equation*}
	\varphi_{l,\pm}(r,y) = \frac{1}{4\pi} \int_0^{4\pi} e^{\pm (1/2+l) \theta} \chi(r^2 e^{2i\theta},y) d\theta 
\end{equation*}
for $\pm \in \{-,+\}$ and $l \geq 0$ an integer and observe that since $\varphi$ is harmonic on $\mathbb{R}^n \setminus \{0\} \times \mathbb{R}^{n-2}$, 
\begin{equation} \label{schauder1_step3_eqn7}
	\frac{1}{r} \frac{\partial}{\partial r} \left( r \frac{\partial \varphi_{l,\pm}}{\partial r} \right) 
	- \frac{(1/2+l)^2}{r^2} \varphi_{l,\pm} + \Delta_y \varphi_{l,\pm} = 0 
\end{equation}
on $(0,\times) \times \mathbb{R}^{n-2}$.  Since $\varphi$ is homogeneous degree $k/2+\tau$, $\varphi_{l,\pm}(r,y) = r^{k/2+\tau} u(y/r)$ for some function $u \in C^{\infty}(\mathbb{R}^{n-2})$.  By (\ref{schauder1_step3_eqn7}), 
\begin{equation} \label{schauder1_step3_eqn8}
	\Delta_z u + \sum_{i,j=1}^{n-2} z_i z_j D_{z_i z_j} u - (k-1+2\tau) \sum_{i=1}^{n-2} z_i D_{z_i} u + ((k/2+\tau)^2 - (1/2+l)^2) u = 0 
\end{equation}
on $\mathbb{R}^{n-2}$.  By (\ref{schauder1_step3_eqn8}), 
\begin{equation} \label{schauder1_step3_eqn9}
	\Delta_z D^p u + \sum_{i,j=1}^{n-2} z_i z_j D_{z_i z_j} D^p u - (k-1-2p+2\tau) \sum_{i=1}^{n-2} z_i D_{z_i} D^p u + ((k/2-p+\tau)^2 - (1/2+l)^2) D^p u = 0 
\end{equation}
on $\mathbb{R}^{n-2}$ for every integer $p \geq 0$.  Take $v = D^{(k-1)/2-l} u$.  We want to claim that 
\begin{equation} \label{schauder1_step3_eqn010}
	\sup_{B_{\rho}(0) \setminus B_{\rho/2}(0)} |Dv| \leq C(n,k,l,\tau) \rho^{-1} \sup_{B_{2\rho}(0) \setminus B_{\rho/4}(0)} |v|
\end{equation}
for all $\rho \geq 1$.  If $n = 2$, then (\ref{schauder1_step3_eqn010}) follows by (\ref{schauder1_step3_eqn9}) and standard ODE theory.  If $n \geq 3$, then (\ref{schauder1_step3_eqn010}) does not generally hold true.  However, we can write $v(r\omega) = \sum_{k=1}^{\infty} \gamma_k(r) \phi_k(\omega)$, where $r > 0$, $\omega \in S^{n-3}$, and $\{\phi_k\}$ is an $L^2(S^{n-3})$-orthonormal basis of eigenfunctions satisfying $\Delta_{S^{n-3}} \phi_k + \lambda_k \phi_k = 0$ on $S^{n-3}$ for eigenvalues $\lambda_1 \leq \lambda_2 \leq \lambda_3 \leq \cdots$, and replace $v(r\omega)$ with $\gamma_k(r) \phi_k(\omega)$ for each $k$.  By (\ref{schauder1_step3_eqn010}), $\gamma_k$ satisfies the ODE 
\begin{equation*}
	(1+r^2) \frac{\partial^2 \gamma_k}{\partial r^2} - (r^{-1} + (-2l+2\tau) r) \frac{\partial \gamma_k}{\partial r} 
	- (\lambda_k r^{-2} + (-l+\tau)^2 - (1/2+l)^2) \gamma_k = 0 
\end{equation*}
and so by standard ODE theory (\ref{schauder1_step3_eqn010}) holds true.  Let $w = Dv$.  By (\ref{schauder1_step3_eqn3}) and (\ref{schauder1_step3_eqn010}), 
\begin{equation} \label{schauder1_step3_eqn011}
	|w(z)| \leq C(n,k,l,\tau) |z|^{-1}
\end{equation}
for all $z \in \mathbb{R}^{n-2}$.  Rewrite (\ref{schauder1_step3_eqn9}) with $p = (k+1)/2-l$ in polar coordinates as 
\begin{equation} \label{schauder1_step3_eqn10}
	\frac{\partial}{\partial r} \left( h \frac{\partial w}{\partial r} \right) + \frac{h}{r^2} \Delta_{S^{n-3}} w - (1-\tau)(2l+\tau) h w = 0
\end{equation}
weakly on $\mathbb{R}^{n-2}$ where $h = r (1+r^2)^{1/2-l-\tau}$.  Since $r^{-1} \nabla_{S^{n-3}} w$ is bounded as $r \downarrow 0$, we can rewrite (\ref{schauder1_step3_eqn10}) in the weak form 
\begin{equation} \label{schauder1_step3_eqn11}
	\int_{S^{n-3}} \int_0^{\infty} h \left( \frac{\partial w}{\partial r} \frac{\partial \zeta}{\partial r} 
		+ \frac{1}{r^2} \nabla_{S^{n-3}} w \nabla_{S^{n-3}} \zeta + (1-\tau)(2l+\tau) w \zeta \right) = 0
\end{equation}
for all $\zeta \in C^1_c(\mathbb{R}^{n-2};\mathbb{R}^{((k+1)/2-l)(n-2)})$.  By replacing $\zeta$ with $w \zeta(r)^2$ in (\ref{schauder1_step3_eqn11}), where $\zeta \in C^1([0,\infty))$ such that $\zeta'(r) = 0$ near $0$ and $\zeta(r)$ vanishes for $r$ sufficiently large and using Cauchy's inequality, 
\begin{equation*}
	\int_{S^{n-3}} \int_0^{\infty} h \zeta(r)^2 \left( \left| \frac{\partial w}{\partial r} \right|^2 + \frac{1}{r^2} |\nabla_{S^{n-3}} w|^2 
	+ (1-\tau)(2l+\tau) |w|^2 \right) \leq 4 \int_{S^{n-3}} \int_0^{\infty} h |w|^2 \zeta'(r)^2. 
\end{equation*}
For $\rho > 1$, let $\zeta(r) = 1$ for $r \in [0,\rho]$, $\zeta(r) = -\log(r/\rho)/\log(\rho)$ for $r \in (\rho,\rho^2)$, and $\zeta(r) = 0$ for $r \geq \rho^2$ to obtain using (\ref{schauder1_step3_eqn011}) that 
\begin{align*}
	&\int_{S^{n-3}} \int_0^{\rho} h \left( \left| \frac{\partial w}{\partial r} \right|^2 + \frac{1}{r^2} |\nabla_{S^{n-3}} w|^2 
		+ (1-\tau)(2l+\tau) |w|^2 \right) 
	\\& \leq \frac{C}{\log(\rho)^2} \int_{S^{n-3}} \int_{\rho}^{\rho^2} r^{-2l-2\tau} |w|^2
	\leq \frac{C}{\log(\rho)^2} \int_{\rho}^{\rho^2} r^{-2-2l-2\tau}
	\leq \frac{C}{\log(\rho)^2} \rho^{-1-2l-2\tau} 
\end{align*}
for $C = C(n,k,l,\tau) \in (0,\infty)$, so by letting $\rho \rightarrow \infty$ we obtain $w \equiv 0$ on $\mathbb{R}^{n-2}$.  Consequently $u$ is a degree $((k-1)/2-l)$ polynomial in $z$.  By (\ref{schauder1_step3_eqn9}), $u \equiv 0$.  Consequently $\chi \equiv 0$, contradicting (\ref{schauder1_step2_eqn9}). 
\end{proof}

\begin{lemma}[Schauder estimate away from the singular set] \label{schaudersys_afbs}
Given an integer $m \geq 2$, $\tau \in (0,1/2)$, and $\nu \geq 0$, there exists $\varepsilon = \varepsilon(n,m,\tau,\nu) > 0$ such that the following holds true.  For $\kappa = 1,2,\ldots,m$, let $s_{\kappa} \geq 1$ be integers with $s_1 = s_2 = 2$, $s \geq \max s_{\kappa}$, and $\psi^{\kappa} \in C^{s,\tau}(B^g_1(0) \setminus \{0\} \times \mathbb{R}^{n-2})$.  Suppose $a^{ij}_{\kappa \lambda} \in \mathcal{H}^{1-s_{\lambda}+t_{ij},s-1,\tau}(B^g_1(0))$ if $\kappa \leq 2$ and $a^{ij}_{\kappa \lambda} \in \mathcal{H}^{s_{\kappa}-s_{\lambda}+t_{ij},s-1,\tau}(B^g_1(0))$ if $\kappa \geq 3$, where $t_{ij} = 0$ if $i,j \leq 2$, $t_{ij} = 1$ if $i \leq 2 < j$ or $j \leq 2 < i$, and $t_{ij} = 2$ if $i,j \geq 3$, such that 
\begin{gather} 
	\left\| a^{ij}_{\kappa \lambda} + \op{Re} \left( \frac{\sqrt{-1}^{j-i-\kappa-\lambda}}{2(\tilde x_1 + \sqrt{-1} \tilde x_2)} \right)
		\right\|_{\mathcal{H}^{-1,s-1,\tau}(B^g_1(0))} \leq \varepsilon 
		\text{ if } i,j,\kappa, \lambda \leq 2, \nonumber \\
	\|a^{ij}_{\kappa \lambda}\|_{\mathcal{H}^{0,s-1,\tau}(B^g_1(0))} \leq \varepsilon 
		\text{ if } \kappa, \lambda \leq 2 \text{ and } i \leq 2 < j \text{ or } j \leq 2 < i, \nonumber \\
	\left\| a^{ij}_{\kappa \lambda} + \op{Re} \left( 2 \sqrt{-1}^{\kappa+\lambda} (\tilde x_1 + \sqrt{-1} \tilde x_2) \right) 
		\right\|_{\mathcal{H}^{1,s-1,\tau}(B^g_1(0))} \leq \varepsilon \text{ if } \kappa, \lambda \leq 2 < i,j, \nonumber \\
	\|a^{ij}_{\kappa \lambda}\|_{\mathcal{H}^{1-s_{\lambda}+t_{ij},s-1,\tau}(B^g_1(0))} \leq \nu \text{ and } 
		\|a^{ij}_{\kappa \lambda} - a^{ij}_{\kappa \lambda}( \, ;0)\|_{\mathcal{H}^{1-s_{\lambda}+t_{ij},s-1,\tau}(B^g_1(0))} \leq \nu
		\text{ if } \lambda \leq 2 < \kappa, \nonumber \\
	\|a^{ij}_{\kappa \lambda}\|_{\mathcal{H}^{s_{\kappa}-2+t_{ij},-1,\tau}(B^g_1(0))} \leq \varepsilon 
		\text{ if } \kappa \leq 2 < \lambda, \nonumber \\
	\|a^{ij}_{\kappa \lambda} - 1\|_{\mathcal{H}^{0,s-1,\tau}(B^g_1(0))} \leq \varepsilon 
		\text{ if } i = j \leq 2 < \kappa = \lambda \nonumber \\
	\|a^{ij}_{\kappa \lambda} - 4|\tilde x|^2\|_{\mathcal{H}^{2,s-1,\tau}(B^g_1(0))} \leq \varepsilon 
		\text{ if } i = j \geq 3, \, \kappa = \lambda \geq 3, \nonumber \\
	\|a^{ij}_{\kappa \lambda}\|_{\mathcal{H}^{t_{ij},s-1,\tau}(B^g_1(0))} \leq \varepsilon 
		\text{ if } i \neq j, \, \kappa = \lambda \geq 3, \nonumber \\
	\|a^{ij}_{\kappa \lambda}\|_{\mathcal{H}^{s_{\kappa}-s_{\lambda}+t_{ij},s-1,\tau}(B^g_1(0))} \leq \varepsilon 
		\text{ if } \kappa, \lambda \geq 3, \, \kappa \neq \lambda. \label{schaudersyshyp1} 
\end{gather}
Suppose that $\psi$ is a solution to 
\begin{equation} \label{schaudersyshyp2} 
	\sum_{i,j=1}^n \sum_{\lambda=1}^m D_i(a^{ij}_{\kappa \lambda} D_j \psi^{\lambda}) =  f_{\kappa} 
\end{equation}
weakly on $B^g_1(0) \setminus \{0\} \times B^{n-2}_1(0)$ for $\kappa = 1,2,\ldots,m$ for some $f_{\kappa} \in C^{s-2,\tau}(B^g_1(0))$ for $\kappa = 1,2,\ldots,m$.  Then 
\begin{align*}
	&\sum_{\kappa=1}^m \sum_{|\alpha|+|\beta| \leq s} \frac{\rho^{|\alpha|+|\beta|}}{|\tilde x_0|^{s_{\kappa}+|\alpha|}} 
		\sup_{B^g_{\rho/2}(\tilde X_0)} |D_{\tilde x}^{\alpha} D_{\tilde y}^{\beta} \psi^{\kappa}| 
	+ \sum_{\kappa=1}^m \sum_{|\alpha|+|\beta| = s} \frac{\rho^{|\alpha|+|\beta|+\tau}}{|\tilde x_0|^{s_{\kappa}+|\alpha|}} 
		[D_{\tilde x}^{\alpha} D_{\tilde y}^{\beta} \psi^{\kappa}]_{g,\tau,B^g_{\rho/2}(\tilde X_0)} 
	\\&\leq C \left( \sum_{\kappa=1}^m \frac{1}{|\tilde x_0|^{s_{\kappa}}} \sup_{B^g_{\rho}(\tilde X_0)} |\psi^{\kappa}| 
	+ \sum_{\kappa=1}^2 \sum_{|\alpha|+|\beta| \leq s-2} \frac{\rho^{2+|\alpha|+|\beta|}}{|\tilde x_0|^{3+|\alpha|}} 
		\sup_{B^g_{\rho}(\tilde X_0)} |D_{\tilde x}^{\alpha} D_{\tilde y}^{\beta} f_{\kappa}| \right. \\& \left. 
	+ \sum_{\kappa=1}^2 \sum_{|\alpha|+|\beta| = s-2} \frac{\rho^{2+|\alpha|+|\beta|+\tau}}{|\tilde x_0|^{3+|\alpha|}} 
		[D_{\tilde x}^{\alpha} D_{\tilde y}^{\beta} f_{\kappa}]_{g,\tau,B^g_{\rho}(\tilde X_0)} 
	+ \sum_{\kappa=3}^m \sum_{|\alpha|+|\beta| \leq s-2} \frac{\rho^{2+|\alpha|+|\beta|}}{|\tilde x_0|^{2+s_{\kappa}+|\alpha|}} 
		\sup_{B^g_{\rho}(\tilde X_0)} |D_{\tilde x}^{\alpha} D_{\tilde y}^{\beta} f_{\kappa}| \right. \\& \left. 
	+ \sum_{\kappa=3}^m \sum_{|\alpha|+|\beta| = s-2} \frac{\rho^{2+|\alpha|+|\beta|+\tau}}{|\tilde x_0|^{2+s_{\kappa}+|\alpha|}} 
		[D_{\tilde x}^{\alpha} D_{\tilde y}^{\beta} f_{\kappa}]_{g,\tau,B^g_{\rho}(\tilde X_0)} \right) 
\end{align*}
for every $\tilde X_0 = (\tilde x_0,\tilde y_0) \in B^g_1(0)$ and $0 < \rho \leq \min\{|\tilde x_0|^2/4,1-d_g(\tilde X_0,0)\}$ for some constant $C = C(n,m,s,\nu,\tau) \in (0,\infty)$.
\end{lemma}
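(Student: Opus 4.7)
The plan is to reduce this to the classical Agmon--Douglis--Nirenberg interior Schauder estimates for linear elliptic systems in divergence form via an anisotropic affine rescaling that pulls the ball $B^g_\rho(\tilde X_0)$ back to a standard Euclidean ball, exactly in the spirit of Lemma \ref{schauder_afbs}. Concretely, I would use the map $\tilde X = \tilde X_0 + T\tilde X'$ with $T(\tilde x',\tilde y') = (\rho \tilde x'/(2|\tilde x_0|), \rho \tilde y')$, which by the inclusions noted in Lemma \ref{schauder_afbs} takes $B^n_{1/2}(0)$ inside $B^g_{\rho/2}(\tilde X_0)$ and $B^n_{4/5}(0)$ inside $B^g_\rho(\tilde X_0)$ (using $\rho \leq |\tilde x_0|^2/4$), so that $|\tilde x|$ is comparable to $|\tilde x_0|$ throughout the ball. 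I would then define rescaled unknowns and source
\begin{equation*}
\tilde\psi^\kappa(\tilde X') = |\tilde x_0|^{-s_\kappa}\,\psi^\kappa(\tilde X_0 + T\tilde X'), \qquad \tilde f_\kappa(\tilde X') = \rho^2 |\tilde x_0|^{-s_\kappa}\,f_\kappa(\tilde X_0 + T\tilde X'),
\end{equation*}
with rescaled coefficients $\tilde a^{ij}_{\kappa\lambda}$ obtained from $a^{ij}_{\kappa\lambda}$ by the corresponding Jacobian factors from the chain rule. Setting $r_j = 2|\tilde x_0|/\rho$ for $j\leq 2$ and $r_j = 1/\rho$ for $j\geq 3$, the transformed equation reads $\sum_{i,j,\lambda}\partial'_i(\tilde a^{ij}_{\kappa\lambda}\partial'_j\tilde\psi^\lambda) = \tilde f_\kappa$ on $B^n_{4/5}(0)$.

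Next I would verify that the hypothesis on the weighted $\mathcal{H}^{\cdot,s-1,\tau}$ norms of $a^{ij}_{\kappa\lambda}$ gives uniform $C^{s-1,\tau}(B^n_{4/5}(0))$ bounds on $\tilde a^{ij}_{\kappa\lambda}$ and uniform $C^{s-2,\tau}$ bounds on $\tilde f_\kappa$, with constants depending only on $n,m,s,\tau,\nu$. The specific exponents $1-s_\lambda+t_{ij}$ or $s_\kappa-s_\lambda+t_{ij}$ in the weight powers of $|\tilde x|$ are chosen precisely so that the weight $|\tilde x_0|^\sigma$ cancels against the Jacobian factor $|\tilde x_0|^{-\sigma}$ that appears from the combination $r_i r_j|\tilde x_0|^{s_\lambda-s_\kappa}$ and the derivative-rescaling inside the $\mathcal{H}$-seminorms; tracking this cancellation term-by-term using the four types of blocks in (\ref{schaudersyshyp1}) is purely bookkeeping. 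The Hölder continuity of the $g$-metric seminorms translates to the ordinary Hölder seminorms in $\tilde X'$ because the map $T$ is $C^{g,1}$ equivalent to the identity on the relevant ball.

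With uniform coefficient bounds in hand, I would then evaluate the frozen-coefficient principal symbol at $\tilde X'=0$. In the block $\kappa,\lambda\geq 3$, the diagonal leading terms $\delta_{\kappa\lambda}(\delta_{ij\leq 2}+4|\tilde x|^2\delta_{ij\geq 3})$ rescale to the constant-coefficient operator $\Delta_{\tilde x'}+\Delta_{\tilde y'}$ which is elliptic, as in Lemma \ref{schauderlemma}. In the block $\kappa,\lambda\leq 2$ the model pieces from the hypothesis involving $-\op{Re}(\sqrt{-1}^{j-i-\kappa-\lambda}/(2(\tilde x_1+\sqrt{-1}\tilde x_2)))$ and $-\op{Re}(2\sqrt{-1}^{\kappa+\lambda}(\tilde x_1+\sqrt{-1}\tilde x_2))\delta_{ij}$ assemble into a $2\times 2$ matrix symbol whose Legendre--Hadamard ellipticity can be read off from the fact that it corresponds to the conjugation of the scalar operator $\Delta_{\tilde x}+4|\tilde x|^2\Delta_{\tilde y}$ by the squaring map $(\tilde x_1,\tilde x_2)\mapsto(\tilde x_1^2-\tilde x_2^2,2\tilde x_1\tilde x_2)$ that was used to define $g$; the off-diagonal blocks coupling $\kappa\leq 2$ with $\kappa\geq 3$ are of size $\varepsilon$ by hypothesis, so for $\varepsilon$ small the full frozen symbol remains uniformly Legendre--Hadamard elliptic. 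The classical interior Schauder estimate for systems in divergence form then yields
\begin{equation*}
\sum_{\kappa=1}^m \|\tilde\psi^\kappa\|_{C^{s,\tau}(B^n_{1/2}(0))} \leq C\Bigl(\sum_{\kappa=1}^m\sup_{B^n_{4/5}(0)}|\tilde\psi^\kappa| + \sum_{\kappa=1}^m\|\tilde f_\kappa\|_{C^{s-2,\tau}(B^n_{4/5}(0))}\Bigr),
\end{equation*}
and unfolding the definitions of $\tilde\psi^\kappa$, $\tilde f_\kappa$ and the derivative rescalings gives exactly the weighted estimate claimed, with the different weights on the $f_\kappa$ (namely $|\tilde x_0|^{-3-|\alpha|}$ for $\kappa\leq 2$ versus $|\tilde x_0|^{-2-s_\kappa-|\alpha|}$ for $\kappa\geq 3$) arising from the fact that $\tilde f_\kappa$ involves $\rho^2|\tilde x_0|^{-s_\kappa}f_\kappa$ and that, for $\kappa\leq 2$, the leading coefficient $a^{ii}_{\kappa\kappa}$ itself carries a weight $|\tilde x|^{-1}$.

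The main obstacle I expect is the verification of the Legendre--Hadamard ellipticity of the frozen leading symbol in the $\kappa,\lambda\leq 2$ block, because the model coefficients are not diagonal in $(\kappa,\lambda)$ and depend in an oscillatory way on the angle of $\tilde x_1+\sqrt{-1}\tilde x_2$; once the identification with the conjugate of $\Delta_{\tilde x}+4|\tilde x|^2\Delta_{\tilde y}$ under the squaring map is made explicit this is mechanical, but establishing it requires a careful symbol computation matching the real and imaginary parts of the entries of a $2\times 2$ complex matrix. The remainder of the argument is essentially the same rescaling bookkeeping already used in Lemma \ref{schauder_afbs}, only with more components and more weights to track.
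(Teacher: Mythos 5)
Your proposal is correct and takes essentially the same approach as the paper: the paper's own proof is the single sentence ``apply the Schauder estimate~\cite[Theorem 6.8.2]{Morrey} for elliptic systems to $|\tilde x_0|^{-s_{\kappa}} \psi^{\kappa}(\tilde x_0+\rho \tilde x/|\tilde x_0|,\tilde y_0+\rho \tilde y)$,'' which is exactly your rescaling followed by the classical interior estimate for divergence-form elliptic systems, and your identification of the frozen symbol in the $\kappa,\lambda\leq 2$ block with the conjugation of $\Delta_{\tilde x}+4|\tilde x|^2\Delta_{\tilde y}$ under the squaring map is precisely what the paper later makes explicit (via the decoupling manipulation around equations (\ref{schaudersyseqn3})--(\ref{schaudersyseqn5})) in the proof of Lemma \ref{schaudersyslemma}. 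The only minor inaccuracy is your stated normalization $\tilde f_\kappa = \rho^2|\tilde x_0|^{-s_\kappa} f_\kappa$, which as written produces $|\tilde x_0|^{-2-|\alpha|}$ and $|\tilde x_0|^{-s_\kappa-|\alpha|}$ in the source weights rather than the $|\tilde x_0|^{-3-|\alpha|}$ and $|\tilde x_0|^{-2-s_\kappa-|\alpha|}$ in the conclusion; the missing factors come from dividing each equation by the magnitude of its leading diagonal coefficient ($\sim|\tilde x_0|^{-1}\cdot r_i^2|\tilde x_0|^{s_\kappa}$ for $\kappa\leq 2$ and $\sim r_i^2|\tilde x_0|^{s_\kappa}$ for $\kappa\geq 3$) rather than merely by $|\tilde x_0|^{s_\kappa}$, a point you in fact flag at the end when you note that the leading coefficient for $\kappa\leq 2$ carries an extra weight $|\tilde x|^{-1}$.
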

\begin{proof}
Apply the Schauder estimate~\cite[Theorem 6.8.2]{Morrey} for elliptic systems to $|\tilde x_0|^{-s_{\kappa}} \psi^{\kappa}(\tilde x_0+\rho \tilde x/|\tilde x_0|,\tilde y_0+\rho \tilde y)$.
\end{proof}

\begin{lemma} \label{schaudersyslemma}
Given an integer $m \geq 2$, $\tau \in (0,1/2)$, and $\nu \geq 0$, there exists $\varepsilon = \varepsilon(n,m,\tau,\nu) > 0$ such that the following holds true.  For $\kappa = 1,2,\ldots,m$, let $s_{\kappa} \geq 1$ be integers with $s_1 = s_2 = 2$, $s = \max s_{\kappa}$, and $\psi^{\kappa} \in C^{g,s_{\kappa},\tau}(B^g_1(0)) \cap C^{\infty}(B^g_1(0) \setminus \{0\} \times \mathbb{R}^{n-2})$ with 
\begin{gather*}
	\psi^{\kappa}(-\tilde x,\tilde y) = (-1)^{s_{\kappa}} \psi^{\kappa}(\tilde x,\tilde y) \text{ for all } (\tilde x,\tilde y) \in B^g_1(0), \\
	D_{\tilde x}^{\alpha} \psi^{\kappa}(0,y) = 0 \text{ for } y \in B^{n-2}_1(0), \, 1 \leq |\alpha| < s_{\kappa}. 
\end{gather*}
Suppose $a^{ij}_{\kappa \lambda} \in \mathcal{H}^{1-s_{\lambda}+t_{ij},s-1,\tau}(B^g_1(0))$ if $\kappa \leq 2$ and $a^{ij}_{\kappa \lambda} \in \mathcal{H}^{s_{\kappa}-s_{\lambda}+t_{ij},s-1,\tau}(B^g_1(0))$ if $\kappa \geq 3$, where $t_{ij} = 0$ if $i,j \leq 2$, $t_{ij} = 1$ if $i \leq 2 < j$ or $j \leq 2 < i$, and $t_{ij} = 2$ if $i,j \geq 3$, such that (\ref{schaudersyshyp1}) holds true.  Suppose that $\psi$ is a solution to (\ref{schaudersyshyp2}) and 
\begin{equation} \label{schaudersyshyp3} 
	\sum_{i,j=1}^n \sum_{\lambda=1}^m \sum_{|\alpha| = s_{\lambda}-1} \frac{1}{\alpha!} D_i(a^{ij}_{\kappa \lambda}(\tilde x;\tilde y_0) 
	D_j D_{\tilde x}^{\alpha} \psi^{\lambda}(0,\tilde y_0) \tilde x^{\alpha}) =  f_{\kappa}(\tilde x;\tilde y_0) 
\end{equation}
weakly on $\mathbb{R}^2 \setminus \{0\}$ for $\kappa = 1,2,\ldots,m$ and $\tilde y_0 \in B^{n-2}_1(0)$ for some $f_{\kappa} \in \mathcal{H}^{-1,s-2,\tau}(B^g_1(0))$ for $\kappa = 1,2$ and $f_{\kappa} \in \mathcal{H}^{s_{\kappa}-2,s-2,\tau}(B^g_1(0))$ for $\kappa = 3,4,\ldots,m$.  Then 
\begin{equation} \label{schaudersyseqn1}
	\sum_{\kappa=1}^m \|\psi^{\kappa}\|_{C^{g,s_{\kappa},\tau}(B^g_{1/2}(0))} 
	\leq C \left( \sum_{\kappa=1}^m \sup_{B^g_1(0)} |\psi^{\kappa}| 
		+ \sum_{\kappa=1}^2 \|f_{\kappa}\|_{\mathcal{H}^{-1,s-2,\tau}(B^g_1(0))} 
		+ \sum_{\kappa=3}^m \|f_{\kappa}\|_{\mathcal{H}^{s_{\kappa}-2,s-2,\tau}(B^g_1(0))}\right)
\end{equation}
for some constant $C = C(n,m,s,\tau,\nu) \in (0,\infty)$. 
\end{lemma}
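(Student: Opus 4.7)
The proof will follow the template of Lemma \ref{schauderlemma}, adapted to the Douglis--Nirenberg system with distinct weights $s_\kappa$. The first step is to reduce (\ref{schaudersyseqn1}) to a key smallness estimate: for every $\delta \in (0,1)$ there exist $\varepsilon(\delta)$ and $C(\delta)$ such that, under the hypotheses,
\begin{equation*}
\sum_\kappa \sum_{|\alpha|+2|\beta|=s_\kappa} [D^\alpha_{\tilde x} D^\beta_{\tilde y} \psi^\kappa]_{g,\tau,B^g_{1/2}(0)}
\le \delta \sum_\kappa \sum_{|\alpha|+2|\beta|=s_\kappa} [D^\alpha_{\tilde x} D^\beta_{\tilde y} \psi^\kappa]_{g,\tau,B^g_1(0)} + C\,\mathcal{R},
\end{equation*}
where $\mathcal{R}$ denotes the right-hand side of (\ref{schaudersyseqn1}). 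The weights $t_{ij}\in\{0,1,2\}$ appearing in (\ref{schaudersyshyp1}) are precisely those that make the coefficient hypotheses invariant under the translation--scaling $(\tilde x,\tilde y)\mapsto(\rho^{1/2}\tilde x,\tilde y_0+\rho\tilde y)$, so this smallness estimate localizes to every $(0,\tilde y_0)$ at scale $\rho$. Combining it with the interior Schauder estimate of Lemma \ref{schaudersys_afbs} and the interpolation inequalities of Lemma \ref{interplemma} verifies the subadditive smallness hypothesis of Lemma \ref{SALemma} for the set function $S(A)=\sum_\kappa\sum_{|\alpha|+2|\beta|=s_\kappa}[D^\alpha_{\tilde x} D^\beta_{\tilde y}\psi^\kappa]_{g,\tau,A}$; applying Lemma \ref{SALemma} followed by one more interpolation then delivers (\ref{schaudersyseqn1}).

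The key smallness estimate will be proved by contradiction, mirroring the three-step blow-up of Lemma \ref{schauderlemma}. Assume sequences $\{\psi^\kappa_j\}$, $\{a^{ij}_{j,\kappa\lambda}\}$, $\{f_{j,\kappa}\}$ satisfying the hypotheses with $\varepsilon_j\downarrow 0$ and violating the estimate, normalized so that the left-hand side equals $\delta$. Pick $\tilde X_j,\tilde X'_j\in B^g_{1/2}(0)$ nearly realizing the supremum and set $\rho_j=d_g(\tilde X_j,\tilde X'_j)$. Lemma \ref{schaudersys_afbs} combined with the normalization forces $\rho_j\to 0$ and $|\tilde x_j|\to 0$. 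In the case $\rho_j/|\tilde x_j|^2\to 0$, a Euclidean rescaling produces a bounded-seminorm nontrivial limit on $\mathbb{R}^n$ solving a constant-coefficient elliptic Douglis--Nirenberg system (obtained by freezing coefficients at a non-singular point), contradicting the classical Liouville theorem for such systems. In the complementary case, after subtracting from each $\psi^\kappa_j$ the weighted Taylor polynomial of order $s_\kappa$ at $(0,\tilde y_j)$ and rescaling by $\chi^\kappa_j(\tilde x,\tilde y)=\rho_j^{-s_\kappa/2-\tau}(\psi^\kappa_j-T^\kappa_j)(\rho_j^{1/2}\tilde x,\tilde y_j+\rho_j\tilde y)$, the hypothesis (\ref{schaudersyshyp3}) and the $C^{g,s_\kappa,\tau}$ bounds ensure that, up to a subsequence, $\chi^\kappa_j\to\chi^\kappa$ in $C^{s_\kappa}_{\mathrm{loc}}(\mathbb{R}^n\setminus\{0\}\times\mathbb{R}^{n-2})$, where $\chi^\kappa\in C^{g,s_\kappa,\tau}(\mathbb{R}^n)$ vanishes with all derivatives of order $<s_\kappa$ at the origin, satisfies $\sum_\kappa\sum_{|\alpha|+2|\beta|=s_\kappa}[D^\alpha_{\tilde x} D^\beta_{\tilde y}\chi^\kappa]_{g,\tau,\mathbb{R}^n}\le 1$, obeys the parity $\chi^\kappa(-\tilde x,\tilde y)=(-1)^{s_\kappa}\chi^\kappa(\tilde x,\tilde y)$, and solves the constant-coefficient \emph{model system} whose symbol is given by the explicit $z$- and $|\tilde x|^2$-dependent expressions on the left-hand sides of (\ref{schaudersyshyp1}).

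The Liouville step for this model system is the main obstacle. The structure of (\ref{schaudersyshyp1}) makes the limiting system triangular. The equations for $\kappa\ge 3$ decouple to $\Delta_{\tilde x}\chi^\kappa+4|\tilde x|^2\Delta_{\tilde y}\chi^\kappa=0$, so the two-valued-function substitution $(x_1,x_2)=(\mathrm{Re}\,z^2,\mathrm{Im}\,z^2)$ with $z=\tilde x_1+\sqrt{-1}\tilde x_2$ and the frequency-function Liouville argument in (\ref{schauder1_step3_eqn1})--(\ref{schauder1_step3_eqn11}) apply verbatim, yielding $\chi^\kappa\equiv 0$ for $\kappa\ge 3$. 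Feeding this back, the remaining block for $(\chi^1,\chi^2)$ reduces to a first-order system in $\tilde x$ (second-order in $\tilde y$) whose principal symbol in the $z$-coordinate, read off from $a^{ij}_{\kappa\lambda}\sim-\mathrm{Re}(\sqrt{-1}^{j-i-\kappa-\lambda}/2z)$ and $a^{ij}_{\kappa\lambda}\sim-\mathrm{Re}(2\sqrt{-1}^{\kappa+\lambda}z)$, is a Cauchy--Riemann-type operator acting on odd two-valued functions; the analogous Fourier-in-$\theta$ decomposition in $z=re^{\sqrt{-1}\theta}$ and the ODE/frequency analysis of (\ref{schauder1_step3_eqn7})--(\ref{schauder1_step3_eqn11}) then force $\chi^1=\chi^2=0$. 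The main technical subtlety is arranging the scaling and Taylor subtraction so that the off-diagonal coefficients $a^{ij}_{\kappa\lambda}$ with $\lambda\le 2<\kappa$ (controlled only by $\nu$, not by $\varepsilon$) generate no nontrivial driving term in the $\kappa\ge 3$ block after passing to the limit; this is ensured by the extra hypothesis $\|a^{ij}_{\kappa\lambda}-a^{ij}_{\kappa\lambda}(\,;0)\|_{\mathcal{H}^{1-s_\lambda+t_{ij},s-1,\tau}}\le\nu$ in (\ref{schaudersyshyp1}), which pins down the $\tilde y$-dependence of these coefficients tightly enough for them to vanish in the blow-up.
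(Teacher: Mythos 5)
Your proposal takes a genuinely different route from the paper's proof, and in its present form contains an error that would derail the Liouville step. The paper does \emph{not} re-run the blow-up/compactness argument at the system level; instead it reduces the entire lemma to the already-proved scalar estimate of Lemma \ref{schauderlemma}. Concretely, for the model $2\times2$ block (\ref{schaudersyseqn2}) the paper expands the two equations and observes the algebraic identity $2\tilde x_1\cdot(\ref{schaudersyseqn3})-2\tilde x_2\cdot(\ref{schaudersyseqn4}) = \Delta_{\tilde x}\psi^1+4|\tilde x|^2\Delta_{\tilde y}\psi^1$ (and similarly for $\psi^2$), so each component separately solves the scalar equation $\Delta_{\tilde x}\psi^\kappa+4|\tilde x|^2\Delta_{\tilde y}\psi^\kappa = \mp\tilde x_1 f_1\pm\tilde x_2 f_2$; Lemma \ref{schauderlemma} then gives (\ref{schaudersyseqn6}) with no new compactness argument. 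The general system is then handled perturbatively: the $\varepsilon$-smallness in (\ref{schaudersyshyp1}) lets the error terms be absorbed, one estimates $\psi^1,\psi^2$ first on a slightly larger ball and feeds those bounds into the $\kappa\ge 3$ equations (which see $\psi^1,\psi^2$ through $\nu$-bounded coefficients), and then Lemma \ref{SALemma} plus interpolation closes the loop. Your plan is essentially to re-prove a system version of Lemma \ref{schauderlemma} from scratch, which is substantially heavier and misses the algebraic decoupling that makes the paper's proof short.

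The concrete error: you say the $\kappa\ge 3$ equations decouple in the blow-up limit and should be solved first, then fed back into the $(\chi^1,\chi^2)$ block. That is exactly backwards. From (\ref{schaudersyshyp1}), $a^{ij}_{\kappa\lambda}$ is $\varepsilon$-small when $\kappa\le 2<\lambda$ but only $\nu$-bounded when $\lambda\le 2<\kappa$. So in the blow-up limit it is the equations for $\kappa=1,2$ that shed their dependence on $\chi^\lambda$, $\lambda\ge 3$, while the $\kappa\ge 3$ equations retain the coupling to $\chi^1,\chi^2$ with frozen coefficients $a^{ij}_{\kappa\lambda}(\,;0)$. Your appeal to the near-rigidity hypothesis $\|a^{ij}_{\kappa\lambda}-a^{ij}_{\kappa\lambda}(\,;0)\|\le\nu$ does not help here --- that is a bound, not a smallness, so the frozen coefficient survives the limit. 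One must solve the decoupled first-order $(\chi^1,\chi^2)$ block first (and here you cannot invoke (\ref{schauder1_step3_eqn7})--(\ref{schauder1_step3_eqn11}) ``verbatim'' since those were derived for the scalar second-order operator; you would need a separate Liouville analysis for the Cauchy--Riemann-type system, or, better, notice the paper's algebraic trick that converts it to the scalar operator), and only then feed $\chi^1=\chi^2=0$ into the $\kappa\ge 3$ equations to reduce them to $\Delta_{\tilde x}\chi^\kappa+4|\tilde x|^2\Delta_{\tilde y}\chi^\kappa=0$. The rest of your scaffolding --- the translation-scaling invariance built into the $t_{ij}$, the use of Lemma \ref{schaudersys_afbs}, interpolation via Lemma \ref{interplemma}, and the covering Lemma \ref{SALemma} --- is sound and matches the final step of the paper's argument.
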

\begin{proof}
First consider the special case where $m = 2$ and 
\begin{align} \label{schaudersyseqn2}
	&D_1 \left( \frac{\tilde x_1}{2 |\tilde x|^2} (D_1 \psi^1 + D_2 \psi^2) + \frac{\tilde x_2}{2 |\tilde x|^2} (D_2 \psi^1 - D_1 \psi^2) \right) \\
		&+ D_2 \left( \frac{-\tilde x_2}{2 |\tilde x|^2} (D_1 \psi^1 + D_2 \psi^2) + \frac{\tilde x_1}{2 |\tilde x|^2} (D_2 \psi^1 - D_1 \psi^2) \right) 
		+ \sum_{j=3}^n D_j (2\tilde x_1 D_j \psi^1 - 2\tilde x_2 D_j \psi^2) = f_1, \nonumber \\	
	&D_1 \left( \frac{-\tilde x_2}{2 |\tilde x|^2} (D_1 \psi^1 + D_2 \psi^2) + \frac{\tilde x_1}{2 |\tilde x|^2} (D_2 \psi^1 - D_1 \psi^2) \right) \nonumber \\
		&+ D_2 \left( \frac{-\tilde x_1}{2 |\tilde x|^2} (D_1 \psi^1 + D_2 \psi^2) - \frac{\tilde x_2}{2 |\tilde x|^2} (D_2 \psi^1 - D_1 \psi^2) \right) 
		+ \sum_{j=3}^n D_j (-2\tilde x_2 D_j \psi^1 - 2\tilde x_1 D_j \psi^2) = f_2. \nonumber 
\end{align}
on $\mathbb{R}^n \setminus \{0\} \times \mathbb{R}^{n-2}$.  By expanding (\ref{schaudersyseqn2}), 
\begin{align} 
	\label{schaudersyseqn3} &\frac{\tilde x_1}{2|\tilde x|^2} \Delta_{\tilde x} \psi^1 + 2\tilde x_1 \Delta_{\tilde y} \psi^1 
		- \frac{\tilde x_2}{2|\tilde x|^2} \Delta_{\tilde x} \psi^2 - 2\tilde x_2 \Delta_{\tilde y} \psi^1 = f_1, \\
	\label{schaudersyseqn4} &\frac{-\tilde x_2}{2|\tilde x|^2} \Delta_{\tilde x} \psi^1 + 2\tilde x_2 \Delta_{\tilde y} \psi^1 
		- \frac{\tilde x_1}{2|\tilde x|^2} \Delta_{\tilde x} \psi^2 - 2\tilde x_1 \Delta_{\tilde y} \psi^1 = f_2, 
\end{align}
on $\mathbb{R}^n \setminus \{0\} \times \mathbb{R}^{n-2}$.  The equations $2\tilde x_1 \cdot $(\ref{schaudersyseqn3})$-2\tilde x_2 \cdot$(\ref{schaudersyseqn4}) and $-2\tilde x_2 \cdot $(\ref{schaudersyseqn3})$-2\tilde x_1 \cdot$(\ref{schaudersyseqn4}) yield 
\begin{align} \label{schaudersyseqn5}
	\Delta_{\tilde x} \psi^1 + 4 |\tilde x|^2\Delta_{\tilde y} \psi^1 &= -\tilde x_1 f_1 + \tilde x_2 f_2, \nonumber \\
	\Delta_{\tilde x} \psi^2 + 4 |\tilde x|^2\Delta_{\tilde y} \psi^2 &= -\tilde x_2 f_1 - \tilde x_1 f_2, 
\end{align}
on $\mathbb{R}^n \setminus \{0\} \times \mathbb{R}^{n-2}$.  Then by Lemma \ref{schauderlemma} we obtain the Schauder estimate for solutions $\psi^1,\psi^2$ to (\ref{schaudersyseqn2}) of 
\begin{equation} \label{schaudersyseqn6}
	\sum_{\kappa=1}^2 \|\psi^{\kappa}\|_{C^{g,2,\tau}(B^g_{1/2}(0))} 
	\leq C(n,\tau) \left( \sum_{\kappa=1}^2 \sup_{B^g_1(0)} |\psi^{\kappa}| + \sum_{\kappa=1}^2 \|f_{\kappa}\|_{\mathcal{H}^{-1,0,\tau}(B^g_1(0))} \right)
\end{equation}

Now consider the case of solutions $\psi^{\kappa}$ to the general systems (\ref{schaudersyshyp2}) and (\ref{schaudersyshyp3}) subject to (\ref{schaudersyshyp1}).  By subtracting (\ref{schaudersyshyp2}) and (\ref{schaudersyshyp3}) and then applying the Schauder estimate away from $\{0\} \times \mathbb{R}^{n-2}$ of Lemma \ref{schaudersys_afbs} with $\rho = |\tilde x_0|^2/4$ and using (\ref{schaudersyshyp1}), 
\begin{align} \label{schaudersyseqn7}
	&\sum_{\kappa=1}^m \sum_{|\alpha|+2|\beta| > s_{\kappa}, \, |\alpha|+|\beta| \leq s} |\tilde x|^{-s_{\kappa}+|\alpha|+2|\beta|-2\tau} 
		|D_{\tilde x}^{\alpha} D_{\tilde y}^{\beta} \psi^{\kappa}(\tilde X)| 
	+ \sum_{\kappa=1}^m \sum_{|\alpha|+|\beta| = s} |\tilde x|^{-s_{\kappa}+|\alpha|+2|\beta|} 
		[D_{\tilde x}^{\alpha} D_{\tilde y}^{\beta} \psi^{\kappa}]_{g,\tau,B^g_{|\tilde x|^2/8}(\tilde X)} \nonumber \\
	&\leq C \left( \varepsilon \sum_{\kappa=1}^m \|\psi^{\kappa}\|_{C^{g,s_{\kappa},\tau}(B^g_1(0))} 
		+ \sum_{\kappa=1}^2 \|f_{\kappa}\|_{\mathcal{H}^{-1,s-2,\tau}(B^g_1(0))} 
		+ \sum_{\kappa=3}^m \|f_{\kappa}\|_{\mathcal{H}^{s_{\kappa}-2,s-2,\tau}(B^g_1(0))}\right)
\end{align}
on $B^g_{3/4}(0)$ for some constant $C = C(n,m,s,\tau,\nu) \in (0,\infty)$.  By (\ref{schaudersyseqn6}) and Lemma \ref{schauderlemma} applied to the systems (\ref{schaudersyshyp2}) and (\ref{schaudersyshyp3}) using (\ref{schaudersyshyp1}) and (\ref{schaudersyseqn7}), 
\begin{align*}
	&\sum_{\kappa=1}^2 \|\psi^{\kappa}\|_{C^{g,2,\tau}(B^g_{5/8}(0))} 
	\leq C \left( \sum_{\kappa=1}^2 \sup_{B^g_1(0)} |\psi^{\kappa}| 
		+ \varepsilon \sum_{\lambda=1}^m \|\psi^{\lambda}\|_{C^{g,s_{\lambda},\tau}(B^g_1(0))} \nonumber \right. \\& \left. 
		+ \sum_{\lambda=1}^2 \|f_{\lambda}\|_{\mathcal{H}^{-1,s-2,\tau}(B^g_1(0))} 
		+ \sum_{\lambda=3}^m \|f_{\lambda}\|_{\mathcal{H}^{s_{\lambda}-2,s-2,\tau}(B^g_1(0))} \right)
\end{align*}
and 
\begin{align*}
	&\|\psi^{\kappa}\|_{C^{g,s_{\kappa},\tau}(B^g_{1/2}(0))} 
	\leq C \left( \sup_{B^g_1(0)} |\psi^{\kappa}| 
		+ \nu \sum_{\lambda=1}^2 \|\psi^{\lambda}\|_{C^{g,2,\tau}(B^g_{5/8}(0))} 
		+ \varepsilon \sum_{\lambda=1}^m \|\psi^{\lambda}\|_{C^{g,s_{\lambda},\tau}(B^g_1(0))} \nonumber \right. \\& \left. 
		+ \sum_{\lambda=1}^2 \|f_{\lambda}\|_{\mathcal{H}^{-1,s-2,\tau}(B^g_1(0))} 
		+ \sum_{\lambda=3}^m \|f_{\lambda}\|_{\mathcal{H}^{s_{\lambda}-2,s-2,\tau}(B^g_1(0))} \right)
\end{align*}
for $\kappa = 3,4,\ldots,m$ for some constant $C = C(n,m,s,\tau,\nu) \in (0,\infty)$.  Hence 
\begin{align} \label{schaudersyseqn8}
	&\sum_{\kappa=1}^m \|\psi^{\kappa}\|_{C^{g,s_{\kappa},\tau}(B^g_{1/2}(0))} 
	\leq C \left( \sum_{\kappa=1}^m \sup_{B^g_1(0)} |\psi^{\kappa}| 
		+ \varepsilon \sum_{\kappa=1}^m \|\psi^{\kappa}\|_{C^{g,s_{\kappa},\tau}(B^g_1(0))} \nonumber \right. \\& \left. 
		+ \sum_{\kappa=1}^2 \|f_{\kappa}\|_{\mathcal{H}^{-1,s-2,\tau}(B^g_1(0))} 
		+ \sum_{\kappa=3}^m \|f_{\kappa}\|_{\mathcal{H}^{s_{\kappa}-2,s-2,\tau}(B^g_1(0))} \right)
\end{align}
for some constant $C = C(n,m,s,\tau,\nu) \in (0,\infty)$.  By scaling (\ref{schaudersyseqn7}) replacing $\psi^{\kappa}$ with $\rho^{-s_{\kappa}/2} \psi^{\kappa}(\rho^{1/2} \tilde x,\tilde y_0+\rho \tilde y)$ and then using the Schauder estimate away from $\{0\} \times \mathbb{R}^{n-2}$ of Lemma \ref{schaudersys_afbs}), the interpolation inequalities of Lemma \ref{interplemma}, and Lemma \ref{SALemma} with $l = s+\tau$, we obtain (\ref{schaudersyseqn1}). 
\end{proof}

For the next section, it will be convenient to define the notation 
\begin{align*}
	\vvvert \psi \vvvert_{k,l,\tau,B^g_{\rho}(0,\tilde y_0)} 
	= {}& \sum_{|\alpha|+2|\beta| \leq k} \rho^{-k/2+|\alpha|/2+|\beta|} \sup_{B^g_{\rho}(0,\tilde y_0)} |D_{\tilde x}^{\alpha} D_{\tilde y}^{\beta} \psi| 
	+ \sum_{|\alpha|+2|\beta| = k} \rho^{\tau} [D_{\tilde x}^{\alpha} D_{\tilde y}^{\beta} \psi]_{g,\tau,B^g_{\rho}(0,\tilde y_0)}
	\\&+ \sum_{|\alpha|+2|\beta| > k, \, |\alpha|+|\beta| \leq l} \rho^{\tau} \sup_{B^g_{\rho}(0,\tilde y_0)} |\tilde x|^{-k-2\tau+|\alpha|+2|\beta|} 
		|D_{\tilde x}^{\alpha} D_{\tilde y}^{\beta} \psi| 
	\\&+ \sum_{|\alpha|+|\beta| = l} \sup_{(\tilde x,\tilde y) \in B^g_{\rho}(0,\tilde y_0)} \rho^{\tau} |\tilde x|^{-k+|\alpha|+2|\beta|} 
		[D_{\tilde x}^{\alpha} D_{\tilde y}^{\beta} \psi]_{g,\tau,B^g_{|\tilde x|^2/4}(\tilde x,\tilde y)} 
\end{align*}
for nonnegative integers $k \leq l$, $\tau \in (0,1/2]$, and $\psi \in C^{g,k,\tau}(B^g_{\rho}(0,\tilde y_0)) \cap C^{l,\tau}(B^g_{\rho}(0,\tilde y_0) \setminus \{0\} \times \mathbb{R}^{n-2})$.  

\begin{rmk} \label{schauder_rmk}
Suppose that $\psi \in C^{g,k,\tau}(B^g_{\rho}(0,\tilde y_0)) \cap C^{l,\tau}(B^g_{\rho}(0,\tilde y_0) \setminus \{0\} \times \mathbb{R}^{n-2})$ with $D_{\tilde x}^{\alpha} \psi = 0$ on $\{0\} \times B^{n-2}_1(0)$ whenever $|\alpha| < k$ and $\vvvert D_{\tilde x}^k \psi \vvvert_{k,l,\tau,B^g_{\rho}(0,\tilde y_0)} < \infty$.  By using the definition of $\vvvert D_{\tilde x}^k \psi \vvvert_{k,l,\tau,B^g_{\rho}(0,\tilde y_0)}$ and the fact that by Taylor's theorem 
\begin{equation*}
	D_{\tilde x}^{\alpha} D_{\tilde y}^{\beta} \psi(\tilde x,\tilde y) 
	= \sum_{|\gamma| = k-|\alpha|-2|\beta|} \frac{|\gamma|}{\gamma!} \int_0^1 (1-t)^{|\gamma|-1} 
	D_{\tilde x}^{\alpha+\gamma} D_{\tilde y}^{\beta} \psi(t\tilde x,\tilde y) dt \, \tilde x^{\alpha} 
\end{equation*}
on $B^g_{\rho}(0,\tilde y_0)$ whenever $|\alpha|+2|\beta| \leq k$, it is easy to check that $\psi \in \mathcal{H}^{k,l,\tau}(B^g_1(0);\mathbb{R}^m)$ with 
\begin{equation*}
	\psi(\tilde X;\tilde z) = \sum_{|\alpha| = k} \frac{1}{\alpha!} D_{\tilde x}^{\alpha} \psi(0,\tilde z) \tilde x^{\alpha}, \quad
	\|\psi\|_{\mathcal{H}^{k,l,\tau}(B^g_{\rho}(0,\tilde y_0))} \leq C(n,m,k,\tau) \vvvert \psi \vvvert_{k,l,\tau,B^g_{\rho}(0,\tilde y_0)}. 
\end{equation*}
\end{rmk}

Notice that by applying Lemma \ref{schaudersys_afbs} to $\rho^{-s_{\kappa}/2} \psi^{\kappa}(\rho^{1/2} \tilde x,\tilde y_0+\rho \tilde y)$ and also subtracting (\ref{schaudersyshyp2}) and (\ref{schaudersyshyp3}) and then applying Lemma \ref{schaudersys_afbs} in the case $\rho = |\tilde x_0|^2/4$ to the resulting system, we obtain: 

\begin{cor} \label{schaudersyscor}
Under the hypotheses of Lemma \ref{schaudersyslemma}, 
\begin{align*}
	\sum_{\kappa=1}^m \vvvert \psi^{\kappa} \vvvert_{s_{\kappa},s,\tau,B^g_{\vartheta \rho}(0,\tilde y_0)} 
	\equiv {}& C \left( \sum_{\kappa=1}^m \rho^{-s_{\kappa}/2} \sup_{B^g_{\vartheta \rho}(0,\tilde y_0)} |\psi^{\kappa}| 
		+ \sum_{\kappa=1}^2 \|f_{\kappa}\|_{\mathcal{H}^{-1,s-2,\tau}(B^g_{\vartheta \rho}(0,\tilde y_0))} \right. \\& \left. 
		+ \sum_{\kappa=3}^m \rho^{1-s_{\kappa}/2} \|f_{\kappa}\|_{\mathcal{H}^{s_{\kappa}-2,s-2,\tau}(B^g_{\vartheta \rho}(0,\tilde y_0))} \right)
\end{align*}
for all $\vartheta \in (0,1)$ and $B^g_{\rho}(0,\tilde y_0) \subset B^g_1(0,0)$ for some constant $C = C(n,m,s,\tau,\nu,\vartheta) \in (0,\infty)$. 
\end{cor}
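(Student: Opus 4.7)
The plan is to decompose the triple-bar norm $\vvvert \psi^{\kappa} \vvvert_{s_{\kappa},s,\tau,B^g_{\vartheta\rho}(0,\tilde y_0)}$ into its two components and bound each using the machinery already developed. The first two terms of the triple-bar norm (sup of derivatives of order at most $s_\kappa$ with respect to the homogeneous weighting, and the $g$-H\"older seminorm at order $s_\kappa$) will be controlled by Lemma \ref{schaudersyslemma}, while the third and fourth terms (the higher-order derivative bounds away from $\{0\} \times \mathbb{R}^{n-2}$, weighted by $|\tilde x|$) will be controlled by Lemma \ref{schaudersys_afbs}. A rescaling argument reduces to the normalized setting of $\rho = 1$.

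First I would perform the rescaling. Set $\widetilde\psi^{\kappa}(\tilde x,\tilde y) = \rho^{-s_\kappa/2}\psi^{\kappa}(\rho^{1/2}\tilde x,\tilde y_0 + \rho\tilde y)$ and similarly rescale the coefficients $a^{ij}_{\kappa\lambda}$ and the right-hand sides $f_\kappa$, so that $\widetilde\psi^\kappa$ is defined on $B^g_1(0)$, still satisfies hypotheses of the form (\ref{schaudersyshyp1})--(\ref{schaudersyshyp3}) (the $\mathcal H^{s,k,\tau}$ norms of the rescaled coefficients are controlled because the weights $|\tilde x|^{-s+|\alpha|+2|\beta|}$ match the rescaling $(\tilde x,\tilde y) \mapsto (\rho^{1/2}\tilde x,\rho\tilde y)$ built into the metric $g$), and the right-hand sides have $\mathcal H^{-1,s-2,\tau}$ or $\mathcal H^{s_\kappa-2,s-2,\tau}$ norms picking up the factors $\rho^0$ or $\rho^{1-s_\kappa/2}$ respectively that appear in the statement.

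Next I would apply Lemma \ref{schaudersyslemma} to the rescaled functions on $B^g_{\vartheta}(0) \subset B^g_1(0)$. This immediately yields the bound
\begin{equation*}
\sum_{\kappa=1}^m \|\widetilde\psi^{\kappa}\|_{C^{g,s_{\kappa},\tau}(B^g_{\vartheta}(0))}
\leq C\Bigl(\sum_{\kappa=1}^m \sup_{B^g_1(0)}|\widetilde\psi^\kappa| + \sum_{\kappa=1}^2 \|\widetilde f_\kappa\|_{\mathcal{H}^{-1,s-2,\tau}(B^g_1(0))} + \sum_{\kappa=3}^m \|\widetilde f_\kappa\|_{\mathcal{H}^{s_\kappa-2,s-2,\tau}(B^g_1(0))}\Bigr),
\end{equation*}
where the dependence of $C$ on $\vartheta$ is handled by standard interior rescaling in $g$-balls. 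Undoing the rescaling converts this into the first two components of the triple-bar norm with the asserted right-hand side.

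To capture the third and fourth components, I would subtract (\ref{schaudersyshyp3}) evaluated at the pole $(0,\tilde y_0)$ from (\ref{schaudersyshyp2}), and then apply Lemma \ref{schaudersys_afbs} on the ball $B^g_{|\tilde x_0|^2/8}(\tilde x_0,\tilde y_0)$ with $\rho$ replaced by $|\tilde x_0|^2/4$. The H\"older-type hypotheses (\ref{schaudersyshyp1}) on the coefficients $a^{ij}_{\kappa\lambda}$ ensure that the difference $a^{ij}_{\kappa\lambda}(\tilde X) - a^{ij}_{\kappa\lambda}(\tilde x;\tilde y_0)$ has the correct size $\lesssim |\tilde x|^{1-s_\lambda+t_{ij}} d_g(\tilde X,(0,\tilde y_0))^\tau / \operatorname{diam}_g(B^g_1(0))^\tau$ to be absorbed as a perturbative error. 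Summing over the covering-type estimate this produces, just as in the proof of Lemma \ref{schaudersyslemma}, controls $\sup |\tilde x|^{-s_\kappa+|\alpha|+2|\beta|}|D_{\tilde x}^\alpha D_{\tilde y}^\beta \psi^\kappa|$ for $|\alpha|+2|\beta|>s_\kappa$ with $|\alpha|+|\beta|\le s$ (term 3) and the top-order H\"older seminorm at the local scale $B^g_{|\tilde x|^2/4}(\tilde X)$ (term 4).

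The main obstacle will be bookkeeping of the scaling factors and the weights $|\tilde x_0|^{-s_\kappa+|\alpha|+2|\beta|}$ in Lemma \ref{schaudersys_afbs} versus the weights built into the triple-bar norm; in particular one must verify that the error terms generated by subtracting the pole equation from the actual PDE satisfy the right $\mathcal H^{s,k,\tau}$-type bounds, using the H\"older control in the $\tilde y_0$-variable built into the definition of $\mathcal H^{s,k,\tau}$ (namely the $\tilde y_0$-regularity terms in $\|f\|_{\mathcal H^{s,k,\tau}}$). Once these are verified, combining the two estimates and absorbing small constants gives the claimed bound, with $C$ depending on $n,m,s,\tau,\nu,\vartheta$.
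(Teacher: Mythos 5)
Your proposal is correct and takes essentially the same approach as the paper: the paper's own proof is the one-sentence remark immediately preceding the corollary, which indicates rescaling $\psi^\kappa$ to $\rho^{-s_\kappa/2}\psi^\kappa(\rho^{1/2}\tilde x,\tilde y_0+\rho\tilde y)$ and applying the main system Schauder estimate for the global $C^{g,s_\kappa,\tau}$ part of the triple-bar norm, then subtracting (\ref{schaudersyshyp3}) from (\ref{schaudersyshyp2}) and applying Lemma \ref{schaudersys_afbs} at scale $\rho = |\tilde x_0|^2/4$ for the $|\tilde x|$-weighted local parts. Your two-step decomposition of the triple-bar norm, the rescaling, and the pole-subtraction step all coincide with what the paper intends; the only residual issue (which you flag) is careful bookkeeping of the scaling exponents, which is routine.
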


\section{Higher regularity of the branch set} \label{sec:analyticitysec}

Let $\tilde X = (\tilde x,\tilde y)$, $\phi^1$, $\phi^2$, $\chi^{\kappa}_k$, and $\psi^{\kappa}_k$ be as in Section~\ref{sec:transpde_sec}.  By (\ref{harm_tildex_asym4}), (\ref{phi_asym1}), (\ref{harm_u_asym1}), (\ref{mss_u_asym1}), (\ref{mss_u_asym2}) and the corresponding error estimates, $\phi^1,\phi^2,\chi^{\kappa}_k \in C^{g,2,\tau}(B^g_{1/8}(0))$ and $\psi^{\kappa}_k \in C^{g,2\mathcal{N}-2\lceil k/2 \\rceil,\tau}(B^g_{1/8}(0))$ with $D_{\tilde x}^{\alpha} \psi^{\kappa}_k = 0$ on $\{0\} \times B^{n-2}_{1/8}(0)$ whenenver $|\alpha| < 2\mathcal{N}-2\lceil k/2 \\rceil$.

Note that if we differentiate (\ref{transpde1}) and (\ref{transpde2}) by $D_{\tilde y}^{\gamma}$ (with $|\gamma| \geq 1$) we obtain 
\begin{align} \label{transpde_diff1}
	&D_i \left( (\phi^1_1 \phi^2_2 - \phi^1_2 \phi^2_1) M^i_s M^j_t a^{st}_{\kappa \lambda}(D\chi_0 M,\sigma D\psi_0 M) 
		D_j D_{\tilde y}^{\gamma} \psi_k^{\lambda} \right) \nonumber \\
	&+ \sigma^{-1} D_i \left( (\phi^1_1 \phi^2_2 - \phi^1_2 \phi^2_1) M^i_s M^j_t b^{st}_{\kappa \lambda}(D\chi_0 M,\sigma D\psi_0 M) 
		D_j D_{\tilde y}^{\gamma} \chi_k^{\lambda} \right) \nonumber \\
	&+ \sigma^{-1} D_i \left( c^{ij}_{k, \kappa \lambda}(D\phi,\{D\chi_l\}_{l \leq k}, \{\sigma D\psi_l\}_{l \leq k}) 
		D_j D_{\tilde y}^{\gamma} \phi^{\lambda} \right) \nonumber \\
	&- D_i \left( (\phi^1_1 \phi^2_2 - \phi^1_2 \phi^2_1) M^i_s D_{Q^{\lambda}_{l,t}} f^s_{k,\kappa}(\{D\chi_l M\}_{l \leq k-1}, 
		\{\sigma D\psi_l M\}_{l \leq k-1}) D_j D_{\tilde y}^{\gamma} \psi_l^{\lambda} \right), \nonumber \\
	&- \sigma^{-1} D_i \left( (\phi^1_1 \phi^2_2 - \phi^1_2 \phi^2_1) M^i_s D_{P^{\lambda}_{l,t}} f^s_{k,\kappa}(\{D\chi_l M\}_{l \leq k-1}, 
		\{\sigma D\psi_l M\}_{l \leq k-1}) D_j D_{\tilde y}^{\gamma} \chi_l^{\lambda} \right) \nonumber \\
	&= \sigma^{-1} D_i \left( h^i_{\gamma,k,\kappa}(\{DD^{\alpha} \phi\}_{\alpha < \gamma}, \{DD^{\alpha} \chi_l\}_{l \leq k, \, \alpha < \gamma}, 
		\{\sigma DD^{\alpha} \psi_l\}_{l \leq k, \, \alpha < \gamma}) \right)
\end{align}
and 
\begin{align} \label{transpde_diff2}
	&D_i \left( (\phi^1_1 \phi^2_2 - \phi^1_2 \phi^2_1) M^i_s M^j_t a^{st}_{\kappa \lambda}(\sigma D\psi_0 M,D\chi_0 M) 
		D_j D_{\tilde y}^{\gamma} \chi_k^{\lambda} \right) \nonumber \\
	&+ \sigma D_i \left( (\phi^1_1 \phi^2_2 - \phi^1_2 \phi^2_1) M^i_s M^j_t b^{st}_{\kappa \lambda}(\sigma D\psi_0 M,D\chi_0 M) 
		D_j D_{\tilde y}^{\gamma} \psi_k^{\lambda} \right) \nonumber \\
	&+ D_i \left( c^{ij}_{k, \kappa \lambda}(D\phi,\{\sigma D\psi_l\}_{l \leq k},\{D\chi_l\}_{l \leq k}) 
		D_j D_{\tilde y}^{\gamma} \phi^{\lambda} \right) \nonumber \\
	&- D_i \left( (\phi^1_1 \phi^2_2 - \phi^1_2 \phi^2_1) M^i_s D_{Q^{\lambda}_{l,t}} f^s_{k,\kappa}(\{\sigma D\psi_l M\}_{l \leq k-1},
		\{D\chi_l M\}_{l \leq k-1}) D_j D_{\tilde y}^{\gamma} \chi_l^{\lambda} \right), \nonumber \\
	&- \sigma D_i \left( (\phi^1_1 \phi^2_2 - \phi^1_2 \phi^2_1) M^i_s D_{P^{\lambda}_{l,t}} f^s_{k,\kappa}(\{\sigma D\psi_l M\}_{l \leq k-1},
		\{D\chi_l M\}_{l \leq k-1}) D_j D_{\tilde y}^{\gamma} \psi_l^{\lambda} \right) \nonumber \\
	&= D_i \left( h^i_{\gamma,k,\kappa}(\{DD^{\alpha} \phi\}_{\alpha < \gamma}, \{\sigma DD^{\alpha} \psi_l\}_{l \leq k, \, \alpha < \gamma}, 
		\{DD^{\alpha} \chi_l\}_{l \leq k, \, \alpha < \gamma}) \right)
\end{align}
on $B_{1/8}(0) \setminus \{0\} \times \mathbb{R}^{n-2}$ for $\kappa = 1,2,\ldots,m$ and $k = 0,1,2,\ldots,2\mathcal{N}-1$, where we let $f^s_{0 \lambda} = 0$, 
\begin{align*}
	c^{ij}_{0, \kappa \lambda}(R,P,Q) 
		&= D_{R^{\lambda}_j} \left( (R^1_1 R^2_2 - R^1_2 R^2_1) M^i_s(R) M^l_t(R)) A^{st}_{\kappa \nu}(P \cdot M(R), Q \cdot M(R)) Q^{\nu}_l \right) 
		\text{ when } k = 0, \\
	c^{ij}_{k, \kappa \lambda}(R,P,Q) 
		&= D_{R^{\lambda}_j} \left( (R^1_1 R^2_2 - R^1_2 R^2_1) M^i_s(R) M^l_t(R)) a^{st}_{\kappa \nu}(P_0 \cdot M(R), Q_0 \cdot M(R)) Q^{\nu}_{0,l} \right) \\
		&\hspace{5mm} + D_{R^{\lambda}_j} \left( (R^1_1 R^2_2 - R^1_2 R^2_1) M^i_s(R) M^l_t(R)) b^{st}_{\kappa \nu}(P_0 \cdot M(R), Q_0 \cdot M(R)) 
			P^{\nu}_{0,l} \right) \\
		&\hspace{5mm} + D_{R^{\lambda}_j} \left( (R^1_1 R^2_2 - R^1_2 R^2_1) M^i_s(R) f^s_{k,\kappa}(P \cdot M(R), Q \cdot M(R)) \right) \text{ when } k \geq 1,
\end{align*}
for all $R \in \mathbb{R}^{2n}$, $P \in (\mathbb{R}^{mn})^k$, and $Q \in (\mathbb{R}^{mn})^k$ using the notation $P_0 = (P^{\kappa}_{0,j}), Q_0 = (Q^{\kappa}_{0,j}) \in \mathbb{R}^{mn}$, $h^i_{\gamma,k,\kappa} : (\mathbb{R}^n)^{(2+2mk)(\gamma_1 \gamma_2 \cdots \gamma_n - 1)} \rightarrow \mathbb{R}$.  Recall that in case (a), we replace $\psi_0^{\lambda}$ with $\tilde x_{\lambda}$ for $\lambda = 1,2$ in (\ref{transpde_diff1}) and (\ref{transpde_diff2}).  In cases (b) and (c), we replace $\psi_{2\mathcal{N}-3+\lambda}^1$ with $\tilde x_{\lambda}$ for $\lambda = 1,2$ in (\ref{transpde_diff1}) and (\ref{transpde_diff2}). 

First we will show that for every $\gamma$, $D_{\tilde y}^{\gamma} \phi^{\kappa} \in C^{g,2,\tau}(B^g_{\rho}(0))$, $D_{\tilde y}^{\gamma} \chi^{\kappa}_k \in C^{g,2,\tau}(B^g_{\rho}(0))$, and $D_{\tilde y}^{\gamma} \psi^{\kappa}_k \in C^{g,2\mathcal{N}-2\lceil k/2 \rceil,\tau}(B^g_{\rho}(0))$ for all $\rho \in (0,1/8)$ with $D_{\tilde x}^{\alpha} D_{\tilde y}^{\gamma} \psi^{\kappa}_k = 0$ on $\{0\} \times B^{n-2}_{1/8}(0)$ whenever $|\alpha| < 2\mathcal{N}-2\lceil k/2 \rceil$ and 
\begin{align} \label{smoothness_eqn1}
	&\sum_{\kappa=1}^2 \vvvert D_{\tilde y}^{\gamma} \phi^{\kappa} \vvvert_{2,2\mathcal{N},\tau,B^g_{\rho}(0)} 
	+ \sum_{\kappa=1}^m \sum_{k=0}^{2\mathcal{N}-1} \vvvert D_{\tilde y}^{\gamma} \chi^{\kappa}_k \vvvert_{2,2\mathcal{N},\tau,B^g_{\rho}(0)} \nonumber \\
	&+ \sum_{\kappa=1}^m \sum_{k=0}^{2\mathcal{N}-1} \vvvert D_{\tilde y}^{\gamma} \psi^{\kappa}_k 
		\vvvert_{2\mathcal{N}-2\lceil k/2 \rceil,2\mathcal{N},\tau,B^g_{\rho}(0)} 
	\leq C(n,m,\mathcal{N},|\gamma|,\rho) 
\end{align}
for all $\rho \in (0,1/8)$.  (Note that $D_{\tilde y}^{\gamma} \psi_0^{\lambda} = 0$ for $\lambda = 1,2$ in case (a) and  $D_{\tilde y}^{\gamma} \psi_{2\mathcal{N}-3+\lambda}^1 = 0$ for $\lambda = 1,2$ in cases (b) and (c).)  We will proceed by induction, assuming that this is all true if $|\gamma| < s$ for some integer $s \geq 1$, and proving this is the case when $|\gamma| = s$ using a standard difference quotient argument involving the Schauder estimates, in particular Corollary \ref{schaudersyscor}. 

For $l = 3,4,\ldots,n$ and $h \in \mathbb{R}$ with $h \neq 0$, define the difference quotient operator $\delta_{l,h}$ by 
\begin{equation} \label{diffquot}
	\delta_{l,h} \psi(\tilde X) = \frac{\psi(\tilde X + he_l) - \psi(\tilde X)}{h}
\end{equation}
for every function $\psi : B^g_{\rho}(0) \rightarrow \mathbb{R}^N$, where $\rho > |h| > 0$ and $\tilde X \in B^g_{\rho-|h|}(0)$ and $e_1,e_2,\ldots,e_n$ denotes the standard basis for $\mathbb{R}^n$.  Note that if $\psi \in C^{g,k,\tau}(B_{\rho}(0);\mathbb{R}^m)$, then $\delta_{h,l} \psi \in C^{g,k,\tau}(B_{\rho-|h|}(0);\mathbb{R}^m)$ whenever $l = 3,4,\ldots,n$; however, we cannot similarly take difference quotients in the $e_1$ and $e_2$ directions.  Moreover, if $\psi, D_{\tilde y} \psi \in C^{g,k,\tau}(B_{\rho}(0);\mathbb{R}^m)$, then $\|\delta_{h,l} \psi\|_{C^{g,k,\tau}(B_{\rho-|h|}(0);\mathbb{R}^m)} \leq \|D_l \psi\|_{C^{g,k,\tau}(B_{\rho}(0);\mathbb{R}^m)}$. 

Now take any $\gamma$ with $|\gamma| = s-1$ and any $l \in \{3,4,\ldots,n\}$ and $h \neq 0$.  By applying $\delta_{l,h}$ to (\ref{transpde1}) and (\ref{transpde2}) if $s = 1$ and to (\ref{transpde_diff1}) and (\ref{transpde_diff2}) if $s \geq 2$ and applying Corollary \ref{schaudersyscor} using the induction hypothesis and Remark \ref{schauder_rmk}, we obtain 
\begin{align} \label{smoothness_eqn2}
	&\sum_{\kappa=1}^2 \vvvert \delta_{h,l} D_{\tilde y}^{\gamma} \phi^{\kappa} \vvvert_{2,2\mathcal{N},\tau,B^g_{\rho}(0)} 
	+ \sum_{\kappa=1}^m \sum_{k=0}^{2\mathcal{N}-1} \vvvert \delta_{h,l} D_{\tilde y}^{\gamma} \chi^{\kappa}_k \vvvert_{2,2\mathcal{N},\tau,B^g_{\rho}(0)} 
		\nonumber \\
	&+ \sum_{\kappa=1}^m \sum_{k=0}^{2\mathcal{N}-1} \vvvert \delta_{h,l} D_{\tilde y}^{\gamma} \psi^{\kappa}_k 
		\vvvert_{2\mathcal{N}-2\lceil k/2 \rceil,2\mathcal{N},\tau,B^g_{\rho}(0)} 
	\leq C
\end{align}
whenever $\rho \in (0,1/8)$ and $0 < |h| < (1-\rho)/2$ for some constant $C = C(n,m,\mathcal{N},|\gamma|,\rho) \in (0,\infty)$ independent of $h$.  Note that the hypotheses of Lemma \ref{schaudersyslemma} are easily checked using the definitions of $M^i_j$, $A^{ij}_{\kappa \lambda}$, $a^{ij}_{\kappa \lambda}$, $b^{ij}_{\kappa \lambda}$, $f^i_{k,\kappa}$, and $c^{ij}_{\kappa \lambda}$, (\ref{Asymmetry1}), (\ref{fsymmetry1}), (\ref{phi_asym1}), (\ref{harm_u_asym1}), (\ref{mss_u_asym1}), (\ref{mss_u_asym2}), and the induction hypothesis.  Now by letting $h \rightarrow 0$ using Arzela-Ascoli and the smoothness of $D_{\tilde y}^{\gamma} \phi^{\kappa}$, $D_{\tilde y}^{\gamma} \chi^{\kappa}_k$, and $D_{\tilde y}^{\gamma} \psi^{\kappa}_k$ on $B^g_{1/8}(0) \setminus \{0\} \times B^{n-2}_{1/8}(0)$, $\delta_{h,l} D_{\tilde y}^{\gamma} \phi^{\kappa} \rightarrow D_l D_{\tilde y}^{\gamma} \phi^{\kappa}$, $\delta_{h,l} D_{\tilde y}^{\gamma} \chi_k^{\kappa} \rightarrow D_l D_{\tilde y}^{\gamma} \chi_k^{\kappa}$, and $\delta_{h,l} D_{\tilde y}^{\gamma} \phi_k^{\kappa} \rightarrow D_l D_{\tilde y}^{\gamma} \phi_k^{\kappa}$ uniformly in $B^g_{1/8-\sigma}(0) \setminus B^2_{\sigma} \times \mathbb{R}^{n-2}$ for all $\sigma \in (0,1/16)$.  By letting $h \rightarrow 0$ using (\ref{smoothness_eqn2}), Arzela-Ascoli, and using series expansions like we did in the proof of Lemma \ref{schaudersyslemma} to establish the $C^{g,k,\tau}$ convergence of $\chi_j \rightarrow \chi$, we obtain that $\delta_{h,l} D_{\tilde y}^{\gamma} \phi^{\kappa} \rightarrow D_l D_{\tilde y}^{\gamma} \phi^{\kappa}$ and $\delta_{h,l} D_{\tilde y}^{\gamma} \chi_k^{\kappa} \rightarrow D_l D_{\tilde y}^{\gamma} \chi_k^{\kappa}$ in $C^{g,2,\tau}(B^g_{\rho}(0))$ and $\delta_{h,l} D_{\tilde y}^{\gamma} \psi_k^{\kappa} \rightarrow D_l D_{\tilde y}^{\gamma} \psi_k^{\kappa}$ in $C^{g,2\mathcal{N}-2\lceil k/2 \rceil,\tau}(B^g_{\rho}(0))$ for all $\rho \in (0,1/8)$.  In particular, $D_l D_{\tilde y}^{\gamma} \phi^{\kappa} \in C^{g,2,\tau}(B^g_{\rho}(0))$ and $D_l D_{\tilde y}^{\gamma} \chi_k^{\kappa} \in C^{g,2,\tau}(B^g_{\rho}(0))$ and $D_l D_{\tilde y}^{\gamma} \phi_k^{\kappa} \in C^{g,2\mathcal{N}-2\lceil k/2 \rceil,\tau}(B^g_{\rho}(0))$ for all $\rho \in (0,1/8)$ and $l = 1,2,\ldots,n-2$.  $D_{\tilde x}^{\alpha} D_l D_{\tilde y}^{\gamma} \psi^{\kappa}_k = 0$ on $\{0\} \times B^{n-2}_{1/8}(0)$ whenever $|\alpha| < 2\mathcal{N}-2\lceil k/2 \rceil$.  By letting $h \rightarrow 0$ in (\ref{smoothness_eqn2}), (\ref{smoothness_eqn1}) holds true whenever $|\gamma| = s$. 

Now we will show that $\mathcal{B}_u$ is a real analytic $(n-2)$-dimensional submanifold by showing that $\phi^1(0,\tilde y)$ and $\phi^2(0,\tilde y)$ are real analytic functions of $\tilde y$.  This will follow if we can show that 
\begin{align} \label{analyticity_eqn1}
	&\sum_{\kappa=1}^2 \vvvert D_{\tilde y}^{\gamma} \phi^{\kappa} \vvvert_{2,2\mathcal{N},\tau,B^g_{\rho/|\gamma|}(0)} 
	+ \sum_{\kappa=1}^m \sum_{k=0}^{2\mathcal{N}-1} \vvvert D_{\tilde y}^{\gamma} \chi^{\kappa}_k \vvvert_{2,2\mathcal{N},\tau,B^g_{\rho/|\gamma|}(0)} \\
	&+ \sum_{\kappa=1}^m \sum_{k=0}^{2\mathcal{N}-1} \vvvert D_{\tilde y}^{\gamma} \psi^{\kappa}_k 
		\vvvert_{2\mathcal{N}-2\lceil k/2 \rceil,2\mathcal{N},\tau,B^g_{\rho/|\gamma|}(0)} 
	\leq \left\{ \begin{array}{ll} 
		H_0 \rho^{-|\gamma|} & \text{if } |\gamma| \leq 2, \\
		(|\gamma|-2)! H_0 H^{|\gamma|-2} \rho^{-|\gamma|} & \text{if } |\gamma| > 2. 
	\end{array} \right. \nonumber 
\end{align}
for all $B^g_{\rho}(0,\tilde y_0) \subseteq B^g_{1/8}(0)$ and $\gamma$ with $|\gamma| \geq 1$ for some constants $H_0,H \geq 1$ depending only on $n$, $m$, and $\mathcal{N}$ (independent of $\tilde y_0$ and $\gamma$).  We will prove (\ref{analyticity_eqn1}) by inductively applying the Schauder estimates, in particular Corollary \ref{schaudersyscor}, to (\ref{transpde_diff1}) and (\ref{transpde_diff2}).  Note that by (\ref{smoothness_eqn1}), (\ref{analyticity_eqn1}) holds true whenever $|\gamma| \leq 2\mathcal{N}+4$ by choosing $H_0$ large enough.  Suppose that for some integer $s \geq 2\mathcal{N}+4$, (\ref{analyticity_eqn1}) holds true whenever $|\gamma| < s$.  We now want to prove (\ref{analyticity_eqn1}) holds true for an arbitrary $\gamma$ with $|\gamma| = s$.  By Corollary \ref{schaudersyscor} applied to (\ref{transpde_diff1}) and (\ref{transpde_diff2}), 
\begin{align} \label{analyticity_eqn2}
	&\sum_{\kappa=1}^2 \vvvert D_{\tilde y}^{\gamma} \phi^{\kappa} \vvvert_{2,2\mathcal{N},\tau,B^g_{\rho/2s}(0)} 
	+ \sum_{\kappa=1}^m \sum_{k=0}^{2\mathcal{N}-1} \vvvert D_{\tilde y}^{\gamma} \chi^{\kappa}_k \vvvert_{2,2\mathcal{N},\tau,B^g_{\rho/2s}(0)} \nonumber \\
	&+ \sum_{\kappa=1}^m \sum_{k=0}^{2\mathcal{N}-1} \vvvert D_{\tilde y}^{\gamma} \psi^{\kappa}_k 
		\vvvert_{2\mathcal{N}-2\lceil k/2 \rceil,2\mathcal{N},\tau,B^g_{\rho/2s}(0)} 
	\leq C \left( \sum_{\kappa=1}^2 (\rho/p)^{-1} \sup_{B^g_{\rho/s}(0,\tilde y_0)} |D_{\tilde y}^{\gamma} \phi^{\kappa}| \right. \nonumber \\
	&+ \sum_{\kappa=1}^m \sum_{k=0}^{2\mathcal{N}-1} (\rho/p)^{-1} \sup_{B^g_{\rho/s}(0,\tilde y_0)} |D_{\tilde y}^{\gamma} \chi^{\kappa}_k| 
	+ \sum_{\kappa=1}^m \sum_{k=0}^{2\mathcal{N}} (\rho/p)^{-\mathcal{N}-\lceil k/2 \rceil} 
		\sup_{B^g_{\rho/s}(0,\tilde y_0)} |D_{\tilde y}^{\gamma} \psi^{\kappa}_k| \nonumber \\
	&+ \sigma^{-1} \sum_{\kappa=1}^m \sum_{k=0}^{2\mathcal{N}-1} \|D_i h^i_{\gamma,k,\kappa}(\{DD^{\alpha} \phi\}, \{DD^{\alpha} \chi_l\}, 
		\{\sigma DD^{\alpha} \psi_l\})\|_{\mathcal{H}^{2\mathcal{N}-2\lceil k/2 \rceil-2,2\mathcal{N}-2,\tau}(B^g_{\rho/s}(0,\tilde y_0))} \nonumber \\
	&\left. + \sum_{\kappa=1}^m \sum_{k=0}^{2\mathcal{N}-1} \|D_i h^i_{\gamma,k,\kappa}(\{DD^{\alpha} \phi\}, \{\sigma DD^{\alpha} \psi_l\}, 
		\{DD^{\alpha} \chi_l\})\|_{\mathcal{H}^{0,2\mathcal{N}-2,\tau}(B^g_{\rho/s}(0,\tilde y_0))} \right)
\end{align}
for all $B^g_{\rho}(0,\tilde y_0) \subseteq B^g_{1/8}(0)$ for $C = C(n,m,\mathcal{N}) \in (0,\infty)$.  We now need to bound the $h^i_{\gamma,k,\kappa}$ terms in (\ref{analyticity_eqn2}).  To do so, we need to handle the fact that (\ref{transpde_diff1}) and (\ref{transpde_diff2}) are not quite elliptic differential systems with real analytic date but rather are singular wherever $\phi^1_1 \phi^2_2 - \phi^1_2 \phi^2_1 = 0$ and we need a technique for managing the computation for obtaining the bounds on the $h^i_{\gamma,k,\kappa}$ terms required for the real analyticity result.  The latter will come from a technique that is due to Friedman in~\cite{Friedman} and used with modification by the author in~\cite{KrumThesis}. 

Consider any real analytic function $f : \mathbb{R}^{n^2} \times \mathbb{R}^{mn} \rightarrow \mathbb{R}$ and assume that for $K_0 > 0$, $K \geq a \geq 2$, and nonnegative integers $p_i$, $q_i$, $p = \sum_{i=1}^n p_i$, and $q = \sum_{i=1}^n q_i$, 
\begin{gather} 
	|D_S^{\alpha} D_P^{\beta} f(S,P)| \leq (|\alpha|+|\beta|+p+q)! K_0 K^{2|\alpha|+|\beta|+2p+q} \prod_{i=1}^n |S^i|^{p_i+q_i-|\alpha_i|} 
		\text{ if } |\alpha| \leq p+q, \nonumber \\
	D_S^{\alpha} f(S,P) = 0 \text{ if } |\alpha| > p+q, \label{singanalyticfn_eqn1}
\end{gather}
for all $S \in \mathbb{R}^{n^2}$, $P \in \mathbb{R}^{mn}$ with $|P| \leq a$, and $\alpha$, where for the variable $S$ we write $S = (S^i_j)_{i,j=1,\ldots,n}$ and $S^i = (S^i_j)_{j=1,\ldots,n}$ and similarly for $P$ and other such variables and for the multi-index $\alpha$ we write $\alpha = (\alpha^i_j)_{i,j=1,\ldots,n}$, $\alpha_j = (\alpha^i_j)_{j=1,\ldots,n}$, and $D_S^{\alpha} = \prod D_{S^i_j}^{\alpha^i_j}$ and similarly for $\beta$ and other such multi-indices.  Then, using the fact that 
\begin{equation*}
	M(R) = \left( \begin{matrix}
		R^1_1 & R^1_2 & R^1_3 & R^1_4 & \cdots & R^1_n \\
		R^2_1 & R^2_2 & R^2_3 & R^2_4 & \cdots & R^2_n \\
		0 & 0 & 1 & 0 & \cdots & 0 \\
		0 & 0 & 0 & 1 & \cdots & 0 \\
		\vdots & \vdots & \vdots & \vdots & \ddots & \vdots \\
		0 & 0 & 0 & 0 & \cdots & 1 
	\end{matrix} \right)^{-1}, 
\end{equation*}
we have 
\begin{equation} \label{singanalyticfn_eqn2}
	D_R^{\gamma} D_P^{\delta} \left( (R^1_1 R^2_2 - R^1_2 R^2_1) f(M(R), PM(R)) \right) = (R^1_1 R^2_2 - R^1_2 R^2_1) f_{\gamma,\delta}(M(R), PM(R)) 
\end{equation}
whenever $|\gamma|+|\delta| = 1$ for the real analytic function $f_{\gamma,\delta} : \mathbb{R}^{n^2} \times \mathbb{R}^{mn} \rightarrow \mathbb{R}$ given by 
\begin{align} \label{singanalyticfn_eqn3}
	&f_{\gamma,\delta}(S,P) = f(S,P) S^i_{\kappa} - D_{S^l_j} f(S,P) S^i_j S^l_{\kappa} 
		- D_{P_j^{\lambda}} f(S,P) S^i_j P^{\lambda}_{\kappa} \text{ if } \gamma = e_i^{\kappa}, \, |\delta|=0, \nonumber \\
	&f_{\gamma,\delta}(S,P) = D_{P_j^{\kappa}} f(S,P) S^i_j \text{ if } |\gamma|=0, \, \delta = e_i^{\kappa}, 
\end{align}
for $S \in \mathbb{R}^{2n}$ and $P \in \mathbb{R}^{mn}$.  By differentiating (\ref{singanalyticfn_eqn3}) and using (\ref{singanalyticfn_eqn1}), if $|\gamma|+|\delta| = 1$ then 
\begin{equation} \label{singanalyticfn_eqn5}
	|D_S^{\alpha} D_P^{\beta} f_{\gamma,\delta}(S,P)| \leq (|\alpha|+|\beta|+|\gamma|+|\delta|+p+q)! 2^{2|\gamma|+|\delta|} 
		K_0 K^{2|\alpha|+|\beta|+2|\gamma|+|\delta|+2p+q} \prod_{i=1}^n |S^i|^{|\gamma_i|+|\delta_i|+p_i+q_i-|\alpha^i|} 
\end{equation} 
if $|\alpha| \leq 1+p+q$ and $D_S^{\alpha} f_{\gamma,\delta}(S,P) = 0$ if $|\alpha| > 1+p+q$ for all $S \in \mathbb{R}^{n^2}$, $P \in \mathbb{R}^{mn}$ with $|P| \leq a$, $\alpha$, and $\beta$.  By induction, we can show that for every $\gamma$ and $\delta$ there exists a real analytic function $f_{\gamma,\delta} : \mathbb{R}^{2n} \times \mathbb{R}^{mn} \rightarrow \mathbb{R}$ such that (\ref{singanalyticfn_eqn2}) holds true, (\ref{singanalyticfn_eqn5}) holds true if $|\alpha| \leq |\gamma|+|\delta|+p+q$ and $D_S^{\alpha} f_{\gamma,\delta}(S,P) = 0$ if $|\alpha| > |\gamma|+|\delta|+p+q$ for all $S \in \mathbb{R}^{n^2}$, $P \in \mathbb{R}^{mn}$ with $|P| \leq a$, $\alpha$, and $\beta$.  By (\ref{singanalyticfn_eqn2}) and (\ref{singanalyticfn_eqn5}), 
\begin{align} \label{singanalyticfn_eqn6}
	&\left| D_R^{\alpha} D_P^{\beta} \left( (R^1_1 R^2_2 - R^1_2 R^2_1) f(M(R), PM(R)) \right) \right| \nonumber \\
	&\leq (|\alpha|+|\beta|+p+q)! K_0 2^{2|\alpha|+|\beta|} K^{2|\alpha|+|\beta|+2p+q} 
	|R^1_1 R^2_2 - R^1_2 R^2_1| \left( \frac{|R_x| (1+|R_y|)}{|R^1_1 R^2_2 - R^1_2 R^2_1|} \right)^{\sum_{i=1}^2 (|\alpha_i|+|\beta_i|+p_i+q_i)} 
\end{align}
wherever $R^1_1 R^2_2 - R^1_2 R^2_1 \neq 0$, where $R_x = (R^{\kappa}_i)_{i \leq 2}$ and $R_y = (R^{\kappa}_i)_{i \geq 3}$. 

Now we can rewrite (\ref{transpde_diff1}) in the form 
\begin{equation} \label{analyticity_eqn3}
	\sum_{i=1}^n D_i \left( F^i_{k,\kappa}(D\phi,D\chi,D\psi) \right) = 0 
\end{equation}
on $B_{1/8}(0) \setminus \{0\} \times \mathbb{R}^{n-2}$ for $k = 0,1,2,\ldots,2\mathcal{N}-1$ and $\kappa = 1,2,\ldots,m$, where $\phi = (\phi^{\lambda})$, $\chi = (\chi_l^{\lambda})_{l \leq k}$, and $\psi$ consists of $\psi_l^{\lambda}$ for $l \leq k$ excluding $\psi_0^1, \psi_0^2$ in case (a) and excluding $\psi_{2\mathcal{N}-2}^1, \psi_{2\mathcal{N}-1}^1$ in cases (b) and (c) and where $F^i_{k,\kappa} : \mathbb{R}^{2n} \times \mathbb{R}^{kmn} \times \mathbb{R}^{kmn-2n} \rightarrow \mathbb{R}$ are functions that are smooth wherever $R^1_1 R^2_2 - R^1_2 R^2_1 \neq 0$.  By the real analyticity of $A^i_{\kappa}$ and (\ref{singanalyticfn_eqn6}) we may assume that 
\begin{align} \label{analyticity_eqn4a}
	&|D_{(R,P,Q)}^{\beta} F_{k,\kappa}^i(R,P,Q)| 
	\\&\leq \left\{ \begin{array}{ll} 
		K_0 K^{2|\beta_R|+|\beta_P|+|\beta_Q|} |R^1_1 R^2_2 - R^1_2 R^2_1| 
			\left( \frac{|R_x| (1+|R_y|)}{|R^1_1 R^2_2 - R^1_2 R^2_1|} \right)^{|\beta_x|+p_i} |Q| |M(R)| 
			& \text{if } |\beta| \leq 2\mathcal{N}+2, \\
		(|\beta|-2\mathcal{N}-2)! K_0 K^{2|\beta_R|+|\beta_P|+|\beta_Q|} |R^1_1 R^2_2 - R^1_2 R^2_1| 
			\left( \frac{|R_x| (1+|R_y|)}{|R^1_1 R^2_2 - R^1_2 R^2_1|} \right)^{|\beta_x|+p_i} 
			& \text{if } |\beta| \geq 2\mathcal{N}+3, 
	\end{array} \right. \nonumber 
\end{align}
if $|\beta_{Q_0}| = |\beta_Q|$, 
\begin{align} \label{analyticity_eqn4b}
	&|D_{(R,P,Q)}^{\beta} F_{k,\kappa}^i(R,P,Q)| 
	\\&\leq \left\{ \begin{array}{ll} 
		K_0 K^{2|\beta_R|+|\beta_P|+|\beta_Q|} |R^1_1 R^2_2 - R^1_2 R^2_1| \left( \frac{|R_x| (1+|R_y|)}{|R^1_1 R^2_2 - R^1_2 R^2_1|} 
			\right)^{\sum_{i=1}^2 (|\alpha_i|+|\beta_i|)+p_i} & \text{if } |\beta| \leq 2\mathcal{N}+2, \\
		(|\beta|-2\mathcal{N}-2)! K_0 K^{2|\beta_R|+|\beta_P|+|\beta_Q|} |R^1_1 R^2_2 - R^1_2 R^2_1| 
			\left( \frac{|R_x| (1+|R_y|)}{|R^1_1 R^2_2 - R^1_2 R^2_1|} \right)^{|\beta_x|+p_i} 
			& \text{if } |\beta| \geq 2\mathcal{N}+3, 
	\end{array} \right. \nonumber 
\end{align}
if either $|\beta_{Q_0}| < |\beta_Q|$ and $|\beta_{Q_k}| = 0$ or $|\beta_{Q_0}| + 1 = |\beta_Q|$ and $|\beta_{Q_k}| = 1$, and 
\begin{equation} \label{analyticity_eqn4c}
	D_{(R,P,Q)}^{\beta} F_{k,\kappa}^i(R,P,Q) = 0 
\end{equation}
if $|\beta_{Q_0}| + 1 < |\beta_Q|$ and $|\beta_{Q_k}| \geq 1$ for all $R \in \mathbb{R}^{2n}$, $P \in \mathbb{R}^{kmn}$ with $|P| \leq \sqrt{kmn}$, $Q \in \mathbb{R}^{kmn}$ with $|Q_l| \leq \sqrt{mn}$ for $l < k$, and $\beta$ for some constants $K_0,K \geq 1$ depending only on $n$, $m$, $\mathcal{N}$ (independent of $\beta$), where $p_i = 1$ if $i = 1,2$ and $p_i = 0$ if $i = 3,4,\ldots,n$ and we use the notation 
\begin{gather*}
	\beta = (\beta_R,\beta_P,\beta_Q), \quad \beta_R = (\beta_{R^{\lambda}_j}), \quad \beta_P = (\beta_{P^{\lambda}_{l,j}}), \quad 
		\beta_Q = (\beta_{Q^{\lambda}_{l,j}}), \\ 
	\beta_{Q_l} = (\beta_{Q^{\lambda}_{l,j}}) \text{ for each } l = 0,1,\ldots,2\mathcal{N}-1, \quad 
	\beta_x = (\beta_{R^{\lambda}_j},\beta_{P^{\lambda}_{l,j}},\beta_{Q^{\lambda}_{l,j}})_{j \leq 2}, \\ 
	D_{(R,P,Q)}^{\beta} = \prod D_{R^{\lambda}_j}^{\beta_{R^{\lambda}_j}} \cdot \prod D_{P^{\lambda}_{l,j}}^{\beta_{P^{\lambda}_{l,j}}} 
		\cdot \prod D_{Q^{\lambda}_{l,j}}^{\beta_{Q^{\lambda}_{l,j}}}.  
\end{gather*}
By the sum, product, and chain rule for derivatives,  
\begin{align} \label{analyticity_eqn5}
	&D_{\tilde X}^{\alpha} h^i_{\gamma,k,\kappa}(D\phi,D\chi,D\psi) 
	= \sum c_{\beta,\{k_l\},\{\kappa_l\},\{j_l\},\{\alpha_l\},\{\gamma_l\}} D_{(R,P,Q)}^{\beta} F_{k,\kappa}^i(D\phi,D\chi,D\psi) \nonumber \\& \cdot 
	\prod_{\kappa_l \leq 2} D_{j_l} D_{\tilde X}^{\alpha_l} D_{\tilde y}^{\gamma_l} \phi^{\kappa_l} 
	\prod_{2 < \kappa_l \leq m+2} D_{j_l} D_{\tilde X}^{\alpha_l} D_{\tilde y}^{\gamma_l} \chi_{k_l}^{\kappa_l-2} 
	\prod_{\kappa_l > m+2} D_{j_l} D_{\tilde X}^{\alpha_l} D_{\tilde y}^{\gamma_l} \psi_{k_l}^{\kappa_l-m-2} 
\end{align}
where $c_{\beta,\{k_l\},\{\kappa_l\},\{j_l\},\{\gamma_l\}}$ are nonnegative integers and the sum is over $2 \leq |\beta| \leq |\alpha|+|\gamma|$ and $k_l$, $\kappa_l$, $j_l$, and $\gamma_l$ for $l = 1,2,\ldots,|\beta|$ such that 
\begin{equation*}
	\sum_{\kappa_l \leq 2} e^{\kappa_l} = \beta_R, \quad 
	\sum_{2 < \kappa_l \leq m+2} e_{j_l}^{\kappa_l-2} = \beta_P, \quad 
	\sum_{\kappa_l > m+2} e_{j_l}^{\kappa_l-m-2} = \beta_Q, \quad 
	\sum_{l=1}^{|\beta|} \alpha_l = \alpha, \quad \sum_{l=1}^{|\beta|} \gamma_l = \gamma. 
\end{equation*}
Note that we are using the convention that sums over empty sets to equal zero and products over empty sets to equal one.  We shall take $k_l = 0$ if $\kappa_l \leq 2$ and assume that in (\ref{analyticity_eqn5}) that if $c_{\beta,\{k_l\},\{\kappa_l\},\{j_l\},\{\alpha_l\},\{\gamma_l\}} \neq 0$ then $c_{\beta,\{k'_l\},\{\kappa'_l\},\{j'_l\},\{\alpha'_l\},\{\gamma'_l\}}$ for every permutation $\{(\kappa'_l,j'_l,\alpha'_l,\gamma'_l)\}$ of $\{(\kappa_l,j_l,\alpha_l,\gamma_l)\}$.  

Suppose that $\Phi : \mathbb{R} \rightarrow \mathbb{R}$ and $v : \mathbb{R} \rightarrow \mathbb{R}$ are functions, called \textit{majorants}, such that $\Phi(0) = 0$, $D\Phi(0) = 0$, 
\begin{equation} \label{analyticity_eqn6}
	|D_{(R,P,Q)}^{\beta} F_{k,\kappa}^i(R,P,Q)| 
	\leq \left\{ \begin{array}{l} 
		D^{|\beta|} \Phi(0) |R^1_1 R^2_2 - R^1_2 R^2_1| \left( \frac{|R_x| (1+|R_y|)}{|R^1_1 R^2_2 - R^1_2 R^2_1|} \right)^{|\beta_x|+p_i} |Q M(R)| 
			\text{ if } |\beta_{Q_0}| = |\beta_Q|, \\
		0 \text{ if } |\beta_{Q_0}| + 1 < |\beta_Q|, \, |\beta_{Q_k}| \geq 1, \\ 
		D^{|\beta|} \Phi(0) |R^1_1 R^2_2 - R^1_2 R^2_1| \left( \frac{|R_x| (1+|R_y|)}{|R^1_1 R^2_2 - R^1_2 R^2_1|} \right)^{|\beta_x|+p_i} 
			\text{ otherwise, }
	\end{array} \right. \nonumber 
\end{equation}
for all $R \in \mathbb{R}^{2n}$, $P \in \mathbb{R}^{kmn}$ with $|P| \leq \sqrt{kmn}$, $Q \in \mathbb{R}^{kmn}$ with $|Q_l| \leq \sqrt{mn}$ for $l < k$, and $\beta$ and 
\begin{align} \label{analyticity_eqn7}
	&\sum_{\kappa=1}^2 \vvvert D_{\tilde y}^{\gamma'} \phi^{\kappa} \vvvert_{2,2\mathcal{N},\tau,B^g_{\rho/s}(0)} 
	+ \sum_{\kappa=1}^m \sum_{k=0}^{2\mathcal{N}-1} \vvvert D_{\tilde y}^{\gamma'} \chi^{\kappa}_k \vvvert_{2,2\mathcal{N},\tau,B^g_{\rho/s}(0)} \nonumber \\
	&+ \sum_{\kappa=1}^m \sum_{k=0}^{2\mathcal{N}-1} \vvvert D_{\tilde y}^{\gamma'} \psi^{\kappa}_k 
		\vvvert_{2\mathcal{N}-2\lceil k/2 \rceil,2\mathcal{N},\tau,B^g_{\rho/s}(0)} 
	\leq D^{|\gamma'|} v(0) \rho^{-|\gamma'|}
\end{align}
whenever $|\gamma'| < s$.  By taking the derivative of the composition functions formed by $\Phi(\eta_1+\eta_2+\cdots+\eta_{2\mathcal{N}mn})$, $\eta_j = e^{\xi_1+\xi_2+\cdots+\xi_n} v(\zeta_1+\zeta_2+\cdots+\zeta_{n-2}) - v(0)$ for all $j$, $\xi_j = \xi$ for all $j$, and $\zeta_j = \zeta$ for all $j$, 
\begin{equation} \label{analyticity_eqn8}
	\left. D_{\xi}^{|\alpha|} D_{\zeta}^{|\gamma|} \left( \Phi(2\mathcal{N}mn e^{n\xi} v((n-2)\zeta) - v(0)) \right) \right|_{\xi=\zeta=0} 
	= \sum c_{\beta,\{k_l\},\{\kappa_l\},\{j_l\},\{\alpha_l\},\{\gamma_l\}} D^{|\beta|} \Phi(0) \prod_{l=1}^{|\beta|} D^{|\gamma_l|} v(0) 
\end{equation}
where the sum is over $\beta$, $k_l$, $\kappa_l$, $j_l$, and $\gamma_l$ as in (\ref{analyticity_eqn5}) and $c_{\beta,\{k_l\},\{\kappa_l\},\{j_l\},\{\gamma_l\}}$ are as in (\ref{analyticity_eqn5}).  By comparing (\ref{analyticity_eqn5}) and (\ref{analyticity_eqn8}) using (\ref{analyticity_eqn6}) and (\ref{analyticity_eqn7}) and simplifying, 
\begin{align} \label{analyticity_eqn9}
	&|\tilde x|^{-2\mathcal{N}+2\lceil k/2 \rceil+2+|\alpha_x|+2|\alpha_y|} |D_{\tilde X}^{\alpha} D_i h^i_{\gamma,k,\kappa}(D\phi,D\chi,D\psi)| \nonumber \\
	&\leq \rho^{-|\gamma|} \left. D_{\xi}^{|\alpha|} D_{\zeta}^{|\gamma|} \left( \Phi(4\mathcal{N}mn (e^{\xi} v(\zeta) - v(0))) \right) \right|_{\xi=\zeta=0},
\end{align}
where $\alpha_x = (\alpha_1,\alpha_2)$ and $\alpha_y = (\alpha_3,\alpha_4,\ldots,\alpha_n)$.  By the properties of H\"{o}lder coefficients and by taking the differences between (\ref{analyticity_eqn5}) and (\ref{analyticity_eqn5}) with $\phi( \, ; \tilde y_0)$, $\chi^{\kappa}_l( \, ; \tilde y_0)$, and $\psi^{\kappa}_l( \, ; \tilde y_0)$ in place of $\phi$, $\chi^{\kappa}_l$, and $\psi^{\kappa}_l$ respectively, one can similarly argue using (\ref{analyticity_eqn6}) and (\ref{analyticity_eqn7}) that 
\begin{align} \label{analyticity_eqn10}
	&|\tilde x|^{-2\mathcal{N}+2\lceil k/2 \rceil+2+|\alpha_x|+2|\alpha_y|+2\tau} 
		[D_{\tilde X}^{\alpha} D_i h^i_{\gamma,k,\kappa}(D\phi,D\chi,D\psi)]_{g,\tau,B^g_{|\tilde x|^2/4}(\tilde X)} \nonumber \\
		&\leq \rho^{-|\gamma|} \left. D_{\xi}^{|\alpha|+1} D_{\zeta}^{|\gamma|} \left( \Phi(4\mathcal{N}mn (e^{\xi} v(\zeta) - v(0))) \right) 
			\right|_{\xi=\zeta=0}, \nonumber \\
	&|D_{\tilde X}^{\alpha} D_i h^i_{\gamma,k,\kappa}(D\phi,D\chi,D\psi) 
			- D_{\tilde X}^{\alpha} D_i h^i_{\gamma,k,\kappa}(D\phi( \, ;\tilde z),D\chi( \, ;\tilde z),D\psi( \, ;\tilde z))| \nonumber \\
		&\leq \left. (\rho/s)^{-\tau} |\tilde x|^{2\mathcal{N}-2\lceil k/2 \rceil-2-|\alpha_x|-2|\alpha_y|} d(\tilde X,\tilde z)^{\tau} \cdot 
			\rho^{-|\gamma|} D_{\xi}^{|\alpha|+1} D_{\zeta}^{|\gamma|} \left( \Phi(4\mathcal{N}mn (e^{\xi} v(\zeta) - v(0))) \right) \right|_{\xi=\zeta=0}, 
			\nonumber \\
	&[D_{\tilde X}^{\alpha} D_i h^i_{\gamma,k,\kappa}(D\phi,D\chi,D\psi) - D_{\tilde X}^{\alpha} D_i h^i_{\gamma,k,\kappa}(D\phi( \, ;\tilde z),
			D\chi( \, ;\tilde z),D\psi( \, ;\tilde z))]_{g,\tau,B^g_{|\tilde x|^2/4}(\tilde X)} \nonumber \\
		&\leq \left. (\rho/s)^{-\tau} |\tilde x|^{2\mathcal{N}-2\lceil k/2 \rceil-2-|\alpha_x|-2|\alpha_y|} \cdot \rho^{-|\gamma|} 
			D_{\xi}^{|\alpha|+2} D_{\zeta}^{|\gamma|} \left( \Phi(4\mathcal{N}mn (e^{\xi} v(\zeta) - v(0))) \right) \right|_{\xi=\zeta=0}, 
\end{align}
where $\tilde X \in B^g_{\rho/s}(0,\tilde y_0)$, $\tilde z \in B^{n-2}_{\rho/s}(0,\tilde y_0)$, $|\alpha| \leq 2\mathcal{N}-2$, and in the last equation we require that $d_g(\tilde X,(0,\tilde z)) \leq 4|\tilde x|^2$.  By the same argument except we write (\ref{transpde_diff2}) as (\ref{analyticity_eqn3}) and in place of (\ref{analyticity_eqn6}) we assume 
\begin{align*}
	&|D_{(R,P,Q)}^{\beta} F_{k,\kappa}^i(R,P,Q)| 
	\\&\leq \left\{ \begin{array}{l} 
		D^{|\beta|} \Phi(0) |R^1_1 R^2_2 - R^1_2 R^2_1| \left( \frac{|R_x| (1+|R_y|)}{|R^1_1 R^2_2 - R^1_2 R^2_1|} \right)^{|\beta_x|} |Q_0 M(R)| |Q_k M(R)| 
			\text{ if } |\beta_Q| = 0, \\
		D^{|\beta|} \Phi(0) |R^1_1 R^2_2 - R^1_2 R^2_1| \left( \frac{|R_x| (1+|R_y|)}{|R^1_1 R^2_2 - R^1_2 R^2_1|} \right)^{|\beta_x|} |Q_k M(R)| 
			\text{ if } |\beta_Q| = |\beta_{Q_0}| \geq 1, \\
		D^{|\beta|} \Phi(0) |R^1_1 R^2_2 - R^1_2 R^2_1| \left( \frac{|R_x| (1+|R_y|)}{|R^1_1 R^2_2 - R^1_2 R^2_1|} \right)^{|\beta_x|} |Q_0 M(R)| 
			\text{ if } |\beta_Q| = |\beta_{Q_k}| = 1, \\
		D^{|\beta|} \Phi(0) |R^1_1 R^2_2 - R^1_2 R^2_1| \left( \frac{|R_x| (1+|R_y|)}{|R^1_1 R^2_2 - R^1_2 R^2_1|} \right)^{|\beta_x|} 
			\text{ if } |\beta_Q| = |\beta_{Q_0}| + 1 \geq 2, \, |\beta_{Q_k}| = 1, \\
		D^{|\beta|} \Phi(0) |R^1_1 R^2_2 - R^1_2 R^2_1| \left( \frac{|R_x| (1+|R_y|)}{|R^1_1 R^2_2 - R^1_2 R^2_1|} \right)^{|\beta_x|} 
			\text{ if } |\beta_{Q_0}| < |\beta_Q|, \, |\beta_{Q_k}| = 0, \\
		0 \text{ if } |\beta_{Q_k}| \geq 2,
	\end{array} \right. 
\end{align*}
for all $R \in \mathbb{R}^{2n}$, $P \in \mathbb{R}^{kmn}$ with $|P| \leq \sqrt{kmn}$, $Q \in \mathbb{R}^{kmn}$ with $|Q_l| \leq \sqrt{mn}$ for $l < k$, and $\beta$, (\ref{analyticity_eqn9}) and (\ref{analyticity_eqn10}) continue to hold true if we interchange $D\chi$ and $D\psi$ and replace each $-2\mathcal{N}+2\lceil k/2 \rceil+2$ with $0$.  By (\ref{analyticity_eqn1}), (\ref{analyticity_eqn4a}), (\ref{analyticity_eqn4b}), and (\ref{analyticity_eqn4c}), we can take 
\begin{equation*}
	\Phi(t) = \sum_{k=2}^{2\mathcal{N}+2} K_0 K^{2k} t^k + \sum_{k=2\mathcal{N}+3}^{\infty} \frac{K_0 K^{2k}}{(k-2\mathcal{N}-2)^{2\mathcal{N}+2}} t^k, \quad
	v(\zeta) = H_0 \zeta + \sum_{k=2}^{|\gamma|} \frac{H_0 H^{k-2}}{(k-1)^2} \zeta^k.  
\end{equation*}

Given any functions $f : \mathbb{R} \rightarrow \mathbb{R}$ and $g : \mathbb{R}^2 \rightarrow \mathbb{R}$ with $f(0) = 0$ and $g(0) = 0$, 
\begin{equation*}
	D^{\alpha} (f \circ g)(0) = \sum_{k=1}^{|\alpha|} D^k f(0) P_k(\{D^{\beta} g(0)\}_{\beta \leq \alpha})
\end{equation*}
for some polynomials $P_k : \mathbb{R}^{\prod \alpha_j} \rightarrow \mathbb{R}$ that are independent of $f$ and $g$.  Since $f$ is arbitrary, we may take $f(t) = t^j$ for each integer $j$ and thereby deduce that 
\begin{equation*}
	D^{\alpha} (f \circ g)(0) = \sum_{k=1}^{|\alpha|} \frac{1}{k!} D^k f(0) D^{\alpha}(g^k)(0). 
\end{equation*}
Letting $f(t) = \Phi(4\mathcal{N}mn t)$ and $g(\xi,\zeta) = e^{n\xi} v((n-2)\zeta) - v(0)$ yields 
\begin{equation} \label{analyticity_eqn11}
	\left. D_{\xi}^j D_{\zeta}^{|\gamma|} \Phi(4\mathcal{N}mn (e^{\xi} v(\zeta) - v(0))) \right|_{\xi=\zeta=0} 
	= \sum_{k=2}^{|\gamma|} \frac{(4\mathcal{N}mn)^k k^j}{k!} D^k \Phi(0) D_{\zeta}^{|\gamma|} (v^k)(0) 
\end{equation}
for $j = 0,1,2,\ldots,2\mathcal{N}$. 

Observe that 
\begin{equation} \label{analyticity_eqn12}
	\frac{1}{k!} D_{\zeta}^k (v^k)(0) = H_0^k 
\end{equation}
for $k = 1,2,\ldots,|\gamma|$.  We claim that 
\begin{equation} \label{analyticity_eqn13}
	\frac{1}{j!} D_{\zeta}^j (v^k)(0) \leq \frac{6^{k-1} H_0^k H^{j-k-1}}{(j-k)^2} 
\end{equation}
for $1 \leq k < j \leq |\gamma|$.  We can prove (\ref{analyticity_eqn13}) by induction on $k$, observing that (\ref{analyticity_eqn13}) holds true when $k = 1$ and using the induction step 
\begin{equation*}
	\frac{1}{(k+1)!} D_{\zeta}^{k+1} (v^k)(0) 
	= \frac{1}{(k+1)!} \sum_{l=1}^2 \frac{(k+1)!}{l! (k+1-l)!} D_{\zeta}^l v(0) D_{\zeta}^{k+1-l} (v^{k-1})(0) 
	\leq 2 \cdot 6^{k-2} H_0^k 
\end{equation*}
for $2 \leq k < |\gamma|$ and 
\begin{align*}
	\frac{1}{j!} D_{\zeta}^j (v^k)(0) 
	&= \frac{1}{j!} \sum_{l=1}^{j-k+1} \frac{j!}{l! (j-l)!} D_{\zeta}^l v(0) D_{\zeta}^{j-l} (v^{k-1})(0) 
	\\&\leq 6^{k-2} H_0^k \left( \frac{2}{(j-k)^2} H^{j-k-1} + \sum_{l=2}^{j-k} \frac{1}{(l-1)^2 (j-l-k+1)^2} H^{j-k-2} \right) 
	\\&\leq \frac{6^{k-1} H_0^k H^{j-k-1}}{(j-k)^2} 
\end{align*}
for $2 \leq k$ and $k+2 \leq j \leq |\gamma|$ using the fact that 
\begin{equation} \label{analyticity_eqn14}
	\sum_{l=1}^{N-1} \frac{1}{l^2 (N-l)^2} 
	= \sum_{l=1}^{N-1} \frac{1}{N^2} \left( \frac{1}{l^2 (N-l)^2} + \frac{1}{l^2 (N-l)^2} \right)^2 
	\leq \frac{4}{N^2} \sum_{l=1}^{N-1} \frac{1}{l^2} 
	\leq \frac{2\pi^2}{3N^2} 
	\leq \frac{6}{N^2} 
\end{equation}
for every integer $N \geq 2$.  By (\ref{analyticity_eqn11}), (\ref{analyticity_eqn12}), (\ref{analyticity_eqn13}), and (\ref{analyticity_eqn14}), 
\begin{align} \label{analyticity_eqn15}
	&\left. D_{\xi}^j D_{\zeta}^{|\gamma|} \Phi(4\mathcal{N}mn (e^{\xi} v(\zeta) - v(0))) \right|_{\xi=\zeta=0} \nonumber \\
	&\leq C(\mathcal{N}) |\gamma|! K_0 H^{|\gamma|-1} \left( \sum_{k=2}^{2\mathcal{N}+2} \frac{1}{(|\gamma|-k)^2} 
		\left( \frac{24\mathcal{N}mn K^2 H_0}{H} \right)^k \right. \nonumber \\& \hspace{5mm} \left. 
		+ \sum_{k=2\mathcal{N}+3}^{|\gamma|-1} \frac{1}{(k-2\mathcal{N}-2)^2 (|\gamma|-k)^2} \left( \frac{24\mathcal{N}mn K^2 H_0}{H} \right)^k 
		+ \frac{1}{(|\gamma|-2\mathcal{N}-2)^2} \left( \frac{24\mathcal{N}mn K^2 H_0}{H} \right)^{|\gamma|} \right) \nonumber \\ 
	&\leq C(n,m,\mathcal{N}) (|\gamma|-2)! K_0 K^4 H_0^2 H^{|\gamma|-3} 
\end{align}
for $j = 0,1,2,\ldots,2\mathcal{N}$ provided $H > 24\mathcal{N}mn K^2 H_0$. 

By (\ref{analyticity_eqn1}), (\ref{analyticity_eqn2}), (\ref{analyticity_eqn9}), (\ref{analyticity_eqn10}), and (\ref{analyticity_eqn15}), 
\begin{align} \label{analyticity_eqn16}
	&\sum_{\kappa=1}^2 \vvvert D_{\tilde y}^{\gamma} \phi^{\kappa} \vvvert_{2,2\mathcal{N},\tau,B^g_{\rho/2s}(0)} 
	+ \sum_{\kappa=1}^m \sum_{k=0}^{2\mathcal{N}-1} \vvvert D_{\tilde y}^{\gamma} \chi^{\kappa}_k \vvvert_{2,2\mathcal{N},\tau,B^g_{\rho/2s}(0)} 
		+ \sum_{\kappa=1}^m \sum_{k=0}^{2\mathcal{N}-1} \vvvert D_{\tilde y}^{\gamma} \psi^{\kappa}_k 
		\vvvert_{2\mathcal{N}-2\lceil k/2 \rceil,2\mathcal{N},\tau,B^g_{\rho/2s}(0)} \nonumber \\
	&\leq C \left( (s-2)! H_0 H^{s-3} \rho^{-s-1} + (s-2)! K_0 K^4 H_0^2 H^{s-3} \rho^{-s} \right) 
\end{align}
for all $B^g_{\rho}(0,\tilde y_0) \subseteq B^g_{1/8}(0)$ and for some $C = C(n,m,\mathcal{N}) \in (0,\infty)$. 

To complete the proof, let $B^g_{\rho}(0,\tilde y_0) \subseteq B^g_{1/8}(0)$.  If $\tilde X = (\tilde x,\tilde y) \in B^g_{\rho/s}(0,\tilde y_0)$ then $B^g_{(1-2/s)\rho}(0,\tilde y) \subseteq B^g_{1/8}(0)$, so by (\ref{analyticity_eqn16}) and the fact that $(1+1/j)^j \leq e$ for $j = s-2$, 
\begin{align} \label{analyticity_eqn17}
	(\rho/s)^{-\mathcal{N}-\lceil k/2 \rceil-|\alpha|/2-|\beta|} |D_{\tilde x}^{\alpha} D_{\tilde y}^{\beta+\gamma} \psi_k^{\kappa}(\tilde X)| 
	&\leq C (1 + K_0 K^4 H_0) (s-2)! H_0 H^{s-3} \left( (1-2/s) \rho \right)^{-s} \nonumber \\
	&\leq C (1 + K_0 K^4 H_0) (s-2)! H_0 H^{s-3} \rho^{-s-1} 
\end{align}
whenever $|\alpha|+2|\beta| \leq 2\mathcal{N}-2\lceil k/2 \rceil$.  Similarly, 
\begin{align} \label{analyticity_eqn18}
	(\rho/s)^{\tau} |\tilde x|^{-2\mathcal{N}-2\lceil k/2 \rceil-2\tau} |D_{\tilde x}^{\alpha} D_{\tilde y}^{\beta+\gamma} \psi_k^{\kappa}(\tilde X)| 
		&\leq C (1 + K_0 K^4 H_0) (s-2)! H_0 H^{s-3} \rho^{-s-1}, \nonumber \\
	(\rho/s)^{\tau} |\tilde x|^{-2\mathcal{N}-2\lceil k/2 \rceil} [D_{\tilde x}^{\alpha} D_{\tilde y}^{\beta+\gamma} \psi_k^{\kappa}
		]_{g,\tau,B^g_{|\tilde x|^2/4}(\tilde X)} &\leq C (1 + K_0 K^4 H_0) (s-2)! H_0 H^{s-3} \rho^{-s-1}, 
\end{align}
whenever $|\alpha|+2|\beta| > 2\mathcal{N}-2\lceil k/2 \rceil$ and $|\alpha|+|\beta| \leq 2\mathcal{N}$.  Next suppose that $\tilde X,\tilde X' \in B^g_{\rho/s}(0,\tilde y_0)$.  If $d_g(X,X') \leq (s-3) \rho/2s^2$, then by (\ref{analyticity_eqn16}) and the fact that $(1+1/j)^j \leq e$ for $j = s-2$, 
\begin{align} \label{analyticity_eqn19}
	(\rho/s)^{\tau} \frac{|D_{\tilde x}^{\alpha} D_{\tilde y}^{\beta+\gamma} \psi_k^{\kappa}(\tilde X) 
		- D_{\tilde x}^{\alpha} D_{\tilde y}^{\beta+\gamma} \psi_k^{\kappa}(\tilde X')|}{d_g(X,X')^{\tau}}
	&\leq (\rho/s)^{\tau} [D_{\tilde x}^{\alpha} D_{\tilde y}^{\beta+\gamma} \psi_k^{\kappa}]_{g,\tau,B^g_{(s-3) \rho/2s^2}(0,\tilde y)} \nonumber \\
	&\leq C (1 + K_0 K^4 H_0) (s-2)! H_0 H^{s-3} (1-2/s)^{-s-\tau} \rho^{-s} \nonumber \\
	&\leq C (1 + K_0 K^4 H_0) (s-2)! H_0 H^{s-3} \rho^{-s-1} 
\end{align}
whenever $|\alpha|+2|\beta| = 2\mathcal{N}-2\lceil k/2 \rceil$.  If instead $d_g(X,X') > (s-3) \rho/2s^2$ then by (\ref{analyticity_eqn17}),
\begin{align} \label{analyticity_eqn20}
	(\rho/s)^{\tau} \frac{|D_{\tilde x}^{\alpha} D_{\tilde y}^{\beta+\gamma} \psi_k^{\kappa}(\tilde X) 
		- D_{\tilde x}^{\alpha} D_{\tilde y}^{\beta+\gamma} \psi_k^{\kappa}(\tilde X')|}{d_g(X,X')^{\tau}} 
	&\leq 2^{\tau} (1-3/s)^{\tau} \sup_{B^g_{\rho}(0,\tilde y_0)} |D_{\tilde x}^{\alpha} D_{\tilde y}^{\beta+\gamma} \psi_k^{\kappa}| \nonumber \\
	&\leq C (1 + K_0 K^4 H_0) (s-2)! H_0 H^{s-3} \rho^{-s-1}. 
\end{align}
By (\ref{analyticity_eqn17}), (\ref{analyticity_eqn17}), (\ref{analyticity_eqn17}), and (\ref{analyticity_eqn17}) and similar computations for $\phi^{\kappa}$ and $\chi_k^{\kappa}$, we obtain (\ref{analyticity_eqn1}) for $|\gamma| = s$.  Therefore (\ref{analyticity_eqn1}) holds true for all $\gamma$, completing the proof that $\mathcal{B}_u$ is a real analytic $(n-2)$-dimensional submanifold near the origin.  

We claim that by modifying the same argument one can show that, letting 
\begin{equation*}
	\|\psi\|'_{C^{k,\tau}(B^g_{\rho}(\tilde X_0))} = \sum_{|\alpha|+|\beta| \leq k} \frac{\rho^{|\alpha|+|\beta|}}{|\tilde x_0|^{|\alpha|}} 
		\sup_{B^g_{\rho/2}(\tilde X_0)} |D_{\tilde x}^{\alpha} D_{\tilde y}^{\beta} \psi| 
	+ \sum_{|\alpha|+|\beta| = k} \frac{\rho^{|\alpha|+|\beta|+\tau}}{|\tilde x_0|^{|\alpha|}} 
		[D_{\tilde x}^{\alpha} D_{\tilde y}^{\beta} \psi]_{g,\tau,B^g_{\rho/2}(\tilde X_0)} 
\end{equation*}
for $\psi \in C^{k,\tau}(B^g_{\rho}(\tilde X_0))$ where $\tilde X_0 = (\tilde x_0,\tilde y_0)$ and $\rho \leq |\tilde x_0|^2/4$, 
\begin{align} \label{analyticity_eqn21}
	&\sum_{\kappa=1}^2 \frac{1}{|\tilde x_0|^2} \|D_{\tilde y}^{\gamma} \phi^{\kappa}\|'_{C^{2\mathcal{N},\tau}(B^g_{\rho/|\gamma|}(\tilde X_0))}
	+ \sum_{\kappa=1}^m \sum_{k=0}^{2\mathcal{N}-1} \frac{1}{|\tilde x_0|^2} \|D_{\tilde y}^{\gamma} \chi_k^{\kappa}\|'_{C^{2\mathcal{N},\tau}(B^g_{\rho/|\gamma|}(\tilde X_0))} 
		\nonumber \\
	&+ \sum_{\kappa=1}^m \sum_{k=0}^{2\mathcal{N}-1} \frac{1}{|\tilde x_0|^{2\mathcal{N}-2\lceil k/2 \rceil}} 
		\|D_{\tilde y}^{\gamma} \psi_k^{\kappa}\|'_{C^{2\mathcal{N},\tau}(B^g_{\rho/|\gamma|}(\tilde X_0))}
	\leq \left\{ \begin{array}{ll} 
		H_0 \rho^{-|\gamma|} & \text{if } |\gamma| \leq 2, \\
		(|\gamma|-2)! H_0 H^{|\gamma|-2} \rho^{-|\gamma|} & \text{if } |\gamma| > 2. 
	\end{array} \right. 
\end{align}
for all $B^g_{\rho}(\tilde X_0) \subseteq B^g_{1/80}(0)$ with $\tilde X_0 = (\tilde x_0,\tilde y_0)$ and $\rho/|\gamma| \leq |\tilde x_0|^2/8$ and $\gamma$ with $|\gamma| \geq 1$ for some constants $H_0,H \geq 1$ depending only on $n$, $m$, and $\mathcal{N}$ (independent of $\tilde y_0$ and $\gamma$).  In particular, we again argue by induction on $|\gamma|$, assuming that for some $s \geq \max\{8,2\mathcal{N}+4\}$, (\ref{analyticity_eqn21}) holds true when $|\gamma| < s$ and then proving (\ref{analyticity_eqn21}) for $|\gamma| = s$.  In place of using Lemma \ref{schaudersys_afbs} to obtain (\ref{analyticity_eqn2}) we use the Schauder estimate away from $\{0\} \times \mathbb{R}^{n-2}$ of Lemma \ref{schaudersys_afbs} instead of Corollary \ref{schaudersyscor} to obtain an estimate bounding $\phi^{\kappa}$, $\chi_k^{\kappa}$, and $\psi_k^{\kappa}$ on $B^g_{\rho/s}(\tilde X_0)$.  We claim that by (\ref{analyticity_eqn1}) and (\ref{analyticity_eqn21}), 
\begin{align} \label{analyticity_eqn22}
	&\sum_{\kappa=1}^2 \frac{1}{|\tilde x_0|^2} \|D_{\tilde y}^{\gamma'} \phi^{\kappa}\|'_{C^{2\mathcal{N},\tau}(B^g_{\rho/s}(\tilde X_0))}
	+ \sum_{\kappa=1}^m \sum_{k=0}^{2\mathcal{N}-1} \frac{1}{|\tilde x_0|^2} \|D_{\tilde y}^{\gamma'} \chi_k^{\kappa}\|'_{C^{2\mathcal{N},\tau}(B^g_{\rho/s}(\tilde X_0))} 
		\nonumber \\
	&+ \sum_{\kappa=1}^m \sum_{k=0}^{2\mathcal{N}-1} \frac{1}{|\tilde x_0|^{2\mathcal{N}-2\lceil k/2 \rceil}} 
		\|D_{\tilde y}^{\gamma'} \psi_k^{\kappa}\|'_{C^{2\mathcal{N},\tau}(B^g_{\rho/s}(\tilde X_0))}
	\leq \left\{ \begin{array}{ll} 
		C H_0 \rho^{-|\gamma'|} & \text{if } |\gamma'| \leq 2, \\
		(|\gamma'|-2)! C H_0 H^{|\gamma'|-2} \rho^{-|\gamma'|} & \text{if } |\gamma'| > 2. 
	\end{array} \right. 
\end{align}
for every $\gamma'$ with $1 \leq |\gamma'| < s$ for some constant $C = C(n,m,\mathcal{N}) \in (0,\infty)$.  If $\rho/|\gamma'| \leq |\tilde x_0|^2/8$, then (\ref{analyticity_eqn22}) follows from (\ref{analyticity_eqn21}) with $\gamma'$ in place of $\gamma$.  If instead $\rho/|\gamma'| > |\tilde x_0|^2/8$, then  $B^g_{\rho/|\gamma'|}(\tilde X_0) \subset B^g_{9\rho/|\gamma'|}(0,\tilde y_0)$ and $B^g_{\rho}(\tilde X_0) \subseteq B^g_{1/80}(0)$ implies that $B^g_{9\rho}(0,\tilde y_0) \subseteq B^g_{1/8}(0)$, so (\ref{analyticity_eqn22}) follows from (\ref{analyticity_eqn1}) with $\gamma'$ and $B^g_{9\rho}(0,\tilde y_0)$ in place of $\gamma$ and $B^g_{\rho}(0,\tilde y_0)$ respectively and Remark \ref{schauder_rmk}.  Now by the argument involving the majorants $\Phi$ and $v$ above using (\ref{analyticity_eqn22}) and with obvious modifications, one can bound the necessary terms in the Schauder estimate to prove (\ref{analyticity_eqn21}) for $|\gamma| = s$.  Therefore (\ref{analyticity_eqn21}) holds true for all $\gamma$.  

By combining (\ref{analyticity_eqn1}) and (\ref{analyticity_eqn21}), 
\begin{align} \label{analyticity_eqn23}
	&\sum_{\kappa=1}^2 \sum_{|\alpha| \leq 2} \sup_{B^g_{1/160}(0)} |\tilde x|^{-2+|\alpha|} 
		|D_{\tilde x}^{\alpha} D_{\tilde y}^{\gamma} \phi^{\kappa}(\tilde X)| 
	+ \sum_{\kappa=1}^2 \sum_{|\alpha| = 2} (160 |\gamma|)^{\tau} 
		[D_{\tilde x}^{\alpha} D_{\tilde y}^{\gamma} \phi^{\kappa}]_{g,\tau,B^g_{1/160}(0)} \nonumber 
	\\&+ \sum_{\kappa=1}^m \sum_{k=0}^{2\mathcal{N}-1} \sum_{|\alpha| \leq 2} \sup_{B^g_{1/160}(0)} |\tilde x|^{-2+|\alpha|} 
		|D_{\tilde x}^{\alpha} D_{\tilde y}^{\gamma} \chi_k^{\kappa}(\tilde X)| 
	+ \sum_{\kappa=1}^m \sum_{k=0}^{2\mathcal{N}-1} \sum_{|\alpha| = 2} (160 |\gamma|)^{\tau} 
		[D_{\tilde x}^{\alpha} D_{\tilde y}^{\gamma} \chi_k^{\kappa}]_{g,\tau,B^g_{1/160}(0)} \nonumber 
	\\&+ \sum_{\kappa=1}^m \sum_{k=0}^{2\mathcal{N}-1} \sum_{|\alpha| \leq 2\mathcal{N}-2\lceil k/2 \rceil} \sup_{B^g_{1/160}(0)} 
		|\tilde x|^{-2\mathcal{N}+2\lceil k/2 \rceil+|\alpha|} |D_{\tilde x}^{\alpha} D_{\tilde y}^{\gamma} \psi_k^{\kappa}(\tilde X)| \nonumber 
	\\&+ \sum_{\kappa=1}^m \sum_{k=0}^{2\mathcal{N}-1} \sum_{|\alpha| = 2\mathcal{N}-2\lceil k/2 \rceil} (160 |\gamma|)^{\tau} 
		[D_{\tilde x}^{\alpha} D_{\tilde y}^{\gamma} \psi_k^{\kappa}]_{g,\tau,B^g_{1/160}(0)} 
	\leq \left\{ \begin{array}{ll} 
		H_0 160^{|\gamma|} & \text{if } |\gamma| \leq 2, \\
		(|\gamma|-2)! H_0 H^{|\gamma|-2} 160^{|\gamma|} & \text{if } |\gamma| > 2, 
	\end{array} \right. 
\end{align}
for all $\gamma$ for some constants $H_0,H \geq 1$ depending only on $n$, $m$, and $\mathcal{N}$ (independent of $\gamma$).  We can interpret (\ref{analyticity_eqn23}) as implying that $\phi^{\kappa}(\tilde x,\tilde y)$, $\chi^{\kappa}(\tilde x,\tilde y)$, and $\psi^{\kappa}(\tilde x,\tilde y)$ are locally real analytic with respect to $\tilde y$ with bounds on $\|D_{\tilde y}^{\gamma} \phi^{\kappa}\|_{C^{g,2,\tau}(B^g_{1/160}(0))}$, $\|D_{\tilde y}^{\gamma} \chi_k^{\kappa}\|_{C^{g,2,\tau}(B^g_{1/160}(0))}$, and $\|D_{\tilde y}^{\gamma} \psi_k^{\kappa}\|_{C^{g,2\mathcal{N}-2\lceil k/2 \rceil,\tau}(B^g_{1/160}(0))}$ given by (\ref{analyticity_eqn23}).  

\end{document}